\begin{document}

\newcommand{\C}{{\mathbb{C}}}
\newcommand{\R}{{\mathbb{R}}}
\newcommand{\Q}{B}
\newcommand{\Z}{{\mathbb{Z}}}
\newcommand{\N}{{\mathbb{N}}}
\newcommand{\q}{\left}
\newcommand{\w}{\right}
\newcommand{\Ninf}{\N_\infty}
\newcommand{\Vol}[1]{\mathrm{Vol}\q(#1\w)}
\newcommand{\B}[4]{B_{\q(#1,#2\w)}\q(#3,#4\w)}
\newcommand{\CjN}[3]{\q\|#1\w\|_{C^{#2}\q(#3\w)}}
\newcommand{\Cj}[2]{C^{#1}\q( #2\w)}
\newcommand{\grad}{\bigtriangledown}
\newcommand{\Det}[1]{\det_{#1\times #1}}
\newcommand{\bE}{\mathbb{E}}
\newcommand{\sK}{\mathcal{K}}
\newcommand{\sKt}{\widetilde{\mathcal{K}}}
\newcommand{\sA}{\mathcal{A}}
\newcommand{\sB}{\mathcal{B}}
\newcommand{\sC}{\mathcal{C}}
\newcommand{\sD}{\mathcal{D}}
\newcommand{\sS}{\mathcal{S}}
\newcommand{\sE}{\mathcal{E}}
\newcommand{\sL}{\mathcal{L}}
\newcommand{\sI}{\mathcal{I}}
\newcommand{\sF}{\mathcal{F}}
\newcommand{\sQ}{\mathcal{Q}}
\newcommand{\sV}{\mathcal{V}}
\newcommand{\sM}{\mathcal{M}}
\newcommand{\sT}{\mathcal{T}}
\newcommand{\fD}{\mathfrak{D}}
\newcommand{\fDt}{\widetilde{\fD}}
\newcommand{\fDh}{\widehat{\fD}}
\newcommand{\sFt}{\widetilde{\sF}}
\newcommand{\fS}{\mathfrak{S}}
\newcommand{\sPh}{\widehat{\sP}}
\newcommand{\sNh}{\widehat{\sN}}
\newcommand{\cV}{\q( \sV\w)}
\newcommand{\vsig}{\varsigma}
\newcommand{\vsigt}{\widetilde{\vsig}}
\newcommand{\vsigh}{\widehat{\vsig}}
\newcommand{\ta}{\tilde{a}}
\newcommand{\dil}[2]{#1^{(#2)}}
\newcommand{\dilp}[2]{#1^{\q(#2\w)_{p}}}
\newcommand{\lA}{-\log_2 \sA}
\newcommand{\eh}{\hat{e}}
\newcommand{\Ho}{\mathbb{H}^1}
\newcommand{\sd}{\sum d}
\newcommand{\dt}{\tilde{d}}
\newcommand{\dhc}{\hat{d}}
\newcommand{\Span}[1]{\mathrm{span}\q\{ #1 \w\}}
\newcommand{\dspan}[1]{\dim \Span{#1}}
\newcommand{\K}{K_0}
\newcommand{\ad}[1]{\mathrm{ad}( #1 )}
\newcommand{\LtOpN}[1]{\q\|#1\w\|_{L^2\rightarrow L^2}}
\newcommand{\LpOpN}[2]{\q\|#2\w\|_{L^{#1}\rightarrow L^{#1}}}
\newcommand{\LpN}[2]{\q\|#2\w\|_{L^{#1}}}
\newcommand{\LpsN}[3]{\q\|#3\w\|_{L^{#1}_{#2}}}
\newcommand{\Jac}{\mathrm{Jac}\:}
\newcommand{\kapt}{\widetilde{\kappa}}
\newcommand{\gt}{\widetilde{\gamma}}
\newcommand{\gtt}{\widetilde{\widetilde{\gamma}}}
\newcommand{\gh}{\widehat{\gamma}}
\newcommand{\Sh}{\widehat{S}}
\newcommand{\Wh}{\widehat{W}}
\newcommand{\sMh}{\widehat{\sM}}
\newcommand{\Ih}{\widehat{I}}
\newcommand{\Wt}{\widetilde{W}}
\newcommand{\Xt}{\widetilde{X}}
\newcommand{\Tt}{\widetilde{T}}
\newcommand{\rh}{\hat{r}}
\newcommand{\rt}{\tilde{r}}
\newcommand{\Dt}{\widetilde{D}}
\newcommand{\Phit}{\widetilde{\Phi}}
\newcommand{\Vh}{\widehat{V}}
\newcommand{\Xh}{\widehat{X}}
\newcommand{\ch}{\widehat{c}}
\newcommand{\deltah}{\hat{\delta}}
\newcommand{\deltat}{\tilde{\delta}}
\newcommand{\sSh}{\widehat{\mathcal{S}}}
\newcommand{\sSt}{\widetilde{\mathcal{S}}}
\newcommand{\sSb}{\overline{\mathcal{S}}}
\newcommand{\sSc}{\widecheck{\mathcal{S}}}
\newcommand{\Xc}{\widecheck{X}}
\newcommand{\Yc}{\widecheck{Y}}
\newcommand{\Yb}{\overline{Y}}
\newcommand{\cd}{\check{d}}
\newcommand{\Xb}{\overline{X}}
\newcommand{\bd}{\overline{d}}
\newcommand{\sR}{\mathcal{R}}
\newcommand{\sRh}{\widehat{\sR}}
\newcommand{\sRt}{\widetilde{\sR}}
\newcommand{\sFh}{\widehat{\mathcal{F}}}
\newcommand{\sFc}{\widecheck{\mathcal{F}}}
\newcommand{\sFb}{\overline{\mathcal{F}}}
\newcommand{\thetah}{\widehat{\theta}}
\newcommand{\ct}{\widetilde{c}}
\newcommand{\at}{\tilde{a}}
\newcommand{\bt}{\tilde{b}}
\newcommand{\ah}{\hat{a}}
\newcommand{\bh}{\hat{b}}
\newcommand{\fg}{\mathfrak{g}}
\newcommand{\cC}{\q( \sC\w)}
\newcommand{\cG}{\q( \sC_{\fg}\w)}
\newcommand{\cJ}{\q( \sC_{J}\w)}
\newcommand{\cY}{\q( \sC_{Y}\w)}
\newcommand{\cYu}{\q( \sC_{Y}\w)_u}
\newcommand{\cJu}{\q( \sC_{J}\w)_u}
\newcommand{\Bb}{\overline{B}}
\newcommand{\Qb}{\overline{Q}}
\newcommand{\sP}{\mathcal{P}}
\newcommand{\sN}{\mathcal{N}}
\newcommand{\cH}{\q(\mathcal{H}\w)}
\newcommand{\Omegat}{\widetilde{\Omega}}
\newcommand{\Kt}{\widetilde{K}}
\newcommand{\sMt}{\widetilde{\sM}}
\newcommand{\denum}[2]{#1_{#2}}
\newcommand{\phit}{\tilde{\phi}}
\newcommand{\nuset}{\{1,\ldots, \nu\}}
\newcommand{\diam}{\mathrm{diam}\: }
\newcommand{\Nt}{{\widetilde{N}}}
\newcommand{\Nh}{{\hat{N}}}
\newcommand{\psit}{\widetilde{\psi}}
\newcommand{\sigt}{\widetilde{\sigma}}
\newcommand{\psih}{\widehat{\psi}}
\newcommand{\sigh}{\widehat{\sigma}}
\newcommand{\muoS}{\q\{\mu_1\w\}}
\newcommand{\Mjcutoff}{\nu+2}
\newcommand{\LplqOpN}[3]{\q\| #3 \w\|_{L^{#1}\q(\ell^{#2}\q(\N^{\nu} \w) \w)\rightarrow L^{#1}\q( \ell^{#2 }\q( \N^{\nu} \w) \w) }}
\newcommand{\LplqN}[3]{\q\| #3 \w\|_{L^{#1}\q(\ell^{#2 }\q(\N^{\nu} \w) \w) }}
\newcommand{\iinf}{\iota_\infty}
\newcommand{\jz}{\ell}
\newcommand{\Lpp}[2]{L^{#1}\q(#2\w)}
\newcommand{\Lppn}[3]{\q\|#3\w\|_{\Lpp{#1}{#2}}}
\newcommand{\Ewmu}{E\cup \q\{\mu_1\w\}}
\newcommand{\Ewmuc}{\q(\Ewmu\w)^c}
\newcommand{\ip}[2]{\q< #1, #2 \w>}
\newcommand{\hd}{\hat{d}}
\newcommand{\mset}{\q\{ 1,\ldots, m\w\}}
\newcommand{\expL}[1]{\exp_L\q( #1\w)}
\newcommand{\NLN}[3]{\q\|#3\w\|_{\NL{#1}{#2}}}
\newcommand{\NL}[2]{\mathrm{NL}^{#1}_{#2}}
\newcommand{\hD}{\widehat{D}}
\newcommand{\lap}{\bigtriangleup}
\newcommand{\Czip}[1]{C_0^\infty(#1)}
\newcommand{\supp}[1]{\mathrm{supp}\left(#1\right)}
\newcommand{\Deg}[1]{\mathrm{deg}(#1)}
\newcommand{\gammat}{\tilde{\gamma}}
\newcommand{\gammah}{\hat{\gamma}}
\newcommand{\vtt}{\tilde{t}}
\newcommand{\et}{\tilde{e}}
\newcommand{\etat}{\tilde{\eta}}
\newcommand{\etah}{\hat{\eta}}
\newcommand{\NLp}[3]{\NL{#1}{#2}(#3)}
\newcommand{\NLpN}[4]{\q\|#4\w\|_{\NLp{#1}{#2}{#3} }}
\newcommand{\nuh}{\hat{\nu}}
\newcommand{\nut}{\tilde{\nu}}
\newcommand{\td}{\tilde{d}}
\newcommand{\qh}{\hat{q}}
\newcommand{\qt}{\tilde{q}}
\newcommand{\jh}{\hat{j}}
\newcommand{\jt}{\tilde{j}}
\newcommand{\lt}{\tilde{l}}
\newcommand{\kt}{\tilde{k}}
\newcommand{\kh}{\hat{k}}
\newcommand{\phih}{\widehat{\phi}}
\newcommand{\kappat}{\tilde{\kappa}}
\newcommand{\snuvect}{\Gamma(T\Omega)\times ([0,\infty)^\nu\setminus \{0\})}
\newcommand{\sonevect}{\Gamma(T\Omega)\times (0,\infty)}
\newcommand{\snutvect}{\Gamma(T\Omega)\times ([0,\infty)^{\nut}\setminus \{0\})}
\newcommand{\snuhvect}{\Gamma(T\Omega)\times ([0,\infty)^{\nuh}\setminus \{0\})}
\newcommand{\onecompnu}{\mathfrak{d}_{\nu}}
\newcommand{\onecompnut}{\mathfrak{d}_{\nut}}
\newcommand{\onecompnuh}{\mathfrak{d}_{\nuh}}
\newcommand{\snuvectone}{\Gamma(T\Omega)\times \onecompnu}
\newcommand{\snutvectone}{\Gamma(T\Omega)\times \onecompnut}
\newcommand{\snuhvectone}{\Gamma(T\Omega)\times \onecompnuh}
\newcommand{\alphaa}{\vec{\alpha}}
\newcommand{\Na}{\vec{N}}
\newcommand{\otimesh}{\widehat{\otimes}}
\newcommand{\Lplqnu}[2]{L^{#1}\q(\ell^{#2}(\N^{\nu})\w)}
\newcommand{\LplqnuN}[3]{\q\| #3 \w\|_{ \Lplqnu{#1}{#2} }  }
\newcommand{\LplqnuOpN}[3]{\q\| #3 \w\|_{ \Lplqnu{#1}{#2}  \rightarrow \Lplqnu{#1}{#2}}  }
\newcommand{\re}[1]{\mathrm{Re}(#1)}
\newcommand{\im}[1]{\mathrm{Im}(#1)}
\newcommand{\vht}{\hat{t}}
\newcommand{\sZ}{\mathcal{Z}}
\newcommand{\gammac}{\check{\gamma}}
\newcommand{\ec}{\check{e}}
\newcommand{\Nc}{\widecheck{N}}
\newcommand{\gammab}{\overline{\gamma}}
\newcommand{\eb}{\overline{e}}
\newcommand{\Nb}{\overline{N}}
\newcommand{\tc}{\check{t}}
\newcommand{\tb}{\overline{t}}
\newcommand{\thetac}{\widecheck{\theta}}
\newcommand{\thetab}{\overline{\theta}}
\newcommand{\Wb}{\overline{W}}
\newcommand{\Wc}{\widecheck{W}}
\newcommand{\Vb}{\overline{V}}
\newcommand{\Vc}{\widecheck{V}}
\newcommand{\alphab}{\overline{\alpha}}
\newcommand{\alphac}{\check{\alpha}}
\newcommand{\etab}{\overline{\eta}}
\newcommand{\etac}{\check{\eta}}
\newcommand{\vsigc}{\widecheck{\vsig}}
\newcommand{\vsigb}{\overline{\vsig}}
\newcommand{\alphat}{\tilde{\alpha}}
\newcommand{\alphah}{\hat{\alpha}}
\newcommand{\deltab}{\overline{\delta}}
\newcommand{\deltac}{\check{\delta}}
\newcommand{\schS}{\mathscr{S}}

\newtheorem{thm}{Theorem}[section]
\newtheorem{cor}[thm]{Corollary}
\newtheorem{prop}[thm]{Proposition}
\newtheorem{lemma}[thm]{Lemma}
\newtheorem{conj}[thm]{Conjecture}

\theoremstyle{remark}
\newtheorem{rmk}[thm]{Remark}

\theoremstyle{definition}
\newtheorem{defn}[thm]{Definition}

\theoremstyle{definition}
\newtheorem{assumption}[thm]{Assumption}

\theoremstyle{remark}
\newtheorem{example}[thm]{Example}

\numberwithin{equation}{section}

\title{Sobolev spaces associated to singular and fractional Radon transforms}
\author{Brian Street\footnote{The author was partially supported by NSF DMS-1401671.}}
\date{}

\maketitle




\begin{abstract}
The purpose of this paper is to study the smoothing properties (in $L^p$ Sobolev spaces) of operators
of the form $f\mapsto \psi(x) \int f(\gamma_t(x)) K(t)\: dt$, where $\gamma_t(x)$ is a $C^\infty$
function defined on a neighborhood of the origin in $(t,x)\in \mathbb{R}^N\times \mathbb{R}^n$,
satisfying $\gamma_0(x)\equiv x$, $\psi$ is a $C^\infty$ cut-off function supported on a small neighborhood
of $0\in \mathbb{R}^n$, and $K$ is a ``multi-parameter fractional kernel'' supported on a small neighborhood of $0\in \mathbb{R}^N$.
When $K$ is a Calder\'on-Zygmund kernel these operators were studied by Christ, Nagel, Stein, and Wainger, and when
$K$ is a multi-parameter singular kernel they were studied by the author and Stein.  In both of these situations,
conditions on $\gamma$ were given under which the above operator is bounded on $L^p$ ($1<p<\infty$).  Under these same conditions,
we introduce non-isotropic $L^p$ Sobolev spaces associated to $\gamma$.  Furthermore, when $K$ is a fractional kernel
which is smoothing of an order which is close to $0$ (i.e., very close to a singular kernel) we prove mapping properties of the above
operators on these non-isotropic Sobolev spaces.  As a corollary, under the conditions introduced on $\gamma$
by Christ, Nagel, Stein, and Wainger, we prove optimal smoothing properties in isotropic $L^p$ Sobolev spaces for the above
operator when $K$ is a fractional kernel which is smoothing of very low order.
\end{abstract}

\tableofcontents

\section{Introduction}
In the influential paper \cite{ChristNagelSteinWaingerSingularAndMaximalRadonTransforms}, Christ, Nagel, Stein, and Wainger
studied operators of the form
\begin{equation}\label{EqnIntroCNSWOp}
Tf(x) = \psi(x) \int f(\gamma_t(x)) K(t) \: dt,
\end{equation}
where $\gamma_t(x)=\gamma(t,x):\R^{N}_0\times \R^{n}_0\rightarrow \R^n$ is a $C^\infty$ function defined on a neighborhood of $(0,0)\in \R^N\times \R^n$
satisfying $\gamma_0(x)\equiv x$, $\psi\in C_0^\infty(\R^n)$ is supported on a small neighborhood of $0\in \R^n$, and $K$ is a Calder\'on-Zygmund
kernel defined on a small neighborhood of $0\in \R^N$. They introduced conditions on $\gamma$ such that every operator of the form \eqref{EqnIntroCNSWOp}
is bounded on $L^p(\R^n)$, $1<p<\infty$.  Furthermore, they showed (under these same conditions on $\gamma$) that if $K$ is instead
a fractional kernel, smoothing of order $\delta>0$ (henceforth referred to as a kernel of order $-\delta$)\footnote{I.e., $K(t)$ satsifies estimates like $\q|\partial_t^{\alpha} K(t)\w|\lesssim |t|^{-N-|\alpha|+\delta}$.  If $\delta\geq N$, one needs a different condition, but we are mostly interested in $\delta$ small.  We address the kernels more precisely in Section \ref{SectionResKer}.}
supported near $0\in \R^{N}$, then $T:L^p\rightarrow L^p_s$ for some $s=s(p,\delta,\gamma)>0$, where $L^p_s$ denotes the $L^p$ Sobolev space of order $s$.
Left open, however, was the optimal choice of $s$.

One main consequence of this paper is that we derive the optimal formula for $s=s(p,\gamma,\delta)$, in the case when $\delta$ is sufficiently small (how small $\delta$ needs to be depends on $\gamma$ and $p$).  In fact, for this choice of $s$ we prove $T:L^p_r\rightarrow L^p_{r+s}$ so long as $r$ and $\delta$ are sufficiently small (depending on $\gamma$ and $p\in (1,\infty)$).  Here $r$ and $\delta$ can be either positive, negative, or zero.\footnote{When $\delta\leq 0$ one needs to add an additional cancellation condition on $K$ in a standard way.}
When both $r$ and $\delta$ are $0$, this is just a reprise of the $L^p$ boundedness result of \cite{ChristNagelSteinWaingerSingularAndMaximalRadonTransforms}.
Moreover, our results are sharper than this.  We introduce non-isotropic $L^p$-Sobolev spaces adapted to $\gamma$:
$\NLp{p}{r}{\gamma}$ for $r\in \R$.
Our result takes the form $T:\NLp{p}{r}{\gamma}\rightarrow \NLp{p}{r+\delta}{\gamma}$, provided $r$ and $\delta$ are sufficiently small.
In fact, we prove mapping properties for $T$ on the spaces $\NLp{p}{r}{\gammat}$, where $\gammat$ can be a different choice of $\gamma$--the
smoothing properties of $T$ on $L^p_r$ are a special case of this.
See Section \ref{SectionIntroSingle} for more precise details on this.

We proceed more generally than the above.  In the series \cite{SteinStreetA,SteinStreetI,SteinStreetII,SteinStreetIII} the author and Stein
introduced a more general framework than the one studied in \cite{ChristNagelSteinWaingerSingularAndMaximalRadonTransforms}.
Again we consider operators of the form
\begin{equation}\label{EqnIntroMultiOp}
Tf(x) = \psi(x) \int f(\gamma_t(x)) K(t)\: dt.
\end{equation}
As before $\gamma_t(x)=\gamma(t,x):\R^{N}_0\times \R^{n}_0\rightarrow \R^n$ is a $C^\infty$ function defined on a neighborhood of $(0,0)\in \R^N\times \R^n$
satisfying $\gamma_0(x)\equiv x$, $\psi\in C_0^\infty(\R^n)$ is supported on a small neighborhood of $0\in \R^n$.
$K$ is now a ``multi-parameter'' kernel.  The simplest situation to consider is when $K$ is a kernel of ``product type,'' though we will later deal with more general
kernels (see Section \ref{SectionResKer}).  To define this notion, we decompose $\R^N$ into $\nu$ factors: $\R^N= \R^{N_1}\times \cdots \times \R^{N_\nu}$;
and we write $t\in \R^{N}$ as $t=(t_1,\ldots, t_\nu)\in \R^{N_1}\times \cdots\times \R^{N_\nu}$.
A product kernel of order $\delta=(\delta_1,\ldots, \delta_\nu)\in \R^{\nu}$ satisfies estimates like
\begin{equation*}
\q|\partial_{t_1}^{\alpha_1} \cdots \partial_{t_\nu}^{\alpha_\nu} K(t_1,\ldots, t_\nu)\w|\lesssim |t_1|^{-N_1-|\alpha_1|-\delta_1} \cdots |t_\nu|^{-N_{\nu}-|\alpha_\nu|-\delta_\nu},
\end{equation*}
along with certain ``cancellation conditions'' if any of the coordinates of $\delta$ are non-negative.\footnote{Here we have reversed the role of $\delta$ and $-\delta$ from above.  In this notation, a kernel of order $\delta$ is ``smoothing'' of order $-\delta$.  When some coordinate $\delta_\mu$ of $\delta$ satisfies $\delta_\mu\leq -N_\mu$, a different definition is needed.  See Section \ref{SectionResKer} for more precise details on these kernels.}
In this situation, for $1<p<\infty$ and $r\in \R^{\nu}$ with $|r|$ sufficiently small (how small depends on $p$ and $\gamma$), we define
non-isotropic Sobolev spaces $\NLp{p}{r}{\gamma}$; and if $|\delta|$ and $|r|$ are sufficiently small (how small depends on $p$ and $\gamma$)
we prove mapping properties of the form
\begin{equation*}
T:\NLp{p}{r}{\gamma}\rightarrow \NLp{p}{r-\delta}{\gamma}.
\end{equation*}
Furthermore, we prove mapping properties for $T$ on spaces $\NLp{p}{r'}{\gammat}$, where $\gammat:\R_0^{\Nt}\times \R_0^n\rightarrow \R^n$ can be a different choice of $\gamma$--where there is an underlying  decomposition of $\R^{\Nt}$ into $\nut$ factors, and $r'\in \R^{\nut}$ is small.
In fact, the way the single-parameter results are proved is by lifting to the more general multi-parameter situation.  Thus, this more general framework is used
even if one is only interested in the single-parameter results.

In Section \ref{SectionIntroSingle} we outline the special case of some of our results when $T$ is of the form studied by Christ, Nagel, Stein, and Wainger \cite{ChristNagelSteinWaingerSingularAndMaximalRadonTransforms}.
This special case is likely the one of the most interest to many readers.  In Section \ref{SectionIntroPast} we outline some of the history of these problems
and reference related works.
In Sections \ref{SectionResKer}-\ref{SectionResSurf} we introduce all of the terminology necessary to state our results in full generality.
In Sections \ref{SectionResSobNew} and \ref{SectionResRadon} we state our main results.  Here and in the previous sections we include only the simplest and most instructive proofs.
In Sections \ref{SectionSchwartzAndProd}-\ref{SectionPfRadon} we prove all of the results whose proofs are more difficult and were not included in the previous sections.
Finally, in Section \ref{SectionOptimality} we address the question of the optimality of our results, but here we focus only on the single-parameter case.

The statement of the results in this paper are self-contained:  the reader does not need to be familiar with any previous works to understand the statement
of the main results.
The proofs, however, rely on the theories developed in several previous works including 
the series \cite{SteinStreetA,SteinStreetI,SteinStreetII,SteinStreetIII},
the papers \cite{ChristNagelSteinWaingerSingularAndMaximalRadonTransforms} and \cite{StreetMultiParameterCCBalls},
and the book \cite{StreetMultiParamSingInt}.

\subsection{The single parameter case}\label{SectionIntroSingle}
The setting in which our results are easiest to understand
is when $\gamma$ is of the form studied by Christ, Nagel, Stein, and Wainger in their
foundational work \cite{ChristNagelSteinWaingerSingularAndMaximalRadonTransforms}.  This falls under the single-parameter ($\nu=1$)
setting in this paper, and here we informally outline our results in this case.\footnote{Even when $\nu=1$, our setting allows more general $\gamma$ than those 
considered in \cite{ChristNagelSteinWaingerSingularAndMaximalRadonTransforms}.  For instance, our theory addresses the case when $\gamma_t(x)$
is real analytic, even if it does not satisfy the conditions of \cite{ChristNagelSteinWaingerSingularAndMaximalRadonTransforms}--see Corollary \ref{CorResSurfAnalSingle}.  In that case, the smoothing occurs only along leaves of a foliation, as opposed to smoothing in full isotropic Sobolev spaces.}
In particular, in this section we focus on the case when the kernel $K$ from \eqref{EqnIntroCNSWOp} is a
standard fractional integral kernel.  More precisely, $K(t)$ is a distribution supported on a small ball
centered at $0\in \R^N$, and  satisfies (for some $\delta\in \R$),
\begin{equation}\label{EqnIntroSingleKerEst}
\q|\partial_t^{\alpha}K(t)\w|\leq C_\alpha |t|^{-N-|\alpha|-\delta}, \quad \forall \alpha.
\end{equation}
In addition, if $\delta\geq 0$, $K$ is assumed to satisfy certain ``cancellation conditions'' which are made precise later (see Section \ref{SectionResKer} and
in particular Section \ref{SectionResProdKer}).\footnote{When $\delta\leq -N$, one needs different estimates.  We address the kernels more formally in Section \ref{SectionResKer}, but we are mostly interested in $\delta$ small, and so this is not a central point in what follows.}
 
 As mentioned above, $\gamma_t(x)=\gamma(t,x):\R^N_0\times \R^n_0\rightarrow \R^n$ is a $C^\infty$ function defined on a small neighborhood
 of $(0,0)\in \R^N\times \R^n$ and satisfies $\gamma_0(x)\equiv x$.  The paper \cite{ChristNagelSteinWaingerSingularAndMaximalRadonTransforms} shows
 that $\gamma$ can be written asymptotically as\footnote{\eqref{EqnIntroSingleCNSWExp} means that $\gamma_t(x) = \exp\q(\sum_{0<|\alpha|<M} t^{\alpha} X_\alpha\w)x+O(|t|^M)$, $\forall M$.}
 \begin{equation}\label{EqnIntroSingleCNSWExp}
 \gamma_t(x)\sim \exp\q(\sum_{|\alpha|>0} t^{\alpha} X_\alpha\w)x,
 \end{equation} 
 where each $X_\alpha$ is a $C^\infty$ vector field defined near $0$ on $\R^n$.
 The main hypothesis  studied in \cite{ChristNagelSteinWaingerSingularAndMaximalRadonTransforms} is that
 the vector fields $\{X_\alpha: |\alpha|>0\}$ satisfy H\"ormander's condition:  the Lie algebra generated by the vector fields span the tangent space
 at every point near $0$.
 
Let $\sS=\{(X_\alpha, |\alpha|):|\alpha|>0\}$.
Associated to $\sS$ are natural non-isotropic Sobolev spaces, for $1<p<\infty$ and $\delta\in \R$, which we denote by $\NLp{p}{\delta}{\sS}$.
We give the formal definition later (see Section \ref{SectionResSobNew}), but the intuitive idea is that these Sobolev spaces are defined so that for $(X,d)\in \sS$, $X$ is a differential operator of ``order'' $d$; in particular, $X:\NLp{p}{\delta}{\sS}\rightarrow \NLp{p}{\delta-d}{\sS}$.
Sobolev spaces of this type have been developed by several authors.  For settings closely related to the one here, see \cite{FollandSteinEstimatesForTheDbarComplex,FollandSubellipticEstimatesAndFunctionSpacesOnNilpotent,RothschildSteinHypoellipticDifferentialOperatorsAndNilpotentGroups,NagelRosaySteinWaingerEstimatesForTheBergmanAndSzegoKernels,KoenigOnMaximalSobolevAndHolderEstimatesForTheTangentialCR,StreetMultiParamSingInt}. 
Let $\psi\in C_0^\infty(\R^n)$ be supported on a small neighborhood of $0$ and let $K(t)$ be a kernel of order $\delta\in \R$ in the sense of \eqref{EqnIntroSingleKerEst}
which is supported near $0\in \R^N$.
Define the operator
\begin{equation*}
Tf(x) = \psi(x) \int f(\gamma_t(x)) K(t)\: dt.
\end{equation*}
Let $\sL(\sS)$ be the smallest set such that:
\begin{itemize}
\item $\sS\subseteq \sL(\sS)$.
\item If $(X_1,d_1),(X_2,d_2)\in \sL(\sS)$, then $([X_1,X_2],d_1+d_2)\in \sL(\sS)$.
\end{itemize}
By hypothesis, there is a finite set $\sF\subset \sL(\sS)$ such that $\{ X: (X,d)\in \sF\}$ spans the tangent space at every point near $0$.
Set $E=\max\{ d: (X,d)\in \sF\}$ and $e=\min\{|\alpha| :X_{\alpha}\ne 0\}$; where we have picked $\sF$ so that $E$ is minimal.\footnote{$X_\alpha\ne 0$ means
that $X_\alpha$ is not identically the zero vector field on a neighborhood of $0$.}
Note, $1\leq e\leq E$.  
Identify $\psi$ with the operator $\psi: f\mapsto \psi  f$.  Let $L^p_s$, $1<p<\infty$, $s\in \R$, denote the standard
(isotropic) $L^p$ Sobolev space of order $s$ on $\R^n$.

\begin{thm}
For $1<p<\infty$, there exists $\epsilon=\epsilon(p,\gamma)>0$ such that if $\delta,\delta_0\in (-\epsilon, \epsilon)$,
$$T:\NLp{p}{\delta_0}{\sS}\rightarrow \NLp{p}{\delta_0-\delta}{\sS}.$$
\end{thm}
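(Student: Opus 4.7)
The plan is to deduce the theorem from its multi-parameter counterpart in Section \ref{SectionResRadon} via the lifting procedure developed in \cite{SteinStreetI,SteinStreetII,SteinStreetIII}. The idea is that, although $K$ is a single-parameter fractional kernel, the non-isotropic Sobolev space $\NLp{p}{\delta_0}{\sS}$ is naturally governed by the multi-parameter geometry generated by $\sF$; by introducing a free parameter for each element of $\sF$, one converts the problem into a multi-parameter product-kernel Radon estimate that has already been established in Section \ref{SectionResRadon}.

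First, I would fix a finite set $\sF\subset \sL(\sS)$ whose vector-field components span the tangent space at every point near $0\in \R^n$. Using the expansion \eqref{EqnIntroSingleCNSWExp} together with the Baker--Campbell--Hausdorff formula, write $\gamma_t(x) = \gammat_{\sigma(t)}(x)$ (modulo a lower-order error) where $\gammat:\R_0^{\Nt}\times \R_0^n\rightarrow \R^n$ is obtained by exponentiating $\sum_{(X,d)\in \sF} s_{(X,d)} X$ and $\sigma:\R^N\rightarrow \R^{\Nt}$ is a polynomial map respecting the dilations assigned to $\sF$. Next, push $K(t)\, dt$ forward through $\sigma$, dyadically decomposed, to obtain a multi-parameter product kernel $\Kt$ of order $\vec\delta$ whose components are each a small constant multiple of $\delta$; the relevant cancellation conditions transfer because $\sigma$ is polynomial.

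With the lifting in place, the multi-parameter theorem in Section \ref{SectionResRadon} gives
\[
\Tt : \NLp{p}{\vec r}{\gammat} \rightarrow \NLp{p}{\vec r - \vec\delta}{\gammat}
\]
for $|\vec r|,|\vec\delta|$ sufficiently small, where $\Tt$ is the lifted Radon operator associated to $\gammat$ and $\Kt$. It then remains to identify $\NLp{p}{\delta_0}{\sS}$ with $\NLp{p}{\vec r}{\gammat}$ for an appropriate diagonal choice of $\vec r$. This step is done via Littlewood--Paley decompositions: in both cases the Sobolev norm is characterized by $L^p$ square functions indexed by dyadic scales, and the Carnot--Carath\'eodory ball structure generated by $\sS$ is comparable to the projected product ball structure from $\gammat$.

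The main obstacle is this final identification. While the multi-parameter framework measures smoothness separately in each lifted direction, $\NLp{p}{\delta_0}{\sS}$ only detects the overall non-isotropic order. Verifying that a diagonal choice of $\vec r$ recovers the correct $L^p$-Sobolev norm requires an almost-orthogonality argument comparing Littlewood--Paley projectors at scales $2^{-j}$ in the CC metric with product projectors adapted to $\gammat$. This is where H\"ormander's condition is essential: it guarantees that the lifted geometry projects faithfully to the CC geometry, so that the diagonal restriction of the multi-parameter Sobolev norm matches $\NLp{p}{\delta_0}{\sS}$ up to admissible error, at which point the mapping property in the theorem follows from the lifted bound.
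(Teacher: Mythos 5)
Your proposal takes a very different route from the paper, and the route has a genuine gap at its central step.

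The paper's proof of this theorem is essentially one line: it is a direct instance of Theorem \ref{ThmResRadonMainThm} with $\nu=1$. Under H\"ormander's condition, Corollary \ref{CorResSurfHorSmoothlyFG} shows that $(\gamma,e,N)$ with the single-parameter dilations $e_j=|j|$ inherited from the expansion \eqref{EqnIntroSingleCNSWExp} is (smoothly) finitely generated on $\Omega'$, so that by Definitions \ref{DefnResSobForGamma} and \ref{DefnResSobDependOnsS} the norm $\NLpN{p}{\delta_0}{\sS}{\cdot}$ is exactly $\NLpN{p}{\delta_0}{\gamma,e,N}{\cdot}$, and Theorem \ref{ThmResRadonMainThm} gives the mapping property directly. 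No re-parameterization of $\gamma$ and no modification of $K$ are needed; the only technical point (addressed in the paper's footnote) is identifying the $X_\alpha$ from \eqref{EqnIntroSingleCNSWExp} with those in \eqref{EqnResSurfWTaylor}, which is done in Section 17.3 of \cite{SteinStreetI}. The ``lifting to multi-parameter'' that the introduction advertises is used for the comparison results of the type in Theorem \ref{ThmIntroSingleInclusion} and Corollary \ref{CorResRadonOtherHorWithEuclidNewer} (where one combines $\sS$ with $(\partial,1)$ into a genuinely $2$-parameter family via \eqref{EqnAssumptionResRadonOtherMainsSNew} and Proposition \ref{PropResRadonOtherAddingParamsNew}), not for this theorem.

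The gap in your proposal is the pushforward step. Writing $\gamma_t(x)\approx\gammat_{\sigma(t)}(x)$ for a polynomial $\sigma:\R^N\to\R^{\Nt}$ with $\Nt>N$ and then pushing $K(t)\,dt$ forward via $\sigma$ produces a distribution supported on the image of $\sigma$, a proper (in fact measure-zero) subvariety of $\R^{\Nt}$. Such a distribution is not a product kernel and is not in $\sK_{\deltat}(\Nt,\et,\at)$: concretely, if you decompose $K=\eta\sum_j 2^{j\delta}\dil{\vsig_j}{2^j}$ and push forward term by term (using that $\sigma$ commutes with the dilations), each $\sigma_*\vsig_j$ is a surface measure--type distribution, not a Schwartz function, so the defining condition of Definition \ref{DefnKer} fails. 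The lifting the paper actually uses (Proposition \ref{PropResRadonOtherAddingParamsNew}) is of a fundamentally different nature: it \emph{tensors} $K$ with a Dirac mass $\delta_0(\vht)$ in new variables, i.e.\ $K(\vht,\vtt)=\delta_0(\vht)\otimes\Kt(\vtt)$, which does remain in the correct kernel class by Lemma \ref{LemmaResKerDelta}, and it augments $W$ by $\sum_l \vht_l\Xh_l$ rather than substituting $t\mapsto\sigma(t)$. A secondary issue: your proposed identification of the single-parameter $\NLp{p}{\delta_0}{\sS}$ with a ``diagonal'' restriction of a multi-parameter $\NLp{p}{\vec r}{\gammat}$ is not a direct Littlewood--Paley statement; the paper's route to this kind of comparison is through Theorem \ref{ThmResCompSobDropParamsNew}, which requires that both parameterizations be finitely generated by the \emph{same} $\sF$ in the same parameter scheme, a hypothesis the ad hoc diagonal restriction does not supply.
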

\begin{proof}
This is a special case of Theorem \ref{ThmResRadonMainThm}.\footnote{In the sequel, the vector fields $X_\alpha$ are defined a slightly different way;
however this different definition is equivalent to the above definition.  See Section 17.3 of \cite{SteinStreetI}.}
\end{proof}

\begin{thm}\label{ThmIntroSingleInclusion}
Let $1<p<\infty$, $\delta\geq 0$.  Then,
\begin{equation*}
\psi: L^p_{\delta}\rightarrow \NLp{p}{\delta e}{\sS}.
\end{equation*}
Dually, we have
\begin{equation*}
\NLp{p}{-\delta e}{\sS}\hookrightarrow L^p_{-\delta}.
\end{equation*}
Also,
\begin{equation*}
\NLp{p}{\delta E}{\sS}\hookrightarrow L^p_{\delta},
\end{equation*}
and dually,
\begin{equation*}
\psi: L^p_{-\delta}\rightarrow \NLp{p}{-\delta E}{\sS}.
\end{equation*}
\end{thm}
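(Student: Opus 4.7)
The plan reduces all four inclusions to two elementary observations together with duality and complex interpolation. First, every vector field $X_\alpha$ in the list $\sS$ is a smooth first-order differential operator, and if nonzero it carries non-isotropic weight $|\alpha|\ge e$. Second, by H\"ormander's spanning hypothesis (encoded in the set $\sF$), on a neighborhood of $0$ each coordinate derivative $\partial_{x_j}$ may be written as a smooth-coefficient linear combination of iterated brackets of the $X_\alpha$, each of non-isotropic degree at most $E$. The first observation is the source of the factor $e$ in the inclusions of the form $\psi:L^p_{\delta}\to\NLp{p}{\delta e}{\sS}$; the second is the source of the factor $E$ in $\NLp{p}{\delta E}{\sS}\hookrightarrow L^p_{\delta}$.

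I first treat the integer case $\delta\in\N$, using the characterization of $\NLp{p}{m}{\sS}$ for $m\in\N$ by $L^p$-boundedness of all vector-field monomials $X_{\alpha_1}\cdots X_{\alpha_k}$ with $\sum|\alpha_j|\le m$. For $\psi:L^p_{\delta}\to\NLp{p}{\delta e}{\sS}$, any such monomial of total weight $\le\delta e$ has at most $\delta$ factors, since each $|\alpha_j|\ge e$. Expanding $X_{\alpha_1}\cdots X_{\alpha_k}(\psi f)$ by Leibniz produces a finite sum of terms $(X_{\beta_1}\cdots X_{\beta_j}\psi)\,(X_{\gamma_1}\cdots X_{\gamma_l}f)$ with $l\le k\le\delta$; each factor involving $\psi$ is smooth and compactly supported, while each factor involving $f$ is a classical differential operator of order at most $\delta$ applied to $f\in L^p_{\delta}$, hence lies in $L^p$. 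For $\NLp{p}{\delta E}{\sS}\hookrightarrow L^p_{\delta}$, write $\partial_{x_j}=\sum_i a_{ij}(x)Y_i$ near $0$, where the $Y_i$ are iterated brackets drawn from $\sF$ of non-isotropic degree $\le E$. Iterating this identity $\delta$ times and applying Leibniz expresses $\partial_x^\alpha$ for $|\alpha|\le\delta$ as a finite sum of smooth coefficients times monomials in the $Y_i$ of total non-isotropic degree $\le\delta E$; each such monomial is controlled in $L^p$ by the $\NLp{p}{\delta E}{\sS}$ norm.

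The two dual inclusions then follow by applying the forward inclusions with $p$ replaced by $p'$ and transposing, using the pairings $(\NLp{p'}{s}{\sS})^*\cong \NLp{p}{-s}{\sS}$ and $(L^{p'}_s)^*\cong L^p_{-s}$ together with the self-adjointness of the multiplier $\psi$. Finally, for non-integer $\delta\ge 0$ I invoke complex interpolation: both $\{L^p_s\}_{s\in\R}$ and $\{\NLp{p}{s}{\sS}\}_{s\in\R}$ form interpolation scales, so the inclusions proved at integer $\delta$ interpolate to every $\delta\ge 0$.

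The main obstacle is conceptual rather than computational: the non-isotropic Sobolev spaces $\NLp{p}{s}{\sS}$ for arbitrary real $s$ are defined via a Littlewood--Paley decomposition adapted to $\sS$, so both the integer-order vector-field characterization and the complex interpolation property of the scale rely on the machinery developed in \cite{SteinStreetI,StreetMultiParamSingInt} and assembled in Section \ref{SectionResSobNew}. Once that foundation is in place, the proof reduces to the essentially algebraic argument sketched above.
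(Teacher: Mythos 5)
Your proof takes a genuinely different route from the paper's, and in doing so introduces gaps that the paper's framework is specifically designed to avoid. The paper derives this theorem as a special case of Theorems \ref{ThmSobEuclidNoSpanNew} and \ref{ThmSobEuclidSpanNew}, which in turn rest on Corollary \ref{CorCompMainCor}, Theorem \ref{ThmResSobEuclidIsNonIsoNew}, and the comparison machinery of Section \ref{SectionResSobCompNew}. That machinery proves inclusions between $\NLp{p}{s}{\sS}$ and $\NLp{p}{s'}{\sSh}$ directly from the control relation (here: $(\partial,1)$ $E$-controls $\sS$, and $\sF$ $F$-controls $(\partial,1)$) via a Littlewood--Paley almost-orthogonality argument, treating all real $\delta$ and producing the ``dual'' statements by simply making a different choice of the shift parameter $\delta$ in Theorem \ref{ThmResCompSobMainThmNew}/Corollary \ref{CorCompMainCor} rather than by literal Banach-space duality. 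It never requires an integer-order characterization, never interpolates, and never establishes a duality isomorphism for the scale $\NLp{p}{s}{\sS}$.

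The gaps in your argument are substantive. You assert, without proof or precise citation, that for $m\in\N$ the space $\NLp{p}{m}{\sS}$ is characterized by $L^p$-boundedness of vector-field monomials of total non-isotropic weight $\le m$; the paper defines $\NLp{p}{s}{\sS}$ purely through the Littlewood--Paley operators $D_j$ of \eqref{EqnResSobDefnDjNew}, and the equivalence you need is itself a nontrivial theorem that is not proved here (and is not what Theorem \ref{ThmResSobWellDefNew} says). You assert that $\{\NLp{p}{s}{\sS}\}_s$ is a complex interpolation scale; this is again plausible in principle but is nowhere established in the paper, and without a precise statement one cannot conclude the fractional-order inclusions from the integer ones. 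Finally, your duality step invokes $(\NLp{p'}{s}{\sS})^*\cong\NLp{p}{-s}{\sS}$, which is not proved anywhere in the paper, and is further complicated by the fact that $\NLp{p}{s}{\sS}$ is a space of functions supported in the fixed compact set $\overline{\Omega_0}$ rather than a translation-invariant space. So while the overall heuristic (each $X_\alpha$ is a first-order operator of non-isotropic weight $\ge e$; conversely $\partial_{x_j}$ is spanned by brackets of non-isotropic weight $\le E$) is exactly the same control observation the paper exploits, the mechanism you propose to convert it into Sobolev inclusions imports three unproved external facts, any one of which constitutes a missing piece.
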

\begin{proof}
The above results are a special cases of Theorems \ref{ThmSobEuclidNoSpanNew} and \ref{ThmSobEuclidSpanNew}.
\end{proof}

\begin{thm}\label{ThmIntroSingleBound}
Let $1<p<\infty$.  There exists $\epsilon=\epsilon(p,\gamma)>0$ such that if $s,\delta\in (-\epsilon,\epsilon)$, we have
\begin{itemize}
\item If $\delta\geq 0$, $T:L^p_s\rightarrow L^p_{s-\delta/e}$.
\item If $\delta\leq 0$, $T:L^p_s\rightarrow L^p_{s-\delta/E}$.
\end{itemize}
Furthermore, this result is optimal in the following sense (recall, $\epsilon$ depends on $p\in (1,\infty)$):
\begin{itemize}
\item There do not exist $p\in (1,\infty)$, $\delta\in [0,\epsilon)$, $s\in (-\epsilon,\epsilon)$, and $t>0$ such that for every operator $T$ of the above form we have
$T:L^p_{s}\rightarrow L^p_{s-\delta/e+t}$
\item There do not exist $p\in (1,\infty)$, $\delta\in (-\epsilon,0]$, $s\in (-\epsilon,\epsilon)$, and $t>0$ such that for every operator $T$ of the above form we have
$T:L^p_{s}\rightarrow L^p_{s-\delta/E+t}$
\end{itemize}
\end{thm}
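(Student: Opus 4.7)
The plan is to deduce this theorem from the two preceding results in the paper: the non-isotropic mapping $T:\NLp{p}{\delta_0}{\sS}\to\NLp{p}{\delta_0-\delta}{\sS}$ (for small $\delta_0,\delta$) and the embedding comparisons of Theorem \ref{ThmIntroSingleInclusion}. For $\delta\geq 0$ and $s\in[0,\delta/e]$, the chain
$$
L^p_s \hookrightarrow \NLp{p}{se}{\sS} \xrightarrow{\,T\,} \NLp{p}{se-\delta}{\sS} \hookrightarrow L^p_{s-\delta/e}
$$
delivers the bound directly: the first embedding is Theorem \ref{ThmIntroSingleInclusion} (valid because $s\geq 0$), the arrow is the previous theorem (applicable because $se$ is small), and the last embedding is the dual inclusion $\NLp{p}{re}{\sS}\hookrightarrow L^p_r$ applied at $r=s-\delta/e\leq 0$, which holds precisely because $se-\delta=e(s-\delta/e)\leq 0$.

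For $s$ outside this interval (namely $s<0$ or $s>\delta/e$), the straight chain falls short, because the embedding $\NLp{p}{se-\delta}{\sS}\hookrightarrow L^p_{s-\delta/e}$ is no longer directly available in the form we need. I would extend by duality and complex interpolation: the operator class is essentially closed under passing to the adjoint (up to smooth modifications of $\gamma$ and $K$ that do not affect the Sobolev conclusions), so a bound for $T$ on $L^p_s$ is equivalent to a bound for $T^*$ on $L^{p'}_{\delta/e-s}$. Using the chain-proven endpoints $s=0$ (giving $T:L^p\to L^p_{-\delta/e}$) and $s=\delta/e$ (giving $T:L^p_{\delta/e}\to L^p$) together with their duals as boundary data, a Stein complex interpolation on the analytic family $(1-\Delta)^{z/2}\, T\,(1-\Delta)^{-z/2-\delta/(2e)}$ fills in all remaining values of $s\in(-\epsilon,\epsilon)$. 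The $\delta\leq 0$ case is handled symmetrically with $E$ in place of $e$, reflecting that $E$ now controls the Euclidean direction of slowest regularization.

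For the optimality (negative) assertions, the plan is to exhibit explicit operators realizing the claimed loss. For $\delta\geq 0$, consider $\gamma_t(x)=x+t^{\alpha}v$ with $|\alpha|=e$ and $v$ a fixed nonzero vector; the substitution $u=t^{\alpha}$ reduces $T$ to a one-variable fractional integral of order $\delta/e$ along $v$, which is sharp on the isotropic $L^p$ Sobolev scale by direct Fourier analysis, ruling out any $t>0$ improvement uniformly in the class. For $\delta\leq 0$, take $\gamma$ of Heisenberg type in which a Euclidean coordinate direction is reached only through brackets of length $E$; the resulting operator gains exactly $|\delta|/E$ Euclidean derivatives, and a parallel frequency-side computation confirms the bound cannot be improved.

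The principal obstacle is the positive direction outside the chain-friendly interval $[0,\delta/e]$: one must verify that $T^*$ belongs to the same class so that duality applies, and the complex interpolation must be arranged so that the analytic family genuinely realizes the desired $L^p$ bounds on both endpoint lines. The counterexamples for optimality likewise require care to ensure that the one-variable reductions faithfully exhibit the claimed sharpness across the full class.
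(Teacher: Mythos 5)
Your chain for $s\in[0,\delta/e]$ is correct and matches one part of what the paper's machinery does. But the extension outside that interval via duality and complex interpolation has a genuine gap. Duality applied to the chain for $T^*$ (which is indeed a fractional Radon transform of the same order, by Theorem~\ref{ThmPfAdjAdjThm}) yields $T:L^p_{\delta/e-s'}\to L^p_{-s'}$ for $s'\in[0,\delta/e]$, but substituting $s''=\delta/e-s'$ returns you to exactly the same interval $s''\in[0,\delta/e]$: duality reflects the interval onto itself and produces no new values of $s$. Likewise, complex interpolation on your analytic family $(1-\Delta)^{z/2}\,T\,(1-\Delta)^{-z/2-\delta/(2e)}$ with boundary data at $\mathrm{Re}\,z\in\{-\delta/e,0\}$ (corresponding to $s\in\{0,\delta/e\}$) can only fill in the strip between the two lines, i.e.\ $s\in[0,\delta/e]$ again; it cannot produce bounds for $s<0$ or $s>\delta/e$. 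Since $\delta$ can be much smaller than $\epsilon$, the interval $[0,\delta/e]$ is strictly smaller than $(-\epsilon,\epsilon)$, and the argument does not close. The paper's route is fundamentally different: it lifts to a genuinely $2$-parameter Sobolev scale (one parameter for the Euclidean filtration $(\partial,1)$, one for the $\gamma$-adapted filtration), proves the $T$-mapping and the Sobolev comparisons there (Theorem~\ref{ThmResRadonOtherMainBound} and Theorem~\ref{ThmResCompSobMainThmNew}), and then descends via Theorem~\ref{ThmResCompSobDropParamsNew}. That multi-parameter step is precisely what makes all small $s$ and $\delta$ accessible at once; it is not recoverable by interpolating single-parameter endpoints.

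The optimality argument also has a gap of a different kind. The theorem asserts that for \emph{each} fixed $\gamma$ satisfying H\"ormander's condition, no uniform improvement over the class of kernels $K$ is possible. Your proposed counterexamples substitute specific, convenient choices of $\gamma$ (a monomial curve, a Heisenberg model); this establishes that the exponents $e$ and $E$ are sharp for \emph{those} $\gamma$, but says nothing about an arbitrary $\gamma$ of the form described, for which $e$ and $E$ are intrinsic quantities computed from the $X_\alpha$'s. The paper instead produces, for the given $\gamma$, a specific sequence of single-scale operators $T_{j_k}$ and $S_{\lambda j_k}$ adapted to the geometry, and shows (via the scaled Frobenius charts $\Phi_{x_k,\delta_k}$ and Lemma~\ref{LemmaOptimalDefineTkandSk}) that suitable products of these cannot decay; the notion of sharp $\lambda$-control is what guarantees this works for any $\gamma$ in the class. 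You would need an argument at that level of generality for the negative assertions to be proved as stated.
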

\begin{proof}
See Corollary \ref{CorResRadonOtherHorWithEuclidNewer}.
\end{proof}

In fact, the results in this paper are more general than the above.  They include:
\begin{itemize}
\item Instead of only considering the operator $T$ acting on the spaces $\NLp{p}{\delta}{\sS}$ and $L^p_s$, we consider the operator acting
on the more general spaces $\NLp{p}{\delta'}{\sSh}$, for some other choice of $\sSh$.  Furthermore, we compare the spaces $\NLp{p}{\delta}{\sS}$
and $\NLp{p}{\delta'}{\sSh}$.
\item We consider more general kernels $K$.  This includes single-parameter kernels with nonisotropic dilations, along with more general multi-parameter kernels.
These multi-parameter kernels include fractional kernels of product type, but also more general multi-parameter kernels.  In these cases, we work with multi-parameter
non-isotropic Sobolev spaces.

\item The above results hold only for the various parameters ($\delta$, $s$, etc.) small, and in general this is necessary.  However, we also present additional conditions
on $\gamma$ under which the above results extend to all parameters.
\end{itemize}

\begin{rmk}
Our results in the single-parameter case discussed above rely heavily on the theory we develop for the multi-parameter case.  Thus, even if one is only
interested in the single-parameter case, the multi-parameter case is essential for our methods.
\end{rmk}



\subsection{Past work}\label{SectionIntroPast}
All of the previous work on questions like the ones addressed in this paper have addressed the single parameter ($\nu=1$) case.
The work most closely related to the results in this paper is that of Greenblatt \cite{GreenblattAnAnalogueToATheoremOfFeffermanAndPhong},
who studied the case $\nu=1$, $N=1$, and $p=2$, under the additional condition that $\frac{\partial \gamma}{\partial t}(t,x)\ne 0$.  He proved
optimal smoothing in isotropic Sobolev spaces, $L^2\rightarrow L^2_s$, for such operators.\footnote{These are the results of   \cite{GreenblattAnAnalogueToATheoremOfFeffermanAndPhong} as they are stated in that paper.  However, the same methods can be extended to
some $L^p$ spaces for $p\ne 2$ via interpolation.  It also seems possible that similar methods could be used to treat some instances in the case $N>1$.}
Our results imply these results, sharpen them to
nonisotropic Sobolev spaces, generalize them to optimal estimates on all $L^p$ spaces, and remove many of the above assumptions.\footnote{\cite{GreenblattAnAnalogueToATheoremOfFeffermanAndPhong} only assumed estimates on $K$ and $\frac{d}{dt} K$, whereas we assume estimates on all derivatives of $K$.  This is not an essential point, and the methods of this paper can be modified to deal with this lesser smoothness, though we do not pursue it here.}
Greenblatt related these results to well-known results of Fefferman and Phong concerning subelliptic operators \cite{FeffermanPhongSubellipticEigenValueProblems}--see Remark \ref{RmkFeffermanPhong} for more details on this. 

An important work is that of Cuccagna \cite{CuccagnaSobolevEstimatesForFractionalAndSingularRadonTransforms} who, under strong additional hypotheses on $\gamma$, made explicit the dichotomy between
the smoothing nature of $T$ for $\delta$ small, and the smoothing properties when $\delta$ is large.  Our results are less precise than this--we only deal with $\delta$ very small, and say nothing about the case when $\delta$ is large.  However, our results hold for much more general $\gamma$ than those in \cite{CuccagnaSobolevEstimatesForFractionalAndSingularRadonTransforms}.\footnote{The situation when $\delta$ is large seems to be far
from well understood when considering the generality covered in this paper.}

Operators of the form 
\begin{equation}\label{EqnIntroPast}
f\mapsto \int f(\gamma_t(x)) K(t) \: dt, \quad \gamma_0(x)\equiv x,
\end{equation}
have a long history.  
In the discussion that follows, the distribution $K(t)$ is usually assumed to be supported near $t=0$.

When $K(t)$ is a Calder\'on-Zygmund kernel, the goal is often to prove that the above operator is bounded
on various $L^p$ spaces.
This began with the work of Fabes \cite{FabesSingularIntegralsAndPartialDifferentialEquationsOfParabolicType} who studied the case when $p=2$ and $K(t)=\frac{1}{t}$, $\gamma_t(x,y)=(x-t, y-t^2)$:  the so-called ``Hilbert transform along the parabola''; see also \cite{SteinWaingerTheEstimationOfAnIntegralArising}.
Following these initial results many papers followed. 
First, the setting where the operators were translation invariant on $\R^n$ was handled
by 
Stein \cite{SteinMaximalFunctionsPoissonIntegralsOnSymmetricSpaces,SteinMaximalFunctionsIIHomogeneousCurves},
Nagel, Rivi{\`e}re, and Wainger \cite{NagelRiviereWaingerI,NagelRiviereWaingerII,NagelRiviereWaingerIII},
and Stein and Wainger \cite{SteinWaingerProblemsInHarmonicAnalysisRelatedToCurvature}.
Moving beyond operators which were translation invariant on $\R^n$, the first results were obtained by Nagel, Stein, and Wainger \cite{NagelSteinWaingerHilbertTransformsAndMaximalFunctionsRelatedToVariableCurves}.
Next, operators which were translation invariant on a nilpotent Lie group
were handled by
Geller and Stein \cite{GellerSteinSingularConvolutionOperatorsOnTheHeisenbergGroup,GellerSteinEstimatesForSingularConvolutionOperatorsOnTheHeisenbergGroup},
M\"uller
\cite{MullerSingularKernelsSupportedByHomogeneousSubmanifolds,MullerCalderonZygmundKernelsCarriedByLinearSubsubspace,MullerTwistedConvolutionsWithCalderonZygmundKernels},
 Christ \cite{ChristHilbertTransformsAlongCurvesI},
 and Ricci and Stein \cite{RicciSteinHarmonicAnalysisOnNilpotentGroupsAndSingularIntegralsII}.
An important work that moves beyond the group translation invariant setting is that of Phong and Stein 
\cite{PhongSteinHilbertIntegrals}.
These ideas were generalized and unified by the influential work of
Christ, Nagel, Stein, and Wainger \cite{ChristNagelSteinWaingerSingularAndMaximalRadonTransforms}, who introduced
general conditions on $\gamma$ under which one can obtain $L^p$ bounds for such operators ($1<p<\infty$)--they referred to the conditions
on $\gamma$ as the {\it curvature condition}.  We refer the reader to that paper
for a more leisurely history of the work which proceeded it.

Questions regarding smoothing of operators of the form \eqref{EqnIntroPast}, when $K$ is a fractional kernel, implicitly take
their roots in H\"ormander's work on Fourier integral operators.  
This was then taken up by Ricci and Stein \cite{RicciSteinHarmonicAnalysisOnNilpotentGroupsAndSingularIntegralsIII},
Greenleaf and Uhlmann \cite{GreenleafUhlmannEstimatesForSingularRadonTransforms},
Christ in an unpublished work \cite{ChristEndpointBoundsForSingularFractionalIntegralOperators} which was
extended by
Greenleaf, Seeger, and Wainger \cite{GreenleafSeegerWaingerOnXrayTransformsForRigidLineComplexes},
 Seeger and Wainger \cite{SeegerWaingerBoundsForSingularFractionalIntegralsAndRelatedFIO}, and others.  Many of these works studied more
general operators than \eqref{EqnIntroPast} by working in the framework of Fourier integral operators.  These considerations forced the authors
to heavily restrict the class of $\gamma$ considered, though often allowed optimal smoothing estimates for \eqref{EqnIntroPast} even when $K$ is a fractional
kernel smoothing of some large order.
As with the work of Cuccagna \cite{CuccagnaSobolevEstimatesForFractionalAndSingularRadonTransforms} cited above,
one can see the difference between the smoothing properties of \eqref{EqnIntroPast} when $K$ is a fractional
kernel smoothing of a small order, and when $K$ is a fractional kernel smoothing of a larger order in these works.

One line of inquiry culminated in the above mentioned work of Christ, Nagel, Stein, and Waigner \cite{ChristNagelSteinWaingerSingularAndMaximalRadonTransforms},
who showed that their curvature condition on $\gamma$ was necessary and sufficient for the operator given in \eqref{EqnIntroPast}
to be smoothing in $L^p$ Sobolev spaces, for every fractional integral kernel $K$.   I.e., if $K$ is a fractional integral kernel which is 
smoothing of order $\delta>0$ (i.e., a kernel of order $-\delta$), the above operator maps
$L^p\rightarrow L^p_s$ for $1<p<\infty$ and some $s=s(p,\delta,\gamma)>0$.  Left open, however, was the optimal choice of $s$.
This has since been taken up by many authors.  Including the previously mentioned work of Greenblatt \cite{GreenblattAnAnalogueToATheoremOfFeffermanAndPhong}.

%

When $K$ is a multi-parameter kernel, the $L^p$ boundedness of operators of the form \eqref{EqnIntroPast}
began with the product theory of singular integrals.  This was introduced by 
R. Fefferman and Stein \cite{FeffermanSteinSingularIntegralsOnProductSpaces}.  Another early important work is that of Journ\'e
\cite{JourneCalderonZygmundOperatorsOnProductSpaces}.
This was followed by many works on the product theory of singular integrals--see, for instance,
\cite{FeffermanHarmonicAnalysisOnProductSpaces,ChangFeffermanSomeRecentDevelopmentsInFourierAnalysisAndHpTheory,RicciSteinMultiparameterSingularIntegralsAndMaximalFunctions,MullerRicciSteinMarcinkiewiczMulipliersAndMulitparameterStructure}.

Outside of the product type situation, the theory of translation invariant operators on nilpotent Lie groups given by convolution with a
flag kernel has been influential.  This started with the work of M\"uller, Ricci, and Stein \cite{MullerRicciSteinMarcinkiewiczMulipliersAndMulitparameterStructure,MullerRicciSteinMarcinkiewiczMultipliersAndMultiParameterStructuresOnHeisenbergTypeGroupsII}
and was furthered by Nagel, Ricci, and Stein \cite{NagelRicciSteinSingularIntegralsWithFlagKernels} and
Nagel, Ricci, Stein, and Wainger \cite{NagelRicciSteinWaingerSingularIntegralWithFlagKernelsOnHomogeneousGroupsI}.
See, also, \cite{GlowackiCompositionAndLtBoundednessOfFlagKernels,GlowackiCompositionAndLtBoundednessOfFlagKernelsCorrection,GlowackiLpBoundednessOfFlagKernelsOnHomogeneousGroups}.

The above product theory of singular integrals and flag theory of singular integrals only apply to very non-singular $\gamma$ when considering
operators of the form \eqref{EqnIntroPast}.  In particular, all of these operators can be thought of as a kind of singular integral.
These concepts were unified and generalized in the monograph \cite{StreetMultiParamSingInt}.
More singular forms of $\gamma$ were addressed by the author and Stein in the series \cite{SteinStreetA,SteinStreetI,SteinStreetII,SteinStreetIII},
where conditions on $\gamma$ were imposed which yielded $L^p$ boundedness ($1<p<\infty$) for operators of the form \eqref{EqnIntroPast}
when $K$ is a multi-parameter kernel.

As mentioned in the previous section, non-isotropic Sobolev spaces of the type studied in this paper have been studied by many authors.
This began with the work of Folland and Stein \cite{FollandSteinEstimatesForTheDbarComplex}, which
was soon followed by work of Folland \cite{FollandSubellipticEstimatesAndFunctionSpacesOnNilpotent}
and Rothschild and Stein \cite{RothschildSteinHypoellipticDifferentialOperatorsAndNilpotentGroups}.
See also, \cite{NagelRosaySteinWaingerEstimatesForTheBergmanAndSzegoKernels,KoenigOnMaximalSobolevAndHolderEstimatesForTheTangentialCR}.
Multi-parameter non-isotropic Sobolev spaces, like the ones studied in this paper, were studied by the author in \cite{StreetMultiParamSingInt}.
The Sobolev spaces in this paper generalize the ones found in \cite{StreetMultiParamSingInt}.

\section{Definitions: Kernels}\label{SectionResKer}
In this section, we state the main definitions and results concerning the class of distribution kernels $K(t)$ for which we study
operators of the form \eqref{EqnIntroMultiOp}.
$K(t)$ is a distribution on $\R^N$ which is supported in $B^N(a)=\{x\in \R^N:|x|<a\}$, where $a>0$ is small (to be chosen later).\footnote{In particular,
$a>0$ will be chosen so small that for $t\in B^N(a)$, $\gamma_t(\cdot)$ is a diffeomorphism onto its image, and we may consider $\gamma_t^{-1}(\cdot)$,
the inverse mapping.}

We suppose we are given $\nu$ parameter dilations on $\R^N$.  That is, we are given $e=(e_1,\ldots, e_N)$ with
each $0\ne e_j\in [0,\infty)^\nu$.  For $\delta\in [0,\infty)^\nu$ and $t=(t_1,\ldots, t_N)\in \R^N$, we define
\begin{equation}\label{EqnResKerDefnDil}
\delta t= (\delta^{e_1} t_1,\ldots, \delta^{e_N} t_N)\in \R^N,
\end{equation}
where $\delta^{e_j}$ is defined by standard multi-index notation: $\delta^{e_j}=\prod_{\mu=1}^\nu \delta_\mu^{e_j^\mu}.$

For $1\leq \mu\leq \nu$, let $t_\mu$ denote the \textit{vector} consisting of those coordinates $t_j$ of $t$ such that $e_j^\mu\ne 0$.  Note that $t_\mu$ and $t_{\mu'}$
may involve some of the same coordinates, even if $\mu\ne \mu'$, and every coordinate appears in at least one $t_\mu$.
Let $\schS(\R^N)$ denote Schwartz space on $\R^N$, and for $E\subseteq \nuset$ let $\schS_E$ denote the set of those
$f\in \schS(\R^N)$ such that for every $\mu\in E$,
\begin{equation*}
\int t_{\mu}^{\alpha_\mu} f(t)\: dt_\mu=0, \text{ for all multi-indices }\alpha_\mu.
\end{equation*}
$\schS_E$ is a closed subspace of $\schS(\R^N)$ and inherits the subspace topology, making $\schS_E$ a Fr\'echet space.

For $j=(j_1,\ldots, j_\nu)\in \R^\nu$, we define $2^j=(2^{j_1},\ldots, 2^{j_\nu})\in (0,\infty)^\nu$, so that it makes sense to write $2^j t$
using the above multi-parameter dilations; i.e., $2^j t = (2^{j\cdot e_1} t_1,\ldots, 2^{j\cdot e_N} t_N)$.
Given a function $\vsig:\R^N\rightarrow \C$, define 
\begin{equation}\label{EqnResKerDefFuncDil}
\dil{\vsig}{2^j}(t)=2^{j\cdot e_1+\cdots+j\cdot e_N} \vsig(2^jt).
\end{equation}
  Note that
$\dil{\vsig}{2^j}$ is defined in such a way that $\int \dil{\vsig}{2^j}(t)\: dt= \int \vsig(t)\: dt$.

\begin{defn}\label{DefnKer}
For $\delta\in \R^\nu$, we define $\sK_{\delta}=\sK_{\delta}(N,e,a)$ to be the set of all distributions $K$ of the form
\begin{equation}\label{EqnDefnsKdeltaSum}
K(t) = \eta(t) \sum_{j\in \N^\nu} 2^{j\cdot \delta} \dil{\vsig_j}{2^j}(t),
\end{equation}
where $\eta\in C_0^\infty(B^N(a))$, $\{\vsig_j:j\in \N^\nu\}\subset \schS(\R^N)$ is a bounded set with $\vsig_j\in \schS_{\{\mu:j_\mu\ne 0\}}$.
The convergence in \eqref{EqnDefnsKdeltaSum} is taken in the sense of distributions.  We will see in Lemma \ref{LemmaKerConvInDist} that every such sum converges in the
sense of distributions.
\end{defn}

Using the dilations, it is possible to assign to each multi-index $\alpha\in \N^N$ a corresponding ``degree'':
\begin{defn}\label{DefnResKerDegree}
Given a multi-index $\alpha\in \N^N$, we define
\begin{equation*}
\deg(\alpha):=\sum_{j=1}^N  \alpha_j e_j\in [0,\infty)^\nu.
\end{equation*}
\end{defn}

\begin{lemma}\label{LemmaResKerDelta}
For any $a>0$, $\delta_0\in \sK_0(N,e,a)$, where $\delta_0$ denotes the Dirac $\delta$ function at $0$.
Moreover, for any $a>0$, $\alpha\in \N^N$, $\partial_t^{\alpha} \delta_0 \in \sK_{\deg(\alpha)}(N,e,a)$.
\end{lemma}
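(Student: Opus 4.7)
My plan for $\delta_0\in \sK_0(N,e,a)$ is to construct a multi-parameter Calder\'on reproducing formula. Choose $\phi\in \schS(\R^N)$ with $\hat\phi\in C_0^\infty(\R^N)$ and $\hat\phi\equiv 1$ on a neighborhood of the origin; then $\phi^{(2^{\vec j})}\to \delta_0$ distributionally as $\vec j\to \infty$ in $\N^\nu$. Apply the $\nu$-fold inclusion--exclusion telescoping: set $S_\mu\phi^{(2^{\vec j})} = \phi^{(2^{\vec j - e^\mu})}$ when $j_\mu\ge 1$ and $S_\mu\phi^{(2^{\vec j})} = 0$ when $j_\mu=0$, and let $A_{\vec j} = \prod_{\mu\in\nuset}(I - S_\mu)\phi^{(2^{\vec j})}$. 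Then $\phi^{(2^{\vec J})} = \sum_{\vec j\le \vec J}A_{\vec j}$ (component-wise ordering), and passing to the limit $\vec J\to\infty$ gives $\delta_0 = \sum_{\vec j\in \N^\nu}A_{\vec j}$ distributionally. Using \eqref{EqnResKerDefFuncDil}, a direct computation rewrites $A_{\vec j} = \vsig_{\vec j}^{(2^{\vec j})}$ with $\vsig_{\vec j} = \sum_{E\subseteq E(\vec j)}(-1)^{|E|}\phi^{(2^{-\mathbf{1}_E})}$, so $\vsig_{\vec j}$ depends on $\vec j$ only through $E(\vec j)=\{\mu:j_\mu\ne 0\}$; in particular the family $\{\vsig_{\vec j}\}$ takes at most $2^\nu$ distinct values in $\schS(\R^N)$, hence is bounded. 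Multiplying by a cutoff $\eta\in C_0^\infty(B^N(a))$ with $\eta\equiv 1$ near the origin, and using $\eta\delta_0=\delta_0$, then yields the required representation.

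The main obstacle is verifying $\vsig_{\vec j}\in \schS_{E(\vec j)}$, which in Fourier says $\hat\vsig_{\vec j}$ must vanish to infinite order on each subspace $\{\xi_{V_\mu}=0\}$ for $\mu\in E(\vec j)$, where $V_\mu=\{k:e_k^\mu\ne 0\}$. In the general multi-parameter case the subspaces $\{\xi_{V_\mu}=0\}$ for different $\mu$ intersect nontrivially, so no generic choice of $\phi$ provides the desired cancellation across all relevant subspaces. The resolution is to choose $\hat\phi$ with a product-type structure compatible with the multi-parameter dilation---for instance $\hat\phi(\xi)=\prod_{\mu}\chi_\mu(\xi_{V_\mu})$ for suitable $\chi_\mu\in C_0^\infty$ equal to $1$ near the origin---and to exploit the fact that the dilation $2^{e^\mu}$ acts as the identity on $\{\xi_{V_\mu}=0\}$, which forces each finite difference $\hat\phi(\xi)-\hat\phi(2^{e^\mu}\xi)$ to vanish on a neighborhood of this subspace. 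The detailed bookkeeping, and in particular the treatment of overlapping $V_\mu$'s, uses the multi-parameter Littlewood--Paley framework developed in \cite{StreetMultiParamSingInt}.

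For the second claim $\partial_t^\alpha\delta_0\in \sK_{\deg(\alpha)}(N,e,a)$, I apply $\partial_t^\alpha$ to the decomposition above. The chain rule applied to \eqref{EqnResKerDefFuncDil} gives the scaling identity
\begin{equation*}
\partial_t^\alpha\bigl[\vsig^{(2^{\vec j})}\bigr] = 2^{\vec j\cdot \deg(\alpha)}(\partial_t^\alpha\vsig)^{(2^{\vec j})},
\end{equation*}
and integration by parts shows $\partial_t^\alpha$ preserves each class $\schS_E$, so $\{\partial_t^\alpha\vsig_{\vec j}\}$ is bounded in $\schS$ with $\partial_t^\alpha\vsig_{\vec j}\in \schS_{E(\vec j)}$. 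Leibniz cross-terms in $\partial_t^\alpha(\eta\sum_{\vec j}\vsig_{\vec j}^{(2^{\vec j})})$ carrying derivatives on $\eta$ are eliminated by first multiplying the identity by a smaller cutoff $\chi\in C_0^\infty(\R^N)$ supported in $\{\eta\equiv 1\}$: since $\partial_t^\alpha\delta_0$ is supported at the origin, these cross-terms contribute nothing. What remains is $(\chi\eta)\sum_{\vec j}2^{\vec j\cdot \deg(\alpha)}(\partial_t^\alpha\vsig_{\vec j})^{(2^{\vec j})}$, which exhibits $\partial_t^\alpha\delta_0$ as an element of $\sK_{\deg(\alpha)}$ with cutoff $\chi\eta\in C_0^\infty(B^N(a))$.
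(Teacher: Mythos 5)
Your decomposition is the same one the paper uses: the paper constructs $\psih_j(\xi) = \sum_{p\in\{0,1\}^\nu,\, p\le j}(-1)^{|p|}\phih(2^p\xi)$, which is exactly the Fourier transform of your $\vsig_{\vec j}$ written via inclusion--exclusion. You are right to flag the verification that $\vsig_{\vec j}\in\schS_{E(\vec j)}$ as the genuine issue when the blocks $t_\mu$ overlap -- the paper's prose glosses over it. But the fix you propose does not close the gap.

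First, the logical step ``$2^{e^\mu}$ acts as the identity on $\{\xi_{V_\mu}=0\}$, which forces each finite difference $\hat\phi(\xi)-\hat\phi(2^{e^\mu}\xi)$ to vanish on a neighborhood of this subspace'' is a non-sequitur: vanishing \emph{on} a subspace does not give vanishing on a \emph{neighborhood}, and $\schS_E$ requires all $\xi_\mu$-derivatives to vanish along the whole slice $\{\xi_\mu=0\}$, not just the function value. Second, your proposed $\hat\phi(\xi)=\prod_\mu\chi_\mu(\xi_{V_\mu})$ is not sufficient in the overlapping case. Take $N=\nu=2$, $e_1=(1,1)$, $e_2=(0,1)$, so $V_1=\{1\}$, $V_2=\{1,2\}$. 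Then $\hat\phi=\chi_1(\xi_1)\chi_2(\xi_1,\xi_2)$, and for $j=(1,0)$,
\begin{equation*}
\psih_{(1,0)}(\xi)=\hat\phi(\xi_1,\xi_2)-\hat\phi(2\xi_1,\xi_2)=\chi_2(\xi_1,\xi_2)-\chi_2(2\xi_1,\xi_2)\quad\text{for }|\xi_1|\text{ small},
\end{equation*}
which is generically nonzero for $\xi_1\neq 0$ small and $\xi_2$ in the range where $\chi_2$ varies, so $\partial_{\xi_1}^k\psih_{(1,0)}(0,\xi_2)\ne 0$ for generic $\chi_2$ and $k\ge 2$. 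What actually works is the \emph{full} tensor product over the $t$-coordinates, $\hat\phi(\xi)=\prod_{k=1}^N\chi_k(\xi_k)$, one factor per coordinate rather than per parameter. Then for $j_\mu>0$,
\begin{equation*}
\hat\phi(2^p\xi)-\hat\phi(2^{p+e_\mu}\xi)=\Bigl[\prod_{k\notin V_\mu}\chi_k(2^{p\cdot e_k}\xi_k)\Bigr]\Bigl[\prod_{k\in V_\mu}\chi_k(2^{p\cdot e_k}\xi_k)-\prod_{k\in V_\mu}\chi_k(2^{(p+e_\mu)\cdot e_k}\xi_k)\Bigr],
\end{equation*}
and the second bracket vanishes identically on a full neighborhood of $\{\xi_\mu=0\}$ since each $\chi_k\equiv 1$ near $\xi_k=0$; this is what gives infinite-order vanishing and hence $\vsig_{\vec j}\in\schS_{E(\vec j)}$. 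Your treatment of $\partial_t^\alpha\delta_0$ (pulling out the scaling factor $2^{\vec j\cdot\deg(\alpha)}$, noting $\partial_t^\alpha$ preserves $\schS_E$, and killing the Leibniz cross-terms by a cutoff supported where $\eta\equiv 1$) is correct and is the same argument as the paper's, which kills the cross-terms directly by observing $\partial_t^{\beta_1}\eta\equiv 0$ near the origin.
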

\begin{proof}This is proved in Section \ref{SectionSchwartzAndProd}.\end{proof}

When some of the coordinates of $\delta$ are strictly negative, there is an a priori slightly weaker definition for $\sK_{\delta}$ which turns out to be equivalent.
This is presented in the next proposition.
\begin{prop}\label{PropResKerOtherDefn}
Suppose $\delta\in \R^{\nu}$, $a>0$, $\eta\in C_0^{\infty}(B^N(a))$, and $\{\vsig_j : j\in \N^{\nu}\}\subset \schS(\R^N)$ is a bounded
set with $\vsig_j\in \schS_{\{\mu : j_\mu\ne 0, \delta_\mu\geq 0\}}$.  Then the sum
\begin{equation*}
K(t):=\eta(t)\sum_{j\in \N^{\nu}} 2^{j\cdot \delta}\dil{\vsig_j}{2^j}(t),
\end{equation*}
converges in distribution and $K\in \sK_{\delta}(N,e,a)$.
\end{prop}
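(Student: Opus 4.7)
The plan is an induction on $|E_-|$, where $E_- := \{\mu : \delta_\mu < 0\}$; the base case $E_- = \emptyset$ is exactly Definition \ref{DefnKer}. For the inductive step, fix $\mu_0 \in E_-$ and aim to re-express the sum so that $\vsig_j$ additionally satisfies cancellation in direction $\mu_0$ whenever $j_{\mu_0} \ne 0$, while preserving all cancellations in the other directions. The inductive hypothesis applied with $\mu_0$ removed from the ``missing cancellation'' set then finishes the proof.

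To gain the $\mu_0$-cancellation I use a dyadic Littlewood--Paley decomposition in $t_{\mu_0}$. Fix $\phi \in \schS(\R^{|t_{\mu_0}|})$ with $\hat\phi$ compactly supported and equal to $1$ near the origin, and set $\psi := \phi - D_{\mu_0}^{-1}\phi$, where $D_{\mu_0}$ denotes dilation by $2^{e_{\mu_0}}$. Then $\hat\psi$ is supported in an annulus disjoint from $\xi_{\mu_0} = 0$, so $\psi \in \schS_{\{\mu_0\}}$, and one has the distributional identity $\delta(t_{\mu_0}) = \phi + \sum_{k \geq 0} D_{\mu_0}^k \psi$. Convolution in $t_{\mu_0}$ then decomposes $\vsig_j = \phi *_{t_{\mu_0}} \vsig_j + \sum_{k \geq 0}(D_{\mu_0}^k \psi) *_{t_{\mu_0}} \vsig_j$, with each Littlewood--Paley piece lying in $\schS_{\{\mu_0\}}$ and inheriting (via a full-Fourier check) the other cancellations of $\vsig_j$.

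The Littlewood--Paley contribution $\sum_j \sum_k 2^{j \cdot \delta} \dil{(D_{\mu_0}^k \psi) *_{t_{\mu_0}} \vsig_j}{2^j}$ is repackaged through the change of index $l := j + k e_{\mu_0}$, combined with the partial-convolution/dilation identity $\dil{f *_{t_{\mu_0}} g}{2^l} = \dil{f}{2^l} *_{t_{\mu_0}} \dil{g}{2^l}$, giving $\sum_l 2^{l \cdot \delta} \dil{\tilde\vsig_l}{2^l}$ with $\tilde\vsig_l := \psi *_{t_{\mu_0}} \sum_{k=0}^{l_{\mu_0}} 2^{-k \delta_{\mu_0}} D_{\mu_0}^{-k} \vsig_{l - k e_{\mu_0}}$. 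Uniform Schwartz bounds on $\{\tilde\vsig_l\}$ follow from the cancellation of $\psi$: Taylor expanding the slowly-varying $D_{\mu_0}^{-k} \vsig_{l - k e_{\mu_0}}$ against $\psi \in \schS_{\{\mu_0\}}$ produces arbitrary polynomial decay in $k$, enough (together with $\delta_{\mu_0} < 0$) to dominate the growth of $2^{-k\delta_{\mu_0}}$ and of the Schwartz seminorms of $D_{\mu_0}^{-k}$.

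The main obstacle is the low-frequency contribution $\sum_j 2^{j \cdot \delta} \dil{\phi *_{t_{\mu_0}} \vsig_j}{2^j}$, which does \emph{not} have cancellation in $\mu_0$. The terms with $j_{\mu_0} = 0$ are kept as-is, since no $\mu_0$-cancellation is required there; the terms with $j_{\mu_0} \geq 1$ are rewritten by iterating the Littlewood--Paley splitting in conjunction with an Abel summation-by-parts in $j_{\mu_0}$, exploiting the geometric summability of $\sum_{k \geq 0} 2^{k\delta_{\mu_0}}$ to transfer the missing cancellation onto successively lower-frequency residues until the residual boundary piece can be absorbed into the $j_{\mu_0} = 0$ summand. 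Convergence of all rearranged sums in the distributional sense is supplied by Lemma \ref{LemmaKerConvInDist}.
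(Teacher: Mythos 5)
Your overall strategy matches the paper's: induct on the number of directions lacking cancellation and, in the inductive step, splice in a dyadic decomposition of $\delta_0$ in the $t_{\mu_0}$ variable, then regroup. However, the place where you perform the Littlewood--Paley splitting is different from the paper, and this choice shifts essentially all of the analytic content into the step you yourself flag as ``the main obstacle.''

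You decompose $\vsig_j = \phi *_{t_{\mu_0}} \vsig_j + \sum_{k\ge 0}(D_{\mu_0}^k\psi)*_{t_{\mu_0}}\vsig_j$ \emph{before} dilating by $2^j$. After dilating, the low-pass piece $\dil{\phi*\vsig_j}{2^j} = \dil{\phi}{2^{j_{\mu_0}}}*\dil{\vsig_j}{2^j}$ has $t_{\mu_0}$-frequency up to $\sim 2^{j_{\mu_0}}$, i.e.\ it carries essentially the entire mass of $\dil{\vsig_j}{2^j}$; the pieces $\dil{(D_{\mu_0}^k\psi)*\vsig_j}{2^j}$ live at frequencies $\sim 2^{j_{\mu_0}+k}$, well beyond the bulk of $\hat\vsig_j$, and are thus rapidly decaying tails. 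Your reindexing $l = j + ke_{\mu_0}$ therefore pushes a negligible remainder \emph{up} in frequency; the Taylor-expansion estimate you give for $\{\tilde\vsig_l\}$ is plausible (one must take the Taylor order $M$ large enough to dominate both $2^{-k\delta_{\mu_0}}$ and the growth of the weighted-sup Schwartz seminorms from the $D_{\mu_0}^{-k}$ stretch, but the vanishing moments of $\psi$ indeed allow arbitrary $M$). Still, this first paragraph is doing almost no work.

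By contrast, the paper applies the dyadic decomposition to the already-dilated delta function $\delta_0(t_{\nu}) = \sum_{k\ge 0}\dil{\psi_{k,\nu}}{2^k}$, at scales independent of $j$, and then regroups by $m = j_{\nu}\wedge k$. With that choice, the low-pass piece $\psi_{0,\nu}=\phi_{\nu}$ enters only at $k=0$, hence contributes only to the level $m=0$ where no $\nu$-cancellation is required; meanwhile both the ``pull up'' ($j_\nu = m < k$) and ``pull down'' ($k = m < l$) contributions are handled simultaneously by the technical Lemma~\ref{LemmaPfKerOtherConvSum}, whose content is precisely the uniform Schwartz bound for the resummed pieces, driven by the geometric factor $2^{(l-m)\delta_{\nu}}$ with $\delta_{\nu}<0$.

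The gap in your proposal is in the third paragraph, where the real work lives. To make ``iterating the Littlewood--Paley splitting with an Abel summation in $j_{\mu_0}$'' rigorous, you would have to expand $\dil{\phi}{2^{j_{\mu_0}}} = \dil{\phi}{2^0} + \sum_{m=1}^{j_{\mu_0}}\dil{\psi}{2^m}$, swap the order of summation, regroup on $m$, and then prove that for each $m$ the series
$\psi_m * \sum_{j_{\mu_0}\ge m}2^{(j_{\mu_0}-m)\delta_{\mu_0}}\dil{\vsig_j}{2^{j_{\mu_0}-m}}$
(with $\psi_0=\phi$) converges in $\schS(\R^N)$ with seminorms uniform in $m$ and in the bounded family $\{\vsig_j\}$. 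That estimate is nontrivial: although the geometric weight $2^{(j_{\mu_0}-m)\delta_{\mu_0}}$ is summable, one must also control the Schwartz norms of $\psi_m * \dil{\vsig_j}{2^{j_{\mu_0}-m}}$ uniformly in the dilation gap --- this is exactly what the paper's Lemma~\ref{LemmaPfKerOtherConvSum} (the $\vsigt_{j,1}$ estimate) establishes, and it is not a consequence of ``geometric summability'' alone. Your sketch gestures at this but does not supply the estimate, so the proof is incomplete precisely where the paper puts its effort. A small secondary slip: for convergence of the original series (before cancellations are gained) you should cite Lemma~\ref{LemmaPfKerOtherConvg}, not Lemma~\ref{LemmaKerConvInDist}, since the hypotheses give cancellation only in directions with $\delta_\mu\ge 0$.

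In short: sound architecture, but the key Schwartz-seminorm lemma for the low-frequency regrouping is asserted rather than proved, and your choice of where to place the LP decomposition makes that gap carry the entire weight of the argument. If you redo the LP splitting at fixed dyadic scales (independent of $j$) and group by $j_{\mu_0}\wedge k$, the two tails become symmetric and you arrive naturally at the paper's Lemma~\ref{LemmaPfKerOtherConvSum}, which you must then prove.
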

\begin{proof}This is proved in Section \ref{SectionPfKerOther}.\end{proof}

\subsection{Product Kernels}\label{SectionResProdKer}
Definition \ref{DefnKer} is extrinsic, and in general we do not know of a simple intrinsic characterization of the kernels in $\sK_\delta$.
However, under the {\bf additional assumption} that each $e_j$ is nonzero in precisely one component, these kernels are the standard
product kernels.  Product kernels were introduced by R. Fefferman and Stein \cite{FeffermanSingularIntegralsOnProductDomains,FeffermanSteinSingularIntegralsOnProductSpaces} and studied by several authors; there are
too many to list here, but some influential papers include \cite{JourneCalderonZygmundOperatorsOnProductSpaces, FeffermanHarmonicAnalysisOnProductSpaces, ChangFeffermanSomeRecentDevelopmentsInFourierAnalysisAndHpTheory, FeffermanSomeRecentDevelopmentsInFourierAnalysisII, RicciSteinMultiparameterSingularIntegralsAndMaximalFunctions, MullerRicciSteinMarcinkiewiczMulipliersAndMulitparameterStructure}.  The definitions here follow 
ideas of Nagel, Ricci, and Stein \cite{NagelRicciSteinSingularIntegralsWithFlagKernels} and are taken from \cite{StreetMultiParamSingInt}.  We now turn to presenting the relevant definitions for this concept.  For the rest of this section,
we assume the following.

\begin{assumption}\label{AssumpKerProduct}
Each $0\ne e_j\in [0,\infty)^\nu$ is nonzero in precisely one component.  I.e., $e_j^\mu\ne 0$ for precisely one $\mu\in \nuset$.
\end{assumption}

\begin{rmk}
It is only in the following intrinsic characterization of $\sK$ that we need Assumption \ref{AssumpKerProduct}--for the rest of the results in this paper,
it is not used.
\end{rmk}

Define $t_\mu$ as before; i.e., $t_\mu$ is the vector consisting of those coordinates $t_j$ of $t$ such that $e_j^\mu\ne 0$.  Let $N_\mu$ denote
the number of coordinates in $t_\mu$.  Because of Assumption \ref{AssumpKerProduct}, this decomposes
$t=(t_1,\ldots, t_\nu)\in \R^{N_1}\times \cdots\times \R^{N_\nu}=\R^N$.

For each $\mu$, we obtain single parameter dilations on $\R^{N_\mu}$.  Indeed, we write $t_\mu=(t_\mu^1,\ldots, t_\mu^{N_\mu})$.
If the coordinate $t_\mu^k$ corresponds to the coordinate $t_{j_{\mu,k}}$ of $t$, then we write $h_\mu^k:=e_{j_{\mu,k}}^{\mu}$--the $\mu$th
component of $e_{j_{\mu,k}}$ which is nonzero by assumption.  We define, for $\delta_\mu\geq 0$,
\begin{equation*}
\delta_\mu t_\mu:= \q(\delta_\mu^{h_\mu^1} t_\mu^1,\ldots, \delta_\mu^{h_\mu^{N_\mu}} t_{\mu}^{N_\mu}\w).
\end{equation*}
In short, these dilations are defined so that if $\delta=(\delta_1,\ldots, \delta_\nu)$, then
\begin{equation*}
\delta t = (\delta_1 t_1,\ldots, \delta_\nu t_\nu).
\end{equation*}
Let $Q_\mu=h_\mu^1+\cdots+h_{\mu}^{N_{\mu}}$.  $Q_\mu$ is called the ``homogeneous dimension'' of $\R^{N_\mu}$ under these dilations.

For $t_\mu\in \R^{N_\mu}$, we write $\|t_\mu\|$ for a choice of a smooth homogenous norm on $\R^{N_\mu}$.  I.e., $\|t_\mu\|$ is smooth
away from $t_\mu=0$ and satisfies $\|\delta_\mu t_\mu\|= \delta_\mu\|t_\mu\|$, and $\|t_\mu\|\geq 0$ with $\|t_\mu\|=0\Leftrightarrow t_\mu=0$.
Any two choices for this homogenous norm are equivalent for our purposes.  For instance, we can take
\begin{equation}\label{EqnKerHomogNorm}
\|t_\mu\| = \q( \sum_{l=1}^{N_\mu} \q|t_\mu^l\w|^{\frac{2(N_\mu!) }{ h_\mu^l } }  \w)^{\frac{ 1}{2(N_\mu! ) }}.
\end{equation}

\begin{defn}
The space of product kernels of order $m=(m_1,\ldots, m_\nu)\in(-Q_1,\infty)\times \cdots\times(-Q_\nu,\infty)$ is a locally convex topological vector
space made of distributions $K(t_1,\ldots, t_\nu)\in C_0^\infty(\R^N)'$.  The space is defined recursively.  For $\nu=0$ it is defined to be $\C$,
with the usual topology.  We assume that we have defined the locally convex topological vector spaces of product kernels up to
$\nu-1$ factors, and we define it for $\nu$ factors.  The space of product kernels is the space of distribution $K\in C_0^\infty(\R^N)'$
such that the following two types of semi-norms are finite:
\begin{enumerate}[(i)]
\item (Growth Condition) For each multi-index $\alpha=(\alpha_1,\ldots, \alpha_\nu)\in \N^{N_1}\times \cdots \times \N^{N_\nu}=\N^N$
we assume there is a constant $C=C(\alpha)$ such that
\begin{equation*}
\q|\partial_{t_1}^{\alpha_1}\cdots \partial_{t_\nu}^{\alpha_\nu} K(t_1,\ldots, t_\nu) \w|\leq C \| t_1\|^{-Q_1-m_1-\alpha_1\cdot h_1}\cdots \|t_\nu\|^{-Q_\nu-m_\nu-\alpha_\nu\cdot h_\nu}.
\end{equation*}
We define a semi-norm to be the least possible $C$.  Here $\|t_\mu\|$ is the homogenous norm on $\R^{N_\mu}$ as defined in \eqref{EqnKerHomogNorm}.

\item (Cancellation Condition) Given $1\leq \mu\leq \nu$, $R>0$, and a bounded set $\sB\subset C_0^\infty(\R^{N_\mu})$ for $\phi\in \sB$ we define
\begin{equation*}
K_{\phi,R} (t_1,\ldots, t_{\mu-1},t_{\mu+1},\ldots, t_\nu):= R^{-m_\mu}\int K(t) \phi(Rt_\mu)\: dt_\mu,
\end{equation*}
where $Rt_\mu$ is defined by the single parameter dilations on $\R^{N_\mu}$.  This defines a distribution
\begin{equation*}
K_{\phi,R}\in C_0^\infty\q(\R^{N_1}\times \cdots\times \R^{N_{\mu-1}}\times \R^{N_{\mu+1}}\times \cdots\times \R^{N_\nu}\w)'
\end{equation*}
We assume that this distribution is a product kernel of order 
$$(m_1,\ldots, m_{\mu-1},m_{\mu+1},\ldots, m_\nu).$$  Let $\|\cdot\|$ be a continuous
semi-norm on the space of $\nu-1$ factor product kernels of order $$(m_1,\ldots, m_{\mu-1},m_{\mu+1},\ldots, m_\nu).$$  We define a semi-norm
on $\nu$ factor product kennels of order $m$ by $\|K\|:=\sup_{\phi\in \sB, R>0} \|K_{\phi,R}\|$, which we assume to be finite.
\end{enumerate}
We give the space of product kernels of order $m$ the coarsest topology such that all of the above semi-norms are continuous.
\end{defn}

\begin{prop}\label{PropKerEquivToProd}
Fix $a>0$ and $m\in(-Q_1,\infty)\times\cdots\times (-Q_\nu,\infty)$.  
If $K$ is a product kernel of order $m$ and $\supp{K}\subset B^N(a)$, then $K\in \sK_m$.
\end{prop}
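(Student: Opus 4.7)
The plan is to dyadically decompose $K$ using a Littlewood--Paley resolution of $\delta_0$ respecting the product structure of $\R^N=\R^{N_1}\times\cdots\times\R^{N_\nu}$, verify that each dyadic piece is a Schwartz-normalized bump at the right scale with the needed moment conditions, and then absorb a smooth cutoff supported in $B^N(a)$.

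First I build the resolution. For each $\mu\in\{1,\ldots,\nu\}$ I select $\phi_\mu\in\schS(\R^{N_\mu})$ with $\int \phi_\mu=1$ and $\psi_\mu\in\schS(\R^{N_\mu})$ with $\int \psi_\mu(t_\mu)\,t_\mu^{\alpha_\mu}\,dt_\mu=0$ for every multi-index $\alpha_\mu$---e.g., $\widehat{\phi_\mu}$ identically $1$ near $0$ and supported in a ball, and $\psi_\mu$ the resulting dyadic Littlewood--Paley difference, so that $\widehat{\psi_\mu}$ is flat to infinite order at the origin---arranged so that
$$\delta_0^{(\mu)} = \phi_\mu + \sum_{k\geq 1} \dil{\psi_\mu}{2^k}$$
in $\schS'(\R^{N_\mu})$. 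Taking the tensor product over $\mu$ gives $\delta_0=\sum_{j\in\N^\nu}\Phi_j$ on $\R^N$, where $\Phi_j:=\bigotimes_\mu \eta_{\mu,j_\mu}$ with $\eta_{\mu,0}=\phi_\mu$ and $\eta_{\mu,k}=\dil{\psi_\mu}{2^k}$ for $k\geq 1$. Setting $K_j:=K*\Phi_j$, the compact support of $K$ makes $K=\sum_j K_j$ converge in $\schS'(\R^N)$.

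The main step is to verify that $K_j(t)=2^{j\cdot m}\dil{\vsig_j}{2^j}(t)$ for a bounded family $\{\vsig_j\}\subset\schS(\R^N)$ with $\vsig_j\in\schS_{\{\mu:j_\mu\neq 0\}}$. The moment condition on $\vsig_j$ is automatic: whenever $j_\mu\geq 1$, $\widehat{K_j}$ carries the factor $\widehat{\psi_\mu}(2^{-j_\mu}\xi_\mu)$, which is flat to infinite order at $\xi_\mu=0$, so $K_j$ has all $t_\mu$-moments vanishing; by Assumption \ref{AssumpKerProduct} the dilation $(\cdot)^{(2^j)}$ respects the product decomposition and this property passes to $\vsig_j$. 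The uniform Schwartz-seminorm bounds are the real content, and I prove them by induction on $\nu$. For $\nu=1$ this reduces to the classical one-parameter Littlewood--Paley estimate for a fractional Calder\'on--Zygmund kernel: the growth bound on $K$ handles $\|t\|\gtrsim 2^{-j_1}$ after integration by parts against derivatives of $\Phi_j$, while the cancellation condition is needed to absorb the contribution of $\|t\|\lesssim 2^{-j_1}$ when $m_1\geq 0$. For the inductive step I convolve out one factor at a time: when $j_\mu=0$ I invoke the product-kernel cancellation condition (ii), which says $K_{\phi_\mu,R}$ is itself a product kernel in $\nu-1$ factors with order obtained by deleting the $\mu$-th coordinate of $m$, and apply the inductive hypothesis; when $j_\mu\geq 1$ the moment-vanishing of $\psi_\mu$ provides the analogue of the one-parameter Littlewood--Paley estimate in the $t_\mu$ direction, and the remaining factors are handled by induction.

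Finally, since $\supp{K}$ is compact in the open ball $B^N(a)$, I pick $\eta\in C_0^{\infty}(B^N(a))$ with $\eta\equiv 1$ on a neighborhood of $\supp{K}$, so that
$$K=\eta K=\eta\sum_j K_j=\eta(t)\sum_{j\in\N^\nu} 2^{j\cdot m}\dil{\vsig_j}{2^j}(t),$$
which is exactly the form required by Definition \ref{DefnKer}, giving $K\in\sK_m(N,e,a)$. The principal obstacle is the inductive main step: the growth condition alone cannot control the averages of $K$ against $\phi_\mu$ in the $m_\mu\geq 0$ range because of the singularity on $\{t_\mu=0\}$, and it is precisely the cancellation condition (ii) that produces the clean $(\nu-1)$-factor reduction; keeping the Schwartz seminorms uniform in $j\in\N^\nu$ across all $\nu$ factors is the technical heart of the argument.
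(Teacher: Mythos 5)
The paper's own ``proof'' of this proposition is a citation to Proposition 5.2.14 of \cite{StreetMultiParamSingInt}, so there is no internal argument to compare against. Your outline --- a tensor-product Littlewood--Paley resolution $\delta_0=\sum_j\Phi_j$, $K_j:=K*\Phi_j$, Fourier-side moment vanishing for $j_\mu\geq 1$, uniform Schwartz bounds by induction on $\nu$, and a cutoff $\eta$ placed outside the sum at the end --- is the standard route and almost certainly matches the cited argument in spirit. Your handling of the moment condition is correct: under Assumption \ref{AssumpKerProduct} the $t_\mu$ variables do not overlap, so vanishing of $t_\mu$-moments of $K_j$ survives the anisotropic rescaling, and $\vsig_j\in\schS_{\{\mu:j_\mu\neq 0\}}$ follows from Lemma \ref{LemmaKerFTOfsS}. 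The final cutoff step is also fine, since Definition \ref{DefnKer} only asks that $\{\vsig_j\}$ be bounded in $\schS(\R^N)$, not compactly supported.

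The gap is in the uniform Schwartz bounds, and there is one concrete place where your argument would fail as written. You chose $\widehat{\phi_\mu}$ compactly supported precisely so that $\widehat{\psi_\mu}$ vanishes identically near the origin and all moments of $\psi_\mu$ vanish; but then $\phi_\mu$ and $\psi_\mu$ are Schwartz and \emph{not} compactly supported. The cancellation condition (ii) in the definition of a product kernel --- the tool you invoke at every $j_\mu=0$ step to drop the $\mu$-th factor --- is formulated only for $\phi$ in a bounded subset of $C_0^\infty(\R^{N_\mu})$, and it does not apply directly to $\phi_\mu$ or to the translates $\psi_\mu(t_\mu-\cdot)$ that appear when you unwind $\vsig_j$. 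This is repairable (decompose $\phi_\mu=\sum_{k\geq 0}\phi_{\mu,k}$ with $\phi_{\mu,k}$ supported in an annulus $\|s_\mu\|\sim 2^k$ and $C^M$-seminorms $O_L(2^{-kL})$, write each $\phi_{\mu,k}$ as a $2^k$-dilate of a fixed-support bump, apply the cancellation condition at the rescaled $R$, and sum the resulting geometric series, which converges precisely because $m_\mu>-Q_\mu$), but it is a genuine missing step, not cosmetic. Beyond that, you assert the inductive verification of the seminorm bounds without carrying it out: already at $\nu=1$ one needs the split between $\|u\|\lesssim 1$ (cancellation) and $1\lesssim\|u\|\leq 2^{j}a$ (growth estimate together with the compact support of $K$), plus the moment vanishing of $\psi$, and none of this --- nor the bookkeeping that propagates the remaining $\nu-1$ factors at each reduction --- appears in your text. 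As written this is a correct roadmap rather than a proof.
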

\begin{proof}
This is a restatement of Proposition 5.2.14 of \cite{StreetMultiParamSingInt}.
\end{proof}

Thus Proposition \ref{PropKerEquivToProd} shows that, under Assumption \ref{AssumpKerProduct}, the kernels in $\sK_m$ are closely related to the standard
product kernels (at least if the coordinates of $m$ are not too negative).  See \cite{StreetMultiParamSingInt} for several generalizations of this type of result;
for example a similar result concerning flag kernels can be found in Proposition 4.2.22 and Lemma 4.2.24 of \cite{StreetMultiParamSingInt}.

\section{Definitions: Vector fields}
Before we can define the class of $\gamma$ for which we study operators of the form \eqref{EqnIntroMultiOp}, we must introduce the relevant definitions
and notation for Carnot-Carath\'eodory geometry.  
For further details on these topics, we refer the reader to \cite{StreetMultiParameterCCBalls}.

For this section, let $\Omega\subseteq \R^n$ be a fixed open set.  And let $\Gamma(T\Omega)$ denote
the space of smooth vector fields on $\Omega$.  Also, fix $\nu\in \N$, $\nu\geq 1$.

\begin{defn}
Let $X_1,\ldots, X_q$ be $C^\infty$ vector fields on $\Omega$.  We define the Carnot-Carath\'eodory ball of unit radius, centered at $x_0\in \Omega$,
with respect to the finite set $X=\{X_1,\ldots, X_q\}$ by
\begin{equation*}
\begin{split}
B_X(x_0):=\Bigg\{ y\in \Omega \:\Big|\:& \exists \gamma:[0,1]\rightarrow \Omega, \gamma(0)=x_0, \gamma(1)=y,
\\&\gamma'(t)=\sum_{j=1}^q a_j(t) X_j(\gamma(t)), a_j\in L^\infty([0,1]),
\\&\q\| \q( \sum_{1\leq j\leq q} |a_j|^2 \w)^{\frac{1}{2}} \w\|_{L^\infty([0,1])}<1
\Bigg\}.
\end{split}
\end{equation*}
In the above, we have written $\gamma'(t)=Z(t)$ to mean $\gamma(t)=\gamma(0)+\int_0^t Z(s)\: ds$.
\end{defn}

\begin{defn}\label{DefnResCCBall}
Let $X_1,\ldots, X_q$ be $C^\infty$ vector fields on $\Omega$, and to each vector field $X_j$ assign a multi-parameter formal degree $0\ne d_j\in [0,\infty)^\nu$.
Write $(X,d)=\{(X_1,d_1),\ldots, (X_q,d_q)\}$.  For $\delta\in [0,\infty)^\nu$ define the set of vector fields $\delta X:=\{\delta^{d_1} X_1,\ldots, \delta^{d_q} X_q \}$,
and for $x_0\in \Omega$, define the multi-parameter {\bf Carnot-Carath\'eodory ball}, centered at $x_0$ of ``radius'' $\delta$ by
\begin{equation*}
\B{X}{d}{x_0}{\delta}:=B_{\delta X}(x_0).
\end{equation*}
\end{defn}

Whenever we have a finite set of vector fields with $\nu$-parameter formal degrees:
$$(X,d)=\{(X_1,d_1),\ldots, (X_q,d_q)\}\subset \snuvect,$$ for $\delta\in [0,\infty)^\nu$
we write $\delta X$ to denote the set $\{\delta^{d_1}X_1,\ldots, \delta^{d_q} X_q\}$.  In addition, we identify this with an ordered list
$\delta X=(\delta^{d_1} X_1,\ldots, \delta^{d_q} X_q)$ (the particular order does not matter for our purposes).  In what follows, we use ordered multi-index notation.  If $\alpha$ is a list of elements
of $\{1,\ldots, q\}$, then we may define $(\delta X)^{\alpha}$, and we denote by $|\alpha|$ the length of the list.
For instance, $(\delta X)^{(1,3,2,1)}= \delta^{d_1} X_1 \delta^{d_3} X_3 \delta^{d_2} X_2 \delta^{d_1} X_1$ and $|(1,3,2,1)|=4$.

In what follows, let $\Omega'\Subset \Omega$ be an open relatively compact subset of $\Omega$,
$\nuh\in \N$ with $\nuh\geq 1$, and $\lambda$ a $\nuh\times \nu$ matrix whose entries are in $[0,\infty]$.

\begin{defn}
Let $(X,d)=\{(X_1,d_1),\ldots, (X_q,d_q)\}\subset \snuvect$ be a finite set consisting of $C^\infty$ vector fields on $\Omega$, $X_j$, each paired with a $\nu$-parameter formal degree $0\ne d_j\in [0,\infty)^\nu$.
Let $X_0$ be another $C^\infty$ vector field on $\Omega$, and let $h:[0,1]^{\nu}\rightarrow [0,1]$ be a function.  We say $(X,d)$ {\bf controls}
$(X_0,h)$ on $\Omega'$ if there exists $\tau_1>0$ such that for every $\delta\in [0,1]^\nu$, $x\in \Omega'$, there exists
$c_{x,j}^{\delta}\in C^0(\B{X}{d}{x}{\tau_1 \delta})$ ($1\leq j\leq q$) such that,
\begin{itemize}
\item $h(\delta) X_0 = \sum_{j=1}^q c_{x,j}^\delta \delta^{d_j} X_j, \text{ on }\B{X}{d}{x}{\tau_1\delta}.$
\item $\sup_{\substack{\delta\in [0,1]^\nu \\ x\in \Omega'}} \sum_{|\alpha|\leq m} \| (\delta X)^{\alpha} c_{x,j}^\delta \|_{C^0(\B{X}{d}{x}{\tau_1\delta} )}<\infty$, for every $m\in \N$.\footnote{For an arbitrary set $U\subseteq \R^n$, we define $\|f\|_{C^0(U)}=\sup_{y\in U} |f(y)|$, and if we say $\|f\|_{C^0(U)}<\infty$, we mean that this norm is finite and
that $f\big|_U:U\rightarrow \C$ is continuous.}
\end{itemize}
\end{defn}

\begin{defn}\label{DefnResCCControl}
Let $\sF\subset \snuvect$ be a finite set consisting of $C^\infty$ vector fields on $\Omega$, each paired with a $\nu$-parameter formal degree.
 Let $(\Xh,\hd)\in \snuhvect$ be another $C^\infty$
vector field with a $\nuh$-parameter formal degree.  We say $\sF$ $\lambda$\textbf{-controls} $(\Xh,\hd)$ on $\Omega'$ if the following holds.
Define a function $h_{\hd,\lambda}:[0,1]^{\nu}\rightarrow [0,1]$ by
\begin{equation}\label{EqnResCCDefnHlambdad}
h_{\hd,\lambda}(\delta) := \delta^{\lambda^{t}(\hd)}.
\end{equation}
Here we use the conventions that $\infty\cdot 0=0$ and $1^{\infty}=0$.
We assume $\sF$ controls $(\Xh,h_{\hd,\lambda})$ on $\Omega'$.
When $\nuh=\nu$, we say $\sF$ \textbf{controls} $(\Xh,\hd)$ on $\Omega'$ if $\sF$ $I$-controls $(\Xh,\hd)$ on $\Omega'$, where $I$ denotes the identity matrix.
\end{defn}

The notion of control is a natural one for our purposes.  In many examples, though, it arises from a stronger version, which we call smooth control.

\begin{defn}
Let $(X,d)=\{(X_1,d_1),\ldots, (X_q,d_q)\}\subset \snuvect$.
Let $X_0$ be another $C^\infty$ vector field on $\Omega$, and let $h:[0,1]^\nu\rightarrow [0,1]$.  We
say $(X,d)$ {\bf smoothly controls} $(X_0,h)$ on $\Omega'$ if there exists an open set $\Omega''$ with $\Omega'\Subset \Omega''\Subset \Omega$
such that for each $\delta\in [0,1]^\nu$ there exist functions $c_j^{\delta}\in C^\infty(\Omega'')$ ($1\leq j\leq q$) with
\begin{itemize}
\item $h(\delta) X_0=\sum_{j=1}^q c_j^\delta \delta^{d_j} X_j$, on $\Omega''$.
\item $\{c_j^\delta : \delta\in [0,1]^\nu, j\in \{1,\ldots,q\}\}\subset C^\infty(\Omega'')$ is a bounded set.
\end{itemize}
\end{defn}

\begin{defn}
Let $\sF\subset \snuvect$ be a finite set consisting of $C^\infty$ vector fields on $\Omega$, each paired with a $\nu$-parameter formal degree.
 Let $(\Xh,\hd)\in \snuhvect$ be another $C^\infty$
vector field with a $\nuh$-parameter formal degree.  We say $\sF$ \textbf{smoothly} $\lambda$\textbf{-controls} $(\Xh,\hd)$ on $\Omega'$ if
$\sF$ smoothly controls $(\Xh,h_{\hd,\lambda})$ on $\Omega'$, where $h_{\hd,\lambda}$ is as in \eqref{EqnResCCDefnHlambdad}.
When $\nuh=\nu$, we say $\sF$ \textbf{smoothly controls} $(\Xh,\hd)$ on $\Omega'$ if $\sF$ smoothly $I$-controls $(\Xh,\hd)$ on $\Omega'$,
where $I$ denotes the identity matrix.
\end{defn}


\begin{rmk}\label{RmkResCCSmoothControlControl}
It is clear that if $\sF$ smoothly $\lambda$-controls $(X_0,d_0)$ on $\Omega'$, then $\sF$ $\lambda$-controls $(X_0,d_0)$ on $\Omega'$.
The converse does not hold, even for $\lambda=I$ and $\nu=1$; see Example 5.15 of \cite{StreetMultiParameterCCBalls}.
\end{rmk}

\begin{defn}
Let $\sS\subseteq \snuvect$ be a possibly infinite set
and let $(\Xh,\hd)$ be another $C^\infty$ vector field on $\Omega$ paired with a $\nuh$-parameter formal degree $0\ne \hd\in [0,\infty)^{\nuh}$.
We say $\sS$ $\lambda$-{\bf controls} (resp. \textbf{smoothly} $\lambda$-\textbf{controls}) $(\Xh,\hd)$ on $\Omega'$ if there is a finite subset $\sF\subseteq \sS$ such that $\sF$ $\lambda$-controls (resp. smoothly $\lambda$-controls) $(\Xh,\hd)$ on $\Omega'$.
When $\nuh=\nu$, we say $\sS$ \textbf{controls} (resp. \textbf{smoothly controls}) $(\Xh,\hd)$ on $\Omega'$ if
$\sS$ $I$-controls (resp. smoothly $I$-controls) $(\Xh,\hd)$ on $\Omega'$, where $I$ denotes the identity matrix.
\end{defn}

\begin{defn}
Let $\sS\subseteq \snuvect$ and $\sT\subseteq\snuhvect$.
We say $\sS$ $\lambda$-{\bf controls} (resp. \textbf{smoothly} $\lambda$-\textbf{controls}) $\sT$ on $\Omega'$ if $\sS$ $\lambda$-controls (resp. smoothly $\lambda$-controls) $(\Xh,\hd)$ on $\Omega'$, $\forall (\Xh,\hd)\in \sT$.
When $\nuh=\nu$, we say $\sS$ \textbf{controls} (resp. \textbf{smoothly controls}) $\sT$ on $\Omega'$ if $\sS$ $I$-controls (resp. smoothly $I$-controls)
$\sT$ on $\Omega'$, where $I$ denotes the identity matrix.
\end{defn}

\begin{lemma}
The notion of control is transitive:  If $\sS_1$ controls $\sS_2$ on $\Omega'$ and $\sS_2$ controls $\sS_3$ on $\Omega'$, then $\sS_1$ controls $\sS_3$
on $\Omega'$.
\end{lemma}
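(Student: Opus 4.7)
The plan is to compose the two families of controlling data provided by the hypotheses, acknowledging that the main subtlety is geometric rather than algebraic.

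First I would fix $(Z,f)\in\sS_3$. Since $\sS_2$ controls $\sS_3$, select a finite $\sF_2=\{(Y_j,e_j)\}_{j=1}^q\subset\sS_2$, a radius $\tau_1>0$, and continuous coefficients $c^\delta_{x,j}\in C^0(B_{\sF_2}(x,\tau_1\delta))$ satisfying $\delta^f Z=\sum_j c^\delta_{x,j}\,\delta^{e_j}Y_j$ on $B_{\sF_2}(x,\tau_1\delta)$, with uniform bounds on every $(\delta\sF_2)^\alpha c^\delta_{x,j}$. Since $\sS_1$ controls each $(Y_j,e_j)$, I can pick (for each $j$) a finite $\sF^j\subset\sS_1$ witnessing this; consolidating, $\sF:=\bigcup_j\sF^j=\{(W_k,d'_k)\}_{k=1}^r\subset\sS_1$ is finite, with a common radius $\tau_2>0$ and coefficients $b^{\delta,j}_{x,k}\in C^0(B_\sF(x,\tau_2\delta))$ giving $\delta^{e_j}Y_j=\sum_k b^{\delta,j}_{x,k}\,\delta^{d'_k}W_k$ with uniform bounds on every $(\delta\sF)^\beta b^{\delta,j}_{x,k}$.

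I then take as candidate coefficients
$$\alpha^\delta_{x,k}:=\sum_{j=1}^q c^\delta_{x,j}\,b^{\delta,j}_{x,k},$$
so that, wherever both sets of coefficients are defined simultaneously, a telescoping computation yields
$$\delta^f Z=\sum_j c^\delta_{x,j}\,\delta^{e_j}Y_j=\sum_j c^\delta_{x,j}\sum_k b^{\delta,j}_{x,k}\,\delta^{d'_k}W_k=\sum_k\alpha^\delta_{x,k}\,\delta^{d'_k}W_k.$$
The uniform $C^0$ bound on $\alpha^\delta_{x,k}$ is immediate from the uniform bounds on the $c$'s and $b$'s. The bounds on $(\delta\sF)^\gamma\alpha^\delta_{x,k}$ then follow by the Leibniz rule, which reduces them to uniformly bounded products of $(\delta\sF)^{\gamma_1}c^\delta_{x,j}$ and $(\delta\sF)^{\gamma_2}b^{\delta,j}_{x,k}$; each $\sF$-derivative of a $c$-factor is rewritten via iterated applications of the vector-field identity $\delta^{e_j}Y_j=\sum_k b^{\delta,j}_{x,k}\,\delta^{d'_k}W_k$, so that what remains is a polynomial combination of $(\delta\sF_2)^\alpha c$'s and $(\delta\sF)^\beta b$'s, all uniformly bounded by hypothesis.

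The principal obstacle I anticipate is geometric rather than algebraic: the composed identity has so far been established only on the intersection $B_{\sF_2}(x,\tau_1\delta)\cap B_\sF(x,\tau_2\delta)$, whereas the definition of control demands validity on a genuine $\sF$-ball $B_\sF(x,\tau\delta)$ with $\tau>0$ uniform in $(x,\delta)$. This is resolved using the ball-comparability results from the multi-parameter Carnot-Carath\'eodory theory developed in \cite{StreetMultiParameterCCBalls} (and recorded in \cite{StreetMultiParamSingInt}): the hypothesis that $\sF$ controls $\sF_2$ ensures that $B_\sF$ and $B_{\sF\cup\sF_2}$ are equivalent families of balls, which allows one to arrange, after a suitable rechoice of the $c$-coefficients so as to cover the enlarged ball, for the composed identity to hold on a uniform $\sF$-ball. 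Once the domains are matched the algebraic composition above and the Leibniz bookkeeping together produce the coefficients $\alpha^\delta_{x,k}$ witnessing that $\sF\subset\sS_1$ controls $(Z,f)$, which completes the proof.
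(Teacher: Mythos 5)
The paper's own proof is simply ``This follows immediately from the definitions,'' so there is no detailed argument to measure yours against; but the two obstacles you flag are genuine, and the resolutions you propose do not close them.

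The ball-comparability you invoke runs the wrong way. When $\sF$ (your finite subset of $\sS_1$) controls $\sF_2$, the containment one obtains is $B_{\sF_2}(x,c\delta)\subseteq B_{\sF}(x,\delta)$, not the reverse: the $\sF$-ball can be strictly larger, even of strictly higher dimension, than the $\sF_2$-ball (take $\sF=\{(\partial_x,1),(\partial_y,1)\}$, $\sF_2=\{(\partial_y,1)\}$ on $\R^2$, so the $\sF_2$-ball is a one-dimensional slice of the two-dimensional $\sF$-ball). Consequently your candidate $\alpha^\delta_{x,k}=\sum_j c^\delta_{x,j}b^{\delta,j}_{x,k}$ is only defined on $B_{\sF_2}(x,\tau_1\delta)\cap B_\sF(x,\tau_2\delta)$, which is in general a proper (possibly lower-dimensional) subset of the $\sF$-ball on which the definition demands the identity. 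The equivalence $B_\sF\sim B_{\sF\cup\sF_2}$ is true but irrelevant here: what you would need is $B_\sF(x,\tau\delta)\subseteq B_{\sF_2}(x,\tau_1\delta)$, which would require $\sF_2$ to control $\sF$ --- not a hypothesis. And ``rechoose the $c$-coefficients so as to cover the enlarged ball'' is precisely the missing step: the $c^\delta_{x,j}$ are given only on the $\sF_2$-ball, there is no canonical extension, and the representation $\delta^f Z=\sum_j c^\delta_{x,j}\delta^{e_j}Y_j$ may not admit a bounded extension to the larger $\sF$-ball at all.

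Separately, the derivative-bookkeeping step is also reversed. To bound $(\delta\sF)^{\gamma_1}c^\delta_{x,j}$ from the hypothesis (which bounds $(\delta\sF_2)^{\alpha}c^\delta_{x,j}$), you would need to write each $\delta^{d'_k}W_k$ as a bounded combination of the $\delta^{e_j}Y_j$, i.e.\ $\sF_2$ controls $\sF$, so that $W_k$-derivatives of $c$ reduce to $Y_j$-derivatives of $c$. The identity you actually cite, $\delta^{e_j}Y_j=\sum_k b^{\delta,j}_{x,k}\delta^{d'_k}W_k$, expresses the $Y_j$'s in terms of the $W_k$'s and cannot be inverted in general, since the $Y_j$'s may span a strictly smaller subspace. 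So even on the overlap where both coefficient families are defined, the Leibniz argument as written does not yield the required bound on $(\delta\sF)^\gamma\alpha^\delta_{x,k}$.
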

\begin{proof}
This follows immediately from the definitions.
\end{proof}

\begin{defn}
Let $\sS\subseteq \snuvect$.
We say $\sS$ {\bf satisfies $\sD(\Omega')$} (resp. {\bf $\sD_s(\Omega')$}) if $\sS$ controls (resp. smoothly controls) $([X_1,X_2], d_1+d_2)$ on $\Omega'$, $\forall (X_1,d_1),(X_2,d_2)\in \sS$.
\end{defn}

\begin{defn}
Let $\sS, \sT\subseteq \snuvect$.
We say $\sS$ is {\bf equivalent} (resp. {\bf smoothly equivalent}) to $\sT$ on $\Omega'$ if $\sS$ controls (resp. smoothly controls) $\sT$ on $\Omega'$ and $\sT$ controls (resp. smoothly controls) $\sS$ on $\Omega'$.
\end{defn}

\begin{defn}
Let $\sS\subseteq \snuvect$.
We say $\sS$ is {\bf finitely generated} (resp. {\bf smoothly finitely generated}) on $\Omega'$ if there is a finite set $\sF\subset \snuvect$
such that $\sF$ is equivalent  (resp. smoothly equivalent) to $\sS$ on $\Omega'$.
If we want to make the choice of $\sF$ explicit, we say $\sS$ is finitely generated (resp. smoothly finitely generated) by $\sF$ on $\Omega'$.
\end{defn}

\begin{rmk}\label{RmkResCCCanTakesFSubset}
If $\sS$ is finitely generated (resp. smoothly finitely generated), one may always take $\sF\subseteq \sS$
such that $\sS$ is finitely generated (resp. smoothly finitely generated) by $\sF$.  However,
one need not take $\sF\subseteq \sS$.
\end{rmk}

\begin{rmk}\label{RmkResCCFiniteGenUnique}
Note that it is possible that $\sS$ be finitely generated (resp. smoothly finitely generated) on $\Omega'$ by two different finite sets
$\sF_1$, $\sF_2$, with $\sF_1\ne \sF_2$.  However, it is immediate from the definitions that $\sF_1$ and $\sF_2$
are equivalent (resp. smoothly equivalent) on $\Omega'$.  Because of this, it turns out that any two such choices will be equivalent for all of our purposes.  Thus, we may unambiguously say
$\sS$ is finitely generated (resp. smoothly finitely generated) by $\sF$ on $\Omega'$, where $\sF$ can be any such choice.
\end{rmk}

\begin{defn}
Let $\onecompnu\subset[0,\infty)^{\nu}$ denote the set of those $d\in [0,\infty)^{\nu}$ such that $d$ is nonzero in precisely one component.
\end{defn}

\begin{defn}
Let $\sS\subseteq \snuvect$.  We say $\sS$ is {\bf linearly finitely generated} (resp. {\bf smoothly linearly finitely generated}) on
$\Omega'$ if there is a finite set $\sF\subset \snuvectone$ such that $\sS$ is finitely generated (resp. smoothly finitely generated)
by $\sF$ on $\Omega'$.  If we wish to make the choice of $\sF$ explicit, we say
$\sS$ is {\bf linearly finitely generated} (resp. {\bf smoothly linearly finitely generated}) {\bf by} $\sF$ {\bf on} $\Omega'$.
\end{defn}

\begin{rmk}\label{RmkResCCnuOneFG}
Note that when $\nu=1$, $\sS$ is finitely generated (resp. smoothly finitely generated) if and only if
$\sS$ is linearly finitely generated (resp. smoothly linearly finitely generated).
\end{rmk}

\begin{defn}\label{DefnResCCsL}
Let $\sS\subseteq \snuvect$.
We define $\sL(\sS)\subseteq \snuvect$ to be the smallest set
such that:
\begin{itemize}
\item $\sS\subseteq \sL(\sS)$.
\item If $(X_1,d_1),(X_2,d_2)\in \sL(\sS)$, then $([X_1,X_2],d_1+d_2)\in \sL(\sS)$.
\end{itemize}
\end{defn}

\begin{rmk}
Note that $\sL(\sS)$ satisfies $\sD(\Omega')$, trivially.
\end{rmk}

\begin{lemma}\label{LemmaResCCsDandControl}
Let $\sS\subseteq \snuvect$.
$\sS$ satisfies $\sD(\Omega')$ (resp. $\sD_s(\Omega')$) if and only if $\sS$ controls (resp. smoothly controls) $\sL(\sS)$ on $\Omega'$.
\end{lemma}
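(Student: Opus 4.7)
The plan is to prove the two directions separately, with the nontrivial content in the ``$\Rightarrow$'' direction.

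The easy direction ``$\Leftarrow$'' is immediate: if $\sS$ controls (resp. smoothly controls) $\sL(\sS)$ on $\Omega'$, then for any $(X_1,d_1),(X_2,d_2)\in \sS\subseteq \sL(\sS)$ we have $([X_1,X_2],d_1+d_2)\in \sL(\sS)$ by Definition \ref{DefnResCCsL}, and so $\sS$ controls (resp. smoothly controls) $([X_1,X_2],d_1+d_2)$ on $\Omega'$, which is exactly $\sD(\Omega')$ (resp. $\sD_s(\Omega')$).

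For the hard direction ``$\Rightarrow$'', I would filter $\sL(\sS)$ by the obvious iterative construction: set $\sL_0(\sS):=\sS$ and
\[
\sL_{k+1}(\sS):=\sL_k(\sS)\cup \{([X_1,X_2],d_1+d_2) : (X_1,d_1),(X_2,d_2)\in \sL_k(\sS)\},
\]
so $\sL(\sS)=\bigcup_{k\geq 0}\sL_k(\sS)$. I would then prove by induction on $k$ that $\sS$ controls (resp. smoothly controls) $\sL_k(\sS)$ on $\Omega'$. The base case $k=0$ is trivial. For the inductive step, given $(X_1,d_1),(X_2,d_2)\in \sL_k(\sS)$, the inductive hypothesis provides, on each relevant Carnot--Carath\'eodory ball (or globally on a fixed neighborhood in the smooth case), expansions of the form $\delta^{d_i}X_i=\sum_j c_{i,j}^{\delta}\,\delta^{e_j}Y_j$ with $(Y_j,e_j)$ ranging over a finite subset of $\sS$ and with all iterated $\delta Y$-derivatives of the coefficients $c_{i,j}^{\delta}$ uniformly bounded. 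Using the bilinearity of the Lie bracket together with the Leibniz rule
\[
[fY,gZ]=fg[Y,Z]+f(Yg)Z-g(Zf)Y,
\]
I would then expand
\[
\delta^{d_1+d_2}[X_1,X_2]=[\delta^{d_1}X_1,\delta^{d_2}X_2]
\]
as a sum of three kinds of terms: (i) $c_{1,j}^{\delta}c_{2,k}^{\delta}\,[\delta^{e_j}Y_j,\delta^{e_k}Y_k]$, controlled by $\sS$ via the hypothesis $\sD(\Omega')$ (resp. $\sD_s(\Omega')$); (ii) $c_{1,j}^{\delta}\bigl((\delta^{e_j}Y_j)c_{2,k}^{\delta}\bigr)\delta^{e_k}Y_k$; and (iii) the symmetric term. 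Terms (ii) and (iii) are visibly in the span of $\sS$ with coefficients whose $\delta Y$-derivatives remain uniformly bounded by the inductive hypothesis on the $c_{i,j}^{\delta}$ and the chain rule. Combining these with the $\sD$ or $\sD_s$ expansion of the bracketed generators completes the inductive step.

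The main obstacle is bookkeeping rather than conceptual: one must be careful that the ``scales'' match when composing the control of $(X_i,d_i)$ by $\sS$ with the control of bracket generators $([Y_j,Y_k],e_j+e_k)$ by $\sS$, so that the derivative bounds in the definition of control are preserved under composition. This is precisely the transitivity property noted just before this lemma, and is the reason the formal degrees add under bracketing in Definition \ref{DefnResCCsL}. The smooth variant is strictly easier because coefficients are defined on a fixed $\Omega''\Subset \Omega$ independent of $\delta$, and the Leibniz expansion stays $C^\infty$-bounded. Once control of each $\sL_k(\sS)$ by $\sS$ is established, the lemma follows since any single element of $\sL(\sS)$ lies in some $\sL_k(\sS)$ and the definition of ``$\sS$ controls $\sL(\sS)$'' is pointwise (for each $(\Xh,\hd)\in \sL(\sS)$ a finite controlling subset of $\sS$ suffices).
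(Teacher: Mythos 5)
The paper's ``proof'' of Lemma~\ref{LemmaResCCsDandControl} is the single sentence ``This is immediate from the definitions,'' so there is no argument in the paper to compare against; what you have written is essentially the content that the author is declaring obvious. Your easy direction is correct and complete. Your hard direction -- induction on the level $k$ of $\sL_k(\sS)$, writing $\delta^{d_1+d_2}[X_1,X_2]=[\delta^{d_1}X_1,\delta^{d_2}X_2]$, and Leibniz-expanding -- is the right mechanism and correctly isolates where $\sD(\Omega')$ enters (term (i)) versus where only boundedness of the inherited coefficients enters (terms (ii), (iii)). You also correctly note the smooth case is strictly easier because the coefficients live on a fixed $\Omega''$.

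One point where your account is slightly off: the residual ``bookkeeping'' is not really an application of the transitivity lemma stated just before. Transitivity concerns chaining $\sS_1$ controls $\sS_2$ controls $\sS_3$; what your inductive step actually requires is a \emph{monotonicity} statement, namely that if $\sF_1\subseteq\sF\subseteq\sS$ and $\sF_1$ controls $(X_0,h)$ on $\Omega'$, then so does $\sF$, with coefficient bounds in terms of $\delta\sF$-derivatives on $\sF$-balls. This is needed because, for terms (ii) and (iii), you differentiate $c_{2,k}^{\delta}$ (bounded a priori only in $\delta\sF_2$-derivatives on $\sF_2$-balls) by a vector field $\delta^{e_j}Y_j$ coming from $\sF_1$; to close the estimate you must first pass from $\sF_1,\sF_2,\sG_{j,k}$ to a single common finite family $\sF$ and know that each controlled expansion survives that enlargement. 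In the smooth case this is trivial; in the general case it uses the equivalence of control with the $\sP_2$ characterization (pullback along $\Phi$) from Proposition~\ref{PropFrobControlEquivsP}, applied with the larger family $\sF$, which is also asserted without proof in the paper. If you make that substitution -- monotonicity under enlargement of the controlling family, rather than transitivity -- your argument is correct and is a fair reconstruction of what ``immediate from the definitions'' must mean here.
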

\begin{proof}
This is immediate from the definitions.
\end{proof}

\begin{lemma}\label{LemmaResCCsLFinGenGivessD}
Let $\sS\subseteq \snuvect$.
If $\sL(\sS)$ is finitely generated (resp. smoothly finitely generated) by $\sF$ on $\Omega'$, then $\sF$ satisfies $\sD(\Omega')$ (resp. $\sD_s(\Omega')$).
\end{lemma}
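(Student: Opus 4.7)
The plan is as follows. Fix $(X_1,d_1),(X_2,d_2)\in\sF$; we must show $\sF$ controls (resp.\ smoothly controls) $([X_1,X_2],d_1+d_2)$ on $\Omega'$. By hypothesis $\sF$ and $\sL(\sS)$ are equivalent on $\Omega'$, so in particular $\sL(\sS)$ controls $\sF$, and by Definition \ref{DefnResCCsL}, $\sL(\sS)$ is closed under the bracket operation with addition of degrees. Thus the strategy is to use the representation of $X_1,X_2$ in terms of a finite subset of $\sL(\sS)$ furnished by control, expand the commutator by Leibniz, and then push everything back to $\sF$ using control in the other direction.

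More concretely, since $\sL(\sS)$ controls $\sF$, there is a finite subset $\{(Y_1,e_1),\ldots,(Y_r,e_r)\}\subset\sL(\sS)$ and a constant $\tau>0$ such that, for $i=1,2$ and every $\delta\in[0,1]^\nu$, $x\in\Omega'$,
\begin{equation*}
\delta^{d_i}X_i = \sum_{k=1}^r c_{i,k}^{x,\delta}\,\delta^{e_k}Y_k \quad\text{on } \B{Y}{e}{x}{\tau\delta},
\end{equation*}
with the family $\{(\delta Y)^{\alpha} c_{i,k}^{x,\delta}\}$ uniformly bounded (in the smooth case, the $c_{i,k}^{\delta}$ are bounded in $C^\infty(\Omega'')$). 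Then formally,
\begin{equation*}
\delta^{d_1+d_2}[X_1,X_2] = \sum_{k,\ell} c_{1,k}^{x,\delta}c_{2,\ell}^{x,\delta}\,[\delta^{e_k}Y_k,\delta^{e_\ell}Y_\ell] + \sum_{k,\ell}\Bigl(c_{1,k}^{x,\delta}(\delta^{e_k}Y_k c_{2,\ell}^{x,\delta})\delta^{e_\ell}Y_\ell - c_{2,\ell}^{x,\delta}(\delta^{e_\ell}Y_\ell c_{1,k}^{x,\delta})\delta^{e_k}Y_k\Bigr).
\end{equation*}
Each $[Y_k,Y_\ell]$ lies in $\sL(\sS)$ with degree $e_k+e_\ell$, so $\delta^{e_k+e_\ell}[Y_k,Y_\ell]$ is controlled by $\sL(\sS)$, and therefore by $\sF$ by transitivity. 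The remaining terms involve $\delta Y$-derivatives of the $c_{i,k}^{x,\delta}$, which are themselves uniformly bounded by the hypothesis of control (together with commuting the scalar multiplication past the vector field, which produces only further such derivatives). Each such term is of the form (bounded scalar)$\cdot \delta^{e_\ell}Y_\ell$, hence is itself controlled by $\sL(\sS)$ on the ball, and thus by $\sF$.

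The main obstacle is bookkeeping: one must verify that the function $h(\delta)=\delta^{d_1+d_2}$ arising on the left of the Leibniz identity, after reshuffling, matches up with a control representation on a Carnot--Carath\'eodory ball of the form $\B{F}{d}{x}{\tau'\delta}$ for a uniform $\tau'>0$, and that the bounds on the resulting coefficients are uniform in $\delta\in[0,1]^\nu$ and $x\in\Omega'$. This in turn requires that control representations can be restricted and composed on subballs, a fact established in the theory of multi-parameter CC balls in \cite{StreetMultiParameterCCBalls}; I would invoke those results directly rather than reprove them. In the smoothly controlled case, the $c_{i,k}^\delta$ lie in a bounded subset of $C^\infty(\Omega'')$, so all $Y_k$-derivatives are automatically bounded and one obtains smooth control of $([X_1,X_2],d_1+d_2)$ by $\sF$ on $\Omega'$, completing the proof in both cases.
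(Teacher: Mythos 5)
Your argument is valid, but you have taken the long road, and the paper's ``immediate from the definitions'' is a fair description of the short one. By Remark~\ref{RmkResCCCanTakesFSubset} one may always choose $\sF\subseteq\sL(\sS)$. With that choice, given $(X_1,d_1),(X_2,d_2)\in\sF\subseteq\sL(\sS)$, the closure property defining $\sL$ (Definition~\ref{DefnResCCsL}) immediately gives $([X_1,X_2],d_1+d_2)\in\sL(\sS)$; since $\sF$ controls $\sL(\sS)$ by equivalence, $\sF$ controls the commutator. Put differently, $\sF\subseteq\sL(\sS)$ forces $\sL(\sF)\subseteq\sL(\sS)$, so $\sF$ controls $\sL(\sF)$, which by Lemma~\ref{LemmaResCCsDandControl} is exactly $\sD(\Omega')$; the smooth case reads the same with ``smoothly controls'' throughout. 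No Leibniz expansion and no CC-ball machinery is required.

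What you have written instead is the proof of a genuinely different (and stronger) fact: that $\sD(\Omega')$ is invariant under equivalence, so the conclusion persists even for $\sF$ not contained in $\sL(\sS)$. That is worth knowing, and your Leibniz computation
\begin{equation*}
\delta^{d_1+d_2}[X_1,X_2] = \sum_{k,\ell} c_{1,k}c_{2,\ell}\,\delta^{e_k+e_\ell}[Y_k,Y_\ell] + \sum_{k,\ell}\Bigl(c_{1,k}(\delta^{e_k}Y_k c_{2,\ell})\,\delta^{e_\ell}Y_\ell - c_{2,\ell}(\delta^{e_\ell}Y_\ell c_{1,k})\,\delta^{e_k}Y_k\Bigr)
\end{equation*}
is the right skeleton for it. But this route really does need the closure of $\sL(\sS)$ under brackets together with nesting and composition of control representations on Carnot--Carath\'eodory sub-balls, which you flag but do not carry out, deferring to \cite{StreetMultiParameterCCBalls}. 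That is a real gap if the goal were to prove the invariance-under-equivalence statement; for the lemma actually asked, you should lead with the two-line argument and reserve the Leibniz computation for where it is genuinely needed.
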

\begin{proof}
This is immediate from the definitions.
\end{proof} 

\begin{example}\label{ExampleResCCFailureFGVect}
In the sequel we will be given a set $\sS\subseteq \snuvect$, and will be interested in whether or not $\sL(\sS)$ is finitely generated on $\Omega'$.
It is instructive to consider the following simple example where $\sL(\sS)$ is {\bf not} finitely generated.
Let $X_1=\frac{\partial}{\partial x}$, $X_2=e^{-\frac{1}{x^2}}\frac{\partial}{\partial y}$, 
and $\sS:=\{ (X_1,1), (X_2,1) \}\subset \Gamma(T\R^2)\times(0,\infty)$.
Let $\Omega'$ be a neighborhood of $0\in \R^2$.  Then, $\sL(\sS)$ is not finitely generated on $\Omega'$.
Indeed, any commutator of the form
\begin{equation*}
[X_1,[X_1,[X_1,\cdots,[X_1,X_2]\cdots]]],
\end{equation*}
is not spanned (with bounded coefficients) on any neighborhood of $0$ by commutators with fewer terms.
For nontrivial examples where $\sL(\sS)$ is finitely generated on $\Omega'$ see
Sections \ref{SectionResHorVect} and \ref{SectionResRealAnalVect}.
\end{example}

\begin{example}\label{ExampleResCCHormanderVF}
An important example where $\sL(\sS)$ is finitely generated but not linearly finitely generated comes from the Heisenberg group.
The Heisenberg group, $\Ho$, has a three dimensional Lie algebra spanned by vector fields $X,Y,T$, where $[X,Y]=T$ and
$T$ is in the center.  As a manifold $\Ho\cong \R^3$ and the vector fields $X,Y,T$ span the tangent space at every point.
Set $\sS := \{ (X,(1,0)), (Y,(0,1))\} \subset \Gamma(T\Ho)\times \mathfrak{d}_2$.
Then (on any non-empty open set $\Omega'\subset \Ho$) it is immediate to see that $\sL(\sS)$ is finitely generated but not linearly
finitely generated on $\Omega'$.
\end{example}

\subsection{H\"ormander's condition}\label{SectionResHorVect}
When some of the vector fields satisfy H\"ormander's condition, many of the above definitions become easier to verify.

\begin{defn}\label{DefnCCHormander}
Let $\sV$ be a collection of $C^\infty$ vector fields on $\Omega$, and let $U\subseteq \Omega$.  We say $\sV$ {\bf satisfies H\"ormander's condition} on
$U$ if
\begin{equation*}
\sV \cup [\sV,\sV]\cup [\sV,[\sV,\sV]]\cup\cdots
\end{equation*}
spans the tangent space to $\R^n$ at every point of $U$.
For $m\in \N$, we say $\sV$ {\bf satisfies H\"ormander's condition of order $m$ on} $U$ if
\begin{equation*}
\sV\cup [\sV,\sV]\cup\cdots\cup \overbrace{[\sV,[\sV,[\cdots, [\sV,\sV]]}^{m\text{ terms}}
\end{equation*}
spans the tangent space at every point of $U$.
\end{defn}

\begin{prop}\label{PropResCCHorSmoothFG}
Let $\sS\subset \snuvect$.
Suppose, for every $M$,
\begin{equation*}
\{ (X,d)\in \sS : |d|_{\infty}\leq M \}
\end{equation*}
is finite.  Also suppose for each $1\leq \mu\leq \nu$,
\begin{equation*}
\{ X : (X,d)\in \sS\text{ and }d_{\mu'}=0, \forall \mu'\ne \mu\}\text{ satisfies H\"ormander's condition on }\Omega.
\end{equation*}
Then $\sL(\sS)$ is smoothly finitely generated on $\Omega'$.
\end{prop}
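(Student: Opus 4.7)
The plan is to construct a finite $\sF\subseteq\sL(\sS)$ that smoothly controls $\sL(\sS)$ on $\Omega'$. Since $\sL(\sS)$ trivially smoothly controls any $\sF\subseteq\sL(\sS)$, this yields smooth equivalence and hence smooth finite generation. The idea is that the pure-$\mu$ H\"ormander hypothesis produces, for each $\mu$, enough pure-$\mu$ iterated brackets to span the tangent space; then any element $(Y,d)\in\sL(\sS)$ whose degree is sufficiently large in some component can be decomposed against the pure-$\mu$ spanning set, with the excess degree absorbed into a bounded multi-parameter dilation factor.

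Fix $\Omega''$ with $\Omega'\Subset\Omega''\Subset\Omega$. For each $\mu\in\{1,\dots,\nu\}$ set $\sV_\mu:=\{X : (X,d)\in\sS,\ d_{\mu'}=0 \text{ for all }\mu'\ne\mu\}$. By hypothesis $\sV_\mu$ satisfies H\"ormander's condition on $\Omega$, so by the openness of the spanning property combined with compactness of $\overline{\Omega''}$, finitely many iterated commutators of elements of $\sV_\mu$ already span $T_x\R^n$ at every $x\in\Omega''$. Pairing each such iterated commutator with the sum of the pure-$\mu$ degrees of its leaves produces a finite set $\sF_\mu\subseteq\sL(\sS)$ whose associated degrees all lie in $\onecompnu$ and are nonzero only in the $\mu$-th coordinate. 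Let $M$ be an upper bound on $|d|_\infty$ over all $(Y,d)\in\bigcup_\mu\sF_\mu$, and define
\[
\sF:=\q\{(Y,d)\in\sL(\sS):|d|_\infty\le M\w\}\ \supseteq\ \bigcup_{\mu=1}^{\nu}\sF_\mu.
\]
The finiteness hypothesis implies $\{(X,d)\in\sS:|d|_\infty\le M\}$ is finite with a strictly positive lower bound on each nonzero coordinate of its degrees; hence any iterated bracket in $\sL(\sS)$ with $|d|_\infty\le M$ uses boundedly many leaves drawn from this finite list, and $\sF$ itself is finite.

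It remains to verify that $\sF$ smoothly controls $\sL(\sS)$ on $\Omega'$. Let $(Y,d)\in\sL(\sS)$. If $|d|_\infty\le M$ then $(Y,d)\in\sF$ and we are done. Otherwise choose $\mu$ with $d_\mu>M$, enumerate $\sF_\mu=\{(Y_j,d_j)\}_j$, and note that by construction $(d_j)_\mu\le M<d_\mu$ while $(d_j)_{\mu'}=0\le d_{\mu'}$ for $\mu'\ne\mu$. Since $\{Y_j\}_j$ spans $T_x\R^n$ at every $x\in\Omega''$, a standard partition-of-unity construction (covering $\overline{\Omega'}$ by open sets on each of which Cramer's rule selects $n$ linearly independent $Y_j$'s, then patching) produces $a_j\in C^\infty(\Omega'')$ with $Y=\sum_j a_j Y_j$ on $\Omega''$. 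For $\delta\in[0,1]^\nu$ we then write
\[
\delta^{d}Y=\sum_j \bigl(a_j\,\delta^{d-d_j}\bigr)\,\delta^{d_j} Y_j,
\]
and the exponent $d-d_j$ has nonnegative coordinates, so $\delta^{d-d_j}\in[0,1]$. The coefficients $c_j^\delta:=a_j\,\delta^{d-d_j}$ are therefore smooth on $\Omega''$ and bounded in $C^\infty(\Omega'')$ uniformly in $\delta\in[0,1]^\nu$, establishing smooth control of $(Y,d)$ by $\sF$.

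The main technical obstacle is the partition-of-unity step producing the smooth coefficients $a_j$ with uniform $C^\infty(\Omega'')$ bounds; the rest is combinatorial bookkeeping that combines the hypothesis on the pure-$\mu$ vector fields with the absorption of excess multi-parameter degree into $\delta^{d-d_j}\in[0,1]$.
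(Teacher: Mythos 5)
Your proof is correct and follows essentially the same route as the paper's: construct pure-$\mu$ finite spanning sets $\sF_\mu$ via H\"ormander plus compactness, set $M$ and $\sF=\{(X,d)\in\sL(\sS):|d|_\infty\le M\}$, and for $d_\mu>M$ decompose against $\sF_\mu$ while absorbing the excess degree into the uniformly bounded factor $\delta^{d-d_j}$. The only difference is that you spell out the finiteness of $\sF$ and the partition-of-unity step, which the paper leaves implicit.
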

\begin{proof}
Because $\Omega'$ is relatively compact in $\Omega$, for each $1\leq \mu\leq \nu$, there is a finite set
\begin{equation*}
\sF_{\mu}\subseteq \{  (X,d)\in \sL(\sS) : d_{\mu'}=0,\forall \mu'\ne \mu  \},
\end{equation*}
such that
\begin{equation*}
\{ X : \exists d, (X,d)\in \sF_\mu\}
\end{equation*}
spans the tangent space at every point on some neighborhood of the closure of $\Omega'$.
Let
\begin{equation*}
M:=\max\q\{ |d|_{\infty} : (X,d)\in \bigcup_{\mu=1}^\nu \sF_\mu\w\},
\end{equation*}
and define
\begin{equation*}
\sF:=\{(X,d)\in \sL(\sS) : |d|_{\infty}\leq M \}.
\end{equation*}
Note that $\sF$ is a finite set and $\sF_\mu\subseteq \sF$, for every $\mu$.
We claim that $\sF$ smoothly controls $\sL(\sS)$ on $\Omega'$.  Indeed, let $(X,d)\in \sL(\sS)$.
If $|d|_{\infty}\leq M$, then $(X,d)\in \sF$ and so $\sF$ smoothly controls $(X,d)$ on $\Omega'$, trivially.
If $|d|_{\infty}>M$, then there is some coordinate $\mu$ with $d_\mu>M$.  By the construction on $\sF_\mu$,
there is an open set $\Omega''$ with $\Omega'\Subset \Omega''\Subset \Omega$ and such that the vector fields in $\sF_\mu$
span the tangent space to every point of $\Omega''$.  Hence, we may write
\begin{equation*}
X = \sum_{(Y,e)\in \sF_\mu} c_{Y,e} Y,
\end{equation*}
where $c_{Y,e}\in C^\infty(\Omega'')$.
We have, for $\delta\in [0,1]^\nu$,
\begin{equation*}
\delta^{d} X = \sum_{(Y,e)\in \sF_\mu} (\delta^{d-e} c_{Y,e}) \delta^e Y.
\end{equation*}
Because $d_\mu>M$, and in the sum $e_\mu\leq M$ and $e_{\mu'}=0$ for $\mu\ne\mu'$, we have
$\{ \delta^{d-e} c_{Y,e} : \delta\in [0,1]^\nu\}\subset C^\infty(\Omega')$ is a bounded set.  Because $\sF_\mu\subseteq \sF$, the result follows.
\end{proof} 

\subsection{Real Analytic Vector Fields}\label{SectionResRealAnalVect}
Another situation where the concepts in this section become somewhat simpler is the case when the vector fields are real analytic.  Indeed, we have

\begin{prop}\label{PropResCCAnalSmoothFinGen}
Let $\sS\subset \snuvect$ be a finite set such that $\forall (X,d)\in \sS$, $X$ is real analytic.
Then, $\sL(\sS)$ is smoothly finitely generated on $\Omega'$.
\end{prop}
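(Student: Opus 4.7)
The plan is to combine the Noetherianity of the stalks of real-analytic germs with the well-quasi-order property of the multi-parameter degree semigroup, and then to globalize via compactness and a smooth partition of unity. All vector fields in $\sL(\sS)$ are automatically real analytic, as iterated Lie brackets of real-analytic fields. Moreover the degrees appearing in $\sL(\sS)$ lie in the $\N$-semigroup generated by the finite set $D_0:=\{d:(X,d)\in \sS\}$; parameterizing this semigroup by $\N^{|D_0|}$ turns it into a well-quasi-order, so by Dickson's lemma every subset has finitely many minimal elements, and consequently any subset containing no infinite strictly ascending chain is finite. Fix $\Omega''$ with $\Omega'\Subset \Omega''\Subset \Omega$, and for each $e\in [0,\infty)^\nu$ set
\[
\sV_e:=\mathrm{span}_{C^\omega(\Omega'')}\q\{\, Y:(Y,e')\in \sL(\sS),\; e'\leq e \text{ componentwise}\,\w\},
\]
an increasing filtration of analytic submodules of the tangent sheaf.

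At each stalk $p\in \overline{\Omega'}$, the local ring of real-analytic germs is Noetherian, so the ambient tangent module is Noetherian and the chain $(\sV_{e,p})_e$ satisfies ACC along any totally ordered subchain. The set $E_p$ of critical degrees at $p$---those $e^\ast$ with $\sV_{e^\ast,p}$ strictly containing $\sum_{e'<e^\ast}\sV_{e',p}$---therefore has no infinite strictly ascending chain, and so is finite by the well-quasi-order remark above. Choosing finitely many elements of $\sL(\sS)$ at each critical $e^\ast\in E_p$ whose germs at $p$ generate $\sV_{e^\ast,p}$ modulo $\sum_{e'<e^\ast}\sV_{e',p}$ produces a finite $\sF_p\subseteq \sL(\sS)$ and a neighborhood $U_p$ of $p$ on which every $(Y,e)\in \sL(\sS)$ admits a local decomposition $Y=\sum_{(Y_j,d_j)\in \sF_p,\, d_j\leq e} c_j^{(p)} Y_j$ with $c_j^{(p)}\in C^\omega(U_p)$. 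By compactness of $\overline{\Omega'}$, extract a finite subcover $\{U_{p_1},\dots,U_{p_L}\}$ and set $\sF:=\bigcup_i \sF_{p_i}$. A smooth partition of unity $\{\varphi_i\}$ subordinate to this cover patches the local expressions, yielding for every $(Y,e)\in \sL(\sS)$ a single global decomposition
\[
Y=\sum_{j:\,d_j\leq e} \tilde c_j\, Y_j \qquad \text{on a neighborhood of } \overline{\Omega'},
\]
with $\tilde c_j\in C^\infty(\Omega'')$ (merely smooth after the patching, but this suffices).

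With this $\sF\subseteq \sL(\sS)$ in hand, smooth control is a one-line verification: for $(Y,e)\in \sL(\sS)$ and $\delta\in [0,1]^\nu$,
\[
\delta^e Y=\sum_{j:\,d_j\leq e}\q(\delta^{e-d_j}\,\tilde c_j\w)\,\delta^{d_j}\,Y_j,
\]
and since $d_j\leq e$ componentwise forces $\delta^{e-d_j}\in [0,1]$, the family $\{\delta^{e-d_j}\tilde c_j:\delta\in [0,1]^\nu\}$ is a bounded subset of $C^\infty(\Omega'')$. Hence $\sF$ smoothly controls $\sL(\sS)$ on $\Omega'$, while the reverse direction $\sL(\sS)$ smoothly controls $\sF$ is trivial from $\sF\subseteq \sL(\sS)$. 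The main technical obstacle is the stalk analysis in the middle paragraph: one must extract a finite local generating set that respects the multi-parameter degree partial order, combining Noetherianity of the analytic local rings with the well-quasi-order (Dickson) property of the degree semigroup, and then patch the resulting local analytic decompositions into a single global smooth one.
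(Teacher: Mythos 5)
Your proposal follows a genuinely different route from the paper's. The paper's proof is short because it outsources the hard local analysis to Proposition~12.1 of \cite{SteinStreetIII}: for each $x$, there is a finite $\sF_x\subseteq \sL(\sS)$ (which one may enlarge to contain $\sS$) such that every $(X,d)\in\sL(\sS)$ is smoothly controlled by $\sF_x$ on a neighborhood $U_{(X,d),x}$ that is allowed to depend on $(X,d)$. Crucially, the paper then does \emph{not} need a uniform neighborhood for all of $\sL(\sS)$: since $\sF_x$ is finite, one intersects over the finitely many brackets $([X_1,X_2],d_1+d_2)$ with $(X_i,d_i)\in\sF_x$ to get a single $V_x$ on which $\sF_x$ satisfies $\sD_s(V_x)$, and Lemma~\ref{LemmaResCCsDandControl} bootstraps this finite check into smooth control of $\sL(\sF_x)=\sL(\sS)$ on the fixed set $V_x$. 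By contrast, you re-derive the local statement from first principles via Noetherianity of the real-analytic local ring and Dickson's lemma on the finitely generated degree semigroup. The degree-theoretic part is correct: a finitely generated submonoid of $[0,\infty)^\nu$ with the componentwise order is a well-quasi-order, ACC of the stalk modules along totally ordered subchains rules out infinite ascending chains of critical degrees, and the WQO property rules out infinite antichains and descending chains, so $E_p$ is finite. The final scaling verification and the partition-of-unity globalization are also fine.

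The gap is the assertion of a single neighborhood $U_p$ on which \emph{every} $(Y,e)\in\sL(\sS)$ admits a decomposition $Y=\sum_{d_j\leq e}c_j^{(p)}Y_j$ with $c_j^{(p)}\in C^\omega(U_p)$. Noetherianity gives you generation at the level of germs, and for each individual $(Y,e)$ a neighborhood on which that germ identity extends to an identity of analytic sections---but that neighborhood a priori shrinks as $(Y,e)$ varies, and $\sL(\sS)$ is infinite. Promoting stalk generation to uniform generation by honest sections of $C^\omega(U_p)$, valid simultaneously for every global section of $\sV_e$, is not a formal consequence of Noetherianity of the local ring; it requires coherence of the analytic submodule sheaves (Oka) together with surjectivity of the generation morphism on global sections over a Stein-type neighborhood (Cartan's Theorem~B, after complexification). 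You gesture at this (``patch the resulting local analytic decompositions'') without supplying it. The economical way to close the gap---and the way the paper's proof sidesteps it---is not to insist on a uniform $U_p$ at all: take the finite intersection of the $(Y,e)$-dependent neighborhoods only over the finitely many pairwise brackets of elements of $\sF_p$ (arranging $\sS\subseteq\sF_p$ so that $\sL(\sF_p)=\sL(\sS)$), conclude $\sD_s$ on that intersection, and apply Lemma~\ref{LemmaResCCsDandControl} to sweep up the rest of $\sL(\sS)$.
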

\begin{proof}
Let $\Omega''$ be an open set with $\Omega'\Subset \Omega'' \Subset \Omega$.  It follows from
Proposition 12.1 of \cite{SteinStreetIII} that $\forall x\in \Omega$,
there is a  finite set $\sF_x\subseteq \sL(\sS)$ such that $\forall (X,d)\in \sL(\sS)$, there is a neighborhood $U_{(X,d),x}$ of $x$
such that $\sF_x$ smoothly controls $(X,d)$ on $U_{(X,d),x}$.  We may assume $\sS\subseteq \sF_x$.
Define
\begin{equation*}
V_x := \bigcap_{(X_1,d_1),(X_2,d_2)\in \sF_x} U_{([X_1,X_2], d_1+d_2),x}.
\end{equation*}
We have $\sF_x$ satisfies $\sD_s(V_x)$.  Lemma \ref{LemmaResCCsDandControl} implies that
$\sF_x$ smoothly controls $\sL(\sF_x)$ on $V_x$, and therefore $\sF_x$ smoothly controls $\sL(\sS)$ on $V_x$.
$V_x$ forms an open cover of the closure of $\Omega''$, and we may extract a finite subcover $V_{x_1},\ldots, V_{x_M}$.
Set $\sF=\bigcup_{l=1}^M \sF_{x_l}$.  A simple partition of unity argument shows that $\sF$ smoothly controls $\sL(\sS)$ on $\Omega'$.
\end{proof}

\section{Definitions: Surfaces}\label{SectionResSurf}
In this section, we define the class of $\gamma$ for which we study operators of the form \eqref{EqnIntroMultiOp}.  We assume we
are given an open set $\Omega\subseteq \R^n$.  Fix open sets $\Omega'\Subset \Omega''\Subset \Omega'''\Subset \Omega$, 
where $A\Subset B$ denotes that $A$ is relatively compact in $B$.

\begin{defn}
A {\bf parameterization} is a triple $(\gamma,e,N)$, where $N\in \N$, $0\ne e_1,\ldots, e_N\in[0,\infty)^\nu$ define $\nu$-parameter dilations on $\R^N$,
and $\gamma(t,x)=\gamma_t(x):B^N(\rho)\times \Omega'''\rightarrow \Omega$ is a $C^\infty$ function satisfying $\gamma_0(x)\equiv x$.  Here, $\rho>0$
should be thought of as small.  If we want to make the choice of $\Omega,\Omega'''$ explicit, we write our parameterization as
$(\gamma,e,N,\Omega,\Omega''')$.
\end{defn}

Suppose we are given a parameterization $(\gamma,e,N)$.
Using the dilations, we assign to each multi-index $\alpha\in \N^N$ a corresponding degree,
$\deg(\alpha)\in [0,\infty)^{\nu}$, as in Definition \ref{DefnResKerDegree}.

When we are given a parameterization $(\gamma,e,N)$, because $\gamma_0$ is the identity map, we may always shrink $\rho$
so that $\gamma_t$ is a diffeomorphism onto its image for every $t\in B^N(\rho)$.  From now on we always do this, so that when we are given a parameterization,
it makes sense to thinking about $\gamma_t^{-1}(x)$, the inverse mapping.  Furthermore, we shrink $\rho>0$ so that $(\gamma_t^{-1},e,N,\Omega,\Omega'''')$
is a parameterization for some choice of $\Omega''''$ with $\Omega''\Subset \Omega''''\Subset \Omega$.

%
%

\begin{defn}
A {\bf vector field parameterization} is a triple $(W,e,N)$ where $N\in \N$, $0\ne e_1,\ldots, e_N\in [0,\infty)^\nu$ and $W(t,x)$ is a smooth vector field on $\Omega''$, depending smoothly on $t\in B^N(\rho)$ for some $\rho>0$, with $W(0,x)\equiv 0$.  If we want to make the choice of $\Omega''$ explicit, we write $(W,e,N, \Omega'')$.
\end{defn}

\begin{prop}\label{PropResSurfEquiv}
Parameterizations and vector field parameterizations are in bijective equivalence in the following sense:
\begin{itemize}
\item Given a parameterization $(\gamma,e,N,\Omega,\Omega''')$, we define a vector field by
\begin{equation}\label{EqnResSurfDefW}
W(t,x):=\frac{d}{d\epsilon}\bigg|_{\epsilon=1} \gamma_{\epsilon t}\circ \gamma_t^{-1}(x).
\end{equation}
For any $\Omega''\Subset \Omega'''$, if we take $\rho>0$ sufficiently small, then $(W,e,N,\Omega'')$
is a vector field parameterization on $\Omega''$.

\item Given a vector field parameterization $(W,e,N,\Omega'')$ and given $\Omega_1\Subset\Omega''$, if we take $\rho>0$ sufficiently small, there exists a unique
    $\gamma(t,x):B^N(\rho)\times \Omega_1\rightarrow \Omega''$ with $\gamma_0(x)\equiv x$ such that
    \begin{equation*}
W(t,x)=\frac{d}{d\epsilon}\bigg|_{\epsilon=1} \gamma_{\epsilon t}\circ \gamma_t^{-1}(x).
    \end{equation*}
    $(\gamma,e,N,\Omega'',\Omega_1)$ is the desired parameterization.
\end{itemize}
\end{prop}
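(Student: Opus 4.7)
The plan is to prove the two directions separately; each is constructive, and one checks that the two constructions invert each other.

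For the forward direction ($\gamma\mapsto W$), the first step is to shrink $\rho>0$ so that $\gamma_t$ is a diffeomorphism onto its image and $\gamma_t^{-1}(x)$ is jointly smooth in $(t,x)\in B^N(\rho)\times\Omega''$; this is the standard inverse-function-theorem argument using $\gamma_0=\mathrm{id}$. The formula
$$W(t,x)=\frac{d}{d\epsilon}\Big|_{\epsilon=1}\gamma_{\epsilon t}\circ\gamma_t^{-1}(x)$$
then manifestly defines a smooth vector field in $x$ depending smoothly on $t$. To see $W(0,x)\equiv 0$, observe that at $t=0$ we have $\gamma_{\epsilon\cdot 0}\circ\gamma_0^{-1}(x)=x$ independent of $\epsilon$, so the $\epsilon$-derivative vanishes. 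Thus $(W,e,N,\Omega'')$ is a vector field parameterization.

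For the backward direction ($W\mapsto\gamma$), the substance lies in converting the desired identity into a smooth parametric ODE in a single variable. Formally setting $h(s):=\gamma_{st}(y)$ and applying the target relation at $t'=st$, one obtains
$$s\,h'(s)=W(st,h(s)),\qquad h(0)=y.$$
The apparent singularity at $s=0$ is exactly cancelled by the hypothesis $W(0,\cdot)\equiv 0$: Taylor expansion in the first argument gives
$$\frac{1}{s}W(st,x)=\int_0^1\sum_j t_j\,(\partial_{t_j}W)(\sigma s t,x)\,d\sigma=:\Wt(s,t,x),$$
which extends smoothly to $(s,t,x)\in[0,1]\times B^N(\rho)\times\Omega''$. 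Applying standard existence, uniqueness, and smooth dependence for the genuinely smooth ODE $h'(s)=\Wt(s,t,h(s))$, $h(0)=y$, produces a unique solution on $[0,1]$ for each $(t,y)$, once $\rho$ is small enough that trajectories starting in $\Omega_1$ remain in $\Omega''$ (which is guaranteed since $\Wt$ is small when $t$ is small). Define $\gamma_t(y):=h(1)$.

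It remains to check that this $\gamma$ satisfies all of the stated properties. At $t=0$ the ODE reads $h'\equiv 0$, so $h(s)\equiv y$ and $\gamma_0(y)=y$. Running the derivation backwards at $s=1$ yields $h'(1)=\Wt(1,t,h(1))=W(t,\gamma_t(y))$, and since $h'(1)=\frac{d}{d\epsilon}|_{\epsilon=1}\gamma_{\epsilon t}(y)$, substituting $y=\gamma_t^{-1}(x)$ recovers \eqref{EqnResSurfDefW}. Uniqueness of $\gamma$ is immediate: any $\gamma^{*}$ satisfying the relation, read along the curve $s\mapsto \gamma^{*}_{st}(y)$, produces a solution of the same smooth ODE with the same initial data, hence agrees with the one above. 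The main obstacle throughout is the $1/s$ singularity of the naive ODE, but this is precisely neutralized by the defining hypothesis $W(0,\cdot)=0$, so no genuinely new analysis is required beyond the smooth ODE theory.
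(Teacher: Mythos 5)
Your forward direction is fine, and the overall strategy in the backward direction (reduce to the parametric ODE $sh'(s)=W(st,h(s))$, remove the $1/s$ singularity using $W(0,\cdot)\equiv 0$, and apply smooth-dependence ODE theory) is essentially the standard argument; the paper itself just cites Proposition 12.1 of the Stein--Street reference for this.

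However, there is a genuine gap in the verification step of the backward direction. You define $\gamma_t(y):=h_t(1)$, where $h_t$ solves the IVP $h_t'(s)=\Wt(s,t,h_t(s))$, $h_t(0)=y$. You then assert ``since $h'(1)=\frac{d}{d\epsilon}\big|_{\epsilon=1}\gamma_{\epsilon t}(y)$.'' This equality is not automatic: by your definition, $\gamma_{\epsilon t}(y)=h_{\epsilon t}(1)$, so $\frac{d}{d\epsilon}\big|_{\epsilon=1}\gamma_{\epsilon t}(y)=\frac{d}{d\epsilon}\big|_{\epsilon=1}h_{\epsilon t}(1)$, which a priori has nothing to do with $h_t'(1)$. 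You need the self-consistency identity $h_{st}(1)=h_t(s)$, equivalently $\gamma_{st}(y)=h_t(s)$, so that the curve $\epsilon\mapsto\gamma_{\epsilon t}(y)$ coincides with the solution curve $h_t$ you constructed. This follows from the scaling property
\begin{equation*}
s\,\Wt(\sigma s,t,x)=\frac{s}{\sigma s}W(\sigma s\,t,x)=\frac{1}{\sigma}W(\sigma\,(st),x)=\Wt(\sigma,st,x),
\end{equation*}
which shows that $\sigma\mapsto h_t(\sigma s)$ solves the same IVP as $h_{st}$ (same equation, same initial data), and so equals $h_{st}$ by ODE uniqueness. Without this step you have constructed a candidate $\gamma$ but have not shown it satisfies the defining relation \eqref{EqnResSurfDefW}; once it is supplied, the rest of your argument (including the uniqueness paragraph) goes through. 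The gap is easily repaired, but it is exactly the crux of why the definition $\gamma_t(y):=h_t(1)$ is consistent across the $t$-parameter family, so it should be made explicit.
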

\begin{proof}See Proposition 12.1 of \cite{SteinStreetI} for details.\end{proof}

\begin{rmk}
When we apply Proposition \ref{PropResSurfEquiv} to a vector field parameterization $(W,e,N)$,
we will take $\Omega_1$ so that $\Omega'\Subset \Omega_1$.  In this way, we may consider $\Omega'$
fixed throughout this entire paper, despite many application of Proposition \ref{PropResSurfEquiv}.
\end{rmk}

\begin{defn}
If $(\gamma,e,N)$ and $(W,e,N)$ are as in Proposition \ref{PropResSurfEquiv}, we say that
$(\gamma,e,N)$ {\bf corresponds} to $(W,e,N)$ and that $(W,e,N)$ {\bf corresponds} to $(\gamma,e,N)$.
\end{defn}

%
\begin{rmk}
If we are given a parameterization $(\gamma,e,N)$ or a vector field parameterization $(W,e,N)$,
we are (among other things) given a $\nu$-parameter dilation structure on $\R^N$.  In this case,
it makes sense to write $\delta t$ for $t\in \R^N$, $\delta\in[0,\infty)^\nu$ via \eqref{EqnResKerDefnDil}.
\end{rmk}

\begin{defn}
Let $(W,e,N,\Omega'')$ be a vector field parameterization
and let $$(X,d)=\{ (X_1,d_1),\ldots, (X_q,d_q)\} \subset \Gamma(T\Omega'')\times ([0,\infty)^{\nu}\setminus\{0\})$$
be a finite set consisting of $C^\infty$ vector fields on $\Omega''$, $X_j$, each paired with a $\nu$-parameter
formal degree $0\ne d_j\in [0,\infty)^\nu$.
We say $(X,d)$ {\bf controls} $(W,e,N)$ {\bf on} $\Omega'$ if there exists $\tau_1>0$, $\rho_1>0$ such that for every
$\delta\in[0,1]^\nu$, $x_0\in \Omega'$, there exist functions $c_l^{x_0,\delta}$ on $B^N(\rho_1)\times \B{X}{d}{x_0}{\tau_1\delta}$ satisfying:
\begin{itemize}
\item $W(\delta t, x) =\sum_{l=1}^q c_l^{x_0,\delta}(t,x) \delta^{d_l} X_l(x)$ on $B^{N}(\rho_1)\times \B{X}{d}{x_0}{\tau_1 \delta}$.
\item $\sup\limits_{\substack{x_0\in \Omega' \\ \delta\in [0,1]^\nu \\ t\in B^{N}(\rho_1)}} \sum_{|\alpha|+|\beta|\leq m } \q| (\delta X)^{\alpha} \partial_t^\beta c_l^{x_0,\delta}(t,x) \w|<\infty$, for every $m$.
\end{itemize}
\end{defn}

\begin{defn}
Let $(W,e,N,\Omega'')$ be a vector field parameterization
and let $$(X,d)=\{ (X_1,d_1),\ldots, (X_q,d_q)\} \subset \Gamma(T\Omega'')\times ([0,\infty)^{\nu}\setminus\{0\})$$
be a finite set consisting of $C^\infty$ vector fields on $\Omega''$, $X_j$, each paried with a $\nu$-parameter
formal degree $0\ne d_j\in [0,\infty)^\nu$.
We say $(X,d)$ {\bf smoothly controls} $(W,e,N)$ {\bf on} $\Omega'$ if there exists an open set $\Omega_1$
with $\Omega'\Subset\Omega_1\Subset\Omega''$ and $\rho_1>0$ such that for each $\delta\in[0,1]^\nu$ there exist
functions $c_j^\delta(t,x)\in C^\infty(B^N(\rho_1)\times\Omega_1)$ ($1\leq j\leq q$) with
\begin{itemize}
\item $W(\delta t, x) = \sum_{j=1}^q c_j^\delta(t,x) \delta^{d_j} X_j$, on $B^N(\rho_1)\times \Omega_1$.
\item $\{ c_j^\delta : \delta\in[0,1]^\nu ,j\in \{1,\ldots, q\} \}\subset C^\infty(B^{N}(\rho_1)\times \Omega_1)$ is a bounded set.
\end{itemize}
\end{defn}

\begin{rmk}
It is clear that if $(X,d)$ smoothly controls $(W,e,N)$, then $(X,d)$ controls $(W,e,N)$, though (as in Remark \ref{RmkResCCSmoothControlControl}) the converse does not hold.
\end{rmk}

\begin{defn}
Let $(W,e,N,\Omega'')$ be a vector field parameterization
and let $\sS\subseteq \Gamma(T\Omega'')\times ([0,\infty)^{\nu}\setminus\{0\})$.
We say $\sS$ {\bf controls} (resp. {\bf smoothly controls}) $(W,e,N)$ {\bf on} $\Omega'$ if there is a finite subset $\sF\subseteq \sS$
such that $\sF$ controls (resp. smoothly controls) $(W,e,N)$.
\end{defn}

\begin{defn}
Let $(\gamma,e,N,\Omega,\Omega''')$ be a parameterization and let $\sS\subseteq \Gamma(T\Omega'')\times ([0,\infty)^{\nu}\setminus\{0\})$.  We say $\sS$ {\bf controls} (resp. {\bf smoothly controls}) $(\gamma,e,N)$ {\bf on} $\Omega'$
if $\sS$ controls (resp. smoothly controls) the vector field parmeterization $(W,e,N)$, where $(W,e,N)$ corresponds to $(\gamma,e,N)$.
\end{defn}


Suppose $(W,e,N,\Omega'')$ is a vector field parameterization.  We can think of $W(t)$ as a smooth function
in the $t$ variable, taking values in smooth vector fields on $\Omega''$, satisfying $W(0)\equiv 0$.
We express $W$ as a Taylor series in the $t$ variable:
\begin{equation}\label{EqnResSurfWTaylor}
W(t)\sim \sum_{|\alpha|>0} t^{\alpha} X_\alpha,
\end{equation}
where $X_\alpha$ is a smooth vector field on $\Omega''$.
For the next definition, set
\begin{equation}\label{EqnResSurfDefnsS}
\sS:= \q\{ (X_\alpha, \deg(\alpha)) : \deg(\alpha)\in \onecompnu \w\}.
\end{equation}

\begin{defn}\label{DefnResSurfFinitelyGenVf}
Let $(W,e,N,\Omega'')$ be a vector field parameteriation and let $\sS$ be given by \eqref{EqnResSurfDefnsS}.  We say $(W,e,N)$ is {\bf finitely generated} (resp. {\bf smoothly finitely generated}) {\bf on} $\Omega'$
if
\begin{itemize}
\item $\sL(\sS)$ controls (resp. smoothly controls) $(W,e,N)$.
\item $\sL(\sS)$ is finitely generated (resp. smoothly finitely generated).
\end{itemize}
If $\sL(\sS)$ is finitely generated (resp. smoothly finitely generated) by $\sF$, 
and we wish to make this choice of $\sF$ explicit, we say $(W,e,N)$ is finitely generated (resp. smoothly finitely
generated) by $\sF$ on $\Omega'$.
\end{defn}

\begin{defn}
Let $(\gamma,e,N,\Omega,\Omega''')$ be a parameterization.  We say $(\gamma,e,N)$ is {\bf finitely generated} (resp. {\bf
smoothly finitely generated}) on $\Omega'$ if $(W,e,N)$ is finitely generated (resp. smoothly finitely generated) on $\Omega'$,
where $(\gamma,e,N)$ corresponds to $(W,e,N)$.  We say $(\gamma, e, N)$ is {\bf finitely generated} (resp. {\bf smoothly finitely generated}) by $\sF$ on $\Omega'$ if $(W,e,N)$ is finitely generated (resp. smoothly finitely generated) by $\sF$ on $\Omega'$.
\end{defn}

\begin{rmk}\label{RmkResSurfFiniteGenImpliessD}
Note that if $(\gamma,e,N)$ (or $(W,e,N)$) is finitely generated (resp. smoothly finitely generated) by $\sF$ on $\Omega'$,
then $\sF$ satisfies $\sD(\Omega')$ (resp. $\sD_s(\Omega')$)--see Lemma \ref{LemmaResCCsLFinGenGivessD}.
\end{rmk}

\begin{rmk}
If $(\gamma,e,N)$ (or $(W,e,N)$) is finitely generated (resp. smoothly finitely generated) on $\Omega'$, then there may be
many different choices of finite sets $\sF\subseteq \sL(\sS)$ such that $(\gamma,e,N)$ (or $(W,e,N)$)
is finitely generated (resp. smoothly finitely generated) by $\sF$ on $\Omega'$.  However, by Remark \ref{RmkResCCFiniteGenUnique}, any two such choices are equivalent (resp. smoothly equivalent) on $\Omega'$,
and are equivalent for all of our purposes.  Thus we may unambiguously say
$(\gamma,e,N)$ (or $(W,e,N)$) is finitely generated (resp. smoothly finitely generated) by $\sF$ on $\Omega'$,
where $\sF$ can be any such choice.
This choice of $\sF$ satisfies $\sD(\Omega')$ (resp. $\sD_s(\Omega)$)--see Remark \ref{RmkResSurfFiniteGenImpliessD}.
\end{rmk}

In Definition \ref{DefnResSurfFinitelyGenVf}, we factored the definition that $(W,e,N)$
be finitely generated into two aspects.  Sometimes it is easier to verify a slightly different characterization.
We continue to take $\sS$ as in \eqref{EqnResSurfDefnsS} and define
\begin{equation*}
\sV:=\q\{ (X_\alpha, \deg(\alpha)) : |\alpha|>0\w\}.
\end{equation*}
Note that $\sS\subseteq \sV$.

\begin{prop}\label{PropResSurfAnotherWay}
Let $(W,e,N,\Omega'')$ be a vector field parameterization.  Then $(W,e,N)$ is finitely generated (resp. smoothly
finitely generated) on $\Omega'$ if and only if
\begin{itemize}
\item $\sL(\sV)$ is finitely generated (resp. smoothly finitely generated) on $\Omega'$.
\item $\sL(\sV)$ controls (resp. smoothly controls) $(W,e,N)$ on $\Omega'$.
\item $\sL(\sS)$ controls (resp. smoothly controls) $\sV$ on $\Omega'$.
\end{itemize}
\end{prop}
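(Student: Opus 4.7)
}
The strategy is to first isolate an auxiliary principle about how control propagates through Lie brackets, and then use it to establish each direction of the equivalence from the definitions. Let $\sS$ and $\sV$ be as above. The only substantive ingredient beyond set-theoretic bookkeeping and the Taylor expansion of $W$ will be the following lemma, which is a close cousin of Lemma \ref{LemmaResCCsDandControl} and the transitivity of control.

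\textbf{Auxiliary lemma.} \emph{If $\sF\subset\snuvect$ is finite, satisfies $\sD(\Omega')$ (resp.\ $\sD_s(\Omega')$), and controls (resp.\ smoothly controls) a set $\sT\subseteq\snuvect$ on $\Omega'$, then $\sF$ controls (resp.\ smoothly controls) $\sL(\sT)$ on $\Omega'$.} I would prove this by induction on bracket depth. Given $(Y_i,\eta_i)\in\sL(\sT)$ ($i=1,2$) controlled by $\sF$, write $\delta^{\eta_i} Y_i = \sum_j c_{i,j}^{x_0,\delta}\,\delta^{d_j}X_j$ on the appropriate ball, and expand the bracket
\[
\delta^{\eta_1+\eta_2}[Y_1,Y_2] = \sum_{j,k} c_{1,j} c_{2,k}\,[\delta^{d_j}X_j,\delta^{d_k}X_k] + \sum_{j,k} \bigl(c_{1,j}(\delta^{d_j}X_j)c_{2,k}\bigr)\delta^{d_k}X_k - \bigl(c_{2,k}(\delta^{d_k}X_k)c_{1,j}\bigr)\delta^{d_j}X_j.
\]
The first sum is handled by the hypothesis $\sF\in\sD(\Omega')$ applied to $[X_j,X_k]$; the remaining sums are handled because the derivative bounds built into the definition of control provide bounds on $(\delta X)^\alpha c_{i,j}$ of all orders. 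This is the main technical step; everything else is formal.

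\textbf{Forward direction.} Assume $(W,e,N)$ is finitely generated by $\sF$, so that $\sF$ is equivalent to $\sL(\sS)$ and $\sL(\sS)$ controls $(W,e,N)$. For (iii), expand $W(\delta t,x)=\sum_l c_l^{x_0,\delta}(t,x)\delta^{d_l}X_l(x)$ and match Taylor coefficients: differentiating $\partial_t^\alpha$ and evaluating at $t=0$ gives $\alpha!\,\delta^{\deg(\alpha)}X_\alpha(x) = \sum_l (\partial_t^\alpha c_l^{x_0,\delta})(0,x)\,\delta^{d_l}X_l(x)$, and the uniform bounds on $\partial_t^\alpha c_l^{x_0,\delta}$ exhibit $(X_\alpha,\deg(\alpha))$ as controlled by $\sL(\sS)$. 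For (ii), the inclusion $\sS\subseteq\sV$ gives $\sL(\sS)\subseteq\sL(\sV)$, so the finite controlling family from (a) lies in $\sL(\sV)$. For (i), Lemma \ref{LemmaResCCsLFinGenGivessD} gives $\sF\in\sD(\Omega')$; combined with (iii), $\sF$ controls $\sV$, so the auxiliary lemma yields that $\sF$ controls $\sL(\sV)$, while $\sF\subseteq\sL(\sV)$ gives the reverse control trivially. Thus $\sL(\sV)$ is finitely generated by $\sF$.

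\textbf{Backward direction.} Assume (i)--(iii), and let $\sF$ be a finite set generating $\sL(\sV)$. Since $\sL(\sS)$ trivially satisfies $\sD(\Omega')$ and by (iii) controls $\sV$, the auxiliary lemma gives that $\sL(\sS)$ controls $\sL(\sV)$. Combined with (ii) and transitivity, $\sL(\sS)$ controls $(W,e,N)$. Also $\sL(\sS)$ controls $\sF\subseteq\sL(\sV)$, and conversely $\sF$ controls $\sL(\sV)\supseteq\sL(\sS)$; hence $\sF$ is equivalent to $\sL(\sS)$, so $\sL(\sS)$ is finitely generated by $\sF$. This verifies both clauses of Definition \ref{DefnResSurfFinitelyGenVf}. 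The smooth case proceeds identically, using the smooth variant of the auxiliary lemma and smooth control everywhere. The only real obstacle is the auxiliary lemma, and it is plausibly already available from \cite{StreetMultiParameterCCBalls}; if not, the bracket expansion above makes it routine.
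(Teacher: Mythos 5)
Your proposal is correct and follows essentially the same route as the paper: both directions are formal manipulations with control, equivalence, and the two clauses of Definition \ref{DefnResSurfFinitelyGenVf}, hinging on the fact that a family satisfying $\sD(\Omega')$ which controls a set $\sT$ also controls $\sL(\sT)$. The paper uses this bracket-propagation fact tacitly (the step ``$\sL(\sS)$ controls $\sV$, therefore $\sL(\sS)$ controls $\sL(\sV)$'' appears in both directions with no citation), whereas you isolate it as an explicit auxiliary lemma and sketch the commutator expansion that proves it; that is a clarifying addition rather than a different argument, and is indeed the only nontrivial ingredient. Two small points to tidy up. First, your auxiliary lemma is stated for a finite $\sF$, but you then apply it to the infinite set $\sL(\sS)$: this is harmless because control of a single $(Z,e)\in\sL(\sV)$ only ever involves finitely many elements at a time, so one can enlarge the finite controlling family by finitely many bracket-controllers drawn from $\sL(\sS)$ as the induction proceeds, but the lemma's statement and its application should be reconciled. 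Second, in the forward direction you write ``$\sF\subseteq\sL(\sV)$ gives the reverse control trivially''; by Remark \ref{RmkResCCCanTakesFSubset} one \emph{may} choose $\sF\subseteq\sL(\sS)\subseteq\sL(\sV)$, but the paper's proof instead argues via equivalence of $\sF$ with $\sL(\sS)$ without assuming the inclusion, which is marginally cleaner since the inclusion is a convention rather than a requirement.
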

\begin{proof}
Suppose the above three conditions hold.
Let $\sL(\sV)$ be finitely generated (resp. smoothly finitely generated) by $\sF$ on $\Omega'$.
Since $\sL(\sS)$ controls (resp. smoothly controls) $\sV$ on $\Omega'$, $\sL(\sS)$ controls (resp. smoothly controls) $\sL(\sV)$ on $\Omega'$, and 
therefore $\sL(\sS)$ controls (resp. smoothly controls) $\sF$ on $\Omega'$.  Since $\sF$ controls (resp. smoothly controls) $\sL(\sV)$ on $\Omega'$,
and $\sL(\sS)\subseteq \sL(\sV)$, $\sF$ controls (resp. smoothly controls) $\sL(\sS)$ on $\Omega'$.  Thus $\sL(\sS)$ and $\sF$
are equivalent (resp. smoothly equivalent) on $\Omega'$.  I.e., $\sL(\sS)$ is finitely generated (resp. smoothly finitely generated) by $\sF$ on $\Omega'$.
Since $\sL(\sV)$ controls (resp. smoothly controls) $(W,e,N)$ on $\Omega'$, and $\sF$ controls (resp. smoothly controls) $\sL(\sV)$ on $\Omega'$, it follows that $\sF$ controls (resp. smoothly controls) $(W,e,N)$ on $\Omega'$.  
Since $\sL(\sS)$ and $\sF$ are equivalent (resp. smoothly equivalent) on $\Omega'$, it follows that $\sL(\sS)$ controls (resp. smoothly controls) $(W,e,N)$
on $\Omega'$.
This shows that $(W,e,N)$ is finitely generated (resp. smoothly finitely generated)
by $\sF$ on $\Omega'$.

Conversely, suppose $(W,e,N)$ is fintiely generated (resp. smoothly finitely generated) by $\sF$ on $\Omega'$.  Becuase
$\sF$ controls (resp. smoothly controls) $(W,e,N)$ on $\Omega'$, it follows that $\sF$ controls (resp. smoothly controls) $\sV$ on $\Omega'$.  Because $\sL(\sS)$ and $\sF$ are equivalent (resp. smoothly equivalent)
on $\Omega'$, it follows that $\sL(\sS)$ controls (resp. smoothly controls) $\sV$ on $\Omega'$.  Thus, $\sL(\sS)$ controls (resp. smoothly controls) $\sL(\sV)$ on $\Omega'$.  Since $\sS\subseteq \sV$, $\sL(\sS)\subseteq \sL(\sV)$ and therefore $\sL(\sV)$ smoothly controls $\sL(\sS)$ on $\Omega'$.
Hence $\sL(\sS)$ and $\sL(\sV)$ are equivalent (resp. smoothly equivalent) on $\Omega'$.
Combining the above, we have $\sL(\sV)$ and $\sF$ are equivalent (resp. smoothly equivalent) on $\Omega'$.  Therefore,
$\sL(\sV)$ is finitelty generated (resp. smoothly finitely generated on $\Omega'$).  Futhermore, since $\sL(\sS)$ controls (resp. smoothly controls) $(W,e,N)$ on $\Omega'$,
by assumption, $\sL(\sV)$ controls (resp. smoothly controls) $(W,e,N)$ on $\Omega'$.  This completes the proof.
\end{proof}

\begin{example}
It is instructive to understand how a parameterization $(\gamma,e,N)$ can fail to be finitely generated.
When $\nu=1$, there are three main ways this can happen:
\begin{enumerate}[(i)]
\item If $\sS$ is given by \eqref{EqnResSurfDefnsS}, it could be that $\sL(\sS)$ fails to be finitely generated.
On $\R^2$ define dilations by multiplication: $\delta(s,t)= (\delta s, \delta t)$ (i.e., $N=2$ and $e_1=1$, $e_2=1$).
Let $X_1=\frac{\partial}{\partial x}$, $X_2=e^{-\frac{1}{x^2}}\frac{\partial}{\partial y}$, and $W(s,t)=sX_1+tX_2$.
Then $\sL(\sS)$ is not finitely generated on any open set containing $0$.
See Example \ref{ExampleResCCFailureFGVect}.

\item If $\sL(\sS)$ is finitely generated by $\sF$ on $\Omega'$, and if one sets $\sF_0:=\{X :(X,d)\in \sF\}$,
then for $X,Y\in \sF_0$, $[X,Y]$ is spanned by elements of $\sF_0$ (with appropriately nice coefficients).
The Frobenius theorem applies in this setting (see Section 2.2 of \cite{StreetMultiParamSingInt}) to foliate the ambient
space into leaves. Our assumptions imply that $\gamma_t(x)$ lies in the leaf passing through $x$.
This is not always the case.  For instance, if $\gamma_t(x):\R\times \R\rightarrow \R$ is given by
$\gamma_t(x)= x-e^{-\frac{1}{t^2}}$, then $X_\alpha=0$ $\forall \alpha$, and therefore the leaves
are points.  Thus, for $t\ne 0$, $\gamma_t(x)$ does not lie in the leaf passing through $x$.

\item Even if $\sL(\sS)$ is finitely generated, and $\gamma_t(x)$ lies in the appropriate leaf, it may still be that $\sL(\sS)$
does not control $\gamma_t(x)$.  Informally, this is because $\gamma_t(x)$ does not lie in the leaf
in an appropriately ``scale invariant'' way.  To create such an example we work on $\R$.  We define the vector field
$W(t,x)$ by
\begin{equation*}
W(t,x) = t e^{-\frac{1}{x^2}} \frac{\partial}{\partial x} + e^{-\frac{1}{t^2}} x \frac{\partial}{\partial x}.
\end{equation*}
Here $N=1$, $e_1=1$.  Let $(\gamma,1,1)$ be the parameterization corresponding to the vector field parameterization
$(W,1,1)$.  Note that
\begin{equation*}
\gamma_t(x)\text{ is }\begin{cases}
\text{negative,}&\text{if }x\text{ is negative,}\\
\text{zero,}&\text{if }x\text{ is zero,}\\
\text{positive}&\text{if }x\text{ is positive.}
\end{cases}
\end{equation*}
In this case, there is only one nonzero $X_\alpha$, namely $X_1=e^{-\frac{1}{x^2}}\frac{\partial}{\partial x}$.  Thus, the leaves
of the corresponding foliation are $(-\infty, 0)$, $\{0\}$, and $(0,\infty)$, and we see that $\gamma_t(x)$ does in fact
lie in the leaf passing through $x$.  Finally, we claim that $\gamma$ is not controlled by $\{ (X_1,1)\}$ on any neighborhood of $0\in \R$.
If $\gamma$ were controlled, it would imply, in particular, that there exists a $t_0\ne 0$ such that
for every $x$ near $0$, $e^{-\frac{1}{t_0^2}} x = c(x) e^{-\frac{1}{x^2}}$, with $c(x)$ bounded uniformly as $x\rightarrow 0$.
This is clearly impossible.

\end{enumerate}
The reader might note that all of the above examples used functions which vanished to infinite order.
This is necessary in the sense that when $\nu=1$ and $\gamma$ is real analytic,
$(\gamma,e,N)$ is automatically smoothly finitely generated; see Corollary \ref{CorResSurfAnalSingle}.
\end{example}
\begin{example}
When $\nu>1$, Proposition \ref{PropResSurfAnotherWay} highlights another way in which $(\gamma,e,N)$ could fail to be finitely
generated:  $\sL(\sS)$ could fail to control $\sV$ on $\Omega'$, and this can happen even if $\gamma$ is real analytic.
For instance, consider the the curve $\gamma_{(s,t)}(x) = x-st$.  Here we are using the dilation
$(\delta_1,\delta_2)(t,s)= (\delta_1 t,\delta_2 s)$.  Then every vector fields in $\sS$ is the zero vector field, however
$(\frac{\partial}{\partial x}, (1,1))\in \sV$, so $\sL(\sS)$ does not control $\sV$.  It is interesting
to note that there is a product kernel $K(s,t)$ such that the operator given by \eqref{EqnIntroCNSWOp},
with this choice of $\gamma$, is not even bounded on $L^2$.
This dates back to work of Nagel and Wainger \cite{NagelWaingerL2BoundednessOfHilbertTransformsMultiParameterGroup}.
See, also,  Section 17.5 of \cite{SteinStreetI}.
\end{example}

If $(\gamma,e,N)$ (or $(W,e,N)$) is finitely generated (resp. smoothly finitely generated) by $\sF$ on $\Omega'$, then $\sF$ satisfies $\sD(\Omega')$ (resp. $\sD_s(\Omega')$) by Lemma \ref{LemmaResCCsLFinGenGivessD}.  The next result addresses the extent to which the converse is true.

\begin{prop}\label{PropResSurfExistsAParam}
Suppose $\sS\subseteq \snuvectone$.  Suppose $\sL(\sS)$ is finitely generated by $\sF$ on $\Omega'$.  Then, there 
is a finite subset $\sS_0\subseteq \sS$ and
a vector field parameterization $(W,e,N,\Omega)$ such that $(W,e,N)$ is finitely generated by 
$\sF\cup \sS_0$ on $\Omega'$ (and
is finitely generated by 
$\sF$ on $\Omega'$).  If, in addition, $\sL(\sS)$ is smoothly finitely generated by $\sF$ on $\Omega'$, then $(W,e,N)$ is smoothly finitely generated by $\sF\cup \sS_0$ on $\Omega'$.
\end{prop}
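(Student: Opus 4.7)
The plan is to first extract a finite $\sS_0\subseteq \sS$ whose iterated Lie algebra $\sL(\sS_0)$ already controls $\sF$ on $\Omega'$, and then to build $W$ as a simple linear-in-$t$ combination of the vector fields in $\sS_0$. With this done, the Taylor coefficients of $W$ which are relevant for \eqref{EqnResSurfDefnsS} will be exactly $\sS_0$, and verifying Definition \ref{DefnResSurfFinitelyGenVf} becomes mechanical.

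For the extraction step: by hypothesis $\sF$ and $\sL(\sS)$ are equivalent (resp. smoothly equivalent) on $\Omega'$, so in particular $\sL(\sS)$ controls (resp. smoothly controls) each $(Y,e')\in\sF$. By the definition of control by a (possibly infinite) set, there is a finite $\sG_{Y}\subseteq \sL(\sS)$ which controls (resp. smoothly controls) $(Y,e')$. By Definition \ref{DefnResCCsL}, every member of $\sG_{Y}$ is obtained from finitely many elements of $\sS$ by iterated Lie brackets; collecting those generators over the (finite) set $\sF$ produces a finite $\sS_0\subseteq \sS$ with $\sG_{Y}\subseteq \sL(\sS_0)$ for every $Y$. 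Thus $\sL(\sS_0)$ controls (resp. smoothly controls) $\sF$. Conversely $\sF$ controls (resp. smoothly controls) $\sL(\sS)\supseteq \sL(\sS_0)$, so $\sF$ and $\sL(\sS_0)$ are equivalent on $\Omega'$, i.e., $\sL(\sS_0)$ is finitely generated (resp. smoothly finitely generated) by $\sF$ on $\Omega'$.

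For the construction step: enumerate $\sS_0=\{(X_{0,1},d_{0,1}),\ldots,(X_{0,m},d_{0,m})\}$ and set $N:=m$, $e_j:=d_{0,j}\in\onecompnu$. Define
\begin{equation*}
W(t,x):=\sum_{j=1}^m t_j\, X_{0,j}(x),\qquad t=(t_1,\ldots,t_m)\in \R^N.
\end{equation*}
Then $W(0,x)\equiv 0$, $W$ is smooth on $\R^N\times \Omega$, and $(W,e,N,\Omega)$ is a vector field parameterization. Its Taylor expansion \eqref{EqnResSurfWTaylor} terminates: the only nonzero $X_\alpha$ occur at $\alpha=\eh_j$ (the $j$-th standard basis vector in $\N^N$), with $X_{\eh_j}=X_{0,j}$ and $\deg(\eh_j)=e_j=d_{0,j}\in\onecompnu$. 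Hence the set $\sS$ associated to $W$ via \eqref{EqnResSurfDefnsS} equals $\sS_0$.

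It remains to check the two items of Definition \ref{DefnResSurfFinitelyGenVf}. The first, that $\sL(\sS_0)$ is finitely generated (resp. smoothly finitely generated) by $\sF$ on $\Omega'$, was done above. For the second, that $\sL(\sS_0)$ controls (resp. smoothly controls) $(W,e,N)$ on $\Omega'$, note $\sS_0\subseteq \sL(\sS_0)$ and take coefficients $c_j^{x_0,\delta}(t,x):=t_j$ (resp. $c_j^\delta(t,x):=t_j$); then
\begin{equation*}
W(\delta t,x)=\sum_{j=1}^m \delta^{d_{0,j}}\, t_j\, X_{0,j}(x)=\sum_{j=1}^m c_j^{x_0,\delta}(t,x)\,\delta^{d_{0,j}} X_{0,j}(x),
\end{equation*}
and $t_j$ is bounded on $B^N(\rho_1)$, independent of $x$ and of $\delta$, hence all the seminorms $\|(\delta X)^\alpha \partial_t^\beta c_j^{x_0,\delta}\|_{C^0}$ (resp. the $C^\infty(B^N(\rho_1)\times \Omega_1)$ seminorms) are uniformly bounded. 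Thus $(W,e,N)$ is finitely generated (resp. smoothly finitely generated) by $\sF$ on $\Omega'$; trivially the same holds with $\sF$ replaced by $\sF\cup \sS_0$, since adding the elements of $\sS_0\subseteq \sL(\sS)$ preserves equivalence with $\sL(\sS)$. The only delicate point — and the main obstacle — is the bookkeeping in the first paragraph above: passing from the statement that $\sL(\sS)$ controls $\sF$ (which a priori may require taking Lie brackets of arbitrarily many elements of $\sS$) to the existence of a single finite $\sS_0\subseteq \sS$ for which $\sL(\sS_0)$ already suffices; once that is established, the construction of $W$ is essentially automatic.
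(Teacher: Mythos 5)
Your proof is correct and takes essentially the same route as the paper: extract a finite $\sS_0\subseteq\sS$ with $\sL(\sS_0)$ finitely generated by $\sF$, then define $W$ as the linear-in-$t$ combination of the extracted vector fields, read off $\sS$ from \eqref{EqnResSurfDefnsS}, and verify Definition \ref{DefnResSurfFinitelyGenVf}. The only cosmetic difference is that you build $W$ from $\sS_0$ alone (so $N=|\sS_0|$ and every $e_j\in\onecompnu$), whereas the paper uses $\sF'=\sF\cup\sS_0$ (so $N=|\sF'|$, allowing some $e_j\notin\onecompnu$); both are valid, since in the paper's version the set from \eqref{EqnResSurfDefnsS} still contains $\sS_0$ and the verification goes through identically. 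You also spell out the extraction step (each element of $\sL(\sS)$ is a finite iterated bracket of finitely many elements of $\sS$, then take a finite union over $\sF$) which the paper simply asserts; that justification is correct and is the one implicitly intended.
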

\begin{proof}
Suppose $\sL(\sS)$ is finitely generated (resp. smoothly finitely generated) by $\sF$ on $\Omega'$.  
Then, there is a finite subset $\sS_0\subseteq\sS$ such that $\sL(\sS_0)$ is finitely generated (resp. smoothly finitely generated) by $\sF$ on $\Omega'$.
Setting
$\sF'=\sF\cup \sS_0$, we see that $\sL(\sS)$ is finitely generated (resp. smoothly finitely generated)
by $\sF'$ on $\Omega'$ and that $\sF'$ and $\sF$ are equivalent (resp. smoothly equivalent) on $\Omega'$.  Enumerate $\sF'$:
\begin{equation*}
\sF'=\{(X_1,d_1),\ldots, (X_q,d_q)\}.
\end{equation*}
Set $N=q$ and define $\nu$-parameter dilations on $\R^N$ by $e_j=d_j$.
Set
\begin{equation*}
W(t,x)=\sum_{j=1}^q t_j X_j.
\end{equation*}
Clearly, $W$ is smoothly controlled by $\sF'$ on $\Omega'$.
Furthermore, if we define $X_\alpha$ as in \eqref{EqnResSurfWTaylor} and $\sS'$
by \eqref{EqnResSurfDefnsS}, then we have $\sS_0\subseteq \sS'$, then therefore
$\sF'$ is controlled (resp. smoothly controlled) by $\sL(\sS')$ on $\Omega'$.  Since $\sF'$ clearly controls (resp. smoothly controls)
$\sL(\sS')$ on $\Omega'$, we see $W$ is finitely generated (resp. smoothly finitely generated) by $\sF'$ on $\Omega'$.
\end{proof}

\begin{rmk}\label{RmkResSurfPointExistsAParam}
The point of Proposition \ref{PropResSurfExistsAParam} is the following.  Suppose $\sS\subseteq \snuvectone$ is such that $\sL(\sS)$ is finitely generated
on $\Omega'$.  
Then, $\sL(\sS)$ ``comes from a parameterization'' in the following sense.
Applying Proposition \ref{PropResSurfExistsAParam}, we obtain a parameterization $(W,e,N,\Omega)$ which is finitely
generated on $\Omega'$ and such that
if we define $\sS'$ by 
\eqref{EqnResSurfDefnsS} with this choice of $W$, then $\sL(\sS)$ and $\sL(\sS')$ are equivalent on $\Omega'$.
I.e., $\sL(\sS)$ and $(W,e,N)$ are finitely generated by the same $\sF$ on $\Omega'$.
\end{rmk}

\begin{rmk}\label{RmkResSurfExistsParam}
The vector field parameterization exhibited in Proposition \ref{PropResSurfExistsAParam}
corresponds to a parameterization $(\gamma,e,N)$ via Proposition \ref{PropResSurfEquiv}.
In this case, $\gamma$ is easy to write down.  Indeed,
\begin{equation*}
\gamma_{(t_1,\ldots, t_q)}(x)  = e^{t_1 X_1+\cdots+t_q X_q} x.
\end{equation*}
\end{rmk} 

Some of our results can be strengthened if we assume that the parameterization involved is even better than finitely generated, which we now present.
\begin{defn}\label{DefnResSurfLinearlyFinitelyGenerated}
Let $(W,e,N,\Omega'')$ be a vector field parameterization.  We say $(W,e,N)$ is {\bf linearly finitely generated} (resp. {\bf smoothly linearly finitely generated}) on $\Omega'$
if 
\begin{itemize}
\item $\sF_0:=\{ (X_\alpha, \deg(\alpha)) : \deg(\alpha)\in \onecompnu\text{ and } |\alpha|=1\}$ satisfies $\sD(\Omega')$ (resp. $\sD_s(\Omega')$).
\item $\sF_0$ controls (resp. smoothly controls) $(W,e,N)$ on $\Omega'$. 
\end{itemize}
If $\sF\subset \snuvectone$ is another finite set such that $\sF_0$ and $\sF$ are equivalent (resp. smoothly equivalent) on $\Omega'$,
we say $(W,e,N)$ is {\bf linearly finitely generated} (resp. {\bf smoothly linearly finitely generated}) {\bf by} $\sF$ {\bf on} $\Omega'$.  (In particular, one can take $\sF=\sF_0$ in this case.)
\end{defn}

\begin{defn}
Let $(\gamma,e,N,\Omega,\Omega''')$ be a parameterization.  We say $(\gamma,e,N)$ is {\bf linearly finitely generated} (resp. {\bf
smoothly linearly finitely generated}) on $\Omega'$ if $(W,e,N)$ is linearly finitely generated (resp. smoothly linearly finitely generated) on $\Omega'$,
where $(\gamma,e,N)$ corresponds to $(W,e,N)$.  We say $(\gamma, e, N)$ is {\bf linearly finitely generated} (resp. {\bf smoothly linearly finitely generated}) {\bf by} $\sF$ {\bf on} $\Omega'$ if $(W,e,N)$ is linearly finitely generated (resp. smoothly linearly finitely generated) by $\sF$ on $\Omega'$.
\end{defn}

\begin{lemma}\label{LemmaResSurfLinFGImpliesFG}
If $(\gamma,e,N,\Omega,\Omega''')$ is linearly finitely generated (resp. smoothly linearly finitely generated) by $\sF$ on $\Omega'$, then $(\gamma,e,N,\Omega,\Omega''')$
is finitely generated (resp. smoothly finitely generated) by $\sF$ on $\Omega'$.
\end{lemma}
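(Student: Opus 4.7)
The plan is to pass to the vector-field parameterization $(W,e,N)$ corresponding to $(\gamma,e,N)$ and to verify, directly from Definition \ref{DefnResSurfFinitelyGenVf}, the two things that must hold for $(W,e,N)$ to be finitely generated (resp.\ smoothly finitely generated) by $\sF$ on $\Omega'$: that $\sL(\sS)$ controls (resp.\ smoothly controls) $(W,e,N)$ on $\Omega'$, and that $\sL(\sS)$ is equivalent (resp.\ smoothly equivalent) to $\sF$ on $\Omega'$, where $\sS$ is defined by \eqref{EqnResSurfDefnsS}. By hypothesis, $\sF_0$ satisfies $\sD(\Omega')$ (resp.\ $\sD_s(\Omega')$), $\sF_0$ controls (resp.\ smoothly controls) $(W,e,N)$ on $\Omega'$, and $\sF_0 \equiv \sF$ (resp.\ smoothly equivalent) on $\Omega'$. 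Crucially, by construction $\sF_0 \subseteq \sS$, so $\sF_0 \subseteq \sL(\sS)$ and in particular $\sL(\sS)$ controls everything $\sF_0$ controls---so the control of $(W,e,N)$ by $\sL(\sS)$ is immediate, and $\sL(\sS)$ controls $\sF$ via $\sF_0$.

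The first real step is to extract from the control of $W$ by $\sF_0$ that $\sF_0$ controls every coefficient vector field individually. Writing out the Taylor expansion $W(\delta t,x) \sim \sum_{|\alpha|>0} \delta^{\deg(\alpha)} t^\alpha X_\alpha(x)$ and matching with the relation $W(\delta t,x) = \sum_l c_l^{x_0,\delta}(t,x) \delta^{d_l} Y_l(x)$ supplied by $\sF_0$ controlling $W$, I differentiate $\alpha!$ times in $t$ and evaluate at $t=0$ to obtain
\begin{equation*}
\delta^{\deg(\alpha)} X_\alpha(x) \;=\; \frac{1}{\alpha!}\sum_l \bigl(\partial_t^{\alpha} c_l^{x_0,\delta}\bigr)(0,x)\,\delta^{d_l} Y_l(x).
\end{equation*}
Since the hypotheses on $c_l^{x_0,\delta}$ control $\partial_t^\beta (\delta Y)^\gamma c_l^{x_0,\delta}$ uniformly for all $\beta,\gamma$, it follows that $\sF_0$ controls (resp.\ smoothly controls) $(X_\alpha,\deg(\alpha))$ on $\Omega'$ for every $\alpha$ with $|\alpha|>0$; restricting to those $\alpha$ with $\deg(\alpha)\in\onecompnu$, $\sF_0$ controls $\sS$.

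The second, and principal, step is to promote control of $\sS$ to control of $\sL(\sS)$, using that $\sF_0$ satisfies $\sD(\Omega')$. I claim that if $(X,d_X)$ and $(Y,d_Y)$ are controlled (resp.\ smoothly controlled) by $\sF_0$, then so is $([X,Y],d_X+d_Y)$; the full claim then follows by induction on commutator depth, starting from the elements of $\sS$. Using the expansions $\delta^{d_X} X = \sum_j a_j^{x_0,\delta} \delta^{d_j} Y_j$ and $\delta^{d_Y} Y = \sum_k b_k^{x_0,\delta} \delta^{d_k} Y_k$, the Leibniz rule gives
\begin{equation*}
\delta^{d_X+d_Y}[X,Y] = \sum_{j,k}\Bigl( a_j^{x_0,\delta}(\delta^{d_j} Y_j b_k^{x_0,\delta})\,\delta^{d_k}Y_k - b_k^{x_0,\delta}(\delta^{d_k}Y_k a_j^{x_0,\delta})\,\delta^{d_j}Y_j + a_j^{x_0,\delta} b_k^{x_0,\delta}\,\delta^{d_j+d_k}[Y_j,Y_k]\Bigr).
\end{equation*}
The first two families of terms are linear combinations of $\delta^{d_l}Y_l$ whose coefficients obey $(\delta Y)^\alpha$-bounds inherited from $a_j$ and $b_k$; the third family is handled by the hypothesis $\sF_0 \in \sD(\Omega')$, which (via Lemma \ref{LemmaResCCsDandControl}) gives control of each $\delta^{d_j+d_k}[Y_j,Y_k]$ by $\sF_0$. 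Combining, $\sF_0$ controls $([X,Y], d_X+d_Y)$, completing the inductive step. Thus $\sF_0$ controls $\sL(\sS)$ on $\Omega'$, and hence so does $\sF$.

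Assembling the pieces: $\sL(\sS)$ controls $\sF$ (since $\sF_0 \subseteq \sL(\sS)$ and $\sF \equiv \sF_0$), and conversely $\sF$ controls $\sL(\sS)$ (by Step 2 via $\sF \equiv \sF_0$), so $\sL(\sS)$ is finitely generated by $\sF$; and $\sL(\sS)$ controls $(W,e,N)$ because $\sF_0 \subseteq \sL(\sS)$ does. Thus $(W,e,N)$, and hence $(\gamma,e,N)$, is finitely generated by $\sF$ on $\Omega'$. The smooth case proceeds identically with ``controls'' replaced by ``smoothly controls'' throughout; there all coefficients are $C^\infty$ on a common neighborhood, so the Leibniz computation goes through verbatim. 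The main obstacle is the inductive Step 2 in the non-smooth case, where one must carefully track the $(\delta Y)^\alpha$-derivative bounds on the coefficients through the bracket computation and make sure the Carnot--Carath\'eodory balls appearing at each stage are compatible; this is the kind of bookkeeping that is carried out in detail in the multi-parameter CC-ball theory of \cite{StreetMultiParameterCCBalls}, which we expect to be able to invoke or adapt directly.
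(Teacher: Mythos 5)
Your proof is correct and follows the same route as the paper's: verify control of $(W,e,N)$ by $\sL(\sS)$ via the containment $\sF_0\subseteq\sS\subseteq\sL(\sS)$, and establish equivalence of $\sF$ with $\sL(\sS)$ by using the hypothesis $\sD(\Omega')$ to promote control of $\sS$ to control of $\sL(\sS)$. The paper's proof simply states "$\sF$ controls $\sS$ on $\Omega'$" and "since $\sF$ satisfies $\sD(\Omega')$, $\sF$ controls $\sL(\sS)$ on $\Omega'$" as immediate consequences, whereas you make these explicit via the Taylor-coefficient extraction and the Leibniz-rule bracket computation; both steps are correct and the bookkeeping concern you flag in the non-smooth case is real but standard, as you note.
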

\begin{proof}
Suppose $(\gamma,e,N)$ is linearly finitely generated by $\sF$ on $\Omega'$.  We may take $\sF=\sF_0$,
where $\sF_0$ is as in Definition \ref{DefnResSurfLinearlyFinitelyGenerated}.
Let $(W,e,N)$ be the vector field parameterization corresponding to $(\gamma,e,N)$, and let $\sS$ be as in \eqref{EqnResSurfDefnsS}.  
By definition, $(W,e,N)$ is linearly finitely generated by $\sF$ on $\Omega'$, and therefore
$\sF$ controls $\sS$ on $\Omega'$.  Since $\sF$ satisfies $\sD(\Omega')$, $\sF$ controls $\sL(\sS)$ on $\Omega'$, and therefore
$\sL(\sS)$ is finitely generated on $\Omega'$.  Furthermore, since $\sF=\sF_0\subseteq \sS\subseteq \sL(\sS)$, $\sL(\sS)$ controls $\sF$ on $\Omega'$,
and therefore $\sL(\sS)$ controls $(W,e,N)$ on $\Omega'$.  This shows that $(W,e,N)$ and $(\gamma,e,N)$ are finitely generated by $\sF$
on $\Omega'$.  A similar proof works if $(\gamma,e,N)$ is smoothly linearly finitely generated to show $(\gamma,e,N)$ is smoothly finitely
generated.
\end{proof}

\begin{prop}\label{PropResSurfExistsLinFinGen}
Suppose $\sF\subset \snuvectone$ and satisfies $\sD(\Omega')$ (resp $\sD_s(\Omega')$).  Then, there exists a parameterization $(\gamma,e,N)$ such that
$(\gamma,e,N)$ is  linearly finitely generated (resp. smoothly linearly finitely generated) by $\sF$ on $\Omega'$.
\end{prop}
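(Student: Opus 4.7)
The plan is to realize the construction from Remark \ref{RmkResSurfExistsParam} verbatim and check that, because the resulting $W$ is linear in $t$, the first-order data is exactly $\sF$ on the nose. Enumerate $\sF = \{(X_1, d_1), \ldots, (X_q, d_q)\}$, set $N := q$, and define $\nu$-parameter dilations on $\R^N$ by $e_j := d_j$ (this is valid because each $d_j \in \onecompnu$). Take the vector field parameterization
$$W(t, x) := \sum_{j=1}^{q} t_j X_j(x),$$
defined for $(t, x) \in B^N(\rho) \times \Omega''$ with $\rho > 0$ to be shrunk as needed. By Proposition \ref{PropResSurfEquiv}, after shrinking $\rho$, $(W, e, N)$ corresponds to a unique parameterization $(\gamma, e, N)$, which one can write explicitly as $\gamma_t(x) = \exp\bigl(\sum_j t_j X_j\bigr)(x)$.

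The key observation is that $W$ is a polynomial of degree one in $t$, so its Taylor series terminates. Consequently, in the notation of \eqref{EqnResSurfWTaylor}, $X_\alpha = X_j$ whenever $\alpha$ is the $j$-th standard basis vector in $\N^q$ (in which case $\deg(\alpha) = d_j \in \onecompnu$), and $X_\alpha = 0$ whenever $|\alpha| \geq 2$. Therefore
$$\sF_0 := \{(X_\alpha, \deg(\alpha)) : \deg(\alpha) \in \onecompnu,\ |\alpha|=1\} = \{(X_1, d_1), \ldots, (X_q, d_q)\} = \sF.$$
In particular, the first bullet of Definition \ref{DefnResSurfLinearlyFinitelyGenerated}, that $\sF_0$ satisfies $\sD(\Omega')$ (resp.\ $\sD_s(\Omega')$), is precisely the hypothesis on $\sF$.

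For the second bullet—that $\sF_0$ controls (resp.\ smoothly controls) $(W, e, N)$ on $\Omega'$—note that
$$W(\delta t, x) = \sum_{j=1}^q \delta^{d_j} t_j\, X_j(x),$$
so the choice $c_l^{x_0, \delta}(t, x) := t_l$, which is independent of $x_0$ and $\delta$ and polynomial in $(t, x)$, trivially satisfies every derivative bound in both the control and the smooth control definitions on $B^N(\rho_1) \times \Omega_1$ for any $\rho_1 \le \rho$ and any $\Omega_1$ with $\Omega' \Subset \Omega_1 \Subset \Omega''$. Hence by Definition \ref{DefnResSurfLinearlyFinitelyGenerated}, $(\gamma, e, N)$ is linearly finitely generated (resp.\ smoothly linearly finitely generated) by $\sF_0 = \sF$ on $\Omega'$.

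There is essentially no main obstacle: the content of the proposition is that any $\sF$ satisfying $\sD$ can be realized as the degree-one data of some parameterization, and the exponential parameterization does this tautologically. The only minor care is in choosing $\rho$ small enough that $\gamma_t$ is defined on and a diffeomorphism of a neighborhood of $\Omega'$; this is automatic from standard ODE theory since $\gamma_0 = \mathrm{id}$ and the $X_j$ are smooth on $\Omega$.
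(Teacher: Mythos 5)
Your proof is correct and follows essentially the same approach as the paper: the paper simply asserts that the exponential parameterization from Proposition \ref{PropResSurfExistsAParam} and Remark \ref{RmkResSurfExistsParam} works, and you carry out the routine verification (that $\sF_0 = \sF$ because $W$ is linear in $t$, and that $c_l^{x_0,\delta}(t,x) = t_l$ witnesses smooth control) which the paper leaves implicit.
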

\begin{proof}
The example from Proposition \ref{PropResSurfExistsAParam} and Remark \ref{RmkResSurfExistsParam} works.
I.e., write $$\sF=\{(X_1,d_1),\ldots, (X_q,d_q)\}\subset \snuvectone,$$ and define dilations on $\R^q$ by $e_j=d_j$.
Then, $(\gamma,e,q$) satisfies the conclusions of the proposition, where
\begin{equation*}
\gamma_t(x) = e^{t_1 X_1+\cdots+ t_q X_q}x.
\end{equation*}
\end{proof}

\begin{example}
The most basic example of a linearly finitely generated parameterization is arises when $\nu=1$, $N=n$ and we use the standard dilations
$\delta (t_1,\ldots, t_n) = (\delta t_1,\ldots, \delta t_n)$; i.e., $e_1=\cdots=e_n=1$.  Take 
\begin{equation*}
\gamma_t(x) = x-t.
\end{equation*}
Then $(\gamma,(1,\ldots, 1), n)$ is linearly finitely generated.  In this special case, the operators we consider are just standard
pseudodifferential operators on $\R^n$.
Thus, the linearly finitely generated case will help us to generalize the setting of pseudodifferential operators
to a non-translation invariant, non-Euclidean setting.  See Section \ref{SectionRadonPDO} for further details on this.
\end{example}

\begin{example}
For $(\gamma,e,N)$ to be linearly finitely generated is a much stronger hypothesis than for $(\gamma,e,N)$ to be merely finitely generated,
even when $\nu=1$.  We present a few examples which help to elucidate the difference.
\begin{enumerate}[(i)]
\item When $N=n=2$, $\nu=1$, and we take the standard dilations $\delta(s,t) = (\delta s, \delta t)$, then if
\begin{equation*}
\gamma_{(s,t)}(x) = x-(s,t),
\end{equation*}
we have $(\gamma,(1,1), 2)$ is linearly finitely generated (on any open set in $\R^2$).  However, if
\begin{equation*}
\gammat_{(s,t)}(x)=x-(s,t^2),
\end{equation*}
$(\gammat,(1,1),2)$ is finitely generated but not linearly finitely generated.

\item When $N=n=1$ and we use multiparameter dilations $e_1=(1,1)$ (i.e., $(\delta_1,\delta_2)t = \delta_1\delta_2 t$),
then if
\begin{equation*}
\gamma_t(x) = x-t,
\end{equation*}
$(\gamma, ((1,1)), 2)$ is neither finitely generated nor linearly finitely generated (on any open subset of $\R$).

\item On the Heisenberg group $\Ho$ (see Example \ref{ExampleResCCHormanderVF}),
we define
\begin{equation*}
W((s,t)) = sX+tY.
\end{equation*}
If we use the standard dilations $e_1=1$, $e_2=1$, then $(W,e,2)$ is finitely generated, but not linearly finitely generated:
$(X,1)$ and $(Y,1)$ do not control $([X,Y],2)=(T,2)$.  Here, if $X$ and $Y$ are taken to be right invariant vector fields,
\begin{equation*}
\gamma_{s,t}(x) = (s,t,0)x,
\end{equation*}
where $x\in \Ho$, and $(s,t,0)x$ denotes group multiplication.  See \cite{SteinHarmonicAnalysis} for an exposition of the Heisenberg group.

\item As in the previous example, we use the Heisenberg group $\Ho$, but now take $N=3$
and define $e_1=1$, $e_2=1$, $e_3=2$. If we define
\begin{equation*}
W((t_1,t_2,t_3))= t_1 X+ t_2 Y + t_3 T,
\end{equation*}
then $(W,(1,1,2),3)$ is linearly finitely generated.  Here,
\begin{equation*}
\gamma_{t_1,t_2,t_3}(x) = (t_1,t_2,t_3)x,
\end{equation*}
where, again, $(t_1,t_2,t_3)x$ denotes group multiplication.

\item If we take $\gamma_{t_1,t_2,t_3}(x)$ as in the previous example, but use multiparameter dilations 
$e_1=(1,0)$, $e_2=(0,1)$, and $e_3=(1,1)$, then $(\gamma, e, 3)$ is finitely generated but not linearly finitely generated.
\end{enumerate}
\end{example}

\subsection{H\"ormander's condition}\label{SectionResHorSurf}
One situation where Definition \ref{DefnResSurfFinitelyGenVf} is particularly easy to verify is when some of the
vector fields satisfy H\"ormander's condition (see Definition \ref{DefnCCHormander}).  Let $(W,e,N,\Omega'')$ be
a vector field parameterization with $\nu$ parameter dilations.  As in \eqref{EqnResSurfWTaylor}, let $W(t)\sim \sum_{|\alpha|>0} t^{\alpha} X_\alpha$, so that $X_\alpha$ is a smooth vector field on $\Omega''$.
Let $\sS$ be as in \eqref{EqnResSurfDefnsS}.

\begin{prop}\label{PropResSurfHorMainProp}
Suppose for each $1\leq \mu\leq \nu$,
\begin{equation}\label{EqnResSurfHorMainHyp}
\begin{split}
\{ X_\alpha : &\deg(\alpha)\text{ is nonzero in only the }\mu\text{ component}\} 
\\&\text{ satisfies H\"ormander's condition on }\Omega'',
\end{split}
\end{equation}
and suppose $\sL(\sS)$ controls (resp. smoothly controls) $(X_\alpha, \deg(\alpha))$ on $\Omega'$ for every $\alpha$.
Then $W$ is finitely generated (resp. smoothly finitely generated) on $\Omega'$.
\end{prop}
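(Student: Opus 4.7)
The plan is to apply Proposition \ref{PropResSurfAnotherWay}, which reduces the claim that $(W,e,N)$ is finitely generated (resp. smoothly finitely generated) on $\Omega'$ to three conditions about $\sV=\{(X_\alpha,\deg(\alpha)):|\alpha|>0\}$: namely that (i) $\sL(\sV)$ is finitely generated (resp. smoothly finitely generated), (ii) $\sL(\sV)$ controls (resp. smoothly controls) $(W,e,N)$, and (iii) $\sL(\sS)$ controls (resp. smoothly controls) $\sV$. Condition (iii) is precisely the second hypothesis of the proposition, since to control $\sV$ means to control each of its elements $(X_\alpha,\deg(\alpha))$ with $|\alpha|>0$.

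For (i), I would first apply Proposition \ref{PropResCCHorSmoothFG} to $\sS$. The Hörmander hypothesis \eqref{EqnResSurfHorMainHyp} gives exactly the single-parameter spanning condition required for each $\mu$, and the finiteness of $\{(X,d)\in \sS:|d|_\infty\leq M\}$ for each $M$ follows from the constraints $\deg(\alpha)\in\onecompnu$ and $|\deg(\alpha)|_\infty\leq M$: these force $\alpha$ to lie in a finite set, since each $e_j\neq 0$. Hence $\sL(\sS)$ is smoothly finitely generated on $\Omega'$ by some finite $\sF\subseteq \sL(\sS)$. Because $\sS\subseteq\sV$ one has $\sL(\sS)\subseteq \sL(\sV)$, so $\sL(\sV)$ trivially controls $\sF$. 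Conversely, $\sF$ smoothly controls $\sL(\sS)$, which by (iii) (smoothly) controls $\sV$, so by transitivity $\sF$ (smoothly) controls $\sV$; combined with $\sF$ satisfying $\sD(\Omega')$ (resp. $\sD_s(\Omega')$), a standard iterative argument using the Jacobi identity shows $\sF$ (smoothly) controls $\sL(\sV)$. Thus $\sF$ and $\sL(\sV)$ are (smoothly) equivalent, giving (i).

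Condition (ii) is the main step. Starting from the Taylor expansion \eqref{EqnResSurfWTaylor}, for any $M$ write
\begin{equation*}
W(\delta t,x)=\sum_{0<|\alpha|\leq M}\delta^{\deg(\alpha)}t^{\alpha}X_\alpha(x)+R_M(\delta t,x),
\end{equation*}
where $R_M$ vanishes to order $M+1$ in $t$ at $t=0$. Each $X_\alpha$ in the finite sum is controlled by $\sL(\sV)\supseteq\sL(\sS)$ by (iii), and since only finitely many $\alpha$ appear, combining the expansions yields a bound of the required form for this piece with coefficients depending smoothly on $t$ and uniformly on $\delta\in[0,1]^\nu$. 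For the remainder, I would choose $M$ larger than $\max\{|d|_\infty:(F,d)\in\sF\}/\min_je_j$-type threshold; the Hörmander-generated $\sF$ spans the tangent space near $\Omega'$, so we may write $R_M(s,x)=\sum_{(F,d)\in\sF}b_{F,d,M}(s,x)F(x)$ with smooth $b_{F,d,M}$ vanishing to order $M+1$ in $s$, and then $\delta^{-d}b_{F,d,M}(\delta t,x)$ stays bounded on $[0,1]^\nu$ once $M$ is large enough.

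The main obstacle is the last step: controlling the Taylor remainder under the anisotropic multi-parameter dilation. Isotropic vanishing of order $M+1$ at $t=0$ does not immediately yield the needed bound after dividing by $\delta^d$ with $d\in[0,\infty)^\nu$, because dilations in different coordinate directions happen at different rates. The resolution is to take $M$ so large that even the slowest-growing component of $\delta^{\deg(\beta)}$ for $|\beta|=M+1$ dominates the fastest-growing $\delta^{-d}$ in $\sF$; a further Taylor expansion of the $b_{F,d,M}$ in $s$ then gives the uniform $C^\infty$ bounds (including $\partial_t^\beta$-derivatives) required by the definition of control, establishing (ii) and completing the proof.
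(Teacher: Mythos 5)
You correctly reduce the claim to the three conditions of Proposition \ref{PropResSurfAnotherWay}, and your handling of (i) and (iii) is sound (though for (i) one can apply Proposition \ref{PropResCCHorSmoothFG} directly to $\sV$, whose hypotheses are exactly \eqref{EqnResSurfHorMainHyp} together with finiteness of $\{\alpha:|\deg(\alpha)|_\infty\leq M\}$, avoiding the detour through $\sS$ and the subsequent equivalence argument).

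The gap is in (ii), and it is precisely the obstacle you flag: an \emph{isotropic} Taylor cut-off at $|\alpha|\leq M$ does not interact correctly with the anisotropic multi-parameter dilations, and the proposed fix---take $M$ so large that the remainder's factor $\delta^{\deg(\beta)}$ absorbs $\delta^{-d}$---fails. Take $\nu=2$, $N=2$, $e_1=(1,0)$, $e_2=(0,1)$, and let $\beta=(M+1,0)$, so $|\beta|=M+1$ but $\deg(\beta)=(M+1,0)$ and $\delta^{\deg(\beta)}=\delta_1^{M+1}$. This gives no decay at all as $\delta_2\to 0$; if some $(F,d)\in\sF$ has $d_2>0$, then $\delta^{-d}\delta^{\deg(\beta)}$ blows up as $\delta_2\to 0$ with $\delta_1$ fixed, no matter how large $M$ is. So no single isotropic cut-off makes the remainder uniformly controllable by the \emph{full} spanning set $\sF$.

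The paper resolves this by cutting off anisotropically and, crucially, choosing the spanning vector fields \emph{per term} to match the direction in which $\deg(\alpha)$ is large. It applies Proposition \ref{PropResCCHorSmoothFG} directly to $\sV$; from that proof one obtains, for each $\mu$, a finite $\sF_\mu\subseteq\{(X,d)\in\sL(\sV):d_{\mu'}=0,\ \forall\mu'\ne\mu\}$ whose vector fields span the tangent space near the closure of $\Omega'$, sets $M:=\max\{|d|_\infty:(X,d)\in\bigcup_\mu\sF_\mu\}$, and takes $\sF:=\{(X,d)\in\sL(\sV):|d|_\infty\leq M\}$. Then $W$ is split into $\sum_{|\deg(\alpha)|_\infty\leq M}t^\alpha X_\alpha$ (terms already indexed by $\sF$) plus a finite sum $\sum_{\alpha\in\sA}t^\alpha W_\alpha(t,x)$ with $|\deg(\alpha)|_\infty>M$ for $\alpha\in\sA$. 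For $\alpha\in\sA$ one picks $\mu$ with $\deg(\alpha)_\mu>M$ and spans $W_\alpha(t,\cdot)$ by $\sF_\mu$: since each $(Z,d)\in\sF_\mu$ has $d$ supported on the single coordinate $\mu$ with $d_\mu\leq M$, one gets $\deg(\alpha)_\mu-d_\mu>0$ and $\deg(\alpha)_{\mu'}-d_{\mu'}=\deg(\alpha)_{\mu'}\geq 0$ for $\mu'\neq\mu$, so $\deg(\alpha)-d$ is nonnegative in every component and $\delta^{\deg(\alpha)-d}c_Z(\delta t,x)$ is uniformly bounded. This $\mu$-adapted choice of spanning set for the remainder is the idea your argument is missing.
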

\begin{proof}
As before, let $\sV:=\{(X_\alpha, \deg(\alpha)): |\alpha|>0\}$.  Note that $\sS\subseteq \sV$.
By Proposition \ref{PropResCCHorSmoothFG}, $\sL(\sV)$ is smoothly finitely generated on $\Omega'$.
In light of Proposition \ref{PropResSurfAnotherWay}, the proof will be complete if we show
$\sL(\sV)$  smoothly controls $(W,e,N)$ on $\Omega'$.

Fix an open set $\Omega_1$ with $\Omega'\Subset \Omega_1\Subset \Omega''$.
Because $\Omega_1$ is relatively compact in $\Omega''$, and because of \eqref{EqnResSurfHorMainHyp}, for each $1\leq \mu\leq \nu$, there is a finite set
\begin{equation*}
\sF_\mu \subseteq \{ (X,d)\in \sL(\sV) : d_{\mu'}=0, \forall \mu'\ne \mu \}
\end{equation*}
such that
\begin{equation*}
\{ X: \exists d, (X,d)\in \sF_\mu\}
\end{equation*}
spans the tangent space at every point on some neighborhood of the closure of $\Omega_1$.
Let
\begin{equation*}
M:=\max \q\{ |d|_{\infty} : (X,d)\in \bigcup_{\mu=1}^\nu \sF_\mu \w\},
\end{equation*}
and define
\begin{equation*}
\sF:=\{ (X,d)\in \sL(\sV) : |d|_{\infty}\leq M \}.
\end{equation*}
Note that $\sF$ is a finite set and $\sF_\mu\subseteq \sF$, for every $\mu$.  We claim
$\sF$ smoothly controls $(W,e,N)$ on $\Omega'$.

By using the Taylor series of $W$, there is a finite set of multi-indices $\sA\subset \N^N$ such that
$\forall \alpha \in \sA$, $|\deg(\alpha)|_\infty>M$, and such that we may write
\begin{equation*}
W(t,x)=\sum_{|\deg(\alpha)|_{\infty}\leq M} t^{\alpha} X_\alpha + \sum_{\alpha\in \sA} t^{\alpha} W_{\alpha}(t,x),
\end{equation*}
where $W_{\alpha}(t)$ is a smooth vector field on $\Omega''$, depending smoothly on $t$.
It is clear that for $|\deg(\alpha)|_{\infty}\leq M$, $t^{\alpha} X_\alpha$ is smoothly controlled
by $\sF$ on $\Omega'$, as $(X_\alpha,\deg(\alpha))\in \sF$ by construction.  The proof will be complete if we
show, for $\alpha\in \sA$, $t^{\alpha} W_{\alpha}(t)$ is smoothly controlled by $\sF$ on $\Omega'$.

Fix $\alpha\in \sA$, since $|\deg(\alpha)|_{\infty}>M$, there is a $\mu\in \nuset$ such that
$\deg(\alpha)_{\mu}>M$.  Using that $\sF_\mu$ spans the tangent space at every point of
$\Omega_1$, we may write
\begin{equation*}
t^{\alpha} W_\alpha(t,x) = \sum_{(Z,d)\in \sF_\mu} t^{\alpha} c_Z(t,x) Z(x),
\end{equation*}
where $c_Z\in C^\infty(B^N(\rho)\times \Omega_1)$ (where the domain of $W(t)$ in the $t$ variable is
$B^N(\rho)$).
Thus, we have, for $\delta\in [0,1]^\nu$,
\begin{equation*}
(\delta t)^{\alpha} W_\alpha(\delta t, x)= \sum_{(Z,d)\in \sF_\mu} t^{\alpha} \delta^{\deg(\alpha) - d} c_Z(\delta t, x) \delta^d Z(x).
\end{equation*}
Using that for $(Z,d)\in \sF_\mu$, $\deg(d)$ is nonzero in only the $\mu$ component, and $\deg(d)_\mu\leq M$, we see that $\deg(\alpha)-d$ is nonnegative in every component.  It follows that
$$\{ t^{\alpha} \delta^{\deg(\alpha)-d} c_Z(\delta t, x) : \delta\in [0,1]^\nu, (Z,d)\in \sF_\mu\}\subset C^\infty(B^N(\rho)\times \Omega_1)$$
is a bounded set.  Hence, $t^{\alpha} W_\alpha(t)$ is smoothly controlled by $\sF_\mu$ on $\Omega'$, and therefore
it is smoothly controlled
by $\sF$ on $\Omega'$.  This completes the proof.
\end{proof}

\begin{cor}\label{CorResSurfHorSmoothlyFG}
Suppose that $\nu=1$ and that
\begin{equation*}
\{X_\alpha : |\alpha|>0\}\text{ satisfies H\"ormander's condition on }\Omega''.
\end{equation*}
Then $W$ is smoothly finitely generated on $\Omega'$.
\end{cor}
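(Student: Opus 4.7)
My plan is to deduce this corollary directly from Proposition \ref{PropResSurfHorMainProp} by verifying that when $\nu=1$, both hypotheses of that proposition are automatically satisfied under the Hörmander assumption.

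First, I would unpack the definition of $\sS$ from \eqref{EqnResSurfDefnsS} in the special case $\nu = 1$. Here $\onecompnu = \mathfrak{d}_1 = (0,\infty)$, and since the definition of a parameterization requires each $e_j \neq 0$, we have $e_j > 0$ for all $j$ when $\nu = 1$. Consequently, for any multi-index $\alpha$ with $|\alpha| > 0$, we have $\deg(\alpha) = \sum_{j=1}^N \alpha_j e_j > 0$, so $\deg(\alpha) \in \onecompnu$ automatically. This shows that $\sS = \{(X_\alpha,\deg(\alpha)) : |\alpha|>0\}$ coincides with the set $\sV$ from Proposition \ref{PropResSurfAnotherWay}.

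Next, I would check the hypotheses of Proposition \ref{PropResSurfHorMainProp} with $\nu=1$. The single Hörmander-type hypothesis \eqref{EqnResSurfHorMainHyp} becomes precisely the assumption that $\{X_\alpha : |\alpha|>0\}$ satisfies Hörmander's condition on $\Omega''$, which is our hypothesis. For the second hypothesis, we need $\sL(\sS)$ to smoothly control $(X_\alpha,\deg(\alpha))$ on $\Omega'$ for every multi-index $\alpha$. For $|\alpha| = 0$ this is vacuous (since $W(0) \equiv 0$ means $X_0$ does not appear), and for $|\alpha| > 0$ we have $(X_\alpha,\deg(\alpha)) \in \sS \subseteq \sL(\sS)$, so the control is trivial.

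With both hypotheses verified, Proposition \ref{PropResSurfHorMainProp} immediately yields that $W$ is smoothly finitely generated on $\Omega'$. There is no real obstacle here; the content of the corollary is simply the observation that the multi-parameter subtlety (captured by the restriction $\deg(\alpha) \in \onecompnu$ and the separate requirement that $\sL(\sS)$ control $\sV$) collapses when $\nu = 1$, because every positive-degree Taylor coefficient automatically lands in $\sS$.
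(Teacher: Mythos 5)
Your proof is correct and matches the paper's own proof exactly: both reduce to Proposition \ref{PropResSurfHorMainProp} by observing that when $\nu=1$ every $(X_\alpha,\deg(\alpha))$ with $|\alpha|>0$ already lies in $\sS$ (since $\deg(\alpha)>0$ automatically places it in $\onecompnu$), making the smooth-control hypothesis trivial. You simply spell out why $\sS=\sV$ a bit more explicitly than the paper does.
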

\begin{proof}
In this case, $(X_\alpha, \deg(\alpha))\in \sS$ for every $\alpha$, and therefore $\sL(\sS)$ smoothly
controls $(X_\alpha,\deg(\alpha))$ for every $\alpha$, trivially.  The result follows from
Proposition \ref{PropResSurfHorMainProp}.
\end{proof} 

\subsection{Real Analytic Surfaces}\label{SectionResRealAnalSurf}
Another situation where Definition \ref{DefnResSurfFinitelyGenVf} is easy to verify is when the vector field
$W$ is real analytic.  Indeed, let $(W,e,N,\Omega'')$ be a vector field parameterization with $\nu$ parameter dilations
and with $W$
real analytic.  We write $W$ as a power series in the $t$ variable, so that for $t$ small,
$W(t,x)=\sum_{|\alpha|>0} t^{\alpha} X_\alpha$, where $X_\alpha$ is a real analytic
vector field on $\Omega''$ .  Let $\sS$ be as in \eqref{EqnResSurfDefnsS}.

\begin{prop}\label{PropResSurfRealAnalMain}
Suppose $\sL(\sS)$ controls (resp. smoothly controls) $(X_\alpha,\deg(\alpha))$ on $\Omega'$ for every $\alpha$.
Then $W$ is finitely generated (resp. smoothly finitely generated) on $\Omega'$.
\end{prop}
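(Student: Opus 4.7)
The plan is to verify the three conditions of Proposition \ref{PropResSurfAnotherWay} characterizing (smooth) finite generation of $(W,e,N)$: (i) $\sL(\sV)$ is (smoothly) finitely generated on $\Omega'$, (ii) $\sL(\sV)$ (smoothly) controls $(W,e,N)$ on $\Omega'$, and (iii) $\sL(\sS)$ (smoothly) controls $\sV$ on $\Omega'$. Condition (iii) is precisely the hypothesis. Moreover, since $\sS \subseteq \sV$ gives $\sL(\sS) \subseteq \sL(\sV)$ and $\sL(\sS)$ is closed under brackets, Lemma \ref{LemmaResCCsDandControl} together with (iii) yields that $\sL(\sS)$ and $\sL(\sV)$ are (smoothly) equivalent on $\Omega'$, so (i) is equivalent to $\sL(\sS)$ being (smoothly) finitely generated.

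For (i) I would adapt the proof of Proposition \ref{PropResCCAnalSmoothFinGen}. The essential tool there is Proposition 12.1 of \cite{SteinStreetIII}, which, at each $x \in \overline{\Omega'}$, produces a finite subset $\sF_x$ of the Lie algebra under consideration that smoothly controls that Lie algebra on some neighborhood $V_x$; a compactness-plus-partition-of-unity argument then assembles the $\sF_x$'s into a single finite $\sF$ generating the whole on $\Omega'$. Every element of $\sL(\sV)$ is real analytic, since each $X_\alpha$ is (by analyticity of $W$) and brackets of analytic vector fields are analytic, so the core analytic input is available. The only point requiring care is that $\sV$ (and $\sS$) may be infinite; but the germ of $\sL(\sV)$ at any point of the analytic manifold spans a finite-dimensional subspace of $T_x\mathbb{R}^n$ and is locally controlled by only finitely many of its elements, so Proposition 12.1 extends to this setting.

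For (ii), I would imitate the analytic half of the proof of Proposition \ref{PropResSurfHorMainProp}. Let $\sF$ be the finite generator of $\sL(\sV)$ from (i), and set $M = \max\{|d|_\infty : (Y,d)\in\sF\}$. Real analyticity of $W$ yields a convergent expansion $W(t,x) = \sum_{|\alpha|>0} t^\alpha X_\alpha(x)$ on $B^N(\rho)\times\Omega''$, and I would split
$$W(\delta t,x) = \sum_{|\deg(\alpha)|_\infty \leq M} t^\alpha \delta^{\deg(\alpha)} X_\alpha(x) + R(\delta t, x).$$
The finite head is directly controlled by $\sF$, since each $(X_\alpha,\deg(\alpha))\in\sV$ is in turn controlled by $\sF$. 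For the analytic tail $R$, each surviving $(X_\alpha,\deg(\alpha))$ has $|\deg(\alpha)|_\infty > M$, so writing it via elements $(Y,d)\in\sF$ gives a representation carrying an extra factor $\delta^{|\deg(\alpha)|_\infty - M}$; combined with absolute convergence of the Taylor series on $B^N(\rho)$ and boundedness of $\delta\in[0,1]^\nu$, this makes the infinite sum converge to a smooth expression $R(\delta t,x) = \sum_{(Y,d)\in\sF} c_Y^\delta(t,x)\,\delta^d Y(x)$ with coefficients uniformly bounded in $\delta$ and $x$, as required.

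The main obstacle is step (i): verifying that the localization result (Proposition 12.1 of \cite{SteinStreetIII}), stated in \cite{SteinStreetIII} for finite generating sets, applies to the possibly infinite analytic family at hand. The key content needed is that analyticity forces the local structure of $\sL(\sV)$ at each point to be determined by only finitely many of its members, independently of $|\sV|$; once this is in place, steps (ii) and (iii) proceed as outlined, via truncation plus an analytic tail estimate and the hypothesis, respectively.
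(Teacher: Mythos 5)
Your outline correctly identifies Proposition \ref{PropResSurfAnotherWay} as the organizing framework and correctly notes that condition (iii) is the hypothesis, but the argument for conditions (i) and (ii) has a genuine gap. The paper does not try to carry over the H\"ormander-style truncation argument to the analytic case; instead it invokes Theorem 9.1 of \cite{SteinStreetIII}, which is a statement about the real analytic map $W(t,x)$ itself: near each $x_0$ there is a \emph{finite} set $F_{x_0}\subset\N^N$ of multi-indices and real analytic coefficients $c_\alpha^{x_0}(t,x)$ with $W(t,x)=\sum_{\alpha\in F_{x_0}} c_\alpha^{x_0}(t,x)\,t^\alpha X_\alpha(x)$. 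This single local representation provides \emph{both} (ii) (smooth control of $(W,e,N)$ by the finite set $\sF=\{(X_\alpha,\deg(\alpha)):\alpha\in F\}$) and, after a short argument expressing each $X_\beta$ as a $C^\infty$ combination of the $X_\alpha$ with $\alpha\in F$ and $\deg(\alpha)\le\deg(\beta)$, the equivalence $\sL(\sF)\sim\sL(\sV)$ on $\Omega'$, which reduces (i) to the finite-set statement Proposition \ref{PropResCCAnalSmoothFinGen}. Note the paper never needs to extend Proposition 12.1 of \cite{SteinStreetIII} to infinite families -- it reduces to the finite $\sF$ first.

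The place where your argument actually fails is condition (ii). You split $W(\delta t,x)$ into the Taylor polynomial of degree $\le M$ plus a tail $R(\delta t,x)=\sum_{|\deg(\alpha)|_\infty>M}\delta^{\deg(\alpha)} t^\alpha X_\alpha(x)$, and claim that applying the hypothesized control to each term of the tail and using absolute convergence of the Taylor series yields a representation with uniformly bounded coefficients. But the hypothesis only gives, for each fixed $\alpha$, functions $c^\delta_{\alpha,Y}$ with $\delta^{\deg(\alpha)}X_\alpha=\sum_{(Y,d)\in\sF}c^\delta_{\alpha,Y}\,\delta^d Y$ and bounds $\sup_\delta\|c^\delta_{\alpha,Y}\|\le C_\alpha$; nothing forces $\sum_\alpha |t^\alpha|\,C_\alpha<\infty$. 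Analyticity of $W$ bounds $\|X_\alpha\|$ geometrically, but the control constants $C_\alpha$ measure a geometric relationship between $X_\alpha$ and $\sF$ and are not governed by $\|X_\alpha\|$ alone. The alternative -- Taylor with a \emph{finite} remainder, as in the proof of Proposition \ref{PropResSurfHorMainProp} -- also does not close: there the remainder vector fields $W_\alpha(t,x)$ are controlled by appealing to the spanning of the tangent space by $\sF_\mu$, a consequence of H\"ormander's condition which is exactly what is absent here. That is why the paper needs Theorem 9.1, a tool specifically about the analytic structure of $W$, rather than Proposition 12.1 (which concerns Lie algebras of finitely many analytic vector fields) plus a truncation.
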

\begin{proof}
As before, let $\sV:=\{(X_\alpha, \deg(\alpha): |\alpha|>0\}$.  Note that $\sS\subseteq \sV$.
Theorem 9.1 of \cite{SteinStreetIII} shows for each $x_0\in \Omega''$, exists a neighborhood
$U_{x_0}$ containing $x_0$ and a finite set $F_{x_0}\subset \N^N$ such that
\begin{equation*}
W(t,x)= \sum_{\alpha\in F_{x_0}} c_{\alpha}^{x_0}(t,x) t^{\alpha} X_{\alpha}(x),  \text{ on }U_{x_0},
\end{equation*}
where $c_{\alpha}^{x_0}(t,x):B^N(\rho_{x_0})\times U_{x_0}\rightarrow \R$ is real analytic, and $\rho_{x_0}>0$.

$U_{x_0}$ forms a cover of the closure of $\Omega'$, which is a compact set.  Extract a finite subcover,
$U_{x_1},\ldots, U_{x_M}$, and set
\begin{equation*}
F= \bigcup_{l=1}^M F_{x_l}, \quad \rho_0:=\min\q\{\rho_{x_l}: 1\leq l\leq M\w\}.
\end{equation*}
A partition of unity argument shows that there is an open set $\Omega_1$ with
$\Omega'\Subset \Omega_1\Subset \Omega'''$ and with
\begin{equation}\label{EqnResSurfAnalFirstControl}
W(t,x)= \sum_{\alpha\in F} c_{\alpha}(t,x) t^{\alpha} X_{\alpha}(x),  \text{ on }\Omega_1,
\end{equation}
with $c_{\alpha}\in C^\infty( B^N(\rho_0)\times \Omega_1)$.  Set $\sF:=\{(X_\alpha, \deg(\alpha)) : \alpha\in F\}$.  \eqref{EqnResSurfAnalFirstControl} shows that $\sF$ smoothly controls $(W,e,N)$ on $\Omega'$,
and therefore $\sL(\sV)$ smoothly controls $(W,e,N)$ on $\Omega'$.

Furthermore, because
\begin{equation*}
X_\beta = \frac{1}{\beta!} \partial_t^{\beta} W(t)\bigg|_{t=0},
\end{equation*}
\eqref{EqnResSurfAnalFirstControl} shows that $X_\beta$ is a $C^\infty(\Omega_1)$ linear
combination of $\{ X_{\alpha} : \alpha\in F,\alpha\leq \beta\}$ where $\alpha\leq \beta$ means the inequality
holds coordinatewise.  Thus, $X_\beta$ is a $C^\infty(\Omega_1)$ linear
combination of $\{ X_{\alpha} : \alpha\in F,\deg(\alpha)\leq \deg(\beta)\}$,
and it follows that $\sF$ smoothly controls $(X_\beta, \deg(\beta))$ on $\Omega'$, $\forall \beta$.
I.e., $\sF$ smoothly controls $\sV$ on $\Omega'$.  Thus, $\sL(\sF)$ smoothly controls $\sL(\sV)$
on $\Omega'$.  Because $\sF\subseteq \sV$, this shows that $\sL(\sF)$ and $\sL(\sV)$ are smoothly 
equivalent.  Proposition \ref{PropResCCAnalSmoothFinGen} shows that
$\sL(\sF)$ is smoothly finitely generated on $\Omega'$, and therefore $\sL(\sV)$ is smoothly finitely generated on $\Omega'$.

The result now follows by combining the above with Proposition \ref{PropResSurfAnotherWay}.
\end{proof}

\begin{cor}\label{CorResSurfAnalSingle}
When $\nu=1$ and when $W$ is real analytic, then $W$ is smoothly finitely generated on $\Omega'$.
\end{cor}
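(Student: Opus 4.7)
The plan is to deduce the corollary directly from Proposition \ref{PropResSurfRealAnalMain}, which already handles the real-analytic case under the hypothesis that $\sL(\sS)$ smoothly controls $(X_\alpha, \deg(\alpha))$ on $\Omega'$ for every $\alpha$. The key observation is that when $\nu = 1$, this hypothesis is automatically satisfied, so no work is required beyond unwinding the definitions.

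Concretely, I would first note that with $\nu = 1$, the set $\onecompnu = \mathfrak{d}_1$ reduces to $(0,\infty)$, since a single-component tuple in $[0,\infty)$ is ``nonzero in precisely one component'' exactly when it is strictly positive. Next, I would observe that each exponent $e_j$ attached to $W$ is a nonzero element of $[0,\infty)$, i.e., $e_j > 0$. Consequently, the degree function $\deg(\alpha) = \sum_{j=1}^N \alpha_j e_j$ is strictly positive whenever $|\alpha| > 0$, so $\deg(\alpha) \in \onecompnu$ for every nonzero multi-index $\alpha$. It follows that
\begin{equation*}
\sS = \{(X_\alpha, \deg(\alpha)) : \deg(\alpha) \in \onecompnu\} = \{(X_\alpha, \deg(\alpha)) : |\alpha| > 0\} = \sV,
\end{equation*}
i.e., in the single-parameter case the two sets $\sS$ and $\sV$ that appear throughout Section \ref{SectionResSurf} actually coincide.

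With $\sS = \sV$, the hypothesis of Proposition \ref{PropResSurfRealAnalMain} becomes the statement that $\sL(\sS)$ smoothly controls each $(X_\alpha, \deg(\alpha)) \in \sS \subseteq \sL(\sS)$ on $\Omega'$, which is trivial: every element of a set is trivially smoothly controlled by that set. Invoking Proposition \ref{PropResSurfRealAnalMain} then gives that $W$ is smoothly finitely generated on $\Omega'$, which is what the corollary asserts.

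There is no real obstacle here; the corollary is essentially a tautological specialization of Proposition \ref{PropResSurfRealAnalMain}. The only content is in recognizing the collapse $\onecompnu = (0,\infty)$ when $\nu = 1$, which forces $\sS = \sV$ and renders the nontrivial controllability hypothesis of that proposition vacuous.
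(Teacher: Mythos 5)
Your argument is correct and is essentially the paper's own proof, just spelled out in more detail: the paper simply notes that $(X_\alpha, \deg(\alpha)) \in \sS$ for all $\alpha$ when $\nu = 1$, so the hypotheses of Proposition \ref{PropResSurfRealAnalMain} hold automatically, whereas you additionally unwind why $\onecompnu = (0,\infty)$ forces $\sS = \sV$ and why membership in $\sL(\sS)$ yields trivial smooth control. Same route, same key reduction.
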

\begin{proof}
In this case $(X_\alpha, \deg(\alpha))\in \sS$, $\forall \alpha$, so the conditions
of Proposition \ref{PropResSurfRealAnalMain} hold automatically.
\end{proof}

\section{Results:  Non-isotropic Sobolev Spaces}\label{SectionResSobNew}
Let $\Omega\subseteq \R^n$ be an open set, and fix open sets $\Omega_0\Subset \Omega'\Subset\Omega$.  Let
$\sS\subset \snuvectone$.  We wish to define Sobolev spaces where for each $(X,d)\in \sS$, $X$ is viewed
as a differential operator of ``order'' $0\ne d\in [0,\infty)^{\nu}$.  We restrict attention to functions supported in
$\Omega_0$.  There are two main assumptions that we deal with:

\begin{enumerate}[\bf{Case} I:]
\item $\sL(\sS)$ is finitely generated on $\Omega'$.
\item $\sL(\sS)$ is linearly finitely generated on $\Omega'$.
\end{enumerate}

Notice that Case II implies Case I, 
and our results in Case II will be stronger than in Case I.
In Case II, for $1<p<\infty$ and $\delta\in \R^{\nu}$ we define non-isotropic Sobolev spaces
consisting of functions supported in $\Omega_0$, denoted by $\NL{p}{\delta}$.  In Case I,
we do the same, but must restrict to $|\delta|$ small in a way which is made precise in what follows.

In what follows, we make several choices in defining the norm which induces the space $\NL{p}{\delta}$.  Different
choices yields comparable norms:  for all $\delta$ in Case II, and for $|\delta|$ small in Case I.  In Case I, how small
$|\delta|$ needs to be depends on the various choices made.  See Theorem \ref{ThmResSobWellDefNew} where this is made precise.

\begin{defn}
An ordered list $\fD=\q(\nu,(\gamma,e,N,\Omega,\Omega'''), a, \eta,\{\vsig_j\}_{j\in \N^{\nu}}, \psi\w)$ is called
{\bf Sobolev data on} $\Omega'$ if:
\begin{itemize}
\item $0\ne \nu\in \N$.
\item $\Omega$, $\Omega'$, and $\Omega'''$ are open with $\Omega'\Subset \Omega'''\Subset \Omega\subseteq \R^n$.
\item $(\gamma,e,N,\Omega,\Omega''')$ is a parameterization, with $\nu$-parameter dilations $$0\ne e_1,\ldots, e_N\in [0,\infty)^{\nu}.$$
Here, $\gamma(t,x):B^{N}(\rho)\times \Omega'''\rightarrow \Omega$, for some $\rho>0$.
\item $0<a\leq \rho$ is a small number (how small $a$ must be depends on $\gamma$, and will be detailed later).
\item $(\gamma,e,N,\Omega,\Omega''')$ is finitely generated on $\Omega'$.
\item $\eta\in C_0^{\infty}(B^N(a))$ and $\{\vsig_j \}_{j\in \N^{\nu}}\subset \schS(\R^N)$ is a bounded set with
$\vsig_j\in \schS_{\{\mu : j_\mu\ne 0\}}$ and satisfies $\delta_0(t)=\eta(t)\sum_{j\in \N^{\nu} } \dil{\vsig_j}{2^j}(t)$.  Here, 
$\dil{\vsig_j}{2^j}$ is defined by the dilations $e$--see \eqref{EqnResKerDefFuncDil}.  Note that such a choice of $\eta$ and $\vsig_j$ always exists by Lemma \ref{LemmaResKerDelta}.
\item $\psi\in C_0^\infty(\Omega')$ with $\psi\equiv 1$ on a neighborhood of the closure of $\Omega_0$.
\end{itemize}
We say $\fD$ is {\bf finitely generated by} $\sF$ {\bf on} $\Omega'$ if $(\gamma,e,N,\Omega,\Omega''')$ is finitely generated by $\sF$ on $\Omega'$.
We say $\fD$ is {\bf linearly finitely generated on} $\Omega'$ if $(\gamma,e,N, \Omega,\Omega''')$ is linearly finitely generated on $\Omega'$, and we say 
$\fD$ is {\bf linearly finitely generated by} $\sF$ {\bf on} $\Omega'$ if $(\gamma,e,N,\Omega,\Omega''')$ is linearly finitely generated by $\sF$ on $\Omega'$.
\end{defn}

Given Sobolev data $\fD=(\nu,(\gamma,e,N,\Omega,\Omega'''),a,\eta,\{\vsig_j : j\in \N^\nu\},\psi )$,
define $D_j=D_j(\fD)$, for $j\in \N^\nu$, by
\begin{equation}\label{EqnResSobDefnDjNew}
D_j f(x) = \psi(x) \int f(\gamma_t(x)) \psi(\gamma_t(x))\eta(t) \dil{\vsig_j}{2^j}(t)\: dt.
\end{equation}
Note that $\sum_{j\in \N^\nu} D_j f = \psi^2 f$; in particular, if $\supp{f}\subset \Omega_0$,
$\sum_{j\in \N^\nu} D_j f =f$.

\begin{defn}
Given Sobolev data $\fD$, for $1<p<\infty$, $\delta\in \R^\nu$, we define (for $f\in C_0^\infty(\Omega_0)$),
\begin{equation*}
\NLpN{p}{\delta}{\fD}{f}:= \LpN{p}{ \q(  \sum_{j\in \N^\nu} \q| 2^{j\cdot \delta} D_j f \w|^2 \w)^{\frac{1}{2}}  },
\end{equation*}
where $D_j=D_j(\fD)$ is defined in \eqref{EqnResSobDefnDjNew}.
We define the Banach space $\NLp{p}{\delta}{\fD}$ to be the closure of $C_0^\infty(\Omega_0)$ in the norm
$\NLpN{p}{\delta}{\fD}{\cdot}$.
\end{defn}

If $\fD$ is finitely generated by $\sF$ on $\Omega'$, then the next result shows that the equivalence class of $\NLpN{p}{\delta}{\fD}{\cdot}$ depends
only on $\sF$, for $|\delta|$ sufficiently small.  If $\fD$ is linearly finitely generated by $\sF$ on $\Omega'$, it shows that the equivalence
class depends only on $\sF$ for all $\delta\in \R^{\nu}$.

\begin{thm}\label{ThmResSobWellDefNew}
Let 
\begin{equation*}
\begin{split}
&\fD=(\nu,(\gamma,e,N,\Omega,\Omega'''),a,\eta,\{\vsig_j\}_{j\in \N^{\nu}}, \psi) \text{ and }
\\& \fDt=(\nu,(\gammat,\et,\Nt,\Omega,\Omegat'''),\at,\etat,\{\vsigt_j\}_{j\in \N^{\nu}}, \psit)
\end{split}
\end{equation*}
both be Sobolev data.  
If $a>0$ and $\at>0$ are chosen sufficiently small (depending on the parameterizations) we have:
\begin{enumerate}[(I)]
\item If $\fD$ and $\fDt$ are both finitely generated by the same $\sF$ on $\Omega'$, then for $1<p<\infty$, $\exists  \epsilon=\epsilon(p,(\gamma,e,N),(\gammat,\et,\Nt))>0$
such that for $\delta\in \R^{\nu}$ with $|\delta|<\epsilon$,
\begin{equation*}
\NLpN{p}{\delta}{\fD}{f}\approx \NLpN{p}{\delta}{\fDt}{f},\quad \forall f\in C_0^{\infty}(\Omega_0).
\end{equation*}
Here, the implicit constants depend on $\fD$, $\fDt$, and $p$.

\item If $\fD$ and $\fDt$ are both linearly finitely generated by the same $\sF$ on $\Omega'$, then for $1<p<\infty$, $\delta\in \R^{\nu}$,
\begin{equation*}
\NLpN{p}{\delta}{\fD}{f}\approx \NLpN{p}{\delta}{\fDt}{f}, \quad \forall f\in C_0^{\infty}(\Omega_0).
\end{equation*}
Here, the implicit constants depend on $\fD$, $\fDt$, $p$, and $\delta$.
\end{enumerate}
\end{thm}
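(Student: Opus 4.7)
My plan is to establish the norm equivalence through an almost-orthogonality argument between the two families of operators $D_j = D_j(\fD)$ and $\widetilde{D}_k = D_k(\fDt)$. By the symmetry of the hypothesis in $\fD$ and $\fDt$, it suffices to prove $\NLpN{p}{\delta}{\fD}{f} \lesssim \NLpN{p}{\delta}{\fDt}{f}$ for all $f \in C_0^\infty(\Omega_0)$. Since $\psit \equiv 1$ on a neighborhood of the closure of $\Omega_0$, for such $f$ the identity $\sum_{k \in \N^\nu} \widetilde{D}_k f = \psit^{\,2} f = f$ holds, so I can expand $D_j f = \sum_{k \in \N^\nu} D_j \widetilde{D}_k f$ and reduce the question to an estimate on the ``matrix'' of operators $\{D_j \widetilde{D}_k\}_{j,k}$.

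The central estimate I would prove is an almost-orthogonality bound of the form
\begin{equation*}
\LpOpN{p}{D_j \widetilde{D}_k} \leq C \prod_{\mu=1}^{\nu} 2^{-\epsilon_0 |j_\mu - k_\mu|},
\end{equation*}
where $\epsilon_0 > 0$ can be taken fixed in Case~I and arbitrarily large in Case~II. The mechanism is the following: because $(\gamma,e,N)$ and $(\gammat,\et,\Nt)$ are finitely generated by the common $\sF \subset \snuvectone$, upon choosing $a, \at$ small enough one may push the supports of $\dil{\vsig_j}{2^j}$ and $\dil{\vsigt_k}{2^k}$ through $\gamma_t$, respectively $\gammat_s$, into Carnot--Carath\'eodory balls of $\sF$ of radius $\sim 2^{-\min(j,k)}$, as in Definition~\ref{DefnResCCBall}. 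The cancellation conditions $\vsig_j \in \schS_{\{\mu : j_\mu \ne 0\}}$ and $\vsigt_k \in \schS_{\{\mu : k_\mu \ne 0\}}$ then allow repeated integration by parts against vector fields from $\sF$, each step converting a derivative into a factor $2^{-|j_\mu - k_\mu|}$. In Case~II, the fact that $\sF \subset \snuvectone$ actually controls $(\gamma,e,N)$ linearly means every Taylor coefficient vector field of $W$ and $\Wt$ already lies in the span of $\sF$, so there is no obstruction to iterating the integration by parts arbitrarily many times; in Case~I, the Lie-algebraic closure $\sL(\sF)$ may contain vector fields of arbitrarily high formal degree, and one only gets a fixed $\epsilon_0 > 0$.

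Granted the almost-orthogonality, the conclusion is routine. Writing
\begin{equation*}
2^{j\cdot \delta} D_j f = \sum_{k \in \N^\nu} 2^{(j-k)\cdot \delta}\, D_j \widetilde{D}_k \q(2^{k\cdot \delta} \widetilde{D}_k f\w),
\end{equation*}
a Schur test applied to the matrix $\q\{2^{(j-k)\cdot \delta} D_j \widetilde{D}_k \w\}_{j,k}$ on $L^p(\ell^2(\N^\nu))$ combined with the $\ell^2$-valued $L^p$ boundedness of the $\widetilde{D}_k$ square function yields $\NLpN{p}{\delta}{\fD}{f} \lesssim \NLpN{p}{\delta}{\fDt}{f}$, the key sum $\sum_k 2^{(j-k)\cdot \delta} \prod_\mu 2^{-\epsilon_0 |j_\mu - k_\mu|}$ being uniformly finite precisely when $|\delta| < \epsilon_0$. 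This gives the smallness restriction in Case~I and, because $\epsilon_0$ can be taken arbitrarily large in Case~II, no restriction on $\delta$ in that case.

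The main obstacle is establishing the almost-orthogonality bound, since $\widetilde{D}_k$ is a nonlinear pullback by $\gammat_s$ rather than a convolution, so composition $D_j \widetilde{D}_k$ involves $\gamma_t \circ \gammat_s^{-1}$ and cannot be analyzed by simple Fourier methods. Quantifying the gain from cancellation at different scales $2^j$ and $2^k$ requires expressing the composed diffeomorphism in exponential coordinates of the common controlling set $\sF$, with uniform bounds on the resulting coefficients. This is precisely where the quantitative Carnot--Carath\'eodory geometry of \cite{StreetMultiParameterCCBalls} and the multi-parameter elementary-operator machinery of \cite{SteinStreetI,SteinStreetII,SteinStreetIII} enter: they permit one to replace the abstract composition $\gamma_t \circ \gammat_s^{-1}$ by a controlled exponential of vector fields drawn from $\sF$, at which point the integration by parts outlined above can be carried out with bounds independent of $j$ and $k$.
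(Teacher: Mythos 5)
Your high-level strategy — exploit almost-orthogonality between the two families $\{D_j\}$ and $\{\widetilde D_k\}$ and reduce to a Schur-type bound in $L^p(\ell^2)$ — is indeed the correct organizing principle, and your identification of the quantitative Carnot--Carath\'eodory machinery as the engine behind the decay is on target. However, the final assembly of the argument has a concrete gap that would make the proof fail as written.

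The displayed identity
\begin{equation*}
2^{j\cdot\delta}D_jf=\sum_{k}2^{(j-k)\cdot\delta}D_j\widetilde D_k\bigl(2^{k\cdot\delta}\widetilde D_k f\bigr)
\end{equation*}
is not correct: expanding $f=\psit^2 f$ gives $D_jf=\sum_kD_j\widetilde D_kf$, which after reweighting reads $2^{j\cdot\delta}D_jf=\sum_k2^{(j-k)\cdot\delta}D_j\bigl(2^{k\cdot\delta}\widetilde D_kf\bigr)$, with a single $\widetilde D_k$, not two. Once corrected, the matrix acting on the vector $\{2^{k\cdot\delta}\widetilde D_kf\}_k$ has entries $2^{(j-k)\cdot\delta}D_j$ — there is no $\widetilde D_k$ in the matrix entry itself, so the almost-orthogonality $\|D_j\widetilde D_k\|\lesssim 2^{-\epsilon_0|j-k|}$ never enters, and the Schur test collapses (the matrix is effectively rank one in the $(j,k)$ structure and unbounded whenever $\delta\neq 0$). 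To genuinely bring the $j$--$k$ interaction into the matrix, one must iterate the reproducing formula, writing $f=\psi^4f$ (or $\psit^4f$) so that each $\widetilde D_j$ is composed with \emph{two} projections $D_{j+k_1}D_{j+k_2}$. This is exactly what the paper does, and it creates a new difficulty your sketch does not address: in the regime where $k_1$ is small but $k_2$ is large, the almost-orthogonality between $\widetilde D_j$ and $D_{j+k_1}$ gives almost nothing, and one needs the separate self-interaction estimate (Lemma \ref{LemmaPfSobTwoToOne}, the ``two-to-one'' bound $\sum_k\|\{2^{j\cdot\delta+\delta_0|k|}D_jD_{j+k}f\}_j\|_{L^p(\ell^2)}\lesssim\NLpN{p}{\delta}{\fD}{f}$) to close the sum. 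That lemma is itself proven by yet another iteration of the same idea together with an absorption argument.

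A second gap concerns the function-space setting of the almost-orthogonality. You state it as a scalar $L^p\to L^p$ bound $\|D_j\widetilde D_k\|_{L^p\to L^p}\lesssim 2^{-\epsilon_0|j-k|}$ obtained by integration by parts. What the CC machinery delivers directly is an $L^2$ decay estimate (Proposition \ref{PropPfLtTechApO}); to promote this to the $\Lplqnu{p}{2}\to\Lplqnu{p}{2}$ estimates that the square-function argument actually requires (Proposition \ref{PropPfSobVVOps}), the paper interpolates the $L^2(\ell^2)$ decay against the $\Lplqnu{1}{1}$ triviality and, critically, against $\Lplqnu{p}{\infty}$ boundedness coming from the associated maximal operator (Theorem \ref{ThmMaxThm}, Proposition \ref{PropPfMaxMaxFuncBound}). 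The maximal-function leg of the interpolation is indispensable and is absent from your sketch. Relatedly, the rate $\epsilon_0$ you obtain is $p$-dependent — the paper's $\epsilon_p$ degenerates as $p\to1,\infty$ — which is precisely why the Case~I equivalence only holds for $|\delta|<\epsilon(p,\cdot)$ and why this dependence cannot be removed.

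So: the plan is sound in outline, but you need to (i) correct the expansion by inserting a second reproducing sum, (ii) supply the ``far-scale'' self-interaction lemma to handle the regime where the two new indices are far apart, and (iii) carry out the $L^2$-plus-maximal-function interpolation to obtain the vector-valued $L^p(\ell^2)$ operator bounds that feed the Schur argument.
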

\begin{proof}The proof of this result is completed in Section \ref{SectionPfSob}.\end{proof}

Theorem \ref{ThmResSobWellDefNew} implies a few properties of the norm $\NLpN{p}{\delta}{\fD}{\cdot}$.
When $\fD$ is merely finitely generated,\footnote{Recall, it is part of the definition of Sobolev data the $\fD$ be finitely generated.} it shows that the equivalence class of the norm $\NLpN{p}{\delta}{\fD}{\cdot}$ does not depend on 
the choices of $a$, $\eta$, $\{\vsig_j\}$, and $\psi$, for $|\delta|<\epsilon$ for some $\epsilon=\epsilon(p,(\gamma,e,N))>0$.
When $\sD$ is linearly finitely generated, it shows that the equivalence class does not depend on 
the choices of $a$, $\eta$, $\{\vsig_j\}$, and $\psi$, for any $\delta\in \R^{\nu}$.
We are led to the following definition.

\begin{defn}\label{DefnResSobForGamma}
Let $(\gamma,e,N,\Omega,\Omega''')$ (with $\Omega'\Subset \Omega'''$) be a parameterization
which is finitely generated on $\Omega'$.  For $f\in C_0^{\infty}(\Omega_0)$,
we write $\NLpN{p}{\delta}{\gamma,e,N}{f}$ to denote $\NLpN{p}{\delta}{\fD}{f}$, where $\fD$ can be any Sobolev data of the form
$$\fD=(\nu,(\gamma,e,N,\Omega,\Omega'''),a,\eta,\{\vsig_j\}_{j\in \N^{\nu}}, \psi),$$
with $a>0$ small.
By Theorem \ref{ThmResSobWellDefNew}, equivalence class of $\NLpN{p}{\delta}{\gamma,e,N}{\cdot}$
is well-defined for $|\delta|<\epsilon$, for some $\epsilon=\epsilon(p,(\gamma,e,N))>0$.
If, in addition, $(\gamma,e,N)$ is linearly finitely generated on $\Omega'$, then the equivalence
class of $\NLpN{p}{\delta}{\gamma,e,N}{\cdot}$ is well-defined for all $\delta\in \R^{\nu}$.
\end{defn}

Now consider the setting at the start of this section.  We are given a finite set $\sS\subset \snuvectone$.  We assume either
$\sL(\sS)$ is finitely generated by some $\sF\subset \snuvect$ on $\Omega'$ (Case I), 
or $\sL(\sS)$ is linearly finitely generated by some $\sF\subset \snuvectone$ on $\Omega'$ (Case II).
In Case I, Proposition \ref{PropResSurfExistsAParam} and Remark \ref{RmkResSurfPointExistsAParam} show that there is a parameterization $(\gamma,e,N)$ such that
$(\gamma,e,N)$ is finitely generated by by $\sF$ on $\Omega'$.  In Case II, Proposition \ref{PropResSurfExistsLinFinGen} shows that
there is a parameterization $(\gamma,e,N)$ such that $(\gamma,e,N)$ is linearly finitely generated by $\sF$ on $\Omega'$.
Theorem \ref{ThmResSobWellDefNew} shows that, given $\sS$, any two such choices of $(\gamma,e,N)$ and $(\gammat,\et,\Nt)$
yield comparable norms for all $\delta$ in Case II--when $(\gamma,e,N)$ and $(\gammat,\et,\Nt)$ are both linearly
finitely generated by $\sF$ on $\Omega'$, and in Case I for all $|\delta|$ sufficiently small (depending on $p$ and the choices of $(\gamma,e,N)$ and $(\gammat,\et,\Nt)$)--when
$(\gamma,e,N)$ and $(\gammat,\et,\Nt$) are both finitely generated by $\sF$ on $\Omega'$.\footnote{Note that the choice
of $\sF$ which finitely generates (resp. linearly finitely generates) $\sL(\sS)$ is irrelevant--any two such choices
are equiavalent on $\Omega'$.}
In Case II, this means that the equivalence class of $\NLpN{p}{\delta}{\gamma,e,N}{\cdot}$ depends only on $\sS$, $\forall \delta\in \R^{\nu}$.
In Case I, this implies that, when thought of as a germ of a function near $0$ in the $\delta$ variable, the equivalence class of the norm
$\NLpN{p}{\delta}{\gamma,e,N}{\cdot}$ depends only on $\sS$.

\begin{defn}
For $\nu\in \N$, we write $f:\R^{\nu}_0\rightarrow \R$ to denote that $f$ is a germ of a function defined near $0\in \R^{\nu}$.
If we write $x\in \R^{\nu}_0$, we mean that $x$ is a variable defined on as small a neighborhood of $0$ as necessary for the application.
Thus, it makes sense to write $f(x)$, for $x\in \R^{\nu}_0$, for $f:\R^{\nu}_0\rightarrow \R$.
\end{defn}

\begin{defn}\label{DefnResSobDependOnsS}
Suppose $\sS\subset \snuvectone$.
\begin{enumerate}[(I)]
\item If $\sL(\sS)$ is finitely generated on $\Omega'$, then for $1<p<\infty$ and $\delta\in \R^{\nu}_0$, we write $\NLpN{p}{\delta}{\sS}{f}:=\NLpN{p}{\delta}{\gamma,e,N}{f}$ for $f\in C_0^{\infty}(\Omega')$.  The equivalence class of the norm $\NLpN{p}{\delta}{\sS}{\cdot}$ is well-defined as a germ of a function in
the $\delta\in \R^{\nu}_0$ variable.

\item If $\sL(\sS)$ is linearly finitely generated 
on $\Omega'$, then for  $1<p<\infty$ and $\delta\in \R^{\nu}$, we write $\NLpN{p}{\delta}{\sS}{f}:=\NLpN{p}{\delta}{\gamma,e,N}{f}$,
for $f\in C_0^{\infty}(\Omega')$.  The equivalence class of the norm $\NLpN{p}{\delta}{\sS}{\cdot}$ is well-defined $\forall \delta\in \R^{\nu}$.
\end{enumerate}
\end{defn}

\begin{rmk}
Suppose $\fD$ is Sobolev data which is finitely generated by $\sF$ on $\Omega'$.  Then, for $1<p<\infty$, $\delta\in \R^{\nu}_0$,
\begin{equation*}
\NLpN{p}{\delta}{\fD}{f}\approx \NLpN{p}{\delta}{\sF}{f}, \quad f\in C_0^{\infty}(\Omega_0).
\end{equation*}
If $\fD$ is linearly finitely generated by $\sF$ on $\Omega'$, the above holds $\forall \delta\in \R^{\nu}$.
\end{rmk}

\begin{prop}\label{PropResSobLp}
Let $\fD$ be Sobolev data.  Then for $1<p<\infty$,
\begin{equation*}
\NLpN{p}{0}{\fD}{f}\approx \LpN{p}{f}, \quad f\in C_0^\infty(\Omega_0),
\end{equation*}
where the implicit constants depend on $p$ and $\fD$.
\end{prop}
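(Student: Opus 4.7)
The plan is to prove both halves of the equivalence by combining a randomization (Khintchine) argument with the $L^p$-boundedness of Radon transforms whose kernels lie in $\sK_0(N,e,a)$, which is the main $L^p$ boundedness theorem that the Stein--Street series makes available under the finite-generation hypothesis built into $\fD$.

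For the upper bound $\NLpN{p}{0}{\fD}{f}\lesssim\LpN{p}{f}$: for each choice of signs $\epsilon=(\epsilon_j)_{j\in\N^{\nu}}\in\{\pm1\}^{\N^{\nu}}$, the kernel $K_{\epsilon}(t)=\eta(t)\sum_{j}\epsilon_j\dil{\vsig_j}{2^j}(t)$ lies in $\sK_0(N,e,a)$ with a bound independent of $\epsilon$, because the replacement $\vsig_j\mapsto\epsilon_j\vsig_j$ preserves both boundedness in $\schS(\R^N)$ and the vanishing conditions $\vsig_j\in\schS_{\{\mu:j_\mu\ne 0\}}$ from Definition~\ref{DefnKer}. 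Hence
\[
T_{\epsilon}f(x):=\psi(x)\int f(\gamma_t(x))\psi(\gamma_t(x))K_{\epsilon}(t)\,dt=\sum_{j}\epsilon_j D_jf
\]
is a Radon transform with $\sK_0$-kernel, so the $L^p$-boundedness theorem gives $\LpN{p}{T_{\epsilon}f}\lesssim\LpN{p}{f}$ uniformly in $\epsilon$. Averaging in $\epsilon$ and invoking Khintchine's inequality pointwise yields
\[
\NLpN{p}{0}{\fD}{f}^p\approx\mathbb{E}_{\epsilon}\LpN{p}{T_{\epsilon}f}^p\lesssim\LpN{p}{f}^p.
\]

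For the lower bound $\LpN{p}{f}\lesssim\NLpN{p}{0}{\fD}{f}$, the plan is to run the dual argument via a Calder\'on-type reproducing formula. Concretely, I would construct an auxiliary Sobolev datum $\fDt$ on $\Omega'$ (with the same $(\gamma,e,N)$ but different $\tilde\eta,\{\tilde\vsig_j\},\tilde\psi$) whose operators $\tilde D_j=D_j(\fDt)$ satisfy $\sum_j\tilde D_j D_j=I+R$ on $C_0^{\infty}(\Omega_0)$, where $R$ has a $C^{\infty}$ kernel (hence is smoothing and can be absorbed). Granting this, for any $g\in L^{p'}$,
\[
\ip{f}{g}=\sum_j\ip{D_jf}{\tilde D_j^*g}+\ip{Rf}{g},
\]
and Cauchy--Schwarz in $\ell^2(\N^{\nu})$ followed by H\"older gives
\[
|\ip{f}{g}|\le\NLpN{p}{0}{\fD}{f}\cdot\LpN{p'}{\q(\sum_j|\tilde D_j^*g|^2\w)^{1/2}}+\LpN{p}{Rf}\LpN{p'}{g}.
\]
The upper-bound argument of the previous paragraph, applied to the family $\{\tilde D_j^*\}$ (which are Radon transforms with $\sK_0$-kernels relative to $\gamma^{-1}$), bounds the square function by $\LpN{p'}{g}$; taking the supremum over $\LpN{p'}{g}=1$ and absorbing $\LpN{p}{Rf}$ finishes the estimate.

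The main obstacle is constructing the dual reproducing formula $\sum_j\tilde D_j D_j=I+R$. In the translation-invariant (convolution) setting this is a standard frequency decomposition, but here the composition $\tilde D_jD_j$ is not a Radon transform at a single scale, so one must appeal to the multi-parameter almost-orthogonality machinery from \cite{SteinStreetII,SteinStreetIII} and \cite{StreetMultiParamSingInt}. The choice of $\tilde\vsig_j$ is dictated by requiring $\tilde\vsig_j\ast\vsig_j$ (suitably interpreted using the dilation structure $e$) to reconstruct the resolution $\delta_0=\eta\sum_j\dil{\vsig_j}{2^j}$, and the finite-generation of $(\gamma,e,N)$ on $\Omega'$ is exactly what lets one show the off-scale compositions produce Schwartz-type errors that collapse into $R$. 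Once this structural ingredient is in hand, the rest is the routine randomization/duality packaging above.
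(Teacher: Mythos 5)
Your upper bound argument is correct and in fact cleaner than what the paper does at that step: for each sign sequence $\epsilon$, the replacement $\vsig_j\mapsto\epsilon_j\vsig_j$ clearly preserves the hypotheses of Definition~\ref{DefnKer}, so $K_\epsilon\in\sK_0(N,e,a)$ uniformly in $\epsilon$, and $T_\epsilon=\sum_j\epsilon_jD_j$ is a fractional Radon transform of order $0$ corresponding to $(\gamma,e,N)$.  Uniform $L^p$ boundedness of such operators is the main result of the Stein--Street series under the finite-generation hypothesis that is built into the definition of Sobolev data, and Khintchine's inequality then yields $\NLpN{p}{0}{\fD}{f}\lesssim\LpN{p}{f}$.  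That much is sound.

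The lower bound, however, has a genuine gap at precisely the place you flag.  You posit a Calder\'on reproducing formula $\sum_j\tilde D_jD_j=I+R$ with $R$ having a $C^\infty$ kernel, but you do not construct it, and the heuristic you offer --- choose $\tilde\vsig_j$ so that ``$\tilde\vsig_j\ast\vsig_j$ reconstructs $\delta_0$'' --- is a convolution statement that does not transfer to the Radon setting: the composition $\tilde D_jD_j$ involves $\gamma_{s}\circ\gamma_{t}$, not $t+s$, and even at matching scales it is not a single-scale Radon transform.  You would need the full vector-valued almost-orthogonality machinery to show the off-diagonal pieces of $\sum_j\tilde D_jD_j-I$ sum to something controllable, which is roughly the content of Proposition~\ref{PropPfSobVVOps} and Lemma~\ref{LemmaPfSobTwoToOne}, and is not ``routine packaging.''  Moreover, even granting $\sum_j\tilde D_jD_j=I+R$ with $R$ smoothing, the absorption step $\LpN{p}{f}\lesssim\NLpN{p}{0}{\fD}{f}+\LpN{p}{Rf}\Rightarrow\LpN{p}{f}\lesssim\NLpN{p}{0}{\fD}{f}$ requires $R$ to have small operator norm or to be invertible modulo the square function, neither of which follows just from $R$ being smoothing.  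The paper circumvents both problems by a different route: Lemma~\ref{LemmaPfSobChoiceOfGamma} replaces $(\gamma,e,N)$ by a composite parameterization whose $D_j$ factor as a product $D_{j_1}^1\cdots D_{j_\nu}^\nu$ of \emph{single}-parameter operators; Lemma~\ref{LemmaPfSobConvSquare} (via the same machinery as Theorem~\ref{ThmResSobWellDefNew}) shows the original and factored square functions give equivalent norms; and the lower bound then follows by iterating the single-parameter Littlewood--Paley inequality of Proposition~\ref{PropPfsLp} --- which is where the reproducing-formula ingredient lives, outsourced to \cite{StreetMultiParamSingInt} in a form the paper does not reprove.  In short: keep the upper-bound argument, but the lower bound needs either the paper's product factorization or an actual construction of the $\nu$-parameter reproducing formula together with an argument that disposes of $R$.
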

\begin{proof}The proof is contained in Section \ref{SectionPfSob}.\end{proof}

\subsection{Comparing Sobolev Spaces}\label{SectionResSobCompNew}
Fix open sets $\Omega_0\Subset \Omega'\Subset\Omega'''\Subset \Omega\subseteq \R^n$.
Let $\sSt\subset \snutvectone$ and $\sSh\subset\snuhvectone$ be finite sets.  As in the previous section, we separate our results into two cases:
\begin{enumerate}[\bf{Case} I:]
\item $\sL(\sSt)$ and $\sL(\sSh)$ are finitely generated on $\Omega'$.
\item $\sL(\sSt)$ and $\sL(\sSh)$ are linearly finitely generated
on $\Omega'$.
\end{enumerate}
In light of Definition \ref{DefnResSobDependOnsS}, it makes sense to talk about $\NLp{p}{\deltat}{\sSt}$ and $\NLp{p}{\deltah}{\sSh}$
for $\deltat\in \R^{\nut}_0$ and $\deltah\in \R^{\nuh}_0$ in Case I, and $\deltat\in \R^{\nut}$ and $\deltah\in \R^{\nuh}$ in Case II.

From $\sSt$ and $\sSh$ we create a set of smooth vector fields on $\Omega$,
paired with $\nu=\nut+\nuh$ parameter formal degrees by
\begin{equation*}
\sS:=\q\{ \q(\Xh, (\hd,0_{\nut})\w) : (\Xh, \hd)\in \sSh \w\} \bigcup\q\{ \q(\Xt, (0_{\nuh},\td)\w) : (\Xt, \td)\in \sSt \w\},
\end{equation*}
where $0_{\nut}$ denotes the $0$ vector in $\R^{\nut}$ and $0_{\nuh}$ denotes the $0$ vector in $\R^{\nuh}$.
We now introduce the main hypothesis of this section.

\begin{assumption}\label{AssumptionResCompSobMainNew}
We assume
\begin{itemize}
\item In Case I, we assume $\sL(\sS)$ is finitely generated on $\Omega'$.
\item In Case II, we assume $\sL(\sS)$ is linearly finitely generated 
on $\Omega'$.
\end{itemize}
\end{assumption}

We assume Assumption \ref{AssumptionResCompSobMainNew} for the remainder of the section.
As before, in light of Definition \ref{DefnResSobDependOnsS}, it makes sense to talk about the norm
$\NLpN{p}{\delta}{\sS}{\cdot}$ for $\delta\in \R^{\nu}_0$ in Case I, and for $\delta\in \R^{\nu}$ in Case II.

\begin{rmk}\label{RmkResCompSobImpliesNew}
Assuming that $\sL(\sS)$ is finitely generated (resp. linearly finitely generated) on $\Omega'$ {\it implies} that $\sL(\sSt)$ and $\sL(\sSh)$ are finitely generated (resp. linearly finitely generated) on $\Omega'$.
Thus, Assumption \ref{AssumptionResCompSobMainNew}
contains all the assumptions of this section.
\end{rmk}

So that we may precisely state our results, in Case I pick Sobolev data 
\begin{equation*}
\begin{split}
&\fDt=(\nut, (\gammat,\et,\Nt,\Omega, \Omega'''), \at, \etat, \{\vsigt_j \}_{ j\in \N^{\nut}}, \psit)\\
&\fDh=(\nuh, (\gammah,\eh,\Nh,\Omega, \Omega'''), \ah, \etah, \{\vsigh_j \}_{j\in \N^{\nuh}}, \psih)\\
&\fD=(\nu,(\gamma,e,N,\Omega,\Omega'''), a, \eta, \{\vsig_j\}_{j\in \N^{\nu}}, \psi)
\end{split}
\end{equation*}
so that $\fDt$ and $\sL(\sSt)$ are finitely generated by
the same $\sFt$ on $\Omega'$,  $\fDh$ and $\sL(\sSh)$ are finitely generated by the same $\sFh$ on $\Omega'$, and
$\fD$ and $\sL(\sS)$ are finitely generated by the same $\sF$ on $\Omega'$.  This is always possible--see Proposition \ref{PropResSurfExistsAParam} and Remark \ref{RmkResSurfPointExistsAParam}.

\begin{thm}\label{ThmResCompSobDropParamsNew}
\begin{itemize}
\item In Case I,
for $1<p<\infty$, $\deltah\in \R^{\nuh}_0$ and $\deltat\in \R^{\nut}_0$ we have
\begin{equation*}
\NLpN{p}{(\deltah,0_{\nut})}{\sS}{f}\approx \NLpN{p}{\deltah}{\sSh}{f}, \quad \NLpN{p}{(0_{\nuh}, \deltat)}{\sS}{f}\approx \NLpN{p}{\deltat}{\sSt}{f},\quad \forall f\in C_0^\infty(\Omega_0).
\end{equation*}
More precisely,
for $1<p<\infty$, $\exists \epsilon=\epsilon(p,(\gamma,e,N),(\gammat,\et,\Nt),(\gammah,\eh,\Nh))>0$ such that
for $\deltah\in \R^{\nuh}$, $\deltat\in \R^{\nut}$ with $|\deltah|,|\deltat|<\epsilon$, we have
\begin{equation*}
\NLpN{p}{(\deltah,0_{\nut})}{\fD}{f}\approx \NLpN{p}{\deltah}{\fDh}{f}, \quad \NLpN{p}{(0_{\nuh}, \deltat)}{\fD}{f}\approx \NLpN{p}{\deltat}{\fDt}{f},\quad \forall f\in C_0^\infty(\Omega_0),
\end{equation*}
where the implicit constants depend on $p$, $\deltat$, $\deltah$, $\fD$, $\fDt$, and $\fDh$.

\item In Case II, for $1<p<\infty$, $\deltah\in \R^{\nuh}$, and $\deltat\in \R^{\nut}$, we have
\begin{equation*}
\NLpN{p}{(\deltah,0_{\nut})}{\sS}{f}\approx \NLpN{p}{\deltah}{\sSh}{f}, \quad \NLpN{p}{(0_{\nuh}, \deltat)}{\sS}{f}\approx \NLpN{p}{\deltat}{\sSt}{f},\quad \forall f\in C_0^\infty(\Omega_0).
\end{equation*}
where the implicit constant depends on $p$, $\deltah$, $\deltat$, and the choices made in defining the above norms.
\end{itemize}
\end{thm}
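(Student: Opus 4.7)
The plan is to exploit the flexibility in choosing Sobolev data afforded by Theorem~\ref{ThmResSobWellDefNew}. Since the equivalence class of $\NLpN{p}{\delta}{\fD}{\cdot}$ depends only on the generating set within the admissible range of $\delta$, I am free to select Sobolev data for the combined $\sS$ whose Littlewood--Paley pieces respect the product structure between the $\sSh$ and $\sSt$ factors. Concretely, with $N:=\Nh+\Nt$, splitting the parameter variable as $(t_h,t_t)\in \R^{\Nh}\times \R^{\Nt}$ and taking dilations $e_j=(\eh_j,0_{\nut})$ for $j\leq \Nh$ and $e_j=(0_{\nuh},\et_{j-\Nh})$ for $j>\Nh$, I would use
$$\gamma_{(t_h,t_t)}(x):=\gammah_{t_h}\q(\gammat_{t_t}(x)\w).$$
Unfolding the control definitions of Sections~\ref{SectionResHorVect}--\ref{SectionResRealAnalSurf} shows that the associated vector field parameterization is (smoothly) finitely generated by $\sFh\cup\sFt$ with padded degrees, which on $\Omega'$ is equivalent to any $\sF$ that finitely (resp.\ linearly) generates $\sL(\sS)$ under Assumption~\ref{AssumptionResCompSobMainNew}, so this $\fD$ is admissible.

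Choose the cutoff and Schwartz family in product form, $\eta(t_h,t_t)=\etah(t_h)\etat(t_t)$ and $\vsig_{(\jh,\jt)}(t_h,t_t)=\vsigh_{\jh}(t_h)\vsigt_{\jt}(t_t)$. Unfolding the integral defining $D_j(\fD)$ using $\gamma=\gammah\circ \gammat$ produces, up to cutoff mismatches, the approximate factorization
$$D_{(\jh,\jt)}(\fD)\,f = D_{\jh}(\fDh)\circ D_{\jt}(\fDt)\,f + E_{(\jh,\jt)}\,f,$$
where $E_{(\jh,\jt)}$ arises from the discrepancy between $\psi$ composed with the diffeomorphism $\gammah_{t_h}\circ\gammat_{t_t}$ and the separate internal cutoffs of $D_{\jh}(\fDh)$ and $D_{\jt}(\fDt)$. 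Because $\vsigh$ and $\vsigt$ are Schwartz and $\psi$ is smooth compactly supported, one obtains rapid off-diagonal decay $\LpN{p}{E_{(\jh,\jt)}f}\lesssim 2^{-M(|\jh|+|\jt|)}\LpN{p}{f}$ for every $M$, so the contribution of $E$ to any $L^p(\ell^2)$ square function is negligible.

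With this factorization in hand, for $\delta=(\deltah,0_{\nut})$ the weight $2^{j\cdot\delta}$ collapses to $2^{\jh\cdot\deltah}$, and the required equivalence reads
$$\LpN{p}{\q(\sum_{\jh,\jt}\q|2^{\jh\cdot\deltah}\,D_{\jh}(\fDh)D_{\jt}(\fDt)\,f\w|^2\w)^{1/2}}\approx \LpN{p}{\q(\sum_{\jh}\q|2^{\jh\cdot\deltah}\,D_{\jh}(\fDh)\,f\w|^2\w)^{1/2}}.$$
This is the Hilbert-space-valued extension of the $\fDt$-Littlewood--Paley equivalence: applied to the $\ell^2(\N^{\nuh})$-valued function $\{2^{\jh\cdot\deltah}D_{\jh}(\fDh)f\}_{\jh}$, it says that the $\fDt$-pieces form a Hilbert-valued $L^p$ Littlewood--Paley decomposition. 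The scalar version at $\deltat=0$ is Proposition~\ref{PropResSobLp} applied to $\fDt$, and its vector-valued upgrade follows by combining almost-orthogonality between pairs $D_{\jt}(\fDt)^{*}D_{\jt'}(\fDt)$ with the Hilbert-valued singular-integral theory developed for these multi-parameter operators in \cite{SteinStreetI,SteinStreetII,SteinStreetIII} and \cite{StreetMultiParamSingInt}. The symmetric equivalence with $\sSt$ follows by swapping the roles of the two factors. The hard part is this last step: in the non-isotropic, non-translation-invariant Carnot--Carath\'eodory setting Fourier-side methods are unavailable, and the Hilbert-valued upgrade must be obtained through direct almost-orthogonality and kernel-size estimates drawn from the cited prior works.
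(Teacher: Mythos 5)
Your overall plan matches the paper's: fix Sobolev data for $\sS$ whose Littlewood--Paley pieces respect a product structure, factorize $D_j$ across the $\sSh$ and $\sSt$ factors, and then appeal to a vector-valued Littlewood--Paley equivalence to drop parameters. But the execution has gaps, and the point you flag as ``the hard part'' is exactly the step that needs to be spelled out.

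First, the error term $E_{(\jh,\jt)}$ is both unnecessary and not obviously controllable as claimed. The mismatch between the cutoff $\psi$ in $\fD$ and the cutoffs $\psih,\psit$ in $\fDh,\fDt$ is a fixed smooth discrepancy on a set of positive measure in $x$; it is not localized in $t$, so there is no a priori gain of $2^{-M(|\jh|+|\jt|)}$ from the Schwartz decay of $\vsig_j$. The paper sidesteps this entirely: Lemma~\ref{LemmaPfSobChoiceOfGamma} builds a $\nu$-fold compositive $\gamma_t=\gamma^{\nu}_{t_\nu}\circ\cdots\circ\gamma^1_{t_1}$ together with cutoffs $\psi_1,\ldots,\psi_\nu$ that telescope (each $\psi_\mu\equiv 1$ on a neighborhood of $\supp{\psi_{\mu+1}}$, with $a$ small enough that composing $\gamma^\mu_{t_\mu}$ does not leave that neighborhood), so that $D_j=D^1_{j_1}\cdots D^\nu_{j_\nu}$ holds exactly, with no error term. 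The freedom to use these data then comes from Theorem~\ref{ThmResSobWellDefNew}. (Also note a small sign slip in your composition order: with $\gamma_{(t_h,t_t)}=\gammah_{t_h}(\gammat_{t_t}(x))$ the resulting operator approximately equals $D_{\jt}(\fDt)\circ D_{\jh}(\fDh)$, not $D_{\jh}(\fDh)\circ D_{\jt}(\fDt)$; this is fixable but symptomatic of the imprecision in the factorization.)

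Second, and more importantly, you reduce to the statement that $\{D_{\jt}(\fDt)\}_{\jt}$ furnishes an $\ell^2(\N^{\nuh})$-valued Littlewood--Paley decomposition, and you describe this as the hard part without saying how to prove it. When $\nut>1$ this is itself a multi-parameter assertion, so you cannot invoke a one-parameter vector-valued theorem directly. The paper's mechanism (Lemma~\ref{LemmaPfSobConvSquare}, equation~\eqref{EqnPfSobDropParamsInSquare}) is to split into $\nu$ \emph{one-parameter} factors, apply the scalar one-parameter equivalence of Proposition~\ref{PropPfsLp} once per factor $\mu$ with independent random signs $\epsilon^\mu_{j_\mu}$, and use the Khintchine inequality at the beginning and end to convert square functions into randomized $L^p$ norms. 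The telescoping cutoffs guarantee that each application lands in the correct $C_0^\infty(\Omega_\mu)$. This iterated-Khintchine argument is the actual content; ``almost-orthogonality plus Hilbert-valued singular-integral theory'' points in the right direction but does not by itself provide the multi-parameter vector-valued upgrade you need.

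In short: same high-level route, but you should drop the approximate factorization in favor of the exact one with telescoping cutoffs, and you should make the key drop-parameters step explicit via the $\nu$-fold factorization, Proposition~\ref{PropPfsLp}, and Khintchine, as in Lemma~\ref{LemmaPfSobConvSquare}.
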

\begin{proof}This is proved in Section \ref{SectionPfSobComp}.\end{proof}

In what follows, it will be convenient to write an element of $[0,1]^{\nu}$ as $2^{-j}$, where $j\in [0,\infty]^{\nu}$.
Here, $2^{-j}=(2^{-j_1},\ldots, 2^{-j_\nu})$ (and similarly for $\nu$ replaced by $\nuh$ or $\nut$).

Let $\lambda$ be a $\nut\times \nuh$ matrix whose entries are all in $[0,\infty]$.  
In both Case I and Case II, we assume:
\begin{equation*}
\sL(\sSh)\text{ }\lambda\text{-controls }\sSt\text{ on }\Omega'.
\end{equation*}
In what follows we write $\lambda^{t}(\deltat)$ for $\deltat\in [0,\infty)^{\nut}$.  Here, we use the convention
that $\infty\cdot 0=0$ but $\infty\cdot x=\infty$ for $x>0$.

\begin{thm}\label{ThmResCompSobMainThmNew}
Under the above hypotheses, we have
\begin{itemize}
\item In Case I, for $1<p<\infty$,
$\delta\in \R^{\nu}_0$, $\deltat\in \R^{\nut}_0\cap [0,\infty)^{\nut}$, and such that $\lambda^{t}(\deltat)$ is not equal to $\infty$ in any coordinate,
\begin{equation}\label{EqnResCompSobMainThmInfin}
\NLpN{p}{\delta+(-\lambda^{t}(\deltat),\deltat )}{\sS}{f}\lesssim \NLpN{p}{\delta}{\sS}{f}, \quad f\in C_0^\infty(\Omega_0).
\end{equation}
More precisely, for $1<p<\infty$, $\exists \epsilon=\epsilon(p, (\gamma,e,N),\lambda)>0$, such that for $\delta\in \R^{\nu}$ and $\deltat\in [0,\infty)^{\nut}$
with $|\delta|,|\deltat|<\epsilon$, and such that $\lambda^{t}(\deltat)$ is not equal to $\infty$ in any coordinate,
\begin{equation}\label{EqnResCompSobMainThmSmall}
\NLpN{p}{\delta+(-\lambda^{t}(\deltat),\deltat )}{\fD}{f}\lesssim \NLpN{p}{\delta}{\fD}{f}, \quad f\in C_0^\infty(\Omega_0).
\end{equation}

\item In Case II, for $1<p<\infty$,
$\delta\in \R^{\nu}$, $\deltat\in \R^{\nut}\cap [0,\infty)^{\nut}$, and such that $\lambda^{t}(\deltat)$ is not equal to $\infty$ in any coordinate,
\begin{equation}\label{EqnResCompSobMainThmAll}
\NLpN{p}{\delta+(-\lambda^{t}(\deltat),\deltat )}{\sS}{f}\lesssim \NLpN{p}{\delta}{\sS}{f}, \quad f\in C_0^\infty(\Omega_0).
\end{equation}

\end{itemize}
\end{thm}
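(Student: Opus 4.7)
The plan is to reduce everything to a square-function inequality at the level of the Littlewood--Paley operators $D_j=D_j(\fD)$ and then absorb the extra weight $2^{-\jh\cdot\lambda^t(\deltat)+\jt\cdot\deltat}$ into the structure of $D_j$ by invoking the $\lambda$-control hypothesis. First, using Theorem \ref{ThmResCompSobDropParamsNew} together with Proposition \ref{PropResSurfExistsAParam} and Remark \ref{RmkResSurfExistsParam}, I will choose a single convenient parameterization $(\gamma,e,N)$ of $\sL(\sS)$ built as a product of parameterizations for $\sL(\sSh)$ and $\sL(\sSt)$; writing $t=(t_h,t_t)\in\R^{\Nh}\times\R^{\Nt}$, the associated $\{\vsig_j\}$ can be taken to factor as $\vsigh_{\jh}(t_h)\vsigt_{\jt}(t_t)$, so that $D_j$ itself factors compatibly into $\nuh$- and $\nut$-pieces.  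With this choice the norm $\NLpN{p}{\delta}{\fD}{\cdot}$ is the $L^p$-norm of the Littlewood--Paley square function $\bigl(\sum_{j\in\N^\nu}|2^{j\cdot\delta}D_jf|^2\bigr)^{1/2}$, and the claim of the theorem becomes
\begin{equation*}
\Big\|\Big(\sum_{j=(\jh,\jt)}\bigl|2^{j\cdot\delta}\,2^{-\jh\cdot\lambda^t(\deltat)+\jt\cdot\deltat}\,D_jf\bigr|^2\Big)^{1/2}\Big\|_{L^p}\lesssim\Big\|\Big(\sum_j|2^{j\cdot\delta}D_jf|^2\Big)^{1/2}\Big\|_{L^p}.
\end{equation*}

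Second, I would reduce to the case $\deltat=r\td$ for a single $(Y,\td)\in\sSt$ with $\td\in\onecompnut$ by writing a general $\deltat\in[0,\infty)^{\nut}$ as a finite sum $\sum_\mu r_\mu\td_\mu$ and iterating one coordinate direction at a time (and further, by first treating integer $r$ and then recovering fractional values through complex interpolation applied to the analytic family of multipliers $r\mapsto 2^{r(\jt\cdot\td-\jh\cdot\lambda^t(\td))}$ acting on the Hilbert-space-valued square function). The $\lambda$-control hypothesis says that for each $\deltah\in[0,1]^{\nuh}$ the vector field $\deltah^{\lambda^t(\td)}Y$ equals, on the scaled Carnot--Carath\'eodory balls of $\sL(\sSh)$, a bounded combination $\sum c_l^{\deltah}\deltah^{d_l}\Xh_l$ of $\sL(\sSh)$-vector fields, with derivatives (in the $\sL(\sSh)$-scaling) bounded uniformly in $\deltah$. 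Specializing to the dyadic scale $\deltah=2^{-\jh}$, this translates to an operator identity $Y=2^{\jh\cdot\lambda^t(\td)}\bigl(\sum c_l^{2^{-\jh}}2^{-\jh\cdot d_l}\Xh_l\bigr)$, valid on regions containing the support of $D_{(\jh,\jt)}f$; thus composing $D_{(\jh,\jt)}$ with $Y$ on the right costs exactly $2^{\jh\cdot\lambda^t(\td)}$ rather than $2^{\jt\cdot\td}$, and the competing weight $2^{\jt\cdot\td-\jh\cdot\lambda^t(\td)}$ is absorbed.

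Third, I would package this as an operator-level statement: the sequence of operators $\{2^{-\jh\cdot\lambda^t(\deltat)+\jt\cdot\deltat}D_j\}_j$ arises from an element of $\sK_{\delta'}$ of the form guaranteed by Proposition \ref{PropResKerOtherDefn} (here $\delta'$ has some strictly negative coordinates, which is exactly when the a priori weaker cancellation condition of Proposition \ref{PropResKerOtherDefn} is needed), and then apply the general multi-parameter Littlewood--Paley / Fefferman--Stein vector-valued inequality to conclude $L^p$-boundedness. This yields the desired estimate for Sobolev data and then, via Theorem \ref{ThmResCompSobDropParamsNew} and the germ-interpretation in Definition \ref{DefnResSobDependOnsS}, the statements (\ref{EqnResCompSobMainThmInfin}), (\ref{EqnResCompSobMainThmSmall}), and (\ref{EqnResCompSobMainThmAll}) simultaneously; the small-$|\delta|,|\deltat|$ restriction in Case I is needed exactly to stay in the regime where Theorem \ref{ThmResSobWellDefNew} guarantees that two such finitely generated norms are comparable.

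The main obstacle is the third step: converting the pointwise $\lambda$-control of vector fields into a rigorous kernel-level estimate that is uniform across all dyadic scales $\jh$ and tracks the coefficients $c_l^{2^{-\jh}}$ together with all their $\sL(\sSh)$-derivatives. This is where the machinery of \cite{StreetMultiParamSingInt} and \cite{SteinStreetI,SteinStreetII,SteinStreetIII}, and in particular the equivalence between the two characterizations of $\sK_\delta$ via Proposition \ref{PropResKerOtherDefn}, becomes essential; without that flexibility one cannot handle the negative components of $\delta+(-\lambda^t(\deltat),\deltat)$ that appear whenever $\lambda^t(\deltat)\ne 0$. The interpolation step to reach non-integer and non-rational $\deltat$ is delicate but standard once the integer case (and the associated kernel bounds) is in hand.
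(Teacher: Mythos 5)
There is a genuine gap in the central mechanism. You assert that the weight $2^{\jt\cdot\deltat-\jh\cdot\lambda^{t}(\deltat)}$ is ``absorbed'' by pushing $Y$-derivatives from the kernel of $D_{(\jh,\jt)}$ onto the $\sL(\sSh)$-geometry at scale $2^{-\jh}$, so that ``composing $D_{(\jh,\jt)}$ with $Y$ on the right costs exactly $2^{\jh\cdot\lambda^t(\td)}$.'' This does not by itself constitute a proof, and more importantly it misidentifies what has to be shown: the weight $2^{\jt\cdot\deltat-\jh\cdot\lambda^{t}(\deltat)}$ is unbounded (it blows up when $\jt$ grows with $\jh$ fixed), so no algebraic manipulation of the kernel can make it disappear. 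The actual mechanism in the paper is an operator-norm decay estimate for the single-scale piece: one proves $\LpOpN{2}{D_j}\lesssim 2^{-\epsilon|\jt\vee\lambda(\jh)-\lambda(\jh)|}$ (Proposition \ref{PropPfLtApTMainProp}). That estimate says that when $\jt$ exceeds $\lambda(\jh)$, the operator $D_j$ is itself small, and this operator-norm decay is what cancels the growing weight. Proving this decay requires the full technical $L^2$ lemma (Lemma \ref{LemmaPfLtMainLemma}) built on the quantitative Frobenius theorem and a $TT^{*}TT^{*}$ almost-orthogonality argument; it is not a consequence of writing $Y$ as $2^{\jh\cdot\lambda^t(\td)}\sum c_l^{2^{-\jh}}2^{-\jh\cdot d_l}\Xh_l$ and handing the $c_l^{\deltah}$ to a maximal function.

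Two secondary issues. First, Proposition \ref{PropResKerOtherDefn} plays no role in this theorem; it is a kernel-class characterization result and is never invoked in Section \ref{SectionPfSobComp}. The negative components of $\delta+(-\lambda^{t}(\deltat),\deltat)$ are handled not by relaxed kernel cancellation conditions but by the operator-norm decay above together with the decomposition of the square function into the two vector-valued pieces $\sR^1_{k,\deltat}$ and $\sR^2_{\deltat}$ of Lemma \ref{LemmaPfCompareVV} and the two-to-one bound of Lemma \ref{LemmaPfSobTwoToOne}. Second, your reduction to a single $(Y,\td)\in\sSt$ with $\td\in\onecompnut$ followed by an iteration over coordinate directions does not appear to be needed and is not what the paper does; the paper proves the estimate directly for general $\deltat\in[0,\infty)^{\nut}$. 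Your suggestion of complex interpolation in $\deltat$ is in the same spirit as the complexification in the proof of Lemma \ref{LemmaPfCompareVV}, but there it serves a different purpose (interpolating $\Lplqnu{2}{2}$, $\Lplqnu{1}{1}$, and $\Lplqnu{p}{\infty}$ bounds, not passing from integer to real $\deltat$). Overall, the architecture of your third step --- ``package as a kernel of order $\delta'$ with some negative coordinates and invoke a Fefferman--Stein vector-valued inequality'' --- is not a route the paper can follow, because the entire difficulty is that the operators $\{2^{\jt\cdot\deltat-\jh\cdot\lambda^t(\deltat)}D_j\}_j$ do \emph{not} have uniformly bounded $L^2$ pieces, and the Fefferman--Stein inequality presupposes a maximal-type bound that is exactly what is missing without the almost-orthogonality gain.
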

\begin{proof}This is proved in Section \ref{SectionPfSobComp}.\end{proof}

\begin{rmk}
By changing $\delta$, \eqref{EqnResCompSobMainThmInfin} and \eqref{EqnResCompSobMainThmAll} can be equivalently written as
\begin{equation}\label{EqnResCompSobMainThmReWrite}
\NLpN{p}{\delta+(0,\delta_0 )}{\sS}{f}\lesssim \NLpN{p}{\delta+(\lambda^{t}(\delta_0),0)}{\sS}{f}, \quad f\in C_0^\infty(\Omega_0).
\end{equation}
A similar remark holds for \eqref{EqnResCompSobMainThmSmall}.
\end{rmk}

\begin{cor}\label{CorCompMainCor}
Under the above hypotheses, we have
\begin{itemize}
\item In Case I, for $1<p<\infty$,
$\deltat\in \R^{\nut}_0\cap [0,\infty)^{\nut}$ and such that $\lambda^{t}(\deltat)$ is not equal to $\infty$ in any coordinate,
\begin{equation*}
\NLpN{p}{\deltat}{\sSt}{f}\lesssim \NLpN{p}{\lambda^{t}(\deltat)}{\sSh}{f}, \quad f\in C_0^{\infty}(\Omega_0),
\end{equation*}
and dually,
\begin{equation*}
\NLpN{p}{-\lambda^{t}(\deltat)}{\sSh}{f}\lesssim \NLpN{p}{-\deltat}{\sSt}{f}, \quad  f\in C_0^{\infty}(\Omega_0).
\end{equation*}
More precisely, for $1<p<\infty$, $\exists \epsilon=\epsilon(p,(\gammah,\eh,\Nh),(\gammat,\et,\Nt),\lambda)>0$ such that
for $\deltat\in [0,\infty)^{\nut}$ with $|\deltat|<\epsilon$ and such that $\lambda^{t}(\deltat)$ is not equal to $\infty$
in any coordinate,
\begin{equation}\label{EqnResCompSobCorToShowO}
\NLpN{p}{\deltat}{\fDt}{f}\lesssim \NLpN{p}{\lambda^{t}(\deltat)}{\fDh}{f}, \quad f\in C_0^{\infty}(\Omega_0),
\end{equation}
and dually,
\begin{equation}\label{EqnResCompSobCorToShowT}
\NLpN{p}{-\lambda^{t}(\deltat)}{\fDh}{f}\lesssim \NLpN{p}{-\deltat}{\fDt}{f}, \quad  f\in C_0^{\infty}(\Omega_0).
\end{equation}

\item  In Case II,
for $1<p<\infty$,
$\deltat\in \cap [0,\infty)^{\nut}$ and such that $\lambda^{t}(\deltat)$ is not equal to $\infty$ in any coordinate,
\begin{equation*}
\NLpN{p}{\deltat}{\sSt}{f}\lesssim \NLpN{p}{\lambda^{t}(\deltat)}{\sSh}{f}, \quad f\in C_0^{\infty}(\Omega_0),
\end{equation*}
and dually,
\begin{equation*}
\NLpN{p}{-\lambda^{t}(\deltat)}{\sSh}{f}\lesssim \NLpN{p}{-\deltat}{\sSt}{f}, \quad  f\in C_0^{\infty}(\Omega_0).
\end{equation*}
\end{itemize}
\end{cor}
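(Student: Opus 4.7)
The plan is to deduce Corollary \ref{CorCompMainCor} directly from Theorem \ref{ThmResCompSobMainThmNew} by making two specific choices of the parameter $\delta\in\R^{\nu}$, then using Theorem \ref{ThmResCompSobDropParamsNew} to convert the combined-norm statements into statements about $\sSt$ and $\sSh$ separately. Note that $\R^{\nu}=\R^{\nuh}\times \R^{\nut}$ so we write $\delta=(\delta_h,\delta_t)$.

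For the first inequality, I would apply Theorem \ref{ThmResCompSobMainThmNew} with the choice $\delta = (\lambda^{t}(\deltat),0_{\nut})$. Then $\delta+(-\lambda^{t}(\deltat),\deltat) = (0_{\nuh},\deltat)$, and the conclusion of the theorem reads
\begin{equation*}
\NLpN{p}{(0_{\nuh},\deltat)}{\sS}{f}\lesssim \NLpN{p}{(\lambda^{t}(\deltat),0_{\nut})}{\sS}{f}, \quad f\in C_0^\infty(\Omega_0).
\end{equation*}
Applying Theorem \ref{ThmResCompSobDropParamsNew} to each side converts this into $\NLpN{p}{\deltat}{\sSt}{f}\lesssim \NLpN{p}{\lambda^{t}(\deltat)}{\sSh}{f}$, which is the desired estimate. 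For the second (dual) inequality, I would take instead $\delta = -(0_{\nuh},\deltat)$ in Theorem \ref{ThmResCompSobMainThmNew}. Then $\delta+(-\lambda^{t}(\deltat),\deltat)=(-\lambda^{t}(\deltat),0_{\nut})$, so the theorem gives
\begin{equation*}
\NLpN{p}{(-\lambda^{t}(\deltat),0_{\nut})}{\sS}{f}\lesssim \NLpN{p}{(0_{\nuh},-\deltat)}{\sS}{f}, \quad f\in C_0^\infty(\Omega_0),
\end{equation*}
which after applying Theorem \ref{ThmResCompSobDropParamsNew} on both sides becomes $\NLpN{p}{-\lambda^{t}(\deltat)}{\sSh}{f}\lesssim \NLpN{p}{-\deltat}{\sSt}{f}$.

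In Case II, Theorems \ref{ThmResCompSobMainThmNew} and \ref{ThmResCompSobDropParamsNew} hold for all parameters, so the above substitutions require no smallness check. In Case I, I need to verify that the $\delta$'s introduced above fall within the region where each invoked result applies. Since both choices of $\delta$ (namely $\pm(\lambda^{t}(\deltat), 0_{\nut})$ and $\pm (0_{\nuh},\deltat)$) have magnitude bounded by a constant times $|\deltat|+|\lambda^{t}(\deltat)|$, and since the finite-dimensional linear map $\lambda^{t}$ is bounded on the finite coordinates (we have excluded the coordinates where it would be $\infty$), shrinking the threshold $\epsilon$ by a constant factor depending only on $\lambda$ ensures that the parameters lie in the range where Theorem \ref{ThmResCompSobMainThmNew} and Theorem \ref{ThmResCompSobDropParamsNew} both apply. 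The resulting $\epsilon=\epsilon(p,(\gammah,\eh,\Nh),(\gammat,\et,\Nt),\lambda)$ then yields \eqref{EqnResCompSobCorToShowO} and \eqref{EqnResCompSobCorToShowT}.

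There is no significant obstacle here: the corollary is essentially a bookkeeping consequence of the two previous theorems, the only care needed being to track the allowable ranges of the parameters in Case I and to verify that the substituted $\delta$'s remain in those ranges after a harmless shrinking of $\epsilon$.
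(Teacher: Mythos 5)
Your proof is correct and is essentially the same as the paper's: you substitute $\delta=(\lambda^{t}(\deltat),0_{\nut})$ and $\delta=(0_{\nuh},-\deltat)$ into \eqref{EqnResCompSobMainThmInfin} and then apply Theorem \ref{ThmResCompSobDropParamsNew} to each side, which (after the change of variable from Remark following the theorem) is precisely the paper's choice of $\delta=0$ and $\delta=(-\lambda^{t}(\deltat),-\deltat)$ in \eqref{EqnResCompSobMainThmReWrite}. Your remark on absorbing $\lambda^t$ into the shrinking of $\epsilon$ in Case I is also the correct bookkeeping.
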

\begin{proof}
\eqref{EqnResCompSobCorToShowO} follows by taking $\delta=0$ in \eqref{EqnResCompSobMainThmReWrite}
and applying Theorem \ref{ThmResCompSobDropParamsNew}.  \eqref{EqnResCompSobCorToShowT} follows similarly
by taking $\delta=(-\lambda^{t}(\deltat),-\deltat)$ in \eqref{EqnResCompSobMainThmReWrite} and applying Theorem \ref{ThmResCompSobDropParamsNew}.
The result in Case II follows by a similar proof.
\end{proof} 

\subsection{Euclidean vector fields and isotropic Sobolev spaces}
An important special case of our Sobolev spaces comes when we consider the finite set of vector fields with single parameter formal degrees on $\R^n$ given by
\begin{equation}\label{EqnResSobEuclidPartialOneNew}
(\partial,1):=\q\{\q(\frac{\partial}{\partial x_1 },1\w),\ldots, \q(\frac{\partial }{\partial x_n},1\w)\w\}.
\end{equation}
Fix open sets $\Omega_0\Subset \Omega'\Subset \Omega'''\Subset \Omega\subseteq \R^n$.
Clearly, $(\partial,1)$ satisfies $\sD(\Omega')$, and (in particular) $\sL(\partial,1)$ is linearly finitely generated by $(\partial,1)$ on $\Omega'$ (in fact,
it is smoothly linearly finitely generated by $(\partial, 1)$ on $\Omega'$).
Thus, it makes sense to talk about $\NLpN{p}{s}{\partial,1}{\cdot}$ for any $1<p<\infty$ and $s\in \R$.
Let $L^p_s$ denote the standard, isotropic, Sobolev space of order $s\in \R$ on $\R^n$.

We have
\begin{thm}\label{ThmResSobEuclidIsNonIsoNew}
For $1<p<\infty$, $s\in \R$,
\begin{equation*}
\NLpN{p}{s}{\partial,1}{f}\approx \LpsN{p}{s}{f}, \quad f\in C_0^\infty(\Omega_0),
\end{equation*}
where the implicit constants depend on $p$ and $s$ (and, of course, on the choices made in defining $\NLpN{p}{s}{\partial,1}{\cdot}$).
\end{thm}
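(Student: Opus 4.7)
The plan is to reduce $\NLpN{p}{s}{\partial,1}{\cdot}$ to the classical Littlewood--Paley characterization of the isotropic Sobolev norm $\LpsN{p}{s}{\cdot}$ on $\R^n$. Pick Sobolev data $\fD=(1,(\gamma,e,n,\Omega,\Omega'''),a,\eta,\{\vsig_j\},\psi)$ with $\gamma_t(x)=x-t$ and isotropic dilations $e_j=1$. First I would unpack $D_j(\fD)$: for $f\in C_0^\infty(\Omega_0)$, the hypotheses that $\psi\equiv 1$ on a neighborhood of $\Omega_0$ and $a$ is small force $\psi(x-t)=1$ wherever the integrand is nonzero, so
\begin{equation*}
D_j f(x)=\psi(x)\int f(x-t)\,\eta(t)\,\dil{\vsig_j}{2^j}(t)\,dt=:\psi(x)\cdot T_j f(x).
\end{equation*}

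Next I would remove the cutoff $\eta$. Since $1-\eta$ vanishes near $0$ and $\{\vsig_j\}$ is bounded in $\schS(\R^n)$, the error kernel $(1-\eta(t))\dil{\vsig_j}{2^j}(t)$ has $L^1$ norm $\lesssim_M 2^{-jM}$ for every $M$. Thus $T_j=\Tt_j+R_j$, where $\Tt_j f(x):=\int f(x-t)\dil{\vsig_j}{2^j}(t)\,dt$ is a pure reverse convolution with $\dil{\vsig_j}{2^j}$, and $R_j$ is negligible at every Sobolev scale (its contribution to the square function sums to $O(\LpsN{p}{s}{f})$ for any fixed $s$).

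The heart of the argument is the classical Littlewood--Paley square-function identity: since $\{\vsig_j\}$ is a bounded family in $\schS(\R^n)$ with $\int t^\alpha\vsig_j(t)\,dt=0$ for every multi-index $\alpha$ whenever $j\geq 1$, one has
\begin{equation*}
\LpN{p}{\q(\sum_{j\in\N}\q|2^{js}\Tt_j f\w|^2\w)^{\frac{1}{2}}}\approx \LpsN{p}{s}{f},\qquad 1<p<\infty,\ s\in \R.
\end{equation*}
Both directions follow from a Calder\'on reproducing formula, almost-orthogonality estimates on the Fourier side (the vanishing-moment conditions give rapid decay of $\widehat{\vsig_j}$ near the origin), and the Fefferman--Stein vector-valued maximal inequality. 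Reinstating the outer multiplication by $\psi$ is harmless: $\psi\in C_0^\infty$ is a bounded multiplier on $L^p_s$, and the support condition $\supp{f}\subseteq \Omega_0\Subset\{\psi=1\}$ allows one to recover $f$ from $\q\{\psi\Tt_j f\w\}_j$ up to Schwartz-tail remainders.

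The main obstacle I anticipate is the Littlewood--Paley step in the generality required: the $\vsig_j$ vary with $j$ rather than being dilates of a single fixed Schwartz function, so the classical argument must be adapted. The uniform Schwartz seminorm bounds on $\{\vsig_j\}$ together with the infinite vanishing-moment conditions should suffice to push through a Cotlar--Stein almost-orthogonality argument, combined with standard vector-valued Calder\'on--Zygmund theory.
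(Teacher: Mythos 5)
The paper does not actually prove this theorem; the given proof is a one-line citation to Lemma 5.8.9 of \cite{StreetMultiParamSingInt}, where the statement is established. Your direct argument is the natural one and is almost certainly what that reference does: for the Euclidean parameterization $\gamma_t(x)=x-t$ with $e_j\equiv 1$, the operators $D_j$ collapse (once $a$ is taken small enough that $\psi\equiv 1$ on $\Omega_0 + B^n(a)$) to $\psi(x)$ times reverse convolution with $\eta\,\dil{\vsig_j}{2^j}$, the $\eta$-tail is negligible at every Sobolev scale by the rapid Schwartz decay of $\vsig_j$, and the vanishing moments of $\vsig_j$ for $j\geq 1$ give Fourier localization of $\widehat{\dil{\vsig_j}{2^j}}$ near $|\xi|\approx 2^j$, which is exactly what classical Littlewood--Paley theory needs.

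Two points are worth flagging, though neither is a genuine gap. First, since $\eta\sum_j\dil{\vsig_j}{2^j}=\delta_0$ exactly, the reproducing identity $\sum_j T_j f = f$ is exact (not merely approximate), which slightly simplifies the recovery step you describe. Second, the reverse inequality $\LpsN{p}{s}{f}\lesssim\LpN{p}{(\sum_j|2^{js}D_j f|^2)^{1/2}}$ is the substantive direction and does need the duality/reproducing machinery you invoke; the forward direction is the routine one (vector-valued Calder\'on--Zygmund theory applied to the family of Fourier-localized multipliers $2^{js}\widehat{\vsig_j}(2^{-j}\xi)(1+|\xi|^2)^{-s/2}$). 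Your sketch correctly names the ingredients for both; the only thing I would ask you to make explicit, if you wrote this out in full, is the almost-orthogonality estimate $\|P_k D_j\|_{L^p\to L^p}\lesssim_M 2^{-M|j-k|}$ against a standard Littlewood--Paley decomposition $\{P_k\}$, since the $\vsig_j$ vary with $j$ and the cutoff $\psi$ sits inside the square function.
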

\begin{proof}
This is exactly the statement of Lemma 5.8.9 of \cite{StreetMultiParamSingInt}.
\end{proof}

Using the theorems earlier in this section, in combination with Theorem \ref{ThmResSobEuclidIsNonIsoNew}, we can compare
the standard isotropic $L^p$ Sobolev spaces with our non-isotropic Sobolev spaces.  Thus, suppose we are
given a set $\sSt\subset \snutvectone$.  Let $\nu=1+\nut$ and define
\begin{equation*}
\begin{split}
\sS&:=\q\{ \q(\frac{\partial}{\partial x_1}, (1,0_{\nut})\w),\ldots,\q(\frac{\partial}{\partial x_n}, (1,0_{\nut})\w) \w\} \bigcup\q\{ \q(\Xt, (0,\td)\w) : (\Xt, \td)\in \sSt \w\}
\\&\subset \snuvectone.
\end{split}
\end{equation*}
For the rest of this section, we assume the following.
\begin{assumption}
We assume one of the following two cases.
\begin{enumerate}[\bf{Case} I:]
\item $\sL(\sS)$ is finitely generated on $\Omega'$.
\item $\sL(\sS)$ is linearly finitely generated on $\Omega'$.
\end{enumerate}
\end{assumption}

Note that, if $\sL(\sS)$ is (linearly) finitely generated on $\Omega'$, then the same is true of $\sL(\sSt)$.
Thus, in Case I, it makes sense to talk about the norms $\NLpN{p}{\delta}{\sS}{\cdot}$ and $\NLpN{p}{\deltat}{\sSt}{\cdot}$
for $\delta\in \R^{\nu}_0$, $\deltat\in \R^{\nut}_0$.  In Case II, it makes sense to talk about the same norms
for all $\delta\in \R^{\nu}$, $\deltat\in \R^{\nut}$.

The next lemma helps to elucidate situations where the above assumptions hold.
\begin{lemma}\label{LemmaResSobEuclidWhenHolds}
If $\sL(\sSt)$ is smoothly finitely generated (resp. smoothly linearly finitely generated) on $\Omega'$, then $\sL(\sS)$ is smoothly finitely
generated (resp. smoothly linearly finitely generated) on $\Omega'$.
\end{lemma}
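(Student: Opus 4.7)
The natural candidate for a finite generating set is
$$\sF := \bigl\{(\partial/\partial x_i,\, (1, 0_{\nut})) : 1 \le i \le n\bigr\} \,\cup\, \bigl\{(\Xt,\, (0, \td)) : (\Xt, \td) \in \sFt\bigr\},$$
where $\sFt$ is a finite set smoothly (linearly) finitely generating $\sL(\sSt)$ on $\Omega'$, chosen so that $\sFt \subseteq \sL(\sSt)$ (possible by Remark \ref{RmkResCCCanTakesFSubset}). By construction $\sF \subseteq \sL(\sS)$ is finite; in the linearly finitely generated hypothesis each degree in $\sF$ lies in $\onecompnu$, so $\sF \subset \snuvectone$. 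The entire task is to show $\sF$ smoothly controls $\sL(\sS)$ on $\Omega'$, for then $\sF$ and $\sL(\sS)$ will be smoothly equivalent and the conclusion will follow.

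First I would verify that $\sF$ satisfies $\sD_s(\Omega')$ by examining brackets of pairs in $\sF$. Brackets of two Euclidean partials vanish. For two lifted fields, the bracket $([\Xt_1, \Xt_2],\,(0, \tilde e_1 + \tilde e_2))$ is handled by lifting the smooth control of $([\Xt_1, \Xt_2],\, \tilde e_1 + \tilde e_2)$ by $\sFt$ (which exists because $\sFt$ satisfies $\sD_s(\Omega')$ by Lemma \ref{LemmaResCCsLFinGenGivessD}), via the observation $\delta^{(0, \tilde e)} = \deltat^{\tilde e}$, so the $\deltat$-uniform $C^\infty$ bounds from $\sFt$ pass verbatim to $\delta = (\delta_0, \deltat)$-uniform bounds. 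For the cross bracket $([\partial_{x_i}, \Xt],\, (1, \tilde e))$, expand in the Euclidean frame as $[\partial_{x_i}, \Xt] = \sum_j a_j \partial_{x_j}$ on a neighborhood $\Omega''$ of $\overline{\Omega'}$; multiplying by $\delta^{(1, \tilde e)}$ gives coefficients $\deltat^{\tilde e} a_j$ which are uniformly $C^\infty$-bounded since $\deltat^{\tilde e} \le 1$. The same lifting argument shows $\sF$ smoothly controls each element of $\sS$.

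The key structural observation is that whenever a finite set $\sF$ satisfies $\sD_s(\Omega')$, the class of vector fields smoothly controlled by $\sF$ on $\Omega'$ is closed under Lie bracket. This reduces to a Leibniz computation: given representations $\delta^{d_i} Z_i = \sum_{Y \in \sF} c^{i,\delta}_Y \delta^{e_Y} Y$ on $\Omega''$, expand
$$\delta^{d_1 + d_2}[Z_1, Z_2] = [\delta^{d_1} Z_1,\, \delta^{d_2} Z_2]$$
using $[fA, gB] = f\,A(g)\,B - g\,B(f)\,A + fg\,[A, B]$; the first two terms are already of the desired form (with uniform $C^\infty$ bounds, since the $\delta^{e_Y} \le 1$ factors absorb the extra weight arising from derivatives of the control coefficients), and the third term is reduced using $\sD_s(\Omega')$. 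Inducting on commutator depth starting from the smooth control of $\sS$ established above then shows that every $(X, d) \in \sL(\sS)$ is smoothly controlled by $\sF$. Combined with the trivial converse $\sF \subseteq \sL(\sS)$, this yields the smooth equivalence of $\sF$ and $\sL(\sS)$, completing the proof in both the finitely generated and linearly finitely generated cases.

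The main obstacle will be the bookkeeping in the bracket-closure step: one must carefully argue that after the Leibniz expansion, each new coefficient is uniformly bounded in $C^\infty(\Omega'')$ for all $\delta \in [0,1]^\nu$, which is precisely where the non-trivial use of $\delta^{e_Y} \le 1$ to absorb differentiations of control coefficients occurs. The cross-bracket computation in $\sD_s$ and the degree-lifting from $\sFt$ to $\sF_0$ are then straightforward once this mechanism is in hand.
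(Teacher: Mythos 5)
Your proof is correct, but it takes a noticeably heavier route than the paper's own argument. The paper's proof uses the same candidate generating set $\sF$, but then handles each $(X,d) \in \sL(\sS)$ \emph{directly} by a dichotomy on the first component $d_1$ of the degree (which is automatically a non-negative integer, since every generator in $\sS$ has first degree component $0$ or $1$). If $d_1 = 0$, then no Euclidean partial was used in building the bracket, so $(X,d) = (X, (0,\td))$ with $(X,\td) \in \sL(\sSt)$, and the smooth control by $\sFt$ lifts immediately (the coefficients are independent of the extra dilation parameter, and $\deltat^{\td} = \delta^{(0,\td)}$). If $d_1 \ge 1$, one writes $X = \sum_j c_j \partial_{x_j}$ with $c_j \in C^\infty(\Omega)$ (always possible, since the partials span), and
\begin{equation*}
\delta^d X = \sum_{j=1}^n \bigl(\delta^{d-(1,0_{\nut})} c_j\bigr)\, \delta^{(1,0_{\nut})} \tfrac{\partial}{\partial x_j},
\end{equation*}
where $d - (1,0_{\nut}) \in [0,\infty)^\nu$ guarantees the bracketed coefficients are uniformly bounded in $C^\infty$. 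This second observation --- that the Euclidean partials trivially control \emph{any} smooth vector field whose degree has first component at least $1$ --- is what makes the paper's proof short; it dispenses with both steps that occupy your argument. Your route, by contrast, first verifies $\sD_s(\Omega')$ for $\sF$ (including the cross-bracket case, which you handle correctly by the same absorption trick), then proves $\sF$ smoothly controls $\sS$, and finally invokes a bracket-closure principle (smooth control is closed under Lie bracket when the controlling set satisfies $\sD_s$) together with an induction on commutator depth. The Leibniz computation you sketch for bracket closure is sound --- the derivative terms are absorbed by $\delta^{e_Y} \le 1$ and the genuine bracket term is reduced by $\sD_s$ --- and that principle is indeed used (implicitly) elsewhere in the paper, for instance in the proof of Lemma \ref{LemmaResSurfLinFGImpliesFG}. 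So your proof buys nothing that the direct argument does not, but it is a legitimate alternative and has the pedagogical merit of making the bracket-closure mechanism explicit. One technical point worth being careful about in your closure argument: each bracket depth may require shrinking the intermediate open set $\Omega''$ (intersecting the neighborhoods coming from $\sD_s$ and from the two input controls), which is harmless since the definition of smooth control of a single pair $(X,d)$ allows $\Omega''$ to depend on $(X,d)$.
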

\begin{proof}
Suppose $\sL(\sSt)$ is smoothly finitely generated by $\sFt$ on $\Omega'$.
Define
$$\sF:=\q\{ \q(\frac{\partial}{\partial x_1}, (1,0_{\nut})\w),\ldots,\q(\frac{\partial}{\partial x_n}, (1,0_{\nut})\w) \w\} \bigcup\q\{ \q(\Xt, (0,\td)\w) : (\Xt, \td)\in \sFt \w\}.$$
Let $(X,d)\in \sL(\sS)$.  We wish to show $(X,d)$ is smoothly controlled by $\sF$.  There are two possibilities.

The first possibility is that $d$ equals $0$ in the first component.  In this case $(X,d)$ is of the form $(X,(0,\td))$
for some $(X,\td)\in \sL(\sSt)$.  Since $\sFt$ smoothly controls $(X,\td)$, by assumption, it is immediate from the definitions
that $\sF$ smoothly controls $(X,d)$.

The other possibility is that the first component of $d$ is $\geq 1$.  In this case, we use that
\begin{equation*}
X=\sum_{j=1}^n c_j \frac{\partial}{\partial x_j}, \quad c_j\in C^{\infty}(\Omega),
\end{equation*}
and therefore for $\delta\in [0,1]^{\nu}$,
\begin{equation*}
\delta^d X=\sum_{j=1}^n \q(\delta^{d-(1,0_{\nut})}c_j\w) \delta^{(1,0_{\nut})} \frac{\partial}{\partial x_j}.
\end{equation*}
Since $d-(1,0_{\nut})\in [0,\infty)^\nu$, we see that $\q\{\delta^{d-(1,0_{\nut})}c_j : \delta\in [0,1]^\nu\w\}\subset C^\infty(\Omega)$ is a bounded set, and therefore $\sF$ smoothly controls $(X,d)$.  Thus, $\sL(\sS)$ is smoothly finitely generated by $\sF$.

If $\sFt\subset \snutvectone$, then $\sF\subset \snuvectone$, and it follows that $\sL(\sS)$ is smoothly linearly finitely generated by $\sF$, completing the proof.
\end{proof}

\begin{example}
When $\sSt$ is finite and the vector fields in $\sSt$ are real analytic then $\sL(\sSt)$ is smoothly finitely generated on $\Omega'$ (see Section \ref{SectionResRealAnalVect}), and therefore $\sL(\sS)$ is smoothly finitely generated on $\Omega'$ (Lemma \ref{LemmaResSobEuclidWhenHolds}).  If $\sSt$ is finite and for each $1\leq \mu\leq \nut$,
$$\q\{\Xt : (\Xt,\td)\in \sSt\text{ and }\td\text{ is nonzero in the }\mu\text{ component}\w\}$$
satisfies H\"ormander's condition, then $\sL(\sSt)$ is smoothly finitely generated (see Section \ref{SectionResHorVect}), and therefore
$\sL(\sS)$ is smoothly finitely generated (Lemma \ref{LemmaResSobEuclidWhenHolds}).  When $\nut=1$ and in either of these settings, $\sL(\sSt)$
is smoothly linearly finitely generated, and the same is true of $\sL(\sS)$ (Lemma \ref{LemmaResSobEuclidWhenHolds}).
\end{example}

So that we may precisely state our results, in Case I, pick Sobolev data 
$$\fDt=(\nut,(\gammat,\et,\Nt),\Omega,\Omega'''), \at, \etat, \{\vsigt_j\}_{j\in \N^{\nu}}, \psit)$$
so that $\sL(\sSt)$ and $\fDt$ are finitely generated by the same $\sFt$ on $\Omega'$.

Let $E\in [0,\infty]^{\nut}$ be a vector such that $\forall (\Xt,\td)\in \sSt$, with $\Xt$ not the zero vector field, $E\cdot \td\geq 1$.  We have,
\begin{thm}\label{ThmSobEuclidNoSpanNew}
\begin{itemize}
\item In Case I, for $1<p<\infty$ and $\deltat\in \R^{\nut}_0\cap [0,\infty)^{\nut}$ such that $E\cdot \deltat<\infty$, we have
\begin{equation*}
\NLpN{p}{\deltat}{\sSt}{f}\lesssim \LpsN{p}{E\cdot \deltat}{f}, \quad f\in C_0^{\infty}(\Omega_0),
\end{equation*}
and dually,
\begin{equation*}
\LpsN{p}{-E\cdot \deltat}{f}\lesssim \NLpN{p}{-\deltat}{\sSt}{f}, \quad  f\in C_0^{\infty}(\Omega_0).
\end{equation*}
More precisely, for $1<p<\infty$, $\exists \epsilon=\epsilon(p, (\gammat,\et,\Nt), E)>0$ such that for $\deltat\in [0,\infty)^{\nut}$ with $|\deltat|<\epsilon$
and such that $E\cdot \deltat<\infty$,
\begin{equation*}
\NLpN{p}{\deltat}{\fDt}{f}\lesssim \LpsN{p}{E\cdot \deltat}{f}, \quad f\in C_0^{\infty}(\Omega_0),
\end{equation*}
and dually,
\begin{equation*}
\LpsN{p}{-E\cdot \deltat}{f}\lesssim \NLpN{p}{-\deltat}{\fDt}{f}, \quad  f\in C_0^{\infty}(\Omega_0).
\end{equation*}

\item In Case II, for $1<p<\infty$ and $\deltat\in [0,\infty)^{\nut}$ such that $E\cdot \deltat<\infty$, we have
\begin{equation*}
\NLpN{p}{\deltat}{\sSt}{f}\lesssim \LpsN{p}{E\cdot \deltat}{f}, \quad f\in C_0^{\infty}(\Omega_0),
\end{equation*}
and dually,
\begin{equation*}
\LpsN{p}{-E\cdot \deltat}{f}\lesssim \NLpN{p}{-\deltat}{\sSt}{f}, \quad  f\in C_0^{\infty}(\Omega_0).
\end{equation*}
\end{itemize}
\end{thm}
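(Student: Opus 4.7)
The plan is to derive Theorem \ref{ThmSobEuclidNoSpanNew} as a direct application of Corollary \ref{CorCompMainCor} after identifying the isotropic Sobolev space as a non-isotropic one associated with the Euclidean partials. Concretely, I would take the ``hat'' side of Corollary \ref{CorCompMainCor} to be $\sSh := (\partial,1)\subset\Gamma(T\R^n)\times(0,\infty)$, with $\nuh=1$, and the ``tilde'' side to be the given $\sSt$. With this choice, the set $\sS$ built from $\sSh$ and $\sSt$ in Assumption \ref{AssumptionResCompSobMainNew} coincides exactly with the $\sS$ in the statement of the present theorem, so our hypothesis that $\sL(\sS)$ is finitely generated (resp.\ linearly finitely generated) on $\Omega'$ matches what the comparison theorem requires. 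Theorem \ref{ThmResSobEuclidIsNonIsoNew} provides the identification $\NLpN{p}{s}{\sSh}{f}\approx\LpsN{p}{s}{f}$ for every $s\in\R$, and since $\sSh$ is smoothly linearly finitely generated by itself we are free to use any choice of underlying parameterization (for instance $\gammah_t(x)=x-t$).

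Next I take $\lambda$ to be the $\nut\times 1$ matrix given by the column vector $E\in[0,\infty]^{\nut}$, so that $\lambda^{t}(\deltat)=E\cdot\deltat\in[0,\infty]$. The hypothesis ``$\lambda^t(\deltat)$ not equal to $\infty$ in any coordinate'' in Corollary \ref{CorCompMainCor} reduces, because $\nuh=1$, to the single scalar condition $E\cdot\deltat<\infty$ that appears in the present statement. To apply the corollary I must verify that $\sL(\sSh)$ $\lambda$-controls $\sSt$ on $\Omega'$; that is, for each $(\Xt,\td)\in\sSt$ the pair $(\Xt,h_{\td,\lambda})$ with $h_{\td,\lambda}(\delta)=\delta^{E\cdot\td}$ is controlled by $\sSh$ in the sense of Definition \ref{DefnResCCControl}. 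Writing $\Xt=\sum_j a_j(x)\partial_{x_j}$ with $a_j\in C^\infty(\Omega)$, I compute for $\delta\in[0,1]$
\begin{equation*}
\delta^{E\cdot\td}\,\Xt \;=\; \sum_{j=1}^n\bigl(\delta^{E\cdot\td-1}a_j(x)\bigr)\cdot\delta\,\partial_{x_j}.
\end{equation*}
Since by hypothesis $E\cdot\td\geq 1$ whenever $\Xt\not\equiv 0$, the coefficients $c_j^{\delta}(x):=\delta^{E\cdot\td-1}a_j(x)$ are uniformly bounded in $\delta\in[0,1]$; moreover applying $\delta\,\partial_{x_i}$ to them produces a factor $\delta^{E\cdot\td}$ times a smooth derivative of $a_j$, so the uniform $C^0$ bounds on arbitrary iterated $(\delta\partial)^\alpha$-derivatives required in the definition of control follow immediately. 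When $E\cdot\td=\infty$ one uses the paper's conventions $\delta^{\infty}=0$ and $1^{\infty}=0$ to get the trivial identity $0=h_{\td,\lambda}(\delta)\Xt$, which is controlled by any set.

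With this $\lambda$-control in hand, Corollary \ref{CorCompMainCor} yields
\begin{equation*}
\NLpN{p}{\deltat}{\sSt}{f}\;\lesssim\;\NLpN{p}{E\cdot\deltat}{\sSh}{f}\;\approx\;\LpsN{p}{E\cdot\deltat}{f},
\end{equation*}
for all $\deltat\in[0,\infty)^{\nut}$ with $E\cdot\deltat<\infty$ (valid for $|\deltat|<\epsilon$ in Case I and unrestricted in Case II), and symmetrically the dual estimate $\LpsN{p}{-E\cdot\deltat}{f}\lesssim\NLpN{p}{-\deltat}{\sSt}{f}$ from the dual half of Corollary \ref{CorCompMainCor}. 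The $\epsilon$ advertised in Case I reduces to the $\epsilon$ in the corollary, which depends on $p$, $(\gammah,\eh,\Nh)$, $(\gammat,\et,\Nt)$ and $\lambda$; since $(\gammah,\eh,\Nh)$ is fixed a priori by our canonical choice, the effective dependence is on $p$, $(\gammat,\et,\Nt)$ and $E$, exactly as stated.

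The only nontrivial step is the $\lambda$-control verification, and that step is genuinely easy: the assumption $E\cdot\td\geq 1$ whenever $\Xt\not\equiv 0$ exactly compensates the single power of $\delta$ attached to the Euclidean partials, so no negative powers of $\delta$ arise when re-expressing $\Xt$ in the Euclidean basis. Everything else is unpacking of definitions plus invocation of Theorem \ref{ThmResSobEuclidIsNonIsoNew} and Corollary \ref{CorCompMainCor}; the genuinely substantive content has been absorbed into those two prior results.
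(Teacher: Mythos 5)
Your proof is correct and takes essentially the same approach as the paper: both identify $\sSh=(\partial,1)$, take $\lambda=E$, verify that $(\partial,1)$ (smoothly) $E$-controls $\sSt$ via the identity $\delta^{E\cdot\td}\Xt=\sum_j(\delta^{E\cdot\td-1}a_j)\delta\partial_{x_j}$ and the hypothesis $E\cdot\td\geq 1$, and then invoke Corollary \ref{CorCompMainCor} together with Theorem \ref{ThmResSobEuclidIsNonIsoNew}. The only cosmetic difference is that you spell out the boundedness of the iterated $(\delta\partial)^\alpha$-derivatives and the degenerate $E\cdot\td=\infty$ case explicitly, which the paper leaves implicit.
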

\begin{proof}
For $(\Xt,\td)\in \sSt$, we may write $\Xt=\sum_{l=1}^n c_l \frac{\partial}{\partial x_l}$, where $c_l\in C^{\infty}(\Omega)$.
Thus, for $\jh\in [0,\infty]$, we have
\begin{equation*}
2^{-(E \jh)\cdot \td} \Xt = \sum_{l=1}^n \q(2^{\jh (1-E\cdot \td)} c_l\w) 2^{-\jh }\frac{\partial}{\partial x_l}.
\end{equation*}
By the assumption on $E$, $\{2^{\jh (1-E\cdot \td)} c_l : 1\leq l\leq n, \jh\in[0,\infty]\}\subset C^\infty(\Omega)$
is a bounded set.  
Thus we have $(\partial, 1)$ smoothly $E$-controls $(\Xt,\td)$ on $\Omega'$.
From here the result follows from an application of Corollary \ref{CorCompMainCor} and Theorem
\ref{ThmResSobEuclidIsNonIsoNew}.
\end{proof}

For the reverse inequalities, in Case I pick $\sF$ so that $\sL(\sS)$ is finitely generated by $\sF$ on $\Omega'$, and in
Case II, pick $\sF$ so that $\sL(\sS)$ is linearly finitely generated by $\sF$ on $\Omega'$.
Define $\sFt\subset \snutvect$ by
\begin{equation*}
\sFt:=\q\{(\Xt,\td):  (\Xt, (0,\td))\in \sF\w\}.
\end{equation*}
It immediately follows that in Case I, $\sL(\sSt)$ is finitely generated by $\sFt$ in $\Omega'$, and in Case II, $\sL(\sSt)$ is linearly
finitely generated by $\sFt$ on $\Omega'$.  We further assume
\begin{equation*}
\q\{ \Xt : (\Xt,\td)\in \sFt\w\}
\end{equation*}
spans the tangent space at every point of $\Omega'$.

For $1\leq k\leq n$, let $\sRt_k\subseteq \sFt$ be such that
\begin{equation}\label{EqnResSobEuclidUseSpanNew}
\frac{\partial}{\partial x_k} = \sum_{(\Xt,\td)\in \sRt_k} c_{k,\Xt,\td} \Xt,
\end{equation}
where $c_{k,\Xt,\td}\in C^\infty(\Omega')$.  Set $\sRt=\bigcup_{k=1}^n \sRt_k$.
And define a vector $F=(F_1,\ldots, F_{\nut})\in [0,\infty)^{\nut}$ by
\begin{equation*}
F_\mu = \max_{(\Xt,\td)\in \sRt} \td_\mu.
\end{equation*}

\begin{thm}\label{ThmSobEuclidSpanNew}
\begin{itemize}
\item In Case I, for $1<p<\infty$, and $\deltah\in \R_0\cap [0,\infty)$ we have
\begin{equation*}
\LpsN{p}{\deltah}{f}\lesssim \NLpN{p}{\deltah F}{\sSt}{f}, \quad f\in C_0^{\infty}(\Omega_0),
\end{equation*}
and dually,
\begin{equation*}
\NLpN{p}{-\deltah F}{\sSt}{f}\lesssim \LpsN{p}{-\deltah}{f}, \quad f\in C_0^\infty(\Omega_0).
\end{equation*}
More precisely, for $1<p<\infty$, $\exists \epsilon=\epsilon(p, (\gammat,\et,\Nt),F)>0$ such that for $\deltah\in [0,\infty)$ with $|\delta|<\epsilon$,
\begin{equation*}
\LpsN{p}{\deltah}{f}\lesssim \NLpN{p}{\deltah F}{\fDt}{f}, \quad f\in C_0^{\infty}(\Omega_0),
\end{equation*}
and dually,
\begin{equation*}
\NLpN{p}{-\deltah F}{\fDt}{f}\lesssim \LpsN{p}{-\deltah}{f}, \quad f\in C_0^\infty(\Omega_0).
\end{equation*}

\item In Case II, for $1<p<\infty$, and $\deltah\in [0,\infty)$ we have
\begin{equation*}
\LpsN{p}{\deltah}{f}\lesssim \NLpN{p}{\deltah F}{\sSt}{f}, \quad f\in C_0^{\infty}(\Omega_0),
\end{equation*}
and dually,
\begin{equation*}
\NLpN{p}{-\deltah F}{\sSt}{f}\lesssim \LpsN{p}{-\deltah}{f}, \quad f\in C_0^\infty(\Omega_0).
\end{equation*}
\end{itemize}
\end{thm}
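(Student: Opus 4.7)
The plan is to run the proof of Theorem~\ref{ThmSobEuclidNoSpanNew} in reverse: rather than having $(\partial,1)$ control $\sSt$, I use the spanning hypothesis to show that $\sFt$ smoothly $F$-controls $(\partial,1)$ on $\Omega'$, and then combine Corollary~\ref{CorCompMainCor} with Theorem~\ref{ThmResSobEuclidIsNonIsoNew}.

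The key point is the following calculation. For each $k$, using \eqref{EqnResSobEuclidUseSpanNew} and any $\delta \in [0,1]^{\nut}$,
\begin{equation*}
\delta^{F} \frac{\partial}{\partial x_k} = \sum_{(\Xt,\td)\in \sRt_k} \q(\delta^{F-\td}\, c_{k,\Xt,\td}\w) \delta^{\td}\, \Xt.
\end{equation*}
By the very definition of $F$, each exponent $F - \td$ (for $(\Xt,\td) \in \sRt_k \subseteq \sRt$) lies in $[0,\infty)^{\nut}$, so the family of coefficients $\{\delta^{F-\td} c_{k,\Xt,\td} : \delta \in [0,1]^{\nut}\}$ is bounded in $C^{\infty}(\Omega')$. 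This shows that $\sFt \subseteq \sL(\sSt)$ smoothly $F$-controls $(\partial,1)$ on $\Omega'$, so in particular $\sL(\sSt)$ $F$-controls the single-parameter family $(\partial,1)$ on $\Omega'$, where $F$ is regarded as a $1 \times \nut$ matrix with entries in $[0,\infty)$.

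With that in hand, I apply Corollary~\ref{CorCompMainCor}, identifying the corollary's ``$\sSh$'' with our $\sSt$ (so $\nuh = \nut$), its ``$\sSt$'' with $(\partial,1)$ (single-parameter), and $\lambda = F$. Up to a permutation of coordinates the combined set $\sS$ built in the corollary coincides with the $\sS$ defined earlier in this section, so the standing hypothesis (Case~I or Case~II) supplies the corresponding case of the corollary. For $\deltah \in [0,\infty)$ (small in Case~I, arbitrary in Case~II) the corollary yields
\begin{equation*}
\NLpN{p}{\deltah}{\partial,1}{f} \lesssim \NLpN{p}{\deltah F}{\sSt}{f},
\end{equation*}
together with its dual $\NLpN{p}{-\deltah F}{\sSt}{f} \lesssim \NLpN{p}{-\deltah}{\partial,1}{f}$. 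Finally, Theorem~\ref{ThmResSobEuclidIsNonIsoNew} identifies $\NLpN{p}{s}{\partial,1}{f} \approx \LpsN{p}{s}{f}$ for all $s\in\R$, and the stated inequalities follow; the parameter restriction in Case~I comes from the $\epsilon$ produced by Corollary~\ref{CorCompMainCor}, which depends on $p$, $(\gammat,\et,\Nt)$, and $F$. The only mildly delicate bookkeeping is the permutation between the two orderings of the parameter factors; since both the non-isotropic norms and the comparison inequality are invariant under permutation of the factors, no substantive obstacle arises.
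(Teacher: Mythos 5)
Your proposal is correct and is essentially the paper's own proof: you derive the same identity $\delta^{F}\,\partial/\partial x_k = \sum_{(\Xt,\td)\in\sRt_k}(\delta^{F-\td}c_{k,\Xt,\td})\,\delta^{\td}\Xt$ (the paper writes this with $\delta=2^{-\jt}$), observe that $F-\td\in[0,\infty)^{\nut}$ keeps the coefficients bounded, conclude that $\sFt$ smoothly $F$-controls $(\partial,1)$ on $\Omega'$, and then invoke Corollary \ref{CorCompMainCor} and Theorem \ref{ThmResSobEuclidIsNonIsoNew}. Your added remarks about the coordinate permutation and the inclusion $\sFt\subseteq\sL(\sSt)$ are correct bookkeeping that the paper leaves implicit.
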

\begin{proof}
By \eqref{EqnResSobEuclidUseSpanNew}, for $\jt\in [0,\infty]^{\nut}$,  we have
\begin{equation*}
2^{-\jt\cdot F} \frac{\partial}{\partial x_k} = \sum_{(\Xt, \td)\in \sRt_k} \q(2^{\jt\cdot( \td - F )} c_{k,\Xt,\td}\w) 2^{-j\cdot \td} \Xt.
\end{equation*}
By the definition of $F$, $\td-F$ is nonpositive in every component (for $(\Xt,\td)\in \sRt_k$), and therefore
$$\q\{2^{\jt\cdot( \td - F )} c_{k,\Xt,\td} : (\Xt,\td)\in \sRt_k, \jt\in [0,\infty]^{\nut}\w\}\subset C^\infty(\Omega')$$
 is a bounded set.
 This shows that $\sFt$ smoothly $F$-controls $(\frac{\partial}{\partial x_k},1)$ on $\Omega'$.
From here the result follows from an application of Corollary \ref{CorCompMainCor} and Theorem
\ref{ThmResSobEuclidIsNonIsoNew}.
\end{proof}

\section{Results: Fractional Radon Transforms}\label{SectionResRadon}
As in Section \ref{SectionResSobNew}, we fix open sets $\Omega_0\Subset\Omega'\Subset\Omega''\Subset\Omega'''\Subset\Omega\subseteq\R^n$.
Let $(\gamma,e,N,\Omega,\Omega''')$ be a parameterization, with $\nu$-parameter dilations.
Fix $a>0$.
For $\psi_1,\psi_2\in C_0^{\infty}(\Omega_0)$, $\kappa(t,x)\in C^{\infty}(B^{N}(a)\times \Omega'')$, $\delta\in \R^{\nu}$,
and $K\in \sK_\delta(N,e,a)$, define an operator
\begin{equation}\label{EqnResRadonMainOpNew}
Tf(x)=\psi_1(x) \int f(\gamma_t(x)) \psi_2(\gamma_t(x)) \kappa(t,x) K(t)\: dt.
\end{equation}

\begin{defn}
If $T$ is an operator as in \eqref{EqnResRadonMainOpNew}, we say that $T$ is a {\bf fractional Radon transform of order} $\delta$ {\bf corresponding
to} $(\gamma,e,N)$.  If we wish to make the choice of $a>0$ explicit, we say $T$ is a {\bf fractional Radon transform of order} $\delta$ {\bf
corresponding to} $(\gamma,e,N)$ {\bf on} $B^N(a)$.
\end{defn}

\begin{thm}\label{ThmResRadonMainThm}
\begin{itemize}
\item Let $(\gamma,e,N,\Omega,\Omega''')$ be a parameterization which is finitely generated on $\Omega'$.
Then, there exists $a>0$, such that for $1<p<\infty$, there exists $\epsilon=\epsilon(p,(\gamma,e,N))>0$, such that
for any $\delta,\delta'\in \R^{\nu}$ with $|\delta|,|\delta'|<\epsilon$, and 
any $T$ a fractional Radon transform of order $\delta$ corresponding to $(\gamma,e,N)$ on $B^N(a)$, we have
\begin{equation*}
\NLpN{p}{\delta'}{\gamma,e,N}{Tf}\lesssim \NLpN{p}{\delta+\delta'}{\gamma,e,N}{f}, \quad f\in C_0^{\infty}(\Omega_0).
\end{equation*}

\item Let $(\gamma,e,N,\Omega,\Omega''')$ be a parameterization which is linearly finitely generated on $\Omega'$.
Then, there exists $a>0$, such that for $1<p<\infty$, $\delta,\delta'\in \R^{\nu}$, and 
any $T$ a fractional Radon transform of order $\delta$ corresponding to $(\gamma,e,N)$ on $B^N(a)$, we have
\begin{equation*}
\NLpN{p}{\delta'}{\gamma,e,N}{Tf}\lesssim \NLpN{p}{\delta+\delta'}{\gamma,e,N}{f}, \quad f\in C_0^{\infty}(\Omega_0).
\end{equation*}

\end{itemize}
\end{thm}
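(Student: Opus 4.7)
My plan is to reduce the theorem to an almost-orthogonality statement for compositions of ``elementary'' Radon transforms, combined with the vector-valued Littlewood--Paley characterization of the non-isotropic Sobolev norm. Pick Sobolev data $\fD = (\nu, (\gamma, e, N, \Omega, \Omega'''), a, \eta, \{\vsig_j\}_{j\in \N^\nu}, \psi)$ attached to the same $(\gamma, e, N)$, so that by Definition \ref{DefnResSobForGamma} we have $\NLpN{p}{r}{\gamma,e,N}{f} = \bigl\| \bigl( \sum_{j\in \N^\nu} |2^{j\cdot r} D_j f|^2\bigr)^{1/2}\bigr\|_{L^p}$ with $D_j = D_j(\fD)$. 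By Definition \ref{DefnKer}, the kernel $K \in \sK_\delta$ decomposes as $K(t) = \eta_K(t) \sum_{k \in \N^\nu} 2^{k\cdot \delta} \dil{\tau_k}{2^k}(t)$ with $\{\tau_k\}$ a bounded family in $\schS(\R^N)$ having the prescribed cancellation. This induces $T = \sum_{k\in\N^\nu} 2^{k\cdot\delta} T_k$, where
\begin{equation*}
T_k f(x) := \psi_1(x) \int f(\gamma_t(x))\, \psi_2(\gamma_t(x))\, \kappa(t,x)\, \eta_K(t)\, \dil{\tau_k}{2^k}(t)\, dt
\end{equation*}
is an elementary Radon transform at multi-scale $2^{-k}$ corresponding to $(\gamma, e, N)$.

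The central step is to analyze the compositions $D_j T_k$, viewing both $D_j$ and $T_k$ as elementary Radon transforms associated to the same parameterization. Here I would invoke the composition theory for elementary operators attached to a finitely generated $(\gamma,e,N)$ developed in \cite{SteinStreetA, SteinStreetI, SteinStreetII, SteinStreetIII} and \cite{StreetMultiParamSingInt}. The upshot is that for $a > 0$ small enough, $D_j T_k$ is (up to Schwartz errors) again an elementary Radon transform at the coordinatewise-minimum scale, and enjoys the almost-orthogonality bound $\LtOpN{D_j T_k} \lesssim \prod_{\mu=1}^\nu 2^{-|j_\mu - k_\mu|}$, thanks to the cancellation of $\tau_k$ and $\vsig_j$ in the respective ``active'' coordinates. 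This estimate upgrades to an analogous $L^p(\ell^2)$-style almost-orthogonality via the vector-valued inequalities in \cite{StreetMultiParamSingInt}.

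Given the almost-orthogonality, the proof is completed by a Schur-type summation. Writing
\begin{equation*}
2^{j \cdot \delta'} D_j T f = \sum_{k \in \N^\nu} \sum_{l\in\N^\nu} 2^{(j-k)\cdot\delta'} \, 2^{(k-l)\cdot(\delta+\delta')} \, D_j T_k \, 2^{l\cdot(\delta+\delta')} D_l f,
\end{equation*}
one bounds the resulting double sum in $L^p(\ell^2(\N^\nu))$ by invoking the Fefferman--Stein vector-valued maximal/square-function theorem, together with a Schur test on the matrix with entries decaying like $\prod_\mu 2^{-|j_\mu-k_\mu|}\cdot 2^{(j-k)\cdot\delta'}$ and its analog in the $(k,l)$ variable. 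In Case I the smallness of $|\delta|, |\delta'| < \epsilon$ is precisely what is needed so that the geometric decay of $D_j T_k$ (and of the ``composition'' $T_k D_l$) dominates the weights $2^{(j-k)\cdot \delta'}$ and $2^{(k-l)\cdot(\delta+\delta')}$ and the resulting Schur matrix is summable; in Case II the linearly finitely generated structure supplies faster decay (cf.\ Theorem \ref{ThmResSobWellDefNew}), removing the smallness constraint.

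The main obstacle will be establishing the uniform $L^p(\ell^2)$ almost-orthogonality for the compositions $D_j T_k$ in the multi-parameter, finitely generated setting. This is where the full strength of the Stein--Street multi-parameter framework is used: one must verify that the bump $T_k$ can be ``pulled past'' $D_j$ using the algebra of elementary operators attached to $(\gamma, e, N)$, with quantitative control on the resulting Schwartz tails that is uniform in $j, k$. Once this composition/almost-orthogonality is in hand, the reduction to vector-valued Littlewood--Paley inequalities is routine, and Theorem \ref{ThmResSobWellDefNew} guarantees the answer is independent of the choices made in defining $\fD$.
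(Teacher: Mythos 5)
Your overall strategy---decompose $K$ dyadically, write $T=\sum_k 2^{k\cdot\delta}T_k$, decompose the Sobolev norm via the Littlewood--Paley pieces $D_j$, and close the estimate by vector-valued almost-orthogonality---is the correct one, and is the one the paper takes. However, the specific plan of a ``double Schur test'' over the index pairs $(j,k)$ and $(k,l)$ after writing $2^{j\cdot\delta'}D_jTf=\sum_{k,l}\cdots D_jT_kD_lf$ does not close, and the gap is substantive. After the triangle inequality, one needs to express each summand as a vector-valued operator of the form covered by Proposition \ref{PropPfSobVVOps} applied to an $L^p(\ell^2)$-bounded sequence. The two natural candidates both fail: the operator $\{f_j\}\mapsto\{D_jT_{j+k_1}f_j\}$ has almost-orthogonal decay only in $k_1$, so the sum over the second offset $k_2$ diverges (the weight $2^{-k_2\cdot(\delta+\delta')}$ is not summable); and the operator $\{f_j\}\mapsto\{D_jT_{j+k_1}D_{j+k_2}f_j\}$ has built into it a factor $D_{j+k_2}$, so applying it to the input $\{2^{(j+k_2)\cdot(\delta+\delta')}D_{j+k_2}f\}_j$ produces $D_{j+k_2}^2f$ rather than $D_{j+k_2}f$, while applying it to a constant-in-$j$ input gives infinite $L^p(\ell^2)$ norm. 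The paper resolves this by inserting an extra decomposition of the identity (writing $f=\psi^4 f$, not $\psi^2 f$), producing three offsets $k_1,k_2,k_3$, splitting into the regimes $|k_2|\geq|k_3|/2$ and $|k_3|>2|k_2|$, and handling the second regime via Lemma \ref{LemmaPfSobTwoToOne}, which controls $\sum_k\|(\sum_j|2^{j\cdot\delta+\delta_0|k|}D_jD_{j+k}f|^2)^{1/2}\|_{L^p}$ by $\NLpN{p}{\delta}{\fD}{f}$. That lemma is not a corollary of almost-orthogonality: its proof relies on yet another three-offset expansion and a bootstrap in which the target quantity is absorbed from the right-hand side back into the left after choosing a truncation parameter $M$ large. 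None of this machinery is visible in your outline, and the ``Schur test in the $(k,l)$ variable'' step is precisely where your argument would break down.

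A secondary but nontrivial inaccuracy: you assert $\LtOpN{D_jT_k}\lesssim\prod_{\mu=1}^\nu 2^{-|j_\mu-k_\mu|}$. In the finitely generated case the available decay is only $2^{-\epsilon|j-k|}$ for some $\epsilon>0$ depending on $(\gamma,e,N)$ (Proposition \ref{PropPfLtTechApO}), and after interpolation to $L^p(\ell^2)$ the exponent $\epsilon_p$ also depends on $p$; the sharper rate $2^{-L|j-k|}$ for arbitrary $L$ is available only in the linearly finitely generated case. This matters: the size of $\epsilon_p$ is exactly what determines how small $|\delta|,|\delta'|$ must be, and the whole structure of Case I versus Case II in the theorem reflects this distinction. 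Your plan does acknowledge the role of smallness of $\delta,\delta'$ in Case I, which is the right intuition, but the quantitative claim about the decay rate is not what the framework delivers.
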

\begin{proof}This is proved in Section \ref{SectionPfRadon}.\end{proof}

\begin{rmk}
In Theorem \ref{ThmResRadonMainThm}, we have used that when $(\gamma,e,N)$ is finitely generated on $\Omega'$,
the equivalence class of the norm $\NLpN{p}{\delta}{\gamma,e,N}{\cdot}$ is well-defined on $C_0^{\infty}(\Omega_0)$
for $\delta$ sufficiently small (depending on $p$ and $(\gamma,e,N)$), and is well-defined for all $\delta\in \R^{\nu}$
when $(\gamma,e,N)$ is linearly finitely generated on $\Omega'$.  See Definition \ref{DefnResSobForGamma} for further details.
\end{rmk}

\begin{cor}\label{CorResRadonIntroCor}
Let $(\gamma,e,N,\Omega,\Omega''')$ be a parameterization which is finitely generated on $\Omega'$.
Then for $1<p<\infty$, there exists $\epsilon=\epsilon(p,(\gamma,e,N))>0$, such that for every $\psi\in C_0^{\infty}(\Omega_0)$,
there exists $a>0$ such that for every $\delta,\delta'\in \R^{\nu}$ with $|\delta|,|\delta'|<\epsilon$, and every $K\in \sK_{\delta}(N,e,a)$,
the operator
\begin{equation}\label{EqnResRadonCorOpNew}
T f(x)=\psi(x)\int f(\gamma_t(x)) K(t)\: dt
\end{equation}
satisfies
\begin{equation*}
\NLpN{p}{\delta'}{\gamma,e,N}{Tf}\lesssim \NLpN{p}{\delta+\delta'}{\gamma,e,N}{f}, \quad f\in C_0^{\infty}(\Omega_0).
\end{equation*}
If, in addition, $(\gamma,e,N)$ is linearly finitely generated on $\Omega'$, we may take $\epsilon=\infty$.
\end{cor}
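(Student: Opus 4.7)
The plan is to deduce the corollary directly from Theorem \ref{ThmResRadonMainThm} by reintroducing, at essentially no cost, the extra cutoff factors $\psi_2(\gamma_t(x))$ and $\kappa(t,x)$ appearing in the definition of a fractional Radon transform. Specifically, given the parameterization $(\gamma,e,N)$, let $a_0>0$ and $\epsilon=\epsilon(p,(\gamma,e,N))>0$ be the constants furnished by Theorem \ref{ThmResRadonMainThm}; in the linearly finitely generated case, take $\epsilon=\infty$. These choices are made once and for all, depending only on $p$ and $(\gamma,e,N)$, not on $\psi$.

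Now suppose we are given $\psi\in C_0^\infty(\Omega_0)$. Since $\supp{\psi}$ is a compact subset of the open set $\Omega_0$, we may choose a function $\psi_2\in C_0^\infty(\Omega_0)$ and an open neighborhood $U$ of $\supp{\psi}$ with $\overline{U}\subset \Omega_0$, such that $\psi_2\equiv 1$ on $U$. Because $\gamma(t,x)$ is continuous with $\gamma_0(x)\equiv x$, and $\supp{\psi}$ is compact, there exists $a_1>0$ such that $\gamma_t(x)\in U$ for every $x\in\supp{\psi}$ and every $t\in B^N(a_1)$. Set $a:=\min(a_0,a_1)$.

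With this choice of $a$, let $\delta,\delta'\in\R^\nu$ with $|\delta|,|\delta'|<\epsilon$, and let $K\in\sK_\delta(N,e,a)$. Then $K$ is supported in $B^N(a)\subset B^N(a_0)$, so in particular $K\in \sK_\delta(N,e,a_0)$. For $x\in\supp{\psi}$ and $t\in B^N(a)$ we have $\psi_2(\gamma_t(x))=1$, so the operator in \eqref{EqnResRadonCorOpNew} may be rewritten identically as
\begin{equation*}
Tf(x)=\psi(x)\int f(\gamma_t(x))\,\psi_2(\gamma_t(x))\,\kappa(t,x)\,K(t)\,dt,
\end{equation*}
with the choice $\kappa\equiv 1\in C^\infty(B^N(a_0)\times\Omega'')$. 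Thus $T$ is a fractional Radon transform of order $\delta$ corresponding to $(\gamma,e,N)$ on $B^N(a_0)$ in the sense of \eqref{EqnResRadonMainOpNew}, with $\psi_1=\psi$, $\psi_2$ as above, and $\kappa\equiv 1$. Applying Theorem \ref{ThmResRadonMainThm} yields the desired estimate $\NLpN{p}{\delta'}{\gamma,e,N}{Tf}\lesssim \NLpN{p}{\delta+\delta'}{\gamma,e,N}{f}$ for all $f\in C_0^\infty(\Omega_0)$; in the linearly finitely generated case the same reasoning works for all $\delta,\delta'\in\R^\nu$, giving the final assertion with $\epsilon=\infty$.

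There is essentially no obstacle beyond bookkeeping: the only subtle point is the order of quantifiers. The theorem provides a single $a_0$ depending on $(\gamma,e,N)$, while the corollary demands that $a$ be chosen after $\psi$; shrinking $a_0$ to $a=\min(a_0,a_1)$ is harmless since $\sK_\delta(N,e,a)\subset \sK_\delta(N,e,a_0)$, so no property of the kernel class is lost. Everything else is a direct invocation of Theorem \ref{ThmResRadonMainThm}.
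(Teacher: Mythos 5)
Your proof is correct and follows essentially the same argument as the paper: take $\psi_2\equiv 1$ on a neighborhood of $\supp\psi$, take $\kappa\equiv 1$, shrink $a$ so that $\psi_2(\gamma_t(x))\equiv 1$ on the domain of integration, and invoke Theorem \ref{ThmResRadonMainThm}. The only difference is that you spell out the quantifier bookkeeping (choosing $a=\min(a_0,a_1)$) and the inclusion $\sK_\delta(N,e,a)\subset\sK_\delta(N,e,a_0)$ more explicitly than the paper does, which is just a gain in clarity.
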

\begin{proof}
Given $\psi$, take $\psi_2\equiv 1$ on a neighborhood of $\supp{\psi}$ and take $\kappa=1$.  It is easy to see that
if $K$ is supported on a sufficiently small neighborhood of $0$ (i.e., if $a>0$ is sufficiently small), then 
$\psi_2(\gamma_t(x))\equiv 1$ on the domain of integration of \eqref{EqnResRadonMainOpNew} (with $\psi$ replaced by $\psi_1$).
Thus, $T$ is of the form \eqref{EqnResRadonMainOpNew}.  From here, the corollary follows from Theorem \ref{ThmResRadonMainThm}.
\end{proof}

\begin{rmk}
The main reason we work with the more general operators in Theorem \ref{ThmResRadonMainThm} (instead of the operators in Corollary \ref{CorResRadonIntroCor}) is that the class of operators in
\eqref{EqnResRadonMainOpNew} is closed under adjoints, while the class of operators in \eqref{EqnResRadonCorOpNew} is not.  See Section \ref{SectionPfAdjoint}
and
Section 12.3 of \cite{SteinStreetI} for details.
\end{rmk}

\subsection{Other geometries}
Suppose $(\gammat,\et,\Nt,\Omega,\Omega''')$ is a parameterization with $\nut$-parameter dilations which is finitely generated (resp. linearly finitely generated) on $\Omega'$.
If $T$ is a fractional Radon transform corresponding to $(\gammat,\et,\Nt)$ of order $\deltat$, then
Theorem \ref{ThmResRadonMainThm} shows, for $p\in (1,\infty$) and  $\deltat,\deltat'\in \R^{\nut}$ sufficiently small (resp. for all $\deltat,\deltat'\in \R^{\nu}$),
$T:\NLp{p}{\deltat+\deltat'}{(\gammat,\et,\Nt)}\rightarrow \NLp{p}{\deltat'}{(\gammat,\et,\Nt)}$.
In other words, if $\sSt$ is defined in terms of $(\gammat,\et,\Nt)$ by \eqref{EqnResSurfDefnsS}, then
$T:\NLp{p}{\deltat+\deltat'}{\sSt}\rightarrow \NLp{p}{\deltat'}{\sSt}$ for $\deltat,\deltat'\in \R_0^{\nut}$ (resp. for all $\deltat,\deltat'\in \R^{\nu}$).

Now suppose $\sSh\subseteq \snuhvectone$ is such that $\sL(\sSh)$ is finitely generated (resp. linearly finitely generated) on $\Omega'$.  If $\deltat\in \R_0^{\nut}$ (resp. $\deltat\in \R^{\nut}$) it makes sense to ask
for what $\deltah_1,\deltah_2\in \R_0^{\nuh}$ (resp. $\deltah_1,\deltah_2\in \R^{\nuh}$), if any, do we have mappings of the form
$T:\NLp{p}{\deltah_1}{\sSh}\rightarrow \NLp{p}{\deltah_2}{\sSh}$.  Moreover, we wish to have a formula for $\deltah_2$ in terms of $\deltah_1$ and $\deltat$.
To this end, we make the following assumptions for the rest of the section.


\begin{enumerate}[\bf{Case} I:]
\item
\begin{itemize}
\item$(\gammat,\et,\Nt,\Omega,\Omega''')$ is a parameterization (with $\nut$-parameter dilations) which is finitely generated
on $\Omega'$.
\item Let $\sSt\subset \snutvectone$ be defined in terms of $(\gammat,\et,\Nt)$ by \eqref{EqnResSurfDefnsS}, and let $\sSh\subseteq \snuhvectone$. 
\item Let\begin{equation}\label{EqnAssumptionResRadonOtherMainsSNew}
\begin{split}
&\sS:=
\\&\quad\quad\q\{ \q(\Xh, (\hd,0_{\nut})\w) : (\Xh, \hd)\in \sSh \w\} 
\\&\quad\quad\bigcup\q\{ \q(\Xt, (0_{\nuh},\td)\w) : (\Xt, \td)\in \sSt \w\}
\\&\subset \snuvectone,
\end{split}
\end{equation}
where $\nu=\nut+\nuh$.  We assume $\sL(\sS)$ is finitely generated on $\Omega'$.  Note, this implies $\sL(\sSh)$ is finitely generated
on $\Omega'$.
\item On $\R^{\Nt}$, define $\nu$-parameter dilatations $\et'$ given by $\et_j'=(0_{\nuh}, \et_j)$.\footnote{I.e., for $\delta=(\deltah,\deltat)\in [0,1]^{\nuh}\times [0,1]^{\nut}$, we define $\delta \vtt=\deltat\vtt$, where $\deltat\vtt$ is defined by the $\nut$ parameter dilations $\et$.}  We assume
$\sL(\sS)$ controls $(\gammat,\et',\Nt)$ on $\Omega'$.
\end{itemize}
\item This is the same as Case I, but everywhere ``finitely generated'' is replaced by ``linearly finitely generated''.
\end{enumerate}

\begin{rmk}\label{RmkResRadonOtherSmoothWorks}
Case I comes up in many situations.  For instance, if $\sL(\sS)$ is finitely generated on $\Omega'$ and if $(\gammat,\et,\Nt)$ is
smoothly finitely generated on $\Omega'$, then the assumptions in Case I hold.\footnote{The point here is that 
if $(\gammat,\et,\Nt)$ is merely finitely generated on $\Omega'$, then it does not necessarily follow that $\sL(\sS)$ controls
$(\gammat,\et,\Nt)$ on $\Omega'$.  However, if $(\gammat,\et,\Nt)$ is {\textit smoothly} finitely generated on $\Omega'$, then
it does follow that $\sL(\sS)$ controls $(\gammat,\et,\Nt)$ on $\Omega'$.  This is one of the main conveniences of
smoothly finitely generated over finitely generated.}
This happens automatically in many situations of interest.
See, e.g.,
Sections \ref{SectionResHorVect}, \ref{SectionResRealAnalVect}, \ref{SectionResHorSurf}, and \ref{SectionResRealAnalSurf}.
Similarly, if $\sL(\sS)$ is linearly finitely generated on $\Omega'$ and if $(\gammat,\et,\Nt)$ is
smoothly linearly finitely generated on $\Omega'$, then the assumptions in Case II hold.  This also arises in some cases of interest;
see Section \ref{SectionRadonPDO}.
\end{rmk}

So that we may precisely state our results we need to make a few choices.
Pick $\sF$ so that in Case I, $\sL(\sS)$ is finitely generated by $\sF$ on $\Omega'$, and in Case II, $\sL(\sS)$ is linearly finitely generated
by $\sF$ on $\Omega'$.
Because $\sL(\sS)$ is finitely generated (resp. linearly finitely generated) on $\Omega'$, it follows that $\sL(\sSh)$ is finitely generated (resp. linearly finitely
generated) by some $\sFh$ on $\Omega'$.
In Case I, pick parameterizations $(\gamma,e,N)$ and $(\gammah,\eh,\Nh)$ which are
finitely generated on $\Omega'$ by $\sF$ and $\sFh$, respectively.  For instance, one may use the choice in Proposition \ref{PropResSurfExistsAParam} and Remark \ref{RmkResSurfPointExistsAParam}.
In what follows, in Case I, we use the norms $\NLpN{p}{\delta}{\gamma,e,N}{\cdot}$ and $\NLpN{p}{\deltah}{\gammah,\eh,\Nh}{\cdot}$,
for $\delta\in \R^{\nu}$ and $\deltah\in \R^{\nut}$ small.  In light of Theorem \ref{ThmResSobWellDefNew} and Definition \ref{DefnResSobForGamma},
these norms are well defined, and depend only on $\sS$ and $\sSh$ for $\delta\in \R^{\nu}_0$, $\deltah\in \R^{\nut}_0$.
In Case II, the equivalence class of the norms $\NLpN{p}{\delta}{\sS}{\cdot}$, $\NLpN{p}{\deltah}{\sSh}{\cdot}$ are well-defined
for all $\delta\in \R^{\nu}$, $\deltah\in \R^{\nuh}$--see Definition \ref{DefnResSobDependOnsS}.

\begin{prop}\label{PropResRadonOtherAddingParamsNew}
Suppose $T$ is a fractional Radon transform of order $\deltat\in \R^{\nut}$ corresponding to $(\gammat,\et,\Nt)$ on $B^{\Nt}(a)$.
Then, there exists a $\nu$-parameter parameterization $(\gammat',\et',\Nt')$ which is finitely generated (resp. linearly finitely generated)
by $\sF$ on $\Omega'$ in Case I (resp. in Case II), and such that $T$ is a fractional Radon transform
of order $(0_{\nuh},\deltat)\in \R^{\nuh}\times \R^{\nut}=\R^{\nu}$ corresponding to $(\gammat',\et',\Nt')$ on $B^{\Nt'}(a)$.
\end{prop}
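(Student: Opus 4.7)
The strategy is to enlarge the parameter space of $\gammat$ by adjoining variables that carry the additional generators of $\sF$, so that the original kernel $K$ embeds into $\sK_{(0_{\nuh},\deltat)}(\Nt',\et',a)$ via a tensor product with a Dirac mass. First I apply Proposition~\ref{PropResSurfExistsAParam} (in Case I) or Proposition~\ref{PropResSurfExistsLinFinGen} (in Case II) to produce a finite $\sS_0\subseteq \sS$ with $\sL(\sS_0)$ (linearly) finitely generated by $\sF$ on $\Omega'$, and enumerate $\sF\cup\sS_0=\{(Y_1,f_1),\ldots,(Y_p,f_p)\}$. Set $\Nt'=p+\Nt$ with dilations $\et'_j=f_j$ for $1\le j\le p$ and $\et'_{p+j}=(0_{\nuh},\et_j)$ for $1\le j\le \Nt$, and let $(\gammat',\et',\Nt')$ correspond to the vector field parameterization
\[
\Wt'(s,t)(x):=\sum_{j=1}^p s_jY_j(x)+\Wt(t)(x),
\]
where $\Wt$ corresponds to $\gammat$. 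Since $\Wt'(0,t)=\Wt(t)$, this yields $\gammat'_{(0,t)}(x)=\gammat_t(x)$.

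Next I verify $(\gammat',\et',\Nt')$ is (linearly) finitely generated by $\sF$ on $\Omega'$ using Proposition~\ref{PropResSurfAnotherWay}. Because $\Wt'$ has no cross terms in $(s,t)$, its Taylor coefficients form
\[
\sV'=\{(Y_j,f_j)\}_{j=1}^p\cup\{(\Xt_\alpha,(0_{\nuh},\deg_{\et}(\alpha))):|\alpha|>0\},
\]
and $\sS'\subseteq\sV'$ consists of those with degree in $\onecompnu$. Since $\sS_0\subseteq\sS'$ (as each such $f_j\in\onecompnu$), $\sL(\sS')$ controls $\sL(\sS_0)\sim\sF$; conversely every element of $\sV'$ is controlled by $\sF$, either as a member of $\sF\cup\sS_0$ or as a lifted Taylor coefficient of $\Wt$, the latter being controlled by $\sL(\sS)\sim\sF$ via the hypothesis that $\sL(\sS)$ controls $(\gammat,\et',\Nt)$. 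With $\sF$ satisfying $\sD(\Omega')$, this gives $\sL(\sS')\sim\sL(\sV')\sim\sF$, and $\sF$ controls $\Wt'$ because the $s$-summand is controlled by $\sF\cup\sS_0$ and the $t$-summand by $\sL(\sS)$. Proposition~\ref{PropResSurfAnotherWay} then yields finite generation by $\sF$; in Case II every $(Y_j,f_j)\in\snuvectone$, and the $|\alpha|=1$ analysis from Definition~\ref{DefnResSurfLinearlyFinitelyGenerated} upgrades to linear finite generation. Finally I realize $T$ as a fractional Radon transform of order $(0_{\nuh},\deltat)$ corresponding to $(\gammat',\et',\Nt')$ by taking $\psi_1'=\psi_1$, $\psi_2'=\psi_2$, $\kappa'(s,t,x)=\kappa(t,x)$, and $K'(s,t)=\delta_0(s)\otimes K(t)$ with $\delta_0$ the Dirac mass at $0\in\R^p$; integration against $\delta_0(s)$ collapses $s$ to $0$ and $\gammat'_{(0,t)}=\gammat_t$ recovers $Tf(x)$.

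The main obstacle is verifying that $K'=\delta_0(s)\otimes K(t)\in\sK_{(0_{\nuh},\deltat)}(\Nt',\et',a)$: because the $f_j$'s may have multiple nonzero components, $\et'$ violates Assumption~\ref{AssumpKerProduct} and Proposition~\ref{PropKerEquivToProd} does not directly apply. One must work from Definition~\ref{DefnKer} by hand: combine the expansion of $\delta_0$ from Lemma~\ref{LemmaResKerDelta} with that of $K$ viewed as an element of $\sK_{(0_{\nuh},\deltat)}(\Nt,\et'',a)$ under the lifted dilations $\et''_j=(0_{\nuh},\et_j)$, and reindex the resulting double sum over $\N^\nu\times\N^\nu$ into a single sum over $m\in\N^\nu$ (for instance via $m_\mu=\max(j_\mu,k_\mu)$), verifying that the combined summands form a bounded family in $\schS(\R^{\Nt'})$ with the required cancellation in every $(s,t)_\mu$ block for which $m_\mu\ne 0$.
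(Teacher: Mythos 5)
Your plan — enlarge the parameter space by adjoining variables carrying extra generators, tensor with a Dirac mass, and verify the new parameterization is (linearly) finitely generated by $\sF$ — is the same basic strategy as the paper's, and your verification via Proposition~\ref{PropResSurfAnotherWay} is essentially sound. But the specific choice of what to adjoin creates a gap in the kernel-membership step, and you have correctly identified this as the dangerous point.

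The paper adjoins coordinates only for the vector fields of a finite set $\sFh\subset\snuhvect$ that generates $\sL(\sSh)$, and gives these coordinates the $\nuh$-parameter dilations $\hd_l$, lifted to $\nu$ parameters by padding with $0_{\nut}$. With this choice the $\vht$-coordinates are dilated only by the first $\nuh$ components of $j$ and the $\vtt$-coordinates only by the last $\nut$, so for $j=(\jh,\jt)$ and $\vsig_j:=\vsigh_{\jh}\otimes\vsigt_{\jt}$ one has the exact identity $\dil{\vsig_j}{2^j}(\vht,\vtt)=\dil{\vsigh_{\jh}}{2^{\jh}}(\vht)\,\dil{\vsigt_{\jt}}{2^{\jt}}(\vtt)$, and the membership $\vsig_j\in\schS_{\{\mu:j_\mu\ne0\}}$ follows immediately block-by-block. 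No reindexing is required; $\delta_0(\vht)\otimes\Kt(\vtt)\in\sK_{(0_{\nuh},\deltat)}(\Nt',\et',a)$ falls out of Lemma~\ref{LemmaResKerDelta} and Definition~\ref{DefnKer} without further work.

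Your proposal instead adjoins $\sF\cup\sS_0$, whose formal degrees $f_l$ are $\nu$-parameter vectors that may have nonzero entries in the $\nut$-components (elements of $\sS_0$ coming from $\sSt$, and elements of $\sF$ in Case~I, which need not even lie in $\onecompnu$). Consequently $2^j s_l$ depends on $\jt$ as well as $\jh$, and the clean factorization above fails: expanding $\delta_0(s)$ over $m\in\N^{\nu}$ (w.r.t.\ its $\nu$-parameter dilations) and $K(t)$ over $k\in\N^{\nut}$ leaves you with a genuine double sum that cannot simply be read off as a single sum over $j\in\N^{\nu}$. Your suggested reindexing via $m_\mu=\max(j_\mu,k_\mu)$ is not a cosmetic step. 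After the change of index one would have to write, say, $\vsigh_j(2^{j-m}\cdot)$ with $j\le m$, and the family $\{\vsigh_j(2^{j-m}\cdot)\}$ is \emph{not} uniformly bounded in Schwartz seminorms as $|m-j|\to\infty$ — the decay degrades, which is precisely why Lemma~\ref{LemmaPfKerDecompsS} and Proposition~\ref{PropPfKerDecompVsig} go through compactly supported cutoffs and extract derivatives before making such a move; and even then, that machinery converts a \emph{single} scale into a sum over \emph{lower} scales, which is the opposite of what you need here (merging two sums into one at a \emph{higher} scale). In addition, the required cancellation for $\vsig_m$ in each block $\mu\le\nuh$ (where the order $(0_{\nuh},\deltat)$ is $0\ge0$, so Proposition~\ref{PropResKerOtherDefn} gives no exemption) has to be re-derived for the merged terms, and it is not automatic when only one of the two tensor factors carries a moment condition on a shared block. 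None of this is insurmountable in principle, but as written it is an unresolved gap, and the point of the paper's proof is that the gap is avoided entirely by adjoining only $\sFh$ so that the $s$-dilations live purely in the $\nuh$-parameters.
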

\begin{proof}This is proved in Section \ref{SectionPfRadon}.\end{proof}

\begin{thm}\label{ThmResRadonOtherMainBound}
There exists $a>0$ such that for all $1<p<\infty$, the following holds.
\begin{itemize}
\item In Case I, for every $\deltat\in \R^{\nut}_0$, $\delta\in \R^{\nu}_0$, and every fractional Radon transform, $T$, of order
$\deltat$ corresponding to $(\gammat,\et,\Nt)$ on $B^{\Nt}(a)$, we have
\begin{equation*}
\NLpN{p}{\delta}{\sS}{Tf}\lesssim \NLpN{p}{\delta+(0_{\nuh},\deltat)}{\sS}{f}, \quad f\in C_0^{\infty}(\Omega_0).
\end{equation*}
More precisely, there exists $\epsilon=\epsilon(p,(\gamma,e,N),(\gammat,\et,\Nt))>0$ such that for any $\deltat\in \R^{\nut}$, $\delta\in \R^{\nu}$
with $|\deltat|,|\delta|<\epsilon$, and every fractional Radon transform, $T$, of order
$\deltat$ corresponding to $(\gammat,\et,\Nt)$ on $B^{\Nt}(a)$, we have
\begin{equation}\label{EqnResRadonOtherToShowBoundMain}
\NLpN{p}{\delta}{\gamma,e,N}{Tf}\lesssim \NLpN{p}{\delta+(0_{\nuh},\deltat)}{\gamma,e,N}{f}, \quad f\in C_0^{\infty}(\Omega_0).
\end{equation}

\item In Case II, for every $\deltat\in \R^{\nut}$, $\delta\in \R^{\nu}$, and every fractional Radon transform, $T$, of order
$\deltat$ corresponding to $(\gammat,\et,\Nt)$ on $B^{\Nt}(a)$, we have
\begin{equation*}
\NLpN{p}{\delta}{\sS}{Tf}\lesssim \NLpN{p}{\delta+(0_{\nuh},\deltat)}{\sS}{f}, \quad f\in C_0^{\infty}(\Omega_0).
\end{equation*}

\end{itemize}
\end{thm}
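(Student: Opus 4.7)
The plan is to reduce Theorem \ref{ThmResRadonOtherMainBound} to the ``diagonal'' case already handled by Theorem \ref{ThmResRadonMainThm}, using Proposition \ref{PropResRadonOtherAddingParamsNew} as the bridge. The point is that $T$ is a priori a $\nut$-parameter object, but our target norms live in the $\nu$-parameter world governed by $\sF$; Proposition \ref{PropResRadonOtherAddingParamsNew} lets us reinterpret $T$ inside that larger world without changing the operator.

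First, I would invoke Proposition \ref{PropResRadonOtherAddingParamsNew} to produce a $\nu$-parameter parameterization $(\gammat',\et',\Nt')$, finitely generated (Case I) or linearly finitely generated (Case II) by $\sF$ on $\Omega'$, such that $T$ is a fractional Radon transform of order $(0_{\nuh},\deltat)\in\R^\nu$ corresponding to $(\gammat',\et',\Nt')$ on $B^{\Nt'}(a)$. Then Theorem \ref{ThmResRadonMainThm}, applied to $(\gammat',\et',\Nt')$, yields (for $|\delta|,|\deltat|$ sufficiently small in Case I, and for all $\delta,\deltat\in\R^\nu$ in Case II)
\begin{equation*}
\NLpN{p}{\delta}{\gammat',\et',\Nt'}{Tf}\lesssim \NLpN{p}{\delta+(0_{\nuh},\deltat)}{\gammat',\et',\Nt'}{f},\quad f\in C_0^\infty(\Omega_0).
\end{equation*}

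Next, I would compare these norms with $\NLpN{p}{\cdot}{\gamma,e,N}{\cdot}$. Since $(\gamma,e,N)$ and $(\gammat',\et',\Nt')$ are both finitely generated (resp.\ linearly finitely generated) by the same $\sF$ on $\Omega'$, Theorem \ref{ThmResSobWellDefNew} gives
\begin{equation*}
\NLpN{p}{\delta}{\gammat',\et',\Nt'}{f}\approx \NLpN{p}{\delta}{\gamma,e,N}{f},
\end{equation*}
for $|\delta|$ sufficiently small (Case I) or all $\delta\in\R^\nu$ (Case II). Chaining this equivalence on both sides of the previous display yields exactly \eqref{EqnResRadonOtherToShowBoundMain}, and invoking Definition \ref{DefnResSobDependOnsS} converts the statement into one about $\NLpN{p}{\cdot}{\sS}{\cdot}$, which is the claim of the theorem.

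The tracking of constants is routine: the $\epsilon$ produced by Theorem \ref{ThmResRadonMainThm} applied to $(\gammat',\et',\Nt')$ depends on $p$ and on $(\gammat',\et',\Nt')$, which in turn is built from $(\gamma,e,N)$ and $(\gammat,\et,\Nt)$; the $\epsilon$ from Theorem \ref{ThmResSobWellDefNew} depends on the same data; taking the minimum produces an $\epsilon = \epsilon(p,(\gamma,e,N),(\gammat,\et,\Nt))$ as required, and the implicit constants likewise depend only on the stated data. The sole nontrivial step is Proposition \ref{PropResRadonOtherAddingParamsNew}; the main obstacle, which will be addressed in Section \ref{SectionPfRadon}, is to enlarge the $t$-variable domain from $\R^{\Nt}$ to some $\R^{\Nt'}$ and construct the $\nu$-parameter dilations and map $\gammat'$ so that (i) the enlarged operator coincides with the original $T$ as a kernel against distributions in $\sK_{(0_{\nuh},\deltat)}(\Nt',\et',a)$, and (ii) the Taylor data of the corresponding vector-field parameterization generates (or is controlled by) precisely $\sF$ -- so that the hypothesis that $\sL(\sS)$ controls $(\gammat,\et',\Nt)$ from Case I/II is indeed the exact input one needs.
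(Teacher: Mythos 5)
Your proposal is correct and follows essentially the same route as the paper's own proof: lift $T$ to a $\nu$-parameter fractional Radon transform via Proposition~\ref{PropResRadonOtherAddingParamsNew}, apply Theorem~\ref{ThmResRadonMainThm} to $(\gammat',\et',\Nt')$, and then use Theorem~\ref{ThmResSobWellDefNew} to compare the $(\gammat',\et',\Nt')$-norms with the $(\gamma,e,N)$-norms. The bookkeeping of $\epsilon$'s and the Case I/Case II split are handled identically.
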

\begin{proof}
First we consider Case I.
Proposition \ref{PropResRadonOtherAddingParamsNew} combined with Theorem \ref{ThmResRadonMainThm} proves
\eqref{EqnResRadonOtherToShowBoundMain} with $(\gamma,e,N)$ replaced by $(\gammat',\et',\Nt')$.
Because $(\gamma,e,N)$ and $(\gammat',\et',\Nt')$ are both finitely generated by $\sF$ on $\Omega'$,
Theorem \ref{ThmResSobWellDefNew} shows that for $\delta\in \R^{\nu}$ sufficiently small,
\begin{equation*}
\NLpN{p}{\delta}{\gamma,e,N}{f}\approx \NLpN{p}{\delta}{\gammat',\et',\Nt'}{f}, \quad f\in C_0^{\infty}(\Omega').
\end{equation*}
\eqref{EqnResRadonOtherToShowBoundMain} follows and completes the proof in Case I.
The same proof goes through in Case II, where in each step we do not need to restrict to $\delta$, $\deltat$ small.
\end{proof}

Using Theorem \ref{ThmResRadonOtherMainBound}, we proceed as in Section \ref{SectionResSobCompNew} to conclude
mapping properties of fractional Radon transforms on the space $\NLp{p}{\deltah}{\sSh}$ for $\deltah\in \R^{\nuh}_0$ in Case I,
and for $\deltah\in \R^{\nuh}$ in Case II.

Let $\lambda_1$ and $\lambda_2$ be two matrices with entries in $[0,\infty]$.  $\lambda_1$ a $\nut\times \nuh$ matrix and $\lambda_2$
a $\nuh\times \nut$ matrix.  We impose the following additional assumptions in both Case I and Case II.
\begin{enumerate}[(i)]
\item\label{ItemResRadonOtherAssumpi} 
We assume $\sL(\sSh)$ $\lambda_1$-controls $\sSt$ on $\Omega'$.

\item 
We assume $\sL(\sSt)$ $\lambda_2$-controls $\sSh$ on $\Omega'$.
%
\end{enumerate}
As before, in what follows we define $\infty\cdot 0=0$ but $\infty\cdot x=\infty$ for $x>0$.

\begin{cor}\label{CorResRadonOtherUpTopCor}
Under the above hypotheses, there exists $a>0$ such that for $1<p<\infty$, the following holds.
\begin{itemize}
\item In Case I, for all $\deltat\in \R^{\nut}_0\cap [0,\infty)^{\nut}$ and $\deltah\in \R^{\nuh}_0\cap [0,\infty)^{\nuh}$
with $\lambda_1^{t}(\deltat)$ and $\lambda_2^{t}(\deltah)$ not $\infty$ in any coordinate, and every $\delta\in \R^{\nu}_0$,
we have for every fractional Radon transform $T$ of order $\deltat-\lambda_2^t(\deltah)$ corresponding to $(\gammat,\et,\Nt)$
on $B^{\Nt}(a)$,
\begin{equation*}
\NLpN{p}{\delta}{\sS}{Tf}\lesssim \NLpN{p}{\delta+(\lambda_1^t(\deltat)-\deltah,0_{\nut})}{\sS}{f},\quad f\in C_0^\infty(\Omega_0).
\end{equation*}
More precisely, there exists $\epsilon=\epsilon(p,(\gamma,e,N),(\gammat,\et,\Nt),\lambda_1,\lambda_2)>0$ such that for all
$\deltat\in [0,\infty)^{\nut}$, $\deltah\in [0,\infty)^{\nuh}$, and $\delta\in \R^{\nu}$ with $|\deltat|,|\deltah|,|\delta|<\epsilon$
and such that $\lambda_1^{t}(\deltat)$ and $\lambda_2^{t}(\deltah)$ are not $\infty$ in any coordinate,
we have for every fractional Radon transform $T$ of order $\deltat-\lambda_2^t(\deltah)$ corresponding to $(\gammat,\et,\Nt)$
on $B^{\Nt}(a)$,
\begin{equation*}
\NLpN{p}{\delta}{\gamma,e,N}{Tf}\lesssim \NLpN{p}{\delta+(\lambda_1^t(\deltat)-\deltah,0_{\nut})}{\gamma,e,N}{f},\quad f\in C_0^\infty(\Omega_0).
\end{equation*}

\item In Case II, for all $\deltat\in  [0,\infty)^{\nut}$ and $\deltah\in  [0,\infty)^{\nuh}$
with $\lambda_1^{t}(\deltat)$ and $\lambda_2^{t}(\deltah)$ not $\infty$ in any coordinate, and every $\delta\in \R^{\nu}$,
we have for every fractional Radon transform $T$ of order $\deltat-\lambda_2^t(\deltah)$ corresponding to $(\gammat,\et,\Nt)$
on $B^{\Nt}(a)$,
\begin{equation*}
\NLpN{p}{\delta}{\sS}{Tf}\lesssim \NLpN{p}{\delta+(\lambda_1^t(\deltat)-\deltah,0_{\nut})}{\sS}{f},\quad f\in C_0^\infty(\Omega_0).
\end{equation*}
\end{itemize}
\end{cor}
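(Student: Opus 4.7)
The plan is to reduce Corollary \ref{CorResRadonOtherUpTopCor} to a chain of two norm-equivalences that follow from Theorem \ref{ThmResCompSobMainThmNew}, sandwiched around the mapping property of $T$ on the $\sS$-scale supplied by Theorem \ref{ThmResRadonOtherMainBound}. I will spell out Case I; Case II is identical but with every ``sufficiently small'' clause removed.

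First, since $T$ is a fractional Radon transform of order $\deltat-\lambda_2^{t}(\deltah)\in\R^{\nut}$ corresponding to $(\gammat,\et,\Nt)$, Theorem \ref{ThmResRadonOtherMainBound} produces an $a>0$ and an $\epsilon_1=\epsilon_1(p,(\gamma,e,N),(\gammat,\et,\Nt))$ such that, whenever $|\delta|$ and $|\deltat-\lambda_2^{t}(\deltah)|$ are smaller than $\epsilon_1$,
\begin{equation*}
\NLpN{p}{\delta}{\gamma,e,N}{Tf}\lesssim \NLpN{p}{\delta+(0_{\nuh},\deltat-\lambda_2^{t}(\deltah))}{\gamma,e,N}{f}.
\end{equation*}
After relabelling $\delta\mapsto \delta+(\lambda_1^{t}(\deltat)-\deltah,0_{\nut})$, it suffices to establish the embedding
\begin{equation*}
\NLpN{p}{\delta+(-\lambda_1^{t}(\deltat)+\deltah,\,\deltat-\lambda_2^{t}(\deltah))}{\gamma,e,N}{f}\lesssim \NLpN{p}{\delta}{\gamma,e,N}{f}.
\end{equation*}

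The key algebraic identity is the decomposition of the shift as the sum of two ``pure'' shifts,
\begin{equation*}
(-\lambda_1^{t}(\deltat)+\deltah,\,\deltat-\lambda_2^{t}(\deltah))\;=\;(-\lambda_1^{t}(\deltat),\deltat)\;+\;(\deltah,-\lambda_2^{t}(\deltah)),
\end{equation*}
each of which is handled by Theorem \ref{ThmResCompSobMainThmNew}. The first summand is exactly the shape of shift produced by the hypothesis that $\sL(\sSh)$ $\lambda_1$-controls $\sSt$, so Theorem \ref{ThmResCompSobMainThmNew} (in the form \eqref{EqnResCompSobMainThmSmall}) gives $\NLpN{p}{\eta+(-\lambda_1^{t}(\deltat),\deltat)}{\gamma,e,N}{f}\lesssim \NLpN{p}{\eta}{\gamma,e,N}{f}$ for $|\eta|,|\deltat|$ small. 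For the second summand we invoke the same theorem applied to the symmetrical set-up in which the roles of $\sSh$ and $\sSt$ are interchanged: the assumption that $\sL(\sSt)$ $\lambda_2$-controls $\sSh$ then yields $\NLpN{p}{\eta+(\deltah,-\lambda_2^{t}(\deltah))}{\gamma,e,N}{f}\lesssim \NLpN{p}{\eta}{\gamma,e,N}{f}$. Chaining these two estimates (with $\eta=\delta$, and then $\eta=\delta+(\deltah,-\lambda_2^{t}(\deltah))$) produces the desired inequality.

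The main obstacle is a book-keeping one: the threshold $\epsilon$ of the corollary must be chosen small enough that \emph{all} intermediate multi-indices arising in the chain, namely $\delta$, $\delta+(\deltah,-\lambda_2^{t}(\deltah))$, and $\delta+(-\lambda_1^{t}(\deltat)+\deltah,\deltat-\lambda_2^{t}(\deltah))$, lie within the smallness window of Theorem \ref{ThmResCompSobMainThmNew} and of the application of Theorem \ref{ThmResRadonOtherMainBound} above. A secondary point, which must be verified but is essentially cosmetic, is that the ``symmetric'' application of Theorem \ref{ThmResCompSobMainThmNew} is legitimate: the definition \eqref{EqnAssumptionResRadonOtherMainsSNew} of $\sS$ is symmetric in $\sSh$ and $\sSt$ up to permutation of the $\nu=\nuh+\nut$ parameter coordinates, and the norm $\NLpN{p}{\delta}{\sS}{\cdot}$ transforms by the corresponding permutation of $\delta$, so the theorem applies verbatim after this relabelling. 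Finally, the $\infty$-convention used in forming $\lambda_1^{t}(\deltat)$ and $\lambda_2^{t}(\deltah)$ causes no trouble because the hypotheses exclude the case where either of these vectors has an $\infty$ entry.
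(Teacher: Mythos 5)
Your proof is correct and follows essentially the same argument as the paper: apply Theorem \ref{ThmResRadonOtherMainBound} to the fractional Radon transform of order $\deltat-\lambda_2^t(\deltah)$, then chain two applications of Theorem \ref{ThmResCompSobMainThmNew} (once with $\lambda_1$ for the shift $(-\lambda_1^t(\deltat),\deltat)$, once with $\lambda_2$ in the swapped $\sSh\leftrightarrow\sSt$ roles for the shift $(\deltah,-\lambda_2^t(\deltah))$). The paper writes the chain $\delta\to\delta+(0_{\nuh},\deltat-\lambda_2^t(\deltah))\to\delta+(\lambda_1^t(\deltat),-\lambda_2^t(\deltah))\to\delta+(\lambda_1^t(\deltat)-\deltah,0_{\nut})$ in one display; you decompose the same additive shift into the same two pieces and note the same permutation-symmetry remark to justify the swapped application, so the proofs are the same in substance.
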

\begin{proof}
In Case I, using Theorem \ref{ThmResRadonOtherMainBound} and two applications of Theorem \ref{ThmResCompSobMainThmNew},
we have
\begin{equation*}
\begin{split}
&\NLpN{p}{\delta}{\gamma,e,N}{Tf} \lesssim \NLpN{p}{\delta+(0_{\nuh}, \deltat-\lambda_2^{t}(\deltah))}{\gamma,e,N}{f}
\\&\lesssim \NLpN{p}{\delta+(\lambda_1^{t}(\deltat),-\lambda_2^{t}(\deltah) )}{\gamma,e,N}{f}
\lesssim \NLpN{p}{\delta+(\lambda_1^{t}(\deltat)-\deltah,0_{\nut} )}{\gamma,e,N}{f},
\end{split}
\end{equation*}
as desired.
A similar proof yields the result in Case II.
\end{proof}

\begin{cor}\label{CorResRadonOtherDownBelowCor}
Under the above hypotheses, there exists $a>0$ such that for $1<p<\infty$, the following holds.
\begin{itemize}
\item In Case I, for all $\deltat\in \R^{\nut}_0\cap [0,\infty)^{\nut}$ and $\deltah\in \R^{\nuh}_0\cap [0,\infty)^{\nuh}$
with $\lambda_1^{t}(\deltat)$ and $\lambda_2^{t}(\deltah)$ not $\infty$ in any coordinate,  and every $\delta\in \R^{\nuh}_0$,
we have for every fractional Radon transform $T$ of order $\deltat-\lambda_2^t(\deltah)$ corresponding to $(\gammat,\et,\Nt)$
on $B^{\Nt}(a)$,
\begin{equation*}
\NLpN{p}{\delta}{\sSh}{Tf}\lesssim \NLpN{p}{\delta+\lambda_1^{t}(\deltat)-\deltah}{\sSh}{f},\quad f\in C_0^\infty(\Omega_0).
\end{equation*}
More precisely, there exists $\epsilon=\epsilon(p,(\gammah,\eh,\Nh),(\gammat,\et,\Nt),\lambda_1,\lambda_2)>0$, such that
for every $\deltat\in  [0,\infty)^{\nut}$ and $\deltah\in  [0,\infty)^{\nuh}$
with $\lambda_1^{t}(\deltat)$ and $\lambda_2^{t}(\deltah)$ not $\infty$ in any coordinate,  and every $\delta\in \R^{\nuh}_0$,
with $|\deltat|,|\deltah|,|\delta|<\epsilon$,
we have for every fractional Radon transform $T$ of order $\deltat-\lambda_2^t(\deltah)$ corresponding to $(\gammat,\et,\Nt)$
on $B^{\Nt}(a)$,
\begin{equation*}
\NLpN{p}{\delta}{\gammah,\eh,\Nh}{Tf}\lesssim \NLpN{p}{\delta+\lambda_1^{t}(\deltat)-\deltah}{\gammah,\eh,\Nh}{f},\quad f\in C_0^\infty(\Omega_0).
\end{equation*}

\item In Case II, for all $\deltat\in [0,\infty)^{\nut}$ and $\deltah\in [0,\infty)^{\nuh}$
with $\lambda_1^{t}(\deltat)$ and $\lambda_2^{t}(\deltah)$ not $\infty$ in any coordinate,  and every $\delta\in \R^{\nuh}$,
we have for every fractional Radon transform $T$ of order $\deltat-\lambda_2^t(\deltah)$ corresponding to $(\gammat,\et,\Nt)$
on $B^{\Nt}(a)$,
\begin{equation*}
\NLpN{p}{\delta}{\sSh}{Tf}\lesssim \NLpN{p}{\delta+\lambda_1^{t}(\deltat)-\deltah}{\sSh}{f},\quad f\in C_0^\infty(\Omega_0).
\end{equation*}

\end{itemize}
\end{cor}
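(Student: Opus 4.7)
The plan is to deduce Corollary \ref{CorResRadonOtherDownBelowCor} directly from Corollary \ref{CorResRadonOtherUpTopCor} by using Theorem \ref{ThmResCompSobDropParamsNew} to translate between the ambient $\sS$-Sobolev norm and the $\sSh$-Sobolev norm. The hypotheses needed for Corollary \ref{CorResRadonOtherUpTopCor} (the Case I / Case II setup for $\sS$, the control of $(\gammat,\et,\Nt)$ by $\sL(\sS)$, and items (i) and (ii) involving $\lambda_1$ and $\lambda_2$) are precisely the standing hypotheses of Corollary \ref{CorResRadonOtherDownBelowCor}, so no extra assumptions are needed. The only mild bookkeeping is to ensure the various $\epsilon$'s can be chosen simultaneously.

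More concretely, in Case I, fix $1<p<\infty$, let $a>0$ and $\epsilon_1=\epsilon_1(p,(\gamma,e,N),(\gammat,\et,\Nt),\lambda_1,\lambda_2)>0$ be given by Corollary \ref{CorResRadonOtherUpTopCor}, and let $\epsilon_2=\epsilon_2(p,(\gamma,e,N),(\gammah,\eh,\Nh))>0$ be given by Theorem \ref{ThmResCompSobDropParamsNew}. I would then set $\epsilon=\min(\epsilon_1,\epsilon_2)/C$ for a constant $C$ depending only on $\lambda_1$, $\lambda_2$, $\nu$, chosen so that when $|\deltat|,|\deltah|,|\delta|<\epsilon$, the $\nu$-parameter exponents $(\delta,0_{\nut})$ and $(\delta+\lambda_1^{t}(\deltat)-\deltah,0_{\nut})$ both have norm less than $\min(\epsilon_1,\epsilon_2)$. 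Applying Corollary \ref{CorResRadonOtherUpTopCor} with the ambient exponent $(\delta,0_{\nut})\in \R^{\nu}$ in place of its generic $\delta$ gives
\begin{equation*}
\NLpN{p}{(\delta,0_{\nut})}{\gamma,e,N}{Tf}\lesssim \NLpN{p}{(\delta+\lambda_1^{t}(\deltat)-\deltah,0_{\nut})}{\gamma,e,N}{f},\quad f\in C_0^{\infty}(\Omega_0).
\end{equation*}
Since $(\gamma,e,N)$ and $(\gammah,\eh,\Nh)$ are both finitely generated on $\Omega'$ by $\sF$ and $\sFh$ respectively, and the relation between $\sS$ and $\sSh$ through the projection onto the first $\nuh$ coordinates of the dilation is exactly the hypothesis of Theorem \ref{ThmResCompSobDropParamsNew}, that theorem gives
\begin{equation*}
\NLpN{p}{(\delta,0_{\nut})}{\gamma,e,N}{g}\approx \NLpN{p}{\delta}{\gammah,\eh,\Nh}{g},
\end{equation*}
and likewise with $\delta$ replaced by $\delta+\lambda_1^{t}(\deltat)-\deltah$. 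Combining these two norm equivalences with the displayed bound yields the desired estimate on $\NLpN{p}{\delta}{\gammah,\eh,\Nh}{Tf}$.

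Case II is handled by the same three-line argument, dropping the smallness restrictions, since in that case both Corollary \ref{CorResRadonOtherUpTopCor} and Theorem \ref{ThmResCompSobDropParamsNew} apply for arbitrary exponents. There is no genuine obstacle here: the corollary is a straightforward consequence of Corollary \ref{CorResRadonOtherUpTopCor} together with the comparison Theorem \ref{ThmResCompSobDropParamsNew}. The only place where care is needed is in tracking the dependencies of the several parameters $\epsilon_1$ and $\epsilon_2$ to produce a single $\epsilon$ depending only on $p$, $(\gammah,\eh,\Nh)$, $(\gammat,\et,\Nt)$, $\lambda_1$, and $\lambda_2$, as claimed in the statement. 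This follows because $(\gamma,e,N)$ was built from $\sF$ which in turn was built from $\sSh$ and $\sSt$, so the dependence on $(\gamma,e,N)$ can be absorbed into the dependence on $(\gammah,\eh,\Nh)$, $(\gammat,\et,\Nt)$, $\lambda_1$, and $\lambda_2$.
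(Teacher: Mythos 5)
Your proposal is correct and follows essentially the same route as the paper: apply Corollary \ref{CorResRadonOtherUpTopCor} to the exponent $(\delta,0_{\nut})$ and then sandwich both sides with the norm equivalences from Theorem \ref{ThmResCompSobDropParamsNew}. The paper's proof is precisely this chain of an $\approx$, a $\lesssim$, and an $\approx$, with the epsilon-bookkeeping left implicit.
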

\begin{proof}
In Case I, pick $(\gamma,e,N)$ as above (e.g., the choice given in Proposition \ref{PropResSurfExistsAParam} when applied to $\sS$).  Using Theorem \ref{ThmResCompSobDropParamsNew}
and Corollary \ref{CorResRadonOtherUpTopCor} we have
\begin{equation*}
\begin{split}
&\NLpN{p}{\delta}{\gammah,\eh,\Nh}{Tf}\approx \NLpN{p}{(\delta, 0_{\nut})}{\gamma,e,N}{Tf}
\\&\lesssim
\NLpN{p}{(\delta+\lambda_1^{t}(\deltat)-\deltah,0_{\nut})}{\gamma,e,N}{f}\approx \NLpN{p}{\delta+\lambda_1^{t}(\deltat)-\deltah}{\gammah,\eh,\Nh}{f},
\end{split}
\end{equation*}
for $f\in C_0^{\infty}(\Omega_0)$, yielding the result in Case I.  A similar proof yields the result in Case II.
\end{proof}

\begin{rmk}
Though it is not necessary, in Corollaries \ref{CorResRadonOtherUpTopCor} and \ref{CorResRadonOtherDownBelowCor} one often
wishes to choose $\deltat$ and $\deltah$ so that $\deltat$ is zero in every coordinate in which $\lambda_2^{t}(\deltah)$ is nonzero,
and $\lambda_2^{t}(\deltah)$ is zero in every coordinate where $\deltat$ is nonzero.
\end{rmk}

\begin{rmk}
In Corollaries \ref{CorResRadonOtherUpTopCor} and \ref{CorResRadonOtherDownBelowCor}
if one takes one of the matricies ($\lambda_1$ or $\lambda_2$) to be $+\infty$ in every component,
then it is as if that matrix were not present in the assumptions and conclusions at all.
For instance, if one takes $\lambda_1$ to be $+\infty$ in every component, then one is forced
to take $\deltat=0$ and the assumption (\ref{ItemResRadonOtherAssumpi}), above, holds automatically.
Most previous work on this topic (e.g., \cite{CuccagnaSobolevEstimatesForFractionalAndSingularRadonTransforms} and \cite{GreenblattAnAnalogueToATheoremOfFeffermanAndPhong}) does not involve $\lambda_1$ (i.e., takes $\lambda_1$ to
be $+\infty$ in every component), and deals only with $\lambda_2$ in very special cases.
\end{rmk} 

\subsection{H\"ormander's condition}
The special case of Case I of Corollary \ref{CorResRadonOtherDownBelowCor} which is likely of most interest is when $\nut=\nuh=1$ and the
$\Xt$ and $\Xh$ vector fields each satisfy H\"ormander's condition.  Below we present this situation.

We start with a parameterization $(\gammat,\et,\Nt,\Omega,\Omega''')$, where $\et_1,\ldots, \et_{\Nt}\in (0,\infty)$--i.e., we have
{\it single}-parameter dilations.  Let $(\Wt,\et,\Nt)$ be the vector field parameterization corresponding to $(\gammat,\et,\Nt)$.
Expanding $\Wt(\vtt,x)$ as a Taylor series in the $\vtt$ variable,
\begin{equation*}
\Wt(\vtt)\sim \sum_{|\alpha|>0} \vtt^{\alpha} \Xt_{\alpha},
\end{equation*}
where $\Xt_{\alpha}$ is a smooth vector field on some $\Omega''$ with $\Omega'\Subset \Omega''$.  We suppose $\{\Xt_{\alpha}:|\alpha|>0\}$
satisfies H\"ormander's condition on $\Omega''$.
Fix $\Omegat$ with $\Omega'\Subset \Omegat\Subset \Omega''$.
 By Corollary \ref{CorResSurfHorSmoothlyFG}, $(\gammat,\et,\Nt)$ is smoothly finitely generated
by some $\sFt$ on $\Omegat$.

We suppose we are given a finite set of vector fields $\sSh\subset \sonevect$, such that
$\{ \Xh :\exists \hd, (\Xh,\hd)\in \sSh\}$ satisfies H\"ormander's condition on $\Omega''$.
By Proposition \ref{PropResCCHorSmoothFG} and Remark \ref{RmkResCCnuOneFG},
$\sL(\sSh)$ is smoothly linearly finitely generated by some $\sFh$ on $\Omegat$.
By Theorem \ref{ThmResSobWellDefNew} (see also
Definition \ref{DefnResSobDependOnsS})
it makes sense to talk about the norm
$\NLpN{p}{\delta}{\sSh}{\cdot}$ for all $1<p<\infty$, $\delta\in(0,\infty)$.

For each $(\Xt,\td)\in \sFt$, Let $\sFh_{(\Xt,\td)}\subseteq \sFh$ be such that
$\Xt$ is in the $C^\infty(\Omega'')$ module generated by
$\{\Xh : \exists \hd, (\Xh,\hd)\in \sFh_{(\Xt,\td)}\}$.\footnote{This is always possible because the vector fields
in $\sFh$ span the tangent space at every point, by assumption.}
Define
\begin{equation}\label{EqnResRadonOtherHorDefnLambdaOXD}
\lambda_1^{(\Xt,\td)} :=\frac{\min \max \{ \hd : \exists \Xh, (\Xh,\hd)\in \sFh_{(\Xt,\td)}\}}{\td},
\end{equation}
where the minimum is taken over all possible choices of $\sFh_{(\Xt,\td)}$.
And set
\begin{equation*}
\lambda_1 := \max \{\lambda_1^{(\Xt,\td)} : (\Xt,\td)\in \sFt \}>0.
\end{equation*}
Define $\lambda_2>0$ in the same way by reversing the roles of $\sFh$ and $\sFt$ throughout. 

\begin{cor}\label{CorResRadonOtherHor3Main}
Under the above hypotheses, there exists $a>0$ such that for every $1<p<\infty$, there exists $\epsilon=\epsilon(p,(\gammat,\et,\Nt),\sSh)>0$
such that for every $\deltat,\deltah\in [0,\epsilon)$, $\delta\in (-\epsilon,\epsilon)$, we have for every fractional Radon transform, $T$,
of order $\deltat-\lambda_2\deltah$ corresponding to $(\gammat,\et,\Nt)$ on $B^{\Nt}(a)$,
\begin{equation*}
\NLpN{p}{\delta}{\sSh}{Tf}\lesssim \NLpN{p}{\delta+\lambda_1\deltat-\deltah}{\sSh}{f}, \quad f\in C_0^{\infty}(\Omega_0).
\end{equation*}
\end{cor}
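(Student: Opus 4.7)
I will reduce the statement to Case~I of Corollary~\ref{CorResRadonOtherDownBelowCor}, applied with $\nut=\nuh=1$ and $\nu=2$ to the parameterization $(\gammat,\et,\Nt)$ and the set $\sSh$. The H\"ormander hypothesis on $\{\Xt_\alpha:|\alpha|>0\}$, combined with Corollary~\ref{CorResSurfHorSmoothlyFG}, produces $\sFt$ smoothly finitely generating $(\gammat,\et,\Nt)$ on $\Omegat$. The H\"ormander hypothesis on $\sSh$, combined with Proposition~\ref{PropResCCHorSmoothFG} and Remark~\ref{RmkResCCnuOneFG}, produces $\sFh$ smoothly linearly finitely generating $\sL(\sSh)$ on $\Omegat$, whose vector-field part spans the tangent space. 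Form the combined 2-parameter set $\sS$ as in \eqref{EqnAssumptionResRadonOtherMainsSNew}: the slice $\{(X,d)\in\sS:|d|_\infty\leq M\}$ is finite for every $M$ (the $\sSt$ part because each $\et_j>0$ allows only finitely many $\alpha$ with $\deg(\alpha)\leq M$, the $\sSh$ part because $\sSh$ is finite), and H\"ormander's condition holds in each of the two coordinate slots, so a second application of Proposition~\ref{PropResCCHorSmoothFG} gives $\sL(\sS)$ smoothly finitely generated on $\Omega'$. Smooth finite generation of $(\gammat,\et,\Nt)$ transfers, under the embedding $(\Xt,\td)\mapsto(\Xt,(0,\td))\in\sS$, to smooth control of $(\gammat,\et',\Nt)$ by $\sL(\sS)$ (cf.~Remark~\ref{RmkResRadonOtherSmoothWorks}). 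All ambient assumptions of Case~I of Corollary~\ref{CorResRadonOtherDownBelowCor} are thereby in place.

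\textbf{Verification of $\lambda_1$- and $\lambda_2$-control.} For each $(\Xt,\td)\in\sFt$, pick $\sFh_{(\Xt,\td)}\subseteq\sFh$ realizing the minimum in \eqref{EqnResRadonOtherHorDefnLambdaOXD} (available because $\sFh$ spans the tangent space), write $\Xt=\sum c_{\Xh,\hd}\Xh$ over $(\Xh,\hd)\in\sFh_{(\Xt,\td)}$ with smooth coefficients, and rescale to
\[
\delta^{\lambda_1\td}\Xt=\sum c_{\Xh,\hd}\,\delta^{\lambda_1\td-\hd}\,\delta^{\hd}\Xh,\qquad\delta\in[0,1].
\]
Since $\lambda_1\td\geq\lambda_1^{(\Xt,\td)}\td\geq\max\hd$ by the definition of $\lambda_1$, each prefactor $\delta^{\lambda_1\td-\hd}$ is bounded on $[0,1]$ together with its iterated $(\delta X)^\alpha$-derivatives, exhibiting $\sFh$ as a controller of $(\Xt,\lambda_1\td)$; that is, $\sL(\sSh)$ $\lambda_1$-controls $(\Xt,\td)$. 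To extend from $\sFt$ to all of $\sSt$, use that $\sFt$ smoothly controls $\sL(\sSt)\supseteq\sSt$: given $(X,d)\in\sSt$, write $\delta^d X=\sum_Y c^\delta_Y\delta^{e_Y}Y$ over $(Y,e_Y)\in\sFt$ with $\{c^\delta_Y\}_\delta$ bounded in $C^\infty$. Substituting $\delta\mapsto\delta^{\lambda_1}$ yields $\delta^{\lambda_1 d}X=\sum_Y c^{\delta^{\lambda_1}}_Y\delta^{\lambda_1 e_Y}Y$, and then applying the just-verified $\lambda_1$-control of each $(Y,e_Y)\in\sFt$ expands each $\delta^{\lambda_1 e_Y}Y$ as a bounded $\sFh$-combination; by the product rule the resulting coefficients are uniformly bounded in $C^\infty$ over $\delta\in[0,1]$, so $\sL(\sSh)$ $\lambda_1$-controls $(X,d)$. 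The symmetric construction, with the roles of $\sFh$ and $\sFt$ interchanged, gives that $\sL(\sSt)$ $\lambda_2$-controls $\sSh$.

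\textbf{Application and main obstacle.} With all Case~I hypotheses of Corollary~\ref{CorResRadonOtherDownBelowCor} verified, its direct invocation yields $a>0$ and, for each $p\in(1,\infty)$, a constant $\epsilon=\epsilon(p,(\gammat,\et,\Nt),\sSh)>0$ (absorbing the dependence on the auxiliary $(\gammah,\eh,\Nh)$ built from $\sFh$ via Proposition~\ref{PropResSurfExistsAParam}, and on $\lambda_1,\lambda_2$, into the dependence on the inputs $(\gammat,\et,\Nt)$ and $\sSh$) such that, for every fractional Radon transform $T$ of order $\deltat-\lambda_2\deltah$ corresponding to $(\gammat,\et,\Nt)$ on $B^{\Nt}(a)$, the estimate
\[
\NLpN{p}{\delta}{\sSh}{Tf}\lesssim\NLpN{p}{\delta+\lambda_1\deltat-\deltah}{\sSh}{f}
\]
holds for $f\in C_0^\infty(\Omega_0)$ whenever $\deltat,\deltah\in[0,\epsilon)$ and $\delta\in(-\epsilon,\epsilon)$. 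The main delicate point is the composition in the middle paragraph: the substitution $\delta\mapsto\delta^{\lambda_1}$ is the mechanism that aligns the outer ($\sFt$) and inner ($\sFh$) scales, and one must verify that applying the scaled $\sFh$-vector fields $(\delta X)^\alpha$ to a product of a bounded $C^\infty(\Omegat)$-family indexed by $\delta^{\lambda_1}$ with another bounded such family indexed by $\delta$ yields a uniformly bounded family; this reduces, via the product rule, to the observation that the derivatives required by the definition of control are purely spatial and never act on the parameter $\delta$.
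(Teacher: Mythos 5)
Your proof is correct and follows essentially the same route as the paper's: use Proposition~\ref{PropResCCHorSmoothFG} and Corollary~\ref{CorResSurfHorSmoothlyFG} to establish the Case~I hypotheses of Corollary~\ref{CorResRadonOtherDownBelowCor} (finite generation of $\sL(\sS)$, control of $(\gammat,\et',\Nt)$ by $\sL(\sS)$), then verify the $\lambda_1$- and $\lambda_2$-control conditions by rescaling the spanning identity from \eqref{EqnResRadonOtherHorDefnLambdaOXD}. The only difference is that you make explicit the transitivity step extending $\lambda_1$-control from $\sFt$ to all of $\sSt$ (and correctly read off \emph{smooth} finite generation from Proposition~\ref{PropResCCHorSmoothFG}, where the paper's proof appears to have a slip saying ``linearly''), but the paper uses that same transitivity implicitly in passing from its displayed conditions on $\sFt$ to the hypotheses of Corollary~\ref{CorResRadonOtherDownBelowCor}.
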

\begin{proof}
By Proposition \ref{PropResCCHorSmoothFG}, if $\sS$ is given by \eqref{EqnAssumptionResRadonOtherMainsSNew}, then $\sL(\sS)$
is linearly finitely generated on $\Omega'$.  Corollary \ref{CorResSurfHorSmoothlyFG} shows
$(\gammat,\et,\Nt)$ is smoothly finitely generated, and it follows that $\sL(\sS)$ controls $(\gammat,\et',\Nt)$ on $\Omega'$  where $\et'$
is as in the assumptions from Case I, above.  See, also, Remark \ref{RmkResRadonOtherSmoothWorks}.

The result will follow from Corollary \ref{CorResRadonOtherDownBelowCor}
once we show:
\begin{enumerate}[(i)]
\item\label{ItemResRadonOtherHorToShow} $\sL(\sSh)$ $\lambda_1$-controls $\sFt$ on $\Omega'$.

\item\label{ItemResRadonOtherHorToShowAlso} $\sFt$ $\lambda_2$-controls $\sSh$ on $\Omega'$.
\end{enumerate}
We begin with (\ref{ItemResRadonOtherHorToShow}).
Let $(\Xt,\td)\in \sFt$.  To show $(\Xt,\td)$ is $\lambda_1$-controlled by $\sL(\sSh)$ on $\Omega'$, it suffices to show
$(\Xt,\td)$ is $\lambda_1$-controlled by $\sFh$ on $\Omega'$.  Let $\sFh_{(\Xt,\td)}$ achieve the minimum
in the definition of $\lambda_1^{(\Xt,\td)}$ in \eqref{EqnResRadonOtherHorDefnLambdaOXD}.
We will show $\sFh_{(\Xt,\td)}$ $\lambda_1^{(\Xt,\td)}$-controls $(\Xt,\td)$ on $\Omega'$,
and it then follows that $\sFh$ $\lambda_1$-controls $(\Xt,\td)$ on $\Omega'$, as desired.

By the definition of $\sFh_{(\Xt,\td)}$, we may write
\begin{equation*}
\Xt = \sum_{(\Xh,\hd)\in \sFh_{(\Xt,\td)}} c_{\Xh,\hd} \Xh,
\end{equation*}
where $c_{\Xh}\in C^{\infty}(\Omegat)$.  Multiplying both sides by $2^{-\lambda_1^{(\Xt,\td)}\jh\td}$ we obtain,
\begin{equation*}
2^{-\lambda_1^{(\Xt,\td)}\jh\td} \Xt = \sum_{(\Xh,\hd)\in \sFh_{(\Xt,\td)}} \q(2^{\jh\hd-\lambda_1^{(\Xt,\td)}\jh\td }c_{\Xh,\hd}\w) 2^{-\jh \hd} \Xh.
\end{equation*}
The choice of $\lambda_1^{(\Xt,\td)}$ shows $\jh \hd-\lambda_1\jh\td\leq 0$ for all $(\Xh,\hd)\in \sFh_{(\Xt,\td)}$.  From here, (\ref{ItemResRadonOtherHorToShow})
follows immediately.

For (\ref{ItemResRadonOtherHorToShowAlso}), note that (using $\sFh$ controls $\sSh$ on $\Omega'$) to show $\sFt$ $\lambda_2$-controls $\sSh$ on $\Omega'$,
it suffices to show $\sFt$ $\lambda_2$-controls $\sFh$ on $\Omega'$ (because $\sFh$ controls $\sSh$ on $\Omega'$).
From here, the proof follows just as in the proof for (\ref{ItemResRadonOtherHorToShow}).
\end{proof}

\begin{rmk}
Corollary \ref{CorResRadonOtherHor3Main} is often optimal.  See
Theorem \ref{ThmOptimalMain} for details.
\end{rmk}

Define $\lambda_1',\lambda_2'\in (0,\infty)$ by
\begin{equation}\label{EqnResRadonOtherHor3New}
\lambda_1' := \frac{\max\{ \hd : \exists (\Xh,\hd)\in \sFh\} }{\min\{ \td : \exists (\Xt,\td)\in \sFt\} }, \quad
\lambda_2' := \frac{\max\{ \td : \exists (\Xt,\td)\in \sFt\}}{\min\{ \hd : \exists (\Xh,\hd)\in \sFh\} }.
\end{equation}

\begin{cor}\label{CorResRadonOtherHor3Follows}
Under the above hypotheses, there exists $a>0$ such that for every $1<p<\infty$, there exists $\epsilon=\epsilon(p,(\gammat,\et,\Nt),\sSh)>0$
such that for every $\deltat,\deltah\in [0,\epsilon)$, $\delta\in (-\epsilon,\epsilon)$, we have for every fractional Radon transform, $T$,
of order $\deltat-\lambda_2'\deltah$ corresponding to $(\gammat,\et,\Nt)$ on $B^{\Nt}(a)$,
\begin{equation*}
\NLpN{p}{\delta}{\sSh}{Tf}\lesssim \NLpN{p}{\delta+\lambda_1'\deltat-\deltah}{\sSh}{f}, \quad f\in C_0^{\infty}(\Omega_0).
\end{equation*}
\end{cor}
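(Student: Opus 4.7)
The plan is to follow the proof of Corollary \ref{CorResRadonOtherHor3Main} with the uniform constants $\lambda_1', \lambda_2'$ from \eqref{EqnResRadonOtherHor3New} replacing $\lambda_1, \lambda_2$. The background hypotheses of Corollary \ref{CorResRadonOtherDownBelowCor} ($\sL(\sS)$ linearly finitely generated on $\Omega'$ and controlling $(\gammat, \et', \Nt)$ there) are already supplied by Proposition \ref{PropResCCHorSmoothFG}, Corollary \ref{CorResSurfHorSmoothlyFG}, and Remark \ref{RmkResRadonOtherSmoothWorks} exactly as in Corollary \ref{CorResRadonOtherHor3Main}, so the work reduces to verifying the two control relations $\sL(\sSh)$ $\lambda_1'$-controls $\sSt$ on $\Omega'$ and $\sL(\sSt)$ $\lambda_2'$-controls $\sSh$ on $\Omega'$.

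For the first, the reduction used in Corollary \ref{CorResRadonOtherHor3Main} applies unchanged: it suffices to show $\sFh$ $\lambda_1'$-controls $\sFt$, since $\sFt$ controls $\sSt$ (by smooth finite generation of $(\gammat, \et, \Nt)$ by $\sFt$) and $\sFh \subseteq \sL(\sSh)$. Given $(\Xt, \td) \in \sFt$, the vector fields in $\sFh$ span the tangent space on $\Omegat$ by Proposition \ref{PropResCCHorSmoothFG} applied to $\sSh$, so I will write
\[
\Xt = \sum_{(\Xh, \hd) \in \sFh} c_{\Xh, \hd} \Xh, \qquad c_{\Xh, \hd} \in C^\infty(\Omegat),
\]
and multiply by $\delta^{\lambda_1' \td}$ to obtain
\[
\delta^{\lambda_1' \td} \Xt = \sum_{(\Xh, \hd) \in \sFh} \bigl(\delta^{\lambda_1' \td - \hd} c_{\Xh, \hd}\bigr)\, \delta^{\hd} \Xh.
\]
The key observation is that the uniform definition of $\lambda_1'$ in \eqref{EqnResRadonOtherHor3New} gives $\lambda_1' \td \geq \lambda_1' \cdot \min_{(\Xt', \td') \in \sFt} \td' = \max_{(\Xh, \hd) \in \sFh} \hd \geq \hd$ for every $(\Xh, \hd)\in \sFh$, so each exponent $\lambda_1' \td - \hd$ is nonnegative and the coefficients $\delta^{\lambda_1' \td - \hd} c_{\Xh, \hd}$ are uniformly bounded in $C^\infty(\Omegat)$ for $\delta\in [0,1]$. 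Hence $\sFh$ smoothly $\lambda_1'$-controls $(\Xt, \td)$, giving the first relation. The second is symmetric, using Corollary \ref{CorResSurfHorSmoothlyFG} to see that $\sFt$-vector fields span the tangent space on $\Omegat$, and the bound $\lambda_2' \hd \geq \max_{(\Xt,\td)\in\sFt} \td \geq \td$.

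With both control relations in hand, I will invoke Corollary \ref{CorResRadonOtherDownBelowCor} with $\lambda_1 \mapsto \lambda_1'$ and $\lambda_2 \mapsto \lambda_2'$ to conclude the estimate. I expect no real obstacle: the cruder constants $\lambda_1', \lambda_2'$ are precisely what one gets by taking uniform rather than pointwise minima in the constructions used for Corollary \ref{CorResRadonOtherHor3Main}, so one simply sidesteps the minimization over the subsets $\sFh_{(\Xt, \td)}, \sFt_{(\Xh, \hd)}$ that appeared there, at the cost of a potentially weaker (but simpler) pair of constants in the final inequality.
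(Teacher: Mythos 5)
Your argument is correct, but it takes a genuinely different (and longer) route than the paper. The paper's proof is a one-liner: since $\lambda_1 \leq \lambda_1'$ and $\lambda_2 \leq \lambda_2'$, the conclusion follows from Corollary~\ref{CorResRadonOtherHor3Main} together with the elementary monotonicity of the one-parameter Sobolev scale. Concretely, given $T$ of order $\deltat - \lambda_2'\deltah$, one sets $\deltah'':=(\lambda_2'/\lambda_2)\deltah\geq \deltah$ so that the order becomes $\deltat - \lambda_2\deltah''$, applies Corollary~\ref{CorResRadonOtherHor3Main} to get a bound by $\NLpN{p}{\delta+\lambda_1\deltat-\deltah''}{\sSh}{f}$, and then uses $\delta+\lambda_1\deltat-\deltah''\leq\delta+\lambda_1'\deltat-\deltah$ together with the fact that $r\leq r'$ forces $\NLpN{p}{r}{\sSh}{\cdot}\lesssim\NLpN{p}{r'}{\sSh}{\cdot}$ (the square-function definition makes this immediate since $j\geq 0$; shrinking $\epsilon$ keeps $\deltah''<\epsilon$). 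You instead bypass Corollary~\ref{CorResRadonOtherHor3Main} entirely, reprove the control relations directly with the cruder global constants $\lambda_1',\lambda_2'$ using the spanning property and the uniform lower bound $\lambda_1'\td\geq\max\hd\geq\hd$, and then feed them into Corollary~\ref{CorResRadonOtherDownBelowCor}. Both are correct. The paper's version is shorter and reuses the preceding corollary; your version is self-contained and shows explicitly that the cruder constants arise by dropping the per-element minimization in \eqref{EqnResRadonOtherHorDefnLambdaOXD}, at the cost of essentially duplicating the verification already done in the proof of Corollary~\ref{CorResRadonOtherHor3Main}.
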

\begin{proof}
Because $\lambda_1\leq \lambda_1'$ and $\lambda_2\leq \lambda_2'$, this follows immediately from Corollary \ref{CorResRadonOtherHor3Main}.
\end{proof}

\begin{rmk}\label{RmkResRadonOtherHor3WhatAreLambdas}
The conclusion in 
Corollary \ref{CorResRadonOtherHor3Follows} depends on the choice of $\sFt$ and $\sFh$.  One wishes to pick them so that $\lambda_1'$ and $\lambda_2'$
are as small as possible.  The conclusion of the stronger result in Corollary \ref{CorResRadonOtherHor3Main} does not depend on the choices of $\sFt$ and $\sFh$.
In an application of Corollary \ref{CorResRadonOtherHor3Follows}, one can pick $\sFh$ so that $\max\{ \hd : \exists (\Xh,\hd)\in \sFh\}$ is equal to:
\begin{equation*}
\min_{\sF} \max\q\{\hd : \exists (\Xh,\hd)\in \sF\w\}
\end{equation*}
where the minimum is taken over all $\sF\subseteq \sL(\sSh)$ such that the vector fields in $\sF$ span the tangent space to every point of $\Omega''$;
and so that $\min\{\hd :\exists (\Xh,\hd)\in \sFh\}$ is equal to
\begin{equation*}
\min\q\{\hd : \exists (\Xh,\hd)\in \sSh, \Xh\text{ is not the zero vector field}\w\}.
\end{equation*}
Similarly for $\sFt$.  See the proof of Proposition \ref{PropResCCHorSmoothFG} for how to choose such $\sFt$ and $\sFh$.
\end{rmk}

An important special case of Corollary \ref{CorResRadonOtherHor3Follows} comes when $\sSh=(\partial,1)$ (see \eqref{EqnResSobEuclidPartialOneNew} for this
notation).  In this case, $\NLpN{p}{\delta}{\partial,1}{\cdot}\approx \LpsN{p}{\delta}{\cdot}$ (Theorem \ref{ThmResSobEuclidIsNonIsoNew}).
To present this case, we change perspective and state the result just near some fixed point $x_0\in  \Omega'''\Subset \Omega\subseteq \R^n$.
We suppose we are given a parameterization $(\gammat,\et,\Nt,\Omega,\Omega''')$ with single-parameter dilations $\et$, and with corresponding vector field parameterization
$(\Wt,\et,\Nt)$.  We expand $\Wt(\vtt)$ into a Taylor series in the $\vtt$ variable:
\begin{equation*}
\Wt(\vtt)\sim \sum_{|\alpha|>0} \vtt^{\alpha} \Xt_{\alphat}.
\end{equation*}
We assume the following.

\noindent\textbf{Assumption:}  The Lie algebra generated by $\{ \Xt_{\alphat} : |\alphat|>0\}$ spans the tangent space at $x_0$.

Let $\sSt:=\{(\Xt,\deg(\alphat)) : |\alphat|>0\}$ as before.\footnote{Here, $\deg(\alphat)$ is defined using the single parameter dilations $\et$; see Definition \ref{DefnResKerDegree}.}
We define two numbers:
\begin{equation*}
E := \min_{\sF_0} \max \{ \td : \exists (\Xt,\td)\in \sF_0\}, 
\end{equation*}
and the minimum is taken over all $\sF_0\subseteq \sL(\sSt)$ such that the vector fields in $\sF_0$ span the tangent space at $x_0$.
Also set
\begin{equation*}
e:= \min\q\{\deg(\alphat) :  \Xt_{\alphat}\text{ not the zero vector on a neighborhood of }x_0\w\}.
\end{equation*}

\begin{cor}\label{CorResRadonOtherHorWithEuclidNewer}
Under the above hypotheses, there exists an open set $\Omega'\Subset \Omega'''$ with $x_0\in \Omega'$ and $a>0$ such that
for $1<p<\infty$, there exists $\epsilon=\epsilon(p,(\gammat,\et,\Nt))>0$ such that for every $r,s\in (-\epsilon,\epsilon)$,
if $T$ is a fractional Radon transform of order $r$ corresponding to $(\gammat,\et,\Nt)$ on $B^{\Nt}(a)$ (with this choice of $\Omega'$
in the definition of a fractional Radon transform),
\begin{itemize}
\item If $r\geq 0$,
\begin{equation}\label{EqnCorResRadonOtherHorWithEuclidNew1Newer}
\LpsN{p}{s-\frac{r}{e}}{Tf}\lesssim \LpsN{p}{s}{f}, \quad f\in C_0^\infty(\Omega').
\end{equation}
Furthermore, this result is optimal in the sense that there do not exist $p\in (1,\infty)$, $t>0$, $s\in (-\epsilon,\epsilon)$,\footnote{Recall, $\epsilon$ depends on $p\in (1,\infty)$.} and $r\in [0,\epsilon)$ such that for every fractional Radon transform, $T$, of order $r$ corresponding to $(\gammat,\et,\Nt)$ on $B^{\Nt}(a)$  we have
\begin{equation*}
\LpsN{p}{s-\frac{r}{e}+t}{Tf}\lesssim \LpsN{p}{s}{f}, \quad f\in C_0^\infty(\Omega').
\end{equation*}

\item If $r\leq 0$,
\begin{equation}\label{EqnCorResRadonOtherHorWithEuclidNew2Newer}
\LpsN{p}{s-\frac{r}{E}}{Tf}\lesssim \LpsN{p}{s}{f}, \quad f\in C_0^\infty(\Omega').
\end{equation}
Furthermore, this result is optimal in the sense that there do not exist $p\in (1,\infty)$, $t>0$, $s\in (-\epsilon,\epsilon)$ and $r\in (-\epsilon,0]$ such that for every fractional Radon transform, $T$, of order $r$ corresponding to $(\gammat,\et,\Nt)$ on $B^{\Nt}(a)$  we have
\begin{equation*}
\LpsN{p}{s-\frac{r}{E}+t}{Tf}\lesssim \LpsN{p}{s}{f}, \quad f\in C_0^\infty(\Omega').
\end{equation*}
\end{itemize}
\end{cor}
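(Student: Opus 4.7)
My plan is to deduce the positive bounds from Corollary \ref{CorResRadonOtherHor3Follows} applied with $\sSh = (\partial, 1)$, translating the non-isotropic conclusions into isotropic Sobolev norms via Theorem \ref{ThmResSobEuclidIsNonIsoNew}; the two optimality assertions require separate constructions.

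First I shrink to an open set $\Omega' \ni x_0$ on which the H\"ormander condition guarantees that some finite subset $\sF_0 \subset \sL(\sSt)$ realizing the minimum in the definition of $E$ spans the tangent space at every point. Since $\sL(\partial, 1)$ is smoothly linearly finitely generated by $(\partial, 1)$ and, by Corollary \ref{CorResSurfHorSmoothlyFG}, $(\gammat, \et, \Nt)$ is smoothly finitely generated on $\Omega'$, the hypotheses of Corollary \ref{CorResRadonOtherHor3Follows} are satisfied (cf. Remark \ref{RmkResRadonOtherSmoothWorks}). Using the freedom of choice described in Remark \ref{RmkResRadonOtherHor3WhatAreLambdas}, I take $\sFh = (\partial, 1)$, so that $\max \hd = \min \hd = 1$, and I take $\sFt \subset \sL(\sSt)$ realizing $\max\{\td : (\Xt, \td) \in \sFt\} = E$ and $\min\{\td : (\Xt, \td) \in \sFt,\ \Xt \not\equiv 0\} = e$. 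Then \eqref{EqnResRadonOtherHor3New} gives $\lambda_1' = 1/e$ and $\lambda_2' = E$, so Corollary \ref{CorResRadonOtherHor3Follows} furnishes
\[
\NLpN{p}{\delta}{\partial, 1}{Tf} \lesssim \NLpN{p}{\delta + \deltat/e - \deltah}{\partial, 1}{f}
\]
for every $T$ of order $\deltat - E\deltah$. By Theorem \ref{ThmResSobEuclidIsNonIsoNew} the norms on both sides are equivalent to isotropic $L^p$ Sobolev norms. To obtain \eqref{EqnCorResRadonOtherHorWithEuclidNew1Newer} I set $\deltat = r \geq 0$, $\deltah = 0$, so $T$ has order $r$ and the shift is $r/e$; reparametrizing $s = \delta + r/e$ yields the claim. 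To obtain \eqref{EqnCorResRadonOtherHorWithEuclidNew2Newer} I set $\deltat = 0$, $\deltah = -r/E \geq 0$, so $T$ has order $r$ and the shift is $r/E$; the same translation and reparametrization give the claim. Shrinking $\epsilon$ so that $|r|/E < \epsilon$ lies within the range of applicability of the corollary handles the quantitative part.

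The main obstacle is the two optimality statements, which do not follow from the positive estimates. For the sharpness of $1/e$, my plan is to pick a multi-index $\alphat$ with $\deg(\alphat) = e$ and $\Xt_\alphat \not\equiv 0$ near $x_0$ and construct $K$ as a one-dimensional fractional kernel of order $r$ concentrated near the $\vtt^\alphat$-axis; after composition with $\gammat$, the resulting $T$ acts essentially as a fractional power of order $r$ of $\Xt_\alphat$, which loses exactly $r/e$ isotropic derivatives by the dilation $\et$ and cannot be improved. For the sharpness of $1/E$ with $r \leq 0$, dually realize a direction lying in an $E$-fold iterated bracket inside $\sF_0$ and take a smoothing kernel whose image under $\gammat$ propagates only along that direction: the non-isotropic gain of $|r|$ units translates into at most $|r|/E$ units in the isotropic scale, with equality attained. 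A rigorous implementation of these constructions is most cleanly carried out by reducing to a free nilpotent Lie group adapted to the vector fields at $x_0$ (where sharp Sobolev estimates are classical) and comparing the model $T$ there to its Euclidean counterpart via the Rothschild--Stein lifting; this is the program of Section \ref{SectionOptimality}.
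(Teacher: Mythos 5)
Your derivation of the positive bounds \eqref{EqnCorResRadonOtherHorWithEuclidNew1Newer} and \eqref{EqnCorResRadonOtherHorWithEuclidNew2Newer} matches the paper's proof exactly: apply Corollary \ref{CorResRadonOtherHor3Follows} with $\sFh=(\partial,1)$, choose $\sFt$ so that $\lambda_1'=1/e$ and $\lambda_2'=E$, make the indicated substitutions, and translate via Theorem \ref{ThmResSobEuclidIsNonIsoNew}. That part is fine.

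The optimality half is where the proposal has a genuine gap, and moreover it mischaracterizes what Section \ref{SectionOptimality} actually does. You say the implementation is ``most cleanly carried out by reducing to a free nilpotent Lie group adapted to the vector fields at $x_0$ and comparing... via the Rothschild--Stein lifting; this is the program of Section \ref{SectionOptimality}.'' It is not. Section \ref{SectionOptimality} never lifts to a free nilpotent group; instead it works directly with the quantitative Frobenius theorem (Theorem \ref{ThmQuantFrob}), rescales the vector fields by $\delta_k\to 0$, pulls back through $\Phi_{x_k,\delta_k}$, extracts convergent subsequences of the rescaled operators (Lemmas \ref{LemmaOptimalSubseqVect}--\ref{LemmaOptimalDefineTkandSk}), and shows the limiting composite operator is nonzero. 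The conclusion is then fed into Theorem \ref{ThmOptimalMain}, whose hypothesis is verified by observing that $\sFt$ sharply $\lambda_2'$-controls $(\partial,1)$ and $(\partial,1)$ sharply $\lambda_1'$-controls $\sFt$.

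Beyond the mislabeled reference, your sketch has a structural problem: to refute ``for every fractional Radon transform $T$ of order $r$, $\|Tf\|_{L^p_{s-r/e+t}}\lesssim\|f\|_{L^p_s}$'' one must show that operator norms over the class cannot be uniformly bounded, and the paper does this by producing a sequence of scales $\delta_k=2^{-j_k}\to 0$, constructing $T_{j_k}$, $S_{\lambda j_k}$ adapted to both $(\gammat,\et,\Nt)$ and $(\gammah,\eh,\Nh)=(\partial,1)$, proving via Lemma \ref{LemmaOptimalDefineTkandSk} that $\liminf_k \|S_{\lambda j_k} T_{j_k} S_{\lambda j_k}\|_{L^p\to L^p}>0$ and $\liminf_k \|T_{j_k} S_{\lambda j_k} T_{j_k}\|_{L^p\to L^p}>0$, and then invoking the uniformity / Baire-category reduction of Remark \ref{RmkOptimalUniform} to conclude. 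Your plan to analyze a single kernel ``concentrated near the $\vtt^{\alphat}$-axis'' acting ``essentially as a fractional power of $\Xt_{\alphat}$'' does not produce this sequence, does not explain how to relate the isotropic loss back to a failed estimate, and does not address the interaction with the second parameterization that drives the paper's argument. The intuition that the Euclidean and non-isotropic degrees should differ by a factor of $e$ or $E$ at the appropriate scale is correct, but converting it into a contradiction requires precisely the scaling-and-limit machinery of Section \ref{SectionOptimality}, which your outline neither reproduces nor replaces.
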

\begin{proof}
To prove \eqref{EqnCorResRadonOtherHorWithEuclidNew1Newer} and \eqref{EqnCorResRadonOtherHorWithEuclidNew2Newer} we wish to
apply Corollary \ref{CorResRadonOtherHor3Follows}.  We are taking $\sFh=(\partial,1)$, and therefore
$\min\{ \hd : \exists (\Xh,\hd)\in \sFh\}=1=\max\{ \hd : \exists (\Xh,\hd)\in \sFh\}$.
By the discussion in Remark \ref{RmkResRadonOtherHor3WhatAreLambdas},
we may pick a small neighborhood $\Omega'$ of $x_0$ so that we may take $\sFt$ 
with $\min\{\td : \exists (\Xt,\td)\in \sFt\}=e$, $\max\{\td :\exists(\Xt,\td)\in \sFt\}=E$.
Thus, $\lambda_1'=e^{-1}$ and $\lambda_2'=E$.

If $r\geq 0$, set $\deltah=0$ and $\deltat=r$.  Then if $s=\delta+\lambda_1'\deltat-\deltah$, we have $\delta=s- \lambda_1'r$.
Plugging these choices into Corollary \ref{CorResRadonOtherHor3Follows} yields \eqref{EqnCorResRadonOtherHorWithEuclidNew1Newer}.

If $r\leq 0$, set $\deltat=0$ and $r=-\lambda_2' \deltah$.  Then, if $s=\delta+\lambda_1' \deltat-\deltah$, we have $\delta=s-r/\lambda_2'$.
Plugging these choices into Corollary \ref{CorResRadonOtherHor3Follows} yields \eqref{EqnCorResRadonOtherHorWithEuclidNew2Newer}.

For the proof of optimality, see Section \ref{SectionOptimality}.
\end{proof}

\subsection{Pseudodifferential operators}\label{SectionRadonPDO}
When $(\gamma,e,N,\Omega,\Omega''')$ is linearly finitely generated on $\Omega'$, and $T$ is a fractional
Radon transform of order $\delta\in \R^{\nu}$ corresponding to $(\gamma,e,N)$, it is sometimes useful to think
of $T$ as a generalized kind of ``pseudodifferential operator''.
Actually, for this we consider a slightly more general kind of operator:
\begin{equation}\label{EqnResRadonPDODefnPDO}
Tf(x) = \int f(\gamma_t(x)) \psi(\gamma_t(x)) K(x,t)\: dt,
\end{equation}
where $\psi\in C_0^{\infty}(\Omega_0)$, $K(x,t)$ is a distribution which can be written as
\begin{equation*}
K(x,t) = \eta(t) \sum_{j\in \N^{\nu}} 2^{j\cdot \delta} \dil{\vsig_j}{2^j}(x,t),
\end{equation*}
 $\dil{\vsig_j}{2^j}(x,t)=2^{j\cdot e_1+\cdots + j\cdot e_N} \vsig_j(x, 2^{j}t)$, $2^j t$ is defined by the dilations $e$,
$\{\vsig_j : j\in \N^{\nu}\}\subset C^{\infty}_0(\Omega_0 ; \schS(\R^{N}))$ is a bounded set, with
$\vsig_j\in C^{\infty}_0(\Omega_0; \schS_{\{\mu : j_\mu\ne 0\}})$.
The results above actually extend to this more general situation, automatically.  Indeed, because
$C_0^{\infty}(\Omega_0; \schS(\R^{N}))\cong C_0^{\infty}(\Omega_0)\otimesh \schS(\R^{N})$ (where $\otimesh$ denotes the
completed tensor product of these nuclear spaces), all of our results for fractional Radon transforms extend to the more
general operators given by \eqref{EqnResRadonPDODefnPDO}.  See \cite{TrevesTopologicalVectorSpaces} for more details on tensor products,
and Theorem 2.14.16 of \cite{StreetMultiParamSingInt} for a similar result using these ideas.

\begin{rmk}\label{RmkResRadonPDOEuclidPDO}
In the case when $\sF=(\partial,1)$ (see \eqref{EqnResSobEuclidPartialOneNew}), and when 
\begin{equation*}
\gamma_{t_1,\ldots, t_n}(x) = e^{-t_1 \frac{\partial}{\partial x_1}-\cdots - t_n \frac{\partial}{\partial x_n}} x = x-t,
\end{equation*}
then fractional Radon transforms of order $\delta\in \R$ corresponding to $(\gamma,(1,\ldots, 1), n)$ are standard pseudodifferential
operators order order $\delta$ whose Schwarz kernels are supported in $\Omega_0\times \Omega_0$.
\end{rmk}

We saw in Remark \ref{RmkResRadonPDOEuclidPDO} that a particular special case of fractional Radon transforms
corresponding to a linearly finitely generated parameterization yields standard pseudodifferential operators on $\R^n$.
The analogy with pseudodifferential operators does not end there, though.  Indeed, a basic use of standard pseudodifferential
operators is to create parametricies for elliptic operators (e.g., the Laplacian on $\R^n$).  When $\nu=1$, the fractional
Radon transforms here can be used to create a parametrix for H\"omander's sub-Laplacian.
This idea was developed by Rothschild and Stein \cite{RothschildSteinHypoellipticDifferentialOperatorsAndNilpotentGroups}
and was based on previous work by Folland and Stein \cite{FollandSteinEstimatesForTheDbarComplex} and Folland
\cite{FollandSubellipticEstimatesAndFunctionSpacesOnNilpotent}.  This was further developed by Goodman
\cite{GoodmanNilpotentLieGroups}; see also \cite{GellerSteinEstimatesForSingularConvolutionOperatorsOnTheHeisenbergGroup,PhongSteinHilbertIntegrals,ChirstGellerGlowackiPolinPseudodifferentialOperatorsOnGroupsWithDilations}.  See \cite{StreetMultiParamSingInt} for more details; in particular, Theorem 2.14.28.
Combining this with the other results in this paper, gives regularity results for H\"ormander's sub-Laplacian
on various non-isotropic Sobolev spaces corresponding to geometries other than
the associated Carnot-Carath\'eodory geoemtry.\footnote{In the case of {\it isotropic} Sobolev spaces, this idea was already
present in the work of Rothschild and Stein \cite{RothschildSteinHypoellipticDifferentialOperatorsAndNilpotentGroups}.}

In fact, the operators discussed here are closely related to a far reaching generalization of ellipticity, known as
maximal hypoellipticity.  See Chapter 2 of \cite{StreetMultiParamSingInt} for this concept, its relationship with these pseudodifferential operators,
and a history of these ideas.

\begin{rmk}\label{RmkFeffermanPhong}
In \cite{GreenblattAnAnalogueToATheoremOfFeffermanAndPhong} results concerning fractional Radon transforms were connected
to the well-known results of Fefferman and Phong on subelliptic operators
\cite{FeffermanPhongSubellipticEigenValueProblems}.
Here, we can make the analogy more explicit:  the results of \cite{FeffermanPhongSubellipticEigenValueProblems} are closely related
to the case when $\gamma$ is linearly finitely generated (e.g., when studying H\"ormander's sub-Laplacian), while the results of \cite{GreenblattAnAnalogueToATheoremOfFeffermanAndPhong}
are for when $\gamma$ is finitely generated.
\end{rmk}

\subsubsection{Singular Integrals}
There is a close relationship here between the smoothing properties for Radon transforms and the corresponding smoothing properties
for singular integrals.  Indeed, suppose $(\gamma,e,N)$ is finitely generated by $\sF\subset \snuvectone$ on $\Omega'$.  Note, we
are taking $\sF\subset \snuvectone$, but not assuming $\gamma$ is linearly finitely generated on $\Omega'$.
Corresponding to $\sF$ we obtain nonisotropic Sobolev spaces $\NLp{p}{r}{\sF}$, $p\in (1,\infty)$, $r\in \R^{\nu}$.
We assume, in addition, $\{ X: (X,d)\in \sF\}$ spans the tangent space to every point of $\Omega'$.

Under the above assumptions, corresponding to $\sF$, there is an algebra of singular integral operators (see \cite{StreetMultiParamSingInt}).\footnote{When
$\nu=1$ these singular integral operators are the NIS operators introduced by Nagel, Rosay, Stein, and Wainger \cite{NagelRosaySteinWaingerEstimatesForTheBergmanAndSzegoKernels} and later studied by
Koenig \cite{KoenigOnMaximalSobolevAndHolderEstimatesForTheTangentialCR}
and Chang, Nagel, and Stein \cite{ChangNagelSteinEstaimtesForDbarNeumannProblem}.}
If $S$ is a singular integral operator of order $\delta\in \R^{\nu}$ (corresponding to $\sF$), then for $1<p<\infty$, $r\in \R^\nu$, 
$S: \NLp{p}{r}{\sF}\rightarrow \NLp{p}{r-\delta}{\sF}$ (see Theorem 5.1.23 of \cite{StreetMultiParamSingInt}).
Furthermore, if $(\gamma,e,N)$ is {\it linearly} finitely generated by $\sF$ on $\Omega'$, and if $T$ is a fractional Radon transform of 
order $r\in \R^{\nu}$ corresponding to $(\gamma,e,N)$, then $T$ is a singular integral operator of order $r$
(the results in Section 5.2.1 of \cite{StreetMultiParamSingInt} can be adapted to this situation).
Thus, Theorem \ref{ThmResRadonMainThm} in the case when $(\gamma,e,N)$ is linearly finitely generated on $\Omega'$ (and $\sF$ satisfies the above hypotheses) is really a result about singular integrals;
and is therefore essentially a special case of  Theorem 5.1.23 of \cite{StreetMultiParamSingInt}.

If $(\gamma,e,N)$ is only finitely generated on $\Omega'$, then $T$ is not necessarily a singular integral operator.
However, we do have $T: \NLp{p}{r}{\sF}\rightarrow \NLp{p}{r-\delta}{\sF}$ for $r,\delta\in \R_0^{\nu}$.
Thus, one way of informally restating Theorem \ref{ThmResRadonMainThm} (at least in the case when $\sF$ satisfies the above hypotheses)
is that the mapping properties of fractional Radon transforms on $\NLp{p}{r}{\sF}$ are the same as the mapping properties
of the corresponding singular integral operators, so long as $r$ and $\delta$ are sufficiently small.












\section{Proofs: Schwartz space and kernels}\label{SectionSchwartzAndProd}
Fix $N\in \N$ and $0\ne e_1,\ldots, e_N\in [0,\infty)^\nu$--$\nu$ parameter dilations on $\R^N$.  For $a>0$ and $\delta\in \R^{\nu}$,
we wish to understand the space $\sK_{\delta}(N,e,a)$.  The first step is to understand the spaces $\schS_{E}$, where $E\subseteq \nuset$.\footnote{Recall, $\schS(\R^N)$ is Schwartz space, and the definition of $\schS_E\subseteq \schS(\R^N)$ is given at the beginning of Section \ref{SectionResKer}.}
$\schS_E$ is clearly a closed subspace of $\schS(\R^N)$, and therefore inherits the Fr\'echet topology.
Using the dilations $e$, it makes sense to write $\dil{\vsig}{2^j}$, for $j\in [0,\infty)^\nu$, as in \eqref{EqnResKerDefFuncDil}.

For each $\mu\in \nuset$, recall the variable $t_\mu$, which denotes the vector consisting of those coordinates $t_j$ of $t$ such that $e_j^{\mu}\ne 0$.
Let $t_\mu^{\perp}$ denote the vector consisting of those coordinates of $t$ which are not in $t_\mu$, so that $t=(t_\mu, t_\mu^{\perp})$.
For $f\in \schS(\R^N)$, let $\hat{f}\in \schS(\R^N)$ denote its Fourier transform.  Let $\xi_\mu$ denote the dual variable to $t_\mu$,
and define $\xi_\mu^{\perp}$ so that $\xi=(\xi_\mu, \xi_{\mu}^{\perp})$; i.e., $\xi_\mu^{\perp}$ is the dual variable to $t_\mu^{\perp}$.

\begin{lemma}\label{LemmaKerFTOfsS}
For $f\in \schS(\R^N)$, the following are equivalent:
\begin{itemize}
\item $f\in \schS_E$.
\item $\forall \mu\in E$, $\partial_{\xi_\mu}^{\alpha} \hat{f}(\xi_\mu, \xi_\mu^{\perp})\big|_{\xi_\mu=0}=0$, for every multi-index $\alpha$.
\end{itemize}
\end{lemma}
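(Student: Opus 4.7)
The plan is to reduce to a single $\mu \in E$ at a time, since the condition defining $\schS_E$ and the condition in the second bullet are both conjunctions over $\mu \in E$. Once $\mu$ is fixed, the statement becomes the standard Fourier-analytic correspondence between vanishing polynomial moments of a Schwartz function in a subset of its variables and vanishing derivatives of its Fourier transform on the corresponding coordinate hyperplane.

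Fix $\mu \in E$, and let $\sF_\mu$ denote the partial Fourier transform in the $t_\mu$ variable only, so that $(\sF_\mu f)(\xi_\mu, t_\mu^\perp) \in \schS(\R^N)$. I would first use the classical identity
\begin{equation*}
\partial_{\xi_\mu}^{\alpha}(\sF_\mu f)(\xi_\mu, t_\mu^\perp)\big|_{\xi_\mu = 0} = (-i)^{|\alpha|}\int t_\mu^{\alpha} f(t_\mu, t_\mu^\perp)\, dt_\mu,
\end{equation*}
obtained by differentiating under the integral sign, which is justified by the Schwartz decay of $f$. Thus the vanishing of $\int t_\mu^\alpha f(t)\, dt_\mu$ as a function of $t_\mu^\perp$, for all multi-indices $\alpha$, is equivalent to $\partial_{\xi_\mu}^{\alpha}(\sF_\mu f)(0,t_\mu^\perp) \equiv 0$ for all $\alpha$.

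Next I would take the remaining Fourier transform in $t_\mu^\perp$ and use that $\hat f = \sF_{t_\mu^\perp \to \xi_\mu^\perp}(\sF_\mu f)$. Since the operators $\partial_{\xi_\mu}^{\alpha}$ and $\sF_{t_\mu^\perp \to \xi_\mu^\perp}$ act in disjoint variables and therefore commute, we have
\begin{equation*}
\partial_{\xi_\mu}^{\alpha}\hat f(\xi_\mu,\xi_\mu^\perp) = \sF_{t_\mu^\perp \to \xi_\mu^\perp}\!\left[\partial_{\xi_\mu}^{\alpha}(\sF_\mu f)(\xi_\mu,\,\cdot\,)\right]\!(\xi_\mu^\perp).
\end{equation*}
Setting $\xi_\mu = 0$ and invoking the injectivity of the Fourier transform on $\schS$, the condition $\partial_{\xi_\mu}^{\alpha}(\sF_\mu f)(0,t_\mu^\perp) \equiv 0$ in $t_\mu^\perp$ is equivalent to $\partial_{\xi_\mu}^{\alpha}\hat f(0,\xi_\mu^\perp) \equiv 0$. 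Chaining the two equivalences gives the statement for the fixed $\mu$; taking the conjunction over $\mu \in E$ yields the lemma.

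There is essentially no obstacle beyond careful bookkeeping of which variables are being transformed; the only minor care required is that although the coordinate blocks $t_\mu$ and $t_{\mu'}$ may overlap (since the dilation exponents $e_j$ can be nonzero in several components), each equivalence is proved for a single $\mu$ at a time, so the overlap plays no role.
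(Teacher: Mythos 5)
Your proof is correct, and it is the standard Fourier-duality argument (vanishing $t_\mu$-moments of $f$ correspond to vanishing $\xi_\mu$-derivatives of $\hat f$ at $\xi_\mu=0$); the paper treats this as ``immediate from the definitions'' and gives no further argument, so your write-up simply makes the same reasoning explicit. The factorization through the partial transform $\sF_\mu$ is sound and just one way of organizing the bookkeeping — one could equally well differentiate the full Fourier integral $\hat f(\xi)=\int f(t)e^{-it\cdot\xi}\,dt$ in $\xi_\mu$ directly and then read off the result coordinatewise — but there is no substantive difference.
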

\begin{proof}
This is immediate from the definitions.
\end{proof}

Decompose the $t_\mu$ variable as $t_\mu = (t_\mu^1,\ldots, t_\mu^{N_\mu})$, so that $t_\mu^1,\ldots, t_\mu^{N_\mu}\in \R$ are an enumeration
of those coordinates $t_j\in \R$ of $t\in \R^{N}$ such that $e_j^{\mu}\ne 0$.
Let $\lap_{\mu}$ denote the positive Laplacian in the $t_\mu$ variable, so that
$$\lap_\mu=-\sum_{j=1}^{N_\mu} \q(\frac{\partial}{\partial t_\mu^j}\w)^2=-\grad_{t_{\mu}}\cdot \grad_{t_{\mu}}.$$

On $t_\mu$ there are $\nu$ parameter dilations, defined so that for $\delta\in [0,\infty)^{\nu}$, $\delta t = (\delta t_{\mu}, \delta t_{\mu}^\perp)$.
I.e., we define dilations $0\ne \eh^{\mu}_1,\ldots,\eh^{\mu}_{N_{\mu}}\in [0,\infty)^\nu$ on $\R^{N_\mu}$
by $\eh^{\mu}_k = e_j$ if $t_\mu^k$ corresponds to the coordinate $t_j$ of $t$.
Notice that each $\eh^{\mu}_k\in [0,\infty)^\nu$ is nonzero in the $\mu$ component; thus
when we compute $\delta t_\mu$, each coordinate of $t_\mu$ is multiplied by a power of $\delta_\mu$ (and possibly by powers $\delta_{\mu'}$ for $\mu'\ne \mu$, as well).
Define a $\nu$ parameter dilation on $\lap_\mu$ by
\begin{equation}\label{EqnPfKerDilLap}
\delta\cdot \lap_\mu := -\sum_{j=1}^{N_\mu} \delta^{2\eh_j^{\mu}}\q(\frac{\partial}{\partial t_\mu^j}\w)^2.
\end{equation}
This is defined in such a way that
\begin{equation}\label{EqnKerPullOutLapDil}
\dil{\q(\lap_{\mu} \vsig\w)}{2^j} = \q(2^{-j}\cdot \lap_\mu\w)\dil{\vsig}{2^j}.
\end{equation}
Letting
\begin{equation}\label{EqnKerPutOnLapDilDefnh0}
h_0:= \min\{e_j^{\mu} : 1\leq j\leq N, 1\leq \mu\leq \nu, e_j^{\mu}\ne 0\}>0
\end{equation} we see, for every $M\in \N$,
\begin{equation}\label{EqnKerPutOnLapDil}
\q|\q(2^{-j}\cdot \lap_\mu\w)^M f\w|\leq C_M 2^{-2M j_\mu h_0} \sum_{|\alpha|\leq 2M} \q|\q(\frac{\partial}{\partial t_{\mu}}\w)^\alpha f\w|,
\end{equation}
where $C_M$ does not depend on $j$ or $f$.

\begin{defn}
Let $f\in \schS_E$, with $\mu\in E$.  For $s\in \R$ define $\lap_\mu^s f$ by $\widehat{\lap_\mu^s f} = |\xi_\mu|^{2s} \hat{f}(\xi)$.
\end{defn}

\begin{lemma}\label{LemmaKersSEPullOut}
For $s\in \R$, and $\mu\in E$, $\lap_\mu^s: \schS_E\rightarrow \schS_E$ is an automorphism of the Fr\'echet space $\schS_E$.
\end{lemma}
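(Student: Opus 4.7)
The plan is to pass to the Fourier side. By Lemma \ref{LemmaKerFTOfsS}, $f \in \schS_E$ if and only if $\hat{f} \in \schS(\R^N)$ has all $\xi_\mu$-partial derivatives vanishing identically on $\{\xi_\mu = 0\}$ for every $\mu \in E$; and the operator $\lap_\mu^s$ becomes multiplication by $|\xi_\mu|^{2s}$. The task therefore reduces to showing that $g \mapsto |\xi_\mu|^{2s} g$ is a continuous linear automorphism of the closed subspace of Schwartz functions vanishing to infinite order along $\{\xi_{\mu'} = 0\}$ for each $\mu' \in E$.

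The heart of the proof is showing $|\xi_\mu|^{2s} g$ is smooth across $\{\xi_\mu = 0\}$. Away from this set the multiplier is smooth, so only the behavior near $\xi_\mu = 0$ is at issue. Here I would combine the homogeneity bound $\q|\partial_{\xi_\mu}^\gamma |\xi_\mu|^{2s}\w| \leq C_{\gamma,s} |\xi_\mu|^{2s-|\gamma|}$ with the Taylor remainder estimate coming from the infinite-order flatness of $g$: for any $M$, $|\partial_\xi^{\beta-\gamma} g(\xi)| \leq C_{\beta,M} |\xi_\mu|^M$ on compacts. Leibniz then yields $|\partial_\xi^\beta (|\xi_\mu|^{2s} g)(\xi)| \leq C |\xi_\mu|^{2s + M - |\beta|}$ on any compact, with $M$ arbitrary. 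Choosing $M$ large shows each partial derivative, computed inductively as a limit of difference quotients, exists and is zero at every point of $\{\xi_\mu = 0\}$; this establishes smoothness and shows that $|\xi_\mu|^{2s} g$ itself vanishes to infinite order along $\{\xi_\mu = 0\}$.

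Schwartz decay at infinity is routine, since $|\xi_\mu|^{2s}$ and its derivatives grow at most polynomially, so the seminorm bounds on $g$ transfer quantitatively to $|\xi_\mu|^{2s} g$; this also gives continuity. For $\mu' \in E$ with $\mu' \neq \mu$, I would verify vanishing of $|\xi_\mu|^{2s} g$ to infinite order along $\{\xi_{\mu'} = 0\}$ by splitting $\xi_\mu$ into coordinates shared and not shared with $\xi_{\mu'}$ and applying Leibniz in the $\xi_{\mu'}$ directions: the infinite-order vanishing of $g$ in $\xi_{\mu'}$ dominates any (at worst finite-order) derivatives of the multiplier in the shared coordinates. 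For the automorphism property, the same argument applied to $-s$ shows $\lap_\mu^{-s}$ continuously maps $\schS_E$ to $\schS_E$; on the Fourier side $|\xi_\mu|^{2s} |\xi_\mu|^{-2s} \hat{f} = \hat{f}$ off the null set $\{\xi_\mu = 0\}$, and since both sides are Schwartz they agree everywhere, so $\lap_\mu^{-s}$ is a two-sided inverse.

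The main obstacle is the smoothness of $|\xi_\mu|^{2s} g$ across $\{\xi_\mu = 0\}$ when $s$ is non-integer or negative: the singularity of the multiplier must be beaten by the infinite-order flatness of $g$, which is precisely why the hypothesis $\mu \in E$ is essential. The subtler bookkeeping step is preserving vanishing at $\{\xi_{\mu'} = 0\}$ for $\mu' \in E \setminus \{\mu\}$ in the case when $t_\mu$ and $t_{\mu'}$ share coordinates, where one needs the coordinate-splitting argument mentioned above.
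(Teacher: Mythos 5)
Your proof takes the same overall route as the paper's: pass to the Fourier side via Lemma \ref{LemmaKerFTOfsS}, show that multiplication by $|\xi_\mu|^{2s}$ preserves the characterizing vanishing conditions on $\schS_E$, and observe that $\lap_\mu^{-s}$ supplies a two-sided inverse. The one genuine difference is how continuity is obtained: you propose tracking the seminorm estimates directly, whereas the paper short-circuits this with the closed graph theorem. Since $\schS_E$ is a closed subspace of the Fr\'echet space $\schS(\R^N)$, once the map is known to be linear and everywhere defined on $\schS_E$, the closed graph theorem hands you continuity with no quantitative bookkeeping; your hands-on route also works but costs more.

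One sentence in your argument undersells what that hands-on route actually requires. The claim that ``Schwartz decay at infinity is routine, since $|\xi_\mu|^{2s}$ and its derivatives grow at most polynomially'' is not quite right: for $|\gamma|$ large, $\partial_\xi^\gamma |\xi_\mu|^{2s}$ is controlled only by $|\xi_\mu|^{2s-|\gamma|}$, which blows up near $\{\xi_\mu=0\}$ whenever $2s$ is not an even nonnegative integer, and this singular region persists out to spatial infinity as long as $\xi_\mu^\perp$ is nonempty. So the Schwartz seminorm bounds on the product do not follow from polynomial growth of the multiplier alone; they need exactly the same input as your smoothness argument, namely the infinite-order flatness of $g$ along $\{\xi_\mu=0\}$ combined with the Schwartz decay of $g$, run globally via a Taylor-remainder estimate with constants that decay in $|\xi|$. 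With that correction, the proof is sound.
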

\begin{proof}
It is a simple consequence of Lemma \ref{LemmaKerFTOfsS} that $\lap_\mu^s:\schS_E\rightarrow \schS_E$.  It follows from the closed graph theorem
that it is continuous.  The continuous inverse is $\lap_\mu^{-s}$, thereby making $\lap_\mu^s$ an automorphism.
\end{proof}

\begin{lemma}\label{LemmaKerConvInDist}
Let $\{\vsig_j : j\in \N^{\nu}\}\subset \schS(\R^N)$ be a bounded set such that $\vsig_j \in \schS_{\{\mu:j_\mu\ne 0\}}$.
Then, for any $s\in \R^{\nu}$, the sum
\begin{equation*}
\sum_{j\in \N^{\nu}} 2^{j\cdot s} \dil{\vsig_j}{2^j}
\end{equation*}
converges in the sense of tempered distributions.
\end{lemma}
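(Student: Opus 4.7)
The plan is to pair the partial sums against an arbitrary test function $\phi\in\schS(\R^N)$ and show the resulting scalar sum converges absolutely. Writing $E(j):=\{\mu:j_\mu\ne 0\}$, by assumption $\vsig_j\in\schS_{E(j)}$, and since $\nuset$ has only finitely many subsets while $\{\vsig_j\}$ is bounded in $\schS(\R^N)$, for each fixed $E\subseteq\nuset$ the family $\{\vsig_j:E(j)=E\}$ is a bounded subset of the Fr\'echet space $\schS_E$.

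Fix a multi-index $M=(M_1,\dots,M_\nu)\in\N^\nu$ to be chosen at the end. Using Lemma \ref{LemmaKersSEPullOut} and the fact that the operators $\lap_\mu^{-M_\mu}$ and $\lap_{\mu'}^{-M_{\mu'}}$ commute (they are constant-coefficient Fourier multipliers) and each map $\schS_E$ to itself when $\mu,\mu'\in E$, I would define
\begin{equation*}
\vsigt_{j,M}:=\Bigl(\prod_{\mu\in E(j)}\lap_\mu^{-M_\mu}\Bigr)\vsig_j\in\schS_{E(j)},
\end{equation*}
so that $\vsig_j=\bigl(\prod_{\mu\in E(j)}\lap_\mu^{M_\mu}\bigr)\vsigt_{j,M}$. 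Because each $\lap_\mu^{-M_\mu}$ is continuous on $\schS_E$ and there are only finitely many $E$, the collection $\{\vsigt_{j,M}:j\in\N^\nu\}$ is bounded in $\schS(\R^N)$. Next, I would apply \eqref{EqnKerPullOutLapDil} repeatedly to pull the Laplacians through the dilation and obtain
\begin{equation*}
\dil{\vsig_j}{2^j}=\Bigl(\prod_{\mu\in E(j)}\bigl(2^{-j}\cdot\lap_\mu\bigr)^{M_\mu}\Bigr)\dil{\vsigt_{j,M}}{2^j}.
\end{equation*}
Since the operators $2^{-j}\cdot\lap_\mu$ are formally self-adjoint constant-coefficient differential operators, pairing against $\phi$ and integrating by parts yields
\begin{equation*}
\bigl\langle 2^{j\cdot s}\dil{\vsig_j}{2^j},\phi\bigr\rangle
=2^{j\cdot s}\Bigl\langle\dil{\vsigt_{j,M}}{2^j},\;\Bigl(\prod_{\mu\in E(j)}\bigl(2^{-j}\cdot\lap_\mu\bigr)^{M_\mu}\Bigr)\phi\Bigr\rangle.
\end{equation*}

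Now I would bound the two factors separately. Iterating the estimate \eqref{EqnKerPutOnLapDil} (with $h_0>0$ as in \eqref{EqnKerPutOnLapDilDefnh0}) gives, for all $t\in\R^N$,
\begin{equation*}
\Bigl|\Bigl(\prod_{\mu\in E(j)}\bigl(2^{-j}\cdot\lap_\mu\bigr)^{M_\mu}\Bigr)\phi(t)\Bigr|
\le C_M\,\Bigl(\prod_{\mu\in E(j)}2^{-2M_\mu j_\mu h_0}\Bigr)\sum_{|\alpha|\le 2|M|}|\partial_t^\alpha\phi(t)|.
\end{equation*}
On the other hand, the change of variables $u=2^j t$ shows
\begin{equation*}
\Bigl|\bigl\langle\dil{\vsigt_{j,M}}{2^j},\psi\bigr\rangle\Bigr|
=\Bigl|\int\vsigt_{j,M}(u)\,\psi(2^{-j}u)\,du\Bigr|
\le \|\vsigt_{j,M}\|_{L^1}\,\|\psi\|_{L^\infty},
\end{equation*}
and $\|\vsigt_{j,M}\|_{L^1}$ is uniformly bounded in $j$ by the boundedness in $\schS$ established above. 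Combining the two bounds, $|\langle 2^{j\cdot s}\dil{\vsig_j}{2^j},\phi\rangle|$ is controlled by a fixed Schwartz seminorm of $\phi$ times $2^{j\cdot s}\prod_{\mu\in E(j)}2^{-2M_\mu j_\mu h_0}$.

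Finally, I would choose $M_\mu\in\N$ large enough that $2M_\mu h_0>s_\mu+1$ for every $\mu$; then the exponent is at most $-\sum_{\mu\in E(j)}j_\mu$ plus bounded terms, and the sum over $j\in\N^\nu$ converges absolutely. This yields convergence of $\sum_j\langle 2^{j\cdot s}\dil{\vsig_j}{2^j},\phi\rangle$ for every $\phi\in\schS(\R^N)$ with a bound by a continuous seminorm, giving convergence in $\schS(\R^N)'$ (and a fortiori in $\mathcal{D}'(\R^N)$). The main obstacle is simply the bookkeeping in step two—verifying that the composition of the $\lap_\mu^{-M_\mu}$ across the varying sets $E(j)$ preserves a uniform bound in $\schS$—which is handled by splitting the index set $\N^\nu$ according to the finitely many possible values of $E(j)$ and invoking Lemma \ref{LemmaKersSEPullOut} on each piece.
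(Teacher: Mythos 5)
Your proof is correct and uses essentially the same mechanism as the paper: Lemma \ref{LemmaKersSEPullOut} to peel off Laplacians, \eqref{EqnKerPullOutLapDil} to commute them through the dilation, and \eqref{EqnKerPutOnLapDil} to convert the integration by parts into exponential decay in $j$. The only difference is bookkeeping: the paper picks a single $\mu$ with $j_\mu=|j|_\infty$ and one large $M$ (yielding a $2^{-L|j|_\infty}$ bound), whereas you integrate by parts simultaneously in every $\mu\in E(j)$ with a fixed $M_\mu$ for each $\mu$ (yielding a $2^{-c|j|_1}$ bound); both routes close the same way.
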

\begin{proof}
Let $\vsig\in \schS(\R^N)$.  We show, for every $L$,
\begin{equation}\label{EqnToShowKerSum}
\q|\int \dil{\vsig_j}{2^j}(t) \vsig(t)\: dt\w|\lesssim 2^{-L|j|_{\infty}},
\end{equation}
and the result will follow.

If $j=0$, \eqref{EqnToShowKerSum} is trivial, so we assume $|j|_{\infty}>0$.  Take $\mu$ so that $j_\mu=|j|_{\infty}$, and take $M$ so large
$2M h_0\geq L$ (where $h_0$ is as in \eqref{EqnKerPutOnLapDilDefnh0}).  Lemma \ref{LemmaKersSEPullOut} shows we may write $\vsig_j = \lap_\mu^{M} \vsigt_j$, where $\{\vsigt_j : j\in \N^\nu, j_\mu\ne 0\}\subset \schS(\R^N)$
is a bounded set (by setting $\vsigt_j = \lap_\mu^{-M} \vsig_j$).
Using \eqref{EqnKerPullOutLapDil}, we have
\begin{equation*}
\int \dil{\vsig_j}{2^j}(t)\vsig(t)\: dt = \int \dil{\vsigt_j}{2^j}(t) (2^{-j}\cdot  \lap_\mu)^{M} \vsig(t) \: dt.
\end{equation*}
Using \eqref{EqnKerPutOnLapDil} and the choice of $M$, we have that $\{ 2^{L|j|_{\infty}} (2^{-j}\cdot \lap_\mu)^M \vsig : j\in \N^{\nu}, j\ne 0, j_\mu=|j|_\infty\}\subset \schS(\R^N)$ is a bounded set.
\eqref{EqnToShowKerSum} follows, completing the proof.
\end{proof}

\begin{lemma}\label{LemmaResKerBasicDelta}
There is a bounded set $\{\vsig_j : j\in \N^\nu\}\subset \schS(\R^N)$ with $\vsig_j\in \schS_{\{\mu:j_\mu\ne 0\}}$ and
$\delta_0=\sum_{j\in \N^{\nu}} \dil{\vsig_j}{2^j}$, with the convergence taken in the sense of tempered distributions.
\end{lemma}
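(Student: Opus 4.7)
The proof is a multi-parameter Littlewood--Paley decomposition of $\delta_0$ on the Fourier side. My plan is to arrange the $\hat{\vsig}_j$ to depend only on $E_j := \{\mu : j_\mu \ne 0\} \subseteq \{1,\ldots,\nu\}$, giving a finite family of at most $2^\nu$ distinct Schwartz functions---which is automatically a bounded subset of $\schS(\R^N)$.

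First I would choose $\phi \in \schS(\R^N)$ with $\hat{\phi} \in C_0^\infty(\R^N)$, $\hat{\phi} \equiv 1$ on a neighborhood of the origin, and---more importantly---$\hat{\phi}$ invariant under the dilation $2^{\hat{e}_\mu}$ on an open, dilation-invariant tubular neighborhood of the hyperplane $\{\xi_\mu = 0\}$ for each $\mu$. Under Assumption \ref{AssumpKerProduct}, this is immediate by taking $\hat{\phi}(\xi) = \prod_\mu \hat{\phi}_\mu(\xi_\mu)$ with each $\hat{\phi}_\mu$ identically $1$ near $\xi_\mu = 0$; in general it requires a patching construction making $\hat{\phi}$ locally independent of the $\xi_\mu$-coordinates in a neighborhood of $\{\xi_\mu = 0\}$, with compatible choices on overlaps (in the corner near $\xi = 0$, forcing $\hat{\phi} \equiv 1$). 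Set $\Delta_\mu f(\xi) := f(\xi) - f(2^{\hat{e}_\mu}\xi)$ and, for $E \subseteq \{1,\ldots,\nu\}$, $\Delta_E := \prod_{\mu \in E}\Delta_\mu$ (with $\Delta_\emptyset := \mathrm{id}$). Iterating the one-parameter telescoping identity in each variable $\mu$ yields, for every $J \in \N^\nu$,
\[
\hat{\phi}(2^{-J}\xi) \;=\; \sum_{k \in \N^\nu,\; k \le J} (\Delta_{E_k}\hat{\phi})(2^{-k}\xi),
\]
so defining $\hat{\vsig}_j := \Delta_{E_j}\hat{\phi}$ makes the partial sum $\sum_{j \le J}\hat{\vsig}_j(2^{-j}\xi)$ equal to $\hat{\phi}(2^{-J}\xi)$.

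Three properties then remain. Boundedness is automatic since only $2^\nu$ distinct $\hat{\vsig}_j$ occur. For $\vsig_j \in \schS_{E_j}$, Lemma \ref{LemmaKerFTOfsS} reduces the task to showing that $\Delta_{E_j}\hat{\phi}$ vanishes to infinite order on $\{\xi_\mu = 0\}$ for every $\mu \in E_j$; but the $2^{\hat{e}_\mu}$-invariance of $\hat{\phi}$ on a neighborhood of that hyperplane makes $\Delta_\mu\hat{\phi}$ identically zero there, and the remaining $\Delta_{\mu'}$ with $\mu' \in E_j \setminus \{\mu\}$ preserve this identical vanishing (after choosing the neighborhood to be $2^{\hat{e}_{\mu'}}$-invariant as well). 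For the distributional convergence, take $\psi \in \schS(\R^N)$; by Parseval,
\[
\Bigl\langle \sum_{j \le J} \dil{\vsig_j}{2^j},\,\psi \Bigr\rangle
\;=\; \int_{\R^N} \hat{\phi}(2^{-J}\xi)\,\check{\psi}(\xi)\,d\xi.
\]
Since $|\hat{\phi}(2^{-J}\xi)| \le \|\hat{\phi}\|_\infty$ uniformly and $\hat{\phi}(2^{-J}\xi) \to 1$ pointwise as every $J_\mu \to \infty$ (because $2^{-J}\xi \to 0$ componentwise and $\hat{\phi} \equiv 1$ near $0$), dominated convergence yields the limit $\int \check{\psi} = \psi(0) = \langle \delta_0,\psi\rangle$.

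The hard part will be the construction in step one of $\hat{\phi}$ with the required tubular invariance outside the product setting; without Assumption \ref{AssumpKerProduct}, the coordinate groups $\xi_\mu$ and $\xi_{\mu'}$ may share coordinates, so the naive product fails to have the needed invariance along the full hyperplanes, and one must build $\hat{\phi}$ by patching together $D_\mu$-invariant local pieces using a partition of unity adapted to dilation-invariant tubular neighborhoods. Once $\hat{\phi}$ is in hand, everything else is a mechanical consequence of the telescoping identity and dominated convergence.
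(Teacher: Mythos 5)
Your proposal follows the same route as the paper's proof: decompose on the Fourier side using the iterated difference operator $\Delta_E\hat\phi$, note the finite family gives boundedness for free, verify $\vsig_j\in\schS_{E_j}$ via Lemma \ref{LemmaKerFTOfsS}, and get distributional convergence by dominated convergence. You have also put your finger on a real subtlety that the paper glosses over: for a generic $\phih\in C_0^\infty(\R^N)$ that is merely $1$ near the origin, $\Delta_\mu\phih$ does \emph{not} vanish to infinite order on the full hyperplane $\{\xi_\mu=0\}$ (the first $\xi_\mu$-derivative at $\xi_\mu=0$ is $(1-2^{\eh_k^\mu})\,\partial_{\xi_\mu^k}\phih(0,\xi_\mu^\perp)$, which need not vanish once $\xi_\mu^\perp$ leaves the region where $\phih$ is flat), so the dilation-invariance near each hyperplane is genuinely needed.

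Where you go astray is in declaring the needed $\phih$ ``the hard part'' requiring a patching construction. The obstacle you describe arises because you try to build $\hat\phi$ as a product over the $\nu$ coordinate \emph{groups} $\xi_\mu$, which indeed is ill-posed when the groups overlap. The clean fix is to take the product over the $N$ \emph{original} coordinates: choose one-variable cutoffs $\hat\phi_k\in C_0^\infty(\R)$ with $\hat\phi_k\equiv 1$ near $0$, and set $\hat\phi(\xi)=\prod_{k=1}^N\hat\phi_k(\xi_k)$. Then for any $\mu$ and any $p\in\N^\nu$, the factors with $e_k^\mu=0$ are untouched by $2^{e_\mu}$, while the factors with $e_k^\mu\ne 0$ equal $1$ at both $2^{p\cdot e_k}\xi_k$ and $2^{(p+e_\mu)\cdot e_k}\xi_k$ whenever all coordinates of $\xi_\mu$ are sufficiently small; hence $\Delta_\mu\hat\phi$, and therefore $\Delta_E\hat\phi$, vanishes identically on a dilation-invariant tube around $\{\xi_\mu=0\}$, with no assumption on how the groups overlap and no partition of unity. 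Once this is observed there is no hard step remaining, and the rest of your argument (telescoping identity, Lemma \ref{LemmaKerFTOfsS}, dominated convergence) is exactly right.
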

\begin{proof}
We decompose $\delta_0$ on the Fourier transform side.  Indeed, let $\phih\in C_0^\infty(\R^N)$ equal $1$ on a neighborhood of $0$.
Define, for $j\in \N^\nu$,
\begin{equation*}
\psih_j(\xi):=\sum_{\substack{ (p_1,\ldots, p_\nu)\in \{0,1\}^\nu \\ j_\mu \geq p_\mu }} (-1)^{p_1+\cdots+p_\nu} \phih(2^p \xi).
\end{equation*}
Notice, $\sum_{j\in \N^{\nu}} \psih_j(2^{-j}\xi)=1$, in the sense of tempered distributions.  Also notice
$\{\psih_j : j\in \N^\nu\}\subset \schS(\R^N)$ is a finite set and therefore bounded.
Letting $\psi_j$ denote the inverse Fourier transform of $\psih_j$, we have
$\sum_{j\in \N^{\nu}} \dil{\psi_j}{2^j}=\delta_0$ and $\{\psi_j : j\in \N^{\nu}\}\subset \schS(\R^N)$ is a bounded set (indeed, it is a finite set).
Finally, notice that, if $j_\mu>0$, $\psih_j(\xi)$ vanishes to infinite order at $0$ in the $\xi_\mu$ variable (it is identically zero on a neighborhood of
$0$ in the $\xi_\mu$ variable).
Lemma \ref{LemmaKerFTOfsS} shows $\psi_j\in \schS_{\{\mu : j_\mu\ne 0\}}$ and completes the proof.
\end{proof}

\begin{proof}[Proof of Lemma \ref{LemmaResKerDelta}]
Let $\eta\in C_0^\infty(B^N(a))$, with $\eta\equiv 1$ on a neighborhood of $0$.  Take $\vsig_j$ as in Lemma \ref{LemmaResKerBasicDelta},
so that $\delta_0 = \sum_{j\in \N^{\nu}} \dil{\vsig_j}{2^j}$.  Thus, $\delta_0 = \eta \delta_0 =\eta \sum_{j\in \N^{\nu}} \dil{\vsig_j}{2^j}$, which proves
the result for $\alpha=0$.  For $|\alpha|>0$, note that
if $\beta_1\ne 0$, $(\partial_{t}^{\beta_1} \eta) \partial_t^{\beta_2} \sum_{j\in \N^{\nu}} \dil{\vsig_j}{2^j} = (\partial_t^{\beta_1} \eta) \partial_t^{\beta_2} \delta_0=0$,
because $\partial_t^{\beta_1} \eta\equiv 0$ on a neighborhood of $0$.
Thus,
$$\partial_t^{\alpha} \delta_0 = \partial_t^{\alpha} \q(\eta \sum_{j\in \N^{\nu}} \dil{\vsig_j}{2^j}\w) = \eta \sum_{j\in \N^{\nu}} \partial_t^{\alpha} \dil{\vsig_j}{2^j}
= \eta \sum_{j\in \N^{\nu}} 2^{j\cdot \deg(\alpha)} \dil{(\partial_t^{\alpha} \vsig_j)}{2^j},
$$
which completes the proof.
\end{proof}

In later sections, we will need some decomposition results about functions in $\schS_E$.  We record them here.

\begin{lemma}\label{LemmaPfKerDecompsS}
Fix $a>0$, and let $\sB_1\subset \schS(\R^N)$ and $\sB_2\subset C_0^{\infty}(B^N(a))$ be bounded sets.  Let $j\in [0,\infty)^{\nu}$,
$\vsig\in \sB_1$, and $\eta\in \sB_2$.  Then, there exist $\{\gamma_k : k\in \N^{\nu}, k\leq j\}\subset C_0^{\infty}(B^N(a))$,
such that\footnote{We have written $k\leq j$ to denote the inequality holds coordinatewise.}
\begin{equation*}
\eta(t) \dil{\vsig}{2^j}(t) = \sum_{\substack{k\leq j \\ k\in \N^\nu}} \eta(t) \dil{\gamma_k}{2^k}(t).
\end{equation*}
Furthermore, for every $M\in \N$, the set
\begin{equation*}
\q\{ 2^{M|j-k|} \gamma_k : j\in [0,\infty)^\nu, k\leq j, k\in \N^\nu, \vsig\in \sB_1, \eta\in \sB_2 \w\}\subset C_0^{\infty}(B^N(a))
\end{equation*}
is bounded.
\end{lemma}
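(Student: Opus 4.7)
The plan is to decompose $1$ on a neighborhood of $\supp\eta$ into a multi-parameter non-isotropic dyadic partition of unity indexed by $k\in\N^\nu$ with $k\leq j$, and then define
\[
\gamma_k(u)\;:=\;2^{(j-k)\cdot(e_1+\cdots+e_N)}\,\vsig(2^{j-k}\cdot u)\,\phi_k(u),
\]
where $\phi_k$ comes from the partition of unity via the substitution $u=2^k\cdot t$. A direct change of variables gives $\phi_k(2^k\cdot t)\,\dil{\vsig}{2^j}(t)=\dil{\gamma_k}{2^k}(t)$, so that once the partition of unity is in hand, summing in $k$ and multiplying by $\eta$ produces the identity in the lemma.

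First I would build the required partition of unity. Fix $\zeta\in C_0^\infty(\R^N)$ with $\zeta\equiv 1$ on a neighborhood of $0$ and $\supp\zeta$ in a small ball chosen so that the rescaled pieces will fit in $B^N(a)$. Using iterated single-parameter telescoping in each of the $\nu$ parameters (and exploiting that for each $\mu\in\nuset$ there is some $i$ with $e_i^\mu\geq h_0>0$, where $h_0$ is as in \eqref{EqnKerPutOnLapDilDefnh0}), I construct a family $\{\phi_k\}_{k\in\N^\nu}\subset C_0^\infty(B^N(a))$, uniformly bounded in $C^\infty$, such that
\[
\sum_{k\in\N^\nu,\,k\leq j}\phi_k(2^k\cdot t)\;\equiv\;1 \qquad\text{on a neighborhood of }\supp\eta.
\]
The "innermost" boundary term $\phi_{\lfloor j\rfloor}$ (supported near the origin) absorbs the infinite tail of scales beyond $\lfloor j\rfloor$; for $k<\lfloor j\rfloor$, $\phi_k$ is supported on a fixed compact annular region bounded away from $0$ in the non-isotropic sense.

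Second, the support condition $\gamma_k\in C_0^\infty(B^N(a))$ is immediate from $\supp\phi_k\subset B^N(a)$. For the uniform $C^L$ estimate, on $\supp\phi_k$ with $k<\lfloor j\rfloor$ the non-isotropic size of $u$ is bounded below by a uniform positive constant, so that the argument $2^{j-k}\cdot u$ has non-isotropic size $\gtrsim 2^{h_0|j-k|}$. The Schwartz decay of $\vsig\in\sB_1$ therefore yields
\[
\bigl|\partial_u^\alpha\bigl[\vsig(2^{j-k}\cdot u)\bigr]\bigr|\;\lesssim_{L'}\;2^{L\cdot\max_i e_i\cdot|j-k|}\cdot 2^{-L'|j-k|h_0}
\]
uniformly for $\vsig\in\sB_1$, and combining with the prefactor $2^{(j-k)\cdot(e_1+\cdots+e_N)}\lesssim 2^{C|j-k|}$ and choosing $L'$ sufficiently large gives $\|2^{M|j-k|}\gamma_k\|_{C^L}\lesssim 1$. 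The boundary case $k=\lfloor j\rfloor$ satisfies $|j-k|<1$, so the bound is trivial there.

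The main obstacle is the construction of the partition of unity in the general multi-parameter setting, since the lemma does not assume the product structure of Assumption \ref{AssumpKerProduct}: one must arrange the telescoping of $\zeta$ in each of the $\nu$ parameters so that the combined partition (i) sums to $1$ on a neighborhood of $\supp\eta$, (ii) has each piece $\phi_k$ supported in $B^N(a)$ after rescaling by $2^{-k}$, and (iii) concentrates the unavoidable infinite tail $m>\lfloor j\rfloor$ entirely inside the innermost term $\phi_{\lfloor j\rfloor}$ without disturbing the uniform bound. Once this is set up, the remaining work is routine manipulation with Schwartz-function decay.
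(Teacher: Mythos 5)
Your approach is the same as the paper's: the paper fixes $\eta'\in C_0^\infty(B^N(a))$ with $\eta'\equiv1$ on a neighborhood of the supports of $\sB_2$, defines $\delta_k(t)=\sum_{p\in\{0,1\}^\nu,\,k+p\leq j}(-1)^{p_1+\cdots+p_\nu}\eta'(2^pt)$ and $\gamma_k=\delta_k\cdot\dil{\vsig}{2^{j-k}}$, and uses that $\delta_k(t)=0$ for $|t_\mu|$ small whenever $k_\mu\leq j_\mu-1$, combined with Schwartz decay of $\vsig$ in the single direction $\mu$ achieving $j_\mu-k_\mu=|j-k|_\infty$. Two details to tighten in your write-up: the cutoff $\zeta$ in your telescoping must be chosen so that the finite differences vanish identically for $|s_\mu|$ small---a product of one-dimensional bumps works, whereas a generic $\zeta\equiv 1$ near $0$ does not---and for a mixed-boundary index $k$ (some $k_\mu=\lfloor j_\mu\rfloor$, others $k_{\mu'}\leq j_{\mu'}-1$) the piece is not supported away from the origin in the full non-isotropic sense, so what one actually obtains is the decay $2^{-L'h_0|j-k|_\infty}$ from the single maximizing direction rather than $2^{-L'h_0|j-k|}$, which is still sufficient after enlarging $L'$.
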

\begin{proof}
Let $\eta'\in C_0^{\infty}(B^N(a))$ equal $1$ on a neighborhood of  $$\text{the closure of }\bigcup_{\eta\in \sB_2} \supp{\eta}.$$
For $k\in \N^{\nu}$ with $k\leq j$, let
\begin{equation*}
\delta_k(t) := \sum_{\substack{ p \in \{0,1\}^\nu \\ k+p\leq j }} (-1)^{p_1+\cdots+p_\nu} \eta'(2^{p}t),
\end{equation*}
so that $\eta'(t) = \sum_{0\leq k\leq j} \delta_k(2^{k}t).$  Note that $\delta_k(t) =0$ if $k_\mu\leq j_\mu-1$ and $|t_\mu|$ is sufficiently small (independent of $j,k$).
Define, for $k\in \N^{\nu}$, $k\leq j$,
\begin{equation*}
\gamma_k(t) = \delta_k(t) \dil{\vsig}{2^{j-k}}(t).
\end{equation*}
If $|j-k|_{\infty}< 1$, it is easy to see that $\| \gamma_k\|_{C^r}\lesssim 1$, for every $r$.  Suppose $|j-k|_{\infty}\geq 1$.  Take $\mu$ so that $|j-k|_{\infty}=j_{\mu}-k_{\mu}$.
Because $\delta_k(t)$ is $0$ for $|t_\mu|$ sufficiently small (independent of $j,k$), and by the Schwartz bounds on $\vsig$, we have for any $\alpha$ and $L$,
\begin{equation*}
\q|\partial_t^{\alpha} \gamma_k(t)\w| \lesssim \chi_{\{|t_\mu|\approx 1\}}  (1+|2^{j_\mu-k_\mu} t_\mu|)^{-L}\lesssim 2^{-h_0|j-k|_{\infty}L},
\end{equation*}
where $h_0$ is as in \eqref{EqnKerPutOnLapDilDefnh0}.
Taking $L=L(M)$ sufficiently large shows that for every $M$,
\begin{equation*}
\q\{ 2^{M|j-k|} \gamma_k : j\in [0,\infty)^\nu, k\leq j, k\in \N^\nu, \vsig\in \sB_1, \eta\in \sB_2 \w\}\subset C_0^{\infty}(B^N(a))
\end{equation*}
is bounded.
Since $\eta \dil{\vsig}{2^j} = \eta \eta' \dil{\vsig}{2^j} = \sum_{k\leq j} \eta \dil{\gamma_k}{2^k}$, the result follows.
\end{proof}

In what follows, $\alpha_\mu\in \N^{N_\mu}$ will denote a multi-index in the $t_\mu$ variable.  Let $\Na=N_1+\cdots+N_{\nu}$.
$\alphaa=(\alpha_1,\ldots, \alpha_\nu)\in \N^{N_1}\times \cdots\times \N^{N_\nu}=\N^{\Na}$.  We write
$\partial_t^{\alphaa}=\partial_{t_1}^{\alpha_1}\cdots \partial_{t_\nu}^{\alpha_\nu}$.  This differs from multi-index notation, because
the $t_\mu$ variables may overlap and, therefore, $\Na$ may be greater than $N$.
For $j\in [0,\infty]^{\nu}$, we write $2^{-j} \partial_{t_\mu}= (2^{-j\cdot \eh_1^{\mu}} \partial_{t_\mu^1} ,\ldots, 2^{-j\cdot \eh_{N_\mu}^\mu} \partial_{t_\mu^{N_\mu}})$,
and $2^{-j} \partial_t^{\alphaa} = (2^{-j} \partial_{t_1})^{\alpha_1} \cdots (2^{-j} \partial_{t_\nu})^{\alpha_\nu}$.  In particular, this is defined so that
$(2^{-j} \partial_t^{\alphaa}) \dil{\vsig}{2^j} = \dil{(\partial_t^{\alphaa}\vsig)}{2^j}$.


\begin{prop}\label{PropPfKerDecompVsig}
Let $a>0$, $M\in \N$ and let $\sB_1\subset \schS(\R^N)$ and $\sB_2\subset C_0^{\infty}(B^N(a))$ be bounded sets.
Let $j\in [0,\infty)^{\nu}$, $\vsig\in \sB_1\cap \schS_{\{\mu : j_\mu\ne 0\}}$, and $\eta\in \sB_2$.  There exists
\begin{equation*}
\begin{split}
&\q\{\gamma_{k,\alphaa} : k\leq j, k\in \N^{\nu}, |\alpha_\mu|=M\text{ when }k_\mu\ne 0, |\alpha_\mu|=0\text{ when }k_\mu=0 \w\}
\\&\subset C_0^\infty(B^N(a))
\end{split}
\end{equation*}
such that if we set
\begin{equation*}
\vsig_k:=\sum_{\substack{\alphaa\in \N^{\Na} \\ |\alpha_\mu|=M\text{ when }k_\mu \ne 0 \\ |\alpha_\mu|=0\text{ when }k_\mu=0 }} \partial_t^{\alphaa} \gamma_{k,\alphaa},
\end{equation*}
we have
\begin{equation*}
\eta(t) \dil{\vsig}{2^j}(t) = \eta(t)\sum_{\substack{ k\leq j \\ k\in \N^{\nu}}} \dil{\vsig_k}{2^k}(t).
\end{equation*}
Furthermore, for every $L\in \N$, the following set is bounded:
\begin{equation*}
\begin{split}
\big\{
2^{L|j-k|} \gamma_{k,\alphaa} :& j\in [0,\infty)^{\nu}, k\leq j, k\in \N^{\nu}, \vsig\in \sB_1\cap \schS_{\{\mu : j_\mu\ne 0\}},\eta\in \sB_2,
\\&
|\alpha_\mu|=M\text{ when }k_\mu\ne 0,
|\alpha_\mu|=0 \text{ when }k_\mu=0
\big\}
\subset C_0^{\infty}(B^N(a)).
\end{split}
\end{equation*}
\end{prop}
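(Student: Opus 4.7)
The plan combines the spatial-scale decomposition of Lemma \ref{LemmaPfKerDecompsS} with a Taylor-type representation of Schwartz functions with cancellation as iterated derivatives. The hypothesis $\vsig \in \schS_{\{\mu:j_\mu\ne 0\}}$ is precisely what enables the derivative structure on $\vsig_k$ absent from Lemma \ref{LemmaPfKerDecompsS}.

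As a preliminary step, I would prove the following representation lemma: for any $S\subseteq \nuset$ and $M\in \N$, every $\psi\in \schS_S$ admits a representation $\psi = \sum_{\alphaa\in \sA(S,M)} \partial_t^{\alphaa}\psi_{\alphaa}$, where $\sA(S,M):=\{\alphaa\in \N^{\Na}:|\alpha_\mu|=M \text{ for }\mu\in S,\ |\alpha_\mu|=0 \text{ otherwise}\}$, each $\psi_{\alphaa}\in \schS(\R^N)$, and $\psi\mapsto \{\psi_{\alphaa}\}$ sends bounded subsets of $\schS_S$ to bounded subsets of $\prod_{\alphaa}\schS(\R^N)$. This follows from iterated Taylor expansion on the Fourier side (one $\mu\in S$ at a time), using Lemma \ref{LemmaKerFTOfsS} to see that $\hat\psi$ vanishes to infinite order at $\xi_\mu=0$ for $\mu\in S$; the seminorm bounds transfer through each Taylor remainder.

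I would then apply this representation to $\vsig$ separately for each $k\leq j$ in $\N^{\nu}$, with $S=S(k):=\{\mu:k_\mu\ne 0\}$ (permissible since $S(k)\subseteq \{\mu:j_\mu\ne 0\}$), yielding $\vsig = \sum_{\alphaa\in \sA(S(k),M)}\partial_t^{\alphaa}\phi_{k,\alphaa}$. The partition identity implicit in the proof of Lemma \ref{LemmaPfKerDecompsS} reads $\eta(t)\dil{\vsig}{2^j}(t)=\eta(t)\sum_{k\leq j}\delta_k(2^kt)\dil{\vsig}{2^j}(t)$, where $\delta_k$ is the telescoping cutoff constructed there. Substituting the representation of $\vsig$, using the dilation identity $\dil{\partial^{\alphaa}\phi}{2^j}=2^{-j\cdot\deg(\alphaa)}\partial_t^{\alphaa}\dil{\phi}{2^j}$ and the rescaling $\delta_k(2^kt)\dil{\phi}{2^j}(t)=\dil{(\delta_k\cdot \dil{\phi}{2^{j-k}})}{2^k}(t)$, and pulling $\partial_t^{\alphaa}$ out through $\delta_k(2^kt)$ via Leibniz, the ``main'' contribution is
\begin{equation*}
\dil{\partial_t^{\alphaa}\gamma_{k,\alphaa}^{\sharp}}{2^k}(t), \qquad \gamma_{k,\alphaa}^{\sharp}:=2^{-(j-k)\cdot\deg(\alphaa)}\,\delta_k\cdot \dil{\phi_{k,\alphaa}}{2^{j-k}}\in C_0^{\infty}(B^N(a)).
\end{equation*}
The decay $2^{L|j-k|}$ of $\gamma_{k,\alphaa}^{\sharp}$ (and its derivatives) then comes from combining the explicit prefactor $2^{-(j-k)\cdot\deg(\alphaa)}$ with the Schwartz-tail estimate on $\supp(\delta_k)$, exactly as in the proof of Lemma \ref{LemmaPfKerDecompsS}: on that support either $|j-k|$ is bounded or some $|t_\mu|\gtrsim 1$, whence $\dil{\phi_{k,\alphaa}}{2^{j-k}}(t)$ satisfies $O(2^{-Lh_0 |j-k|_{\infty}})$ bounds for every $L$.

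The main obstacle will be the Leibniz error terms — those of the form $2^{-j\cdot\deg(\alphaa)}\binom{\alphaa}{\beta}\,\partial_t^{\beta}[\delta_k(2^kt)]\cdot \partial_t^{\alphaa-\beta}\dil{\phi_{k,\alphaa}}{2^j}(t)$ with $0<\beta\leq \alphaa$ — because they have differential order $|\alphaa-\beta|<|\alphaa|$ and so do not individually fit the constraint $\sA(S(k),M)$. I would absorb them by redistribution across the $k$-sum: an error at index $k$ with residual order $\alphaa-\beta$ can be reassigned to a nearby index $k'$ where $\sA(S(k'),M)$ is suitably weaker, using the extra factor $2^{-(j-k)\cdot\deg(\beta)}$ and the restricted support of $\partial^{\beta}\delta_k$ (inherited from Lemma \ref{LemmaPfKerDecompsS}) to supply whatever additional $|j-k|$-decay is needed. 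Uniformity over $j$, $k\leq j$, $\vsig\in \sB_1\cap \schS_{\{\mu:j_\mu\ne 0\}}$, and $\eta\in \sB_2$ is then inherited from the corresponding uniformities in the representation lemma and Lemma \ref{LemmaPfKerDecompsS}.
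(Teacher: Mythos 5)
Your proposal assembles the right ingredients—a representation of $\schS_S$ as $\sum_{\alphaa}\partial_t^{\alphaa}\psi_{\alphaa}$ (essentially equivalent to the paper's Lemma~\ref{LemmaKersSEPullOut}), the telescoping decomposition of Lemma~\ref{LemmaPfKerDecompsS}, and the dilation identities—but you apply them in an order that creates a real difficulty, and the step you flag as "the main obstacle" is, in fact, a genuine gap that your redistribution plan does not close.

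The paper's proof inverts your order of operations, and this inversion is precisely what avoids the Leibniz problem. It first writes $\vsig=\prod_{\mu\in E_0}\lap_{\mu}^M\vsigt$ with $\vsigt$ bounded in $\schS(\R^N)$ (one global representation, not one per $k$), \emph{then} applies Lemma~\ref{LemmaPfKerDecompsS} to $\eta'\dil{\vsigt}{2^j}=\sum_k\eta'\dil{\gammat_k}{2^k}$, and only then applies the differential operator $\prod(2^{-j}\cdot\lap_\mu)^M$ to the whole sum. Because the scale-$k$ cutoff has already been absorbed into $\gammat_k$, the operator $(2^{-j}\partial_t)^{\alphaa}$ commutes cleanly through the dilation via $(2^{-j}\partial_t)^{\alphaa}\dil{\gammat_k}{2^k}=\dil{2^{(k-j)\cdot\deg(\alphaa)}\partial_t^{\alphaa}\gammat_k}{2^k}$, with no product rule to invoke; the only cutoff that sits "outside" is $\eta'$, which equals $1$ on a neighborhood of $\supp\eta$ and is therefore invisible after the final multiplication by $\eta$. (One residual mismatch—the paper's construction gives $|\alpha_\mu|=2M$ for all $\mu$ with $j_\mu\ne 0$, not only those with $k_\mu\ne 0$—is dispatched by absorbing the surplus derivatives back into $\gamma_{k,\alphaa}\in C_0^\infty(B^N(a))$, which costs nothing.)

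Your approach decomposes first, so the cutoff $\delta_k(2^kt)$ sits outside the dilation when you try to push $\partial_t^{\alphaa}$ through. The Leibniz error at index $k$ has the shape $2^{-(j-k)\cdot\deg(\beta)}(\partial^{\beta}\delta_k)(2^kt)\cdot\dil{\partial^{\alphaa-\beta}\phi_{k,\alphaa}}{2^j}(t)$, and "reassigning it to a nearby index $k'$" doesn't close the gap for two reasons. First, the error is anchored at scale $k$ through $(\partial^{\beta}\delta_k)(2^kt)$; rescaling it to $\dil{(\cdot)}{2^{k'}}$ for $k'\ne k$ does not change the fact that the inner function lacks the differential structure the proposition demands, and for $k'$ small the $2^{L|j-k'|}$ bound fails outright because $\dil{\partial^{\alphaa-\beta}\phi}{2^j}$ grows like $2^{j(e_1+\cdots+e_N)}$. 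Second, the constraint sets $\sA(S(k'),M)$ are not nested in $k'$—they are \emph{different} (requiring $|\alpha_\mu|=M$ on $S(k')$ and $|\alpha_\mu|=0$ off it), so an $\alphaa-\beta$ that violates the constraint at $k$ does not land inside the constraint at any $k'$. There is no evident induction or recursion that terminates. To repair this along your lines you would essentially have to undo the premature decomposition, at which point you are reproducing the paper's order of operations anyway.
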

\begin{proof}
Let $j\in [0,\infty)^{\nu}$, $\vsig\in \sB_1\cap \schS_{\{\mu : j_\mu\ne 0\}}$, and $\eta\in \sB_2$.  We prove the result for $M$ replaced by $2M$ (because the result
for $M$ follows from the result for $2M$, this is sufficient).
Define $E_0=\{\mu : j_\mu\ne 0\}$.  Because $\vsig\in \schS_{E_0}$, Lemma \ref{LemmaKersSEPullOut} shows
\begin{equation*}
\vsig = \q[\prod_{\mu\in E_0} \lap_\mu^M\w]\vsigt,
\end{equation*}
where $\{\vsigt : j\in[0,\infty)^\nu, \vsig\in \sB_1\cap \schS_{\{\mu : j_\mu\ne 0\}}\}\subset \schS(\R^N)$ is a bounded set.
Let $\eta'\in C_0^{\infty}(B^N(a))$ be such that $\eta'$ equals $1$ on a neighborhood of the closure of $\bigcup_{\eta\in \sB_2} \supp{\eta}$.
We apply Lemma \ref{LemmaPfKerDecompsS} to $\eta' \dil{\vsigt}{2^j}$ to write
\begin{equation*}
\eta'(t) \dil{\vsigt}{2^j}(t) = \sum_{\substack{k\leq j \\ k\in \N^\nu}} \eta'(t) \dil{\gammat_k}{2^k}(t),
\end{equation*}
where for every $L$,
\begin{equation}\label{EqnPfKerBoundGammat}
\q\{ 2^{L|j-k|} \gammat_k : j\in [0,\infty)^\nu, k\leq j, k\in \N^\nu, \vsig\in \sB_1\cap \schS_{\{\mu : j_\mu\ne 0\}} \w\}\subset C_0^\infty(B^N(a))
\end{equation}
is a bounded set.
Consider, using \eqref{EqnKerPullOutLapDil},
\begin{equation*}
\begin{split}
&\eta\dil{\vsig}{2^j} = \eta \q[\prod_{\mu\in E_0} (2^{-j} \cdot \lap_\mu)^M \w] \dil{\vsigt}{2^j} = \eta \q[\prod_{\mu\in E_0} (2^{-j_\mu} \cdot\lap_\mu)^M \w]  \eta' \dil{\vsigt}{2^j}
\\& = \sum_{\substack{k\leq j \\ k\in \N^{\nu}}} \eta \q[\prod_{\mu \in E_0} (2^{-j}\cdot \lap_\mu)^M \w] \eta' \dil{\gammat_k}{2^k}
= \sum_{\substack{k\leq j \\ k\in \N^{\nu}}} \eta \q[\prod_{\mu \in E_0} (2^{-j}\cdot \lap_\mu)^M \w]  \dil{\gammat_k}{2^k}
\end{split}
\end{equation*}
We expand $\prod_{\mu\in E_0} \lap_\mu^M$ as
\begin{equation*}
\prod_{\mu\in E_0} \lap_\mu^M = \sum_{\substack{\alphaa\in \N^{\Na} \\ |\alpha_\mu|=2M\text{ when }\mu\in E_0 \\ |\alpha_\mu|=0\text{ when }\mu\not \in E_0 }} c_{\alphaa} \partial_t^{\alphaa},
\end{equation*}
where $c_{\alphaa}$ is a constant depending on $\alphaa$.  Consider,
\begin{equation*}
(2^{-j} \partial_t)^{\alphaa} \dil{\gammat_k}{2^k} = \dil{( 2^{(k-j)\cdot e_{\alphaa}} \partial_t^{\alphaa} \gammat_k )}{2^k},
\end{equation*}
where $e_{\alphaa}$ is a vector depending on $\alphaa$.  Setting $\gamma_{k,\alphaa} = 2^{(k-j)\cdot e_{\alphaa}} c_{\alphaa} \gammat_k$
and using \eqref{EqnPfKerBoundGammat} completes the proof.
\end{proof}

For the next lemma, we move to the single-parameter case.  Thus, we assume we have single-parameter dilations $e_1,\ldots, e_N\in (0,\infty)$
on $\R^N$.  When we write $\dil{\vsig}{2^j}(t)$ for $\vsig:\R^N\rightarrow \C$ and $j\in \R$, we are using these dilations in the definition.
\begin{lemma}\label{LemmaPfKerDecompSingleParam}
Fix $\delta\in \R$.  There exists $M=M(\delta)\in \N$ such that the following holds.  For every bounded set $\sB\subset \schS(\R^N)$ and $j\in [0,\infty)$
if $\vsig_0\in \sB$ and $\vsig=\partial_t^{\alpha} \vsig_0$ with $|\alpha|=M$, we may write
\begin{equation*}
2^{j \delta} \dil{\vsig}{2^j} = \sum_{\substack{k\leq j \\k\in \N}} 2^{k\delta} \dil{\vsig_{k,j}}{2^k}
\end{equation*}
where $\vsig_{k,j}\in \schS_0(\R^N)$ for $k>0$ and
\begin{equation*}
\q\{ \vsig_{k,j} : j\in [0,\infty), k\in \N, k\leq j, \vsig_0\in \sB, |\alpha|=M\w\}\subset \schS(\R^N)
\end{equation*}
is a bounded set.
\end{lemma}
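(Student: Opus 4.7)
The plan is to construct the $\vsig_{k,j}$ via a Littlewood--Paley-type resolution of the identity on the Fourier side, using the derivative structure $\vsig=\partial_t^{\alpha}\vsig_0$ to absorb the growth factor $2^{j\delta}$.

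First I fix $\hat{\phi}\in C_0^{\infty}(\R^N)$ with $\hat{\phi}\equiv 1$ on a neighborhood of the origin, and set $\hat{\Psi}(\eta):=\hat{\phi}(\eta)-\hat{\phi}(2\eta)$, which is compactly supported and vanishes to infinite order at $0$. With $n:=\lfloor j\rfloor$ and the telescoping identity $\sum_{k=1}^{n}\hat{\Psi}(2^{-k}\xi)=\hat{\phi}(2^{-n}\xi)-\hat{\phi}(\xi)$, I get the decomposition
\begin{equation*}
1\;=\;\hat{\phi}(\xi)\;+\;\sum_{k=1}^{n-1}\hat{\Psi}(2^{-k}\xi)\;+\;\bigl[1-\hat{\phi}(2^{-n+1}\xi)\bigr],
\end{equation*}
where the last bracket absorbs both $\hat{\Psi}(2^{-n}\xi)$ and the high-frequency remainder $1-\hat{\phi}(2^{-n}\xi)$. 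Multiplying by $2^{j\delta}\hat{\vsig}(2^{-j}\xi)$, I read off the definitions on the Fourier side: $\hat{\vsig}_{0,j}(\xi):=2^{j\delta}\hat{\phi}(\xi)\hat{\vsig}(2^{-j}\xi)$, for $1\le k\le n-1$ I set $\hat{\vsig}_{k,j}(\eta):=2^{(j-k)\delta}\hat{\Psi}(\eta)\hat{\vsig}(2^{k-j}\eta)$, and $\hat{\vsig}_{n,j}(\eta):=2^{(j-n)\delta}[1-\hat{\phi}(2\eta)]\hat{\vsig}(2^{n-j}\eta)$. The case $n=0$ (i.e.\ $j\in[0,1)$) is handled trivially by $\hat{\vsig}_{0,j}(\xi):=2^{j\delta}\hat{\vsig}(2^{-j}\xi)$.

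Next I choose $M=M(\delta)\in\N$ so that $h_0 M\ge\delta$, where $h_0:=\min_i e_i>0$. For any $\alpha$ with $|\alpha|=M$ this forces $\deg(\alpha)=\sum \alpha_i e_i\ge h_0 M\ge\delta$, hence $\delta-\deg(\alpha)\le 0$. Using $\hat{\vsig}(\xi)=i^{M}\xi^{\alpha}\hat{\vsig}_0(\xi)$ gives $\hat{\vsig}(2^{c}\eta)=i^{M}2^{c\deg(\alpha)}\eta^{\alpha}\hat{\vsig}_0(2^{c}\eta)$, so each of the formulas above rewrites as
\begin{equation*}
\hat{\vsig}_{k,j}(\eta)\;=\;i^{M}\,2^{(j-k)(\delta-\deg(\alpha))}\,\bigl[\text{bump factor}\bigr]\,\eta^{\alpha}\,\hat{\vsig}_0(2^{k-j}\eta),
\end{equation*}
with the scalar factor now uniformly bounded in $j,k$ because $j-k\ge 0$.

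Finally I verify the required properties. For $k=0$ the bump factor is $\hat{\phi}(\xi)$, compactly supported, and $\hat{\vsig}_{0,j}$ is smooth with uniformly compact support, so $\vsig_{0,j}$ is uniformly Schwartz (no vanishing moments needed). For $1\le k\le n-1$, the bump factor is $\hat{\Psi}(\eta)$, which is compactly supported and vanishes to infinite order at $0$, so $\hat{\vsig}_{k,j}$ is uniformly bounded in $C_0^{\infty}$ with the same property; taking Fourier inverse shows $\vsig_{k,j}\in\schS_0$ with uniform Schwartz bounds. For $k=n$ the factor $1-\hat{\phi}(2\eta)$ is no longer compactly supported; this is the main technical point. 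However $2^{n-j}\in(1/2,1]$, so $\hat{\vsig}_0(2^{n-j}\eta)$ is Schwartz in $\eta$ uniformly, and multiplying by the bounded function $1-\hat{\phi}(2\eta)$ (which vanishes on a neighborhood of $\eta=0$) and the polynomial $\eta^{\alpha}$ keeps the product Schwartz with uniform seminorm bounds and vanishing moments. Combining the three ranges of $k$ gives the required bounded family $\{\vsig_{k,j}\}\subset\schS(\R^N)$ with $\vsig_{k,j}\in\schS_0$ for $k>0$, completing the proof. The main obstacle is exactly the handling of the tail at $k=n$, where one must verify uniform Schwartz bounds despite losing compact Fourier support; the Schwartz decay of $\hat{\vsig}_0$ evaluated at the bounded scale $2^{n-j}$ resolves this.
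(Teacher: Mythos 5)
Your proof is correct and follows essentially the same strategy as the paper: a Fourier-side Littlewood--Paley resolution of $\widehat{\dil{\vsig}{2^j}}$ adapted to the dilations, with the non-compactly-supported high-frequency remainder placed at the top scale $k=\lfloor j\rfloor$ and the derivative structure $\vsig=\partial_t^{\alpha}\vsig_0$ used to trade the growth factor $2^{j\delta}$ for the uniformly bounded factor $2^{(j-k)(\delta-\deg(\alpha))}$. The only (cosmetic) difference is that you roll the top dyadic piece $\hat{\Psi}(2^{-n}\xi)$ into the high-frequency remainder so that each scale appears exactly once, whereas the paper first splits off $1-\phih(2^{-j'}\xi)$ and then telescopes $\phih(2^{-j'}\xi)$, producing two contributions at scale $j'$; this changes nothing substantive.
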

\begin{proof}
Let $\phih\in C_0^\infty(\R^n)$ equal $1$ on a neighborhood of $0\in \R^n$, and set $\psih(\xi) = \phih(\xi)-\phih(2\xi)$ (here $2\xi$ is defined using
the given single-parameter dilations on $\R^n$).  Let $j'\in \N$ be the largest integer $\leq j$.  Consider, where $\hat{f}$ denotes the Fourier transform of $f$,
\begin{equation*}
\widehat{\dil{\vsig}{2^j} }(\xi) = \q(1-\phih(2^{-j'} \xi)\w)\vsigh(2^{-j}\xi) + \phih(2^{-j'}\xi) \vsigh(2^{-j}\xi).
\end{equation*}
Let
\begin{equation*}
2^{j'\delta} \vsigh_{j',j,1}(2^{-j'}\xi):= 2^{j\delta} \q( 1-\phih(2^{-j'\xi} ) \w)\vsigh(2^{-j}\xi).
\end{equation*}
Let $\vsig_{j',j,1}$ be the inverse Fourier transform of $\vsigh_{j',j,1}$.  Clearly
\begin{equation*}
\{ \vsig_{j',j,1} : j\in [0,\infty), \vsig_0\in \sB, |\alpha|=M\} \subset \schS_0(\R^n)
\end{equation*}
is a bounded set.

Thus, we need only deal with the term $2^{j\delta} \phih(2^{-j'}\xi) \vsigh(2^{-j}\xi)$.
Consider,
\begin{equation*}
\begin{split}
&\phih(2^{-j'}\xi) \vsigh(2^{-j}\xi) = 2^{j\delta} \phih(2^{-j'}\xi)  (2^{-j} \xi)^{\alpha} \vsigh_0(2^{-j}\xi)
\\&=2^{j\delta} \phih(\xi) (2^{-j}\xi)^{\alpha} \vsigh_0(2^{-j}\xi) + \sum_{k=1}^{j'} 2^{j\delta} \psih(2^{-k} \xi) (2^{-j}\xi)^{\alpha} \vsigh_0(2^{-j}\xi)
\\&=2^{j\delta-j\deg(\alpha)} \xi^{\alpha} \phih(\xi) \vsigh_0(2^{-j}\xi)
\\&\quad+ \sum_{k=1}^{j'} 2^{k\delta} 2^{(j-k)\delta +(k-j)\deg(\alpha)} (2^{-k}\xi)^{\alpha} \psih(2^{-k}\xi) \vsigh_0(2^{-j} \xi).
\end{split}
\end{equation*}
By taking $M=M(\delta)$ large, we have $\deg(\alpha)\geq \delta+1$.  From here,
the result follows by
taking the inverse Fourier transform of the above expression.
\end{proof}

\subsection{Proof of Proposition \ref{PropResKerOtherDefn}}\label{SectionPfKerOther}
In this section, we prove Proposition \ref{PropResKerOtherDefn}.  The ideas here are not used elsewhere in the paper.

Fix $\mu\in \nuset$.  
Using the $\mu$ parameter dilations on $t_\mu\in \R^{N_\mu}$ discussed at the start of this section, we obtain single parameter
dilations on $t_\mu$ by
\begin{equation*}
\delta_\mu t_\mu = (1,\ldots,1, \delta_\mu,1,\ldots, 1)t_\mu,
\end{equation*}
where $(1,\ldots, 1, \delta_\mu,1,\ldots, 1)\in [0,\infty)^\nu$ denotes the vector which is $\delta_\mu$ in the $\mu$ component,
and $1$ in all the other components. 
Let $\phih_{\mu}\in C_0^{\infty}(\R^{N_\mu})$ equal $1$ on a neighborhood of $0\in \R^{N_\mu}$, and for $j\in \N^{\nu}$ define
\begin{equation*}
\psih_{j,\mu}(\xi_\mu):=
\begin{cases}
\phih_{\mu}(\xi_{\mu})-\phih_{\mu}(2\xi_{\mu}),&\text{ if }j>0,\\
\phih_{\mu}(\xi_{\mu}),&\text{ if }j=0,
\end{cases}
\end{equation*}
where $2\xi_\mu$ is defined using the single parameter dilations on $\R^{N_{\mu}}$ (and so is {\it not} just standard multiplication).
Note that $\sum_{j\in \N} \psih_{j,\mu}(2^{-j} \xi_{\mu})=1$ in the sense of tempered distributions.
Let $\psi_{j,\mu}$ be the inverse Fourier transform of $\psih_{j,\mu}$, and define $\dil{\psi_{j,\mu}}{2^j}$ ($j\in \N$) in the usual way, i.e. $\dil{\psi_{j,\mu}}{2^j}$ is defined
so that
\begin{equation*}
\int \dil{\psi_{j,\mu}}{2^j}(t_\mu) \eta(t_\mu)\: dt_\mu = \int \psi_{j,\mu}(t_\mu) \eta(2^{-j} t_\mu) \: dt_\mu,
\end{equation*}
and $2^{-j} t_\mu$ is defined by the single parameter dilations.
We have,
\begin{equation}\label{EqnPfKerOtherDecompDeltaz}
\delta_0(t_\mu)=\sum_{j\in \N} \dil{\psi_{j,\mu}}{2^j}(t_\mu),
\end{equation}
where $\delta_0(t_\mu)$ denotes the Dirac $\delta$ function at $0$ in the $t_\mu$ variable.
In the next lemma, we take $\mu=\nu$.

\begin{lemma}\label{LemmaPfKerOtherConvSum}
Let $\delta<0$ and fix $\sE\subseteq \{1,2,\ldots, \nu-1\}$.  Suppose $\sB\subset \schS_{\sE}$ is a bounded set.
For $j\in \N$ let $\{\vsig_l : l\in \N, l\geq j\}\subset \sB$ be bounded.
Define $\vsigt_j$ by
\begin{equation}\label{EqnPfKerOtherToShowLem}
\vsigt_j(t) := \sum_{\substack{l,k\in \N \\ l\wedge k=j}} 2^{(l-j)\delta} \q(\dil{\vsig_l}{2^{(0,0,\ldots,0,l-j) }}*\dil{\psi_{k,\nu}}{2^{k-j}} \w)(t),
\end{equation}
where $*$ denotes convolution in the $t_\nu$ variable.  Then,
\begin{equation*}
\q\{ \vsigt_j : j\in \N, \{\vsig_l : l\geq j\}\subseteq \sB\w\}\subset \schS(\R^N)
\end{equation*}
is a bounded set.
Furthermore, $\vsigt_0\in \schS_\sE$ and $\vsigt_j\in \schS_{\sE\cup \{\nu\}}$ for $j>0$.
\end{lemma}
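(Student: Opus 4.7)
The plan is to split the inner double sum defining $\vsigt_j$ into two regimes according to whether $l=j$ (so $k\geq j$) or $k=j$ (so $l\geq j$), and handle the uniform Schwartz bounds and the two cancellation claims separately. In each regime I would pass to the Fourier side, using $\widehat{\dil{g}{2^a}}(\xi)=\hat{g}(2^{-a}\xi)$ and $\widehat{f *_{t_\nu} g}(\xi)=\hat{f}(\xi)\,\hat{g}(\xi_\nu)$, and exploit that $\psih_{k,\nu}(\xi_\nu)$ either vanishes on a neighborhood of $\xi_\nu=0$ (for $k\geq 1$) or localizes $\xi_\nu$ to a fixed compact set.

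For the uniform boundedness in $\schS(\R^N)$: in the first regime the $k$-th summand has Fourier transform $\vsigh_j(\xi)\,\psih_{k,\nu}(2^{-(k-j)}\xi_\nu)$; for $k>j$ the dilate of $\psih_{k,\nu}=\phih_\nu-\phih_\nu(2\,\cdot)$ is supported in the annulus $|\xi_\nu|\sim 2^{k-j}$, and the Schwartz decay of $\vsigh_j$ on this annulus produces bounds $\lesssim 2^{-M(k-j)}$ for every $M$ in every Schwartz seminorm, uniformly over $\vsig_j\in\sB$. In the second regime the $l$-th summand has Fourier transform $2^{(l-j)\delta}\,\vsigh_l(\tilde\xi)\,\psih_{j,\nu}(\xi_\nu)$ with $\tilde\xi_i=2^{-(l-j)e_i^\nu}\xi_i$. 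Since $\psih_{j,\nu}(\xi_\nu)$ localizes $\xi_\nu$ to a fixed compact set independent of $l$, since $\tilde\xi$ agrees with $\xi$ in coordinates outside $t_\nu$, and since every chain-rule factor produced by differentiating $\vsigh_l(\tilde\xi)$ in $\xi$ is a nonpositive power of $2^{l-j}$, the Schwartz seminorms of $\vsigh_l(\tilde\xi)\,\psih_{j,\nu}(\xi_\nu)$ are bounded uniformly in $l-j$; the geometric prefactor $2^{(l-j)\delta}$ with $\delta<0$ then renders the sum in $l$ absolutely convergent in every seminorm.

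For the cancellation, I would apply Lemma \ref{LemmaKerFTOfsS}. The hypothesis $\vsig_l\in\sB\subset\schS_\sE$ says $\vsigh_l$ vanishes to infinite order at $\xi_\mu=0$ for every $\mu\in\sE$; this vanishing is preserved under the rescaling $\xi\mapsto\tilde\xi$ (which is the identity outside $t_\nu$ and linear otherwise) and under multiplication by any smooth function in $\xi$, so $\vsigt_j\in\schS_\sE$ in both regimes. When $j\geq 1$, every $(l,k)$ in the sum satisfies $\min(l,k)=j\geq 1$, so that either $k\geq 1$ (first regime) or $j\geq 1$ (second regime); in both cases the corresponding Fourier-side factor in $\xi_\nu$ vanishes identically on a whole neighborhood of $\xi_\nu=0$, so $\widehat{\vsigt_j}$ does too, which is stronger than $\vsigt_j\in\schS_{\sE\cup\{\nu\}}$.

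The main obstacle I anticipate is the Schwartz-seminorm bookkeeping in the second regime: one must verify that the weights $(1+|\xi|)^A$ can be absorbed against the Schwartz decay of $\vsigh_l$ on the $\tilde\xi$-scale, even though $\tilde\xi_\nu$ is much smaller than $\xi_\nu$ for $l-j$ large. This works because the factor $\psih_{j,\nu}(\xi_\nu)$ already confines $|\xi_\nu|$ to a bounded range, so those weights effectively only need to control the coordinates outside $t_\nu$, where $\tilde\xi=\xi$, and these coordinates are then handled by the Schwartz decay of $\vsigh_l$.
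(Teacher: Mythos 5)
Your proposal is correct and follows essentially the same path as the paper: you make the same split of the sum over $\{\min(l,k)=j\}$ into the $k=j$ branch and the $l=j$ branch (the paper calls these $\vsigt_{j,1}$ and $\vsigt_{j,2}$; just note that you should index the two pieces disjointly, e.g.\ $k=j, l\geq j$ and $l=j, k>j$, to avoid double-counting the term $l=k=j$). The only cosmetic difference is that you carry out both estimates entirely on the Fourier side, whereas the paper treats $\vsigt_{j,1}$ by a brief ``standard computation'' and $\vsigt_{j,2}$ by factoring $\psi_{k,\nu}=\lap_{\nu}\psit_{\nu}$ and integrating by parts. Those are interchangeable: your annulus-plus-Schwartz-decay argument in the $l=j$ branch is precisely the Fourier-side reading of the paper's $2^{j-k}\cdot\lap_\nu$ gain, and your weight-balancing argument in the $k=j$ branch (using that $\psih_{j,\nu}$ pins $|\xi_\nu|$ to a compact set so that the $\tilde\xi$-Schwartz decay of $\vsigh_l$ only has to control the coordinates outside $t_\nu$, where $\tilde\xi=\xi$) is exactly what the paper's unstated ``standard computation'' requires, together with the observation that the chain-rule factors $2^{-(l-j)e_i^\nu}$ are all $\le 1$. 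Your cancellation argument (vanishing to infinite order at $\xi_\mu=0$ is preserved under the diagonal linear rescaling $\xi\mapsto\tilde\xi$ and under multiplication by a smooth function, and for $j\geq 1$ every summand carries a $\psih_{\cdot,\nu}$ factor that vanishes identically near $\xi_\nu=0$) matches the paper's use of $\psi_{k,\nu}\in\schS_0(\R^{N_\nu})$ for $k>0$ via Lemma~\ref{LemmaKerFTOfsS}. No gaps.
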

\begin{proof}
We separate the sum in \eqref{EqnPfKerOtherToShowLem} into two parts:
\begin{equation*}
\vsigt_j(t) =  \sum_{l\geq j} 2^{(l-j)\delta} \q(\dil{\vsig_l}{2^{(0,0,\ldots,0,l-j)}}*\psi_{j,\nu}\w)(t) + \sum_{k>j} \q(\vsig_j * \dil{\psi_{k,\nu}}{2^{k-j}}\w)(t)  =:\vsigt_{j,1}+\vsigt_{j,2}.
\end{equation*}
We first deal with $\vsigt_{j,1}$.
It follows from a standard computation that 
\begin{equation*}
\q\{ \q( \dil{\vsig}{2^{(0,0,\ldots,0,l)}}*\psi_{j,\nu}\w)(t) : \vsig\in \sB, j,l\in \N\w\}\subset \schS(\R^N)
\end{equation*}
is a bounded set.  
Since $\delta<0$, it follows that 
\begin{equation*}
\q\{ \vsigt_{j,1} :j\in \N, \{\vsig_l : l\geq j\}\subseteq \sB\w\}\subset \schS(\R^N)
\end{equation*}
is a bounded set.
If $j>0$, because $\vsig_l\in \schS_\sE$ and $\psi_{j,\nu}\in \schS_0(\R^{N_{\nu}})$ it follows that $\vsig_l*\psi_{j,\nu}\in \schS_{\sE\cup \{\nu\}}$;
and therefore $\vsigt_{j,1}\in \schS_{\sE\cup \{\nu\}}$.
If $j=0$, because $\vsig_l\in \schS_\sE$ it follows that $\vsig_l*\psi_{0,\nu}\in \schS_{\sE}$;
and therefore $\vsigt_{0,1}\in \schS_{\sE}$.

We now turn to $\vsigt_{j,2}$.  For $k>0$, because $\psi_{k,\nu}\in \schS_0(\R^{N_\nu})$ we have $\psi_{k,\nu}=\lap_{\nu} \psit_{\nu}$,
where $\psit_{\nu}\in \schS(\R^{N_\nu})$ (here we have used that $\psi_{k,\nu}$ does not depend on $k$ for $k>0$).  Integrating by parts, we have
\begin{equation*}
\vsig_j *\dil{ \psi_{k,\nu}}{2^{k-j}} = \q(2^{j-k} \cdot \lap_{\nu} \vsig_j\w) *\psit_{\nu};
\end{equation*}
here
\begin{equation*}
2^{j-k} \cdot \lap_{\nu} := 2^{(1,\ldots, 1, j-k)}\cdot \lap_\nu,
\end{equation*}
where $ 2^{(1,\ldots, 1, j-k)}\cdot \lap_\nu$ is defined via \eqref{EqnPfKerDilLap}.
A standard estimate shows that there exists $c>0$ (depending on the dilations; see \eqref{EqnKerPutOnLapDil}) with
\begin{equation*}
\q\{ 2^{c|j-k|}\q(2^{j-k} \cdot \lap_{\nu} \vsig_j\w) *\psit_{\nu} :j\in \N, \{\vsig_l : l\geq j\}\subseteq \sB  \w\}\subset \schS(\R^N)
\end{equation*}
is a bounded set.  It follows that 
\begin{equation*}
\q\{ \vsigt_{j,2} :j\in \N, \{\vsig_l : l\geq j\}\subseteq \sB\w\}\subset \schS(\R^N)
\end{equation*}
is a bounded set.  Furthermore, since for $k>0$ we have $\psi_k\in \schS_0(\R^{N_\nu})$ it follows that
$\vsig_j * \dil{\psi_{k,\nu}}{2^{k-j}}\in \schS_{\sE\cup\{\nu\}}$ for all $k>0$.  Hence
$\vsigt_{j,2}\in \schS_{\sE\cup\{\nu\}}$, as desired.
\end{proof}

\begin{lemma}\label{LemmaPfKerOtherConvg}
Let $\{\vsig_j : j\in \N^{\nu}\}\subset \schS(\R^N)$ be a bounded set and let $\delta\in \R^{\nu}$.  Suppose $\vsig_j\in \schS_{\{\mu : j_\mu\ne 0, \delta_\mu\geq 0\}}$. 
Then the sum
\begin{equation*}
\sum_{j\in \N^{\nu}} 2^{j\cdot \delta} \dil{\vsig_j}{2^j}
\end{equation*}
converges in the sense of tempered distribution.
\end{lemma}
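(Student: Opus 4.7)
The plan is to reduce to the situation already handled by Lemma \ref{LemmaKerConvInDist} by iteratively ``promoting'' each coordinate $\mu$ with $\delta_\mu < 0$ to a coordinate possessing cancellation. Lemma \ref{LemmaPfKerOtherConvSum} is exactly the tool engineered for a single such promotion, so the proof is essentially a combinatorial rearrangement of the sum together with a finite induction on $|\{\mu : \delta_\mu < 0\}|$.

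First I would note that if $\delta \in [0,\infty)^{\nu}$, the hypothesis of the lemma coincides with that of Lemma \ref{LemmaKerConvInDist} and there is nothing to prove. Otherwise pick a coordinate with $\delta_\mu < 0$; by permuting coordinates we may assume $\mu = \nu$. Using the resolution of the Dirac mass \eqref{EqnPfKerOtherDecompDeltaz} (applied in the $t_\nu$ variable alone), I would write
\begin{equation*}
\sum_{j\in \N^{\nu}} 2^{j\cdot \delta} \dil{\vsig_j}{2^j}(t) = \sum_{j\in \N^{\nu}}\sum_{k\in \N} 2^{j\cdot \delta} \dil{\vsig_j}{2^j}(t) \ast_{t_\nu} \dil{\psi_{k,\nu}}{2^k}(t_\nu),
\end{equation*}
where $\ast_{t_\nu}$ denotes convolution in $t_\nu$ only. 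The key step is then to regroup this double sum by the minimum $m := j_\nu \wedge k$: writing $j = (j', j_\nu)$ with $j' \in \N^{\nu-1}$, for each fixed $(j', m) \in \N^{\nu-1}\times \N$ the collection of terms with $j_\nu \wedge k = m$ naturally rescales to produce a single ``level-$m$'' function in the $t_\nu$ variable. A direct scaling computation shows this regrouping yields
\begin{equation*}
\sum_{j\in \N^{\nu}} 2^{j\cdot \delta} \dil{\vsig_j}{2^j} = \sum_{(j',m) \in \N^{\nu-1}\times \N} 2^{(j',m)\cdot \delta}\, \dil{\vsigt_{j',m}}{2^{(j',m)}},
\end{equation*}
where $\vsigt_{j',m}$ is precisely the expression appearing in \eqref{EqnPfKerOtherToShowLem} of Lemma \ref{LemmaPfKerOtherConvSum} (applied with $\sE = \{\mu \in E_+ : j'_\mu \ne 0\}$ and with $j$ there equal to $m$ here, and with sequence $\{\vsig_{(j',l)} : l \ge m\}$). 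Lemma \ref{LemmaPfKerOtherConvSum} gives that $\{\vsigt_{j',m}\}$ is a bounded subset of $\schS(\R^N)$, and more importantly that $\vsigt_{j',m} \in \schS_{\sE \cup \{\nu\}}$ when $m > 0$ and $\vsigt_{j',0} \in \schS_\sE$. In other words, the new sequence satisfies the hypothesis of the lemma with $\nu$ now contained in $E_+$ (even though $\delta_\nu < 0$, the $\nu$-cancellation is manifest).

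Next, I would iterate: having eliminated one ``bad'' coordinate, apply the same procedure to any remaining $\mu$ with $\delta_\mu < 0$. Each such step preserves the cancellation already established in the previously processed coordinates, because convolving in a new variable $t_{\mu'}$ with a Schwartz function does not destroy moment-vanishing in $t_\mu$. After finitely many steps we arrive at a representation
\begin{equation*}
\sum_{j\in \N^{\nu}} 2^{j\cdot \delta} \dil{\vsig_j}{2^j} = \sum_{j\in \N^{\nu}} 2^{j\cdot \delta}\, \dil{\vsigh_j}{2^j}
\end{equation*}
with $\{\vsigh_j\}$ bounded in $\schS(\R^N)$ and $\vsigh_j \in \schS_{\{\mu : j_\mu \ne 0\}}$. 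An application of Lemma \ref{LemmaKerConvInDist} then gives convergence in the sense of tempered distributions.

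The hard part will be the bookkeeping in the regrouping step: verifying that the scaling identity required to match \eqref{EqnPfKerOtherToShowLem} holds coordinate-by-coordinate in the multi-parameter dilation structure, and that the boundedness furnished by Lemma \ref{LemmaPfKerOtherConvSum} (with the decay factor $2^{(l-m)\delta_\nu}$ coming from $\delta_\nu < 0$) chains correctly through the iteration. The rest is essentially formal once these identities are in place.
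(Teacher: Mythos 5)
Your plan essentially replays the proof of Lemma \ref{LemmaPfKerOtherLemmaInducStep} (the rewriting step used in Proposition \ref{PropResKerOtherDefn}) as a means of proving convergence. The problem is that the regrouping $\sum_{j,k} \rightsquigarrow \sum_{(j',m)}$ with $m = j_\nu \wedge k$ is a rearrangement of an infinite double sum, and rearranging an infinite sum whose convergence is precisely what you are trying to prove is not ``formal once the identities are in place.'' Even granting that Lemma \ref{LemmaPfKerOtherConvSum} gives a well-defined, bounded family $\vsigt_{j',m}$ with extra cancellation, and that Lemma \ref{LemmaKerConvInDist} then shows the rearranged sum $\tilde K := \sum_{(j',m)} 2^{(j',m)\cdot\delta}\,\dil{\vsigt_{j',m}}{2^{(j',m)}}$ converges, you are left with the task of showing that the partial sums $S_M = \sum_{|j|_\infty\leq M} 2^{j\cdot\delta}\dil{\vsig_j}{2^j}$ actually converge to $\tilde K$. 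That requires either absolute summability of $\bigl\{\,|\langle 2^{j\cdot\delta}\dil{\vsig_j}{2^j},\vsig\rangle|\,\bigr\}_j$ or a tail estimate of the same strength --- and establishing such an estimate is exactly what the lemma asserts. In other words, the rearrangement step is circular: to justify it you need a bound that already implies the conclusion, at which point the rearrangement is no longer doing any work. In the paper, Lemma \ref{LemmaPfKerOtherLemmaInducStep} is careful to assume up front that $K$ already ``defines a distribution''; it is Lemma \ref{LemmaPfKerOtherConvg} that secures this assumption, and it does so by a \emph{different} route.

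The paper's actual proof is a short direct estimate: pair against a test function $\vsig$, observe (by the same integration-by-parts argument as in Lemma \ref{LemmaKerConvInDist}, available because $\vsig_j$ has moment cancellation in each $\mu$ with $j_\mu\ne 0$ and $\delta_\mu\geq 0$) that $|\langle\dil{\vsig_j}{2^j},\vsig\rangle|\lesssim 2^{-L N(j)}$ for any $L$, where $N(j)=\max\{j_\mu : \delta_\mu\geq 0\}$. Multiplying by $2^{j\cdot\delta}$, the coordinates with $\delta_\mu\geq 0$ can contribute growth at most $2^{N(j)\sum_{\delta_\mu\geq 0}\delta_\mu}$ (absorbed by taking $L$ large), while the coordinates with $\delta_\mu<0$ supply geometric decay $2^{-c j_\mu}$ with $c = \min\{-\delta_\mu : \delta_\mu<0\}$. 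Together these give $|\langle 2^{j\cdot\delta}\dil{\vsig_j}{2^j},\vsig\rangle|\lesssim 2^{-c|j|_\infty}$, hence absolute convergence. No rearrangement, no promotion of cancellation, no induction. I would redo your argument along these lines.
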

\begin{proof}
Let $\vsig\in \schS(\R^N)$.
Let $N(j) = \max\{ j_\mu : \delta_\mu\geq 0\}$.
The same proof as in Lemma \ref{LemmaKerConvInDist} shows that for every $L$,
\begin{equation*}
\q|\int \dil{\vsig_j}{2^j}(t) \vsig(t)\: dt\w| \lesssim 2^{-LN(j)}. 
\end{equation*}
Hence, if $c=\min\{-\delta_\mu : \delta_\mu<0\}$ we have
\begin{equation*}
\q|\int 2^{j\cdot \delta}\dil{\vsig_j}{2^j}(t) \vsig(t)\: dt\w| \lesssim 2^{-c|j|_\infty}. 
\end{equation*}
The result follows.
\end{proof}


\begin{lemma}\label{LemmaPfKerOtherLemmaInducStep}
Suppose $\delta\in \R^{\nu}$.  Let $\sE\subseteq \nuset$ be such that $\{\mu : \delta_\mu\geq 0\}\subseteq \sE$.
Suppose $\{\vsig_j : j\in \N^{\nu}\}\subset \schS(\R^N)$ is a bounded set with 
$\vsig_j\in \schS_{\{\mu\in \sE : j_\mu\ne 0\}}$.  Define a distribution by
\begin{equation*}
K(t):=\sum_{j\in \N^{\nu}} 2^{j\cdot \delta} \dil{\vsig_j}{2^j}(t).
\end{equation*}
Let $\mu_0\not \in \sE$.  Then, there is a bounded set $\{\vsigt_j : j\in \N^{\nu}\}\subset \schS(\R^N)$
with $\vsigt_j\in  \schS_{\{\mu\in \sE\cup \{\mu_0\} : j_\mu\ne 0\}}$ such that
\begin{equation*}
K(t)=\sum_{j\in \N^{\nu}} 2^{j\cdot \delta} \dil{\vsigt_j}{2^j}(t).
\end{equation*}
\end{lemma}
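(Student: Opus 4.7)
The plan is to mimic the proof of Lemma \ref{LemmaPfKerOtherConvSum} with the role of the ``$\nu$'' coordinate played by $\mu_0$. The sign hypothesis enters only through the fact that $\delta_{\mu_0}<0$, which holds because $\mu_0\notin\sE\supseteq\{\mu:\delta_\mu\geq 0\}$; this supplies the geometric decay that powered Lemma \ref{LemmaPfKerOtherConvSum}. I would begin by inserting the partition of unity $\delta_0(t_{\mu_0})=\sum_{k\in\N}\dil{\psi_{k,\mu_0}}{2^k}(t_{\mu_0})$ from \eqref{EqnPfKerOtherDecompDeltaz} into $K$ via convolution in the $t_{\mu_0}$-coordinates, obtaining
\begin{equation*}
K(t)=\sum_{j\in\N^\nu}\sum_{k\in\N}2^{j\cdot\delta}\q(\dil{\vsig_j}{2^j}*_{\mu_0}\dil{\psi_{k,\mu_0}}{2^k}\w)(t).
\end{equation*}

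Next I would reindex this double sum by the effective $\mu_0$-scale: for each $j'\in\N^\nu$ I would collect those $(j,k)$ with $j_\mu=j'_\mu$ for all $\mu\ne\mu_0$ and $\min(j_{\mu_0},k)=j'_{\mu_0}$. Letting $\mathbf{e}_{\mu_0}\in\R^\nu$ denote the $\mu_0$-th standard basis vector, these pairs parametrize as either $(j',\,j'_{\mu_0}+q)$ with $q\geq 0$, or $(j'+p\mathbf{e}_{\mu_0},\,j'_{\mu_0})$ with $p>0$. In analogy with the formula in Lemma \ref{LemmaPfKerOtherConvSum}, I would then define
\begin{equation*}
\vsigt_{j'}(t):=\sum_{\substack{a,k\in\N\\ a\wedge k=j'_{\mu_0}}}2^{(a-j'_{\mu_0})\delta_{\mu_0}}\q(\dil{\vsig_{j'+(a-j'_{\mu_0})\mathbf{e}_{\mu_0}}}{2^{(a-j'_{\mu_0})\mathbf{e}_{\mu_0}}}*_{\mu_0}\dil{\psi_{k,\mu_0}}{2^{k-j'_{\mu_0}}}\w)(t),
\end{equation*}
and check the identity $K(t)=\sum_{j'}2^{j'\cdot\delta}\dil{\vsigt_{j'}}{2^{j'}}(t)$ using the composition rule for $\nu$-parameter dilations together with the scaling identity for convolution in a subset of coordinates.

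The Schwartz bounds on $\{\vsigt_{j'}\}$ and the membership $\vsigt_{j'}\in\schS_{\{\mu\in\sE\cup\{\mu_0\}\,:\,j'_\mu\ne 0\}}$ then follow the template of Lemma \ref{LemmaPfKerOtherConvSum}'s proof essentially verbatim. I would split $\vsigt_{j'}$ into the subfamily where $k=j'_{\mu_0}$ and $a\geq j'_{\mu_0}$ (convergent because of the geometric factor $2^{(a-j'_{\mu_0})\delta_{\mu_0}}$, since $\delta_{\mu_0}<0$), and the subfamily where $a=j'_{\mu_0}$ and $k>j'_{\mu_0}$ (convergent after writing $\psi_{k,\mu_0}=\lap_{\mu_0}\psit_{\mu_0}$ with $\psit_{\mu_0}\in\schS(\R^{N_{\mu_0}})$ and integrating by parts, which is possible because $k>0$ forces $\psi_{k,\mu_0}\in\schS_0(\R^{N_{\mu_0}})$). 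Cancellation in $t_{\mu_0}$ is needed only when $j'_{\mu_0}>0$, in which case every contributing term has $k>0$ and the required cancellation is imparted by $\psi_{k,\mu_0}$ and passes through the convolution; the inherited cancellation in $\schS_{\{\mu\in\sE\,:\,j'_\mu\ne 0\}}$ survives convolution in $t_{\mu_0}$ by a direct Fubini-and-change-of-variables argument, valid even when $t_{\mu_0}$ and $t_\mu$ share coordinates of $t$. Once $\{\vsigt_{j'}\}$ is bounded with the stated cancellation, convergence of the outer sum in the sense of tempered distributions is automatic from Lemma \ref{LemmaPfKerOtherConvg}.

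The main technical obstacle will be carefully tracking how the $\nu$-parameter dilations on $\R^N$ interact with convolution in the $t_{\mu_0}$-coordinates: the dilation induced on $\R^{N_{\mu_0}}$ by $2^{j'}$ acting on $\R^N$ agrees with the single-parameter $\mu_0$-dilation only under Assumption \ref{AssumpKerProduct}, so in the formal manipulations one has to keep careful track of which dilation is being used and verify the correct analog of $\dil{(F*_{\mu_0}G)}{2^{j'}}=\dil{F}{2^{j'}}*_{\mu_0}\dil{G}{2^{j'}|_{\mu_0}}$ in the general (possibly overlapping) setting. Once this bookkeeping is settled, the rest of the argument is routine.
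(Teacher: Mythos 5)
Your proposal is correct and follows essentially the same route the paper takes: fix the ``bad'' coordinate $\mu_0$, insert the single-scale partition of unity $\delta_0(t_{\mu_0})=\sum_k\dil{\psi_{k,\mu_0}}{2^k}$ by convolution in the $t_{\mu_0}$-block, regroup by the effective $\mu_0$-scale $\min(j_{\mu_0},k)$, and feed the resulting family into (the analog of) Lemma \ref{LemmaPfKerOtherConvSum}. Your formula for $\vsigt_{j'}$ is, after renaming, the same expression the paper derives from its ``undilation'' identity \eqref{EqnPfKerOtherUndil}, and the cancellation bookkeeping you describe (inherited cancellation in $\mu\in\sE$ survives convolution in $t_{\mu_0}$ by Fubini plus a shift; new cancellation in $\mu_0$ comes from $\psi_{k,\mu_0}\in\schS_0(\R^{N_{\mu_0}})$ for $k>0$) is exactly what is needed.

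The ``main technical obstacle'' you flag at the end is, on closer inspection, not an obstacle, and this is worth being precise about. You are right that the single-parameter $\mu_0$-dilation and the ambient dilation would be incompatible in general, but the paper never uses that comparison: it equips the $t_{\mu_0}$-block with the restriction of the ambient $\nu$-parameter dilations (``On $t_\mu$ there are $\nu$-parameter dilations, defined so that $\delta t=(\delta t_\mu,\delta t_\mu^\perp)$''), and defines the single-parameter $\mu_0$-dilation as the $\nu$-parameter one with all other components set to $1$. With that convention the identity
\begin{equation*}
\dil{\q(F*_{\mu_0}G\w)}{2^{j'}}=\dil{F}{2^{j'}}*_{\mu_0}\dil{G}{2^{j'}}
\end{equation*}
is a direct change of variables, valid with no hypothesis of Assumption \ref{AssumpKerProduct} type and regardless of whether different $t_\mu$'s share coordinates with $t_{\mu_0}$; and the telescoping $\sum_k\psih_{k,\mu_0}(2^{-(j',k)}\xi_{\mu_0})=1$ holds for each fixed $j'$ because the extra $(j',0)$-dilation is simply an invertible linear substitution in $\xi_{\mu_0}$. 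So once you adopt the paper's convention for dilations on the $t_{\mu_0}$-block, the bookkeeping you were worried about reduces to the elementary computations above and the rest of your argument goes through as stated.
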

\begin{proof}
By relabeling, we may without loss of generality take $\mu_0=\nu$ in the statement of the lemma (and therefore $\delta_{\nu}<0$).
For the proof, we need some new notation.  Separate $t=(t_{\nu}, t_{\nu}^{\perp})$ 
and define $\nu$ parameter dilations on $t_\nu$ as was done at the start of the section.  
Note that (for $k\in \R$), $2^{k} t_{\nu} = 2^{(0,\ldots, 0,k)} t_{\nu}$, where $2^{k}t_\nu$ is defined via the  single parameter dilations on $\R^{N_{\nu}}$
defined above.
For $\psi\in \schS(\R^{N_\nu})$, $j\in \R^\nu$, we define $\dil{\psi}{2^j}$ as usual; i.e., for $\eta\in \schS(\R^{N_{\nu}})$, we have
\begin{equation*}
\int \dil{\psi}{2^j}(t_{\nu}) \eta(t_\nu)\: dt_\nu = \int \psi(t_\nu) \eta(2^{-j} t_\nu)\: dt_\nu.
\end{equation*}
For $j\in \R^{\nu}$ we decompose $j=(j', j_\nu)\in \R^{\nu-1}\times \R$, where $j'=(j_1,\ldots, j_{\nu-1})$.  Notice, with this notation and applying \eqref{EqnPfKerOtherDecompDeltaz},
we have for $j'\in \R^{\nu-1}$ fixed
\begin{equation*}
\sum_{k\in \N} \dil{\psi_{k,\nu}}{2^{(j',k)}}= \delta_0(t_\nu).
\end{equation*}
Also, we have, for $\vsigt,\vsig\in \schS(\R^N)$, $\psi\in \schS(\R^{N_{\nu}})$, $j,k,l\in \R^{\nu}$,
\begin{equation}\label{EqnPfKerOtherUndil}
\dil{\vsigt}{2^l} = \dil{\vsig}{2^j}*\dil{\psi}{2^k} \Leftrightarrow \vsigt = \dil{\vsig}{2^{j-l}} *\dil{\psi}{2^{k-l}}.
\end{equation}

Thus, writing $j\in \N^{\nu}$ as $j=(j',j_{\nu})$, we have
\begin{equation*}
\begin{split}
&\sum_{j\in \N^{\nu}} 2^{j\cdot \delta} \dil{\vsig_j}{2^j} = \sum_{j\in \N^{\nu}} \sum_{k\in \N} 2^{j\cdot \delta} \q(\dil{\vsig_j}{2^j}* \dil{\psi_{k,\nu}}{2^{(j',k)\cdot\delta}}\w)
\\&=\sum_{j'\in \N^{\nu-1}} \sum_{l\in \N} \sum_{j_\nu\wedge k=l} 2^{j\cdot \delta} \q(\dil{\vsig_j}{2^j}* \dil{\psi_{k,\nu}}{2^{(j',k)\cdot\delta}} \w)
=:\sum_{j'\in \N^{\nu-1}}\sum_{l\in \N} 2^{(j',l)\cdot\delta} \dil{\vsigt_{(j',l)}}{2^l},
\end{split}
\end{equation*}
where,
\begin{equation*}
2^{(j',l)\cdot\delta} \dil{\vsigt_{(j',l)}}{2^l} := \sum_{j_\nu\wedge k=l} 2^{j\cdot \delta} \q(\dil{\vsig_j}{2^j}* \dil{\psi_{k,\nu}}{2^{(j',k)\cdot\delta}} \w).
\end{equation*}
Using \eqref{EqnPfKerOtherUndil}, this is equivalent to
\begin{equation*}
\vsigt_{(j',l)} = \sum_{j_\nu\wedge k =l} 2^{(j_\nu-l)\delta_{\nu}} \q(\dil{\vsig_j}{2^{0,\ldots, 0, j_\nu-l}} * \dil{\psi_{k,\nu}}{2^{k-l}}\w).
\end{equation*}
Rewriting $(j',l)$ as $j$, and
using Lemma \ref{LemmaPfKerOtherConvSum}, we have
$\vsigt_j\in  \schS_{\{\mu\in \sE\cup \{\nu\} : j_\mu\ne 0\}}$ and $\{\vsigt_j : j\in \N^{\nu}\} \subset \schS(\R^N)$ is a bounded set, which completes the proof.
%
%
\end{proof}

\begin{proof}[Proof of Proposition \ref{PropResKerOtherDefn}]
The convergence of the sum follows from Lemma \ref{LemmaPfKerOtherConvg}.  From here, the result follows from Lemma \ref{LemmaPfKerOtherLemmaInducStep}
and a simple induction.
\end{proof}

\section{The Frobenius Theorem and the Unit Scale}
In this section, we review the quantitative Frobenius theorem from \cite{StreetMultiParameterCCBalls}; the reader is also referred to
Section 2.2 of \cite{StreetMultiParamSingInt} for further details and a more leisurely introduction to these concepts.\footnote{This quantitate version
of the Frobenius theorem takes its roots in work of Nagel, Stein, and Wainger \cite{NagelSteinWaingerBallsAndMetricsDefinedByVectorFields}
and Tao and Wright \cite{TaoWrightLpImprovingBounds} on Carnot-Carath\'eodory geometry.}

Before we can introduce the setting of the Frobenius theorem, we need to introduce a few additional pieces of notation.
Let $\Omega\subseteq \R^n$ be an open set and let $U\subseteq \Omega$ be an arbitrary set.  For $m\in \N$, we define
\begin{equation*}
\CjN{f}{m}{U}:= \sum_{|\alpha|\leq m} \sup_{x\in U} |\partial_x^{\alpha} f(x)|,
\end{equation*}
and if we say $\CjN{f}{m}{U}<\infty$ we mean that all the above partial derivatives exist and are are continuous on $U$.
If $V$ is a vector field on $\Omega$ and $V=\sum_{j=1}^n a_j \partial_{x_j}$, we denote by $\CjN{V}{m}{U}:=\sum_{j=1}^n \CjN{a_j}{m}{U}$.
If $A$ is a $n\times q$ matrix, and $n_0\leq n\wedge q$, we let $\det_{n_0\times n_0} A$ denote the {\it vector} whose entries consist
of the determinants of the $n_0\times n_0$ submatrices of $A$.  It is not important in which order these determinants are arranged.

Let $\Omega\subseteq \R^n$ be an open set.  We are given $C^\infty$ vector fields $Z_1,\ldots, Z_q$ on $\Omega$.  
For $x\in \Omega$ and $\delta>0$, we let $B_Z(x,\delta):=\B{Z}{1}{x}{\delta}$, where $\B{Z}{1}{x}{\delta}$ denotes the 
Carnot-Carath\'eodory ball from Definition \ref{DefnResCCBall} using the set $(Z,1)=\{(Z_1,1),\ldots, (Z_q,1)\}$.

Fix $x_0\in \Omega$.
We assume that there exists $0<\xi_1\leq 1$ such that:
\begin{enumerate}[(a)]
\item For every $a_1,\ldots, a_q\in L^\infty([0,1])$, with $\LpN{\infty}{\sum_{j=1}^q |a_j|^2}<1$, there exists a solution to the ODE
\begin{equation*}
\gamma'(t) = \sum_{j=1}^q a_j(t) \xi_1 Z_j(\gamma(t)), \quad \gamma(0)=x_0, \quad \gamma:[0,1]\rightarrow \Omega.
\end{equation*}
Notice, by the Picard-Lindel\"of theorem for existence of ODEs, this condition holds so long as we take $\xi_1$ small enough, depending on the 
$C^1$ norms of $Z_1,\ldots, Z_q$ and the Euclidean distance between $x_0$ and $\partial \Omega$.

\item For each $m$, there is a constant $C_m$ such that
\begin{equation}\label{EqnFrobDefnCm}
\CjN{Z_j}{m}{B_{Z}({x_0},{\xi_1})} \leq C_m.
\end{equation}

\item $[Z_j,Z_k]=\sum_{l=1}^q c_{j,k}^l Z_l$ on $\B{Z}{d}{x_0}{\xi_1}$, where for every $m$ there is a constant $D_m$ such that
\begin{equation}\label{EqnFrobDefnDm}
\sum_{|\alpha|\leq m} \CjN{Z^{\alpha} c_{j,k}^l }{0}{B_{Z}(x_0,\xi_1)}\leq D_m.
\end{equation}
\end{enumerate}

For $m\geq 2$, we say $C$ is an $m$-admissible constant if $C$ can be chosen to depend only on upper bounds for $m$, $n$, $q$, $C_m$
from \eqref{EqnFrobDefnCm}, $D_m$ from \eqref{EqnFrobDefnDm}, 
and a positive lower bound for $\xi_1$.
  For $m<2$, we say $C$ is an $m$-admissible constant
if $C$ is a $2$-admissible constant.  We write $A\lesssim_m B$ if $A\leq C B$, where $C$ is an $m$-admissible constant, and we
write $A\approx_m B$ if $A\lesssim_m B$ and $B\lesssim_m A$.

Let $n_0=\dim \Span {Z_1(x_0),\ldots, Z_q(x_0)}$.  
In light of the classical Frobenius theorem, there is a unique, maximal, injectively immersed submanifold of $\Omega$ passing through
$x_0$ whose tangent space equals $\Span{Z_1,\ldots, Z_q}$--called a leaf.  $B_{Z}({x_0},{\delta})$ is an open subset of this leaf, and for a
subset, $S$, of this leaf, we use the notation $\Vol{S}$ to denote the volume of $S$ with respect to the induced Lebesgue measure on this leaf.  See
Section 2.2 of \cite{StreetMultiParamSingInt} for more details.

We state, without proof, the quantitative Frobenius theorem.  The proof can be found in \cite{StreetMultiParameterCCBalls}.
\begin{thm}[The quantitative Frobenius theorem]\label{ThmQuantFrob}
There exist $2$-admissible constants $\xi_2,\xi_3,\eta>0$, $\xi_3<\xi_2<\xi_1$ and a $C^{\infty}$ map
\begin{equation*}
\Phi:B^{n_0}(\eta)\rightarrow B_{Z}({x_0},{\xi_2}),
\end{equation*}
such that
\begin{itemize}
\item $\Phi(0)=x_0$.
\item $\Phi$ is injective.
\item $B_{Z}({x_0},{\xi_3})\subseteq \Phi(B^{n_0}(\eta))\subseteq B_{Z}({x_0},{\xi_2})$.
\item For $u\in B^{n_0}(\eta)$,
\begin{equation}\label{EqnQuantFrobJacVol}
\q|\det_{n_0\times n_0} d\Phi(u)\w| \approx_2 \q|\det_{n_0\times n_0} (Z_1(x_0) | \cdots | Z_q(x_0))\w| \approx_2 \Vol{B_{Z}({x_0},{\xi_2})},
\end{equation}
where $(Z_1(x_0)| \cdots| Z_q(x_0))$ is the matrix whose columns are given by the vectors $Z_1(x_0),\ldots, Z_q(x_0)$.
\end{itemize}
Furthermore, if $Y_1,\ldots, Y_q$ are the pullback of $Z_1,\ldots, Z_q$ to $B^{n_0}(\eta)$, then
\begin{equation}\label{EqnFrobYsSmooth}
\CjN{Y_j}{m}{B^{n_0}(\eta)}\lesssim_m 1,
\end{equation}
and 
\begin{equation}\label{EqnFrobYsSpan}
\inf_{u\in B^{n_0}(\eta)} \q|\det_{n_0\times n_0} (Y_1(u)| \cdots | Y_q(u))\w| \approx_2 1.
\end{equation}
\end{thm}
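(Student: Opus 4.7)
The plan is to construct $\Phi$ as an exponential-type map built from a carefully chosen subset of the vector fields, then use the involutivity hypothesis (c) to propagate a non-degeneracy estimate via Gronwall's inequality.

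First, I would select an $n_0$-tuple of indices $i_1,\ldots,i_{n_0}\in\{1,\ldots,q\}$ such that
\[
\q|\det(Z_{i_1}(x_0)|\cdots|Z_{i_{n_0}}(x_0))\w| \approx_2 \q|\det_{n_0\times n_0}(Z_1(x_0)|\cdots|Z_q(x_0))\w|.
\]
Write $W_k = Z_{i_k}$ for $1\le k\le n_0$ and define $\Phi(u) := \exp(u_1 W_1 + \cdots + u_{n_0}W_{n_0})x_0$ for $u\in B^{n_0}(\eta)$, where $\eta>0$ is a small $2$-admissible constant to be chosen. Standard ODE estimates (using (a) and (b)) show that $\Phi$ is well-defined into $B_Z(x_0,\xi_2)$ for an admissible $\xi_2<\xi_1$ as long as $\eta$ is sufficiently small. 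Let $Y_j$ denote the pullback of $Z_j$ under $\Phi$; the smoothness bound \eqref{EqnFrobYsSmooth} then follows from repeated differentiation of the ODE for $\Phi$ together with (b).

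The crucial step is the lower bound \eqref{EqnFrobYsSpan}, which gives both injectivity and the Jacobian estimate \eqref{EqnQuantFrobJacVol}. To prove this, I would express $Y_j$ in the basis $\partial_{u_1},\ldots,\partial_{u_{n_0}}$ near the origin and write $Y_j(u) = \sum_k a_j^k(u)\partial_{u_k}$, with $a_j^k(0) = \delta_{jk}$ for $j\in\{i_1,\ldots,i_{n_0}\}$ and appropriate initial conditions for the other indices coming from expanding the other $Z_j(x_0)$ in the chosen basis. The involutivity condition (c) then gives a closed ODE system for the matrix $A(u) = (a_j^k(u))$ along the rays $u = s\omega$: differentiating in $s$ and using $[W_k,Z_j] = \sum_l c_{k,j}^l Z_l$ produces a linear system $\partial_s A = A\cdot M(s,\omega)$ where $M$ is bounded in terms of the $c_{k,j}^l$ (hence in terms of $D_m$). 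Gronwall's inequality then shows $|A(u)-A(0)|$ is small for $|u|$ less than an admissible $\eta$, which gives both $\det(Y_{i_1}|\cdots|Y_{i_{n_0}})\approx_2 1$ and an analogous representation of every other $Y_j$ as a bounded combination of $Y_{i_1},\ldots,Y_{i_{n_0}}$ — this is the substantive content of \eqref{EqnFrobYsSpan}.

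From these pullback estimates, the three ball inclusions follow: the image lies in $B_Z(x_0,\xi_2)$ by construction of $\Phi$; injectivity follows from the non-degenerate Jacobian together with a covering argument preventing the image from folding back on itself within $B^{n_0}(\eta)$; and the reverse inclusion $B_Z(x_0,\xi_3)\subseteq \Phi(B^{n_0}(\eta))$ is proved by lifting any admissible sub-unit curve $\gamma$ in $B_Z(x_0,\xi_3)$ to a curve in $B^{n_0}(\eta)$ using the fact that each $Z_j$ pulls back to a bounded combination of $W_1,\ldots,W_{n_0}$, so the sub-unit speed lifts to a Lipschitz speed in $u$ of admissible size. Finally, \eqref{EqnQuantFrobJacVol} follows by comparing $|\det d\Phi(u)|$ to $|\det(Y_{i_1}(u)|\cdots|Y_{i_{n_0}}(u))|\cdot|\det(W_1(x_0)|\cdots|W_{n_0}(x_0))|$ and integrating to compute $\Vol{B_Z(x_0,\xi_2)}$ via the change of variables formula.

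The main obstacle is the Gronwall step underlying \eqref{EqnFrobYsSpan}: one must show not merely that the chosen basis vectors $W_1,\ldots,W_{n_0}$ remain linearly independent after pullback, but that every $Z_j$ stays inside their span with bounded coefficients throughout $B^{n_0}(\eta)$ for an admissible $\eta$ independent of $n_0$ and of the particular point $x_0$. This is where the uniformity of the bounds $C_m$ and $D_m$ — rather than merely smoothness — plays a decisive role, and it is what makes the theorem \emph{quantitative} rather than a mere repackaging of the classical Frobenius theorem.
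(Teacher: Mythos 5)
The paper itself does not prove Theorem~\ref{ThmQuantFrob}: it cites \cite{StreetMultiParameterCCBalls}. So your proposal must be judged on its own merits, and your overall strategy (exponential map built from a chosen subbasis, pullback, ODE-plus-Gronwall along rays) is indeed the Nagel--Stein--Wainger / Tao--Wright / Street strategy. But your formulation of the decisive Gronwall step is wrong, and the error is exactly at the point you yourself flag as ``the main obstacle.''

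You claim that writing $Y_j(u)=\sum_m a_j^m(u)\partial_{u_m}$ and differentiating along rays gives a linear system $\partial_s A = A\cdot M(s,\omega)$ with $M$ bounded by the $c_{k,j}^l$. This is not the equation one gets. Let $r=\sum_k u_k\partial_{u_k}$ be the radial field on $B^{n_0}(\eta)$; the exponential map satisfies $\Phi_*r = \sum_k u_k W_k$, and hence $[r,Y_j]=\Phi^*\bigl[\sum_k u_k W_k,\, Z_j\bigr]$. Expanding the bracket, the term $\sum_k u_k[W_k,Z_j]$ does produce, after pullback, the bounded-coefficient part you describe. But because the coefficient functions $u_k$ are not constant, there is an additional term $-\sum_k (Z_j u_k) W_k$, and under pullback $Z_j u_k$ becomes precisely $a_j^k$. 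One therefore obtains, along $u=s\omega$,
\begin{equation*}
s\,\dot a_j^m - a_j^m \;=\; -\sum_k a_j^k\,a_{i_k}^m \;+\; s\sum_{k,l}\omega_k\,\bigl(c_{i_k,j}^l\circ\Phi\bigr)\,a_l^m .
\end{equation*}
The first term on the right is quadratic in the unknowns and is not $O(s)$; dividing by $s$ leaves a $1/s$-singular, nonlinear ODE, not the linear regular one you posit. (A quick sanity check: the initial data satisfy $a_{i_k}^m(0)=\delta_{km}$, and the $s=0$ reduction of the equation above is the trivially true identity $-a_j^m(0)=-\sum_k a_j^k(0)\delta_{km}$, so the singularity is of a benign damping type; but it is there, and a direct Gronwall argument does not apply to it without further work.)

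The object that \emph{does} satisfy a linear ODE with bounded coefficients is not $Y_j$ but the pullback along the fixed curve $\gamma(s)=e^{sV}x_0$ via the linearized flow: setting $\tilde Y_j(s):=\bigl(d(e^{sV})_{x_0}\bigr)^{-1}Z_j(\gamma(s))$, one has $\dot{\tilde Y}_j=\sum_{k,l}\omega_k\,c_{i_k,j}^l(\gamma(s))\,\tilde Y_l$, which \emph{is} your linear system. But $\tilde Y_j(s)$ lives in $T_{x_0}\R^n$ and is not the same as $Y_j(s\omega)\in T_{s\omega}\R^{n_0}$; the discrepancy is exactly the difference between $d\Phi_{s\omega}$ and $d(e^{sV(\omega)})_{x_0}$, which is governed by the ``derivative of the exponential'' formula and contains the entire $u$-dependence of $V(u)$. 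Bridging this gap --- either via the integral formula for $d\Phi$, or by setting $\hat a_j^m=a_j^m-a_j^m(0)$ and running a bootstrap/continuity argument that exploits the stable friction structure $\dot{\hat a}+\hat a/s=\dots$ of the singular equation --- is the genuine content of \eqref{EqnFrobYsSpan} and \eqref{EqnQuantFrobJacVol}, and your proposal skips it. Also note that your derivation of the smoothness bound \eqref{EqnFrobYsSmooth} already presupposes control of $(d\Phi)^{-1}$, so it cannot precede \eqref{EqnFrobYsSpan} as your ordering suggests.
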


\subsection{Control of vector fields}
Let $\Omega'\subseteq \Omega\subseteq \R^n$ be open sets.
Let $Z=\{Z_1,\ldots, Z_q\}$ be a finite set of smooth vector fields on $\Omega$,
which 
satisfy the assumptions of Theorem \ref{ThmQuantFrob} at some point $x_0\in \Omega$.  Let $Z_0$ be another smooth vector field on $\Omega$.
  We introduce the following conditions on $Z_0$ which turn out to be equivalent.  All parameters
that follow are assumed to be strictly positive real numbers.
\begin{enumerate}[1.]
\item $\sP_1(\tau_1, \{\sigma_1^m\}_{m\in \N})$, ($\tau_1\leq \xi_1$):  There exists $c_j\in C^0\q(B_{Z}({x_0},{\tau_1})\w)$ such that
\begin{itemize}
\item $Z_0=\sum_{j=1}^q c_j Z_j$, on $B_{Z}({x_0},{\tau_1})$.
\item $\sum_{|\alpha|\leq m} \CjN{Z^{\alpha} c_j}{0}{B_{Z}({x_0},{\tau_1})}\leq \sigma_1^m$.
\end{itemize}

\item $\sP_2(\eta_2,\{\sigma_2^m\}_{m\in \N})$, ($\eta_2\leq \eta_1$):  $Z_0$ is tagent to the leaf passing through $x_0$
generated by $Z_1,\ldots, Z_q$, and moreover, if $Y_0$ is the pullback of $Z_0$ via the map $\Phi$ from
Theorem \ref{ThmQuantFrob}, then we have $\CjN{Y_0}{m}{B^{n_0}(\eta_2)}\leq \sigma_2^m$.
\end{enumerate}

\begin{prop}\label{PropFrobControlEquivsP}
$\sP_1\Leftrightarrow \sP_2$ in the following sense.
\begin{itemize}
\item $\sP_1(\tau_1, \{\sigma_1^m\}_{m\in \N}) \Rightarrow$ there exists a $2$-admissible constant $\eta_2=\eta_2(\tau_1)>0$ and
$m$-admissible constants $\sigma_2^m=\sigma_2^m(\sigma_1^m)$ such that $\sP_2(\eta_2, \{\sigma_2^m\}_{m\in \N})$ holds.
\item $\sP_2(\eta_2, \{\sigma_2^m\}_{m\in \N})\Rightarrow$ there exists a $2$-admissible constant $\tau_1=\tau_1(\eta_2)>0$
and $m$-admissible constants $\sigma_1^m=\sigma_1^m(\sigma_2^m)$ such that $\sP_1(\tau_1, \{\sigma_1^m\}_{m\in \N})$ holds.
\end{itemize}
\end{prop}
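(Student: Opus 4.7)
The plan is to use the Frobenius coordinate map $\Phi$ from Theorem \ref{ThmQuantFrob} as the bridge between the two conditions. Each direction amounts to transporting the representation $Z_0 = \sum c_j Z_j$ between $B_Z(x_0,\tau_1)$ and $B^{n_0}(\eta_2)$ via $\Phi$ and $\Phi^{-1}$, and then verifying that the relevant $C^m$-bounds are admissible.

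For $\sP_2 \Rightarrow \sP_1$, assume $Y_0$ (the pullback of $Z_0$) satisfies $\CjN{Y_0}{m}{B^{n_0}(\eta_2)} \leq \sigma_2^m$. By \eqref{EqnFrobYsSmooth} and \eqref{EqnFrobYsSpan}, the matrix $(Y_1(u)\mid\cdots\mid Y_q(u))$ has an $n_0\times n_0$ minor of size $\approx_2 1$ at every $u\in B^{n_0}(\eta_2)$. Cramer's rule (applied locally, using a partition-of-unity argument on $B^{n_0}(\eta_2)$ to handle which minor is large at each point) produces coefficients $\tilde c_j \in C^\infty(B^{n_0}(\eta_2))$ with $Y_0 = \sum_{j=1}^q \tilde c_j Y_j$ and $\|\tilde c_j\|_{C^m} \lesssim_m \sigma_2^m$. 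Setting $c_j := \tilde c_j \circ \Phi^{-1}$ on $\Phi(B^{n_0}(\eta_2))$ gives $Z_0 = \sum c_j Z_j$ there. Choose $\tau_1$ small enough (a $2$-admissible choice, using that $B_Z(x_0,\xi_3)\subseteq \Phi(B^{n_0}(\eta))$ and rescaling) so that $B_Z(x_0,\tau_1)\subseteq \Phi(B^{n_0}(\eta_2))$. The key identity is $Z^\alpha c_j \circ \Phi = Y^\alpha \tilde c_j$, since $\Phi_* Y_k = Z_k$; iterating this, the $C^m$-control of $\tilde c_j$ in terms of $Y^\alpha \tilde c_j$ (valid because $Y_j$ span with bounded determinant) yields the $\sigma_1^m$-bound required in $\sP_1$.

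For $\sP_1 \Rightarrow \sP_2$, assume $Z_0 = \sum c_j Z_j$ on $B_Z(x_0,\tau_1)$ with $\sum_{|\alpha|\leq m}\CjN{Z^\alpha c_j}{0}{B_Z(x_0,\tau_1)} \leq \sigma_1^m$. In particular $Z_0$ is tangent to the leaf through $x_0$. Pick $\eta_2 = \eta_2(\tau_1)$ a $2$-admissible constant small enough that $\Phi(B^{n_0}(\eta_2))\subseteq B_Z(x_0,\tau_1)$; this is possible by the inclusion $\Phi(B^{n_0}(\eta))\subseteq B_Z(x_0,\xi_2)$ together with the Jacobian control \eqref{EqnQuantFrobJacVol}. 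Pulling back via $\Phi$, the relation becomes $Y_0 = \sum \tilde c_j Y_j$ on $B^{n_0}(\eta_2)$ with $\tilde c_j := c_j\circ \Phi$. The identity $Y^\alpha \tilde c_j = (Z^\alpha c_j)\circ \Phi$ gives admissible bounds on the iterated $Y$-derivatives of $\tilde c_j$. Since \eqref{EqnFrobYsSpan} provides a quantitative lower bound on the determinant of $(Y_1|\cdots|Y_q)$ and \eqref{EqnFrobYsSmooth} provides $C^m$-bounds on the $Y_k$, the standard partial derivatives $\partial_u^\beta$ are controlled on $B^{n_0}(\eta_2)$ by $\{Y^\alpha : |\alpha|\leq |\beta|\}$ with $m$-admissible constants (again by Cramer's rule applied to $(Y_1|\cdots|Y_q)$). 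Combining, we get $\CjN{\tilde c_j}{m}{B^{n_0}(\eta_2)} \lesssim_m \sigma_1^m$, and then $\CjN{Y_0}{m}{B^{n_0}(\eta_2)} \lesssim_m \sigma_1^m$, which is $\sigma_2^m$ in $\sP_2$.

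The main obstacle is bookkeeping in the conversion between the intrinsic $Z^\alpha$-derivative norm in $\sP_1$ and the Euclidean $C^m$-norm in $\sP_2$. The mechanism is the same in both directions—use the determinant lower bound \eqref{EqnFrobYsSpan} together with \eqref{EqnFrobYsSmooth} to invert the frame $(Y_1,\ldots,Y_q)$ and express $\partial_{u_i}$ as a smooth combination of the $Y_j$'s with $m$-admissible bounds, then iterate. Everything else (existence of $c_j$'s, tangency to the leaf, shrinking of radii) is straightforward once the coordinate map $\Phi$ is in hand.
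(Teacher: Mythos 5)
The paper does not prove Proposition \ref{PropFrobControlEquivsP} directly; it simply cites Proposition 11.6 of \cite{SteinStreetI}. Your sketch reproduces what is almost certainly the argument there: transport the spanning relation $Z_0 = \sum c_j Z_j$ through the Frobenius chart $\Phi$ and use \eqref{EqnFrobYsSmooth} and \eqref{EqnFrobYsSpan} to control the conversion between the intrinsic $Z^\alpha$-norm of $\sP_1$ and the Euclidean $C^m$-norm of $\sP_2$. The two-way frame inversion (Cramer's rule with bounded minors, patching via admissible partition of unity, commuting $Y^\alpha$ and $\partial^\beta$ up to lower-order terms) is the right mechanism in both directions.

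Two technical points deserve more care than a wave at ``rescaling,'' though neither is a genuine gap. First, the containment $\Phi(B^{n_0}(\eta_2)) \subseteq B_Z(x_0,\tau_1)$ you need for $\sP_1\Rightarrow\sP_2$ is exactly what the paper's definition of $\sQ_2$ cites Proposition 11.2 of \cite{SteinStreetI} for, so that step is available; but the reverse containment $B_Z(x_0,\tau_1) \subseteq \Phi(B^{n_0}(\eta_2))$ you invoke for $\sP_2\Rightarrow\sP_1$ goes the other way and is not literally the same statement. It does hold in this framework (it comes from rerunning Theorem \ref{ThmQuantFrob} with the scale shrunk so that the new $\xi_3$ falls below $\eta_2$), but you should say so explicitly rather than lump both under ``rescaling.'' Second, when you say $\|\tilde c_j\|_{C^m}\lesssim_m \sigma_2^m$ and then pass back to $\sum_{|\alpha|\le m}\|Y^\alpha\tilde c_j\|_{C^0}$, remember the coefficients $\tilde c_j$ are not unique when $q>n_0$; your Cramer-plus-partition-of-unity construction produces one admissible choice, which is what $\sP_1$ asks for (an existence statement), so that is fine — just make sure the partition of unity itself has $m$-admissibly bounded $C^m$-norms, which is where Proposition \ref{PropFrobControlEquivsP}'s dependence of $\sigma_1^m$ on $\sigma_2^m$ (and vice versa) being $m$-admissible actually comes from. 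With those two points filled in, the proof is complete and is the expected one.
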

\begin{proof}
This is contained in Proposition 11.6 of \cite{SteinStreetI}.
\end{proof}

\begin{rmk}\label{RmkExtraPropFrobControlEquivsP}
The proof of  Proposition 11.6 of \cite{SteinStreetI} actually gives more than is stated in Proposition \ref{PropFrobControlEquivsP}:
namely, that $\sigma_1^m$ is small if and only if $\sigma_2^m$ is small.  More precisely
for all $\epsilon>0$, $m\in \N$, there exists an $m$-admissible constant $\delta_m>0$ such that:
\begin{itemize}
\item $\sP_1(\tau_1, \{\sigma_1^m\}_{m\in \N}) \Rightarrow$ there exists a $2$-admissible constant $\eta_2=\eta_2(\tau_1)>0$ and
$m$-admissible constants $\sigma_2^m=\sigma_2^m(\sigma_1^m)$ such that $\sP_2(\eta_2, \{\sigma_2^m\}_{m\in \N})$ holds.  Furthermore,
if $\sigma_1^m<\delta_m$, then $\sigma_2^m<\epsilon$.

\item $\sP_2(\eta_2, \{\sigma_2^m\}_{m\in \N})\Rightarrow$ there exists a $2$-admissible constant $\tau_1=\tau_1(\eta_2)>0$
and $m$-admissible constants $\sigma_1^m=\sigma_1^m(\sigma_2^m)$ such that $\sP_1(\tau_1, \{\sigma_1^m\}_{m\in \N})$ holds.
Furthermore, if $\sigma_2^m<\delta_m$, then $\sigma_1^m<\epsilon$.
\end{itemize}
We will use this in Section \ref{SectionOptimality}.
\end{rmk}

\begin{defn}
Let $V=\{V_1,\ldots, V_r\}$ be a finite set of smooth vector fields on $\Omega$.
We say $V$ {\bf controls} $Z_0$ {\bf at the unit scale near} $x_0$ if $\sP_1$ holds (with $Z$ replaced by $V$).
We say $V$ {\bf controls} $Z_0$ {\bf at the unit scale on} $\Omega'$ if $\sP_1$ holds $\forall x_0\in \Omega'$,
where the constants $\tau_1$, $\{\sigma_1^m\}_{m\in \N}$ can be chosen independent of $x_0\in \Omega'$.
\end{defn}

\begin{rmk}
Proposition \ref{PropFrobControlEquivsP} shows that $Z$ controls $Z_0$ at the unit scale near $x_0$ if and only if $\sP_2$ holds.
This uses that $Z$ satisfies the assumptions of Theorem \ref{ThmQuantFrob}
\end{rmk}


Let $\gammah_t(x)=\gammah(t,x):B^N(\rho)\times \Omega\rightarrow \Omega$ be a smooth function, satisfying $\gammah_0(x)\equiv x$.
Given $\gammah$, we can define a vector field as in \eqref{EqnResSurfDefW} given by
\begin{equation}\label{EqnFrobControlWh}
\Wh(t,x)=\frac{d}{d\epsilon}\bigg|_{\epsilon=1} \gammah_{\epsilon t}\circ \gammah_t^{-1}(x),
\end{equation}
which is defined for $|t|$ sufficiently small (we shrink $\rho$ so that $W(t,x)$ is defined for all $t\in B^N(\rho)$).
Just as above, we introduce two conditions, which turn out to be equivalent.  All parameters that follow are assumed to be strictly
positive real numbers.
\begin{enumerate}[1.]
\item $\sQ_1(\rho_1, \tau_1, \{\sigma_1^m\}_{m\in \N})$, ($\rho_1\leq \rho$, $\tau_1\leq \xi_1$):
\begin{itemize}
\item $\Wh(t,x)=\sum_{l=1}^q c_l(t,x) Z_l(x)$, on $B_{Z}({x_0},{\tau_1})$.
\item $\sum_{|\alpha|+|\beta|\leq m} \CjN{Z^{\alpha} \partial_t^{\beta} c_l}{0}{B^N(\rho_1)\times B_{Z}({x_0},{\tau_1})}\leq \sigma_1^m$.
\end{itemize}
\item $\sQ_2(\rho_2,\tau_2,\{\sigma_2^m\})$, ($\rho_2\leq \rho$, $\tau_2\leq \xi_1$):
\begin{itemize}
\item $\gammah\q(B^N(\rho_2)\times B_{Z}({x_0},{\tau_2})\w)\subseteq B_{Z}({x_0},{\xi_1})$.
\item If $\eta'=\eta'(\tau_2)>0$ is an $2$-admissible constant so small\footnote{Such a $2$-admissible $\eta'$ always exists.  See Proposition 11.2 of \cite{SteinStreetI}.}
that
\begin{equation*}
\Phi(B^{n_0}(\eta'))\subseteq B_{Z}({x_0},{\tau_2})\subseteq \Phi(B^{n_0}(\eta_1)),
\end{equation*}
then if we define a new map
\begin{equation*}
\theta_t(u) = \Phi^{-1} \circ \gamma_t \circ \Phi(u):B^N(\rho_2)\times B^{n_0}(\eta')\rightarrow B^{n_0}(\eta_1),
\end{equation*}
we have $\CjN{\theta}{m}{B^N(\rho_2)\times B^{n_0}(\eta')}\leq \sigma_2^m$.
\end{itemize}
\end{enumerate}

\begin{prop}\label{PropFrobControlEquivsQ}
$\sQ_1\Leftrightarrow \sQ_2$ in the following sense:
\begin{itemize}
\item $\sQ_1(\rho_1, \tau_1, \{\sigma_1^m\}_{m\in \N})\Rightarrow$ there exists a $2$-admissible constant $$\rho_2=\rho_2(\rho_1,\tau_1,\sigma_1^1,N)>0$$
and $m+1$-admissible constants $\sigma_2^m=\sigma_2^m(\sigma_1^{m+1}, N)$ such that
$$\sQ_2(\rho_2,\tau_1/2, \{\sigma_2^m\}_{m\in \N})$$ holds.
\item $\sQ_2(\rho_2,\tau_2,\{\sigma_2^m\}_{m\in \N})\Rightarrow$ there exists a $2$-admissible constant $\tau_1=\tau_1(\tau_2)>0$
and $m$-admissible constants $\sigma_1^m=(\sigma_2^{m+1}, N)$ such that $$\sQ_1(\rho_2,\tau_1,\{\sigma_1^m\}_{m\in \N})$$ holds.
\end{itemize}
\end{prop}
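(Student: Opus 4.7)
The plan is to reduce both $\sQ_1$ and $\sQ_2$ to statements on the Frobenius chart $B^{n_0}(\eta)$, where the pulled-back vector fields $Y_1,\ldots,Y_q$ are uniformly $C^\infty$-bounded and (by \eqref{EqnFrobYsSpan}) span the tangent space at every point with a uniform lower bound on a $n_0\times n_0$ minor. Once everything is transported to $B^{n_0}(\eta)$, the spatial half of the equivalence is precisely Proposition \ref{PropFrobControlEquivsP}, and the $t$-dependence is handled by smooth dependence of ODEs on parameters applied to the correspondence in Proposition \ref{PropResSurfEquiv}.

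First I would set up the pullback. Write $\theta_t := \Phi^{-1}\circ\gamma_t\circ\Phi$ and $\Wh^*(t,u) := (d\Phi(u))^{-1}\Wh(t,\Phi(u))$, the pullback of $\Wh(t,\cdot)$ to $B^{n_0}(\eta)$. A direct computation using the definition \eqref{EqnFrobControlWh} shows that $\theta$ and $\Wh^*$ are related by the same formula, i.e.\ $\Wh^*(t,u) = \tfrac{d}{d\epsilon}\big|_{\epsilon=1}\theta_{\epsilon t}\circ\theta_t^{-1}(u)$, so the bijective correspondence of Proposition \ref{PropResSurfEquiv} applies in the Frobenius chart as well.

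For $\sQ_1\Rightarrow\sQ_2$: Assume $\Wh(t,x)=\sum_{l=1}^q c_l(t,x)Z_l(x)$ with the stated $Z^{\alpha}\partial_t^{\beta}$-bounds on $B^N(\rho_1)\times B_{Z}(x_0,\tau_1)$. For each fixed $t$, Proposition \ref{PropFrobControlEquivsP} (the $\sP_1\Rightarrow\sP_2$ direction, with Remark \ref{RmkExtraPropFrobControlEquivsP} giving uniform bounds) says the pullback $\Wh^*(t,\cdot)$ is $C^m$ on $B^{n_0}(\eta_2)$ with bounds that are $m$-admissible multiples of $\sigma_1^m$. Joint smoothness in $(t,u)$ follows because $t$-derivatives commute with the spatial pullback: $\partial_t^{\beta}\Wh^*(t,u) = (d\Phi(u))^{-1}(\partial_t^{\beta}\Wh)(t,\Phi(u))$, and the same Frobenius argument applied to $\partial_t^{\beta}\Wh$ (using the joint bounds on $Z^\alpha\partial_t^\beta c_l$) yields $C^m$ bounds on $\partial_t^{\beta}\Wh^*$. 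Finally, $\theta_t(u)$ is recovered as the unique solution of the ODE $\tfrac{d}{d\epsilon}\theta_{\epsilon t}(u) = \Wh^*(\epsilon t,\theta_{\epsilon t}(u))$ with $\theta_0(u)=u$; smooth dependence of flows on parameters loses one derivative, giving $C^m$ bounds on $\theta$ in terms of $C^{m+1}$ bounds on $\Wh^*$, which accounts for the $\sigma_2^m=\sigma_2^m(\sigma_1^{m+1},N)$ dependence. The containment $\gammah(B^N(\rho_2)\times B_Z(x_0,\tau_1/2))\subseteq B_Z(x_0,\xi_1)$ follows by choosing $\rho_2$ small, since $\gammah_0=$ identity and $\Wh$ is bounded.

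For $\sQ_2\Rightarrow\sQ_1$: Assume $\theta$ is $C^m$ on $B^N(\rho_2)\times B^{n_0}(\eta')$ with bounds $\sigma_2^m$. Differentiating the relation $\Wh^*(t,u) = \tfrac{d}{d\epsilon}\big|_{\epsilon=1}\theta_{\epsilon t}\circ\theta_t^{-1}(u)$ (which requires inverting $\theta_t$, a $C^m$ operation with bounded inverse Jacobian for $t$ small) shows $\Wh^*$ is $C^{m-1}$ in $(t,u)$ with $\sigma_2^{m}$-admissible bounds. Since $Y_1,\ldots,Y_q$ have a uniformly bounded-below $n_0\times n_0$ Jacobian minor, we may solve the linear system $\Wh^*(t,u)=\sum\ct_l(t,u)Y_l(u)$ via Cramer's rule with $\ct_l\in C^{m-1}$. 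Pulling back via the $\sP_2\Rightarrow\sP_1$ half of Proposition \ref{PropFrobControlEquivsP}, applied separately for each $\partial_t^{\beta}\Wh^*$, produces $c_l(t,x)$ satisfying the estimates of $\sQ_1$ on some $B_Z(x_0,\tau_1)$ for a $2$-admissible $\tau_1=\tau_1(\tau_2)$.

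The main obstacle will be the careful bookkeeping of where derivatives are lost: pulling back through $\Phi$ is derivative-preserving and controlled by Proposition \ref{PropFrobControlEquivsP}, but passing between $\Wh$ and $\gammah$ via the ODE in Proposition \ref{PropResSurfEquiv} loses one derivative in each direction, which is the source of the $\sigma^{m+1}$-versus-$\sigma^m$ shift in the conclusion. The spatial half of the equivalence is essentially Proposition \ref{PropFrobControlEquivsP} applied parametrically in $t$; the new content is verifying that this parametric application is uniform in $t$, which follows from the fact that all constants involved depend only on admissible data and not on the $t$-parameter.
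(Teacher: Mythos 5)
The paper's own proof is a one-line citation to Proposition 12.3 of \cite{SteinStreetI}, so there is no internal proof to compare against. Your high-level strategy --- transport everything to the Frobenius chart, apply Proposition \ref{PropFrobControlEquivsP} parametrically in $t$, and pass between $\Wh^*$ and $\theta$ via the $W$--$\gamma$ correspondence while keeping track of the derivative loss --- is the right one, and your backward direction ($\sQ_2 \Rightarrow \sQ_1$) is sketched correctly: differentiating $\theta_{\epsilon t}\circ\theta_t^{-1}$ in $\epsilon$ is the source of the one-derivative drop in that direction, and the Cramer's-rule decomposition against $Y_1,\ldots,Y_q$ using \eqref{EqnFrobYsSpan} is exactly how one produces the coefficients $\ct_l$.

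However, there is a concrete error in the forward direction. You write that $\theta_t(u)$ is recovered as the solution of $\tfrac{d}{d\epsilon}\theta_{\epsilon t}(u)=\Wh^*(\epsilon t,\theta_{\epsilon t}(u))$, $\theta_0(u)=u$. The correct ODE is $\tfrac{d}{d\epsilon}\theta_{\epsilon t}(u)=\tfrac{1}{\epsilon}\Wh^*(\epsilon t,\theta_{\epsilon t}(u))$. (Check on $\Wh^*(t,u)=tY(u)$, $N=1$: then $\theta_t(u)=e^{tY}u$, so $\tfrac{d}{d\epsilon}\theta_{\epsilon t}(u)=tY(\theta_{\epsilon t}(u))=\tfrac{1}{\epsilon}\Wh^*(\epsilon t,\theta_{\epsilon t}(u))$, not $\Wh^*(\epsilon t,\theta_{\epsilon t}(u))$.) This is not a cosmetic slip: for the ODE you wrote, the flow depends on $(t,u)$ with the \emph{same} degree of smoothness as the $C^m$ vector field $\Wh^*$ --- smooth dependence of flows on parameters does \emph{not} lose a derivative --- so your argument would give $\sigma_2^m=\sigma_2^m(\sigma_1^m)$, contradicting the $\sigma_1^{m+1}$-dependence that the proposition asserts (and that you correctly want). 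The actual source of the loss in the forward direction is the $\tfrac{1}{\epsilon}$ singularity: since $\Wh^*(0,\cdot)\equiv 0$, one writes $\Wh^*(\tau,u)=\sum_j\tau_j\Wt^*_j(\tau,u)$ by Taylor's theorem, and the coefficients $\Wt^*_j$ cost one $t$-derivative of $\Wh^*$. After that regularization the ODE $g'(\epsilon)=\sum_j t_j\Wt^*_j(\epsilon t,g(\epsilon))$ is honestly nonsingular and one applies the (derivative-preserving) smooth-dependence theorem, landing on $\sigma_2^m=\sigma_2^m(\sigma_1^{m+1},N)$. So the conclusion you state is correct, but the mechanism you give for it would, if taken literally, prove a false (too strong) version of the estimate; the $\tfrac{1}{\epsilon}$ factor needs to be restored and its regularization identified as the place where the derivative is spent.
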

\begin{proof}
This is Proposition 12.3 of \cite{SteinStreetI}.
\end{proof}

\begin{defn}
Let $V=\{V_1,\ldots, V_r\}$ be a finite set of smooth vector fields on $\Omega$.
We say $V$ {\bf controls} $\gammah$ {\bf at the unit scale near} $x_0$ if $\sQ_1$ holds (with $Z$ replaced by $V$).
We say $V$ {\bf controls} $\gammah$ {\bf at the unit scale on} $\Omega'$ if $\sQ_1$ holds 
with $\rho_1$, $\tau_1$, and $\{\sigma_1^m\}_{m\in \N}$ independent of $x_0\in \Omega'$.
\end{defn}

\begin{rmk}
Proposition \ref{PropFrobControlEquivsQ} shows $Z$ controls $\gammah$ at the unit scale near $x_0$ if and only if $\sQ_2$ holds.
This uses that $Z$ satisfies the assumptions of Theorem \ref{ThmQuantFrob}
\end{rmk}


Now let $\sI$ be some index set (of arbitrary cardinality).

\begin{defn}\label{DefnFrobControlUniform}
For each $\iota\in \sI$, let $V^\iota=\{V^{\iota}_1,\ldots, V^{\iota}_r\}$ be a finite set of smooth vector fields
on $\Omega$, and let $Z^{\iota}_0$ be another smooth vector field on $\Omega$.
We say $V^{\iota}$ {\bf controls} $Z^{\iota}_0$
{\bf at the unit scale on} $\Omega'$, {\bf uniformly in} $\iota\in \sI$ if for each $x_0\in \Omega'$ and $\iota\in \sI$, $\sP_1$ holds with parameters ($\tau_1,\{\sigma_1^m\}$)
independent of $x_0\in \Omega'$ and $\iota\in \sI$ (with $Z$ replaced by $V^{\iota}$).  
\end{defn}


\begin{defn}
For each $\iota\in \sI$ let $V^\iota=\{V^{\iota}_1,\ldots, V^{\iota}_r\}$ be a finite set of smooth vector fields
on $\Omega$ each paired with single-parameter formal degrees.
Let $\Omega'\Subset\Omega''\Subset \Omega$ be open sets.
Suppose, for each $\iota\in \sI$, $\gammah^{\iota}:B^N(\rho)\times \Omega\rightarrow \Omega$ is a smooth function satisfying $\gammah_0^{\iota}(x)\equiv 0$,
and such that for $|t|<\rho$, $\gamma_t^{\iota}:\Omega''\rightarrow \Omega$ is a diffeomophism onto its image.  We say $V^{\iota}$ {\bf controls}
$\gammah^{\iota}$ {\bf at the unit scale on} $\Omega'$, {\bf uniformly in} $\iota \in \sI$ if for each $x_0\in \Omega'$ and $\iota \in \sI$, $\sQ_1$ holds with parameters  ($\rho_1,\tau_1,\{\sigma_1^m\}$)
independent of $x_0\in \Omega'$ and $\iota \in \sI$ (with $Z$ replaced by $V^{\iota}$).  
\end{defn}


\begin{rmk}\label{RmkFrobControlCanUseUniformt}
Fix $q\in \N$ and for each $\iota \in \sI$ let
$Z^\iota=\q\{Z_1^{\iota},\ldots, Z_q^{\iota}\w\}$ be a finite set of smooth vector fields
on $\Omega$ (where $q$ does not depend on $\iota$).
We assume that the assumptions of Theorem \ref{ThmQuantFrob} hold uniformly for $Z^{\iota}$ uniformly for
$x_0\in \Omega'$, in the sense that $\xi_1>0$
can be chosen independent of $\iota$,
and $C_m$ and $D_m$ from \eqref{EqnFrobDefnCm} and \eqref{EqnFrobDefnDm} can be chosen independent of $\iota$.
Note that $m$-admissible constants can be chosen independent of $\iota$.
In this case,
$Z^{\iota}$ controls $Z^{\iota}_0$ at the unit scale on $\Omega'$, uniformly in $\iota$, if and only if $\sP_2$
holds with parameters independent of $x_0\in \Omega'$ and $\iota\in \sI$ (Proposition \ref{PropFrobControlEquivsP}).
$Z^{\iota}$ controls $\gammah^{\iota}$ at the unit scale on $\Omega'$, uniformly in $\iota$, if and only if
$\sQ_2$ holds with parameters independent of $x_0\in \Omega'$ and $\iota\in \sI$ (Proposition \ref{PropFrobControlEquivsQ}).
\end{rmk}

\begin{prop}\label{PropFrobControlUnitScaleVsAll}
Let $(X,d)=\{(X_1,d_1),\ldots, (X_q,d_q)\}\subset \snuvect$ be a finite set.  For $\delta\in [0,1]^{\nu}$, let
$Z^{\delta}:=\{\delta^{d_1}X_1,\ldots, \delta^{d_q} X_q\}$.
\begin{itemize}
\item Let $X_0$ be another $C^{\infty}$ vector field on $\Omega$ and let $h:[0,1]^{\nu}\rightarrow [0,1]^{\nu}$ be a function.
Then, $(X,d)$ controls $(X_0,h)$ on $\Omega'$ if and only if $Z^{\delta}$ controls $h(\delta) X_0$ at the unit scale on $\Omega'$, uniformly
in $\delta\in [0,1]^{\nu}$.

\item $(X,d)$ satisfies $\sD(\Omega')$ if and only if $Z^{\delta}$ satisfies the conditions of Theorem \ref{ThmQuantFrob}, uniformly
for $\delta\in [0,1]^{\nu}$ and $x_0\in \Omega'$.

\item Let $(\gamma,e,N,\Omega,\Omega''')$ be a parameterization with $\nu$-parameter dilations (where $\Omega'\Subset\Omega'''\Subset\Omega$).  Then $(X,d)$ controls $(\gamma,e,N,\Omega,\Omega''')$ on $\Omega'$
if and only if $Z^{\delta}$ controls $\gamma_{\delta t}(x)$ at the unit scale on $\Omega'$, uniformly in $\delta\in [0,1]^{\nu}$.  Here, $\delta t$ is defined
with the $\nu$-parameter dilations, $e$.
\end{itemize}
\end{prop}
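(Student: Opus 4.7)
The plan is to prove all three parts by direct unpacking of definitions, using the single key observation that the multi-parameter Carnot--Carath\'eodory ball $\B{X}{d}{x}{\tau\delta}$ is, by Definition \ref{DefnResCCBall}, literally the unit-scale ball $B_{Z^\delta}(x,\tau)$ associated to the rescaled family $Z^\delta=\delta X=\{\delta^{d_1}X_1,\ldots,\delta^{d_q}X_q\}$. Once that identification is in hand, the ``control'' conditions in the multi-parameter framework of Section 3 and the unit-scale conditions $\sP_1$ and $\sQ_1$ of this section differ only by a relabeling of parameters, with uniformity in $\delta\in[0,1]^\nu$ corresponding precisely to the quantifier structure in the definitions.

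For part (i), suppose $(X,d)$ controls $(X_0,h)$ on $\Omega'$. The defining identity $h(\delta)X_0=\sum_j c_{x,j}^\delta\,\delta^{d_j}X_j$ on $\B{X}{d}{x}{\tau_1\delta}=B_{Z^\delta}(x,\tau_1)$ is literally the identity $Z_0^\delta=\sum_j c_{x,j}^\delta\,Z^\delta_j$ with $Z^\delta_0:=h(\delta)X_0$. The bounds $\sum_{|\alpha|\le m}\|(\delta X)^\alpha c_{x,j}^\delta\|_{C^0}<\infty$ coincide with $\sum_{|\alpha|\le m}\|(Z^\delta)^\alpha c_{x,j}^\delta\|_{C^0}<\infty$, uniform in $x\in\Omega'$ and $\delta\in[0,1]^\nu$, which is exactly $\sP_1$ for $Z^\delta$ with parameters independent of $\delta$. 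The converse is identical. Part (ii) is the same principle applied to the Lie bracket: since $[Z^\delta_j,Z^\delta_k]=\delta^{d_j+d_k}[X_j,X_k]$, the condition that $(X,d)$ controls $([X_j,X_k],d_j+d_k)$ on $\Omega'$ translates, by part (i), to the unit-scale structural bound \eqref{EqnFrobDefnDm} for $Z^\delta$ uniformly in $\delta$; smoothness of the $X_j$ and $\delta^{d_j}\le 1$ give \eqref{EqnFrobDefnCm} uniformly, and a trivial Picard--Lindel\"of estimate gives condition (a) uniformly. Conversely, the uniform validity of (a)--(c) immediately gives $\sD(\Omega')$ upon specializing to each $\delta$.

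For part (iii), the bridge is the computation
\[
\frac{d}{d\epsilon}\bigg|_{\epsilon=1}\gamma_{\epsilon\delta t}\circ\gamma_{\delta t}^{-1}(x)=W(\delta t,x),
\]
where $W$ is the vector field parameterization corresponding to $\gamma$ via Proposition \ref{PropResSurfEquiv}. Hence, setting $\gammah^\delta_t(x):=\gamma_{\delta t}(x)$, the vector field $\Wh$ produced from $\gammah^\delta$ by \eqref{EqnFrobControlWh} is exactly $W(\delta t,x)$. The defining equation for $(X,d)$ controlling $(W,e,N)$, namely $W(\delta t,x)=\sum_l c_l^{x_0,\delta}(t,x)\,\delta^{d_l}X_l(x)$ on $B^N(\rho_1)\times\B{X}{d}{x_0}{\tau_1\delta}$ with derivative bounds in $(\delta X)^\alpha\partial_t^\beta$, then reads as the unit-scale decomposition $\Wh^\delta(t,x)=\sum_l c_l^{x_0,\delta}(t,x)\,Z^\delta_l(x)$ on $B^N(\rho_1)\times B_{Z^\delta}(x_0,\tau_1)$ with bounds in $(Z^\delta)^\alpha\partial_t^\beta$, uniform in $\delta$---precisely $\sQ_1$ uniformly. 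The converse is again automatic.

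The only ``obstacle'' is bookkeeping: one must be careful that the implicit constants and radii in the definition of multi-parameter control depend only on $(X,d)$ and $\Omega'$ (not on $\delta$), so that they produce \emph{uniform} admissible constants for the family $\{Z^\delta\}_{\delta\in[0,1]^\nu}$ in the sense of Remark \ref{RmkFrobControlCanUseUniformt}. This is built into the quantifier ordering of Definition \ref{DefnResCCControl}, Definition \ref{DefnFrobControlUniform}, and the vector field parameterization analogue, so no substantial argument is required beyond matching these quantifier patterns.
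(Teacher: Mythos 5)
Your proof has a real gap at its foundational step. You assert that $\B{X}{d}{x}{\tau\delta}$ is \emph{literally} the ball $B_{Z^\delta}(x,\tau)$, but this is false. Unwinding Definition \ref{DefnResCCBall}, $\B{X}{d}{x}{\tau\delta}=B_{(\tau\delta)X}(x)$, where the rescaled vector fields are $(\tau\delta)^{d_j}X_j=\tau^{|d_j|_1}\delta^{d_j}X_j$; on the other hand $B_{Z^\delta}(x,\tau)=\B{Z^\delta}{1}{x}{\tau}$ is the unit ball of the family $\{\tau\,\delta^{d_j}X_j\}$, i.e.\ each rescaling coefficient carries $\tau$ to the \emph{first} power rather than to the power $|d_j|_1$. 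These coincide only in the degenerate situation $|d_j|_1=1$ for every $j$. When you write, in your proof of part (i), ``$\B{X}{d}{x}{\tau_1\delta}=B_{Z^\delta}(x,\tau_1)$,'' you are invoking an identity that does not hold, and the same false identification is silently used in parts (ii) and (iii).

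What you actually need --- and what the paper's proof supplies --- is the two-sided comparison: given $\tau_1>0$ there exists $\tau_2=\tau_2(\tau_1)>0$, depending only on $C^1$ bounds for $X_1,\ldots,X_q$ and on upper and lower bounds for the $|d_j|_1$, such that
\begin{equation*}
B_{Z^{\delta}}(x,\tau_2)\subseteq\B{X}{d}{x}{\tau_1\delta}\subseteq B_{Z^{\delta}}(x,\tau_1')
\end{equation*}
for a suitable $\tau_1'$, uniformly in $\delta\in[0,1]^\nu$ and $x\in\Omega'$. Equivalently, one passes through the intermediate family $(Z^\delta,\hd)$ with $\hd_j=|d_j|_1$, notices that $\B{Z^\delta}{\hd}{x}{\tau_1}$ \emph{does} literally equal $\B{X}{d}{x}{\tau_1\delta}$, and then compares $\B{Z^\delta}{\hd}{x}{\cdot}$ with the unit-degree ball $B_{Z^\delta}(x,\cdot)$. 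Because both the multi-parameter notion of control and $\sP_1$, $\sQ_1$ are existentially quantified over the radius, this comparison step is all that is missing; after it, your ``match the quantifier patterns'' argument does go through. So the conclusion of your proof is salvageable, but your claim that the identification is literal is wrong, and your closing remark that ``no substantial argument is required beyond matching these quantifier patterns'' misses the one substantive step that is required.
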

\begin{proof}
Let $\hd_1=|d_1|_1,\ldots, \hd_q=|d_q|_1$, and let $(Z^\delta,\hd)=\{ (Z^{\delta}_1, \hd_1),\ldots, (Z^{\delta}_q, \hd_q)\}$.  If, in all of the definitions ``at the unit scale'' and in the conditions of Theorem \ref{ThmQuantFrob},
we replace $B_{Z^\delta}(x,\cdot)$ with $\B{Z^{\delta}}{\hd}{x}{\cdot}$, then the above results follow immediately from the definitions.
To complete the proof, it suffices to note, given $\tau_1>0$, there exists $\tau_2=\tau_2(\tau_1)>0$, depending on the $C^1$ norms of
$X_1,\ldots, X_q$, and upper and lower bounds for $\hd_1,\ldots, \hd_q$ such that
\begin{equation*}
B_{Z^{\delta}}(x, \tau_2)\subseteq \B{Z^\delta}{\hd}{x}{\tau_1},\quad \B{Z^\delta}{\hd}{x}{\tau_2}\subseteq B_{Z^{\delta}}(x,\tau_1).
\end{equation*}
The result follows.
\end{proof}

\begin{rmk}
Combining Proposition \ref{PropFrobControlUnitScaleVsAll} with Remark \ref{RmkFrobControlCanUseUniformt} shows that
we can use the characterizations $\sP_2$ and $\sQ_2$ at each scale, uniformly in the scale, when working with the notions of control.
This is the key point behind these definitions.
\end{rmk}


\section{Proofs: Adjoints}\label{SectionPfAdjoint}
A key point in the proofs that follow is that the class of operators we consider is closed under adjoints.
More precisely, we have the following results.  Fix open sets $\Omega_0\Subset \Omega'\Subset \Omega''\Subset\Omega'''\Subset \Omega\subseteq \R^n$.

\begin{prop}\label{PropPfAdjGamma}
Let $(\gamma,e,N,\Omega,\Omega''')$ be a parameterization.  For $t\in \R^N$ with $|t|$ sufficiently small, we may consider
$\gamma_t^{-1}(x)$; the inverse of $\gamma_t(\cdot)$.  Then, $$(\gamma_t^{-1}, e, N,\Omega'',\Omega)$$ is a parameterization.
Furthermore, if $(\gamma,e,N)$ is finitely generated (resp. linearly finitely generated) by $\sF$ on $\Omega'$,
then $(\gamma_t^{-1},e,N)$ is finitely generated (resp. linearly finitely generated) by $\sF$ on $\Omega'$.
\end{prop}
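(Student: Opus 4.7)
The proof splits naturally into three steps: (a) verifying that the triple $(\gamma_t^{-1},e,N,\Omega'',\Omega)$ qualifies as a parameterization; (b) showing that $\sF$ controls $(\gamma_t^{-1},e,N)$ on $\Omega'$; and (c) transferring the Lie-algebraic condition that $\sL(\sS_W)$ be finitely generated (resp.\ linearly finitely generated) by $\sF$ to the analogous statement for $\sL(\sS_{\tilde W})$, where $W$ and $\tilde W$ denote the vector field parameterizations corresponding to $\gamma$ and $\gamma^{-1}$ via Proposition \ref{PropResSurfEquiv}, and $\sS_W$, $\sS_{\tilde W}$ are the sets associated to them by \eqref{EqnResSurfDefnsS}. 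Step (a) is immediate from the smooth inverse function theorem: since $d_x\gamma_0=I$ and $\gamma$ is smooth, $\gamma_t^{-1}(x)$ exists and is smooth in $(t,x)$ on a small neighborhood of $\{0\}\times\Omega''$, with $\gamma_0^{-1}(x)\equiv x$; after shrinking $\rho$ we obtain $\gamma_t^{-1}:\Omega''\rightarrow \Omega$ for $|t|<\rho$.

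For step (b), I pass to the unit scale. Writing $\sF=\{(V_1,\hd_1),\ldots,(V_r,\hd_r)\}$ and $V^\delta=\{\delta^{\hd_1}V_1,\ldots,\delta^{\hd_r}V_r\}$, Proposition \ref{PropFrobControlUnitScaleVsAll} says that $\sF$ controls $(\gamma,e,N)$ on $\Omega'$ if and only if $V^\delta$ controls $\gamma_{\delta t}$ at the unit scale on $\Omega'$, uniformly in $\delta\in[0,1]^\nu$. Invoking the equivalence $\sQ_1\Leftrightarrow\sQ_2$ of Proposition \ref{PropFrobControlEquivsQ}, this in turn is equivalent to uniform $C^m$ bounds on the Frobenius pullback $\theta_{\delta t}^{x_0,\delta}:=\Phi_{x_0,\delta}^{-1}\circ\gamma_{\delta t}\circ\Phi_{x_0,\delta}$ as $x_0$ ranges over $\Omega'$ and $\delta$ over $[0,1]^\nu$. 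Since $\theta_0^{x_0,\delta}=\mathrm{id}$, with Jacobian bounded below by an admissible constant coming from \eqref{EqnFrobYsSpan}, a quantitative inverse function theorem yields matching $C^m$ bounds on the pointwise inverse $(\theta_{\delta t}^{x_0,\delta})^{-1}=\Phi_{x_0,\delta}^{-1}\circ\gamma_{\delta t}^{-1}\circ\Phi_{x_0,\delta}$, uniformly in $\delta$ and $x_0$, provided $|t|$ is taken uniformly small. Running the chain of equivalences backwards, $\sF$ controls $(\gamma_t^{-1},e,N)$ on $\Omega'$.

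For step (c), I compare the Taylor data of $W$ and $\tilde W$ directly. Formally differentiating the identity $\gamma_t\circ\gamma_t^{-1}(x)\equiv x$ in $t$ and inductively solving for the Taylor coefficients of $\tilde W$ expresses each $\tilde X_\alpha$ as a polynomial in $\{X_\beta:|\beta|\le|\alpha|\}$ together with their spatial derivatives, and the multi-parameter degree $\deg(\alpha)$ is preserved in the inversion. Consequently each $(\tilde X_\alpha,\deg(\alpha))$ lies in the $C^\infty(\Omega')$-module generated by $\{(X_\beta,\deg(\beta)):\deg(\beta)\le\deg(\alpha)\}$; by the symmetric role of $\gamma$ and $\gamma^{-1}$ the reverse inclusion holds as well, so $\sL(\sS_{\tilde W})$ and $\sL(\sS_W)$ are (smoothly) equivalent on $\Omega'$. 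Since $\sL(\sS_W)$ is finitely generated by $\sF$ by hypothesis, so is $\sL(\sS_{\tilde W})$; combined with the control conclusion from step (b), this gives that $(\gamma_t^{-1},e,N)$ is finitely generated by $\sF$ on $\Omega'$. For the linear case, the correction terms in the Taylor inversion vanish at first order, so $\tilde X_\alpha=-X_\alpha$ whenever $|\alpha|=1$; the set $\tilde\sF_0$ in Definition \ref{DefnResSurfLinearlyFinitelyGenerated} for $\tilde W$ thus coincides with $\sF_0$ for $W$, and the linearly finitely generated property is inherited verbatim.

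The step I expect to be the main obstacle is the quantitative inverse function argument in step (b), where one must verify that admissible bounds on $\theta_{\delta t}^{x_0,\delta}$ transfer to its inverse uniformly in $\delta\in[0,1]^\nu$ and $x_0\in\Omega'$, noting that the neighborhood on which the inverse is defined must also shrink uniformly. The algebraic comparison of $\sS_W$ and $\sS_{\tilde W}$ in step (c) is conceptually routine but requires careful tracking of the multi-parameter degree filtration under the Taylor inversion formulas.
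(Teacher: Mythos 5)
The proposal has the right overall shape, and your handling of the linearly finitely generated case in step (c) is in substance the paper's own argument: differentiating $\gamma_t\circ\gamma_t^{-1}=\mathrm{id}$ gives $W_i(t,x)=-(d_x\gamma_t(x))^{-1}W(t,\gamma_t(x))=-W(t,x)+O(|t|^2)$, so the first-order Taylor coefficients of $W_i$ are the negatives of those of $W$ and $\tilde\sF_0$ is (up to signs) $\sF_0$. Where you part ways with the paper is in the finitely generated case: the paper does not re-derive it, it invokes Lemma 12.20 of \cite{SteinStreetI} as a black box, and then uses that result (together with Lemma \ref{LemmaResSurfLinFGImpliesFG}) to get $\sF$ controlling $(\gamma_t^{-1},e,N)$ before doing the first-order computation. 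Your steps (b) and (c) are an attempt to prove what is being cited, so it is worth examining them closely.

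Step (b) is conceptually sound: passing to the unit scale via Propositions \ref{PropFrobControlUnitScaleVsAll} and \ref{PropFrobControlEquivsQ}, using the uniform Jacobian lower bound from \eqref{EqnQuantFrobJacVol}, and applying a quantitative inverse-function theorem uniformly in $x_0\in\Omega'$ and $\delta\in[0,1]^\nu$ is exactly the right mechanism for showing that $\sF$ controls $(\gamma_t^{-1},e,N)$. This is, however, only one of the two conditions in Definition \ref{DefnResSurfFinitelyGenVf}; the other is that $\sL(\sS_{\tilde W})$ itself be finitely generated by $\sF$.

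It is step (c) where there is a genuine gap. You assert that \emph{``each $(\tilde X_\alpha,\deg(\alpha))$ lies in the $C^\infty(\Omega')$-module generated by $\{(X_\beta,\deg(\beta)):\deg(\beta)\le\deg(\alpha)\}$.''} This is false in general. The identity $W_i(t,\cdot)=-\gamma_t^{*}W(t,\cdot)$ exhibits $\tilde W$ as the negative pullback of $W$ under the family of diffeomorphisms $\gamma_t$, and pulling a vector field back under a flow-type map produces Lie-derivative (iterated commutator) corrections, not module corrections. For instance, the coefficient of a degree-$2$ monomial in $\tilde W$ can contain a term of the form $[X_{\beta_1},X_{\beta_2}]$ with $\deg(\beta_1)+\deg(\beta_2)=\deg(\alpha)$, and there is no reason for $[X_{\beta_1},X_{\beta_2}]$ to be a $C^\infty$-combination of the single vector fields $X_\beta$ with $\deg(\beta)\le\deg(\alpha)$. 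What you can correctly conclude from the Taylor inversion is that $\tilde X_\alpha$ is a $C^\infty(\Omega')$-combination of elements of $\sL(\sV_W)$ (iterated brackets of the $X_\beta$), with the bracket degrees not exceeding $\deg(\alpha)$ in each component. Turning that into the required statement that $\sL(\sS_{\tilde W})$ is finitely generated by $\sF$ and controls $(\tilde W,e,N)$ still requires threading through Proposition \ref{PropResSurfAnotherWay} and the multi-parameter degree bookkeeping with uniform control over the scaling $\delta$; that is precisely the nontrivial content of the lemma the paper cites, and the ``consequently'' that collapses it to a module statement is where your argument would fail if pressed.

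So: the linearly finitely generated half of your proposal essentially reproduces the paper's argument; the finitely generated half attempts a direct proof of what the paper cites, and step (c) as written is not correct.
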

\begin{proof}
It is clear that $(\gamma_t^{-1},e,N,\Omega'',\Omega)$ is a parameterization.  When $(\gamma,e,N)$ is finitely generated by $\sF$ on $\Omega'$,
that $(\gamma_t^{-1},e,N)$ is finitely generated by $\sF$ on $\Omega'$ is exactly the statement of Lemma 12.20 of \cite{SteinStreetI}.

%

Now suppose $(\gamma,e,N)$ is linearly finitely generated by $\sF$ on $\Omega'$.  First we claim that $\sF$ controls $(\gamma_t^{-1},e,N)$ on
$\Omega'$.  Indeed, by Lemma \ref{LemmaResSurfLinFGImpliesFG}, $(\gamma,e,N)$ is finitely generated by $\sF$ on $\Omega'$,
and by the above $(\gamma_t^{-1},e,N)$ is finitely generated by $\sF$ on $\Omega'$, and therefore $(\gamma_t^{-1},e,N)$ is controlled
by $\sF$ on $\Omega'$.

Let $(\gamma,e,N)$ correspond to the vector field parameterization $(W,e,N)$, and let $(\gamma_t^{-1},e,N)$ correspond to the vector field
parameterization $(W_i,e,N)$.  We know that
\begin{equation*}
W(t) = \sum_{j=1}^N t_j X_j + O(|t|^2),
\end{equation*}
where $\{X_1,\ldots, X_N\}$ and $\sF$ are equivalent on $\Omega'$.  
By the definition of linearly finitely generated, the proof will be complete once we show the same is true for $W_i$ (with $X_j$ replaced by
$-X_j$).
We have
\begin{equation*}
0=\frac{d}{d\epsilon}\bigg|_{\epsilon=1} \gamma_{\epsilon t}\circ \gamma_{\epsilon t}^{-1}\circ\gamma_t(x) = W(t,\gamma_t(x)) + \q(d_x \gamma_t(x)\w) W_i(t,x).
\end{equation*}
I.e.,
\begin{equation*}
W_i(t,x) = -(d_x \gamma_t(x))^{-1} W(t,\gamma_t(x)).
\end{equation*}
Using that $W(t,x)\equiv 0$, and that $\gamma_0(x)\equiv x$, we have
\begin{equation*}
W_i(t,x) = -(d_x \gamma_t(x))^{-1} W(t,\gamma_t(x)) = -W(t,x) + O(|t|^2)=\sum_{j=1}^N t_j (-X_j) + O(|t|^2).
\end{equation*}
The result follows.
\end{proof}

\begin{thm}\label{ThmPfAdjAdjThm}
Let $(\gamma,e,N,\Omega''', \Omega)$ be a parameterization.  There exists $a>0$ (depending on the parameterization)
such that if $T$ is a fractional Radon transform of order $\delta\in \R^{\nu}$ corresponding to $(\gamma,e,N)$ on $B^N(a)$,
then $T^{*}$ is a fractional Radon transform of order $\delta$ corresponding to $(\gamma_t^{-1},e,N)$ on $B^N(a)$.
\end{thm}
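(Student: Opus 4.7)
The plan is to compute $T^*$ directly by expanding the $L^2$ pairing $\langle Tf, g\rangle$, interchanging the distributional pairing against $K(t)$ with the $dx$-integration, and then performing a change of variables $y = \gamma_t(x)$ for each fixed $t$. First I would shrink $a>0$ so that for every $t\in B^N(a)$, $\gamma_t$ restricted to a neighborhood of $\Omega''$ is a diffeomorphism onto its image (so $\gamma_t^{-1}$ makes sense and the pair $(\gamma_t^{-1},e,N,\Omega'',\Omega)$ is a parameterization, using Proposition \ref{PropPfAdjGamma}). I would also shrink $a$ so that whenever $\psi_2(\gamma_t(x))\ne 0$ with $x\in\supp(\psi_1)$, the point $\gamma_t(x)$ lies in a fixed relatively compact set on which the inverse mapping and its Jacobian are uniformly smooth.

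The key computation is to write
\begin{equation*}
\langle Tf, g\rangle \;=\; \Bigl\langle K(t),\; \int \psi_1(x)\overline{g(x)}\, f(\gamma_t(x))\,\psi_2(\gamma_t(x))\,\kappa(t,x)\,dx\Bigr\rangle_t,
\end{equation*}
which is justified because $K\in \sK_\delta(N,e,a)$ is a compactly supported distribution and the quantity inside the pairing is $C^\infty$ in $t$ with values in the space of smooth functions of compact support in $x$. For each fixed $t\in B^N(a)$ I then substitute $y = \gamma_t(x)$ in the inner integral. Letting $J(t,y):=|\det d_y\gamma_t^{-1}(y)|$, this produces
\begin{equation*}
\langle Tf, g\rangle \;=\; \int f(y)\,\psi_2(y)\,\overline{\,\Bigl\langle \overline{K(t)},\; \overline{\psi_1(\gamma_t^{-1}(y))}\,g(\gamma_t^{-1}(y))\,\overline{\kappa(t,\gamma_t^{-1}(y))}\,J(t,y)\Bigr\rangle_t\,}\,dy.
\end{equation*}
Reading off $T^*g$ gives
\begin{equation*}
T^*g(y) \;=\; \psi_2(y)\int g\bigl(\gamma_t^{-1}(y)\bigr)\,\psi_1\bigl(\gamma_t^{-1}(y)\bigr)\,\widetilde{\kappa}(t,y)\,K(t)\,dt,
\end{equation*}
where $\widetilde{\kappa}(t,y):=\overline{\kappa(t,\gamma_t^{-1}(y))}\,J(t,y)$.

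The final step is to recognize this formula as a fractional Radon transform of order $\delta$ corresponding to $(\gamma_t^{-1},e,N)$ on $B^N(a)$. For this I need to check three things: that $(\gamma_t^{-1},e,N,\Omega'',\Omega)$ really is a parameterization in the sense required by \eqref{EqnResRadonMainOpNew} (this is Proposition \ref{PropPfAdjGamma}, after possibly shrinking $a$); that the new cutoffs $\psi_1':=\psi_2$ and $\psi_2':=\psi_1$ lie in $C_0^\infty(\Omega_0)$ (immediate); that the new amplitude $\widetilde{\kappa}$ is $C^\infty$ on $B^N(a)\times \Omega''$ (this follows from smoothness of the inverse flow in $(t,y)$ and the fact that $J(t,y)$ is smooth and nonvanishing for $|t|$ small); and that the kernel $K$ is the same element of $\sK_\delta(N,e,a)$ (it is, since only the amplitude absorbed the Jacobian).

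The main obstacle is just the rigorous justification of the interchange of the $K$-pairing with the $dx$-integration and the subsequent change of variables; once this is set up with $a$ chosen small enough that the $t$-support of $K$ is entirely contained in a region where all the geometric operations are smooth, everything else is essentially bookkeeping. A detailed justification of the Fubini/change-of-variables step in this exact framework is also carried out in Section 12.3 of \cite{SteinStreetI}, and my argument closely parallels that one.
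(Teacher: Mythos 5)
Your proof is correct and takes essentially the same approach as the paper: a change of variables $y=\gamma_t(x)$ inside the $L^2$ pairing, absorbing the Jacobian into a new amplitude $\widetilde{\kappa}$, verifying that $\widetilde{\kappa}\in C^\infty(B^N(a)\times\Omega'')$ via non-vanishing of $\det d_x\gamma_t$ for $|t|<a$ small, and invoking Proposition \ref{PropPfAdjGamma} for the parameterization structure of $(\gamma_t^{-1},e,N)$. Your bookkeeping of complex conjugates in the final displayed formula is slightly inconsistent (you keep the conjugate on $\kappa$ but drop it on $\psi_1,\psi_2,K$), but this is harmless since $C_0^\infty(\Omega_0)$ and $\sK_\delta(N,e,a)$ are closed under conjugation, so the conclusion of the theorem is unaffected.
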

\begin{proof}
Suppose $T$ is given by \eqref{EqnResRadonMainOpNew}.  Then a simple change of variables shows, if $K\in \sK_\delta(N,e,a)$ for $a>0$ sufficiently small,
\begin{equation*}
T^{*} f(x) = \overline{\psi_2(x)} \int f(\gamma_t^{-1}(x))\overline{\psi_1(\gamma_t^{-1}(x))} \kappat(t,x) K(t)\: dt,
\end{equation*}
where 
$$\kappat(t,x)=\overline{\kappa(t,\gamma_t^{-1}(x))} \q|\q(\det d_x \gamma_t\w)(t, \gamma_t^{-1}(x))\w|^{-1}.$$
Using that $\gamma_0(x)\equiv x$, for $|t|<a$ (if $a>0$ is chosen small enough), $$\q(\det d_x \gamma_t\w)(t, \gamma_t^{-1}(x))$$ stays
bounded away from zero, and therefore $\kappat\in C^{\infty}(B^N(a)\times \Omega'')$.  The result follows.
\end{proof}

In the sequel, we need the following version of Theorem \ref{ThmPfAdjAdjThm}  which happens ``at each scale''.

\begin{prop}\label{PropPfAdjEachScale}
Let $(\gamma,e,N,\Omega,\Omega''')$ be a parameterization.  There exists $a>0$ such that the following holds.
Let $\sB_1\subset \schS(\R^N)$, $\sB_2\subset C_0^\infty(B^N(a))$, $\sB_3\subset C_0^\infty(\Omega_0)$,
and $\sB_4\subset C^\infty(B^N(a)\times \Omega'')$ be bounded sets.
For $j\in [0,\infty)^{\nu}$, $\vsig\in \sB_1\cap \schS_{\{\mu : j_{\mu}>0\}}$, $\eta\in \sB_2$, $\psi_1,\psi_2\in \sB_3$, and $\kappa\in \sB_4$,
define
\begin{equation*}
T_j[(\gamma,e,N),\vsig,\eta,\psi_1,\psi_2,\kappa]f(x):=\psi_1(x)\int f(\gamma_t(x)) \psi_2(\gamma_t(x)) \kappa(t,x) \eta(t)\dil{\vsig}{2^j}(t)\: dt.
\end{equation*}
Then, $$T_j[(\gamma,e,N),\vsig,\eta,\psi_1,\psi_2,\kappa]^{*}$$ is ``of the same form'' as $$T_j[(\gamma,e,N),\vsig,\eta,\psi_1,\psi_2,\kappa],$$
with $(\gamma,e,N)$ replaced by $(\gamma_t^{-1},e,N)$.  More precisely,
\begin{equation*}
T_j[(\gamma,e,N),\vsig,\eta,\psi_1,\psi_2,\kappa]^{*} = T_j[(\gamma_t^{-1},e,N), \overline{\vsig}, \overline{\eta}, \overline{\psi_2},\overline{\psi_1}, \kappat],
\end{equation*}
where $\kappat=\kappat(\kappa,(\gamma,e,N))$ and 
\begin{equation}\label{EqnPfAdjToShowKapt}
\{\kappat : \kappa\in \sB_4\}\subset C^\infty(B^N(a)\times \Omega'')
\end{equation}
 is a bounded set.  In the above, $\overline{\vsig}$,  $\overline{\eta}$, $\overline{\psi_2}$, and $\overline{\psi_1}$ denote the complex conjugate
 of the respective function.
\end{prop}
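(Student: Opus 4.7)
The plan is to do the standard change of variables computation that proves Theorem \ref{ThmPfAdjAdjThm}, but tracking how the constants depend on the data so that boundedness in the sets $\sB_i$ is preserved. Concretely, I would pair $T_j f$ against a test function $g$ and substitute $y = \gamma_t(x)$ in the $x$-integral, using that, once $a>0$ is taken small enough, $\gamma_t(\cdot)$ is a diffeomorphism onto its image for every $t\in B^N(a)$ (this is built into the notion of parameterization).  The Jacobian factor $|\det d_x \gamma_t(x)|^{-1}$ appears, evaluated at $x=\gamma_t^{-1}(y)$, and after relabeling one reads off
\[
T_j^* g(x) \;=\; \overline{\psi_2(x)}\int g(\gamma_t^{-1}(x))\,\overline{\psi_1(\gamma_t^{-1}(x))}\,\kappat(t,x)\,\overline{\eta(t)}\,\dil{\overline{\vsig}}{2^j}(t)\,dt,
\]
where
\[
\kappat(t,x)\;:=\;\overline{\kappa(t,\gamma_t^{-1}(x))}\,\bigl|\det d_x\gamma_t(\gamma_t^{-1}(x))\bigr|^{-1}.
\]
Here I use that $\overline{\dil{\vsig}{2^j}(t)} = \dil{\overline{\vsig}}{2^j}(t)$ because the dilation normalization factor is a positive real number, and I use Proposition \ref{PropPfAdjGamma} to know that $(\gamma_t^{-1},e,N,\Omega'',\Omega)$ is itself a parameterization. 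This identifies $T_j^*$ with $T_j[(\gamma_t^{-1},e,N),\overline{\vsig},\overline{\eta},\overline{\psi_2},\overline{\psi_1},\kappat]$ in the exact form required.

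The substantive point is then \eqref{EqnPfAdjToShowKapt}: that $\{\kappat : \kappa\in\sB_4\}$ is a bounded subset of $C^\infty(B^N(a)\times\Omega'')$ when $\sB_4$ is. The strategy is to choose $a>0$ once and for all (depending only on the parameterization) so that three things hold uniformly for $t\in B^N(a)$ and $x\in\Omega''$:
\begin{enumerate}[(i)]
\item $\gamma_t^{-1}(x)$ is well-defined and lies in $\Omega'''$ (hence stays in the domain of $\kappa$), with $(t,x)\mapsto \gamma_t^{-1}(x)$ smooth with bounded $C^m$ norms for every $m$;
\item $|\det d_x\gamma_t(x)| \geq c > 0$ uniformly, which is possible since at $t=0$ this determinant equals $1$ by $\gamma_0(x)\equiv x$ and continuity;
\item $\gamma_t^{-1}$ maps $\Omega''$ into $\Omega'''$, again by continuity in $t$ around $t=0$.
\end{enumerate}

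Given these, the claim reduces to a routine application of the chain rule and the Fa\`a di Bruno formula: $\kappat$ is a composition and a product of (a) the function $\overline{\kappa}$, restricted to a compact set in its domain, (b) the smooth map $(t,x)\mapsto (t,\gamma_t^{-1}(x))$ with bounded $C^m$ seminorms, and (c) the smooth function $|\det d_x\gamma_t|^{-1}\circ(\mathrm{id},\gamma_t^{-1})$, which is bounded in $C^m$ because the denominator is bounded away from $0$ by (ii). Since each factor varies in a bounded subset of $C^\infty$ as $\kappa$ ranges over $\sB_4$ (the dependence on $\gamma$ is fixed and contributes only a multiplicative constant in each $C^m$ seminorm), so does $\kappat$. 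The only genuinely delicate step is the uniform lower bound in (ii) together with the uniform smoothness of $\gamma_t^{-1}$ in (i), both of which follow from the inverse function theorem applied to the family $\{\gamma_t\}_{|t|<a}$ by shrinking $a$; this is the one place the choice of $a$ is used and is what makes the statement scale-robust.
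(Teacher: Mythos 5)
Your argument is correct and follows essentially the same approach as the paper's: the same change of variables $y=\gamma_t(x)$ produces the identical formula for $\kappat$, and the boundedness of $\{\kappat:\kappa\in\sB_4\}$ is obtained from the same key observation that $\det d_x\gamma_t$ stays bounded away from $0$ for $|t|<a$ small (since $\gamma_0(x)\equiv x$). The paper simply states the change-of-variables step as routine, whereas you have spelled it out together with the Fa\`a di Bruno bookkeeping; the substance is the same.
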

\begin{proof}
A straightforward change of variables shows
\begin{equation*}
T_j[(\gamma,e,N),\vsig,\eta,\psi_1,\psi_2,\kappa]^{*} = T_j[(\gamma_t^{-1},e,N), \overline{\vsig}, \overline{\eta}, \overline{\psi_2},\overline{\psi_1}, \kappat],
\end{equation*}
where 
$$\kappat(t,x)=\overline{\kappa(t,\gamma_t^{-1}(x))} \q|\q(\det d_x \gamma_t\w)(t, \gamma_t^{-1}(x))\w|^{-1}.$$
Using that $\gamma_0(x)\equiv x$, for $|t|<a$ (if $a>0$ is chosen small enough), $$\q(\det d_x \gamma_t\w)(t, \gamma_t^{-1}(x))$$ stays
bounded away from zero, and therefore the set in \eqref{EqnPfAdjToShowKapt} is bounded.
\end{proof}

\section{Proofs: A Technical $L^2$ Lemma}
In this section, we present a basic $L^2$ lemma, which lies at the heart of our analysis.  The lemma has two parts.  The first part is a consequence of
the basic $L^2$ result which forms the heart
of the work in \cite{SteinStreetI}; we use this to study fractional Radon transforms corresponding to finitely generated parameterizations.
The second part is an analogous version which is appropriate for studying fractional Radon transforms corresponding to linearly finitely generated parameterizations.

Fix open sets $\Omega'\Subset\Omega''\Subset\Omega'''\Subset \Omega\subseteq \R^n$.  Let $Z=\{Z_1,\ldots, Z_q\}$ be a finite set of
smooth vector fields on $\Omega$.
We assume these vector fields satisfy the conditions of Theorem \ref{ThmQuantFrob}, uniformly for $x_0\in \Omega'$.  It, therefore, makes
sense to talk about $m$-admissible constants as in that section; and these constants can be chosen to be independent of $x_0\in \Omega'$.

Let $\gammah(t,x):B^N(\rho)\times \Omega'''\rightarrow \Omega$ be controlled at the unit scale on $\Omega'$ by $Z$,
with $\gammah(0,x)\equiv x$.
Fix $a>0$ small (to be chosen later). Let $\kappa(t,x)\in C^{\infty}(B^{N}(a)\times \Omega'')$, $\psi_1,\psi_2\in C_0^{\infty}(\Omega')$, and $\vsig\in C_0^{\infty}(B^N(a))$.
Decompose $\R^{N}=\R^{N_1}\times \R^{N_2}$ and write $t=(t_1,t_2)\in \R^{N_1}\times \R^{N_2}$.
Define $\Wh$ as in \eqref{EqnFrobControlWh}.
We separate our assumptions on $\Wh$ into two possible cases.

\noindent{\bf Case I:}  Setting $t_2=0$ and expanding $\Wh(t_1,0)$ as a Taylor series in the $t_1$ variable
\begin{equation}\label{EqnPfLtTSeries}
\Wh(t_1,0)\sim \sum_{|\alpha|>0} t_1^{\alpha} Z_{\alpha},
\end{equation}
where the $Z_\alpha$ are smooth vector fields on $\Omega''$.  We assume that there exists $M\in \N$ such that the following holds.
Let $\sF_1^M:=\{Z_\alpha : |\alpha|\leq M\}$.  Recursively define
\begin{equation*}
\sF_j^M := \sF_{j-1}^M \bigcup\q[ \bigcup_{1\leq k,l\leq j} \q\{ [Y_1,Y_2] : Y_1\in \sF_k^M, Y_2\in \sF_{l}^M\w\}\w].
\end{equation*}
We assume that $\sF_M^M$ controls $Z$ at the unit scale on $\Omega'$.

\noindent{\bf Case II:}  Setting $t_2=0$ and expanding $\Wh(t_1,0)$ as a Taylor series in the $t_1$ variable
as in \eqref{EqnPfLtTSeries}, we assume $\{ Z_{\alpha} : |\alpha|=1\}$ controls $Z$ at the unit scale on $\Omega'$.

We also separate our assumptions on $\vsig$ into the same two cases:

\noindent{\bf Case I:} $\int \vsig(t_1,t_2)\: dt_2=0$.

\noindent{\bf Case II:} For some $L\in \N$, $\vsig(t_1,t_2)=\sum_{|\alpha|=L} \partial_{t_2}^{\alpha} \vsig_\alpha(t_1,t_2)$,
where $\vsig_{\alpha}\in C_0^{\infty}(B^N(a))$.

For $\zeta\in(0,1]$, define an operator
\begin{equation*}
D_{\zeta} f(x) = \psi_1(x) \int f(\gammah_{t_1,\zeta t_2}(x)) \psi_2(\gammah_{t_1,\zeta t_2}(x)) \kappa(t_1, \zeta t_2,x) \vsig(t)\: dt.
\end{equation*}

\begin{lemma}\label{LemmaPfLtMainLemma}
If $a>0$ is chosen sufficiently small, then we have:
\begin{itemize}
\item In Case I, there exists $\epsilon>0$ so that $\LpOpN{2}{D_\zeta}\lesssim \zeta^{\epsilon}$.
\item In Case II, $\LpOpN{2}{D_\zeta}\lesssim \zeta^{L/2}$.
\end{itemize}
\end{lemma}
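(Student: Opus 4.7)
The plan is to attack both parts via a $TT^*$ argument, reducing the desired $L^2$ operator-norm bound for $D_\zeta$ to a kernel bound for $D_\zeta^* D_\zeta$ (equivalently $D_\zeta D_\zeta^*$). By Proposition \ref{PropPfAdjEachScale} (after shrinking $a>0$), $D_\zeta^*$ is itself an operator of the same form as $D_\zeta$ but with the parameterization replaced by $\gammah_t^{-1}$, which means $D_\zeta^* D_\zeta$ can be written explicitly as a double integral whose kernel is a composition of $\gammah$ and $\gammah^{-1}$ in the $t$ and $\tilde t$ variables. The goal then becomes to bound this composite kernel by $\zeta^L$ (Case II) or $\zeta^{2\epsilon}$ (Case I) in an appropriate $L^1$-sense so that Schur's test closes the argument.

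For Case II, the proof is essentially a direct integration by parts. Writing $\vsig = \sum_{|\alpha|=L}\partial_{t_2}^{\alpha}\vsig_\alpha$ and $\overline{\vsig}$ similarly in the $\tilde t_2$ slot, integrate by parts in $t_2$ (and $\tilde t_2$) inside the $TT^*$ kernel a total of $L$ times on one side (or $L/2$ on each, balanced by Cauchy--Schwarz). Each $\partial_{t_2^j}$ falling on the integrand produces, via the chain rule, a factor $\zeta$ together with a vector field of the form $\partial_{s_j}\gammah(t_1,\zeta t_2,x)$ applied to what remains; by the linear-control hypothesis that $\{Z_\alpha : |\alpha|=1\}$ controls $Z$ at the unit scale, these vector fields are in the $C^\infty$-module generated by $Z_1,\dots,Z_q$ with uniformly bounded coefficients (using Propositions \ref{PropFrobControlEquivsP} and \ref{PropFrobControlEquivsQ}, applied in the straightened Frobenius coordinates from Theorem \ref{ThmQuantFrob}). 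Collecting all factors, we extract $\zeta^L$ times a smooth uniformly-bounded kernel, and Schur yields $\|D_\zeta D_\zeta^*\|_{L^2\to L^2}\lesssim \zeta^L$, giving the claim $\|D_\zeta\|_{L^2\to L^2}\lesssim \zeta^{L/2}$.

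Case I is the heart of the matter and is essentially the main $L^2$ lemma underlying the series \cite{SteinStreetI,SteinStreetII,SteinStreetIII}. Here we have only a single cancellation $\int \vsig(t_1,t_2)\,dt_2=0$ rather than the stronger derivative decomposition of Case II, so the direct integration-by-parts argument above gains only one factor of $\zeta$, which is insufficient. Instead, the hypothesis that $\sF_M^M$ controls $Z$ at the unit scale on $\Omega'$ plays the role of the curvature condition of Christ--Nagel--Stein--Wainger: it says that iterated commutators of the Taylor coefficients $\{Z_\alpha\colon|\alpha|\leq M\}$ (up to depth $M$) span the tangent space in the sense of unit-scale control. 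Combining this with the cancellation in $\vsig$ permits an almost-orthogonality/Cotlar--Stein decomposition of $D_\zeta^*D_\zeta$: after pulling back through the map $\Phi$ of Theorem \ref{ThmQuantFrob} so that $Z$-balls become nearly Euclidean, one uses the Taylor expansion \eqref{EqnPfLtTSeries} of $\Wh$ together with Baker--Campbell--Hausdorff to rewrite $\gammah^{-1}\circ\gammah$ so that the $t_2$-integration, combined with the cancellation, gains a small power $\zeta^{2\epsilon}$. Schur's test again concludes.

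The main obstacle will be Case I: carefully adapting the Frobenius-based almost-orthogonality scheme from \cite{SteinStreetI} so that the $t_2$-cancellation hypothesis $\int \vsig\,dt_2=0$, rather than full cancellation in all of $t$, is enough to extract a uniform power gain $\zeta^{2\epsilon}$. The technical point is the interplay between the commutator depth $M$ (which governs how much of the tangent space is reached) and the pointwise scaling parameter $\zeta$ on the $t_2$ slab; once this is in place, Case I reduces to an already-developed $L^2$ estimate in \cite{SteinStreetI}, and Case II follows by the simpler direct calculation above.
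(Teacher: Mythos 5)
Your overall architecture — reduce to a bound for a composed operator, use the Frobenius map to straighten the geometry, and exploit the cancellation via integration by parts (Case II) or via the estimates in \cite{SteinStreetI} (Case I) — is sound in spirit, but both cases diverge from the paper in ways worth recording.

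For Case II the paper does \emph{not} use $TT^*$ and Schur. It instead notes that the trivial change of variables gives $\LpOpN{1}{D_\zeta}\lesssim 1$, then proves the endpoint bound $\LpOpN{\infty}{D_\zeta}\lesssim \zeta^L$ directly, and interpolates to get $\LpOpN{2}{D_\zeta}\lesssim\zeta^{L/2}$. The $L^\infty$ bound is obtained by fixing $x_0$, pulling back through $\Phi$, splitting $t_1=(t_{1,1},t_{1,2})$ so that $\det\partial_{t_{1,1}}\theta_t(0)|_{t=0}$ is bounded below (this uses the Case II hypothesis that the degree-one Taylor coefficients control $Z$), changing variables $u=\theta_t$, and only then integrating by parts in $t_2$. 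Your $TT^*$+Schur plan is likely to close, but it duplicates work: you would have to control the kernel of the full composition $D_\zeta^*D_\zeta$ uniformly, whereas the $L^1$--$L^\infty$ interpolation keeps each side elementary. It also quietly uses that the kernel is bounded, which is exactly the $L^\infty$ endpoint anyway; so you have not actually avoided that step.

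For Case I the gap is more substantive. You correctly identify that one must ultimately invoke an $L^2$ result from \cite{SteinStreetI}, but the mechanism you sketch — Cotlar--Stein plus Baker--Campbell--Hausdorff applied to the $D_\zeta^*D_\zeta$ kernel to ``gain $\zeta^{2\epsilon}$'' — is not how the reduction is made, and it is unclear how to make it work as stated. The paper's argument is structural: since $\LpOpN{2}{D_\zeta}^4=\LpOpN{2}{D_\zeta^*D_\zeta D_\zeta^*D_\zeta}\leq\LpOpN{2}{D_\zeta^*}\LpOpN{2}{D_\zeta D_\zeta^*D_\zeta}$, it suffices to bound the \emph{triple} product. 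One then writes $D_\zeta D_\zeta^*D_\zeta=S_1S_2(R_1-R_2)$ where $S_1=D_\zeta$, $S_2=D_\zeta^*$, $R_1=D_\zeta$, and $R_2$ is $D_\zeta$ with $\zeta$ replaced by $0$; the hypothesis $\int\vsig(t_1,t_2)\,dt_2=0$ is used precisely to show $R_2=0$, converting the cancellation into a \emph{difference} $R_1-R_2$. This difference structure is what Theorem 14.5 of \cite{SteinStreetI} is designed to handle, and it delivers the $\zeta^\epsilon$ gain without any hand computation of kernels or BCH expansions. Your plan never produces this difference operator, and the cancellation $\int\vsig\,dt_2=0$ by itself only gives one factor of $\zeta$ under direct integration by parts, which (as you note) is insufficient. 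You should also make explicit the first reduction both proofs use: replace $Z$ by $\sF_M^M$ in Case I and by $\{Z_\alpha:|\alpha|=1\}$ in Case II, which is licit because the control hypothesis makes these sets satisfy the assumptions of Theorem \ref{ThmQuantFrob} and control $\gammah$ at the unit scale — this is what lets the machinery apply with the right generating set.
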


In our applications of Lemma \ref{LemmaPfLtMainLemma} it is important that we be explicit about what parameters the constants may depend on.
Because $Z$ satisfies the conditions of Theorem \ref{ThmQuantFrob}, it makes sense to talk about $m$-admissible constants as in that theorem.
In this section, we say $C$ is an admissible constant, if $C$ can be chosen to depend only on the following:
\begin{itemize}
\item Anything an $m$-admissible constant can depend on, where $m$ can be chosen to depend only on $q$, $n$, and $M$ in Case I, or $q$, $n$, and $L$
in Case II.
\item  $\|\kappa\|_{C^K(B^{N}(a)\times \Omega')}$, $\|\psi_1\|_{C^K(\Omega')}$, and $\|\psi_2\|_{C^K(\Omega')}$, where $K$ can be chosen
to depend only on $q$, $n$, $N$, and $M$ in Case I, or $q$, $n$, $N$, and $L$ in Case II.

\item In Case I, $\| \vsig\|_{C^K(B^{N}(a))}$,  where $K$ can be chosen
to depend only on $q$, $n$, $N$, and $M$.

\item In Case II, $\|\vsig_{\alpha}\|_{C^K(B^N(a))}$, $|\alpha|=L$, where $K$ can be chosen to depend only on  $q$, $n$, $N$, and $L$.

\item Because $Z$ controls $\gammah$ at the unit scale on $\Omega'$, we have that $\sQ_1(\rho_1,\tau_1, \{\sigma_1^m\}_{m\in \N})$
holds at each point $x_0\in \Omega'$, for some $\rho_1$, $\tau_1$, and $\{\sigma_1^m\}$ independent of $x_0$.
$C$ can depend on $\rho_1$, $\tau_1$ and $\sigma_1^m$, where $m$ can be chosen to depend only on $q$, $n$, and $M$ in Case I,
or $q$, $n$, and $L$ in Case II.
\end{itemize}
In Lemma \ref{LemmaPfLtMainLemma}, $a$ and the implicit constants in $\lesssim$ are all admissible constants.
In Case I, $\epsilon$ can be chosen to depend only on $q$, $N$, and $M$.
For the remainder of this section, we write $A\lesssim B$ to denote $A\leq CB$, where $C$ is an admissible constant.

\begin{proof}[Proof of Lemma \ref{LemmaPfLtMainLemma} in Case I]
We first claim that we may, without loss of generality, replace $Z$ with $\sF_M^M$.
Indeed, because $\sF_M^M$ controls $Z_j$ at the unit scale on $\Omega'$, $\forall Z_j\in Z$, we have that $\sF_M^M$ controls $\gammah$ at the unit scale on $\Omega'$.
Because $Z$ controls $\gammah$ at the unit scale on $\Omega'$, we have that $Z$ controls $Y$ at the unit scale on $\Omega'$, $\forall Y\in \sF_M^M$.
Combining this with the fact that $\sF_M^M$ controls $Z_j$ at the unit scale on $\Omega'$, $\forall Z_j\in Z$, it follows that $\sF_M^M$ satisfies
the conditions of Theorem \ref{ThmQuantFrob}: indeed, if $X,Y\in \sF_M^M$, then $Z$ controls $[X,Y]$ at the unit scale on $\Omega'$,
and therefore $\sF_M^M$ controls $[X,Y]$ at the unit scale on $\Omega'$.
Hence, $\sF_M^M$ satisfies all the hypothesis that $Z$ satisfies in our assumptions and  
we may assume $Z=\sF_M^M$ in what follows.

It suffices to show that there exists $\epsilon>0$ (depending only on $q$, $n$, and $M$) such that
\begin{equation*}
\LpOpN{2}{D_\zeta^{*} D_\zeta D_\zeta^{*} D_\zeta}\lesssim \zeta^{\epsilon}.
\end{equation*}
Because $\LpOpN{2}{D_{\zeta}^{*}}\lesssim 1$, it suffices to show
\begin{equation}\label{EqnPfltTechCaseIToShow}
\LpOpN{2}{D_\zeta D_\zeta^{*} D_\zeta}\lesssim \zeta^{\epsilon}.
\end{equation}
Set $S_1=D_\zeta$, $S_2=D_\zeta^{*}$, $R_1=D_\zeta$, and $R_2=0$.
Note,
\begin{equation*}
R_2 f(x) = \psi_2(x) \int f(\gamma_{t_1, 0}(x)) \psi_2(\gamma_{t_1,0}(x)) \kappa(t_1,0,x) \vsig(t_1,t_2)\: dt_1 dt_2=0,
\end{equation*}
where we have used the fact that $\int \vsig(t_1,t_2)\: dt_2=0$.  I.e., $R_2$ is the same as $R_1$ but with $\zeta=0$.
\eqref{EqnPfltTechCaseIToShow} is equivalent to
\begin{equation}\label{EqnPfltTechCaseIToShowT}
\LpOpN{2}{S_1 S_2 (R_1-R_2)}\lesssim \zeta^{\epsilon},
\end{equation}
and this follows directly from Theorem 14.5 of \cite{SteinStreetI}.  See the proofs of Proposition 15.1 of \cite{SteinStreetI} and Theorem 10.1 of \cite{SteinStreetII} for similar
arguments with more details.
\end{proof}

\begin{proof}[Proof of Lemma \ref{LemmaPfLtMainLemma} in Case II]
Without loss of generality, we may take $Z=\{ Z_{\alpha} : |\alpha|=1\}$, where $Z_\alpha$ is as in \eqref{EqnPfLtTSeries}:
this follows as in Case I, replacing $\sF_M^M$ with $\{Z_{\alpha} : |\alpha|=1\}$ throughout the first paragraph of the proof of Case I.
Just as in Case I, we have that this choice of $Z$ satisfies all of the assumptions; in particular, it satisfies the assumptions of Theorem \ref{ThmQuantFrob}.

It follows from a simple change of variables that (for $a\gtrsim 1$ sufficiently small), $\LpOpN{1}{D_\zeta}\lesssim 1$.
Thus, by interpolation, it suffices to show $\LpOpN{\infty}{D_\zeta}\lesssim \zeta^{L}$.
Let $f$ be a bounded measurable function with $\sup_{x} |f(x)|=\LpN{\infty}{f}$.
Fix $x_0\in \Omega'$.  We will show
\begin{equation}\label{EqnPfLtTechToShowBigGain}
\q|D_\zeta f(x_0)\w|\lesssim \zeta^{L}\LpN{\infty}{f},
\end{equation}
with implicit constant not depending on $x_0$, and the result will follow.\footnote{It suffices to only consider $f$ such that $\sup_{x} |f(x)|=\LpN{\infty}{f}$ because
$\LpOpN{1}{D_{\zeta}}<\infty$, and therefore if $f$ is supported on a set of measure $0$, then $D_\zeta f=0$ almost everywhere.}

Because $\gammah$ is controlled by $Z$, it is easy to see that $D_{\zeta} f(x_0)$ depends only on the values of $f$ on $B_Z(x_0,\xi_3)$
provided we take $a\gtrsim 1$ to be sufficiently small,
and $\xi_3$ is the $2$-admissible constant from Theorem \ref{ThmQuantFrob}.  Let $\Phi$ be the map from Theorem \ref{ThmQuantFrob}
with this choice of $x_0$ and $Z$.  Let $\Phi^{\#} f = f\circ \Phi$ (the pullback via $\Phi$), and consider the operator
$T_\zeta:=  \Phi^{\#} D_{\zeta}\q(\Phi^{\#}\w)^{-1}$.  
Let $\eta>0$ be the $2$-admissible constant of the same name from Theorem \ref{ThmQuantFrob}, and let $n_0=\dim \Span{Z_1(x_0),\ldots, Z_q(x_0)}$.
Because $B_Z(x_0,\xi_3)\subseteq \Phi(B^{n_0}(\eta))$, and because $\Phi(0)=x_0$, to show \eqref{EqnPfLtTechToShowBigGain} it suffices to show,
for $\supp{g}\subseteq B^{n_0}(\eta)$,
\begin{equation}\label{EqnPfLtTeachToShowBigGaint}
\q|T_\zeta g(0)\w|\lesssim \zeta^{L} \LpN{\infty}{g}.
\end{equation}
Let $\theta_t(u) = \Phi^{-1}\circ \gammah_t\circ \Phi(u)$.  By Proposition \ref{PropFrobControlEquivsQ}, we have
\begin{equation}\label{EqnPfLtTClTheta}
\|\theta\|_{C^{L+1} (B^{N}(a)\times B^{n_0}(\eta) )}\lesssim 1.
\end{equation}
Also,
\begin{equation*}
\begin{split}
T_{\zeta} g(0) &= \psi_1\circ\Phi(0) \int g( \theta_t(0)) \psi_2(\theta_t(0)) \kappa(t,\Phi(0)) \zeta^{-N_2} \vsig(t_1, \zeta^{-1} t_2)\: dt.
\\&=\zeta^{L} \sum_{|\alpha|=L} \psi_1\circ \Phi(0) \int g(\theta_t(0)) \psi_2(\theta_t(0)) \kappa(t,\Phi(0)) \zeta^{-N_2} \partial_{t_2}^{\alpha} \vsig_\alpha(t_1,\zeta^{-1} t_2)\: dt.
\end{split}
\end{equation*}
Fix $\alpha$ with $|\alpha|=L$.  Set
\begin{equation*}
I_\alpha :=  \int g(\theta_t(0)) \psi_2(\theta_t(0)) \kappa(t,\Phi(0)) \zeta^{-N_2} \partial_{t_2}^{\alpha} \vsig_\alpha(t_1,\zeta^{-1} t_2)\: dt.
\end{equation*}
We will show 
\begin{equation}\label{EqnPfLtTeachToShowBigGainr}
|I_\alpha|\lesssim \LpN{\infty}{g},
\end{equation}
and then \eqref{EqnPfLtTeachToShowBigGaint} will follow, which completes the proof.
Because $\vsig_\alpha(t_1,t_2)$ can be written as a rapidly converging sum of functions of the form $\vsig_1(t_1)\vsig_2(t_2)$ with
$\vsig_1\in C_0^{\infty}(B^{N_1}(a))$, $\vsig_2\in C_0^{\infty}(B^{N_2}(a))$,\footnote{We are using the fact that
$C_0^{\infty}(B^{N_1}(a))\widehat{\otimes} C_0^{\infty}(B^{N_2}(a))\cong C_0^{\infty}(B^{N_1}(a)\times B^{N_2}(a))$, where $\widehat{\otimes}$
denotes the tensor product of these nuclear spaces.  See \cite{TrevesTopologicalVectorSpaces} for more details.}
and so it suffices to assume $\vsig_\alpha(t_1, t_2)$ is of the form $\vsig_\alpha(t_1,t_2)=\vsig_1(t_1)\vsig_2(t_2)$.  Thus we are considering
\begin{equation*}
I_\alpha :=  \int g(\theta_t(0)) \psi_2(\theta_t(0)) \kappa(t,\Phi(0))   \vsig_1(t_1) \zeta^{-N_2} \partial_{t_2}^{\alpha} \vsig_2(\zeta^{-1} t_2)\: dt.
\end{equation*}
Let $Y_1,\ldots, Y_q$ be the pullback of $Z_1,\ldots, Z_q$ via $\Phi$ to $B^{n_0}(\eta)$. 
Because we are assuming $Z=\{ Z_{\alpha} : |\alpha|=1\}$, our we have that $q=N_1$ and 
if we write $t_1=(t_1^1,\ldots, t_1^{q})$,
\begin{equation}\label{EqnPfLtDerivZero}
\frac{\partial}{\partial t_1^j} \theta_t(0)\bigg|_{t=0} = Y_j (0), \quad 1\leq j\leq q.
\end{equation}
By possibly rearranging the coordinates of $t_1$ and using $\eqref{EqnFrobYsSpan}$, we may assume
\begin{equation}\label{EqnPfLtTYsSpan}
\q|\det \q(Y_1(0)| \cdots | Y_{n_0}(0)\w)\w|\gtrsim 1.
\end{equation}
Separating $t_1=(t_{1,1},t_{1,2})$ where $t_{1,1}=(t_1^1,\ldots, t_1^{n_0})$ and $t_{1,2}=(t_1^{n_0+1},\ldots, t_1^{q})$,
\eqref{EqnPfLtTYsSpan} combined with \eqref{EqnPfLtDerivZero} shows
\begin{equation*}
\q| \det \frac{\partial}{\partial t_{1,1} }\theta_{t}(0)\big|_{t=0} \w|\gtrsim 1.
\end{equation*}
Applying a change of variables in the $t_{1,1}$ variable (setting $u=\theta_t$), taking $a\gtrsim 1$ sufficiently small, and using \eqref{EqnPfLtTClTheta}, we have
\begin{equation*}
I_{\alpha} = \int_{|u|,|t_{1,2}|<a} g(u) K(u,t_{1,2}, t_2) \zeta^{-N_2} \partial_{t_2}^{\alpha} \vsig_2(\zeta^{-1} t_2)\: du \: dt_{1,2}\: dt_2,
\end{equation*}
where $\|K\|_{C^L}\lesssim 1$.\footnote{See Theorem B.3.1 of \cite{StreetMultiParamSingInt} for a precise statement of this kind of change of variables.}
Integrating by parts, we have
\begin{equation*}
I_{\alpha} = (-1)^L\int_{|u|,|t_{1,2}|<a} g(u) \q[\partial_{t_2}^{\alpha} K(u,t_{1,2}, t_2)  \w] \zeta^{-N_2} \vsig_2(\zeta^{-1} t_2)\: du \: dt_{1,2}\: dt_2.
\end{equation*}
That $|I_{\alpha}|\lesssim \LpN{\infty}{g}$ now follows immediately.
\end{proof}

In the next two subsections, we present our main applications of Lemma \ref{LemmaPfLtMainLemma}.

\subsection{Application I: Almost Orthogonality}
Fix open sets $\Omega_0\Subset\Omega'\Subset\Omega''\Subset\Omega'''\Subset \Omega \subseteq \R^n$.
For a set $F\subset \R^{\nu}$ define
\begin{equation*}
\diam F =\sup_{j,k\in F} |j-k|.
\end{equation*}
Suppose $(\gamma^1,e^1,N_1,\Omega,\Omega'''),\ldots, (\gamma^L,e^L, N_L, \Omega, \Omega''')$ are $L$
parameterizations, each with $\nu$-parameter dilations $0\ne e_1^l,\ldots e_{N_L}^l\in [0,\infty)^\nu$.
We separate our assumptions in this section into two cases:
\begin{enumerate}[\bf{Case} I:]
\item There exists a finite set $\sF\subset \snuvect$ such that $$(\gamma^1,e^1,N_1),\ldots, (\gamma^L,e^L,N_L)$$ are all finitely
generated by $\sF$ on $\Omega'$.

\item There exists a finite set $\sF\subset \snuvectone$ such that $$(\gamma^1,e^1,N_1),\ldots, (\gamma^L,e^L,N_L)$$
are all linearly finitely generated by $\sF$ on $\Omega'$.
\end{enumerate}

Fix $a>0$ small (how small to be chosen later).  For $1\leq l\leq L$, $\vsig\in \schS(\R^{N_l})$,
$\eta\in C_0^{\infty}(B^{N_l}(a))$, $j\in [0,\infty)^{\nu}$, $\psi_1,\psi_2\in C_0^{\infty}(\Omega_0)$,
and $\kappa\in C^{\infty}(B^{N_l}(a)\times \Omega'')$, define the operator
\begin{equation}\label{EqnPfLtDefnOfOpNew}
\begin{split}
&T^l_j[\vsig,\eta,\psi_1,\psi_2,\kappa] f(x)
\\&:=\psi_1(x) \int f(\gamma^l_{t^l}(x)) \psi_2(\gamma^l_{t^l}(x)) \kappa(t^l,x) \eta(t^l)\dil{\vsig}{2^j}(t^l)\: dt^l.
\end{split}
\end{equation}

\begin{prop}\label{PropPfLtTechApO}
For each $l\in\{1,\ldots, L\}$, let $\sB_1^l\subset \schS(\R^{N_l})$, $\sB_2^l\subset C_0^\infty(B^{N_l}(a))$, $\sB_3^l\subset C_0^\infty(\Omega_0)$,
and $\sB_4^l\subset C^\infty(B^{N_l}(a)\times \Omega'')$ be bounded sets.  For $j_1,\ldots, j_L\in [0,\infty)^{\nu}$,
$\vsig_l\in \schS_{\{\mu : j_{l,\mu}>0\}}\cap \sB_1^l$, $\eta_l\in \sB_2^l$, $\psi_{1,l},\psi_{2,l}\in \sB_3^l$, and $\kappa_l\in \sB_4^l$,
define
\begin{equation*}
T_{j_l}^l := T_{j_l}^l[\vsig_l,\eta_l,\psi_{1,l},\psi_{2,l},\kappa_l].
\end{equation*}
Then, there exists $a>0$ (depending on the parameterizations) such that the following holds.
\begin{itemize}
\item In Case I, there exists $\epsilon>0$ (depending on the parameterizations) such that
\begin{equation*}
\LpOpN{2}{ T_{j_1}^1 \cdots T_{j_L}^L} \leq C 2^{-\epsilon\diam \{j_1,\ldots, j_L\}},
\end{equation*}
where $C$ depends on the above parameterizations, and the sets $\sB_1^l$, $\sB_2^l$, $\sB_3^l$, and $\sB_4^l$.

\item In Case II, for every $N$, there exists $C_N$ (depending on the above parameterizations, and the sets $\sB_1^l$, $\sB_2^l$, $\sB_3^l$, and $\sB_4^l$)
such that
\begin{equation*}
\LpOpN{2}{ T_{j_1}^1 \cdots T_{j_L}^L} \leq C_N 2^{-N\diam \{j_1,\ldots, j_L\}}.
\end{equation*}
\end{itemize}
\end{prop}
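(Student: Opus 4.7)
The plan is to reduce the proposition to Lemma \ref{LemmaPfLtMainLemma} via a multi-scale rescaling argument applied to the composite parameterization $\Gamma_t(x) := \gamma^L_{t^L}(\gamma^{L-1}_{t^{L-1}}(\cdots \gamma^1_{t^1}(x)))$. I would first establish that each individual operator $T_{j_l}^l$ is uniformly bounded on $L^2$, by interpolating between the obvious $L^1$ and $L^\infty$ bounds (both follow from $\|\dil{\vsig_l}{2^{j_l}}\|_{L^1} \lesssim 1$ together with the bounded families $\sB_1^l, \sB_2^l, \sB_3^l, \sB_4^l$). This makes the proposition trivial when $\diam\{j_1, \ldots, j_L\}$ is bounded, so I may henceforth assume it is large. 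By pigeonholing, I choose $(l^*, \mu^*)$ with $j_{l^*, \mu^*} = \max_{l, \mu} j_{l, \mu}$ and $(l', \mu')$ with $j_{l', \mu'} = \min_{l, \mu} j_{l, \mu}$; then $j_{l^*, \mu^*} - j_{l', \mu'} \geq \diam\{j_1, \ldots, j_L\}/(2\nu)$. Setting $\zeta := 2^{-(j_{l^*, \mu^*} - j_{l', \mu'})}$, the goal becomes to extract a gain of $\zeta^{\epsilon'}$ for some $\epsilon' > 0$ (Case I) or $\zeta^{L/2}$ for any $L$ (Case II).

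Next I would realize the composition as a single integral operator of the form $T_{j_1}^1 \cdots T_{j_L}^L f(x) = \int f(\Gamma_t(x))\, K(t, x)\, dt$, where $t = (t^1, \ldots, t^L) \in \R^{N_1 + \cdots + N_L}$ and $K(t,x)$ is the tensor product of the individual kernels $\eta_l(t^l) \dil{\vsig_l}{2^{j_l}}(t^l) \kappa_l(t^l, \cdot)$ together with the cutoffs $\psi_{1,l}, \psi_{2,l}$. Using Proposition \ref{PropPfAdjEachScale} to take adjoints of individual factors where convenient (noting that $\sF$ also controls $(\gamma^l)^{-1}$ by Proposition \ref{PropPfAdjGamma}), I may assume $l^* = L$ so that the fine-scale operator appears last. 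A uniform rescaling of all integration variables by $2^{-j_{l',\mu'}}$ then places the composite operator into the form $D_\zeta$ of Lemma \ref{LemmaPfLtMainLemma}, with $t_1$ containing all coordinates except the $\mu^*$-coordinates of $t^{l^*}$, and $t_2$ containing exactly those $\mu^*$-coordinates.

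It remains to verify the hypotheses of Lemma \ref{LemmaPfLtMainLemma} uniformly in the scales. The cancellation condition on the $t_2$-integral (Case I) holds because $\mu^* \in \{\mu : j_{l^*, \mu} > 0\}$ and $\vsig_{l^*} \in \schS_{\{\mu : j_{l^*, \mu} > 0\}}$. In Case II, Proposition \ref{PropPfKerDecompVsig} (applied for arbitrarily large $M$) produces the higher-order derivative structure $\vsig_{l^*} = \sum_{|\alpha|=M} \partial_{t_2}^\alpha \vsig_{l^*, \alpha}$ needed for the lemma, yielding $\zeta^{L/2}$ decay for any $L$. The vector field control hypothesis is supplied by the shared set $\sF$: since $\sF$ controls every $\gamma^l$ on $\Omega'$, an appropriate rescaling of $\sF$ controls the composite $\Gamma$ at unit scale, and the iterated-commutator set $\sF_M^M$ (in Case I, obtained because $\sF$ satisfies $\sD(\Omega')$ by Remark \ref{RmkResSurfFiniteGenImpliessD}) or $\sF$ itself (in Case II, by linear finite generation) spans in the required directions.

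The main obstacle will be verifying that $\Gamma$ and the rescaled vector field system satisfy the assumptions of Lemma \ref{LemmaPfLtMainLemma} uniformly, with admissible constants independent of the scales $j_1, \ldots, j_L$. This requires a careful analysis of the Taylor expansion of the vector field parameterization $W^\Gamma$ associated to $\Gamma$, decomposing it into contributions from each $\gamma^l$, together with the observation that the vector fields controlling $\Gamma$ at the coarse scale $2^{j_{l',\mu'}}$ become, after rescaling, a set $Z$ satisfying the assumptions of Theorem \ref{ThmQuantFrob} uniformly on $\Omega'$.
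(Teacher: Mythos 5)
Your plan to treat the full composition $T_{j_1}^1\cdots T_{j_L}^L$ in one stroke via a single composite parameterization $\Gamma$ diverges from the paper, which begins with the observation that the general-$L$ case reduces to $L=2$: if $\diam\{j_1,\ldots,j_L\}=|j_a-j_b|$, then by a telescoping/pigeonhole argument some \emph{consecutive} pair satisfies $|j_i-j_{i+1}|\gtrsim \diam\{j_1,\ldots,j_L\}/L$, and one can then bound $T_{j_i}^iT_{j_{i+1}}^{i+1}$ by the two-operator estimate and the remaining factors by their trivial $L^2$ norms. You skip this reduction, which is what makes the verification of the hypotheses of Lemma \ref{LemmaPfLtMainLemma} for the two-factor composite tractable via Lemma \ref{LemmaPfLtsLControl}; your approach would instead need a generalization of Lemma \ref{LemmaPfLtsLControl} to $L$ sets (i.e., that for a partition $\sP_1\cup\cdots\cup\sP_L=\nuset$ the set $\sL_K\bigl(\bigcup_l\bigcup_{\mu\in\sP_l}\sigma_K(\pi_\mu\sS_l)\bigr)$ is still equivalent to $\sF$), which you do not supply or cite.

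Beyond the missing reduction there are several points that would have to be repaired. First, rescaling ``all integration variables by $2^{-j_{l',\mu'}}$'' with $j_{l',\mu'}$ a single scalar does not respect the $\nu$-parameter dilation structure; what is actually used (in the $L=2$ case, Lemma \ref{LemmaPfLtApOBaiscBound}) is the coordinatewise minimum $j=j_1\wedge j_2\in[0,\infty)^\nu$, and for general $L$ one would need $j=j_1\wedge\cdots\wedge j_L$—this matters precisely because for each $\mu$ at least one index $l$ attains the minimum at $\mu$, giving Taylor coefficients in $\Wh(t_1,0)$ without a harmful sub-unit scaling factor. Second, taking adjoints only reverses the order of a product of operators, so you cannot ``assume $l^*=L$'' when the extremal index sits in the middle of the composition; this step fails as stated. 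Third, Proposition \ref{PropPfKerDecompVsig} must be applied to \emph{all} $L$ kernels (not only $\vsig_{l^*}$), since it is this decomposition that produces the derivative structure required in the hypotheses of the two-factor lemma \emph{and} introduces the geometric sum over $k\le j_l$ that must be controlled; your proposal invokes it only for the fine-scale factor in Case II. Each of these is substantive, and together they leave the key step—uniform verification of the Case I/Case II hypotheses of Lemma \ref{LemmaPfLtMainLemma} for the full $L$-fold composite—essentially unjustified.
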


In this section, we prove of Proposition \ref{PropPfLtTechApO}.

\begin{defn}\label{DefnPfLtTechApOPiSigma}
Given $\sS\subseteq \snuvect$, define the following:
\begin{itemize}
\item For $\mu\in \nuset$, let
\begin{equation*}
\pi_{\mu}(\sS):=\{ (X,d)\in \sS : d\text{ is nonzero in only the }\mu\text{ component} \}.
\end{equation*}

\item For $K\geq 0$, let
\begin{equation*}
\sigma_{K}(\sS) := \{ (X,d)\in \sS : |d|_1\leq K\}.
\end{equation*}

\item For $K\in \N$, let $\sL_K(\sS)$ be defined inductively as follows:
$\sL_1(\sS)=\sS$ and for $j\geq 1$,
\begin{equation*}
\begin{split}
&\sL_{j+1}(\sS):=
\\&\sL_{j}(\sS)\bigcup
\q[\bigcup_{\substack{k+l=j+1\\ 1\leq k,l}} \{ ([X_1,X_2], d_1+d_2) : (X_1,d_1)\in \sL_{k}(\sS), (X_2,d_2)\in \sL_{l}(\sS) \}\w].
\end{split}
\end{equation*}
Note, $\sL(\sS)=\bigcup_{j\in \N} \sL_j(\sS)$.
\end{itemize}
\end{defn}

\begin{lemma}\label{LemmaPfLtsLControl}
Suppose $\sS_1, \sS_2\subset \snuvectone$ are such that there exists a finite set $\sF\subset \snuvect$
with both $\sL(\sS_1)$ and $\sL(\sS_2)$ finitely generated by $\sF$ on $\Omega'$.
Then, there exists $L\in \N$ such that the following holds.  Pick any two subsets $\sP_1,\sP_2\subseteq \nuset$ with
$\sP_1\bigcup \sP_2=\nuset$.
Then, the set
\begin{equation}\label{EqnPfLtToShowsLControl}
\sL_L\q( \bigcup_{l\in \{1,2\}} \bigcup_{\mu\in \sP_l} \sigma_L(\pi_{\mu} \sS_l ) \w)
\end{equation}
is equivalent to $\sF$ on $\Omega'$.
\end{lemma}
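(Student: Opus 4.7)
The plan is to reduce Lemma~\ref{LemmaPfLtsLControl} to showing $\sL(\sG) \sim \sF$ on $\Omega'$, where $\sG := \bigcup_{l \in \{1,2\},\ \mu \in \sP_l} \pi_\mu(\sS_l)$; once this is established, finiteness of $\sF$ yields a uniform bracket depth and constituent degree $L$ by choosing, for each of the finitely many $(X,d)\in \sF$, a concrete finite iterated-bracket representation in $\sG$ and letting $L$ be the maximum of the depths and constituent degrees that arise. The ``easy'' direction, $\sL(\sG)$ controlled by $\sF$, is immediate because $\sG \subseteq \sS_1 \cup \sS_2 \subseteq \sL(\sS_1)$, which is equivalent to $\sF$ on $\Omega'$ by hypothesis; hence every iterated bracket formed from $\sG$ lies in $\sL(\sS_1\cup\sS_2)\sim\sF$ and is therefore controlled by $\sF$.

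The main task is the converse: $\sF$ is controlled by $\sL(\sG)$. It suffices to show that $\sS_1$ and $\sS_2$ are each controlled by $\sL(\sG)$ on $\Omega'$, for then $\sL(\sS_1\cup\sS_2) \sim \sF$ is controlled by $\sL(\sG)$. Since $\sS_l = \bigcup_\mu \pi_\mu(\sS_l)$ and the pieces with $\mu\in \sP_l$ already lie in $\sG$, what must be done is to control $\pi_\mu(\sS_l)$ for $\mu\notin \sP_l$. For such a $\mu$ one has $\mu\in \sP_{l'}$ with $l'\ne l$, and $\pi_\mu(\sS_l) \subseteq \sL(\sS_{l'})$ from the equivalence $\sL(\sS_1)\sim\sL(\sS_2)$. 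Thus each $(Y,e)\in \pi_\mu(\sS_l)$ admits a finite iterated-bracket expansion using elements of $\sigma_K(\sS_{l'})$ for some $K$; its constituents in $\pi_{\mu'}(\sS_{l'})$ with $\mu'\in \sP_{l'}$ already lie in $\sG$, which handles the ``good'' constituents.

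The obstacle is to handle the ``bad'' constituents $(Z,f)\in \pi_{\mu'}(\sS_{l'})$ with $\mu'\notin \sP_{l'}$ (so $\mu'\in \sP_l \setminus \sP_{l'}$). Re-expanding each such $(Z,f)$ via $\sL(\sS_l)$ produces constituents in $\pi_{\mu''}(\sS_l)$; those with $\mu''\in \sP_l$ lie in $\sG$, but those with $\mu''\notin \sP_l$ are of the same type as $(Y,e)$, and a naive ping-pong substitution between $\sS_1$ and $\sS_2$ appears to recur indefinitely. The key step, which I expect to be the main technical hurdle, is to close this recursion by exploiting the rigidity forced by the coincidence $\sL(\sS_1)\sim\sL(\sS_2)\sim\sF$: evaluating the control relations at one-parameter scalings $\delta = (1,\ldots,1,\delta_\mu,1,\ldots,1)$ and tracking the leading-order behaviour as $\delta_\mu\to 0$ should yield a direct control of $\pi_\mu(\sS_l)$ by bracket expressions in $\pi_\mu(\sS_{l'}) \subseteq \sG$, modulo corrections whose total $\nu$-degree is strictly larger in some direction other than $\mu$. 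An induction on total degree (made finite by cutting off at a level beyond the maximum degree appearing in $\sF$ and its chosen bracket representations) then replaces each bad constituent by an $\sL(\sG)$-expression and closes the loop. Once this rigidity step is in hand, the remaining degree and depth bookkeeping is routine from the definitions of control and from Definition~\ref{DefnPfLtTechApOPiSigma}.
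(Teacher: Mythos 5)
Your high-level plan is close to the paper's, and you correctly put your finger on the crux (a "rigidity" lemma relating the projected pieces $\pi_\mu\sS_l$), but the scaling you propose to prove it does not work and the recursion you worry about is a real obstacle to your version of the argument. The paper does \emph{not} evaluate at $\delta=(1,\ldots,1,\delta_\mu,1,\ldots,1)$ and take a $\delta_\mu\to 0$ limit; it instead evaluates the control relations at $\delta=(0,\ldots,0,\delta_\mu,0,\ldots,0)\in[0,1]^{\nu}$. The distinction matters: with zeros in the other slots, every vector field $\delta^{d}X$ with $d_{\mu'}>0$ for some $\mu'\ne\mu$ is multiplied by $0^{d_{\mu'}}=0$ and literally vanishes from the relation and from the Carnot--Carath\'eodory ball $\B{X}{d}{x}{\tau_1\delta}$, while the left-hand side $\delta^{\hd}\Xh$ (with $\hd\in\onecompnu$ nonzero only in the $\mu$-th slot) survives as $\delta_\mu^{\hd_\mu}\Xh$. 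One thus obtains, with no limit and no leading-order bookkeeping, that $\sL(\pi_\mu\sS_1)$ and $\sL(\pi_\mu\sS_2)$ are each equivalent to $\pi_\mu\sF$ on $\Omega'$. By contrast, your choice $\delta=(1,\ldots,1,\delta_\mu,1,\ldots,1)$ keeps all the terms with $d_{j,\mu}=0$ alive as $\delta_\mu\to 0$; those are exactly the vector fields you want to eliminate, and your proposed remedy (``corrections whose total degree is strictly larger in some direction other than $\mu$'') does not materialize because the degrees in the other directions are frozen under this scaling.

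Once the correct projection lemma $\sL(\pi_\mu\sS_l)\sim\pi_\mu\sF$ is in hand, the rest of the argument is direct and there is no ping-pong and no induction on degree: since $\sS_l=\bigcup_\mu\pi_\mu\sS_l$ and each piece is controlled by $\pi_\mu\sF$, one has $\sL(\bigcup_\mu\pi_\mu\sF)$ controls $\sL(\sS_l)$ and hence controls $\sF$; conversely, for each $\mu$ pick $l$ with $\mu\in\sP_l$ (possible since $\sP_1\cup\sP_2=\nuset$), so that $\sL(\pi_\mu\sS_l)$ controls $\pi_\mu\sF$, whence $\sL(\sG)$ controls $\bigcup_\mu\pi_\mu\sF$ and therefore $\sF$. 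Finiteness of $\sF$ then yields a single $L$ such that the truncated set in \eqref{EqnPfLtToShowsLControl} already controls $\sF$. So the gap in your proposal is precisely the choice of specialization: set the off-$\mu$ components of $\delta$ to $0$, not to $1$, and the recursion you were trying to close simply never arises.
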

\begin{proof}
Because $\sL(\sS_1)$ and $\sL(\sS_2)$ are both equivalent to $\sF$ on $\Omega'$, taking $\delta=(0,\ldots, 0,\delta_\mu,0,\ldots, 0)\in [0,1]^{\nu}$
in the definitions
shows $\sL(\pi_\mu \sS_1)$ and $\sL(\pi_{\mu} \sS_2)$ are equivalent to $\pi_\mu \sF$ on $\Omega'$.  In particular, $\pi_{\mu} \sF$ controls
$\pi_{\mu} \sS_1$ and $\pi_{\mu} \sS_2$ on $\Omega'$.  For $l=1,2$, $\sS_l=\bigcup_{\mu} \pi_{\mu} \sS_l$ and so,
\begin{equation*}
\sL\q(\bigcup_{\mu\in \nuset} \pi_{\mu} \sF \w)
\end{equation*}
controls $\sL(\sS_1)$ and $\sL(\sS_2)$ on $\Omega'$, and therefore, $\sL\q(\bigcup_{\mu\in \nuset} \pi_{\mu} \sF\w)$ controls $\sF$ on $\Omega'$.

Because $\sL(\pi_\mu \sS_1)$ and $\sL(\pi_{\mu} \sS_2)$ are both equivalent to $\pi_\mu \sF$ on $\Omega'$,
we have
$$\sL\q( \bigcup_{l\in \{1,2\}} \bigcup_{\mu\in \sP_l} \pi_{\mu} \sS_l  \w)$$
controls $\bigcup_{\mu} \pi_{\mu} \sF$ on $\Omega'$, and therefore controls $\sL\q(\bigcup_{\mu\in \nuset} \pi_{\mu} \sF\w)$ on $\Omega'$,
and therefore controls $\sF$ on $\Omega'$.  Because $\sF$ is finite, there exists $L$ such that the set in \eqref{EqnPfLtToShowsLControl}
controls $\sF$ on $\Omega'$.  It follows that, with this choice of $L$, the set in \eqref{EqnPfLtToShowsLControl} and $\sF$ are equivalent on $\Omega'$.
\end{proof}

\begin{lemma}\label{LemmaPfLtLinControl}
Suppose $\sS_1,\sS_2, \sF\subset \snuvectone$ are finite sets such that $\sS_1$, $\sS_2$, and $\sF$ are all equivalent on $\Omega'$.
Let $\sP_1,\sP_2\subseteq \nuset$ be such that $\sP_1\bigcup \sP_2=\nuset$.  Then
\begin{equation*}
\bigcup_{l\in \{1,2\}} \bigcup_{\mu\in \sP_l} \pi_{\mu} \sS_l
\end{equation*}
is equivalent to $\sF$ on $\Omega'$.
\end{lemma}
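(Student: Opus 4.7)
The plan is to reduce Lemma~\ref{LemmaPfLtLinControl} to a single observation, after which it drops out essentially by bookkeeping. The observation is: \emph{if $\sS\subset\snuvectone$ is finite and $\sS$ controls $(X,d)$ on $\Omega'$ with $d\in\onecompnu$ nonzero only in the $\mu$-th coordinate, then already $\pi_\mu\sS$ controls $(X,d)$ on $\Omega'$.} The rest of the argument will then be a purely combinatorial application of this, using the hypothesis $\sP_1\cup\sP_2=\nuset$.

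\emph{Proof of the observation.} The plan is to specialize the defining relation to a one-parameter slice of dilations. By hypothesis there exist $\tau_1>0$ and coefficients $c_{x,j}^{\delta}\in C^0(\B{\sS}{}{x}{\tau_1\delta})$, for every $\delta\in[0,1]^\nu$ and $x\in\Omega'$, such that
\begin{equation*}
\delta^d X=\sum_{(X_j,d_j)\in\sS}c_{x,j}^{\delta}\,\delta^{d_j}X_j\qquad\text{on }\B{\sS}{}{x}{\tau_1\delta},
\end{equation*}
with $(\delta\sS)^\alpha c_{x,j}^\delta$ bounded in $C^0$, uniformly in $(x,\delta)$, for every multi-index $\alpha$. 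For each $\delta\in[0,1]^\nu$, let $\delta_*:=(0,\ldots,0,\delta_\mu,0,\ldots,0)\in[0,1]^\nu$ be the projection onto the $\mu$-axis. For $(X_j,d_j)\in\pi_{\mu'}\sS$ with $\mu'\neq\mu$ one has $\delta_*^{d_j}=0^{d_{j,\mu'}}=0$, so specializing the above relation at $\delta_*$ leaves only the contributions coming from $\pi_\mu\sS$:
\begin{equation*}
\delta^d X=\delta_*^d X=\sum_{(X_j,d_j)\in\pi_\mu\sS}c_{x,j}^{\delta_*}\,\delta^{d_j}X_j.
\end{equation*}
Define $\widetilde c_{x,j}^\delta:=c_{x,j}^{\delta_*}$. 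Since the zero-scaled vector fields contribute nothing to the Carnot--Carath\'eodory ball, one has the identification $\B{\sS}{}{x}{\tau_1\delta_*}=\B{\pi_\mu\sS}{}{x}{\tau_1\delta}$; the $\widetilde c_{x,j}^\delta$ therefore live on the correct ball for $\pi_\mu\sS$-control. Similarly, $(\delta\pi_\mu\sS)^\alpha$ and $(\delta_*\sS)^\alpha$ agree as differential operators (the latter only picks up nonzero contributions from $\pi_\mu\sS$), so the uniform bound
\begin{equation*}
\sup_{x,\delta}\sum_{|\alpha|\leq m}\|(\delta\pi_\mu\sS)^\alpha\widetilde c_{x,j}^\delta\|_{C^0(\B{\pi_\mu\sS}{}{x}{\tau_1\delta})}\leq\sup_{x,\delta'}\sum_{|\alpha|\leq m}\|(\delta'\sS)^\alpha c_{x,j}^{\delta'}\|_{C^0(\B{\sS}{}{x}{\tau_1\delta'})}<\infty
\end{equation*}
is inherited from the original. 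Thus $\pi_\mu\sS$ controls $(X,d)$.

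\emph{Proof of the lemma from the observation.} Let $\sA:=\bigcup_{l\in\{1,2\}}\bigcup_{\mu\in\sP_l}\pi_\mu\sS_l$. That $\sF$ controls $\sA$ is immediate: since $\sA\subseteq\sS_1\cup\sS_2$ and $\sF$ controls both $\sS_l$ (by the assumed equivalence), $\sF$ controls $\sA$. Conversely, because $\sF\subset\snuvectone$, $\sF=\bigcup_{\mu\in\nuset}\pi_\mu\sF$. Fix $\mu\in\nuset$. Using $\sP_1\cup\sP_2=\nuset$, choose $l\in\{1,2\}$ with $\mu\in\sP_l$; then $\pi_\mu\sS_l\subseteq\sA$. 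Since $\sS_l$ controls $\sF$, it controls every $(X,d)\in\pi_\mu\sF$, and each such $d$ is nonzero only in the $\mu$-coordinate; the observation then yields that $\pi_\mu\sS_l$ controls each such $(X,d)$. Hence $\sA$ controls $\pi_\mu\sF$, and taking the union over $\mu$ shows $\sA$ controls $\sF$.

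\emph{Main obstacle.} The only nontrivial point is the observation itself, and within that, the verification that specializing to $\delta_*$ is compatible with the quantitative $C^0$ bounds in the definition of control: one must keep track of the fact that the controlling CC balls $\B{\sS}{}{x}{\tau_1\delta_*}$ and $\B{\pi_\mu\sS}{}{x}{\tau_1\delta}$ coincide, and that the derivations $(\delta_*\sS)^\alpha$ reduce to $(\delta\pi_\mu\sS)^\alpha$, so that uniform bounds at $\delta_*$ (supplied by the hypothesis at each fixed $\delta'$) become the uniform bounds needed at $\delta$. This is where the assumption $\sS\subset\snuvectone$ is essential: if degrees were not concentrated on single axes, the vanishing of ``off-axis'' contributions under $\delta_*$ would fail.
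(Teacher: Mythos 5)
Your proof is correct and takes essentially the same approach as the paper: the paper's argument also specializes the dilation parameter to a single axis $(0,\ldots,0,\delta_\mu,0,\ldots,0)$ to conclude that $\pi_\mu\sS_1$, $\pi_\mu\sS_2$, and $\pi_\mu\sF$ are mutually equivalent, and then uses $\sF=\bigcup_\mu\pi_\mu\sF$ together with $\sP_1\cup\sP_2=\nuset$ exactly as you do. You have simply made the paper's "it follows immediately from the definition of control" step explicit by isolating it as the observation and checking the compatibility of the CC balls and the $C^0$ bounds under the specialization $\delta\mapsto\delta_*$, which is a faithful expansion of the same idea.
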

\begin{proof}
It follows immediately from the definition of control, by taking $$\delta=(0,\ldots, 0,\delta_{\mu},0,\ldots, 0)\in [0,1]^{\nu},$$
that $\pi_{\mu} \sS_1$, $\pi_{\mu} \sS_2$, and $\pi_{\mu} \sF$ are all equivalent on $\Omega'$.
Because $\sF=\bigcup_{\mu} \pi_{\mu} \sF$ (similarly for $\sS_1$ and $\sS_2$) by assumption, the result follows.
\end{proof}

The heart of the proof of Proposition \ref{PropPfLtTechApO} lies in the next lemma.
For it, we need some notation from Section \ref{SectionSchwartzAndProd}.  For $l=1,\ldots, L$, we let $t^l\in \R^{N_l}$.  For $1\leq \mu\leq \nu$,
we let $t^l_\mu$ denote the vector consisting of those coordinates $t^l_j$ of $t^l$ such that $e^l_{j,\mu}\ne 0$ (i.e., the $\mu$th component of $e^l_j$ is strictly positive).  We let $N_l^\mu$ denote the dimension of the $t^l_\mu$ variable,
and let $\Na_l=N_l^1+\cdots+N_l^\nu$.
We write $\alphaa_l = (\alpha_1^l,\ldots,\alpha_\nu^l)\in\N^{N_l^1}\times \cdots \times \N^{N_l^{\nu}}=\N^{\Na_l}$, and define $\partial_{t^l}^{\alphaa}$ as
in Section \ref{SectionSchwartzAndProd}.

\begin{lemma}\label{LemmaPfLtApOBaiscBound}
There exists $a>0$ (depending on $(\gamma^1, e^1, N_1)$ and $(\gamma^2, e^2,N_2)$) such that the following holds.
For $l=1,2$, let $\sB_1^l\subset C_0^\infty(B^{N_l}(a))$, $\sB_2^l \subset C_0^\infty(\Omega_0)$, and $\sB_3^l\subset C^\infty(B^{N_l}(a)\times \Omega'')$
be bounded sets.  Fix $M\in \N$.  Let $j_1,j_2\in [0,\infty)^{\nu}$.  For $l=1,2$ suppose the following.
$$\vsig_l=\sum_{\substack{ \alphaa_l\in \N^{\Na_l} \\ |\alpha_\mu^{l}|=M\text{ when }j_l^{\mu}\ne 0 \\ |\alpha_\mu^{l}|=0\text{ when }  j_{l}^{\mu}=0}}\partial_{t^l}^{\alphaa} \gamma_{l,\alphaa},
$$
where $\gamma_{l,\alphaa}\in \sB_1^l$.  Let $\psi_{1,l},\psi_{2,l}\in \sB_2^l$, and $\kappa_l\in \sB_3^l$.  Define,
\begin{equation*}
T_{j_l}^{l}f(x) = \psi_{1,l}(x) \int f(\gamma_{t^l}^l(x)) \kappa_l(t^l,x) \psi_{2,l}(\gamma_{t^l}^{l}(x)) \dil{\vsig}{2^{j_l}}(t^l)\: dt^l.
\end{equation*}
Then,
\begin{itemize}
\item In Case I, if $M\geq 1$, there exists $\epsilon>0$ (depending on $(\gamma^1, e^1, N_1)$ and $(\gamma^2, e^2,N_2)$) such that
\begin{equation*}
\LpOpN{2}{T_{j_1}^1 T_{j_2}^2}\leq C 2^{-\epsilon|j_1-j_2|},
\end{equation*}
where $C$ depends on $(\gamma^1, e^1, N_1)$, $(\gamma^2,e^2, N_2)$, $\sB_l^1$, $\sB_l^2$, and $\sB_l^3$, $l=1,2$.

\item In Case II, for every $N$, there exists a choice of $M$ and a constant $C_N$ (depending on  $(\gamma^1, e^1, N_1)$, $(\gamma^2,e^2, N_2)$, $\sB_l^1$, $\sB_l^2$, and $\sB_l^3$, $l=1,2$) such that
\begin{equation*}
\LpOpN{2}{T_{j_1}^1 T_{j_2}^2}\leq C_N 2^{-N|j_1-j_2|}.
\end{equation*}
\end{itemize}
\end{lemma}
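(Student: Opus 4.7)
The plan is to realize the composition $T^1_{j_1} T^2_{j_2}$ as a single operator of the form $D_\zeta$ from Lemma \ref{LemmaPfLtMainLemma}, with small parameter $\zeta:=2^{-|j_1-j_2|}\in(0,1]$; the desired bounds then follow directly from that lemma. In Case I, I will invoke Case I of Lemma \ref{LemmaPfLtMainLemma}, which requires only mean-zero cancellation (so $M\geq 1$ suffices) and delivers $\zeta^\epsilon$ for some $\epsilon>0$; in Case II, I will invoke Case II of Lemma \ref{LemmaPfLtMainLemma}, which converts the $M$-th order derivative cancellation into $\zeta^{M/2}$ and hence arbitrary decay.

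If $|j_1-j_2|=0$ the estimate is trivial, so fix $\mu\in\nuset$ with $|j_1^\mu-j_2^\mu|=|j_1-j_2|$. By Proposition \ref{PropPfAdjEachScale} the operator class is closed under adjoints, and by Proposition \ref{PropPfAdjGamma} each $(\gamma^l)^{-1}$ inherits finite (resp.\ linear finite) generation by the same $\sF$; using this, after possibly replacing $T^1_{j_1}T^2_{j_2}$ by $T^{2*}_{j_2}T^{1*}_{j_1}$ and relabeling, I may assume $j_1^\mu>j_2^\mu$. Then $j_1^\mu>0$, so the hypothesis $|\alpha_\mu^1|=M$ forces $M$ orders of derivative cancellation for $\vsig_1$ in the $\mu$-only coordinates of $t^1$ (those components $t^1_k$ whose weight $e^1_k$ is nonzero only in the $\mu$-slot). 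After the substitution $s^l=2^{j_l}t^l$ inside each integral, $T^1_{j_1}T^2_{j_2}$ becomes an integral against $\vsig_1(s^1)\vsig_2(s^2)$ with composite parameterization $\gammah(s^1,s^2,x):=\gamma^2_{2^{-j_2}s^2}(\gamma^1_{2^{-j_1}s^1}(x))$. Grouping the $\mu$-only coordinates of $s^1$ into a subvariable $t_2$ and placing the remaining coordinates (other $s^1$-coordinates together with all of $s^2$) into $t_1$, the dilation structure factors so that $2^{-j_1}s^1$ restricted to the $\mu$-only block can be rewritten as $2^{-j_2}\zeta t_2$; this puts the composition into the $D_\zeta$ form $\widetilde{\gamma}_{t_1,\zeta t_2}(x)$, with $M$-fold $t_2$-cancellation inherited from $\vsig_1$.

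The core of the argument is verifying Lemma \ref{LemmaPfLtMainLemma}'s control hypothesis uniformly in $(j_1,j_2)$. Let $\sS_l$ be the set associated to $(\gamma^l,e^l,N_l)$ via \eqref{EqnResSurfDefnsS}, and take as controlling set the $2^{-j_1}$-dilation of $\sF$: since $\sF$ satisfies $\sD(\Omega')$ (Lemma \ref{LemmaResCCsLFinGenGivessD}) and controls both $(\gamma^l,e^l,N_l)$ on $\Omega'$, Proposition \ref{PropFrobControlUnitScaleVsAll} gives that this dilated set satisfies Theorem \ref{ThmQuantFrob}'s hypotheses and controls $\gammah$ at the unit scale, uniformly in $(j_1,j_2)$. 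The Taylor coefficients of $\widetilde{\gamma}(t_1,0,x)$ in $t_1$ --- obtained by differentiation in the non-$\mu$-only coordinates of $s^1$ and in all of $s^2$ --- include scaled images of $\pi_{\mu'}\sS_1$ for every $\mu'\ne\mu$ and of $\pi_{\mu'}\sS_2$ for every $\mu'\in\nuset$. Taking $\sP_1=\nuset\setminus\{\mu\}$ and $\sP_2=\nuset$, Lemma \ref{LemmaPfLtsLControl} (in Case I) provides $L\in\N$ such that the iterated-bracket set $\sL_L$ of these coefficients is equivalent to $\sF$; choosing $M\geq L$ then meets the $\sF_M^M$ hypothesis of Case I of Lemma \ref{LemmaPfLtMainLemma}. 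In Case II, Lemma \ref{LemmaPfLtLinControl} does the same using only degree-one coefficients, matching Case II of Lemma \ref{LemmaPfLtMainLemma}. The main obstacle will be the bookkeeping in this rescaling --- precisely identifying which coordinates absorb a factor of $\zeta$ and confirming that all control and cancellation bounds survive with constants independent of $j_1$ and $j_2$ --- but once this is verified, Lemma \ref{LemmaPfLtMainLemma} delivers the stated decay.
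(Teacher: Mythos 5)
Your high-level strategy --- realizing $T^1_{j_1}T^2_{j_2}$ as an operator of the form $D_\zeta$ from Lemma \ref{LemmaPfLtMainLemma}, with the adjoint trick to reduce without loss of generality to $j_1^\mu > j_2^\mu$ --- matches the paper's (the adjoint reduction via Propositions \ref{PropPfAdjEachScale} and \ref{PropPfAdjGamma} being a tidier alternative to the paper's ``reverse case is nearly identical''). However, the setup of the controlling vector fields contains a genuine error. You take the controlling set to be ``the $2^{-j_1}$-dilation of $\sF$,'' i.e. $\{2^{-j_1\cdot d}X : (X,d)\in\sF\}$. Your reduction guarantees $j_1^\mu > j_2^\mu$ only for the single coordinate $\mu$ maximizing the gap; for other $\mu'$ you can have $j_1^{\mu'} < j_2^{\mu'}$, and then $2^{-j_1^{\mu'}d_{\mu'}} < 2^{-j_2^{\mu'}d_{\mu'}}$, so the vector fields in your $Z$ are too small in the $\mu'$-direction to control $\gamma^2_{2^{-j_2}\cdot}$ at the unit scale. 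The correct choice is $Z = \{2^{-j\cdot d}X : (X,d)\in\sF\}$ with $j = j_1\wedge j_2$, the \emph{coordinatewise minimum}, which dominates both $2^{-j_1}$ and $2^{-j_2}$ in every slot.

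This mis-scaling propagates into your choice $\sP_1 = \nuset\setminus\{\mu\}$, $\sP_2 = \nuset$. Lemma \ref{LemmaPfLtsLControl} applies to any pair with $\sP_1\cup\sP_2 = \nuset$, but what one then needs is that the $2^{-j\cdot d}$-scaled versions of the \emph{selected} vector fields actually appear as Taylor coefficients of $\Wh(s_1,0)$. The Taylor coefficient of $\Wh((0,t^1_{\mu^\perp}),0)$ corresponding to $(X_\alpha^1,\deg(\alpha))\in\pi_{\mu'}\sS_1$ is $2^{-j_1\cdot\deg(\alpha)}X_\alpha^1$, which equals $2^{-j\cdot\deg(\alpha)}X_\alpha^1$ precisely when $j_1^{\mu'} = j^{\mu'}$. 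So the correct selection is $\sP_l := \{\mu'' : j_l^{\mu''} = j_{\mu''}\}$, i.e. the coordinates at which $j_l$ achieves the coordinatewise minimum. With your broader $\sP_1$, the equivalence Lemma \ref{LemmaPfLtsLControl} produces is for a selection whose $2^{-j\cdot d}$-scaled members do \emph{not} all appear as Taylor coefficients of $\Wh(s_1,0)$, so the chain of implications from ``unscaled selection is equivalent to $\sF$'' to ``$\sF_M^M$ controls $Z$ at the unit scale'' does not close. Note that $\sP_1\cup\sP_2 = \nuset$ and $\mu\notin\sP_1$ still hold with the corrected $\sP_l$ (the latter because $j_1^\mu > j_2^\mu$), so both the application of Lemma \ref{LemmaPfLtsLControl} and the $s_2$-cancellation step go through once the $\sP_l$ are fixed.
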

\begin{proof}
We prove the two cases simultaneously.
If $j_1=j_2$, the result is obvious, so we assume $|j_1-j_2|>0$.  Set $j=j_1\wedge j_2\in [0,\infty)^{\nu}$ (the coordinatewise minimum of $j_1$ and $j_2$).
Set $Z=\{2^{-j\cdot d} X : (X,d)\in \sF\}$.  Because $\sF$ satisfies $\sD(\Omega')$, $Z$ satisfies the conditions of Theorem \ref{ThmQuantFrob}, uniformly
for $x_0\in \Omega'$.

Consider,
\begin{equation*}
\begin{split}
T_{j_1}^1 T_{j_2}^2 f(x) =
\psi_{1,1}(x) \int &f(\gamma^2_{2^{-j_2} t^2}\circ \gamma^1_{2^{-j_1} t^1}(x)) \psi_{2,2}(\gamma^2_{2^{-j_2} t^2}\circ \gamma^1_{2^{-j_1} t^1}(x) )
\\&\kappa(2^{-j_1} t^1, 2^{-j_2} t^2,x) \vsig_1(t^1) \vsig_2(t^2)\: dt^1 dt^2,
\end{split}
\end{equation*}
where $\kappa\in C^\infty(B^{N_1}(a)\times B^{N_2}(a)\times \Omegat)$ and $\Omega'\Subset \Omegat\Subset \Omega''$, provided $a>0$ is chosen small,
and $\kappa$ ranges over a bounded set as the various parameters in the problem vary.

Take $\mu$ so that $|j_1^{\mu}-j_2^{\mu}|=|j_1-j_2|_\infty>0$.  We assume $j_1^{\mu}>j_2^{\mu}$--the reverse case is nearly identical, and we leave
it to the reader.  Separate $t^1$ into two variables, $t^1=(t^1_{\mu}, t^1_{\mu^{\perp}})$, where $t^{1}_{\mu}$ denotes the vector consisting of those coordinates $t_j^1$ of $t^1$
so that $e^1_{j,\mu}\ne 0$, and $t^1_{\mu^{\perp}}$ denotes the vector consisting of the rest of the coordinates.  We write $2^{-j_1} t^1_{\mu}$ and $2^{-j_1} t^1_{\mu^{\perp}}$
for the corresponding coordinates of $2^{-j_1} t^1$.  Let $\zeta=2^{-(j_{1}^{\mu} -j_2^{\mu})c}$, where $c=\min\{e^1_{j,\mu} : 1\leq j\leq N_1, e^{1}_{j,\mu}\ne 0\}>0$.
Note that if $t^{1}_{\mu}=(t^{1}_{\mu,1},\ldots, t^{1}_{\mu,l_\mu})$, we have
\begin{equation*}
2^{-j_1} t^{1}_\mu = \zeta 2^{-j} (\xi_1 t^{1}_{\mu,1}, \ldots, \xi_{l_{\mu}} t^{1}_{\mu,l_\mu} ),
\end{equation*}
where $\xi_1,\ldots, \xi_{l_{\mu}}\in (0,1]$ (here, $\xi_1,\ldots, \xi_{l_{\mu}}$ depend on $j_1,j_2$). By our hypotheses, $\gamma_{2^{-j} t^1}^1(x)$ and $\gamma_{2^{-j} t^2}^2$ are controlled at the unit scale
by $Z$ on $\Omega'$ (here, and everywhere else in this proof, all such conclusions are uniform in $j_1,j_2$).
It follows immediately from the definitions (see also Proposition 12.7 of \cite{SteinStreetI}) that
\begin{equation*}
\gamma^1_{(2^{-j}(\xi_1  t^{1}_{\mu,1}, \ldots, \xi_{l_{\mu}} t^{1}_{\mu,l_\mu} ), 2^{-j_1}t^{1}_{\mu^{\perp}})}(x), \quad \gamma^2_{2^{-j_2} t^2}(x)
\end{equation*}
are controlled at the unit scale by $Z$.
Let $s_1=(t^{1}_{\mu^{\perp}}, t^2)$ and $s_2=t^{1}_\mu$.
Applying Proposition 12.6 of \cite{SteinStreetI}, we have
\begin{equation*}
\gammah_{s_1,s_2}(x):=  \gamma^2_{2^{-j_2} t^2} \circ \gamma^1_{(2^{-j}(\xi_1  t^{1}_{\mu,1}, \ldots, \xi_{l_{\mu}} t^{1}_{\mu,l_\mu} ), 2^{-j_1}t^{1}_{\mu^{\perp}})}(x)
\end{equation*}
is controlled at the unit scale by $Z$ on $\Omega'$.
Set $\vsigt(s_1,s_2):= \vsig_1(t^1) \vsig_2(t^2)$.  By our hypotheses,
\begin{equation*}
\vsigt(s_1,s_2) = \sum_{|\alpha|=M} \partial_{s_2}^{\alpha} \vsigt_{\alpha}(s_1,s_2),
\end{equation*}
where $\{\vsigt_{\alpha}\}\subset C_0^{\infty}(B^{N_1+N_2}(2a))$ is a bounded set (where $\{\vsigt_{\alpha}\}$ denotes the set of all $\vsigt_\alpha$
as the various parameters in the problem vary).
Also, define
\begin{equation*}
\kappat(s_1,s_2):= \psi_{2,2}( \gammah_{s_1,s_2}(x)) \kappa( (2^{-j}(\xi_1  t^{1}_{\mu,1}, \ldots, \xi_{l_{\mu}} t^{1}_{\mu,l_\mu} ), 2^{-j_1}t^{1}_{\mu^{\perp}}), 2^{-j_2} t^{2}, x),
\end{equation*}
so that $\{\kappat\}\subset C^{\infty}(B^{N_1}(a)\times B^{N_2}(a)\times \Omegat)$ is a bounded set (again, $\{\kappat\}$ denotes the set of all such $\kappat$
as the various parameters vary).  Using this notation we have
\begin{equation*}
T_{j_1}^1 T_{j_2}^2 f(x) = \psi_{1,1}(x) \int f(\gammah_{s_1,\zeta s_2}(x)) \kappat(s_1, \zeta s_2, x) \vsig(s_1,s_2)\: ds_1 \: ds_2.
\end{equation*}
The goal is to apply Lemma \ref{LemmaPfLtMainLemma} to this operator.

Define
\begin{equation*}
\Wh(s,x) = \frac{\partial}{\partial \epsilon}\bigg|_{\epsilon=1} \gammah_{\epsilon s}\circ \gammah_{s}^{-1}(x),
\end{equation*}
and suppose the parameterizations $(\gamma^1,e^1,N_1)$ and $(\gamma^2,e^2,N_2)$ correspond to the vector field
parameterizations $(W^1, e^1, N_1)$ and $(W^2, e^2, N_2)$, respectively.  Returning to the $t^1$, $t^2$ coordinates
and treating $\Wh$ as a function of $(t^1, t_2)=((t_{\mu}^1, t^{1}_{\mu^{\perp}}), t^2)$, we have
\begin{equation}\label{EqnPfLtWhDecomp}
\Wh((0,t^1_{\mu^{\perp}}),0) = W^1(0,2^{-j_1} t^1_{\mu^{\perp}}), \quad \Wh(0,t^2)=W^2(2^{-j_2} t^2).
\end{equation}

We now separate the proof into the two cases.  We begin with Case I, and we wish to show that the hypotheses of Case I of Lemma \ref{LemmaPfLtMainLemma}
hold in this situation, with the above choice of $Z$.  At this point it is a matter of unravelling the definitions.  We have already seen
that $Z$ controls $\gammah$ at the unit scale on $\Omega'$, and it is clear that when $M\geq 1$, $\int \vsig(s_1,s_2)\: ds_2=0$.
Write $W_1(t^{1})\sim \sum_{|\alpha|>0} (t^1)^{\alpha} X_{\alpha}^1$, and $W_2(t^2)\sim \sum_{|\alpha|>0} (t^2)^{\alpha} X_\alpha^2$.
For $l\in \{1,2\}$ let $\sS_l:=\{ (X_\alpha^l, \deg(\alpha)) : \deg(\alpha)\in \onecompnu\}$.  By our assumptions,
both $\sL(\sS_1)$ and $\sL(\sS_2)$ are equivalent to $\sF$ on $\Omega'$.
For $l\in {1,2}$, let $\sP_l:=\{ \mu'\in \nuset : j_{l}^{\mu'}=j_{\mu'}\}$; i.e., $\mu'\in \sP_l$ if and only if $j_l^{\mu'}=j_1^{\mu'}\wedge j_2^{\mu'}$.
Clearly $\sP_1\bigcup \sP_2=\nuset$ and $\mu\not \in \sP_1$.  Lemma \ref{LemmaPfLtsLControl} shows that there exists $L$ such that the set in
\eqref{EqnPfLtToShowsLControl} is equivalent to $\sF$ on $\Omega'$.  In particular, the set in \eqref{EqnPfLtToShowsLControl} controls
$\sF$ on $\Omega'$.
Notice that there exists $K=K(L)$ such that for each
$$(X,d)\in \bigcup_{l\in \{1,2\}} \bigcup_{\mu'\in \sP_l} \sigma_L(\pi_{\mu} \sS_l ),$$
the following holds:
\begin{itemize}
\item $d$ is nonzero in precisely one component:  $d_{\mu'}\ne 0$ for only one $\mu'\in \nuset$.
\item  This $\mu'$ satisfies $\mu'\in \sP_1\cup \sP_2=\nuset$.
\item If $\mu'\in \sP_1$, $X$ appears as a Taylor coefficient in  $ W^1(0,t^1_{\mu^{\perp}})$ corresponding to
some multi-index $\alpha$ with $|\alpha|\leq K$ and $\deg(\alpha)=d$.  Here we have used $\mu\not\in \sP_1$, so that $\mu'\ne \mu$ in this case.
\item If $\mu'\in \sP_2$, $X$ appears as a Taylor coefficient in  $W^2(t^2)$ corresponding to
some multi-index $\alpha$ with $|\alpha|\leq K$ and $\deg(\alpha)=d$.
\end{itemize}

Combining this with \eqref{EqnPfLtWhDecomp}, and using that $d$ is nonzero in only one component, shows that $2^{-j\cdot d} X$ appears as a Taylor coefficient
in $\Wh(s_1,s_2)$ corresponding to a mulit-index of order $\leq K$.
Combining this with the fact that the set in \eqref{EqnPfLtToShowsLControl} controls $\sF$ on $\Omega'$ shows that the hypotheses of Lemma \ref{LemmaPfLtsLControl}
hold in this case.  It follows that there exists $\epsilon,\epsilon',\epsilon''>0$ with
\begin{equation*}
\LpOpN{2}{T_{j_1}^1 T_{j_2}^2}\lesssim \zeta^{\epsilon}\lesssim  2^{-\epsilon'(j_1^{\mu}-j_2^{\mu})}\lesssim2^{-\epsilon''|j_1-j_2|},
\end{equation*}
as desired.

We now turn to Case II, and we wish to show that the hypotheses of Case II of Lemma \ref{LemmaPfLtMainLemma} hold in this situation with
the above choice of $Z$, and it again is just a matter of unravelling the definitions.  We have already seen that
$Z$ controls $\gammah$ at the unit scale on $\Omega'$, and we have $\vsig(s_1,s_2)=\sum_{|\alpha|=M} \partial_{s_2}^{\alpha} \vsig_{\alpha}(s_1,s_2)$,
where $\vsig_{\alpha}\in C_0^{\infty}(B^{N_1}(a)\times B^{N_2}(a))$ ranges over a bounded set as the various parameters in the problem vary.
Proceeding as in Case I, we write $W_1(t^1)$ and $W_2(t^2)$ as Taylor series.  For $l=1,2$, we set
$\sS_l:=\{ (X_\alpha^l, \deg(\alpha)) : \deg(\alpha)\in \onecompnu\text{ and } |\alpha|=1\}$.  Our hypotheses imply that
$\sS_1$, $\sS_2$, and $\sF$ are all equivalent.  Define $\sP_1$ and $\sP_2$ as in Case I; i.e., $\sP_l:=\{ \mu'\in \nuset : j_{l}^{\mu'}=j_{\mu'}\}$.
Note $\mu\not\in \sP_1$.  By Lemma \ref{LemmaPfLtLinControl},
\begin{equation}\label{EqnPfLtToShowLinControl}
\bigcup_{l\in \{1,2\}} \bigcup_{\mu'\in \sP_l} \pi_{\mu'} \sS_l
\end{equation}
is equivalent to $\sF$ on $\Omega'$.  Note that for $(X,d)\in \bigcup_{l\in \{1,2\}} \bigcup_{\mu'\in \sP_l} \pi_{\mu'} \sS_l$,
the following holds:
\begin{itemize}
\item $d$ is nonzero in precisely one component: $d_{\mu'}\ne 0$ for only one $\mu'\in \nuset$.
\item This $\mu'$ satisfies $\mu'\in \sP_1\cup \sP_2=\nuset$.
\item If $\mu'\in \sP_1$, $X$ appears as a Taylor coefficient in $ W^1(0,t^1_{\mu^{\perp}})$ corresponding to
some multi-index $\alpha$ with $|\alpha|=1$ and $\deg(\alpha)=d\in \onecompnu$.  Here we have used $\mu\not\in \sP_1$,
so $\mu'\ne \mu$ in this case.
\item If $\mu'\in \sP_2$, $X$ appears as a Taylor coefficient in  $W^2(t^2)$ corresponding to
some multi-index $\alpha$ with $|\alpha|=1$ and $\deg(\alpha)=d\in \onecompnu$.
\end{itemize}
Combining this with \eqref{EqnPfLtWhDecomp}, and using that $d$ is nonzero in only one component, shows that $2^{-j\cdot d} X$ appears as a Taylor coefficient
in $\Wh(s_1,s_2)$ corresponding to a mulit-index of order $1$.
Using that the set in \eqref{EqnPfLtToShowLinControl} controls $\sF$ on $\Omega'$ shows the the hypotheses of Lemma \ref{LemmaPfLtsLControl}
hold in this case.  Thus we have, for some $c,c'>0$ (independent of any relevant parameters)
\begin{equation*}
\LpOpN{2}{T_{j_1}^1 T_{j_2}^2}\lesssim \zeta^{M/2}\lesssim  2^{-cM(j_1^{\mu}-j_2^{\mu})}\lesssim2^{-c' M |j_1-j_2|}.
\end{equation*}
The result follows.
\end{proof}

\begin{proof}[Proof of Proposition \ref{PropPfLtTechApO}]
The result for $L=1$ is trivial.  The result for $L>2$ follows from the result for $L=2$ and $L=1$, so we prove only the result for $L=2$.
Fix $M\in \N$.  In Case I, we take $M=1$.  In Case II, we take $M=M(N)$ large to be chosen later.
For $l\in \{1,2\}$, we apply Proposition \ref{PropPfKerDecompVsig} to write
\begin{equation*}
\eta_l(t) \dil{\vsig_l}{2^{j_l}}(t)=\eta_l(t)\sum_{\substack{k\leq j_l\\ k\in \N^{\nu} }} \dil{\vsig_{l,k}}{2^k}(t),
\end{equation*}
where $t\in N_l$ and we are using the dilations $e^l$ to define $\dil{\vsig_l}{2^{j_l}}$.  Here, using the notation of Proposition \ref{PropPfKerDecompVsig},
we have
\begin{equation*}
\vsig_{l,k}=\sum_{\substack{\alphaa\in \N^{\Na_l} \\ |\alpha_{\mu}|=M\text{ when }k_\mu\ne 0 \\ |\alpha_\mu|=0\text{ when }k_\mu=0 }} \partial_t^{\alphaa} \gamma_{k,\alphaa},
\end{equation*}
where, for every $L\in \N$,
\begin{equation}\label{EqnPfLtApODecomp}
\begin{split}
&\big\{ 2^{L|j_l-k|} \gamma_{k,\alphaa} : j_l\in [0,\infty)^{\nu}, \vsig_{l}\in \schS_{\{\mu : j_{l,\mu}>0 \}}\cap \sB_1^l, \eta_l\in \sB_2^l, k\leq j_l, k\in \N^{\nu}
\\&\quad\quad\quad\quad\quad\quad\quad
|\alpha_\mu|=M\text{ when }k_\mu\ne 0,
|\alpha_\mu|=0 \text{ when }k_\mu=0
\big\}
\\&\subset C_0^{\infty}(B^N(a))
\end{split}
\end{equation}
is a bounded set.

Define $T^l_{j_l,k}$, for $k\in \N^{\nu}$ with $k\leq j_l$, by
\begin{equation*}
T_{j_l,k}^l f(x) = \psi_{1,l}(x) \int f(\gamma_{t^l}^l(x)) \psi_{2,l}(\gamma_{t^l}^l(x)) \kappa(t^{l},x) \eta_l(t_l)\dil{\vsig_{l,k}}{2^k}(t^l)\: dt^l.
\end{equation*}
We have,
\begin{equation*}
T_{j_l}^l = \sum_{\substack{k\in \N^{\nu} \\ k\leq j_l}} T_{j_l,k}^l,
\end{equation*}
and it follows that
\begin{equation}\label{EqnPfLtDecompTs}
\LpOpN{2}{T_{j_1}^1 T_{j_2}^2}\leq \sum_{\substack{k_1,k_2\in \N^{\nu} \\ k_1\leq j_1, k_2\leq j_2}} \LpOpN{2}{T_{j_1,k_1}^1 T_{j_2,k_2}^2}.
\end{equation}

In Case I, we apply Lemma \ref{LemmaPfLtApOBaiscBound} and use \eqref{EqnPfLtApODecomp} (with $L=1$) to see that (if $a>0$ is chosen small enough, depending only on the parameterizations),
there exists $0<\epsilon\leq \frac{1}{2}$ such that
\begin{equation*}
\LpOpN{2}{T_{j_1,k_1}^1 T_{j_2,k_2}^2}\lesssim 2^{-|j_1-k_1|-|j_2-k_2|-\epsilon|k_1-k_2|}\leq 2^{-(1/2) (|j_1-k_2|+|j_2-k_2|) - \epsilon|j_1-j_2|}.
\end{equation*}
Plugging this into \eqref{EqnPfLtDecompTs}, we have
\begin{equation*}
\LpOpN{2}{T_{j_1}^1 T_{j_2}^2}\lesssim \sum_{\substack{k_1,k_2\in \N^{\nu} \\ k_1\leq j_1, k_2\leq j_2}} 2^{-(1/2) (|j_1-k_2|+|j_2-k_2|) - \epsilon|j_1-j_2|}\lesssim 2^{-\epsilon|j_1-j_2|},
\end{equation*}
as desired.

In Case II, we take $M=M(N)$ large, apply Lemma \ref{LemmaPfLtApOBaiscBound}, and use \eqref{EqnPfLtApODecomp} (with $L=N+1$) to see that (if $a>0$
is chosen small enough, depending only on the parameterizations),
\begin{equation*}
\begin{split}
\LpOpN{2}{T_{j_1,k_1}^1 T_{j_2,k_2}^2}&\lesssim 2^{-(N+1)|j_1-k_1|-(N+1)|j_2-k_2|-N|k_1-k_2|}
\\&\lesssim 2^{-|j_1-k_1|-|j_2-k_2|-N|j_1-j_2|}.
\end{split}
\end{equation*}
Plugging this into \eqref{EqnPfLtDecompTs}, we have
\begin{equation*}
\LpOpN{2}{T_{j_1}^1 T_{j_2}^2}\lesssim \sum_{\substack{k_1,k_2\in \N^{\nu} \\ k_1\leq j_1, k_2\leq j_2}}  2^{-|j_1-k_1|-|j_2-k_2|-N|j_1-j_2|} \lesssim 2^{-N|j_1-j_2|},
\end{equation*}
as desired.
\end{proof}

\subsection{Application II: Different Geometries}
In this section we present another application of Lemma \ref{LemmaPfLtMainLemma}.  Here the setting
is the same as in Section \ref{SectionResSobCompNew}.
Fix open sets $\Omega_0\Subset\Omega'\Subset\Omega''\Subset\Omega'''\Subset \Omega\subseteq \R^n$.
Let $\nut,\nuh\in \N$, and suppose $\sSt\subset \snutvectone$, $\sSh\subset \snuhvectone$ are finite sets.
Let $\nu=\nut+\nuh$ and define
\begin{equation*}
\sS:=\q\{ \q(\Xh, (\hd,0_{\nut})\w) : (\Xh, \hd)\in \sSh \w\} \bigcup\q\{ \q(\Xt, (0_{\nuh},\td)\w) : (\Xt, \td)\in \sSt \w\}\subset \snuvectone.
\end{equation*}
We separate our assumptions into two cases:
\begin{enumerate}[\bf{Case} I:]
\item $\sL(\sS)$ is finitely generated by some $\sF\subset \snuvect$ on $\Omega'$.
\item $\sL(\sS)$ is linearly finitely generated by some $\sF\subset \snuvectone$ on $\Omega'$.
\end{enumerate}

Let $\lambda$ be a $\nut\times\nuh$ matrix whose entries are all in $[0,\infty]$.
 In both Case I and Case II, we assume
\begin{equation*}
\sL(\sSh)\text{ }\lambda\text{-controls }\sSt\text{ on }\Omega'.
\end{equation*}
We suppose we are given a parameterization $(\gamma,e,N,\Omega,\Omega''')$ with $\nu$-parameter dilations such that
\begin{enumerate}[\bf{Case} I:]
\item $(\gamma,e,N)$ is finitely generated by $\sF$ on $\Omega'$.
\item $(\gamma,e,N)$ is linearly finitely generated by $\sF$ on $\Omega'$.
\end{enumerate}

Fix $a>0$ small (how small to be chosen later).  For $\vsig\in \schS(\R^{N})$, $\eta\in C_0^{\infty}(B^{N}(a))$,
$j\in [0,\infty)^{\nu}$, $\psi_1,\psi_2\in C_0^\infty(\Omega_0)$, and $\kappa\in C^\infty(B^{N}(a)\times \Omega'')$, define the operator
\begin{equation*}
T_j[\vsig,\eta,\psi_1,\psi_2,\kappa]f(x):=\psi_1(x) \int f(\gamma_{t}(x)) \psi_2(\gamma_t(x)) \kappa(t,x)\dil{\vsig}{2^j}(t)\: dt.
\end{equation*}

\begin{prop}\label{PropPfLtApTMainProp}
Let $\sB_1\subset \schS(\R^N)$, $\sB_2\subset C_0^\infty(B^N(a))$, $\sB_3\subset C_0^\infty(\Omega_0)$,  and $\sB_4\subset C^\infty(B^N(a)\times \Omega'')$
be bounded sets.  For $j=(\jh,\jt)\in [0,\infty)^{\nuh}\times [0,\infty)^{\nut}= [0,\infty)^\nu$, $\vsig\in \schS_{\{\mu:j_\mu>0\}}\cap \sB_1$, $\eta\in \sB_2$, $\psi_1,\psi_2\in \sB_3$, and $\kappa\in \sB_4$,
define
\begin{equation*}
T_{j}:=T_j[\vsig,\eta,\psi_1,\psi_1,\kappa].
\end{equation*}
Then, there exists $a>0$ (depending on $(\gamma,e,N)$) such that the following holds.
\begin{itemize}
\item In Case I, there exists $\epsilon>0$ (depending on $(\gamma,e,N)$) such that
\begin{equation*}
\LpOpN{2}{T_j}\leq C 2^{-\epsilon|\jt\vee  \lambda(\jh) - \lambda(\jh)| },
\end{equation*}
where $C$ depends on $(\gamma,e,N)$, $\sB_1$, $\sB_2$, $\sB_3$, and $\sB_4$.

\item In Case II, for every $L$, there exists $C_L$ (depending on $(\gamma,e,N)$ and the sets $\sB_1$, $\sB_2$, $\sB_2$, and $\sB_4$) such that
\begin{equation*}
\LpOpN{2}{T_j}\leq C_L 2^{-L |\jt\vee  \lambda(\jh) - \lambda(\jh)|}.
\end{equation*}
\end{itemize}
In the above, for vectors $j, k$ we have written $j\vee k$ to denote the coordinatewise maximum.
\end{prop}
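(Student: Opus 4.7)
The plan is to reduce to Lemma \ref{LemmaPfLtMainLemma} by rescaling $T_j$ to unit scale at the ``base scale'' $k_0 := (\jh, \jt \wedge \lambda(\jh)) \in [0,\infty)^\nu$ (with the convention $\jt_\mu \wedge \infty = \jt_\mu$), in parallel with the rescaling used in the proof of Lemma \ref{LemmaPfLtApOBaiscBound}. Write $m := \jt - (\jt \wedge \lambda(\jh)) \in [0,\infty)^{\nut}$ for the excess, which equals $\jt \vee \lambda(\jh) - \lambda(\jh)$ coordinatewise and quantifies the available cancellation. First I would handle the case $m = 0$: the change of variables $y = \gamma_t(x)$ (valid for $a$ small) gives $\LpOpN{1}{T_j}, \LpOpN{\infty}{T_j} \lesssim 1$, hence $\LpOpN{2}{T_j} \lesssim 1$ by interpolation. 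Otherwise, fix $\mu_0 \in \{1,\ldots,\nut\}$ with $m_{\mu_0} = |m|_\infty > 0$; then $j_{\nuh+\mu_0} > 0$, so $\vsig$ has full vanishing moments in the $t_{\nuh+\mu_0}$-variable in the notation of Section \ref{SectionSchwartzAndProd}.

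Next I would define $Z := \{2^{-k_0 \cdot d} X : (X,d) \in \sF\}$. Because $\sF$ satisfies $\sD(\Omega')$, Proposition \ref{PropFrobControlUnitScaleVsAll} combined with Remark \ref{RmkFrobControlCanUseUniformt} shows that $Z$ satisfies the hypotheses of Theorem \ref{ThmQuantFrob} uniformly in $j$ and in $x_0 \in \Omega'$; and since $(\gamma, e, N)$ is (linearly) finitely generated by $\sF$, the map $\gammah_u(x) := \gamma_{2^{-k_0} u}(x)$ is controlled at unit scale by $Z$ on $\Omega'$, uniformly in $j$. Substituting $u = 2^{k_0} t$ in the integral defining $T_j$ would then turn the kernel into $\gammah_{\zeta \cdot u}(x)$, where $(\zeta \cdot u)_i = 2^{-m \cdot e_i^{\sSt}} u_i$ and $e_i^{\sSt}$ denotes the last $\nut$ components of $e_i$. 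I would split $u = (u_1, u_2)$ so that $u_2 = u_{\nuh+\mu_0}$ collects those coordinates with $e_i^{\nuh+\mu_0} > 0$; then $\zeta_i = 1$ on $u_1$, while on $u_2$ we have $\zeta_i \in [2^{-C|m|}, 2^{-c_0 m_{\mu_0}}]$ for $c_0 := \min\{e_i^{\nuh+\mu_0} : e_i^{\nuh+\mu_0} > 0\}$. Following the device used in the proof of Lemma \ref{LemmaPfLtApOBaiscBound}, I would set $\zeta_0 := 2^{-c_0 m_{\mu_0}}$ and absorb the ratios $\zeta_i/\zeta_0 \in (0,1]$ into a redefined parameterization $\widehat{\gammah}_{u_1, u_2}(x) := \gammah_{u_1,(\xi_1 u_2^1, \ldots)}(x)$, which remains controlled at unit scale by $Z$ uniformly in $j$. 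The operator now takes the form in Lemma \ref{LemmaPfLtMainLemma} with $\zeta = \zeta_0$. The Case I hypothesis $\int \vsig\,du_2 = 0$ holds because $\nuh+\mu_0 \in \{\mu : j_\mu > 0\}$; the Case II hypothesis $\vsig = \sum_{|\alpha|=L}\partial_{u_2}^\alpha \vsig_\alpha$ I would obtain from Lemma \ref{LemmaKersSEPullOut} by writing $\vsig = \lap_{\nuh+\mu_0}^M \vsigt$ with $M = M(L)$ large and $\vsigt$ in a bounded Schwartz set.

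The main obstacle is then to verify the vector-field hypothesis of Lemma \ref{LemmaPfLtMainLemma} for the rescaled parameterization: in Case I, that the iterated commutators to a fixed depth of the Taylor coefficients of the vector field parameterization associated with $\widehat{\gammah}$, restricted to $u_2 = 0$, control $Z$ at unit scale on $\Omega'$; in Case II, that the first-order such Taylor coefficients control $Z$ at unit scale. Because $u_2$ has been chosen to hold every coordinate with $e_i^{\nuh+\mu_0} > 0$, these Taylor coefficients are precisely the rescaled vector fields $2^{-k_0 \cdot \deg(\alpha)} X_\alpha$ with $\alpha_i = 0$ for every $i$ with $e_i^{\nuh+\mu_0} > 0$; in particular, no generator carrying weight only in the $\nuh + \mu_0$-coordinate appears. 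The missing information must be recovered from the hypothesis that $\sL(\sSh)$ $\lambda$-controls $\sSt$ on $\Omega'$: at the parameter $\delta = 2^{-\jh}$, $\lambda$-control produces the factor $\delta^{\lambda^t(\td)} = 2^{-\lambda(\jh) \cdot \td}$, which is exactly the weight built into the $(\jt \wedge \lambda(\jh))$-coordinates of $k_0$. Combining this with variants of Lemmas \ref{LemmaPfLtsLControl} and \ref{LemmaPfLtLinControl} (with $\sP_1 = \nuset$ and $\sP_2 = \nuset \setminus \{\nuh + \mu_0\}$) would yield the needed equivalence between the kept Taylor coefficients and $\sF$, and hence with $Z$ after rescaling.

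Invoking Lemma \ref{LemmaPfLtMainLemma} would then give $\LpOpN{2}{T_j} \lesssim \zeta_0^\epsilon = 2^{-c_0 \epsilon m_{\mu_0}}$ in Case I and $\LpOpN{2}{T_j} \lesssim \zeta_0^{M/2} = 2^{-c_0 M m_{\mu_0}/2}$ in Case II (choosing $M$ large in terms of $L$). Since $m_{\mu_0} = |m|_\infty \geq |m|/\nut = |\jt \vee \lambda(\jh) - \lambda(\jh)|/\nut$, both conclusions of the proposition would follow.
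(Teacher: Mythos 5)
Your plan reconstructs much of the spirit of Lemma \ref{LemmaPfLtAptMainLemma} (rescaling to the base scale $k_0=(\jh,\jt\wedge\lambda(\jh))$, splitting the $t$-variables, absorbing the ratios $\xi_i\in(0,1]$ into a modified parameterization, and invoking Lemma \ref{LemmaPfLtMainLemma}), but it skips the paper's first decomposition step, and that omission is fatal.  Lemma \ref{LemmaPfLtMainLemma} requires the kernel $\vsig$ to lie in $C_0^\infty(B^N(a))$ (and in Case II, to be of the form $\sum_{|\alpha|=L}\partial_{t_2}^\alpha\vsig_\alpha$ with $\vsig_\alpha\in C_0^\infty(B^N(a))$).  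After you substitute $u=2^{k_0}t$, the kernel of $T_j$ becomes $\eta(2^{-k_0}u)\,\dil{\vsig}{2^{(0_{\nuh},m)}}(u)$ with $m=\jt\vee\lambda(\jh)-\lambda(\jh)$; this is a Schwartz function supported essentially on $|u|<a2^{k_0}$, which grows with $j$, so it is not a fixed element of $C_0^\infty(B^N(a))$.  Worse, even the cancellation hypotheses are destroyed: $\vsig\in\schS_{\{\mu:j_\mu>0\}}$ indeed has all moments vanishing in the $t_{\nuh+\mu_0}$-block, but the rescaled cutoff $\eta(2^{-k_0}u)$ depends on $u_2$ as well, so $\int \eta(2^{-k_0}u)\dil{\vsig}{2^{(0,m)}}(u)\,du_2$ is not zero and the product is not a sum of $u_2$-derivatives of compactly supported functions.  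The appeal to Lemma \ref{LemmaKersSEPullOut} (writing $\vsig=\lap_{\nuh+\mu_0}^M\vsigt$) only produces a Schwartz $\vsigt$, not a compactly supported one, so the Case II hypothesis of Lemma \ref{LemmaPfLtMainLemma} is still not met.

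The paper's proof fixes exactly this by first applying Proposition \ref{PropPfKerDecompVsig} to write
$\eta(t)\dil{\vsig}{2^j}(t)=\eta(t)\sum_{k\leq j,\,k\in\N^\nu}\dil{\vsig_k}{2^k}(t)$,
where each $\vsig_k$ is a sum of $\partial_t^{\alphaa}\gamma_{k,\alphaa}$ with $\gamma_{k,\alphaa}\in C_0^\infty(B^N(a))$ ranging over a bounded set, and with rapid decay $2^{-M'|j-k|}$.  Each piece $T_{j,k}$ then has a kernel that genuinely is a finite sum of derivatives of compactly supported functions at scale $k$, so the rescaling argument you outline (with base scale $(\kh,\kt\wedge\lambda(\kh))$ rather than $k_0$) applies --- this is Lemma \ref{LemmaPfLtAptMainLemma}.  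The final bound $2^{-\epsilon|\jt\vee\lambda(\jh)-\lambda(\jh)|}$ is obtained only after summing the resulting bounds $2^{-\epsilon|\kt\vee\lambda(\kh)-\lambda(\kh)|}$ against the rapid decay in $|j-k|$, which costs a little of the exponent (this is why the dyadic-piece lemma must carry an arbitrarily large parameter $M'$).  Without this decomposition and resummation your plan does not produce a bound.  As a smaller point, your two-way split $u=(u_1,u_2)$ is also not quite right: coordinates $u_i$ with $e_i^{\nuh+\mu_0}=0$ but $e_i^{\nuh+\mu'}>0$ for some $\mu'\ne\mu_0$ with $m_{\mu'}>0$ will still carry a ratio $<1$, so $\zeta_i\ne 1$ on $u_1$ in general; the paper avoids this with a three-way split $t=(s_1,s_{2,1},s_{2,2})$ and absorbs the $s_{2,1}$-ratios $\xi_{1,l}\leq 1$ into the parameterization separately.
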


The rest of this section is devoted to the proof of Proposition \ref{PropPfLtApTMainProp}.  The key lies in the next lemma.
We use some notation from Section \ref{SectionSchwartzAndProd}.  For $t\in \R^{N}$, and for $1\leq \mu\leq \nu$,
we let $t_{\mu}$ denote the vector consisting  of those coordinates $t_j$ of $t$ such that $e_{j}^{\mu}\ne 0$.  We let $N_\mu$ denote the dimension of the
$t_{\mu}$ variable and let $\Na=N_1+\cdots+N_{\nu}$.  We write
$\alphaa=(\alpha_1,\ldots, \alpha_{\nu})\in \N^{N_1}\times \cdots \times \N^{N_{\nu}}=\N^{\Na}$,
and define $\partial_t^{\alphaa}$ as in Section \ref{SectionSchwartzAndProd}.

\begin{lemma}\label{LemmaPfLtAptMainLemma}
There exists $a>0$ (depending on $(\gamma,e,N$)) such that the following holds.  Let $\sB_1\subset C_0^{\infty}(B^N(a))$, $\sB_2\subset C_0^\infty(\Omega_0)$,
and $\sB_3\subset C^{\infty}(B^{N}(a)\times \Omega'')$ be bounded sets.  Fix $M\in \N$ and let $j\in [0,\infty)^{\nu}$.  Suppose
\begin{equation*}
\vsig = \sum_{\substack{\alphaa\in \N^N \\ |\alpha_{\mu}|=M\text{ when }j_{\mu}\ne 0 \\ |\alpha_\mu|=0\text{ when }j_\mu=0 }} \partial_t^{\alphaa} \gamma_{\alphaa},
\end{equation*}
where $\gamma_{\alphaa}\in \sB_1$.  Let $\psi_1,\psi_2\in \sB_2$, and $\kappa\in \sB_3$.  Define,
\begin{equation*}
T_j f(x) = \psi_1(x) \int f(\gamma_t(x)) \kappa(t,x) \psi_2(\gamma_t(x)) \dil{\vsig}{2^j}(t)\: dt.
\end{equation*}
Then,
\begin{itemize}
\item In Case I, if $M\geq 1$, there exists $\epsilon>0$ (depending on $(\gamma,e,N)$) such that
\begin{equation*}
\LpOpN{2}{T_j}\leq C 2^{-\epsilon|\jt\vee  \lambda(\jh) - \lambda(\jh)| },
\end{equation*}
where $C$ depends on $(\gamma,e,N)$, $\sB_1$, $\sB_2$, and $\sB_3$.

\item In Case II, for every $L$, there exists a  choice of $M=M(L)$ and a constant $C_L$ (depending on $(\gamma,e,N$), $\sB_1$, $\sB_2$, and $\sB_3$) such that
\begin{equation*}
\LpOpN{2}{T_j}\leq C_L 2^{-L |\jt\vee  \lambda(\jh) - \lambda(\jh)|}.
\end{equation*}
\end{itemize}
\end{lemma}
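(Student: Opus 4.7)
The plan is to follow the template set by Lemma \ref{LemmaPfLtApOBaiscBound}, with the role of ``discrepancy $|j_1-j_2|$'' replaced by the discrepancy $|\jt\vee\lambda(\jh)-\lambda(\jh)| = |\jt-\jt\wedge\lambda(\jh)|$, and to conclude by invoking Lemma \ref{LemmaPfLtMainLemma}. If $\jt\le \lambda(\jh)$ coordinatewise, there is nothing to prove: $T_j$ is uniformly bounded on $L^2$ by a trivial $L^1$--$L^\infty$ interpolation (using boundedness of $\gamma_t$ and a change of variables in the integral), and the exponential decay factor equals $1$. So assume the discrepancy is positive, pick $\mu_0\in\{1,\dots,\nut\}$ realizing $\jt_{\mu_0}-(\lambda\jh)_{\mu_0} = |\jt-\jt\wedge\lambda(\jh)|_\infty$, and define the pivot scale $j^{\#}:=(\jh,\jt\wedge\lambda(\jh))\in[0,\infty)^\nu$ together with the rescaled vector fields $Z:=\{2^{-j^{\#}\cdot d}X:(X,d)\in\sF\}$. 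Because $\sF$ satisfies $\sD(\Omega')$ (Remarks \ref{RmkResSurfFiniteGenImpliessD}, and Lemma \ref{LemmaResSurfLinFGImpliesFG} in Case II), Proposition \ref{PropFrobControlUnitScaleVsAll} shows $Z$ satisfies the hypotheses of Theorem \ref{ThmQuantFrob} uniformly for $x_0\in\Omega'$.

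Next, split $t\in\R^N$ into $(s_1,s_2)$, where $s_2$ consists of the coordinates $t_k$ with $(e_k)_{\nuh+\mu_0}>0$ and $s_1$ consists of the rest, and set $\zeta:=2^{-c(\jt_{\mu_0}-(\lambda\jh)_{\mu_0})}$ where $c:=\min\{(e_k)_{\nuh+\mu_0}:(e_k)_{\nuh+\mu_0}>0\}>0$. Repeating the bookkeeping at the beginning of the proof of Lemma \ref{LemmaPfLtApOBaiscBound} (pulling out the $j^{\#}$-rescaling from $\gamma_{2^{-j}t}$ via Propositions 12.6/12.7 of \cite{SteinStreetI}) rewrites
\begin{equation*}
T_j f(x)=\psi_1(x)\int f(\gammah_{s_1,\zeta s_2}(x))\,\kappat(s_1,\zeta s_2,x)\,\psi_2(\gammah_{s_1,\zeta s_2}(x))\,\vsigt(s_1,s_2)\,ds_1\,ds_2,
\end{equation*}
with $\gammah$ controlled at the unit scale by $Z$ on $\Omega'$ (uniformly in all parameters) and with $\{\kappat\},\{\vsigt\}$ in bounded subsets of the appropriate spaces. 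The assumed decomposition $\vsig=\sum\partial_t^{\alphaa}\gamma_{\alphaa}$ with $|\alpha_\mu|=M$ whenever $j_\mu\ne 0$ transfers into $\vsigt$ as an $M$-fold $\partial_{s_2}$-derivative structure, which in particular gives $\int\vsigt(s_1,s_2)\,ds_2=0$ once $M\ge 1$ (matching Case I of Lemma \ref{LemmaPfLtMainLemma}) and matches Case II of that lemma verbatim.

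It remains to verify the control hypothesis on the Taylor expansion of $\Wh$, at which point Lemma \ref{LemmaPfLtMainLemma} yields $\LpOpN{2}{T_j}\lesssim\zeta^\epsilon$ in Case I and $\lesssim\zeta^{L/2}$ (by choosing $M=M(L)$) in Case II, producing the desired bound after unwinding the definition of $\zeta$. Writing $\Wh((0,s_2),0)$ and $\Wh(s_1,0)$ as Taylor series as in \eqref{EqnPfLtWhDecomp}, the $s_1$-series contributes (after rescaling by $2^{-j^{\#}}$) vector fields arising from the Taylor expansion of $\gamma$ whose $\nu$-parameter degrees interact with $\jh$ and $\jt\wedge\lambda(\jh)$; these are controlled by $\sFh$ (resp. by the restriction of $\sF$ supported on the $\nuh$-indices) at scale $\jh$. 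The $s_2$-series contributes ``$\sSt$-type'' vector fields whose degrees live in the last $\nut$ coordinates; it is here that the $\lambda$-control of $\sSt$ by $\sL(\sSh)$ enters, since for a Taylor coefficient $(\Xt,\td)$ coming from $s_2$ the deficit in our rescaling is exactly $\jh\cdot(\lambda^t\td)-\jt\wedge\lambda(\jh)\cdot\td\ge 0$ by the very definition of $\jt\wedge\lambda(\jh)$, so $2^{-j^{\#}\cdot(0_{\nuh},\td)}\Xt$ is a uniformly bounded $C^\infty$ linear combination of elements of $Z$. Combined with Lemma \ref{LemmaPfLtsLControl} in Case I (using $\sP_1,\sP_2$ chosen so that $\mu_0\notin\sP_1$) or Lemma \ref{LemmaPfLtLinControl} in Case II, one recovers that the resulting family of Taylor coefficients generates $Z$ up to iterated brackets of bounded depth, verifying the hypothesis of Lemma \ref{LemmaPfLtMainLemma}.

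The main obstacle is the last point: cleanly matching the scale deficits produced by $\lambda$-control with the pivot scale $j^{\#}=(\jh,\jt\wedge\lambda(\jh))$ so that every ``$\sSt$-type'' Taylor coefficient, after rescaling, enters $Z$ with uniformly bounded coefficients and hence integrates into the iterated-bracket generation of $Z$. Once this is set up, the remainder is routine and parallels Lemma \ref{LemmaPfLtApOBaiscBound}; in particular, Proposition \ref{PropPfLtApTMainProp} then follows from Lemma \ref{LemmaPfLtAptMainLemma} exactly as Proposition \ref{PropPfLtTechApO} follows from Lemma \ref{LemmaPfLtApOBaiscBound}, by first decomposing $\vsig$ via Proposition \ref{PropPfKerDecompVsig} and then summing the geometric bounds on each piece.
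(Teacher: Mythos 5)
Your geometric setup is correct and matches the paper's: treat the trivial case $\jt\le\lambda(\jh)$, pick the maximizing index $\mu_0$, define the pivot scale $l=(\jh,\jt\wedge\lambda(\jh))$ with $Z=\{2^{-l\cdot d}X:(X,d)\in\sF\}$, split the $t$-variable to pull out the single scalar $\zeta$, and feed the result into Lemma \ref{LemmaPfLtMainLemma}. (One cosmetic difference: the paper splits $t$ into three blocks $(s_1,s_{2,1},s_{2,2})$ so that the pullback relation $\Wh(s_1,0,0)=W(2^{-l}(s_1,0,0))$ holds with no spurious $\xi$-factors; your binary split absorbs $s_{2,1}$ into $s_1$, which is harmless for the operator bound but means you would have to discard those coordinates anyway when verifying the control hypothesis.)

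The genuine gap is exactly what you flag as ``the main obstacle'': verifying that the Taylor coefficients of $\Wh(t_1,0)$ control $Z$ at the unit scale, which is the substance of the lemma, and your sketch of how to close it would not work as stated. You invoke Lemma \ref{LemmaPfLtsLControl} with $\sP_1,\sP_2$ chosen as in the proof of Lemma \ref{LemmaPfLtApOBaiscBound}, but that lemma is designed to compare two different generating sets $\sS_1,\sS_2$ split across coordinate blocks; here there is only one surface $\gamma$, and the paper invokes that lemma only in the trivial $\sS_1=\sS_2=\sS_0$ way (to produce a uniform bracket depth $L$). The real work is a separate chain of implications: (i) restricting to the $\sP_2$-supported part of $\sS_0$ still generates $\sL(\bigcup_{\mu'\in\sP_2}\pi_{\mu'}\sS)$; (ii) for $(\Xh,\hd)\in\sL(\sSh)$, $2^{-\jh\cdot\hd}\Xh$ is unit-scale controlled by that family; (iii) for $(\Xt,\td)$ supported on $\sP_2$ one has $2^{-\jt\cdot\td}\Xt=2^{-\lt\cdot\td}\Xt$ directly controlled; (iv) for $(\Xt,\td)$ supported on $\sP_1$ the $\lambda$-control hypothesis gives $2^{-\lambda(\jh)\cdot\td}\Xt=2^{-\lt\cdot\td}\Xt$ controlled; and (v) commuting and using that $\sL(\sS)$ controls $\sF$ gives every $Z_0\in Z$ controlled at depth $L'''$. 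Your sentence ``$2^{-j^\#\cdot(0_{\nuh},\td)}\Xt$ is a uniformly bounded $C^\infty$ linear combination of elements of $Z$'' states the wrong direction of the needed control: that inclusion follows trivially from $\sF$ controlling $\sL(\sS)$; what must be proven is the converse, that $Z$ (i.e., every rescaled element of $\sF$) is controlled by iterated brackets of the $t_1$-only Taylor coefficients of $\Wh$. Without steps (ii)--(v) the appeal to Lemma \ref{LemmaPfLtMainLemma} is unjustified.
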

\begin{proof}
We prove the two cases simultaneously.  First notice that the assumption
that
$\sL(\sSh)$ $\lambda$-controls $\sSt$ on $\Omega'$ can be rephrased in the following way.
For $0\ne \td\in [0,\infty)^{\nut}$, define $h_{\td,\lambda}:[0,1]^{\nut}\rightarrow [0,1]$
by $h_{\td,\lambda}(2^{-\jh})=2^{-\lambda(\jh)\cdot \td}$.
Then we are assume that for each $(\Xt,\td)\in \sSt$, $\sL(\sSh)$ controls $(\Xt, h_{\td,\lambda})$ on
$\Omega'$.\footnote{Here we make the convention that $\infty\cdot 0=\infty$ in the definition of
$\lambda(\jh)$, but $\infty\cdot 0=0$ in the definition of dot product $\lambda(\jh)\cdot \td$.}

If $\jt_{\mu}\leq \lambda(\jh)_{\mu}$ for all $\mu\in \nuset$, the result it obvious.  Thus we assume for some $\mu$,
$\jt_{\mu}-\lambda(\jh)_{\mu}>0$ and we pick $\mu$ so that $\jt_{\mu}-\lambda(\jh)_{\mu}=|\jt\vee \lambda(\jh)-\lambda(\jh)|_{\infty}$.
Let $\lt=\jt\wedge \lambda(\jh)\in [0,\infty)^{\nut}$ (i.e., $\lt$ is the coordinatewise minimum of $\jt$ and $\lambda(\jh)$) and set $l=(\jh,\lt)\in [0,\infty)^{\nu}$.
Let $Z=\{2^{-l\cdot d} X: (X,d)\in \sF\}$.  Because $\sF$ satisfies $\sD(\Omega')$, $Z$ satisfies the conditions
of Theorem \ref{ThmQuantFrob}, uniformly for $x_0\in \Omega'$.
Note that $(\jh,\jt)_{\mu_0+\nuh}=\jt_{\mu_0}$ for $1\leq \mu_0\leq \nut$.

We decompose $t\in \R^{N}$ into two variables:  $t=(s_1,s_2)$.  $s_2$ is the vector consisting of those coordinates of $t$ which, when we compute
$2^{-j}t=2^{-(\jh,\jt)}t$ are multiplied by a power of $2^{-\jt_{\mu'}}$, where $\jt_{\mu'}>\lambda(\jh)_{\mu'}$.  $s_1$ is the vector consisting of the rest of the coordinates.
More precisely, $s_2$ is the vector consisting of those coordinates $t_k$ of $t$ such that $e_{k}^{\mu_0+\nuh}\ne 0$, where $\mu_0$ is such that
$\jt_{\mu_0}>\lambda(\jh)_{\mu_0}$; and $s_1$ is the vector consisting of the rest of the coordinates--note that $\mu$ is such a $\mu_0$.
We decompose $s_2$ into two variables: $s_2=(s_{2,1},s_{2,2})$.  $s_{2,2}$ is the vector consisting of those coordinates of $t$ which, when we compute
$2^{-j} t$ are multiplied by a power of $2^{-\jt_{\mu}}$--note that every such coordinate is a coordinate of $s_2$.  $s_{2,1}$ is the vector consisting of the rest of the coordinates
of $s_2$.  More precisely, $s_{2,2}$ consists of those coordinates $t_k$ of $t$ such that $e_{k}^{\mu+\nuh}\ne 0$; and $s_{2,1}$ consists of the rest of the coordinates of $s_2$.


Set $\zeta=2^{-c(\jt_{\mu}-\lambda(\jh)_{\mu})}$, where $c=\min\{ e_{l}^{\mu+\nuh} :1\leq l\leq N, e_{l}^{\mu+\nuh}\ne 0\}>0$.  Write $s_{2,1}=(s_{2,1}^1,\ldots, s_{2,1}^{N_1})$ and $s_{2,2}=(s_{2,2}^1,\ldots, s_{2,2}^{N_2})$.
The dilations $2^{-j}t$ induce dilations in the $(s_1,s_2)$ variables, which we again denote by $2^{-j}(s_1,s_2)$.
Using these choices, we may write
\begin{equation*}
2^{-j}(s_1,s_2) = 2^{-l} ( s_1,  (\xi_{1,1} s_{2,1}^1,\ldots, \xi_{1,N_1} s_{2,1}^{N_1}), \zeta (\xi_{2,1} s_{2,2}^1,\ldots, \xi_{2,N_2} s_{2,2}^{N_2} )  ),
\end{equation*}
where $\xi_{1,1},\ldots, \xi_{1,N_1},\xi_{2,1},\ldots, \xi_{2,N_2}\in (0,1]$ (here, $\xi_{k_1,k_2}$  depends on $\jt,\jh$).

By our hypotheses, $\gamma_{2^{-l}t}(x)$ is controlled at the unit scale by $Z$ on $\Omega'$ (here, and everywhere else in the proof, all such conclusions
are uniform in $\jt$ and $\jh$; see Definition \ref{DefnFrobControlUniform}).  It follows immediately from the definitions (see also Proposition 12.7 of \cite{SteinStreetI})
that
\begin{equation*}
\gammah_{s_1,s_{2,1}, s_{2,2}}(x):= \gamma_{2^{-l}  ( s_1,  (\xi_{1,1} s_{2,1}^1,\ldots, \xi_{1,N_1} s_{2,1}^{N_1}), (\xi_{2,1} s_{2,2}^1,\ldots, \xi_{2,N_2} s_{2,2}^{N_2} )  )}(x)
\end{equation*}
is controlled at the unit scale by $Z$ on $\Omega'$.
By our hypotheses, and using the coordinates $s_1,s_{2,1},s_{2,2}$ and the fact that $\jt_{\mu}>0$, we have
\begin{equation}\label{EqnPfLtAptVsigCancel}
\vsig(s_1,s_{2,1},s_{2,2})=\sum_{|\alpha|=M} \partial_{s_{2,2}}^{\alpha} \vsig_{\alpha}(s_1,s_{2,1},s_{2,2}),
\end{equation}
where $\{\vsig_{\alpha}\}\subset C_0^{\infty}(B^N(a))$ is a bounded set (and $\{\vsig_\alpha\}$ denotes the set of all $\vsig_{\alpha}$ as the various
parameters in the problem vary).  Using the above choices, we have
\begin{equation}\label{EqnPfLtAptBasicOp}
\begin{split}
T_{j} f(x) = \psi_1(x) \int &f(\gammah_{s_1,s_{2,1},\zeta s_{2,2}}(x)) \psi_2(\gammah_{s_1,s_{2,1},\zeta s_{2,2}}(x))
\\&\kappa(s_1, s_{2,1}, \zeta s_{2,2}, x) \vsig(s_1,s_{2,1},s_{2,2})\: ds_1\: ds_{2,1} \: ds_{2,2}.
\end{split}
\end{equation}
The goal is to apply Lemma \ref{LemmaPfLtMainLemma} to this operator.

Let $s=(s_1,s_{2,1},s_{2,2})$ and define
\begin{equation*}
\Wh(s,x)=\frac{\partial}{\partial \epsilon}\bigg|_{\epsilon=1} \gammah_{\epsilon s}\circ \gammah_s^{-1}(x).
\end{equation*}
and suppose the parameterization $(\gamma,e,N)$ corresponds to the vector field paramaterization $(W,e,N)$. Note, we have
\begin{equation}\label{EqnPfLtAptWsEqual}
\Wh((s_1,0,0),x)= W(2^{-l}(s_1,0,0),x).
\end{equation}

The $\nu$-parameter dilations $e_1,\ldots, e_N$ assign to each multi-index $\alpha\in \N^{N}$ a degree $\deg(\alpha)\in [0,\infty)^{\nu}$
by Definition \ref{DefnResKerDegree}.  This induces the same for multi-indicies when we consider $(s_1,s_2)^{\alpha}$:  i.e., if
we write $(s_1,s_2)^{\alpha}$, then $\deg(\alpha)$ is defined to be $\deg(\beta)$ where $(s_1,s_2)^{\alpha}$ corresponds to $t^{\beta}$
under the change of coordinates.

We now separate the proof into the two cases, and we begin with Case I.  We wish to show that the hypotheses of Case I of Lemma \ref{LemmaPfLtMainLemma}
hold in this situation, with the above choice of $Z$.
We have already seen that $Z$ controls $\gammah$ at the unit scale on $\Omega'$, and it is clear from \eqref{EqnPfLtAptVsigCancel}
that $\int \vsig \: ds_{2,2}=0$.
Decompose $W(s)$ as a Taylor series in the $s=(s_1,s_2)$ variable: $W(s)\sim \sum_{|\alpha|>0} s^{\alpha} X_{\alpha}$.
Let
\begin{equation*}
\sS_0:=\{ (X_{\alpha},\deg(\alpha)): \deg(\alpha)\in \onecompnu \}.
\end{equation*}
Our hypotheses show that $\sL(\sS_0)$ is equivalent to $\sF$ on $\Omega'$.
Using that $\sS_0=\bigcup_{\mu\in \nuset} \pi_{\mu} \sS_0$, Lemma \ref{LemmaPfLtsLControl} shows
that there exists $L\in \N$ such that $\sL_L(\sigma_L \sS_0)$ is equivalent to $\sF$ on $\Omega'$.  In particular,
$\sL_L(\sigma_L \sS_0)$ satisfies $\sD(\Omega')$.
Let
\begin{equation}\label{EqnPfLtAptsPo}
\sP_1:=\{\mu_0+\nuh\in \nuset :1\leq \mu_0\leq \nut\text{ and } \jt_{\mu_0}>\lambda(\jh)_{\mu_0} \},
\end{equation}
and let $\sP_2=\nuset\setminus \sP_1$.
Because $\sL_L(\sigma_L (\sS_0))$ is equivalent to $\sF$ on $\Omega'$ we have that $\sL_L(\sigma_L(\sS_0))$ is equivalent to $\sL(\sS)$ on $\Omega'$.  It follows (by taking $\delta_{\mu'}=0$ for $\mu'\in \sP_1$
in the definitions) that
\begin{equation*}
\sL_L\q(\sigma_L \bigcup_{\mu'\in \sP_2} \pi_{\mu'}\sS_0\w) \text{ and } \sL\q(\bigcup_{\mu'\in \sP_2} \pi_{\mu'} \sS\w)
\end{equation*}
are equivalent on $\Omega'$.
Set $\sS_1 = \bigcup_{\mu'\in \sP_2} \pi_{\mu'}\sS_0$.  Note that $(X,d)\in \sS_1$ if and only if $X$ appears as a Taylor coefficient
in $W(s_1,0,0)$ corresponding to a multi-index $\alpha$ with $\deg(\alpha)=d$; and therefore by \eqref{EqnPfLtAptWsEqual},
$2^{-l\cdot d}X$ is a Taylor coefficient of $\Wh(s_1,0,0)$.

Using the above, we have
\begin{equation*}
\q\{ 2^{-l\cdot d} X: (X,d)\in \sL_L\q(\sigma_L \sS_1\w)\w\}
\end{equation*}
controls $\{ 2^{-l\cdot d} X : (X,d)\in \sF\text{ and }d_{\mu'}=0, \forall \mu'\in \sP_1\}$ at the unit scale on $\Omega'$.
In particular, since for each $(\Xh,\hd)\in \sL(\sSh)$,
$\{ 2^{-l\cdot d} X : (X,d)\in \sF\text{ and }d_{\mu'}=0, \forall \mu'\in \sP_1\}$
controls $2^{-\jh\cdot \hd} \Xh$ at the unit scale on $\Omega'$, we have that $\q\{ 2^{-l\cdot d} X: (X,d)\in \sL_L\q(\sigma_L \sS_1\w)\w\}$
controls $2^{-\jh\cdot \hd} \Xh$ at the unit scale on $\Omega'$.

If $1\leq \mu'\leq \nut$ is such that $\jt_{\mu'}\leq \lambda(\jh)_{\mu'}$, then $\mu'+\nuh\in \sP_2$.  Thus, for such a $\mu'$,
if $(\Xt,\td)\in \sSt$ has $\td$ nonzero in only the $\mu'$ coordinate, since $\sF$ controls $\sL(\sS)$ on $\Omega'$,
it follows that $\sL_L\q(\sigma_L \sS_1\w)$ controls $(\Xt,(0_{\nuh},\td))$ on $\Omega'$, and therefore,
$\q\{ 2^{-l\cdot d} X: (X,d)\in \sL_L\q(\sigma_L \sS_1\w)\w\}$ controls $2^{-\jt\cdot \td} \Xt=2^{- (\jt\wedge \lambda(\jh) )\cdot \td} \Xt$ at the unit scale on $\Omega'$.

By the hypothesis that for each $(\Xt,\td)\in \sSt$, $(\Xt, h_{\td,\lambda})$ is controlled by $\sL(\sSh)$ on $\Omega'$, it follows that
$\q\{ 2^{-l\cdot d} X: (X,d)\in \sL_L\q(\sigma_L \sS_1\w)\w\}$ controls $2^{-\lambda(\jh)\cdot \td} \Xt$ at the unit scale on $\Omega'$.
Hence, if $\td$ is nonzero in only the $\mu'$ coordinate for some $\mu'$ with $\jt_{\mu'}>\lambda(\jh)_{\mu'}$, then
 $\q\{ 2^{-l\cdot d} X: (X,d)\in \sL_L\q(\sigma_L \sS_1\w)\w\}$ controls $2^{-\lambda(\jt)\cdot \td} \Xt=2^{- (\jt\wedge \lambda(\jh) )\cdot \td} \Xt$ at the unit scale on $\Omega'$.

 Combining the above three paragraphs (and using that for each $(X,d)\in \sS$, $d$ is nonzero in only one component) shows that for all $(X_0,d_0)\in\sS$,  $$\q\{ 2^{-l\cdot d} X: (X,d)\in \sL_L\q(\sigma_L \sS_1\w)\w\}$$
 controls $2^{-l\cdot d_0} X_0$ at the unit scale on $\Omega'$.  By taking commutators of this, we see for every $(X_0,d_0)\in \sL(\sS)$,
 there exists $L'=L'(X_0,d_0)$ such that $$\q\{ 2^{-l\cdot d} X: (X,d)\in \sL_{L'}\q(\sigma_L \sS_1\w)\w\}$$ controls
 $2^{-l\cdot d_0} X_0$ at the unit scale on $\Omega'$.  Since $\sL(\sS)$ controls $\sF$ on $\Omega'$, we see that for every $(X_0,d_0)\in \sF$,
 there exists $L''=L''(X_0,d_0)$ such that $$\q\{ 2^{-l\cdot d} X: (X,d)\in \sL_{L''}\q(\sigma_L \sS_1\w)\w\}$$ controls
 $2^{-l\cdot d_0} X_0$ at the unit scale on $\Omega'$.   Set $L'''=\max\{ L''(X_0,d_0) : (X_0,d_0)\in \sF\}$.  We therefore have
 for all $(X_0,d_0)\in \sF$, $$\q\{ 2^{-l\cdot d} X: (X,d)\in \sL_{L'''}\q(\sigma_L \sS_1\w)\w\}$$ controls
 $2^{-l\cdot d_0} X_0$ at the unit scale on $\Omega'$.  This is the same as saying for all $Z_0\in Z$,
 $$\q\{ 2^{-l\cdot d} X: (X,d)\in \sL_{L'''}\q(\sigma_L \sS_1\w)\w\}$$ controls
 $Z_0$ at the unit scale on $\Omega'$.

 Because for every $(X,d)\in \sS_1$, $X$ appears as a Taylor coefficient of $W(s_1,0,0)$ corresponding to a multi-index $\alpha$ with $\deg(\alpha)=d$
 (and therefore by \eqref{EqnPfLtAptWsEqual}, $2^{-l\cdot d}X$ appears as a Taylor coefficient of $\Wh(s_1,0,0)$),
 this shows that the hypotheses of Case I of Lemma \ref{LemmaPfLtMainLemma} hold when applied to the operator \eqref{EqnPfLtAptBasicOp}.
 It follows that there exists $\epsilon,\epsilon',\epsilon''>0$ with
 \begin{equation*}
 \LpOpN{2}{T_j}\lesssim \zeta^{\epsilon} \lesssim 2^{-\epsilon' (\jt_{\mu}-\lambda(\jh)_{\mu})} = 2^{-\epsilon' |\jt\vee \lambda(\jh)-\lambda(\jh)|_{\infty}}
 \lesssim 2^{-\epsilon'' |\jt\vee \lambda(\jh)-\lambda(\jh)|},
 \end{equation*}
 as desired, completing the proof in Case I.

 We now turn to Case II.  We wish to shows that the hypotheses of Case II of Lemma \ref{LemmaPfLtMainLemma} hold when applied to the operator
 \eqref{EqnPfLtAptBasicOp}, with the above choice of $Z$.  We have already seen that $Z$ controls $\gammah$ at the unit scale on $\Omega'$,
 and it is clear from \eqref{EqnPfLtAptVsigCancel} that
 \begin{equation*}
 \vsig(s_1,s_{2,1},s_{2,2}) = \sum_{|\alpha|=M} \partial_{s_{2,2}}^{\alpha} \vsig_{\alpha}(s_1,s_{2,1},s_{2,2}),
 \end{equation*}
 where $\vsig_\alpha\in C_0^\infty(B^{N}(a))$ ranges over a bounded set as the various parameters of the problem vary.
 As before, decompose $W(s)$ into a Taylor series $W(s)\sim \sum_{|\alpha|>0} s^{\alpha} X_\alpha$, and now let
 \begin{equation*}
 \sS_0:=\{(X_\alpha,\deg(\alpha)) : \deg(\alpha)\in \onecompnu\text{ and }|\alpha|=1\}.
 \end{equation*}
 Our hypotheses show that $\sS_0$ and $\sF$ are equivalent on $\Omega'$.  Define $\sP_1$ as in \eqref{EqnPfLtAptsPo}
and let $\sP_2=\nuset\setminus \sP_1$ as before.  Set $\sS_1=\bigcup_{\mu'\in \sP_2} \pi_{\mu'} \sS_0$.
Note, $(X,d)\in \sS_1$ if and only if $X$ appears as a Taylor coefficient of $W(s_1,0,0)$ corresponding to a multi-index
$\alpha$ with $|\alpha|=1$ and $\deg(\alpha)=d\in \onecompnu$.  Using \eqref{EqnPfLtAptWsEqual}, we see that if $(X,d)\in \sS_1$,
then $2^{-l\cdot d} X$ appears as a Taylor coefficient of $\Wh(s_1,0,0)$ corresponding to a multi-index
$\alpha$ with $|\alpha|=1$.

Recall, by the hypothesis that $(\gamma,e,N)$ is linearly finitely generated by $\sF$, we have $\sF\subset \snuvectone$.
Let $(X_0,d_0)\in \sF$.  We know that $d_0\in \onecompnu$, i.e. $d_0$ is nonzero in precisely one component.
Suppose $\mu'\in \sP_2$, and $d_{0,\mu'}\ne 0$.  Using the fact that $\sS_0$ controls $\sF$ on $\Omega'$, we have
$\sS_1$ controls $(X_0,d_0)$ on $\Omega'$, and therefore $\{2^{-l\cdot d} X: (X,d)\in \sS_1\}$ controls
$2^{-l\cdot d_0} X_0$ at the unit scale on $\Omega'$.

Because $\sS_0$ is equivalent to $\sF$ on $\Omega'$ and $\sF$ controls $\sS$ on $\Omega'$, we have for each $(\Xh,\hd)\in \sSh$,
$2^{-\jh \cdot \hd} \Xh$ is controlled by $\{2^{-l\cdot d} X: (X,d)\in \sS_0\}$ at the unit scale on $\Omega'$.
Because $\{1,\ldots, \nuh\}\subseteq \sP_2$, it follows that $2^{-\jh \cdot \hd} \Xh$ is controlled by $\{2^{-l\cdot d} X: (X,d)\in \sS_1\}$ at the unit scale on $\Omega'$.
Since $\sS_0$ is equivalent to $\sF$ on $\Omega'$, $\sS_0$ satisfies $\sD(\Omega')$.  By taking $\delta$ so that $\delta_{\mu'}=0$ for all $\mu'\in \sP_1$
in the definition of $\sD(\Omega')$, it follows that $\sS_1$ satisfies $\sD(\Omega')$.  Hence, for each $(\Xh,\hd)\in \sL(\sSh)$, we have
 $2^{-\jh \cdot \hd} \Xh$ is controlled by $\{2^{-l\cdot d} X: (X,d)\in \sS_1\}$ at the unit scale on $\Omega'$.
Let $(\Xt,\td)\in \sSt$, and suppose $\td_{\mu'}\ne 0$ with $\mu'+\nuh\in \sP_1$ (because $\sSt\subset \snutvectone$, $\td$ is nonzero in precisely one
component).  By hypothesis, $\sL(\sSh)$ controls $(\Xt,h_{\td,\lambda})$ on $\Omega'$; therefore, $2^{-\lambda(\jh)\cdot \td} \Xt=2^{-(\jt \wedge \lambda(\jh))\cdot \td} \Xt$ is controlled by $\{2^{-l\cdot d} X: (X,d)\in \sS_1\}$ at the unit scale on $\Omega'$.

Let $(X_0,d_0)\in \sF$ with $d_0$ nonzero in only the $\mu'$ component, where $\mu'\in \sP_1$.  Because $\sL(\sS)$ controls $\sF$ on $\Omega'$,
$(X_0,d_0)$ is controlled by $\sL(\pi_{\mu'} \sS)$.  For each $(X_1,d_1)\in \pi_{\mu'}\sS$, $2^{-l\cdot d_1} X_1=2^{-\lambda(\jh)\cdot \td} \Xt$ for some
$\Xt\in \sSt$.  Thus, applying the conclusion of the previous paragraph and using that $\sS_1$ satisfies $\sD(\Omega')$, we
have $2^{-l\cdot d_0} X_0$ is controlled by $\{2^{-l\cdot d} X: (X,d)\in \sS_1\}$ at the unit scale on $\Omega'$.

Combining the above, we see that for any $(X_0,d_0)\in \sF$, $2^{-l\cdot d_0} X_0$ is controlled by $\{2^{-l\cdot d} X: (X,d)\in \sS_1\}$ at the unit scale on $\Omega'$.
This is the same as saying for all $Z_0\in Z$, $Z$ is controlled by $\{2^{-l\cdot d} X: (X,d)\in \sS_1\}$ at the unit scale on $\Omega'$.  Since for each
$(X,d)\in \sS_1$, $2^{-l\cdot d} X$ appears as a Taylor coefficient of $\Wh(s_1,0,0)$ corresponding to a multi-index $\alpha$ with $|\alpha|=1$,
this shows that the hypotheses of Case II of Lemma \ref{LemmaPfLtMainLemma} hold when applied to the operator
 \eqref{EqnPfLtAptBasicOp}.  Thus, we have for some $c, c'>0$ (independent of any relevant parameters),
 \begin{equation*}
 \LpOpN{2}{T_j}\lesssim \zeta^{M/2}\lesssim 2^{-cM(\jt_{\mu}-\lambda(\jh)_{\mu})} \lesssim 2^{-c'M |\jt\vee \lambda(\jh)-\lambda(\jh)|}.
 \end{equation*}
The result follows.
\end{proof}

\begin{proof}[Proof of Proposition \ref{PropPfLtApTMainProp}]
Fix $M\in \N$.  In Case I, we take $M=1$.  In Case II, we take $M=M(L)$ large to be chosen later.  We apply Proposition \ref{PropPfKerDecompVsig} to write
\begin{equation*}
\eta(t)\dil{\vsig}{2^j}(t)=\eta(t)\sum_{\substack{k\leq j \\ k\in \N^{\nu}}} \dil{\vsig_k}{2^k}(t).
\end{equation*}
Here, using the notation of Proposition \ref{PropPfKerDecompVsig}, we have
\begin{equation*}
\vsig_k=\sum_{\substack{\alphaa\in \N^{\Na} \\ |\alpha_{\mu}|=M\text{ when }k_\mu\ne 0 \\ |\alpha_\mu|=0\text{ when }k_\mu=0 }} \partial_t^{\alphaa} \gamma_{k,\alphaa},
\end{equation*}
where for every $M'\in \N$,
\begin{equation}\label{EqnPfLtAptBddSet}
\begin{split}
&\big\{ 2^{M'|j-k|} \gamma_{k,\alphaa} : j\in [0,\infty)^{\nu}, \vsig\in \schS_{\{\mu:j_{\mu}>0\}}\cap \sB_1, \eta\in \sB_2, k\leq j, k\in \N^{\nu},
\\&\quad\quad\quad\quad\quad\quad\quad|\alpha_{\mu}|=M\text{ when }k_\mu\ne 0, |\alpha_{\mu}|=0\text{ when }k_\mu=0
\big\}
\\&\subset C_0^{\infty}(B^N(a))
\end{split}
\end{equation}
is a bounded set.

Define $T_{j,k}$ for $k\in \N^{\nu}$ with $k\leq j$ by
\begin{equation*}
T_{j,k} f(x) = \psi_1(x) \int f(\gamma_t(x))\psi_2(\gamma_t(x))\kappa(t,x) \eta(t) \dil{\vsig_k}{2^k}(t)\: dt.
\end{equation*}
We have,
\begin{equation*}
T_j = \sum_{\substack{k\in \N^{\nu} \\ k\leq j}} T_{j,k},
\end{equation*}
and it follows that
\begin{equation}\label{EqnPfLtAptTriIneq}
\LpOpN{2}{T_j}\leq \sum_{\substack{k\in \N^{\nu} \\ k\leq j}} \LpOpN{2}{T_{j,k}}.
\end{equation}

In Case I, we apply Lemma \ref{LemmaPfLtAptMainLemma} and use \eqref{EqnPfLtAptBddSet} to see that (if $a>0$ is chosen small enough,
depending only on the parameterization), there exists $0<\epsilon\leq 1$ such that for any $M'$,
\begin{equation*}
\LpOpN{2}{T_{j,k}}\leq 2^{-M'|j-k|-\epsilon|\kt\vee \lambda(\kh) - \lambda(\kh)|}.
\end{equation*}
Taking $M'=M'(\lambda)$ sufficiently large, we have,
\begin{equation*}
\LpOpN{2}{T_{j,k}}\leq 2^{-|j-k|-\epsilon|\jt\vee \lambda(\jh) - \lambda(\jh)|}.
\end{equation*}
Plugging this into \eqref{EqnPfLtAptTriIneq}, we have
\begin{equation*}
\LpOpN{2}{T_j}\lesssim \sum_{\substack{k\in \N^{\nu} \\ k\leq j}}2^{-|j-k|-\epsilon|\jt\vee \lambda(\jh) - \lambda(\jh)|} \lesssim 2^{-\epsilon|\jt\vee \lambda(\jh) - \lambda(\jh)|},
\end{equation*}
as desired.

In Case II, we take $M=M(L)$ large, apply Lemma \ref{LemmaPfLtAptMainLemma}, and use \eqref{EqnPfLtAptBddSet} to see that (if $a>0$ is chosen small enough,
depending only on the parameterization), we have for any $M'\in \N$,
\begin{equation*}
\LpOpN{2}{T_{j,k}} \lesssim 2^{-M' |j-k| - L|\kt\vee \lambda(\kh) - \lambda(\kh)|}.
\end{equation*}
Taking $M'=M'(\lambda,L)$ sufficiently large, we have
\begin{equation*}
\LpOpN{2}{T_{j,k}}\lesssim 2^{-|j-k| - L|\jt\vee \lambda(\jh) - \lambda(\jh)|}.
\end{equation*}
Plugging this into \eqref{EqnPfLtAptTriIneq}, we have
\begin{equation*}
\LpOpN{2}{T_j}\lesssim \sum_{\substack{k\in \N^{\nu} \\ k\leq j}} 2^{-|j-k| - L|\jt\vee \lambda(\jh) - \lambda(\jh)|} \lesssim 2^{-L|\jt\vee \lambda(\jh) - \lambda(\jh)|},
\end{equation*}
as desired.
\end{proof}


\section{Proofs: The maximal function}
A key tool in the proofs that follow is the maximal function corresponding to a finitely generated parameterization.
This was studied in \cite{SteinStreetII} and we present those results here.
Fix open sets $\Omega_0\Subset\Omega'\Subset \Omega'''\Subset \Omega\subseteq \R^n$.
Let $(\gamma,e,N,\Omega,\Omega''')$ be a parameterization, with $\nu$-parameter dilations $0\ne e_1,\ldots, e_N\in [0,\infty)^{\nu}$.
For $a>0$ a small number, and $\psi_1,\psi_2\in C_0^{\infty}(\Omega_0)$ with $\psi_1,\psi_2\geq 0$, define
\begin{equation*}
\sM_{(\gamma,e,N),\psi_1,\psi_2} f(x) = \sup_{\delta\in [0,1]^{\nu}}  \psi_1(x) \int_{|t|<a} \q|f(\gamma_{\delta t}(x))\w| \psi_2(\gamma_{\delta t}(x)) \: dt.
\end{equation*}

\begin{thm}\label{ThmMaxThm}
If $(\gamma,e,N)$ is finitely generated on $\Omega'$, there exists $a>0$ (depending on the parameterization $(\gamma,e,N)$) such that for $1<p\leq \infty$,
\begin{equation*}
\LpN{p}{\sM_{(\gamma,e,N),\psi_1,\psi_2} f}\lesssim \LpN{p}{f},
\end{equation*}
where the implicit constant depends on $p$, the parameterization, and the choice of $\psi_1$ and $\psi_2$.
\end{thm}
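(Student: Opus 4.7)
The plan is to first discretize the supremum: because $\gamma_{\delta t}(x)$ depends smoothly on $\delta \in [0,1]^\nu$ and the integrand is nonnegative, one can replace the continuous supremum by the dyadic one
\begin{equation*}
\sM_{(\gamma,e,N),\psi_1,\psi_2} f(x) \lesssim \sup_{j\in \N^\nu} \psi_1(x) \int_{|t|<a} |f(\gamma_{2^{-j}t}(x))| \psi_2(\gamma_{2^{-j}t}(x))\, dt =: \sup_{j\in \N^\nu} M_j f(x),
\end{equation*}
at the cost of an admissible constant. The case $p=\infty$ is then immediate from the compact support of $\psi_2$, so it suffices to bound $\|\sup_j M_j f\|_{L^p}$ for $1<p<\infty$.

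For each fixed $j$, I would use the finite generation hypothesis: let $\sF=\{(X_1,d_1),\ldots,(X_q,d_q)\}$ finitely generate $(\gamma,e,N)$ on $\Omega'$, and set $Z^{(j)}:=\{2^{-j\cdot d_1}X_1,\ldots,2^{-j\cdot d_q}X_q\}$. By Proposition \ref{PropFrobControlUnitScaleVsAll}, $Z^{(j)}$ satisfies the hypotheses of Theorem \ref{ThmQuantFrob} uniformly in $j\in \N^\nu$ and $x\in \Omega'$, and controls $\gamma_{2^{-j}t}(x)$ at the unit scale uniformly in $j$. Pulling back by the Frobenius map $\Phi$ of Theorem \ref{ThmQuantFrob}, the coordinate change has admissible Jacobian, and combined with the volume identity \eqref{EqnQuantFrobJacVol} this yields
\begin{equation*}
M_j f(x) \lesssim \frac{1}{\Vol{\B{X}{d}{x}{2^{-j}}}}\int_{\B{X}{d}{x}{C 2^{-j}}} |f(y)|\, \chi(y)\, dy,
\end{equation*}
for some cutoff $\chi$ and an admissible constant $C$, uniformly in $j$ and $x$.

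It remains to bound, on $L^p$, the multi-parameter maximal operator $f\mapsto \sup_{j\in \N^\nu} A_j|f|$, where $A_j$ denotes the CC-ball averaging operator at radius $2^{-j}$. I would prove this by iterating a one-parameter bound in each coordinate $\mu\in\nuset$: fixing all the other coordinates of $j$, the operator $\sup_{j_\mu}A_j$ is a one-parameter CC maximal operator, which is bounded on $L^p$ for every $1<p\le\infty$ by the Nagel--Stein--Wainger covering and doubling estimates obtained from Theorem \ref{ThmQuantFrob} (these survive uniformly in the frozen parameters because the admissible constants do). Composing the $\nu$ one-parameter bounds in the style of the strong maximal theorem of Jessen--Marcinkiewicz--Zygmund then gives $\|\sup_j A_j|f|\|_{L^p}\lesssim\|f\|_{L^p}$ for $1<p\le\infty$.

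The main obstacle is the uniform control in both the scale $j$ and the center $x\in\Omega'$: one must ensure that the constants appearing in Theorem \ref{ThmQuantFrob}, the Frobenius change of variables, and the one-parameter covering lemmas are all admissible in the sense of depending only on $\sF$ and the ambient parameterization. This is precisely what the quantitative Frobenius theorem and Proposition \ref{PropFrobControlUnitScaleVsAll} are designed to furnish, so the argument goes through; the detailed execution is carried out in \cite{SteinStreetII}.
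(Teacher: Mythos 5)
Your discretization of the supremum and the reduction, via the quantitative Frobenius theorem and \eqref{EqnQuantFrobJacVol}, to an averaging operator comparable to a multi-parameter Carnot--Carath\'eodory ball average are both sound. The gap is in the final iteration step. The domination used in the Jessen--Marcinkiewicz--Zygmund argument, $M_s \leq M_1 \circ M_2 \circ \cdots \circ M_\nu$, depends on the averaged region factoring as a product: a Euclidean rectangle $R = I_1 \times \cdots \times I_\nu$ satisfies $\frac{1}{|R|}\int_R |f| = \frac{1}{|I_1|}\int_{I_1}\cdots\frac{1}{|I_\nu|}\int_{I_\nu}|f|$, which is what lets you peel off one parameter at a time. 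The multi-parameter CC ball $\B{X}{d}{x}{2^{-j}} = B_{\{2^{-j\cdot d_1}X_1,\ldots,2^{-j\cdot d_q}X_q\}}(x)$ has no such product structure: it is a connected set of path endpoints, and a single vector field $X_l$ with $d_l$ nonzero in two or more components couples the scaling in those parameters. It \emph{is} true that freezing $j_{\mu'}$ for $\mu'\ne\mu$ gives a legitimate one-parameter CC maximal operator bounded on $L^p$ uniformly in the frozen parameters, but that family of uniform bounds does not compose: there is no pointwise inequality $\sup_j A_j \leq M_1\circ\cdots\circ M_\nu$ to invoke, so the strong maximal theorem scheme never gets started.

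The remedy, and roughly the structure of the argument this theorem is actually citing in \cite{SteinStreetII}, is to exploit finite generation to pass to a parameterization that genuinely factors. Proposition \ref{PropResSurfExistsAParam} and especially the construction in Lemma \ref{LemmaPfSobChoiceOfGamma} produce a parameterization of the form $\gamma'_t(x) = \gamma^\nu_{t_\nu}\circ\gamma^{\nu-1}_{t_{\nu-1}}\circ\cdots\circ\gamma^1_{t_1}(x)$, a composition of single-parameter parameterizations finitely generated by the same $\sF$ on $\Omega'$. For \emph{that} $\gamma'$, the averaging operator is an honest iterated integral in $(t_1,\ldots,t_\nu)$, the supremum over $\delta\in[0,1]^\nu$ can be passed through one coordinate at a time, and each factor is a single-parameter Christ--Nagel--Stein--Wainger maximal operator whose $L^p$ bound follows from Theorem \ref{ThmQuantFrob} and the attendant doubling and covering estimates, uniformly in the composed coordinates. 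One then needs a comparison $\sM_{(\gamma,e,N),\psi_1,\psi_2} \lesssim \sM_{(\gamma',e',N'),\psi_1',\psi_2'}$, which is itself nontrivial; it uses the fact that both parameterizations are controlled by the same $\sF$ and hence their orbits fill out comparable CC balls. That comparison step, together with the replacement of $\gamma$ by a composition, is the missing content in your proposal.
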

\begin{proof}
This follows from Theorem 5.4 of \cite{SteinStreetII}.
\end{proof}

The maximal function often arises via the following proposition.
\begin{prop}\label{PropPfMaxMaxFuncBound}
Let $a>0$ be as in the definition of the maximal function.  
Let $\sB_1\subset \schS(\R^N)$, $\sB_2\subset C_0^\infty(B^N(a))$, $\sB_3\subset C_0^{\infty}(B^n(\Omega_0))$, and $\sB_4\subset C^\infty(B^N(a)\times \Omega'')$ be bounded sets.
For each $\vsig\in \sB_1$, $\eta\in \sB_2$, $\psi_1,\psi_2\in \sB_3$, $\kappa\in \sB_4$, and $j\in [0,\infty)^{\nu}$ define
\begin{equation*}
T_j[\vsig,\eta,\psi_1,\psi_2,\kappa] f(x) :=\psi_1(x) \int f(\gamma_t(x)) \psi_2(\gamma_t(x)) \kappa(t,x) \eta(t) \dil{\vsig}{2^j}(t)\: dt.
\end{equation*}
Then, there exists $C=C(\sB_1,\sB_2,\sB_3,\sB_4,e,N)$ and $\psi_1',\psi_2'\in C_0^\infty(\Omega_0)$ ($\psi_1',\psi_2'$ depending only on $\sB_3$) such that
\begin{equation*}
\q|T_j[\vsig,\eta,\psi_1,\psi_2,\kappa] f(x)\w|\leq C \sM_{(\gamma,e,N),\psi_1',\psi_2'} |f| (x).
\end{equation*}
\end{prop}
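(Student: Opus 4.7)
The plan is to reduce the operator $T_j$ to a rapidly convergent sum of ``bump'' operators, each of which is pointwise controlled by the maximal function with a single choice of scale $\delta = 2^{-k}$. The decomposition is exactly what Lemma \ref{LemmaPfKerDecompsS} provides.

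First I would apply Lemma \ref{LemmaPfKerDecompsS} to write
\begin{equation*}
\eta(t) \dil{\vsig}{2^j}(t) = \sum_{\substack{k \leq j \\ k\in \N^\nu}} \eta(t) \dil{\gamma_k}{2^k}(t),
\end{equation*}
where $\gamma_k \in C_0^\infty(B^N(a))$ and for every $M\in \N$ the set $\{2^{M|j-k|}\gamma_k\}$ is bounded in $C_0^\infty(B^N(a))$, uniformly for $j \in [0,\infty)^\nu$, $\vsig \in \sB_1$, $\eta \in \sB_2$, and $k \leq j$ with $k\in \N^\nu$. Substituting $t = 2^{-k}u$ in each summand (recalling that $\dil{\gamma_k}{2^k}(t)\,dt$ pulls back to $\gamma_k(u)\,du$) yields
\begin{equation*}
T_j f(x) = \psi_1(x) \sum_{\substack{k\leq j \\ k\in \N^\nu}} \int_{|u|<a} f(\gamma_{2^{-k}u}(x))\, \psi_2(\gamma_{2^{-k}u}(x))\, \kappa(2^{-k}u,x)\, \eta(2^{-k}u)\, \gamma_k(u)\, du.
\end{equation*}

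Next I would choose non-negative $\psi_1',\psi_2'\in C_0^\infty(\Omega_0)$ so that $\psi_1'\geq |\psi_1|$ and $\psi_2'\geq |\psi_2|$ pointwise for every $\psi_1,\psi_2\in \sB_3$ (such $\psi_1',\psi_2'$ exist because $\sB_3$ is a bounded set in $C_0^\infty(\Omega_0)$, hence uniformly bounded with common compact support in $\Omega_0$). Since $2^{-k}\in (0,1]^\nu \subseteq [0,1]^\nu$ for every $k\in \N^\nu$, each inner integral, taken in absolute value and multiplied by $\psi_1'(x)$, is majorized by $\sM_{(\gamma,e,N),\psi_1',\psi_2'}|f|(x)$ evaluated with the choice $\delta = 2^{-k}$. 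Using the uniform bounds $\|\kappa\|_{C^0}, \|\eta\|_{C^0} \lesssim 1$ (from $\sB_2,\sB_4$ bounded) and $\|\gamma_k\|_{C^0}\leq C_M 2^{-M|j-k|}$, this gives
\begin{equation*}
|T_j f(x)| \leq C_M \Bigl(\sum_{\substack{k\leq j \\ k\in \N^\nu}} 2^{-M|j-k|}\Bigr)\, \sM_{(\gamma,e,N),\psi_1',\psi_2'}|f|(x).
\end{equation*}

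Finally I would choose $M$ large enough (depending only on $\nu$) so that $\sum_{l\in \N^\nu} 2^{-M|l|} < \infty$; reindexing by $l = j - k$ (which runs over a subset of $\N^\nu$-translates) shows the bracketed sum is bounded by a constant independent of $j$. This produces the claimed pointwise estimate with constant $C$ depending only on $\sB_1,\sB_2,\sB_3,\sB_4,e,N$, and with $\psi_1',\psi_2'$ depending only on $\sB_3$.

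There is no serious obstacle: the content is entirely in Lemma \ref{LemmaPfKerDecompsS}, which packages the Schwartz decay of $\vsig$ into the decay of $\gamma_k$. Everything after that is a change of variables and a comparison to the definition of $\sM_{(\gamma,e,N),\psi_1',\psi_2'}$. The only mild care needed is the choice of $\psi_1',\psi_2'$, which must majorize the whole family $\sB_3$ simultaneously; this is harmless since $\sB_3$ is bounded in $C_0^\infty(\Omega_0)$.
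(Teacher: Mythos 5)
Your proposal is correct and takes essentially the same route as the paper's proof: both apply Lemma \ref{LemmaPfKerDecompsS} to split the dilated bump into rescaled pieces $\dil{\vsig_k}{2^k}$, then bound each piece pointwise by the maximal function at scale $\delta = 2^{-k}$ and sum the geometric series. The only cosmetic differences are your choice of majorants $\psi_1',\psi_2'$ (you dominate $|\psi_1|,|\psi_2|$ pointwise; the paper takes nonnegative cut-offs equal to $1$ on $\overline{\bigcup_{\psi\in\sB_3}\supp\psi}$, which achieves the same thing) and your use of a large exponent $M$, where $M=1$ already suffices since $\sum_{l\in\N^\nu}2^{-|l|}<\infty$.
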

\begin{proof}
Because $\sB_3\subset C_0^{\infty}(B^n(\Omega_0))$ is bounded, if we define
\begin{equation*}
K:=\overline{\bigcup_{\psi\in \sB_3} \supp{\psi}},
\end{equation*}
then $K\Subset \Omega_0$.  Let $\psi_1',\psi_2'\in C_0^\infty(\Omega_0)$ be such that $\psi_1',\psi_2'\geq 0$,
and $\psi_1',\psi_2'\equiv 1$ on $K$.
For $\vsig\in \sB_1$, $\eta\in \sB_2$, and $j\in [0,\infty)^{\nu}$, we apply 
Lemma \ref{LemmaPfKerDecompsS} to write
\begin{equation*}
\eta(t) \dil{\vsig}{2^j}(t) = \sum_{\substack{k\leq j \\ k\in \N^{\nu}}} \eta(t) \dil{\vsig_k}{2^k}(t),
\end{equation*}
where
\begin{equation}\label{EqnPfMaxDecompBound}
\q\{ 2^{|j-k|}\vsig_k : j\in [0,\infty)^{\nu}, k\in \N^{\nu}, \vsig\in \sB_1,\eta\in \sB_2 \w\}\subset C_0^\infty(B^N(a))
\end{equation}
is bounded.

For $k\in \N^{\nu}$, $k\leq j$, define $T_{j,k}=T_{j,k}[\vsig,\eta,\psi_1,\psi_2,\kappa]$ by
\begin{equation*}
T_{j,k} f(x) = \psi_1(x) \int f(\gamma_t(x)) \psi_2(\gamma_t(x)) \kappa(t,x) \dil{\vsig_k}{2^k}(t)\: dt
\end{equation*}
so that
\begin{equation}\label{EqnPfMaxDecomp}
T_j[\vsig,\eta,\psi_1,\psi_2,\kappa] =\sum_{\substack{k\leq j \\ k\in \N^{\nu}}} T_{j,k}.
\end{equation}
We have
\begin{equation*}
\begin{split}
|T_{j,k} f(x)| &\leq |\psi_1(x)| \int \q| f(\gamma_{2^{-k} t}(x)) \psi_2(\gamma_{2^{-k}t}(x)) \kappa(2^{-k} t,x) \vsig_k(t)\w|\: dt
\\&\lesssim \psi_1'(x) \int_{|t|<a} \q|f(\gamma_{2^{-k}t}(x)) \w| \psi_2'(\gamma_{2^{-k}t}(x)) \q|\vsig_k(t)\w|\: dt 
\\&\lesssim 2^{-|j-k|} \sM_{(\gamma,e,N),\psi_1',\psi_2'} |f|(x),
\end{split}
\end{equation*}
where in the last line we used \eqref{EqnPfMaxDecompBound}.
Plugging this into \eqref{EqnPfMaxDecomp}, we have
\begin{equation*}
\q|T_j[\vsig,\eta,\psi_1,\psi_2,\kappa] f(x)  \w|\lesssim \sum_{\substack{k\leq j \\ k\in \N^{\nu}}}2^{-|j-k|} \sM_{(\gamma,e,N),\psi_1',\psi_2'} |f|(x) \lesssim \sM_{(\gamma,e,N),\psi_1',\psi_2'} |f|(x),
\end{equation*}
as desired, completing the proof.
\end{proof}

\section{Proofs:  A single parameter Littlewood-Paley theory}
Fix open sets $\Omega_0\Subset \Omega' \Subset\Omega'''\Subset \Omega$.  Suppose
$\sF\subset \sonevect$ is a finite set satisfying $\sD(\Omega')$.
Enumerate $\sF$:
\begin{equation*}
\sF=\{ (X_1,d_1),\ldots, (X_q,d_q)\}.
\end{equation*}
On $\R^q$ define single parameter dilations by, for $\delta\in [0,\infty)$:
\begin{equation*}
\delta(t_1,\ldots,t_q)= (\delta^{d_1} t_1,\ldots, \delta^{d_1} t_q).
\end{equation*}
We denote these single parameter dilations by $d$.
Define
\begin{equation*}
\gamma_t(x):=e^{t_1X_1+\cdots+ t_q X_q} x,
\end{equation*}
so that $(\gamma,d,q,\Omega,\Omega''')$ is a parameterization.

\begin{prop}\label{PropPfsLp}
There exists $a>0$, depending on $(\gamma,d,q)$, such that the following holds.
Let $\fD=\q(1,(\gamma,d,q,\Omega,\Omega'''), a, \eta,\{\vsig_j\}_{j\in \N}, \psi\w)$ be Sobolev data on $\Omega'$,
and let $\{\epsilon_j\}_{j\in \N}$ be a sequence of i.i.d. random variables of mean $0$ taking values $\pm 1$.
For $j\in \N$ let $D_j=D_j(\fD)$ be given by \eqref{EqnResSobDefnDjNew}.
Then, for $1<p<\infty$,
\begin{equation*}
\LpN{p}{f} \approx\LpN{p}{\q( \sum_{j\in \N}\q|D_j f\w|^2 \w)^{\frac{1}{2} }} \approx \q(\bE \LpN{p}{ \sum_{j\in \N}  \epsilon_j D_j f }^{p} \w)^{\frac{1}{p}}, \quad f\in C_0^\infty(\Omega_0).
\end{equation*}
Here, the implicit constants depend on $p\in (1,\infty)$ and $\fD$, and $\bE$ denotes the expectation with respect to the (suppressed) variable with respect
to which $\epsilon_j$ is a random variable.
\end{prop}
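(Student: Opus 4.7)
The plan is to prove the two-sided bound in three steps: first use Khintchine's inequality to reduce the claim to an $L^p$ bound on a randomized operator, then establish this bound via almost orthogonality plus a Calder\'on-Zygmund argument, and finally obtain the matching lower bound by polarizing a reproducing formula. Khintchine applied pointwise in $x$ and integrated immediately yields
$\q(\bE \LpN{p}{T_\epsilon f}^p\w)^{1/p} \approx \LpN{p}{(\sum_j |D_j f|^2)^{1/2}}$,
where $T_\epsilon := \sum_{j\in \N} \epsilon_j D_j$, so the second equivalence in the statement is automatic. The core task is the two-sided bound $\LpN{p}{f} \approx (\bE \LpN{p}{T_\epsilon f}^p)^{1/p}$, which I will split into an upper square-function estimate and a reverse inequality via duality.

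\textbf{Upper bound for $T_\epsilon$.} First I would show that $T_\epsilon$ is bounded on $L^2$ uniformly in $\epsilon \in \{\pm 1\}^{\N}$ via Cotlar-Stein. Proposition \ref{PropPfAdjEachScale} identifies $D_j^*$ as an operator of exactly the same form as $D_j$ but attached to $(\gamma_t^{-1}, d, q)$, and Proposition \ref{PropPfAdjGamma} guarantees that this inverse parameterization is again linearly finitely generated by $\sF$ on $\Omega'$. Case II of Proposition \ref{PropPfLtTechApO}, applied to the pairs $D_j^* D_k$ and $D_j D_k^*$, then yields super-polynomial almost orthogonality
$\LpOpN{2}{D_j^* D_k} + \LpOpN{2}{D_j D_k^*} \leq C_N 2^{-N|j-k|}$ for every $N\in\N$,
and Cotlar-Stein gives $\LpOpN{2}{T_\epsilon}\lesssim 1$ uniformly in $\epsilon$. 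To upgrade to $L^p$, note that the kernel $K_\epsilon(t)=\eta(t)\sum_j \epsilon_j \dil{\vsig_j}{2^j}(t)$ lies in $\sK_0(q,d,a)$ with seminorms uniform in $\epsilon$, so $T_\epsilon$ is a fractional Radon transform of order $0$ for the linearly finitely generated parameterization $(\gamma,d,q)$. Combining the uniform $L^2$ bound with Proposition \ref{PropPfMaxMaxFuncBound}, which dominates each partial sum pointwise by the maximal function $\sM_{(\gamma,d,q),\psi_1',\psi_2'}$ that is $L^p$-bounded by Theorem \ref{ThmMaxThm}, a standard Calder\'on-Zygmund decomposition on the space of homogeneous type defined by the Carnot-Carath\'eodory balls of $\sF$ produces weak-type $(1,1)$ and hence, by interpolation and duality, $L^p$ boundedness of $T_\epsilon$ for all $1<p<\infty$ uniformly in $\epsilon$.

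\textbf{Lower bound and main obstacle.} For the reverse inequality I would construct a companion family $\{\widetilde D_j\}$ by choosing bounded $\{\widetilde\vsig_j\}\subset \schS(\R^q)$ with $\widetilde\vsig_j\in\schS_{\{1\}}$ for $j>0$ (via Lemma \ref{LemmaResKerBasicDelta}), arranged so that $\sum_j \widetilde D_j^* D_j = \psi^2\cdot I + R$ on $C_0^\infty(\Omega_0)$ with $R$ a smoothing remainder (handled by Proposition \ref{PropPfMaxMaxFuncBound}). Pairing with a dual function $g\in L^{p'}$, applying Cauchy-Schwarz in $j$, and using the upper square-function bound already established at exponent $p'$ applied to $\widetilde D_j^* g$, I obtain
\begin{equation*}
|\langle f,g\rangle| \lesssim \LpN{p}{\q(\sum_j |D_j f|^2\w)^{1/2}}\,\LpN{p'}{\q(\sum_j|\widetilde D_j^* g|^2\w)^{1/2}} + |\langle Rf,g\rangle| \lesssim \LpN{p}{\q(\sum_j|D_jf|^2\w)^{1/2}}\,\LpN{p'}{g},
\end{equation*}
and duality closes the estimate. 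The main obstacle will be the $L^p$ extension of the upper bound: the $L^2$ Cotlar-Stein step is clean thanks to Case II of Proposition \ref{PropPfLtTechApO}, but extending to $L^p$ for $p\neq 2$ requires carefully implementing a Calder\'on-Zygmund decomposition adapted to the CC geometry of $\sF$, where the maximal theorem \ref{ThmMaxThm} supplies the crucial pointwise ingredient and the specific form $\gamma_t(x) = e^{t_1 X_1 + \cdots + t_q X_q} x$ must be exploited to get sufficiently good kernel estimates relative to those balls.
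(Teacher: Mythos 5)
Your overall strategy — Khintchine to reduce to bounding $T_\epsilon = \sum_j \epsilon_j D_j$ uniformly in the signs, showing this via almost orthogonality (Cotlar--Stein, using Propositions \ref{PropPfAdjEachScale}, \ref{PropPfAdjGamma}, and Case II of Proposition \ref{PropPfLtTechApO}) and a Calder\'on--Zygmund argument, and then a Calder\'on reproducing formula plus duality for the reverse — is the same route the paper's sketch takes (the paper defers the details to Corollary 2.15.54 of \cite{StreetMultiParamSingInt}).

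There is, however, a concrete gap in your ``upgrade to $L^p$'' step. You run a Calder\'on--Zygmund decomposition ``on the space of homogeneous type defined by the Carnot--Carath\'eodory balls of $\sF$,'' but this only makes sense when the vector fields $X_1,\ldots,X_q$ span the tangent space at every point of $\Omega'$. In general they do not, and nothing in the hypotheses forces them to: the Sobolev data is merely finitely generated by $\sF$, and the set $\{X:(X,d)\in\sF\}$ may span only a proper subspace of $T_x\R^n$. In that case the CC balls $B_{(X,d)}(x,\delta)$ lie in a lower-dimensional leaf and have Lebesgue measure zero in $\R^n$, so $(\R^n, dx, \text{CC-balls})$ is not a space of homogeneous type and the usual Vitali covering / CZ decomposition fails outright — you cannot produce the good/bad split, and the weak-$(1,1)$ endpoint does not follow. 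To close this you need the Frobenius foliation: $\gamma_t(x) = e^{t_1X_1+\cdots+t_qX_q}x$ moves only within the leaf through $x$, so $T_\epsilon$ acts leaf-by-leaf, and after disintegrating Lebesgue measure over the leaves one runs the CZ argument on each leaf in the rescaled coordinates provided by the map $\Phi$ of Theorem \ref{ThmQuantFrob}, where \eqref{EqnQuantFrobJacVol}, \eqref{EqnFrobYsSmooth}, and \eqref{EqnFrobYsSpan} give the uniform doubling, kernel, and Jacobian estimates needed. This is precisely what the paper's proof sketch flags (``When $X_1,\ldots,X_q$ do not span the tangent space one wishes to use the Frobenius theorem to foliate\ldots This can be achieved using Theorem \ref{ThmQuantFrob}''), and your write-up omits it. A secondary, smaller issue: your reproducing-formula step asserts without argument that $\sum_j \widetilde D_j^{*} D_j = \psi^2 I + R$ with $R$ smoothing; the natural Calder\'on identity produces $\sum_j \widetilde D_j D_j$ (no adjoint), and getting the adjoint in place while keeping $R$ harmless needs a short almost-orthogonality argument of its own, which should be spelled out.
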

\begin{proof}
This is exactly the statement of Corollary 2.15.54 of \cite{StreetMultiParamSingInt}, and we refer the reader there for the full details.
We make a few comments on the proof here.
The estimate
\begin{equation*}
\LpN{p}{\q( \sum_{j\in \N}\q|D_j f\w|^2 \w)^{\frac{1}{2} }} \approx \q(\bE \LpN{p}{ \sum_{j\in \N}  \epsilon_j D_j f }^{p} \w)^{\frac{1}{p}}
\end{equation*}
is an immediate consequence of the Khintchine inequality.

In the case that $X_1,\ldots, X_q$ span the tangent space at every point, to prove
\begin{equation*}
\q(\bE \LpN{p}{ \sum_{j\in \N}  \epsilon_j D_j f }^{p} \w)^{\frac{1}{p}} \lesssim \LpN{p}{f},
\end{equation*}
one shows the (a priori stronger) estimate
\begin{equation*}
\sup_{\epsilon_j\in \{\pm 1\}}  \LpN{p}{ \sum_{j\in \N}  \epsilon_j D_j f } \lesssim \LpN{p}{f}
\end{equation*}
by showing that $\sum_{j\in \N}  \epsilon_j D_j$ is a Calder\'on-Zygmund operator, uniformly in the choice of sequence $\epsilon_j$,
so that the estimate follows from classical theorems.
When $X_1,\ldots, X_q$ do not span the tangent space one wishes to use the Frobenius theorem to foliate the ambient space
into leaves; and apply the above idea to each leaf.  This can be achieved using Theorem \ref{ThmQuantFrob}.

Finally, the estimate
\begin{equation*}
\LpN{p}{f}\lesssim \LpN{p}{\q( \sum_{j\in \N}\q|D_j f\w|^2 \w)^{\frac{1}{2} }}
\end{equation*}
follows from the above and a Calder\'on reproducing type formula (which can be proved using the above estimates and an almost orthogonality argument).  We refer the reader to \cite{StreetMultiParamSingInt} for all the details.
\end{proof}

\section{Proofs: Non-isotropic Sobolev spaces}\label{SectionPfSob}
In this section, we prove the results from Section \ref{SectionResSobNew}.  For this, we use vector valued $L^p$ spaces.
For $1\leq p\leq \infty$, $1\leq q\leq\infty$, we define the space $\Lplqnu{p}{q}$ to be the Banach space consisting of sequences
of measurable functions $\{f_j(x)\}_{j\in \N^{\nu}}$, $f_j:\R^{n}\rightarrow \C$, with the norm:
\begin{equation*}
\LplqnuN{p}{q}{\{ f_j\}_{j\in \N^{\nu}}}:=
\begin{cases}
\LpN{p}{\q(\sum_{j\in \N^{\nu}} |f_j(x)|^q\w)^{\frac{1}{q}}}, &\text{ if }q\in[1,\infty),\\
\LpN{p}{\sup_{j\in \N^{\nu}} |f_j(x)|}, &\text{ if } q=\infty.
\end{cases}
\end{equation*}

In the proofs that follow, we use the following convention.  If $T_j$, $j\in \N^{\nu}$ is a sequence of operators,
then, for $j\in \Z^{\nu}\setminus \N^{\nu}$ we define $T_j=0$.  
Let $\Omega_0\Subset\Omega'\Subset\Omega'''\Subset \Omega\subseteq \R^n$ be open sets.
We need the next result concerning vector valued operators.

\begin{prop}\label{PropPfSobVVOps}
Suppose $(\gamma^1,e^1,N_1,\Omega,\Omega'''),\ldots,(\gamma^K,e^K,N_K,\Omega,\Omega''')$ be $K$ parameterizations,
each with $\nu$-parameter dilations $0\ne e_1^l,\ldots, e_{N_l}^l\in [0,\infty)^{\nu}$.  We separate our assumptions into two cases:
\begin{enumerate}[\bf{Case} I]
\item There exists a finite set $\sF\subset \snuvect$ such that $$(\gamma^1,e^1,N_1),\ldots,(\gamma^{K},e^K,N_K)$$ are all finitely
generated by $\sF$ on $\Omega'$.

\item There exists a finite set $\sF\subset \snuvectone$ such that $$(\gamma^1,e^1,N_1),\ldots,(\gamma^{K},e^K,N_K)$$ are all linearly finitely
generated by $\sF$ on $\Omega'$.
\end{enumerate}
Then, there exists $a>0$ (depending on the parameterizations) such that the following holds. 
For each $l\in\{1,\ldots, K\}$, let $\sB_1^l\subset \schS(\R^{N_l})$, $\sB_2^l\subset C_0^\infty(B^{N_l}(a))$, $\sB_3^l\subset C_0^\infty(\Omega_0)$,
and $\sB_4^l\subset C^\infty(B^{N_l}(a)\times \Omega'')$ be bounded sets.  For each $j_1,\ldots, j_K\in [0,\infty)^{\nu}$,
let $\vsig_{l,j_l}\in \schS_{\{\mu : j_{l,\mu}>0\}}\cap \sB_1^l$, $\eta_{l,j_l}\in \sB_2^l$, $\psi_{1,l,j_l},\psi_{2,l,j_l}\in \sB_3^l$, and $\kappa_{l,j_l}\in \sB_4^l$.
Define an operator $T_{j_l}^l$ by
\begin{equation*}
T_{j_l}^l f(x):=\psi_{1,l,j_l}(x) \int f(\gamma^l_{t^l}(x)) \psi_{2,l,j_l}(\gamma^l_{t^l}(x)) \kappa_{l,j_l}(t^l,x) \eta_{l,j_l}(t^l)\dil{\vsig_{l,j_l}}{2^{j_l}}(t^l)\: dt^l.
\end{equation*}
For $k_2,\ldots, k_K\in \Z^{\nu}$ define a vector valued operator by
\begin{equation*}
\sT_{k_2,\ldots, k_K} \{ f_j \}_{j\in \N^{\nu}} := \q\{ T_{j}^1 T_{j+k_2}^2 T_{j+k_3}^3\cdots T_{j+k_K}^K f_j \w\}_{j\in \N^{\nu}},
\end{equation*}
where for $j_l\in \Z^{\nu}\setminus \N^{\nu}$, $T^l_{j_l}=0$, by convention.  Then, for $1<p<\infty$, we have the following.
\begin{itemize}
\item In Case I, there exists $\epsilon_p>0$ (depending on $p$ and the parameterizations) such that
\begin{equation}\label{EqnPropPfSobVVCaseI}
\LplqnuOpN{p}{2}{\sT_{k_2,\ldots, k_K}}\leq C_p 2^{-\epsilon_p ( |k_2|+\cdots+|k_K|)},
\end{equation}
where $C_p$ depends on $p$, the above parameterizations, and the sets $\sB_1^l$, $\sB_2^l$, $\sB_3^l$, and $\sB_4^l$.

\item In Case II, for every $L$, there exists $C_{p,L}$ (depending on $p$, $L$, the above parameterizations, and the sets $\sB_1^l$, $\sB_2^l$, $\sB_3^l$, and $\sB_4^l$)
such that
\begin{equation}\label{EqnPropPfSobVVCaseII}
\LplqnuOpN{p}{2}{\sT_{k_2,\ldots, k_K}}\leq C_{p,L} 2^{-L ( |k_2|+\cdots+|k_K|)}.
\end{equation}

\end{itemize}
The above results hold even when $K=1$.  When $K=1$ one takes, by convention, $|k_2|+\cdots+|k_K|=0$.
\end{prop}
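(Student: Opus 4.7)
The plan is to build up \eqref{EqnPropPfSobVVCaseI}--\eqref{EqnPropPfSobVVCaseII} in three stages: an almost--orthogonality bound at $p=2$, a decay--free maximal function bound on $L^p(\ell^2)$ for all $1<p<\infty$, and finally complex interpolation together with a duality argument to reach every $1<p<\infty$ with positive decay rate.

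For the endpoint $p=2$ the key observation is that $\Lplqnu{2}{2}$ is fiber-wise Hilbertian, so
\begin{equation*}
\LplqnuN{2}{2}{\sT_{k_2,\ldots,k_K}\{f_j\}}^2 = \sum_{j\in\N^{\nu}}\LpN{2}{T^1_j T^2_{j+k_2}\cdots T^K_{j+k_K} f_j}^2.
\end{equation*}
I would then apply Proposition \ref{PropPfLtTechApO} to the list $j,j+k_2,\ldots,j+k_K$, noting that with the convention $k_1=0$ one has $\diam\{j,j+k_2,\ldots,j+k_K\}=\max_{a,b}|k_a-k_b|\gtrsim_K |k_2|+\cdots+|k_K|$. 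This produces an $L^2\to L^2$ norm bounded by $2^{-\epsilon(|k_2|+\cdots+|k_K|)}$ in Case I (and by $2^{-L'(|k_2|+\cdots+|k_K|)}$ for any prescribed $L'$ in Case II). Summing in $j$ gives the desired vector-valued bound at $p=2$, and since Schwartz support of $\vsig_{l,j_l}$ lies in $\schS_{\{\mu:j_{l,\mu}>0\}}$ the hypotheses of Proposition \ref{PropPfLtTechApO} apply directly.

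Next I would establish the uniform bound $\LplqnuOpN{p}{2}{\sT_{k_2,\ldots,k_K}}\lesssim 1$ with no decay for every $1<p<\infty$. By Proposition \ref{PropPfMaxMaxFuncBound}, each $T^l_{j_l}$ is dominated pointwise by a maximal operator $\sM^l:=\sM_{(\gamma^l,e^l,N_l),\psi_1',\psi_2'}$ that depends only on $\sB_3^l$, so
\begin{equation*}
\q|T^1_j T^2_{j+k_2}\cdots T^K_{j+k_K} f_j(x)\w|\le C^K\,\sM^1\sM^2\cdots\sM^K |f_j|(x).
\end{equation*}
Each $\sM^l$ is scalar $L^p$-bounded by Theorem \ref{ThmMaxThm}; combined with a Fefferman--Stein type vector-valued extension for these Carnot--Carath\'eodory maximal functions (available in \cite{SteinStreetII} and \cite{StreetMultiParamSingInt}) the iterated operator is bounded on $L^p(\ell^2)$ uniformly in the shifts $k_l$, which gives the claimed decay-free bound.

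The final step is complex interpolation of the $L^2(\ell^2)$ bound (with geometric decay) against the decay-free $L^p(\ell^2)$ bound, for any fixed $p\in(2,\infty)$, to obtain \eqref{EqnPropPfSobVVCaseI}--\eqref{EqnPropPfSobVVCaseII} on the range $2\le p<\infty$ (with a strictly positive but smaller decay exponent in Case I; in Case II the full rate $2^{-L(|k_2|+\cdots+|k_K|)}$ is recovered by choosing the $L^2$ rate sufficiently large before interpolating). To cover $1<p<2$ I would invoke duality: Proposition \ref{PropPfAdjEachScale} identifies each $(T^l_{j_l})^*$ as an operator of the same form, now associated to the inverse parameterization $(\gamma^l_t{}^{-1},e^l,N_l)$, which by Proposition \ref{PropPfAdjGamma} remains finitely (resp.\ linearly finitely) generated by the same $\sF$. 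Hence $\sT_{k_2,\ldots,k_K}^*$ satisfies the hypotheses of the proposition with the same constants, so the bound at $p'\in(2,\infty)$ for the adjoint yields the desired bound at $p\in(1,2)$ for $\sT_{k_2,\ldots,k_K}$. The $K=1$ case requires only the decay-free step, since $|k_2|+\cdots+|k_K|=0$ by convention. The main obstacle I anticipate is the vector-valued $L^p(\ell^2)$ extension of the maximal functions $\sM^l$: the scalar bound of Theorem \ref{ThmMaxThm} is off-the-shelf, but upgrading to the $\ell^2$-valued inequality in this multi-parameter, Carnot--Carath\'eodory setting is the one genuinely nontrivial ingredient beyond what is quoted in the excerpt.
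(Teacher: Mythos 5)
Your overall scaffolding (almost--orthogonality at $L^2(\ell^2)$, a decay-free $L^p(\ell^2)$ bound from maximal functions, interpolation, then duality to swap $p$ and $p'$) is close in spirit to the paper, but the middle step has a genuine gap that you yourself flag: you need an $L^p(\ell^2)$ Fefferman--Stein inequality for the Carnot--Carath\'eodory maximal operators $\sM^l$, and no such inequality is proved or cited anywhere in this paper's toolkit. Theorem \ref{ThmMaxThm} is purely scalar. The fact that each $T^l_{j_l}$ is dominated by a maximal operator (Proposition \ref{PropPfMaxMaxFuncBound}) does not by itself give any $\ell^2$-valued bound; that is exactly the content of a Fefferman--Stein inequality, and it is a nontrivial result that would have to be established separately for these sub-Riemannian maximal functions. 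Citing \cite{SteinStreetII} or \cite{StreetMultiParamSingInt} here is not a substitute for an actual statement.

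The paper sidesteps this entirely by a two-step interpolation that never needs an $\ell^2$-valued maximal estimate. First it interpolates the $L^2(\ell^2)$ bound with decay against the trivial $L^1(\ell^1)$ bound (each $T^l_{j_l}$ is $L^1\to L^1$ bounded by a kernel-size estimate, and then one simply interchanges the $\ell^1$ sum with the $L^1$ norm) to get $L^p(\ell^p)$ with decay for $1<p\le 2$. Second, the scalar maximal bound does give $L^p(\ell^\infty)$ for free, since $\sup_j |\sM g_j|\le \sM(\sup_j |g_j|)$ by monotonicity. Interpolating $L^p(\ell^p)$ against $L^p(\ell^\infty)$ lands on $L^p(\ell^2)$ with decay, for $1<p\le 2$. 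The range $2\le p<\infty$ then follows by the duality argument you describe, using Propositions \ref{PropPfAdjEachScale} and \ref{PropPfAdjGamma}. Note that the paper proves $1<p\le 2$ first and dualizes to $p\ge 2$, whereas you propose the reverse; that direction difference is cosmetic, but the way you produce the decay-free $L^p(\ell^2)$ bound is not, and it is where your argument breaks down as written. If you replace the Fefferman--Stein step with the paper's $L^1(\ell^1)$/$L^p(\ell^\infty)$ interpolation trick, your outline becomes the paper's proof.
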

\begin{proof}
Applying Proposition \ref{PropPfLtTechApO}, and using that $|k_2|+\cdots+|k_K|\lesssim \diam\{j,j+k_2,\ldots, j+k_K\}$, we have:
\begin{itemize}
\item In Case I, there exists $\epsilon_2>0$ such that
\begin{equation}\label{EqnPfSobBasicBoundI}
\LpOpN{2}{T_j^1 T_{j+k_2}^2T_{j+k_3}^3\cdots T_{j+k_K}^K}\lesssim 2^{-\epsilon_2 (|k_2|+\cdots+|k_K|)}.
\end{equation}

\item In Case II, for every $L$, there exists a constant $C_L$ such that
\begin{equation}\label{EqnPfSobBasicBoundII}
\LpOpN{2}{T_j^1 T_{j+k_2}^2T_{j+k_3}^3\cdots T_{j+k_K}^K}\leq C_L 2^{-L (|k_2|+\cdots+|k_K|)}.
\end{equation}
\end{itemize}
In Case II, fix $L\in \N$.  Applying \eqref{EqnPfSobBasicBoundI} (in Case I) and \eqref{EqnPfSobBasicBoundII} (in Case II) and interchanging the norms,
we have
\begin{equation}\label{EqnPfSobVVTTB}
\LplqnuOpN{2}{2}{\sT_{k_2,\ldots, k_K}}\lesssim
\begin{cases}
2^{-\epsilon_2 (|k_2|+\cdots+|k_K|)} & \text{ in Case I},\\
2^{-L (|k_2|+\cdots+|k_K|)} &\text{ in Case II}.
\end{cases}
\end{equation}
We also have the trivial inequality $\LpOpN{1}{T_j^1 T_{j+k_2}^2T_{j+k_3}^3\cdots T_{j+k_K}^K}\lesssim 1$, from which it follows that
\begin{equation}\label{EqnPFSobVVOOB}
\LplqnuOpN{1}{1}{\sT_{k_2,\ldots, k_K}} \lesssim 1. 
\end{equation}
Interpolating \eqref{EqnPfSobVVTTB} and \eqref{EqnPFSobVVOOB} we see for $1<p\leq 2$,
\begin{equation}\label{EqnPfSobVVPPB}
\LplqnuOpN{p}{p}{\sT_{k_2,\ldots, k_K}}\lesssim
\begin{cases}
2^{-(2-\frac{2}{p})\epsilon_2 (|k_2|+\cdots+|k_K|)} & \text{ in Case I},\\
2^{- (2-\frac{2}{p}) L (|k_2|+\cdots+|k_K|)} &\text{ in Case II}.
\end{cases}
\end{equation}

We claim, for $1<p<\infty$,
\begin{equation}\label{EqnPFSobVVPIB}
\LplqnuOpN{p}{\infty}{\sT_{k_2,\ldots, k_K}} \lesssim 1,
\end{equation}
with implicit constant depending on $p$.
Applying Proposition \ref{PropPfMaxMaxFuncBound} to find $\psi_{1,l}',\psi_{2,l}'\in C_0^\infty(\Omega_0)$, $1\leq l\leq K$ (depending only on $\sB_3^l$)
such that
\begin{equation*}
\q|T_{j_l}^l f(x) \w|\lesssim \sM_{(\gamma^l, e^l,N_l), \psi_{1,l}',\psi_{2,l}'} |f| (x).
\end{equation*}
Hence, we have for $1<p<\infty$,
\begin{equation*}
\begin{split}
&\LplqnuN{p}{\infty}{ \sT_{k_2,\ldots, k_K} \{ f_j \}_{j\in \N^{\nu}}} 
= \LpN{p}{\sup_{j} \q| T_{j}^1 T_{j+k_2}^2\cdots T_{j+k_K}^K f_j\w| }
\\&\lesssim \LpN{p}{ \sM_{(\gamma^1, e^1,N_1), \psi_{1,1}',\psi_{2,1}'} \cdots \sM_{(\gamma^K, e^K,N_K), \psi_{1,K}',\psi_{2,K}'} \sup_j |f_j|}
\\&\lesssim \LpN{p}{\sup_j |f_j|} = \LplqnuN{p}{\infty}{\{f_j\}_{j\in \N^{\nu}}},
\end{split}
\end{equation*}
where in the last inequality we have applied Theorem \ref{ThmMaxThm}.  \eqref{EqnPFSobVVPIB} follows.

Interpolating \eqref{EqnPfSobVVPPB} and \eqref{EqnPFSobVVPIB} shows for $1<p\leq 2$,
\begin{equation*}
\LplqnuOpN{p}{2}{\sT_{k_2,\ldots, k_K}}\lesssim
\begin{cases}
2^{-(p-1)\epsilon_2 (|k_2|+\cdots+|k_K|)} & \text{ in Case I},\\
2^{- (p-1) L (|k_2|+\cdots+|k_K|)} &\text{ in Case II}.
\end{cases}
\end{equation*}
Because $L\in \N$ was arbitrary, this completes the proof for $1<p\leq 2$.

Fix $2\leq p<\infty$. 
 For this choice of $p$, we wish to prove \eqref{EqnPropPfSobVVCaseI} in Case I, and \eqref{EqnPropPfSobVVCaseII} in Case II.
Let $\frac{1}{p}+\frac{1}{q}=1$.
The dual of $\Lplqnu{p}{2}$ is $\Lplqnu{q}{2}$.
Let $\sT_{k_2,\ldots, k_K}^{*}$ denote the adjoint of $\sT_{k_2,\ldots, k_K}$, and define
\begin{equation*}
\sR_{k_2,\ldots, k_K}\{f_j \}_{j\in \N^{\nu}}:= \q\{ \q(T_{j}^K\w)^{*} \q(T_{j+k_2}^{K-1}\w)^{*} \cdots \q(T_{j+k_K}^1\w)^{*} f_j \w\}_{j\in \N^{\nu}}.
\end{equation*}
Proposition \ref{PropPfAdjEachScale} combined with Proposition \ref{PropPfAdjGamma} shows that $\sR_{k_2,\ldots, k_K}$ is of the same
form as $\sT_{k_2,\ldots, k_K}$ and by applying the result for $q$ (which we have already proved since $1<q\leq 2$), we have in Case I there exists $\epsilon_p>0$, and in Case II for all $L\in \N$,
\begin{equation}\label{EqnPfAdjVVBoundsR}
\LplqOpN{q}{2}{\sR_{k_2,\ldots, k_K}}
\lesssim
\begin{cases}
2^{-\epsilon_p (|k_2|+\cdots+|k_K|)} & \text{ in Case I},\\
2^{- L (|k_2|+\cdots+|k_K|)} &\text{ in Case II}.
\end{cases}
\end{equation}
Consider, (using the convention that $f_j=0$ for $j\in \Z^{\nu}\setminus \N^{\nu}$),
\begin{equation*}
\begin{split}
&\LplqnuN{q}{2}{\sT_{k_2,\ldots,k_K}^{*} \{f_j\}_{j\in \N^{\nu}}} 
\\&= \LplqnuN{q}{2}{\q\{ \q(T_{j+k_K}^K\w)^{*} \q(T_{j+k_{K-1}}^{K-1}\w)^{*}\cdots \q( T_{j+k_2}^2 \w)^{*} \q(T_j^1\w)^{*} f_j \w\}_{j\in \N^{\nu}}}
\\& =\LplqnuN{q}{2}{ \q\{ \q(T_{j}^K\w)^{*} \q(T_{j+k_{K-1}-k_{K}}^{K-1}\w)^{*}\cdots \q( T_{j+k_2-k_K}^2 \w)^{*} \q(T_{j-k_K}^1\w)^{*} f_{j-k_K} \w\}_{j\in \N^{\nu}} }
\\& = \LplqnuN{q}{2}{ \sR_{k_{K-1}-k_K,k_{K-2}-k_K,\ldots, k_{2}-k_K,-k_K } \{f_{j-k_K} \}_{j\in \N^{\nu}}}
\\&\lesssim
\begin{cases}
2^{-\epsilon_p (|k_{K-1}-k_{K}|+\cdots+|k_2-k_{K}|+|k_K|)} \LplqnuN{q}{2}{\{f_j\}_{j\in \N^{\nu}}} & \text{ in Case I},\\
2^{- L (|k_{K-1}-k_{K}|+\cdots+|k_2-k_{K}|+|k_K|)} \LplqnuN{q}{2}{\{f_j\}_{j\in \N^{\nu}}}&\text{ in Case II}.
\end{cases}
\\&\lesssim
\begin{cases}
2^{-\epsilon_p/2 (|k_{2}|+\cdots+|k_K|)} \LplqnuN{q}{2}{\{f_j\}_{j\in \N^{\nu}}}& \text{ in Case I},\\
2^{- L/2 (|k_{2}|+\cdots+|k_K|)} \LplqnuN{q}{2}{\{f_j\}_{j\in \N^{\nu}}}&\text{ in Case II}.
\end{cases}
\end{split}
\end{equation*}
Hence,
\begin{equation*}
\LplqnuOpN{q}{2}{\sT_{k_2,\ldots,k_K}^{*}}\lesssim
\begin{cases}
2^{-\epsilon_p/2 (|k_{2}|+\cdots+|k_K|)} & \text{ in Case I},\\
2^{- L/2 (|k_{2}|+\cdots+|k_K|)} &\text{ in Case II}.
\end{cases}
\end{equation*}
Therefore,
\begin{equation*}
\LplqnuOpN{p}{2}{\sT_{k_2,\ldots,k_K}}\lesssim
\begin{cases}
2^{-\epsilon_p/2 (|k_{2}|+\cdots+|k_K|)} & \text{ in Case I},\\
2^{- L/2 (|k_{2}|+\cdots+|k_K|)} &\text{ in Case II},
\end{cases}
\end{equation*}
as desired, completing the proof.
\end{proof}

\begin{lemma}\label{LemmaPfSobTwoToOne}
Let $\fD=\q(\nu,(\gamma,e,N,\Omega,\Omega'''), a, \eta,\{\vsig_j\}_{j\in \N^{\nu}}, \psi\w)$ be Sobolev data on $\Omega'$.
Define $D_j=D_j(\fD)$ by \eqref{EqnResSobDefnDjNew}.  We separate our assumptions into two cases:
\begin{enumerate}[\bf{Case} I]
\item $\fD$ is finitely generated on $\Omega'$.
\item $\fD$ is linearly finitely generated on $\Omega'$.
\end{enumerate}
Then, if $a>0$ is sufficiently small (depending on $(\gamma,e,N)$), we have for $1<p<\infty$,
\begin{itemize}
\item In Case I, there exists $\epsilon=\epsilon(p,(\gamma,e,N))>0$ and $A=A(\fD,p)$ such that for all $\delta\in \R^{\nu}$, $\delta_0\in \R$ with $|\delta|,\delta_0<\epsilon$,
we have for $f\in C_0^\infty(\Omega_0),$
\begin{equation}\label{EqnPfSobTwoToOneBoundI}
\sum_{k\in \Z^{\nu}} \LpN{p}{\q( \sum_{j\in \N^{\nu}} \q| 2^{j\cdot \delta+\delta_0|k|} D_j D_{j+k} f \w|^2 \w)^{\frac{1}{2}} }\leq A \NLpN{p}{\delta}{\fD}{f}.
\end{equation}

\item In Case II, for every $\delta\in \R^{\nu}$, $\delta_0\in \R$, there exists $A=A(\fD,p,\delta,\delta_0)$ such that for $f\in C_0^\infty(\Omega_0),$
\begin{equation}\label{EqnPfSobTwoToOneBoundII}
\sum_{k\in \Z^{\nu}} \LpN{p}{\q( \sum_{j\in \N^{\nu}} \q| 2^{j\cdot \delta+\delta_0|k|} D_j D_{j+k} f \w|^2 \w)^{\frac{1}{2}} }\leq A \NLpN{p}{\delta}{\fD}{f}.
\end{equation}
\end{itemize}
\end{lemma}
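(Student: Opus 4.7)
The strategy is to combine a Calder\'on-type reproducing identity for $\fD$ with the vector-valued almost-orthogonality from Proposition~\ref{PropPfSobVVOps} in its three-factor ($K=3$) form. The obstruction to the naive two-factor approach is that once we write $D_jD_{j+k}f = D_j\cdot(D_{j+k}f)$ and replace $D_{j+k}f$ by $2^{-(j+k)\cdot\delta}g_{j+k}$ (where $g_m:=2^{m\cdot\delta}D_mf$), only \emph{one} factor of $D$ remains, so Proposition~\ref{PropPfSobVVOps} yields no decay in $|k|$; the almost-orthogonality that produces the decay requires \emph{two} factors $D_j,D_{j+k}$ to act on a common function, which forces us to produce a third operator $\Dt_{j+l}$ expressing $f$ in terms of $\{g_m\}$.

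Accordingly, the first step is to invoke an auxiliary Sobolev datum $\fDt$ whose underlying parameterization is finitely generated (resp.\ linearly finitely generated) by the same finite set $\sF$ as $\fD$, with associated operators $\Dt_m:=D_m(\fDt)$ satisfying
\begin{equation*}
\sum_{m\in\N^{\nu}}\Dt_m D_m f = f,\qquad f\in C_0^{\infty}(\Omega_0).
\end{equation*}
This is the multi-parameter analogue of the reproducing identity underlying Proposition~\ref{PropPfsLp}; in our non-isotropic, non-translation-invariant setting it is built along the lines of Section~2.15 of \cite{StreetMultiParamSingInt}, for instance by inverting $\sum_m D_m^*D_m$ on $L^p$ via a Cotlar--Stein/Neumann-series argument and absorbing the perturbation into $\Dt_m$, verifying en route that $\Dt_m$ indeed comes from Sobolev data controlled by $\sF$. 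This is the main technical hurdle; once it is available, the rest of the proof is essentially mechanical.

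Granted the identity, write $D_m f = 2^{-m\cdot\delta}g_m$ and substitute $m=j+l$ to obtain the pointwise expansion
\begin{equation*}
2^{j\cdot\delta+\delta_0|k|}D_jD_{j+k}f = \sum_{l\in\Z^{\nu}} 2^{\delta_0|k|-l\cdot\delta}\,D_jD_{j+k}\Dt_{j+l}\,g_{j+l},
\end{equation*}
with the convention that entries vanish when the index leaves $\N^{\nu}$. The vector-valued operator $\sT_{k,l}\{h_j\}_j:=\{D_jD_{j+k}\Dt_{j+l}h_j\}_j$ fits Proposition~\ref{PropPfSobVVOps} with $K=3$, $T^1_j=D_j$, $T^2_{j+k}=D_{j+k}$, $T^3_{j+l}=\Dt_{j+l}$, so $\LplqnuOpN{p}{2}{\sT_{k,l}}\lesssim 2^{-\epsilon_p(|k|+|l|)}$ in Case~I and $\LplqnuOpN{p}{2}{\sT_{k,l}}\lesssim_L 2^{-L(|k|+|l|)}$ for any $L\in\N$ in Case~II. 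Taking $\Lplqnu{p}{2}$ norms, using Minkowski on the $l$-sum, the operator bound on $\sT_{k,l}$, and the trivial inequality $\LplqnuN{p}{2}{\{g_{j+l}\}_j}\leq \LplqnuN{p}{2}{\{g_m\}_m}=\NLpN{p}{\delta}{\fD}{f}$, and finally summing over $k$, one reduces the lemma to convergence of the geometric double sum $\sum_{k,l\in\Z^{\nu}}2^{\delta_0|k|-l\cdot\delta-\epsilon(|k|+|l|)}$.

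This double sum converges provided $\delta_0<\epsilon$ and $|\delta|<\epsilon$ (in the coordinate-wise sense). In Case~I one takes $\epsilon=\epsilon_p$ and declares the threshold in the lemma to be (say) $\epsilon_p/2$, giving \eqref{EqnPfSobTwoToOneBoundI}. In Case~II, given any prescribed $\delta\in\R^{\nu}$ and $\delta_0\in\R$, one simply chooses $L$ larger than $|\delta_0|+|\delta|+1$ and the same computation yields \eqref{EqnPfSobTwoToOneBoundII} with $A$ depending on $\delta,\delta_0$ through the choice of $L$. The crucial structural point is that the substitution $m=j+l$ converts the absolute index $m$ of the reproducing formula into a relative shift $l$ from $j$, which is exactly the shape on which Proposition~\ref{PropPfSobVVOps} delivers Cotlar--Stein decay.
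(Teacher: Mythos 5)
Your approach has a genuine gap. You reduce the lemma to constructing an exact reproducing family $\{\Dt_m\}$ with $\sum_m \Dt_m D_m f = f$ and with each $\Dt_m$ of the form covered by Proposition~\ref{PropPfSobVVOps} (i.e., arising from a parameterization finitely generated by the same $\sF$, with Schwartz-bump density and appropriate cancellation). You label this "the main technical hurdle" and defer it, but it is not a peripheral detail: producing such a family --- and verifying that a Neumann-series parametrix for $\sum_m D_m^* D_m$ stays inside the rigid class of operators to which the almost-orthogonality machinery applies --- is at least as hard as the lemma itself. In the multi-parameter, non-translation-invariant setting there is no off-the-shelf Calder\'on reproducing formula of the required type, and the paper does not build one here.

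The paper sidesteps this entirely. The only reproducing identity it uses is the trivial one $\sum_j D_j = \psi^2$, which holds exactly because $\sum_j \vsig_j = \delta_0$ is built into the definition of Sobolev data. Writing $f = \psi^4 f$ produces \emph{four} factors $D_j D_{j+k_1} D_{j+k_2} D_{j+k_3}$, and the resulting triple sum over $k_1,k_2,k_3$ is split into three regions. Two of them are handled by the three-factor Cotlar--Stein bound (your $\sR^1$). The troublesome region is $|k_3|$ large but $|k_2|$ small, where the three-factor decay is not enough to beat $2^{\delta_0|k_1|}$; there the paper instead applies the two-factor bound to $D_j D_{j+k_1}$ and observes that what remains, $D_{j+k_2}D_{j+k_3}f$, is again of the form $D_m D_{m+l}f$, so the contribution of this region is a small multiple $C\,2^{-M\epsilon_p/(64\nu)}$ of the \emph{very quantity being estimated}. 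Choosing $M$ large makes this multiple $<1/2$ and one absorbs it into the left-hand side. This bootstrap is the key idea your proposal is missing: it replaces the need for an exact inverse family $\{\Dt_m\}$ by a self-referential absorption, and it is what lets the proof close without any parametrix construction.
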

\begin{proof}
First we prove the result in Case I, then we indicate the modifications necessary to prove the result in Case II.  
Let $p\in (1,\infty)$.  
We define two families vector valued operators: for $k_1,k_2\in \Z^{\nu}$ define
\begin{equation*}
\sR_{k_1,k_2}^1 \{f_j\}_{j\in \N^{\nu}} := \{ D_j D_{j+k_1} D_{j+k_2} f_j \}_{j\in \N^{\nu}}, \quad \sR_{k_1}^2\{f_j\}_{j\in \N^{\nu}}:=\{ D_j D_{j+k_1} f_j\}_{j\in \N^{\nu}}.
\end{equation*}
Case I of Proposition \ref{PropPfSobVVOps} shows that there exists $\epsilon_p>0$ with
\begin{equation}\label{EqnPfSobApplVVBound}
\begin{split}
&\LplqnuOpN{p}{2}{\sR_{k_1,k_2}^1}\lesssim 2^{-\epsilon_p (|k_1|+|k_2|)}, 
\\& \LplqnuOpN{p}{2}{\sR_{k_1}^2}\lesssim 2^{-\epsilon_p|k_1|}.
\end{split}
\end{equation}
Here we have replaced $\Omega_0$ in Proposition \ref{PropPfSobVVOps} with a larger open set $\Omega_0'\Subset \Omega'$
so that $\supp{\psi}\Subset \Omega_0'$.

We prove \eqref{EqnPfSobTwoToOneBoundI} for $\delta\in \R^{\nu}$, $\delta_0\in \R$ with $|\delta|\leq \epsilon_p/8$ and $\delta_0\leq \epsilon_p/4$;
and the result will follow.
In fact, we prove the result for $\delta_0=\epsilon_p/4$ and $|\delta|\leq \epsilon_p/8$, as then the result follows for all smaller $\delta_0$ as well.
Thus, for the rest of the proof of Case I, take $\delta_0=\epsilon_p/4$ and $|\delta|\leq \epsilon_p/8$.

Fix $M\in \N$ to be chosen later.\footnote{In the following inequalities $A\lesssim B$ means $A\leq CB$ where $C$ is allowed
to depend on $p$ and $\fD$, but not on $M$ or the function $f$ under consideration.}
Consider, for $f\in C_0^\infty(\Omega_0)$, using that $\psi f = f$ and $\sum_{j} D_j = \psi^2$,
\begin{equation*}
\begin{split}
&\sum_{k\in \Z^{\nu}} \LpN{p}{\q( \sum_{j\in \N^{\nu}} \q| 2^{j\cdot \delta+(\epsilon_p/4)|k|} D_j D_{j+k} f \w|^2 \w)^{\frac{1}{2}} } 
\\&= \sum_{k\in \Z^{\nu}} \LpN{p}{\q( \sum_{j\in \N^{\nu}} \q| 2^{j\cdot \delta+(\epsilon_p/4)|k|} D_j D_{j+k} \psi^4 f \w|^2 \w)^{\frac{1}{2}} } 
\\& = \sum_{k_1\in \Z^{\nu}} \LpN{p}{\q( \sum_{j\in \N^{\nu}} \q| \sum_{k_2,k_3\in \Z^\nu} 2^{j\cdot \delta+(\epsilon_p/4)|k_1|} D_j D_{j+k_1} D_{j+k_2} D_{j+k_3} f \w|^2 \w)^{\frac{1}{2}} }
\\&\leq \sum_{k_1,k_2,k_3\in \Z^{\nu}} \LpN{p}{\q( \sum_{j\in \N^{\nu}} \q|  2^{j\cdot \delta+(\epsilon_p/4)|k_1|} D_j D_{j+k_1} D_{j+k_2} D_{j+k_3} f \w|^2 \w)^{\frac{1}{2}} }
\\&= \sum_{\substack{k_1,k_2,k_3\in \Z^{\nu} \\ |k_3|\leq M }}  + 
\sum_{\substack{k_1,k_2,k_3\in \Z^{\nu} \\ |k_3|> M \\ |k_2|\geq |k_3|/2 }}  + 
\sum_{\substack{k_1,k_2,k_3\in \Z^{\nu} \\ |k_3|> M \\ |k_3|> 2|k_2| }}
=:(I)+(II)+(III).
\end{split}
\end{equation*}

Set
\begin{equation*}
\sT_{k_1,k_2,k_3}^1 := 2^{-k_3\cdot \delta+(\epsilon_p/4)|k_1|} \sR_{k_1,k_2}^1, \quad \sT_{k_1,k_2,k_3}^2:= 2^{-k_2\cdot \delta+(\epsilon_p/4)|k_1|-(\epsilon_p/4)|k_3-k_2|} \sR_{k_1}^2.
\end{equation*}

We begin with $(I)$.  Using \eqref{EqnPfSobApplVVBound}, we have
\begin{equation*}
\begin{split}
&\LplqnuOpN{p}{2}{\sT_{k_1,k_2,k_3}^1}= 2^{-k_3\cdot \delta+(\epsilon_p/4)|k_1|} \LplqnuOpN{p}{2}{\sR_{k_1,k_2}^1}
\\&\lesssim 2^{-k_3\cdot\delta+(\epsilon_p/4)|k_1|-\epsilon_p(|k_1|+|k_2|)}\leq 2^{|k_3| |\delta| - (3/4)\epsilon_p (|k_1|+|k_2|)}.
\end{split}
\end{equation*}
Thus,
\begin{equation*}
\begin{split}
(I)&=\sum_{\substack{k_1,k_2,k_3\in \Z^{\nu} \\ |k_3|\leq M }} \LpN{p}{\q( \sum_{j\in \N^{\nu}} \q|  2^{j\cdot \delta+(\epsilon_p/4)|k_1|} D_j D_{j+k_1} D_{j+k_2} D_{j+k_3} f \w|^2 \w)^{\frac{1}{2}} }
\\&=\sum_{\substack{k_1,k_2,k_3\in \Z^{\nu} \\ |k_3|\leq M }}\LplqnuN{p}{2}{ \sT_{k_1,k_2,k_3}^1 \q\{ 2^{(j+k_3)\cdot \delta} D_{j+k_3}f\w\}_{j\in \N^{\nu}}}
\\&\lesssim \sum_{\substack{k_1,k_2,k_3\in \Z^{\nu} \\ |k_3|\leq M }} 2^{|k_3||\delta|- (3/4)\epsilon_p (|k_1|+|k_2|)} \LplqnuN{p}{2}{\{2^{j\cdot \delta} D_j f\}_{j\in \N^{\nu}}}
\\&\lesssim 2^{\nu|\delta|M} \NLpN{p}{\delta}{\fD}{f}. 
\end{split}
\end{equation*}

We now turn to $(II)$.  For $(II)$, we restrict attention to $|k_2|\geq |k_3|/2$.  With this restriction, \eqref{EqnPfSobApplVVBound} shows
\begin{equation*}
\begin{split}
&\LplqnuOpN{p}{2}{\sT_{k_1,k_2,k_3}^1} \lesssim 2^{-k_3\cdot \delta + (\epsilon_p/4)|k_1|-\epsilon_p(|k_1|+|k_2|)}
\\&\leq 2^{|k_3|  |\delta|+ (\epsilon_p/4)|k_1|-\epsilon_p|k_1|-(\epsilon_p/4)|k_2|-(3\epsilon_p/8)|k_3|}
\leq 2^{-(\epsilon_p/4)(|k_1|+|k_2|+|k_3|)},
\end{split}
\end{equation*}
where in the last line we have used $|\delta|\leq \epsilon_p/8$.  Thus,
\begin{equation*}
\begin{split}
(II) &=\sum_{\substack{k_1,k_2,k_3\in \Z^{\nu} \\ |k_3|> M \\ |k_2|\geq |k_3|/2 }}\LpN{p}{\q( \sum_{j\in \N^{\nu}} \q|  2^{j\cdot \delta+(\epsilon_p/4)|k_1|} D_j D_{j+k_1} D_{j+k_2} D_{j+k_3} f \w|^2 \w)^{\frac{1}{2}} }
\\&=\sum_{\substack{k_1,k_2,k_3\in \Z^{\nu} \\ |k_3|> M \\ |k_2|\geq |k_3|/2 }} \LplqnuN{p}{2}{\sT_{k_1,k_2,k_3}^1 \q\{ 2^{(j+k_3)\cdot \delta} D_{j+k_3} f\w\}_{j\in \N^{\nu}}}
\\&\lesssim \sum_{\substack{k_1,k_2,k_3\in \Z^{\nu} \\ |k_3|> M \\ |k_2|\geq |k_3|/2 }}  2^{-(\epsilon_p/4)(|k_1|+|k_2|+|k_3|)} \LplqnuN{p}{2}{\{ 2^{j\cdot \delta} D_j f\}_{j\in \N^{\nu}}}
\\&\lesssim  \NLpN{p}{\delta}{\fD}{f}.
\end{split}
\end{equation*}

We turn to $(III)$.  For $(III)$ we restrict attention to $|k_3|>2|k_2|$.  With this restriction, \eqref{EqnPfSobApplVVBound} shows (using $|\delta|\leq \epsilon_p/8$)
\begin{equation*}
\begin{split}
&\LplqnuOpN{p}{2}{\sT_{k_1,k_2,k_3}^2} \lesssim 2^{-k_2\cdot \delta+(\epsilon_p/4)|k_1|-(\epsilon_p/4)|k_3-k_2|-\epsilon_p|k_1|}
\\&\leq 2^{|k_2||\delta| -(3\epsilon_p/4)|k_1| - (\epsilon_p/8)|k_3|} \leq 2^{(\epsilon_p/16)|k_3| -(3\epsilon_p/4)|k_1|-(\epsilon_p/8)|k_3|}
\\&\leq 2^{-(3\epsilon_p/4)|k_1|-(\epsilon_p/16)|k_3|} \leq 2^{-(\epsilon_p/64) (|k_1|+|k_2|+|k_3|)}.
\end{split}
\end{equation*}
Thus,
\begin{equation*}
\begin{split}
&(III) =\sum_{\substack{k_1,k_2,k_3\in \Z^{\nu} \\ |k_3|> M \\ |k_3|> 2|k_2| }}\LpN{p}{\q( \sum_{j\in \N^{\nu}} \q|  2^{j\cdot \delta+(\epsilon_p/4)|k_1|} D_j D_{j+k_1} D_{j+k_2} D_{j+k_3} f \w|^2 \w)^{\frac{1}{2}} }
\\&=\sum_{\substack{k_1,k_2,k_3\in \Z^{\nu} \\ |k_3|> M \\ |k_3|> 2|k_2| }} \LplqnuN{p}{2}{\sT_{k_1,k_2,k_3}^2 \q\{ 2^{ (j+k_2)\cdot \delta +(\epsilon_p/4)|k_3-k_2| } D_{j+k_2} D_{j+k_3} f   \w\}_{j\in \N^{\nu}} }
\\&\lesssim \sum_{\substack{k_1,k_2,k_3\in \Z^{\nu} \\ |k_3|> M \\ |k_3|> 2|k_2| }}2^{-\frac{\epsilon_p}{64} (|k_1|+|k_2|+|k_3|)} \LpN{p}{ \q(\sum_{j\in \N^{\nu}} \q|  2^{(j+k_2)\cdot \delta +(\epsilon_p/4)|k_3-k_2| } D_{j+k_2} D_{j+k_3} f\w|^2\w)^{\frac{1}{2}}  }
\\&\leq \sum_{\substack{k_1,k_2,k_3\in \Z^{\nu} \\ |k_3|> M \\ |k_3|> 2|k_2| }}2^{-\frac{\epsilon_p}{64} (|k_1|+|k_2|+|k_3|)} \LpN{p}{ \q(\sum_{j\in \N^{\nu}} \q|  2^{j\cdot \delta +(\epsilon_p/4)|k_3-k_2| } D_{j} D_{j+k_3-k_2} f\w|^2\w)^{\frac{1}{2}}  }
\\&\leq \sum_{\substack{k_1,k_2,k_3\in \Z^{\nu} \\ |k_3|> M  }}2^{-\frac{\epsilon_p}{64} (|k_1|+|k_3|)} \LpN{p}{ \q(\sum_{j\in \N^{\nu}} \q|  2^{j\cdot \delta +(\epsilon_p/4)|k_3-k_2| } D_{j} D_{j+k_3-k_2} f\w|^2\w)^{\frac{1}{2}}  }
\\&= \sum_{\substack{k_1,k_2',k_3\in \Z^{\nu} \\ |k_3|> M  }}2^{-\frac{\epsilon_p}{64} (|k_1|+|k_3|)} \LpN{p}{ \q(\sum_{j\in \N^{\nu}} \q|  2^{j\cdot \delta +(\epsilon_p/4)|k_2'| } D_{j} D_{j+k_2'} f\w|^2\w)^{\frac{1}{2}}  }
\\&\lesssim 2^{-M\epsilon_p/(64\nu)}\sum_{k_2'\in \Z^{\nu}} \LpN{p}{ \q(\sum_{j\in \N^{\nu}} \q|  2^{j\cdot \delta +(\epsilon_p/4)|k_2'| } D_{j} D_{j+k_2'} f\w|^2\w)^{\frac{1}{2}}  },
\end{split}
\end{equation*}
where in the second to last line we have set $k_2'=k_2-k_3$ in the summation in $k_2$, and in the last line we have summed in $k_1,k_3$ using the restriction $|k_3|>M$.

Combining the above estimates, we have that there exists a constant $C$ which is independent of $f$ and $M$ such that
\begin{equation*}
\begin{split}
&\sum_{k\in \Z^{\nu}} \LpN{p}{\q( \sum_{j\in \N^{\nu}} \q| 2^{j\cdot \delta+\delta_0|k|} D_j D_{j+k} f \w|^2 \w)^{\frac{1}{2}} }
=(I)+(II)+(III)
\\&\leq C2^{\nu|\delta|M} \NLpN{p}{\delta}{\fD}{f} + C2^{-M\epsilon_p/(64\nu)} \sum_{k\in \Z^{\nu}} \LpN{p}{\q( \sum_{j\in \N^{\nu}} \q| 2^{j\cdot \delta+\delta_0|k|} D_j D_{j+k} f \w|^2 \w)^{\frac{1}{2}} }
\end{split}
\end{equation*}
Taking $M$ so large $C2^{-M\epsilon_p/(64\nu)}\leq \frac{1}{2}$ we have
\begin{equation*}
\sum_{k\in \Z^{\nu}} \LpN{p}{\q( \sum_{j\in \N^{\nu}} \q| 2^{j\cdot \delta+\delta_0|k|} D_j D_{j+k} f \w|^2 \w)^{\frac{1}{2}} } \leq 2C2^{\nu|\delta|M} \NLpN{p}{\delta}{\fD}{f}.
\end{equation*}
This completes the proof in Case I.

We now turn to Case II.  In the proof in Case I, we proved the result for $|\delta|\leq \epsilon_p/8$ and $\delta_0\leq \epsilon_p/4$, where $\epsilon_p$
was chosen so that \eqref{EqnPfSobApplVVBound} held.  In Case II, Case II of Proposition \ref{PropPfSobVVOps} shows that
\eqref{EqnPfSobApplVVBound} holds for all $\epsilon_p\in (0,\infty)$.  Thus, the same proof applies to show that \eqref{EqnPfSobTwoToOneBoundII}
holds for all $\delta\in \R^{\nu}$, $\delta_0\in \R$, as desired.
\end{proof}

\begin{proof}[Proof of Theorem \ref{ThmResSobWellDefNew}]
First we prove the result for Case I, then we indicate the modifications necessary to prove the result in Case II.
Let $p\in (1,\infty)$, and let $\fD$ and $\fDt$ be as in the statement of the theorem.
For $j\in \N^{\nu}$ define $D_j=D_j(\fD)$ and $\Dt_j=\Dt_j(\fDt)$ by \eqref{EqnResSobDefnDjNew}.
Our goal is to show that there is an $\epsilon=\epsilon(p,(\gamma,e,N),(\gammat,\et,\Nt))>0$ such that
for $\delta\in \R^{\nu}$ with $|\delta|<\epsilon$,
\begin{equation*}
\NLpN{p}{\delta}{\fD}{f}\approx \NLpN{p}{\delta}{\fDt}{f}, \quad f\in C_0^\infty(\Omega_0).
\end{equation*}
Because the problem is symmetric in $\fD$ and $\fDt$ it suffices to show that 
there exists $\epsilon=\epsilon(p,(\gamma,e,N),(\gammat,\et,\Nt))>0$ such that
for $\delta\in \R^{\nu}$ with $|\delta|<\epsilon$,
\begin{equation*}
\NLpN{p}{\delta}{\fDt}{f}\lesssim \NLpN{p}{\delta}{\fD}{f}, \quad f\in C_0^\infty(\Omega_0).
\end{equation*}

For $k_1\in \Z^\nu$, we define a vector valued operator
\begin{equation*}
\sR_{k_1}^1 \{ f_j\}_{j\in \N^{\nu}} := \{ \Dt_j D_{j+k_1} f_j\}_{j\in \N^{\nu}}.
\end{equation*}
Case I of Proposition \ref{PropPfSobVVOps} and Lemma \ref{LemmaPfSobTwoToOne} show that
there exists $\epsilon_p>0$ such that:\footnote{As in the proof of Lemma \ref{LemmaPfSobTwoToOne}, we have
replaced $\Omega_0$ in our application of Proposition \ref{PropPfSobVVOps} with a larger open set $\Omega_0'\Subset\Omega'$,
so that Proposition \ref{PropPfSobVVOps} applies to $\sR_{k_1}^1$.}
\begin{equation}\label{EqnPfSobFirstVV}
\LplqnuOpN{p}{2}{\sR_{k_1}^1}\lesssim 2^{-\epsilon_p |k_1|},
\end{equation}
and if $|\delta|,\delta_0<\epsilon_p$,
\begin{equation}\label{EqnPfSobUseTwoToOne}
\sum_{k\in \Z^{\nu}} \LpN{p}{\q( \sum_{j\in \N^{\nu}} \q| 2^{j\cdot \delta+\delta_0|k|} D_j D_{j+k} f \w|^2 \w)^{\frac{1}{2}} }\leq A \NLpN{p}{\delta}{\fD}{f}, \quad f\in C_0^\infty(\Omega_0).
\end{equation}
We prove the result for $\q|\delta\w|\leq \epsilon_p/4$.

We have, for $f\in C_0^\infty(\Omega_0)$, using that $f=\psi^2 f$ and the convention that $D_k=0$ for $k\in \Z^{\nu}\setminus \N^{\nu}$,
\begin{equation*}
\begin{split}
&\NLpN{p}{\delta}{\fDt}{f} = \NLpN{p}{\delta}{\fDt}{\psi^4 f}
= \LpN{p}{\q( \sum_{j\in \N^{\nu}}\q| 2^{j\cdot \delta} \Dt_j \psi^4 f \w|^2 \w)^{\frac{1}{2} }}
\\&= \LpN{p}{\q( \sum_{j\in \N^{\nu}}\q| \sum_{k_1,k_2\in \Z^{\nu}}2^{j\cdot \delta} \Dt_j D_{j+k_1} D_{j+k_2} f \w|^2 \w)^{\frac{1}{2} }}
\\&\leq \sum_{k_1,k_2\in \Z^{\nu}}\LpN{p}{\q( \sum_{j\in \N^{\nu}}\q| 2^{j\cdot \delta} \Dt_j D_{j+k_1} D_{j+k_2} f \w|^2 \w)^{\frac{1}{2} }}
\\&= \sum_{\substack{k_1,k_2\in \Z^{\nu} \\ |k_1|\geq |k_2|/2}}
+ \sum_{\substack{k_1,k_2\in \Z^{\nu} \\ |k_2|> 2|k_1|}} =:(I)+(II).
\end{split}
\end{equation*}

We begin by bounding $(I)$.  For $k_1,k_2\in \Z^{\nu}$, let $\sT_{k_1}^1 := 2^{-k_2\cdot \delta} \sR_{k_1}^1$.  For $|k_1|\geq |k_2|/2$, we have
by \eqref{EqnPfSobFirstVV} and the fact that $|\delta|\leq \epsilon_p/4$,
\begin{equation}\label{EqnPfSobSecondVV}
\begin{split}
&\LplqnuOpN{p}{2}{\sT_{k_1}^1} = 2^{-k_2\cdot \delta} \LplqnuOpN{p}{2}{\sR_{k_1}^1} 
\\&\lesssim 2^{-k_2\cdot \delta - \epsilon_p|k_1|} 
\leq 2^{2|k_1||\delta| - \epsilon_p|k_1|}
\leq 2^{|k_1|\epsilon_p/2 -\epsilon_p|k_1|}\leq 2^{-\epsilon_p|k_1|/2}.
\end{split}
\end{equation}
Using \eqref{EqnPfSobSecondVV}, we have
\begin{equation*}
\begin{split}
(I)&= \sum_{\substack{k_1,k_2\in \Z^{\nu} \\ |k_1|\geq |k_2|/2}}\LpN{p}{\q( \sum_{j\in \N^{\nu}}\q| 2^{j\cdot \delta} \Dt_j D_{j+k_1} D_{j+k_2} f \w|^2 \w)^{\frac{1}{2} }}
\\&=\sum_{\substack{k_1,k_2\in \Z^{\nu} \\ |k_1|\geq |k_2|/2}} \LplqnuN{p}{2}{\sT_{k_1}^1 \{ 2^{(j+k_2)\cdot \delta} D_{j+k_2} f \}_{j\in \N^{\nu}} }
\\&\lesssim \sum_{\substack{k_1,k_2\in \Z^{\nu} \\ |k_1|\geq |k_2|/2}} 2^{-\epsilon_p |k_1|/2} \LplqnuN{p}{2}{ \{ 2^{j\cdot \delta} D_j f\}_{j\in \N^{\nu}}}
\\&\lesssim \sum_{\substack{k_1,k_2\in \Z^{\nu} \\ |k_1|\geq |k_2|/2}} 2^{-\epsilon_p |k_1|/4 - \epsilon_p|k_2|/8} \LplqnuN{p}{2}{ \{2^{j\cdot \delta} D_j f\}_{j\in \N^{\nu}}}
\\&\lesssim \LplqnuN{p}{2}{\{2^{j\cdot \delta} D_j f\}_{j\in \N^{\nu}}} = \NLpN{p}{\delta}{\fD}{f}.
\end{split}
\end{equation*}

We now turn to $(II)$.  Define a vector valued operator $\sR \{f_j\}_{j\in \N^{\nu}} := \{\Dt_j f_j \}_{j\in \N^{\nu}}$.
Proposition \ref{PropPfSobVVOps} shows $\LplqnuOpN{p}{2}{\sR}\lesssim 1$.
\begin{equation*}
\begin{split}
&(II)=\sum_{\substack{k_1,k_2\in \Z^{\nu} \\ |k_2|> 2|k_1|}}\LpN{p}{\q( \sum_{j\in \N^{\nu}}\q| 2^{j\cdot \delta} \Dt_j D_{j+k_1} D_{j+k_2} f \w|^2 \w)^{\frac{1}{2} }}
\\&=\sum_{\substack{k_1,k_2\in \Z^{\nu} \\ |k_2|> 2|k_1|}} 2^{-k_1\cdot \delta - 3\frac{\epsilon_p}{4} |k_2-k_1|} \LplqnuN{p}{2}{\sR \{ 2^{(j+k_1)\cdot \delta +3\frac{\epsilon_p}{4}  |k_2-k_1|} D_{j+k_1} D_{j+k_2} f \}_{j\in \N^{\nu}}}
\\&\lesssim \sum_{\substack{k_1,k_2\in \Z^{\nu} \\ |k_2|> 2|k_1|}} 2^{-k_1\cdot \delta - 3\frac{\epsilon_p}{4}  |k_2-k_1|} \LplqnuN{p}{2}{\{ 2^{(j+k_1)\cdot \delta +3\frac{\epsilon_p}{4} |k_2-k_1|} D_{j+k_1} D_{j+k_2} f \}_{j\in \N^{\nu}}}
\\&\leq \sum_{\substack{k_1,k_2\in \Z^{\nu} \\ |k_2|> 2|k_1|}} 2^{|k_1||\delta| - 3\frac{\epsilon_p}{4}  |k_2-k_1|} \LplqnuN{p}{2}{ \{ 2^{j\cdot \delta +3\frac{\epsilon_p}{4}  |k_2-k_1| }  D_j D_{j+k_2-k_1} f\}_{j\in \N^{\nu}}}
\\&\leq \sum_{\substack{k_1,k_2\in \Z^{\nu} \\ |k_2|> 2|k_1|}} 2^{\frac{\epsilon_p}{4}|k_1| - \frac{\epsilon_p}{2}|k_1| - \frac{\epsilon_p}{8}|k_2|}  \LplqnuN{p}{2}{ \{ 2^{j\cdot \delta +3\frac{\epsilon_p}{4} |k_2-k_1| }  D_j D_{j+k_2-k_1} f\}_{j\in \N^{\nu}}}
\\&\lesssim \sum_{l\in \Z^{\nu}} \LplqnuN{p}{2}{ \{ 2^{j\cdot \delta + 3\frac{\epsilon_p}{4}  |l| }D_j D_{j+l} f\}_{j\in \N^{\nu}}}
\lesssim \NLpN{p}{\delta}{\fD}{f},
\end{split}
\end{equation*}
where the last inequality follows from \eqref{EqnPfSobUseTwoToOne}.  Combining the above estimates completes the proof in Case I.

We now turn to Case II.  The proof above in Case I worked for $|\delta|\leq \epsilon_p/4$, where $\epsilon_p$ was such that
\eqref{EqnPfSobFirstVV} and \eqref{EqnPfSobUseTwoToOne} held.  Under the assumptions in Case II,
Case II of Proposition \ref{PropPfSobVVOps} and Lemma \ref{LemmaPfSobTwoToOne} show that
these equations hold for all $\epsilon_p\in (0,\infty)$.  The result for Case II therefore follows from the same proof.
\end{proof}

We now turn to the proof of Proposition \ref{PropResSobLp}.  Let $\fD$ be Sobolev data.  In light of Theorem \ref{ThmResSobWellDefNew},
it suffices to prove Proposition \ref{PropResSobLp} for any Sobolev data $\fDt$ such that $\fD$ and $\fDt$ are finitely generated by the same $\sF$ on $\Omega'$.
The next lemma helps us pick out a choice of Sobolev data which is convenient for proving Proposition \ref{PropResSobLp}.  In it, we
use the notation $\pi_{\mu}$ from Definition \ref{DefnPfLtTechApOPiSigma}.

\begin{lemma}\label{LemmaPfSobChoiceOfGamma}
Let $\sS\subseteq \snuvectone$ be such that $\sL(\sS)$ is finitely generated by $\sF\subset \snuvect$ on $\Omega'$.
For each $\mu\in \nuset$, enumerate
$$\pi_{\mu} \sF = \{ (X_1^\mu, d_1^\mu),\ldots, (X_{q_{\mu}}^{\mu}, d_{q_\mu}^\mu) \}\subset \snuvectone.$$
Let $\hd_j^{\mu}=|d_j^\mu|_1$.  For each $\mu$, and $t_\mu=(t_{\mu,1},\ldots, t_{\mu,q_{\mu}})\in \R^{q_{\mu}}$, define
\begin{equation}\label{EqnPfSobDefinemuParam}
\gamma_{t_{\mu}}^\mu(x) := e^{ t_{\mu,1} X_{1}^{\mu} + \cdots + t_{\mu, q_{\mu}} X_{q_{\mu}}^{\mu}} x.
\end{equation}
Define single parameter dilations (denoted by $\hd^{\mu}$) on $\R^{q_\mu}$ by, for $j_\mu\in \R$,
\begin{equation*}
2^{-j_{\mu}} t_{\mu} = (2^{-j_\mu \hd_1^{\mu}} t_{\mu,1},\ldots, 2^{-j_\mu \hd_{q_\mu}^{\mu}} t_{q_\mu,\mu}).
\end{equation*}
Thus, we have a parameterization $(\gamma^{\mu}, \hd^{\mu}, q_\mu)$.
Let $q=q_{1}+\cdots+q_{\nu}$.  For $t=(t_1,\ldots, t_\nu)\in \R^{q_1}\times \cdots\times \R^{q_\nu}=\R^{q}$ define
\begin{equation*}
\gamma_t(x) := \gamma_{t_{\nu}}^{\nu}\circ \gamma_{t_{\nu-1}}^{\nu-1}\circ\cdots\circ\gamma_{t_{1}}^1(x).
\end{equation*}
We define $\nu$ parameter dilations on $\R^{q}$, which we denote by $e$, by for $j\in \R^{\nu}$,
\begin{equation*}
2^{-j} (t_1,\ldots, t_\nu) = (2^{-j_1} t_1,\ldots, 2^{-j_\nu} t_\nu),
\end{equation*}
where $2^{-j_\mu} t_\mu$ is defined by the single parameter dilations on $\R^{q_{\mu}}$.
Then, $(\gamma,e,q)$ is finitely generated by $\sF$ on $\Omega'$.
Furthermore, if $\sL(\sS)$ is linearly finitely generated by $\sF\subset \snuvectone$ on $\Omega'$,
then $\gamma$ is linearly finitely generated by $\sF$ on $\Omega'$.
\end{lemma}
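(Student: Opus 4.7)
The plan is to verify, via Definition \ref{DefnResSurfFinitelyGenVf}, that the vector field parameterization $(W,e,q)$ corresponding to the composition $(\gamma,e,q)$ is finitely generated by $\sF$ on $\Omega'$. Because $\gamma^\mu_{t_\mu}=e^{Y^\mu(t_\mu)}$ with $Y^\mu(t_\mu):=\sum_k t_{\mu,k}X_k^\mu$ and $Y^\mu(\epsilon t_\mu)=\epsilon Y^\mu(t_\mu)$, one checks directly that the vector field parameterization of each $\gamma^\mu$ is the linear expression $W^\mu(t_\mu,x)=Y^\mu(t_\mu)(x)$. An induction on $\nu$, applying the chain rule to $\partial_\epsilon\bigl(\gamma_{\epsilon t}\circ\gamma_t^{-1}\bigr)$ at $\epsilon=1$, then yields
\begin{equation*}
W(t,x)=\sum_{\mu=1}^\nu \bigl(\gamma^\nu_{t_\nu}\circ\cdots\circ\gamma^{\mu+1}_{t_{\mu+1}}\bigr)_{*}W^\mu(t_\mu)(x).
\end{equation*}

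Setting all $t_\mu$ but the $t_{\mu_0}$-block to zero collapses this sum to $W^{\mu_0}(t_{\mu_0},x)$, which is linear in $t_{\mu_0}$. Hence the only nonzero Taylor coefficients of $W$ whose degrees lie in $\onecompnu$ are the first-order ones within a single block, each equal to an $X_k^\mu$ with formal degree $d_k^\mu$; the set $\sS'$ of \eqref{EqnResSurfDefnsS} applied to $W$ therefore satisfies $\sL(\sS')=\sL(\sF)$. The hypothesis that $\sL(\sS)$ is finitely generated by $\sF$ on $\Omega'$ forces $\sF$ to satisfy $\sD(\Omega')$ (Remark \ref{RmkResSurfFiniteGenImpliessD}), and Lemma \ref{LemmaResCCsDandControl} then gives that $\sF$ controls $\sL(\sF)=\sL(\sS')$ on $\Omega'$; so $\sL(\sS')$ is finitely generated by $\sF$ on $\Omega'$. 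It remains only to show that $\sF$ controls $(W,e,q)$ on $\Omega'$.

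For this, substitute $\delta t$ into the composition formula and expand each pushforward as $(\gamma^{\mu'}_{\delta t_{\mu'}})_{*}=e^{-\ad{Y^{\mu'}(\delta t_{\mu'})}}$. Because $Y^{\mu'}(\delta t_{\mu'})=\sum_{k'}t_{\mu',k'}\,\delta^{d_{k'}^{\mu'}}X_{k'}^{\mu'}$, each $j$-fold iteration of the adjoint produces monomials $\prod_l t_{\mu_l',k_l'}\,\delta^{\sum_l d_{k_l'}^{\mu_l'}}$ times a $j$-fold iterated bracket of elements of $\sF$ whose multi-parameter formal degree matches the $\delta$-exponent on that monomial. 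Iterating over $\mu'=\mu+1,\ldots,\nu$ and collecting, every term in the full expansion of $W(\delta t,x)$ takes the form (bounded smooth function of $t$)$\,\cdot\,\delta^d\,\cdot\,$(iterated bracket in $\sL(\sF)$ of degree $d$). Since $\sF$ controls $\sL(\sF)$ on $\Omega'$, each such $\delta$-rescaled bracket is a smooth combination of the $\delta^{d_j}X_j$, $(X_j,d_j)\in\sF$, on $\B{X}{d}{x_0}{\tau_1\delta}$, with all $(\delta X)^\alpha\partial_t^\beta$ derivatives bounded uniformly in $\delta\in[0,1]^\nu$; summing (the expansion converges uniformly for $|t|$ and $\delta$ small, by analyticity of each $\gamma^\mu$ in $t_\mu$) gives the coefficient functions required by Definition \ref{DefnResCCControl}. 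For the linearly finitely generated case, since now $\sF\subset\snuvectone$ the set $\sF_0$ of Definition \ref{DefnResSurfLinearlyFinitelyGenerated} applied to $W$ again equals $\sF$; the hypothesis supplies $\sD(\Omega')$, and the preceding argument supplies control of $W$ by $\sF$, so $(\gamma,e,q)$ is linearly finitely generated by $\sF$ on $\Omega'$.

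The hard part will be converting the formal BCH-style expansion of the nested pushforwards into honest uniform-in-$\delta$ bounds on the coefficient functions $c_l^{x_0,\delta}(t,x)$ of Definition \ref{DefnResCCControl}, including all iterated $(\delta X)^\alpha \partial_t^\beta$ derivatives. The essential inputs are (i) analyticity in $t$ of each $\gamma^\mu$, which controls the tail of the series at small scales, and (ii) the fact that iterated brackets in $\sL(\sF)$ are controlled at every scale by $\sF$ with derivative bounds that are admissible in the Frobenius sense of Theorem \ref{ThmQuantFrob}, so that the series can be summed term-by-term into coefficients with the required uniform bounds.
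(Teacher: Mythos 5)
Your identification of the blockwise Taylor structure of $W$ via the chain-rule formula is correct (this is essentially \eqref{EqnPfSobWWithZeros} in the paper), and the deduction that $\sS'$ and $\sF$ generate the same thing goes through, though note $\sL(\sS')$ is merely \emph{equivalent} to $\sF$ on $\Omega'$, not literally equal, when $\sF\subset\snuvect$ can have degrees supported in several components; your subsequent reasoning still works with equivalence. The genuine gap is in your verification that $\sF$ controls $(W,e,q)$. You invoke the BCH-type identity $(\gamma^{\mu'}_{\delta t_{\mu'}})_{*}=e^{\ad{Y^{\mu'}(\delta t_{\mu'})}}$ as an honest convergent series and appeal to ``analyticity of each $\gamma^\mu$ in $t_\mu$.'' But the vector fields $X^\mu_k$ are only $C^\infty$, not real analytic, and the time-$1$ flow of a merely $C^\infty$ vector field is in general \emph{not} analytic in the flow parameter (a $C^\infty$, non-analytic $f$ gives a non-analytic solution to $\dot y=f(y)$). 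Consequently the adjoint series is only an asymptotic expansion of the pushforward; it need not converge, and you cannot ``sum term-by-term'' to produce the coefficients demanded by the control definition. Your own final paragraph identifies this as ``the hard part'' but misdiagnoses the obstruction as deriving derivative bounds, when in fact the series itself is not convergent.

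The paper's proof avoids any series expansion. It uses Proposition \ref{PropFrobControlUnitScaleVsAll} to reduce to uniform control at each scale $2^{-j}$, Proposition $12.6$ of \cite{SteinStreetI} to reduce control of the composition $\gamma_t=\gamma^\nu_{t_\nu}\circ\cdots\circ\gamma^1_{t_1}$ to control of each block $\gamma^\mu$, and then verifies the equivalent characterization $\sQ_2$ (Proposition \ref{PropFrobControlEquivsQ}) rather than $\sQ_1$: in the Frobenius chart $\Phi_{x_0,2^{-j}}$ from Theorem \ref{ThmQuantFrob}, the pullbacks $Y^\mu_l$ of $2^{-j\cdot d^\mu_l}X^\mu_l$ are uniformly $C^\infty$ with the span bound \eqref{EqnFrobYsSpan}, and the flow $\theta^\mu_{t_\mu}(u)=e^{\sum_k t_{\mu,k}Y^\mu_k}u=\Phi^{-1}\circ\gamma^\mu_{2^{-j_\mu}t_\mu}\circ\Phi(u)$ is then uniformly $C^\infty$ by standard ODE estimates, no infinite series involved. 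To salvage your route you would need to truncate the adjoint expansion at a finite order with an explicit remainder and show that the remainder is controlled uniformly in $\delta$, which essentially amounts to redoing the Frobenius/$\sQ_2$ argument, so I recommend switching to that approach.
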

\begin{proof}
First we show that $\sF$ controls $(\gamma,e,q)$ on $\Omega'$.  Using Proposition \ref{PropFrobControlUnitScaleVsAll}, 
it suffices to show that for $j\in [0,\infty]^{\nu}$, 
if $Z:=\{2^{-j\cdot d} X : (X,d)\in \sF\}$ and if $\gammah_t(x):= \gamma_{2^{-j} t}(x)$,
then $Z$ controls $\gammah$ at the unit scale on $\Omega'$, uniformly in $j\in [0,\infty]^{\nu}$.
Set $\gammah_{t_\mu}^{\mu}(x):= \gamma_{2^{-j_\mu} t_\mu}^\mu(x)$.
By Proposition 12.6 of \cite{SteinStreetI}, it suffices to show $Z$ controls $\gammah^{\mu}$ at the unit scale on $\Omega'$,
uniformly in $j\in [0,\infty]^{\nu}$, $\mu\in \nuset$.
Fix $x_0$, we will show $Z$ controls $\gammah$ at the unit scale near $x_0$ uniformly for $x_0\in \Omega'$ and $j\in [0,\infty]^{\nu}$, $\mu\in \nuset$.
Because $\sF$ satisfies $\sD(\Omega')$ (Lemma \ref{LemmaResCCsLFinGenGivessD}), $Z$ satisfies the conditions of Theorem \ref{ThmQuantFrob}
uniformly for $x_0\in \Omega'$, $j\in [0,\infty]^{\nu}$.  Let $\Phi$ be the map associated to $Z$ given by Theorem \ref{ThmQuantFrob},
with this choice of $x_0$.
By Proposition \ref{PropFrobControlEquivsQ}, it suffices to show that $\gammah^{\mu}$ satisfies $\sQ_2$, with parameters independent
of $j\in [0,\infty]^{\nu}$, $x_0\in \Omega'$.

Let $Y_l^{\mu}$ be the pullback of $2^{-j_\mu \hd_l^{\mu}} X_l^{\mu}$ via $\Phi$.
Theorem \ref{ThmQuantFrob} shows for every $m$, $\| Y_j^\mu \|_{C^m}\lesssim 1$.  Standard theorems from ODEs\footnote{See Appendix B.1 of \cite{StreetMultiParamSingInt} for more on this.}
show that the function
\begin{equation*}
\theta_{t_\mu}^{\mu}(u) := e^{ t_{\mu,1} Y_1^{\mu} + \cdots + t_{q_{\mu},\mu} Y_{q_\mu}^{\mu}} u
\end{equation*}
satisfies $\| \theta^{\mu}\|_{C^m(B^{q_\mu}(a')\times B^{n_0}(\eta') )}\lesssim 1$ for every $m$ (here $a',\eta'\gtrsim 1$).
Because $\theta^{\mu}_{t_\mu} (u)= \Phi^{-1} \circ \gammah_{t_\mu}\circ \Phi(u)$, we have that $\sQ_2$ holds with 
parameters independent of $j\in [0,\infty]^{\nu}$, $x_0\in \Omega'$.   Combining all of the above, we have that $\sF$
controls $(\gamma,e,q)$ on $\Omega'$, as desired.

Let $(\gamma,e,q)$ correspond to the vector field parametrization $(W,e,q)$.  Note that
\begin{equation}\label{EqnPfSobWWithZeros}
W(0,\ldots, 0 , t_\mu,0,\ldots, 0) = t_{\mu,1}X_1^{\mu}+\cdots + t_{\mu,q_{\mu}} X_{q_\mu}^{\mu}.
\end{equation}
It follows that if $\sS$ is given by \eqref{EqnResSurfDefnsS} (with this choice of $W$), then each $\pi_\mu \sF\subseteq \sS$.
Lemma \ref{LemmaPfLtsLControl} shows that $\sL(\bigcup_{\mu\in \nuset} \pi_\mu \sF)$ is equivalent to $\sF$ on $\Omega'$,
and therefore $\sL(\sS)$ controls $\sF$ on $\Omega'$.  Because $\sF$ controls $(\gamma,e,q)$ on $\Omega'$, it follows
that $\sF$ controls $\sL(\sS)$ on $\Omega'$, and therefore $\sL(\sS)$ is finitely generated by $\sF$ on $\Omega'$.
Thus, $(\gamma,e,q)$ is finitely generated by $\sF$ on $\Omega'$.

Finally, if $\sF\subset \snuvectone$, then $\bigcup_{\mu\in \nuset} \pi_\mu \sF=\sF$, and therefore if
$W(t)\sim \sum_{|\alpha|>0} t^{\alpha} X_\alpha$, we have by \eqref{EqnPfSobWWithZeros}
$\sF = \{ (X_\alpha, \deg(\alpha)) : \deg(\alpha)\in \onecompnu\text{ and } |\alpha|=1\}$.
Because $\sF$ controls $(\gamma,e,q)$ on $\Omega'$, it follows that $(\gamma,e,q)$ is linearly finitely generated
by $\sF$ on $\Omega'$, as desired.
\end{proof}

\begin{lemma}\label{LemmaPfSobConvSquare}
Let $\fD$ be Sobolev data on $\Omega'$.  We separate our assumptions into two cases:
\begin{enumerate}[\bf{Case} I:]
\item $\fD$ is finitely generated by $\sF\subset \snuvect$ on $\Omega'$.
\item $\fD$ is linearly finitely generated by $\sF\subset \snuvectone$ on $\Omega'$.
\end{enumerate}
We take all the same notation as in Lemma \ref{LemmaPfSobChoiceOfGamma} with this choice of $\sF$.
Thus, we have a parameterization $(\gamma,d,q)$ which is finitely generated by $\sF$ on $\Omega'$ in Case I,
and linearly finitely generated by $\sF$ on $\Omega'$ in Case II.  For $\mu\in \nuset$, we also have parameterizations
$(\gamma^{\mu},\hd^{\mu},q_{\mu})$ defined by 
\eqref{EqnPfSobDefinemuParam}, so that $\gamma_t(x)=\gamma_{t_\nu}^{\nu}\circ \gamma_{t_{\nu-1}}^{\nu-1}\circ\cdots \circ\gamma_{t_1}^1(x)$.
For $\mu\in \nuset$,
fix $\psi_\mu\in C_0^\infty(\Omega')$ with $\psi_\nu\equiv 1$ on a neighborhood of the closure of $\Omega_0$,
and $\psi_{\mu}\equiv 1$ on a neighborhood $\supp{\psi_{\mu+1}}$, for $\mu<\nu$.
Let $a>0$ be small (to be chosen in the proof).  
For each $\mu\in \nuset$, let 
$\fD_{\mu}=\q(1,(\gamma^{\mu},\hd^{\mu},q_{\mu},\Omega,\Omega'''), a, \eta_{\mu},\{\vsig_{\mu,j}\}_{j\in \N}, \psi_\mu\w)$
be Sobolev data on $\Omega'$.  For $j_{\mu}\in \N$, let $D_j^{\mu}=D_j^{\mu}(\fD^{\mu})$ be given by \eqref{EqnResSobDefnDjNew}.
 For $j=(j_1,\ldots, j_\nu)\in \N^{\nu}$ define
\begin{equation*}
D_{j}=D_{j_1}^1\cdots D_{j_\nu}^{\nu}.
\end{equation*}
Then,
\begin{itemize}
\item In Case I, for $1<p<\infty$, there exists $\epsilon=\epsilon(p,(\gamma,d,q))>0$ such that for $|\delta|<\epsilon$,
\begin{equation}\label{EqnPfSobChangeSquareI}
\NLpN{p}{\delta}{\fD}{f}\approx \LpN{p}{\q(\sum_{j\in \N^{\nu}} \q| 2^{j\cdot \delta} D_j f\w|^2\w)^{\frac{1}{2}}}, \quad f\in C_0^\infty(\Omega_0).
\end{equation}
Here, the implicit constants depend on $p\in (1,\infty)$ and $\fD$.

\item In Case II, for $1<p<\infty$, $\delta\in \R^{\nu}$,
\begin{equation}\label{EqnPfSobChangeSquareII}
\NLpN{p}{\delta}{\fD}{f}\approx \LpN{p}{\q(\sum_{j\in \N^{\nu}} \q| 2^{j\cdot \delta} D_j f\w|^2\w)^{\frac{1}{2}}}, \quad f\in C_0^\infty(\Omega_0).
\end{equation}
Here the implicit constants depend on $p\in (1,\infty)$, $\fD$, and $\delta\in \R^{\nu}$.
\end{itemize}
Furthermore, in either case, fix $\nuh\in \nuset$, set $\nut=\nu-\nuh$, and decompose $\delta\in \R^{\nu}$ as $\delta=(\deltah,\deltat)\in \R^{\nuh}\times \R^{\nut}$
where $\deltah=(\delta_1,\ldots, \delta_{\nuh})$, $\deltat=(\delta_{\nuh+1}, \ldots, \delta_{\nu})$.  Also, for $j\in \N^{\nu}$ decompose
$j=(\jh,\jt)$ in the same way.
Set 
$$\Dt_{\jt}:= D_{j_{\nuh+1}}^{\nuh+1} D_{j_{\nuh+2}}^{\nuh+2}\cdots D_{j_{\nu}}^{\nu}.$$
Then for $1<p<\infty$, $\deltat\in \R^{\nut}$, we have
\begin{equation}\label{EqnPfSobDropParamsInSquare}
\LpN{p}{\q(\sum_{j\in\N^{\nu}} \q|2^{\jt\cdot \deltat} D_j f\w|^2\w)^{\frac{1}{2}}}\approx \LpN{p}{\q(\sum_{\jt\in \N^{\nut}} \q|2^{\jt\cdot \deltat}\Dt_{\jt} f  \w|^2\w)^{\frac{1}{2}}},
\end{equation}
where the implicit constants depend on $p\in (1,\infty)$.
In particular, taking $\nuh=\nu$, we have
\begin{equation}\label{EqnPfSobDropParamsInSquareAll}
\LpN{p}{\q(\sum_{j\in\N^{\nu}} \q|D_j f\w|^2\w)^{\frac{1}{2}}}\approx\LpN{p}{f}.
\end{equation}
\end{lemma}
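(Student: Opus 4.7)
The plan is to identify the iterated composition $D_j=D_{j_1}^1\cdots D_{j_\nu}^\nu$ with $D_j(\fDt)$ for a carefully chosen Sobolev datum $\fDt$ built on the parameterization $(\gamma,e,q)$ of Lemma \ref{LemmaPfSobChoiceOfGamma}, and then invoke Theorem \ref{ThmResSobWellDefNew} to transfer the $\NLp{p}{\delta}{\fD}$ norm to the $\NLp{p}{\delta}{\fDt}$ norm. Concretely I would take
\[
\fDt=\bigl(\nu,(\gamma,e,q,\Omega,\Omega'''),\at,\etat,\{\vsigt_j\}_{j\in\N^{\nu}},\psi_1\bigr)
\]
with $\etat(t)=\prod_{\mu=1}^{\nu}\eta_\mu(t_\mu)$ and $\vsigt_j(t)=\prod_{\mu=1}^{\nu}\vsig_{\mu,j_\mu}(t_\mu)$. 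Each $\fD_\mu$ being Sobolev data guarantees $\vsig_{\mu,j_\mu}\in\schS_{\{1\}}(\R^{q_\mu})$ whenever $j_\mu\ne 0$, so the product lies in $\schS_{\{\mu:j_\mu\ne 0\}}(\R^{q})$, and the single-parameter reproducing identities $\delta_0(t_\mu)=\eta_\mu(t_\mu)\sum_{j_\mu}\dil{\vsig_{\mu,j_\mu}}{2^{j_\mu}}(t_\mu)$ multiply to give $\delta_0(t)=\etat(t)\sum_j\dil{\vsigt_j}{2^j}(t)$. Thus $\fDt$ is genuine Sobolev data, and by Lemma \ref{LemmaPfSobChoiceOfGamma} it is (linearly) finitely generated by the same $\sF$ as $\fD$; hence Theorem \ref{ThmResSobWellDefNew} gives $\NLpN{p}{\delta}{\fD}{f}\approx\NLpN{p}{\delta}{\fDt}{f}$ in the stipulated range.

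Unwinding the iterated composition produces
\[
D_jf(x)=\psi_1(x)\int f(\gamma_t(x))\,\kappa(t,x)\prod_\mu\eta_\mu(t_\mu)\dil{\vsig_{\mu,j_\mu}}{2^{j_\mu}}(t_\mu)\,dt,
\]
where $\kappa(t,x)$ is a product of intermediate cutoff factors of the form $\psi_\mu(\gamma_{t_{\mu-1}}^{\mu-1}\circ\cdots\circ\gamma_{t_1}^1(x))$. For $f\in C_0^\infty(\Omega_0)$ the integrand vanishes unless $x$ lies in a small Euclidean neighborhood of $\Omega_0$, and for $\at$ chosen small (depending on the $\gamma^\mu$) every intermediate trajectory $\gamma_{t_\mu}^\mu\circ\cdots\circ\gamma_{t_1}^1(x)$ remains in that same neighborhood; since $\psi_\nu\equiv 1$ near $\Omega_0$ and $\psi_\mu\equiv 1$ on $\supp{\psi_{\mu+1}}$, a downward induction shows every $\psi_\mu\equiv 1$ on the effective domain of integration, so $\kappa\equiv 1$ there. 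Consequently $D_jf=D_j(\fDt)f$ pointwise on $C_0^\infty(\Omega_0)$, and combining with the norm equivalence above delivers \eqref{EqnPfSobChangeSquareI} in Case~I and \eqref{EqnPfSobChangeSquareII} in Case~II directly from the definition of $\NLpN{p}{\delta}{\fDt}{\cdot}$.

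For \eqref{EqnPfSobDropParamsInSquare} I would argue by repeated randomization together with the single-parameter Littlewood--Paley theorem. Khintchine's inequality rewrites the left-hand side as $(\bE\LpN{p}{g_\epsilon}^p)^{1/p}$ with $g_\epsilon=\sum_\jt\epsilon_\jt 2^{\jt\cdot\deltat}\Dt_\jt f$, and $g_\epsilon$ is supported in $\supp{\psi_{\nuh+1}}$. I then apply Proposition \ref{PropPfsLp} successively for $\mu=\nuh,\nuh-1,\ldots,1$, at each stage viewing $\fD_\mu$ as Sobolev data with base set $\Omega_0^{(\mu)}=\supp{\psi_{\mu+1}}$ on which $\psi_\mu\equiv 1$; the nested cutoff structure ensures the function being decomposed at stage $\mu$ is supported exactly where $\psi_\mu\equiv 1$, which is precisely the hypothesis needed to invoke Proposition \ref{PropPfsLp}. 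Sandwiching with Khintchine a second time pushes the $\jt$-sum back inside the square function and yields the right-hand side of \eqref{EqnPfSobDropParamsInSquare}. The statement \eqref{EqnPfSobDropParamsInSquareAll} is the case $\nuh=\nu$, in which $\Dt_\jt$ is trivially the identity on $C_0^\infty(\Omega_0)$ (empty product) and the $\jt$-sum collapses to $\LpN{p}{f}$.

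The main obstacle I foresee is the careful choice of $\at$ in the identification $D_jf=D_j(\fDt)f$: it must be chosen small enough, depending on each $\gamma^\mu$, so that every intermediate composition stays in the region where all cutoffs equal one; a compactness argument using $\gamma^\mu_0(x)\equiv x$ and continuity handles this uniformly over $x$ in a neighborhood of $\Omega_0$. Once this identification is secured, everything else reduces to Theorem \ref{ThmResSobWellDefNew} and a routine Khintchine-plus-iteration argument, neither of which introduces difficulties beyond those already settled earlier in this section.
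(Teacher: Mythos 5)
Your proposal is correct, and for the first half it takes a genuinely different route than the paper. The paper derives the formula
\[
D_jf(x)=\psi_1(x)\int f(\gamma_t(x))\,\psi_\nu(\gamma_t(x))\,\psi_\nu\bigl(\gamma^{\nu-1}_{t_{\nu-1}}\circ\cdots\circ\gamma^1_{t_1}(x)\bigr)\prod_\mu\eta_\mu(t_\mu)\dil{\vsig_{\mu,j_\mu}}{2^{j_\mu}}(t_\mu)\,dt,
\]
observes that this is ``not exactly of the form covered in the proof of Theorem~\ref{ThmResSobWellDefNew}, but the same proof goes through unchanged,'' and essentially re-runs the Littlewood--Paley/almost-orthogonality argument (Proposition~\ref{PropPfSobVVOps}, Lemma~\ref{LemmaPfSobTwoToOne}) with the iterated operators in place of the $D_j(\fD)$. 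You instead build a genuine Sobolev datum $\fDt$ on the composed parameterization $(\gamma,e,q)$ with product-form $\etat$, $\vsigt_j$, verify that $\fDt$ satisfies every clause of the definition (bounded Schwartz family, the correct moment vanishing for $\vsigt_j$, the reproducing identity $\delta_0=\etat\sum\dil{\vsigt_j}{2^j}$), and then show $D_jf=D_j(\fDt)f$ on $C_0^\infty(\Omega_0)$ because every cutoff factor equals $1$ on the effective domain when $a$ is small. This lets you cite Theorem~\ref{ThmResSobWellDefNew} as a black box. Your route buys a cleaner logical structure --- no re-verification that the theorem's machinery tolerates the modified $D_j$'s --- at the cost of the bookkeeping needed to check $\fDt$ is Sobolev data and that the pointwise identification holds (both of which you do correctly, modulo the trivial adjustment of $\at$ so that $\supp{\etat}\subset B^q(\at)$). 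For \eqref{EqnPfSobDropParamsInSquare} your Khintchine-plus-iterated-Proposition~\ref{PropPfsLp} argument is exactly the paper's; one small transposition in the exposition: the quantity $(\bE\LpN{p}{\sum_{\jt}\epsilon_{\jt}2^{\jt\cdot\deltat}\Dt_{\jt}f}^p)^{1/p}$ you write down is the Khintchine representative of the \emph{right}-hand side of \eqref{EqnPfSobDropParamsInSquare}, not the left; the left produces $\sum_{j}\epsilon_j 2^{\jt\cdot\deltat}D_jf$, which then factors as $\bigl(\sum_{j_1}\epsilon_{j_1}^1D_{j_1}^1\bigr)\cdots\bigl(\sum_{j_\nu}\epsilon_{j_\nu}^\nu 2^{j_\nu\delta_\nu}D_{j_\nu}^\nu\bigr)f$ so that Proposition~\ref{PropPfsLp} can strip off the first $\nuh$ factors one at a time. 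The iteration scheme you describe is the intended one; only the label is swapped.
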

\begin{proof}
We pick $a>0$ so small for $|t_{\mu}|<a$ and $\mu<\nu$,
$\psi_{\mu}(x) \psi_{\mu+1}(\gamma_{t_{\mu}}^{\mu}(x))=\psi_{\mu+1}(\gamma_{t_{\mu}}^{\mu}(x))$.
Note that $\sum_{j_\mu\in \N} D_{j_\mu}^{\mu} = \psi_{\mu}^2$ and therefore $\sum_{j\in \N^{\nu}} D_j = \prod_{\mu=1}^{\nu} \psi_\mu^{2}=\psi_{\nu}^2$.
Also,
\begin{equation*}
\begin{split}
&D_j f(x) = 
\\&\psi_1(x) \int f(\gamma_t(x))  \psi_{\nu}(\gamma_t(x))\psi_{\nu}(\gamma_{t_{\nu-1}}^{\nu-1}\circ \gamma_{t_{\nu-2}}^{\nu-2}\circ \cdots \circ \gamma_{t_1}^1(x) )\q[ \prod_{\mu=1}^{\nu} \eta_{\mu}(t_\mu) \dil{\vsig_{j_\mu,\mu}}{2^{j_\mu}}(t_{\mu}) \w]\: dt.
\end{split}
\end{equation*}
Because of the above remarks and the fact that $(\gamma,d,q)$ is finitely generated by $\sF$ on $\Omega'$ in Case I, and linearly finitely generated by $\sF$ on $\Omega'$ in Case II,
the same proof as in  Theorem \ref{ThmResSobWellDefNew} yields \eqref{EqnPfSobChangeSquareI} and \eqref{EqnPfSobChangeSquareII} (by possibly shrinking $a>0$).  Strictly speaking, $D_j$ is not exactly of the form covered in the proof of Theorem \ref{ThmResSobWellDefNew}, but the same proof goes through unchanged.

We now turn to proving \eqref{EqnPfSobDropParamsInSquare}.  Fix $p\in (1,\infty)$ and $\deltat\in \R^{\nut}$.  Set $\delta=(0_{\nuh},\deltat)\in \R^{\nuh}\times \R^{\nut}=\R^{\nu}$.
For each $\mu\in \nuset$, pick $\Omega_\mu$ with $\Omega_0\Subset \Omega_\mu \Subset \Omega'$ and $\supp{\psi_\mu}\Subset \Omega_\mu$,
and if $\mu>1$, $\psi_{\mu-1}\equiv 1$ on a neighborhood of the closure of $\Omega_{\mu}$.  
Proposition \ref{PropPfsLp} (applied with $\Omega_0$ replaced by $\Omega_\mu$) shows that if $\epsilon_{j_\mu}^{\mu}$ is a sequence of i.i.d. random variables of mean $0$ taking value $\pm 1$,
we have
\begin{equation}\label{EqnPfSobOneParamDropNew}
\q(\bE \LpN{p}{ \sum_{j_{\mu}\in \N}  \epsilon_{j_\mu}^{\mu} D_{j_\mu}^{\mu} f }^{p} \w)^{\frac{1}{p}} \approx \LpN{p}{f}, \quad f\in C_0^\infty(\Omega_\mu).
\end{equation} 
Pick the sequences $\epsilon_{j_\mu}^{\mu}$ so that they are mutually independent for $\mu\in \nuset$.
For $j=(j_1,\ldots,j_{\nu})\in \N^{\nu}$, set $\epsilon_j :=\epsilon_{j_1}^1\cdots \epsilon_{j_\nu}^\nu$, so that $\{\epsilon_j\}_{j\in \N^{\nu}}$ are i.i.d. random
variables of mean $0$ taking value $\pm 1$.  
Also set $\epsilon_{\jt}= \epsilon_{\jt_1}^{\nuh+1}\cdots \epsilon_{\jt_{\nut}}^{\nuh+\nut}$, so that $\{\epsilon_{\jt}\}_{\jt\in \N^{\nut}}$ are also i.i.d. random
variables of mean $0$ taking values $\pm 1$.
Using the Khintchine inequality and repeated applications of \eqref{EqnPfSobOneParamDropNew}, we have for $f\in C_0^\infty(\Omega_0)$,
\begin{equation*}
\begin{split}
&\LpN{p}{\q(\sum_{j\in \N} \q|2^{\jt\cdot \deltat}D_j f\w|^2\w)^{\frac{1}{2}} } \approx \q(\bE \LpN{p}{\sum_{j\in \N^{\nu}} \epsilon_j 2^{\jt\cdot\deltat} D_j f }^p\w)^{\frac{1}{p}}
\\&=\Bigg(\bE \Bigg\| \q(\sum_{j_1\in \N} \epsilon_{j_1}^1 D_{j_1}^1\w)\cdots \q(\sum_{j_{\nuh}\in \N} \epsilon_{j_{\nuh}}^{\nuh} D_{j_{\nuh}}^{\nuh}\w) 
\\&\quad\quad\quad\times \q(\sum_{j_{\nuh+1}\in \N} \epsilon_{j_{\nuh+1}}^{\nuh+1} 2^{j_{\nuh+1}\delta_{\nuh+1} }D_{j_{\nuh+1}}^{\nuh+1}\w)\cdots\q(\sum_{j_\nu\in \N} \epsilon_{j_\nu}^\nu 2^{j_{\nu}\delta_{\nu}}D_{j_\nu}^\nu\w)  f \Bigg\|_{L^p}^p\Bigg)^{\frac{1}{p}}
\\&\approx\Bigg(\bE \Bigg\| \q(\sum_{j_2\in \N} \epsilon_{j_2}^2 D_{j_2}^2\w)\cdots \q(\sum_{j_{\nuh}\in \N} \epsilon_{j_{\nuh}}^{\nuh} D_{j_{\nuh}}^{\nuh}\w)
\\&\quad\quad\quad\times
 \q(\sum_{j_{\nuh+1}\in \N} \epsilon_{j_{\nuh+1}}^{\nuh+1} 2^{j_{\nuh+1}\delta_{\nuh+1} }D_{j_{\nuh+1}}^{\nuh+1}\w)\cdots\q(\sum_{j_\nu\in \N} \epsilon_{j_\nu}^\nu 2^{j_{\nu}\delta_{\nu}}D_{j_\nu}^\nu\w)  f \Bigg\|_{L^p}^p\Bigg)^{\frac{1}{p}}
\\&\approx\cdots
\approx \q(\bE \LpN{p}{  \q(\sum_{j_{\nuh+1}\in \N} \epsilon_{j_{\nuh+1}}^{\nuh+1} 2^{j_{\nuh+1}\delta_{\nuh+1} }D_{j_{\nuh+1}}^{\nuh+1}\w)\cdots\q(\sum_{j_\nu\in \N} \epsilon_{j_\nu}^\nu 2^{j_{\nu}\delta_{\nu}}D_{j_\nu}^\nu\w)  f }^p\w)^{\frac{1}{p}}
\\&= \q(\bE \LpN{p}{\sum_{\jt\in \N^{\nut}} \epsilon_{\jt} 2^{\jt\cdot \deltat} \Dt_{\jt} f   }^p  \w)
\approx \LpN{p}{\q(\sum_{\jt\in \N^{\nut}} \q|2^{\jt\cdot \deltat}\Dt_{\jt} f  \w|^2\w)^{\frac{1}{2}}},
\end{split}
\end{equation*}
as desired.
\end{proof}

\begin{proof}[Proof of Proposition \ref{PropResSobLp}]
This follows by combining \eqref{EqnPfSobChangeSquareI} and \eqref{EqnPfSobDropParamsInSquareAll}.
\end{proof}

\subsection{Comparing Sobolev spaces}\label{SectionPfSobComp}
In this section, we prove the results from Section \ref{SectionResSobCompNew}.

\begin{proof}[Proof of Theorem \ref{ThmResCompSobDropParamsNew}]
We first prove the result for Case I, and then indicate the modifications necessary to adapt the proof for Case II.
Because of the symmetry of the theorem in $\fDt$ and $\fDh$, it suffices to show that there
exists $\epsilon=\epsilon(p,(\gamma,e,N),(\gammat,\et,\Nt))>0$ such that for $\deltat\in \R^{\nut}$
with $|\deltat|<\epsilon$,
\begin{equation*}
\NLpN{p}{(0_{\nuh},\deltat)}{\fD}{f}\approx \NLpN{p}{\deltat}{\fDt}{f}, \quad f\in C_0^\infty(\Omega_0).
\end{equation*}
Write $j\in \N^{\nu}$ as $j=(\jt,\jh)\in \N^{\nut}\times \N^{\nuh}=\N^{\nu}$.

We take $D_j$, $j\in \N^{\nu}$, and $\Dt_{\jt}$, $\jt\in \N^{\nut}$ as in the statement of Lemma \ref{LemmaPfSobConvSquare}, with this choice of
$\fD$, $\nut$, and $\nuh$.
Notice that $D_j$ is to $\fD$ as $\Dt_{\jt}$ is to $\fDt$.
Thus applying \eqref{EqnPfSobChangeSquareI}, there exists $\epsilon=\epsilon(p,(\gamma,e,N),(\gammat,\et,\Nt))>0$
such that for $\deltat\in \R^{\nut}$
with $|\deltat|<\epsilon$,
\begin{equation*}
\NLpN{p}{(0_{\nuh},\deltat)}{\fD}{f}\approx \LpN{p}{\q(\sum_{j\in \N^{\nu}} \q| 2^{\jt\cdot \deltat} D_j f\w|^2\w)^{\frac{1}{2}}}, \quad f\in C_0^\infty(\Omega_0)
\end{equation*}
and
\begin{equation*}
\NLpN{p}{\deltat}{\fDt}{f}\approx \LpN{p}{\q(\sum_{\jt\in \N^{\nut}} \q| 2^{\jt\cdot \deltat} \Dt_{\jt} f\w|^2\w)^{\frac{1}{2}}}, \quad f\in C_0^\infty(\Omega_0).
\end{equation*}
Combining the above with \eqref{EqnPfSobDropParamsInSquare} yields the result.

In Case II, the same proof works except that we use \eqref{EqnPfSobChangeSquareII} in place of \eqref{EqnPfSobChangeSquareI}.
\end{proof}

\begin{lemma}\label{LemmaPfCompareVV}
Let $\fD$ and $\lambda$ be as in the statement of Theorem \ref{ThmResCompSobMainThmNew}--where we separate our assumptions
into the same two cases as in that theorem.
For $j\in \N^{\nu}$ define $D_j=D_j(\fD)$ by \eqref{EqnResSobDefnDjNew}; and decompose $j=(\jh,\jt)\in \N^{\nuh}\times\N^{\nut}$ as in Theorem \ref{ThmResCompSobMainThmNew}.
For $k\in \Z^{\nu}$ and $\deltat\in [0,\infty)^{\nut}$, define two vector valued operators:\footnote{We again use the convention that $D_j=0$ for $j\in \Z^{\nu}\setminus \N^{\nu}$.}
\begin{equation}\label{EqnPfCompareDefnsRo}
\sR_{k,\deltat}^1 \{f_j\}_{j\in \N^{\nu}} := \{  2^{\jt\cdot \deltat - \jh\cdot \lambda^{t}(\deltat)} D_j D_{j+k} f_j  \}_{j\in \N^{\nu}},
\end{equation}
\begin{equation}\label{EqnPfCompareDefnsRt}
\sR_{\deltat}^2 \{f_j\}_{j\in \N^{\nu}} := \{ 2^{\jt\cdot \deltat - \jh\cdot \lambda^{t}(\deltat)} D_j  f_j\}_{j\in \N^{\nu}}.
\end{equation}
Then, for $1<p<\infty$, we have the following:
\begin{itemize}
\item In Case I, there exists $\epsilon=\epsilon(p,(\gamma,e,N),\lambda)>0$ such that for $|\deltat|<\epsilon$ ($\deltat\in [0,\infty)^{\nut}$), we have
\begin{equation}\label{EqnPfCompareToShowLemmaVVI}
\begin{split}
\LplqnuOpN{p}{2}{\sR_{k,\deltat}^1}&\lesssim 2^{-\epsilon|k|},\\ 
\LplqnuOpN{p}{2}{\sR_{\deltat}^2} &\lesssim 1.
\end{split}
\end{equation}
Here, the implicit constants depend on $p$ and $\fD$.

\item In Case II, for every $L$ and $\deltat\in [0,\infty)^{\nut}$, there exists $C_L=C_L(p,\deltat,\lambda,(\gamma,e,N))$ such that
\begin{equation*}
\begin{split}
\LplqnuOpN{p}{2}{\sR_{k,\deltat}^1}&\leq C_L 2^{-L|k|}, 
\\ \LplqnuOpN{p}{2}{\sR_{\deltat}^2}&\leq C_1.
\end{split}
\end{equation*}
\end{itemize}
\end{lemma}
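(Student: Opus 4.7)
The plan is to mimic the structure of Proposition \ref{PropPfSobVVOps} (almost-orthogonality for vector-valued operators), with two essential new ingredients: replacing the use of Proposition \ref{PropPfLtTechApO} by Proposition \ref{PropPfLtApTMainProp} in the $L^2$ step, and absorbing the weight $2^{\jt\cdot\deltat-\jh\cdot \lambda^{t}(\deltat)} = 2^{(\jt - \lambda(\jh))\cdot\deltat}$ into the gain coming from Proposition \ref{PropPfLtApTMainProp}. The basic arithmetic observation driving the whole argument is that, since $\deltat\in[0,\infty)^{\nut}$,
\begin{equation*}
(\jt - \lambda(\jh))\cdot \deltat \;\leq\; (\jt \vee \lambda(\jh) - \lambda(\jh))\cdot \deltat \;\leq\; |\jt \vee \lambda(\jh) - \lambda(\jh)|\cdot |\deltat|,
\end{equation*}
so that when $|\deltat|$ is small (Case~I), or for arbitrary $\deltat$ at the cost of a constant (Case~II), the exponential weight is dominated by the $L^2$-gain in Proposition~\ref{PropPfLtApTMainProp}.

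First I would establish the scalar $L^2$ estimates. Applying Proposition~\ref{PropPfLtApTMainProp} to the operator $D_j = D_j(\fD)$ (which is exactly of the form treated there) gives, in Case~I, $\LpOpN{2}{D_j} \leq C 2^{-\epsilon_0 |\jt\vee\lambda(\jh)-\lambda(\jh)|}$ for some $\epsilon_0>0$ depending on $(\gamma,e,N)$ and $\lambda$; in Case~II one gets the analogous bound with $\epsilon_0$ replaceable by any $L$. Combining this with the trivial $\LpOpN{2}{D_{j+k}}\lesssim 1$ and with the almost-orthogonality bound $\LpOpN{2}{D_j D_{j+k}}\lesssim 2^{-\epsilon_1|k|}$ from Proposition~\ref{PropPfLtTechApO}, and taking the geometric mean, yields in Case~I
\begin{equation*}
\LpOpN{2}{D_j D_{j+k}} \;\lesssim\; 2^{-\tfrac{\epsilon_1}{2}|k| - \tfrac{\epsilon_0}{2}|\jt\vee\lambda(\jh)-\lambda(\jh)|},
\end{equation*}
with the analogous arbitrarily large gain in Case~II.

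Next, choosing $\epsilon>0$ small (depending on $p$, $(\gamma,e,N)$, $\lambda$) so that $|\deltat|<\epsilon$ implies $|\deltat|\cdot C_\lambda \leq \min(\epsilon_0,\epsilon_1)/4$, where $C_\lambda$ is a constant from the inequality relating $(\jt-\lambda(\jh))\cdot\deltat$ to $|\jt\vee\lambda(\jh)-\lambda(\jh)|$, the weight in \eqref{EqnPfCompareDefnsRo}--\eqref{EqnPfCompareDefnsRt} is absorbed to give $\LplqnuOpN{2}{2}{\sR^1_{k,\deltat}}\lesssim 2^{-\epsilon|k|}$ and $\LplqnuOpN{2}{2}{\sR^2_{\deltat}}\lesssim 1$ after interchanging $L^2$ and $\ell^2$. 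The trivial $L^1$ bounds $\LpOpN{1}{D_j D_{j+k}},\LpOpN{1}{D_j}\lesssim 1$ give $\LplqnuOpN{1}{1}{\sR^1_{k,\deltat}},\LplqnuOpN{1}{1}{\sR^2_{\deltat}}\lesssim 2^{C|\deltat||k|}$ and $2^{C|\deltat|}$ respectively (the $\jt,\jh$ are bounded by a geometric series on supports since $\jt\leq \jh+|k|$, but in fact since each weight enters one scale at a time through $D_j$ the simple pointwise domination by the maximal function will do); interpolating yields $\Lplqnu{p}{p}$ bounds for $1<p\leq 2$ with a slightly smaller exponential gain.

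Then I would upgrade to the $\Lplqnu{p}{\infty}$ bound for $1<p<\infty$: by Proposition~\ref{PropPfMaxMaxFuncBound} both $|D_j f|$ and $|D_j D_{j+k} f|$ are pointwise controlled by $\sM_{(\gamma,e,N),\psi_1',\psi_2'} |f|$ (applied twice in the second case), and the weight $2^{(\jt-\lambda(\jh))\cdot\deltat}$ is bounded by $2^{C|\deltat||k|}$, so Theorem~\ref{ThmMaxThm} yields $\LplqnuOpN{p}{\infty}{\sR_{k,\deltat}^1}\lesssim 2^{C|\deltat||k|}$ and $\LplqnuOpN{p}{\infty}{\sR_\deltat^2}\lesssim 1$. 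Interpolating this with the $\Lplqnu{p}{p}$ bound delivers the claimed $\Lplqnu{p}{2}$ estimates for $1<p\leq 2$, after shrinking $\epsilon$ slightly so that $C|\deltat|$ is still beaten by $\epsilon_0/4$ and $\epsilon_1/4$. Finally, for $2\leq p<\infty$ I would dualize: by Propositions~\ref{PropPfAdjEachScale} and \ref{PropPfAdjGamma}, the adjoints $D_j^*$ are of exactly the same form as $D_j$ but with $(\gamma,e,N)$ replaced by $(\gamma_t^{-1},e,N)$, which is still finitely/linearly finitely generated by the same $\sF$, so the already-proved result for $q=p'\in(1,2]$ applies to the adjoint vector-valued operator (after the standard index relabelling $j\mapsto j-k$), giving the desired bound for $\sR_{k,\deltat}^1$ and $\sR_\deltat^2$.

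The main obstacle will not be any single step in isolation—each is modeled on the proof of Proposition~\ref{PropPfSobVVOps}—but rather the bookkeeping that certifies the weight $2^{(\jt-\lambda(\jh))\cdot\deltat}$ is quantitatively dominated by the gain from Proposition~\ref{PropPfLtApTMainProp} \emph{uniformly} in $k$ and $j$, in particular at the interpolation with the $\Lplqnu{p}{\infty}$ maximal bound, where one only has the crude estimate $2^{(\jt-\lambda(\jh))\cdot\deltat}\lesssim 2^{C|\deltat|(|k|+|\jt\vee\lambda(\jh)-\lambda(\jh)|)}$; choosing $\epsilon$ small enough at the outset ensures all such losses are absorbed. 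In Case~II, where Proposition~\ref{PropPfLtApTMainProp} provides arbitrarily fast decay, no smallness restriction on $\deltat$ is needed, and the same scheme yields the result with $C_L$ depending on $\deltat$.
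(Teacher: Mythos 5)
Your overall plan follows the paper closely — the same three building blocks (Propositions \ref{PropPfLtApTMainProp}, \ref{PropPfLtTechApO}, the maximal function), the same $L^2$/$L^1$/$L^p(\ell^\infty)$ endpoints, interpolation, and duality for $p>2$ via Propositions \ref{PropPfAdjEachScale} and \ref{PropPfAdjGamma}. But there is a genuine gap in how you handle the weight $2^{(\jt-\lambda(\jh))\cdot\deltat}$ at the $L^1$ and $L^p(\ell^\infty)$ endpoints, and it cannot be patched by the argument you sketch.

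You assert $\LplqnuOpN{1}{1}{\sR^1_{k,\deltat}}\lesssim 2^{C|\deltat||k|}$ and $\LplqnuOpN{p}{\infty}{\sR_{k,\deltat}^1}\lesssim 2^{C|\deltat||k|}$, i.e.\ that the weight $2^{(\jt-\lambda(\jh))\cdot\deltat}$ is bounded by a constant depending only on $|k|$. This is false: for $j\in\N^{\nu}$ the components $\jt$ and $\jh$ are independent, and for real $\deltat\in[0,\infty)^{\nut}\setminus\{0\}$ the factor $2^{(\jt-\lambda(\jh))\cdot\deltat}$ blows up as $\jt\to\infty$ with $\jh$ fixed. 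The parenthetical ``$\jt\leq\jh+|k|$'' has no basis — $k$ is a shift in the single index $j=(\jh,\jt)$, not a bound relating $\jt$ to $\jh$. The only place a compensating decay in $|\jt\vee\lambda(\jh)-\lambda(\jh)|$ is available is the $L^2$ estimate from Proposition \ref{PropPfLtApTMainProp}; neither the trivial $L^1$ bound on $D_j$ nor the pointwise maximal-function domination gives any such decay, so neither weighted endpoint bound is finite, and the real interpolation you propose has nothing to interpolate with.

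The paper's resolution is to complexify $\deltat$ and invoke Stein interpolation for analytic families. The operators $\sR^1_{k,\deltat}$, $\sR^2_{\deltat}$ depend holomorphically on $\deltat\in\C^{\nut}$. At $\re{\deltat}=0$ the weight is unimodular, so the trivial $L^1(\ell^1)$ bound and the maximal-function $L^p(\ell^\infty)$ bound hold uniformly in $\im{\deltat}$; for $\re{\deltat}\in[0,\infty)^{\nut}$ with $|\re{\deltat}|$ small the $L^2(\ell^2)$ bound holds with the $2^{-\epsilon_2|k|}$ gain (again uniformly in $\im{\deltat}$), because the exponential gain from Proposition \ref{PropPfLtApTMainProp} dominates the modulus of the weight. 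Analytic interpolation in the vertical strip then gives the $L^p(\ell^2)$ bound for $1<p\leq 2$ for $|\re{\deltat}|$ appropriately restricted, and one finishes by duality as you indicate. This complexification step is the essential idea your proposal is missing.
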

\begin{proof}
We first prove the result in Case I, and then indicate the necessary modifications to prove the result in Case II.
To prove Case I, it suffices to prove \eqref{EqnPfCompareToShowLemmaVVI} for $1<p\leq 2$, and prove \eqref{EqnPfCompareToShowLemmaVVI}
for $1<p\leq 2$ with $\sR_{k,\deltat}^1$ and $\sR_{\deltat}^2$ replaced by $\q(\sR_{k,\deltat}^1\w)^{*}$ and $\q(\sR_{\deltat}^2\w)^{*}$, respectively.
The result then follows, since (for $1<p\leq 2$) the dual of $\Lplqnu{p}{2}$ is $\Lplqnu{p'}{2}$ where $\frac{1}{p}+\frac{1}{p'}=1$.
We exhibit the proof for $\sR_{k,\deltat}^1$ and $\sR_{\deltat}^2$.  A nearly identical proof works for the adjoints
after an application of Propositions \ref{PropPfAdjEachScale} and \ref{PropPfAdjGamma} (see the proof of Proposition \ref{PropPfSobVVOps},
where the same idea is used).  We leave the remainder of the details for the adjoints to the reader.

Case I of Proposition \ref{PropPfLtApTMainProp} shows that there exists $\epsilon>0$ such that
\begin{equation}\label{EqnPfCompareLemmaVVTO}
\LpOpN{2}{D_j}\lesssim 2^{-\epsilon |\jt\vee \lambda(\jh)-\lambda(\jh)|}.
\end{equation}
Case I of Proposition \ref{PropPfLtTechApO} shows that there exists $\epsilon>0$ such that
\begin{equation}\label{EqnPfCompareLemmaVVTTT}
\LpOpN{2}{D_j D_{j+k}}\lesssim 2^{-\epsilon |k|},\quad k\in \Z^{\nu}.
\end{equation}
Combining the above two estimates, using that $ \deltat\in [0,\infty)^{\nut}$, and using the trivial bound $\LpOpN{2}{D_{j+k}}\lesssim 1$,
we have that there exists $\epsilon_2>0$ such that for $|\deltat|<\epsilon_2$, $\deltat\in [0,\infty)^{\nut}$,
\begin{equation}\label{EqnPfCompareLemmaVVCoord}
\LpOpN{2}{ 2^{ \jt\cdot \deltat - \jh\cdot \lambda^{t}\cdot \deltat} D_j} \lesssim 1,
\quad \LpOpN{2}{ 2^{ \jt\cdot \deltat - \jh\cdot \lambda^{t}\cdot \deltat} D_jD_{j+k}} \lesssim 2^{-\epsilon_2|k|}.
\end{equation}

We complexify the variable $\deltat$, which turns $\sR_{k,\deltat}^1$ and $\sR_{\deltat}^2$ into operators which depend holomorphically on $\deltat$.
For a variable $z\in \C^{\nut}$, we write $$\re{z}:=(\re{z_1},\ldots, \re{z_{\nut}})\in \R^{\nut};$$ similarly for $\im{z}$.
When $|\re{\deltat}|<\epsilon_2$, $\re{\deltat}\in [0,\infty)^{\nut}$, \eqref{EqnPfCompareLemmaVVCoord} shows
\begin{equation}\label{EqnPfCompareLemmaVVTT}
\LplqnuOpN{2}{2}{\sR_{k,\deltat}^1}\lesssim 2^{-\epsilon_2 |k|}, \quad \LplqnuOpN{2}{2}{\sR_{\deltat}^2}\lesssim 1,
\end{equation}
merely by interchanging the norms.  Here the bounds are independent of $\im{\deltat}$.
Also, we have the trivial estimates,
\begin{equation*}
\LpOpN{1}{D_j}\lesssim 1, \quad \LpOpN{1}{D_j D_{j+k}}\lesssim 1.
\end{equation*}
Thus, when $|\re{\deltat}|=0$, we have
\begin{equation}\label{EqnPfCompareLemmaVVOO}
\LplqnuOpN{1}{1}{\sR_{k,\deltat}^1}\lesssim 1, \quad \LplqnuOpN{1}{1}{\sR_{\deltat}^2}\lesssim 1,
\end{equation}
again by interchanging the norms, and the bounds are independent of $\im{\deltat}$.

Interpolating \eqref{EqnPfCompareLemmaVVTT} and \eqref{EqnPfCompareLemmaVVOO}, for $1<p\leq 2$
and $|\re{\deltat}|<\q(2-\frac{2}{p}\w)\epsilon_2$, $\re{\deltat}\in [0,\infty)^{\nut}$, we have
\begin{equation}\label{EqnPfCompareLemmaVVPP}
\begin{split}
\LplqnuOpN{p}{p}{\sR_{k,\deltat}^1}&\lesssim 2^{-(2-\frac{2}{p})\epsilon_2 |k|}, 
\\ \LplqnuOpN{p}{p}{\sR_{\deltat}^2}&\lesssim 1.
\end{split}
\end{equation}
Just as in the proof of Proposition \ref{PropPfSobVVOps}, by using the maximal function, we have for $1<p<\infty$, $|\re{\deltat}|=0$,
\begin{equation}\label{EqnPfCompareLemmaVVPI}
\begin{split}
\LplqnuOpN{p}{\infty}{\sR_{k,\deltat}^1}&\lesssim 1, 
\\ \LplqnuOpN{p}{\infty}{\sR_{\deltat}^2} &\lesssim 1.
\end{split}
\end{equation}
Interpolating \eqref{EqnPfCompareLemmaVVPP} and \eqref{EqnPfCompareLemmaVVPI} shows for $1<p\leq 2$,
if $|\re{\deltat}|<(p-1)\epsilon_2$, $\re{\deltat}\in [0,\infty)^{\nut}$, we have
\begin{equation}\label{EqnPfCompareLemmaFinalVV}
\begin{split}
\LplqnuOpN{p}{2}{\sR_{k,\deltat}^1}&\lesssim 2^{-(p-1)\epsilon_2 |k|}, 
\\ \LplqnuOpN{p}{2}{\sR_{\deltat}^2} & \lesssim 1.
\end{split}
\end{equation}
This completes the proof in Case I.

For Case II, we note that we proved \eqref{EqnPfCompareLemmaFinalVV}, where $\epsilon_2$ was as in \eqref{EqnPfCompareLemmaVVCoord}.
Case II of Propositions \ref{PropPfLtApTMainProp} and \ref{PropPfLtTechApO} show that \eqref{EqnPfCompareLemmaVVTO} and \eqref{EqnPfCompareLemmaVVTTT}
hold for all $\epsilon\in (0,\infty)$, and therefore \eqref{EqnPfCompareLemmaVVCoord} holds for all $\epsilon_2\in (0,\infty)$.  From here, the same proof as above
proves the result in Case II.
\end{proof}

\begin{proof}[Proof of Theorem \ref{ThmResCompSobMainThmNew}]
We first prove the result in Case I, and then indicate the modifications necessary to prove the result in Case II.
Let $\fD$ be as in Case I, and
for $j\in \N^{\nu}$ define $D_j=D_j(\fD)$ by \eqref{EqnResSobDefnDjNew}; and decompose $j=(\jh,\jt)\in \N^{\nuh}\times\N^{\nut}$ as in the statement of the theorem.

For $k\in \Z^{\nu}$, $\deltat\in [0,\infty)^{\nut}$ define $\sR_{k,\deltat}^1$ and $\sR_{\deltat}^2$ by \eqref{EqnPfCompareDefnsRo} and \eqref{EqnPfCompareDefnsRt}.
Fix $1<p<\infty$.
Case I of Lemma \ref{LemmaPfCompareVV} and Case I of Lemma \ref{LemmaPfSobTwoToOne} show there exists $\epsilon_p>0$ such that
for $|\deltat|,|\delta|,\delta_0<\epsilon_p$, $\deltat\in [0,\infty)^{\nut}$, $\delta\in \R^{\nu}$, $\delta_0\in \R$,
\begin{equation}\label{EqnPfCompareConcLemmaVV}
\LplqnuOpN{p}{2}{\sR_{k,\deltat}^1}\lesssim 2^{-\epsilon_p|k|}, \quad \LplqnuOpN{p}{2}{\sR_{\deltat}^2}\lesssim 1,
\end{equation}
\begin{equation}\label{EqnPfCompareTwoToOne}
\sum_{k\in \Z^{\nu}} \LpN{p}{ \q( \sum_{j\in \Z^{\nu}} \q|2^{j\cdot \delta+\deltat_0 |k|} D_j D_{j+k} f  \w|^2   \w)^{\frac{1}{2}}}\lesssim \NLpN{p}{\delta}{\fD}{f}, \quad f\in C_0^\infty(\Omega_0).
\end{equation}
We prove \eqref{EqnResCompSobMainThmSmall} for $|\delta|,|\deltat|\leq \epsilon_p/4$, $\deltat\in [0,\infty)^{\nut}$, which will complete the proof in Case I.

Consider, for $f\in C_0^\infty(\Omega')$, and using the fact that $\sum_{j\in \N^\nu} D_j = \psi^2$ where $\psi\equiv 1$ on a neighborhood of the closure of $\Omega_0$,
\begin{equation*}
\begin{split}
\NLpN{p}{\delta+ (-\lambda^{t}(\deltat),\deltat )}{\fD}{f}
&= \LpN{p}{\q(\sum_{j\in \N^{\nu}} \q| 2^{j\cdot \delta +\jt\cdot \deltat - \jh\cdot \lambda^{t}(\deltat) } D_j f \w|^2\w)^{\frac{1}{2}}}
\\&= \LpN{p}{\q(\sum_{j\in \N^{\nu}} \q| 2^{j\cdot \delta +\jt\cdot \deltat - \jh\cdot \lambda^{t}(\deltat) } D_j \psi^4 f \w|^2\w)^{\frac{1}{2}}}
\\&= \LpN{p}{\q(\sum_{j\in \N^{\nu}} \q| \sum_{k_1,k_2\in \Z^{\nu}}2^{j\cdot \delta +\jt\cdot \deltat - \jh\cdot \lambda^{t}(\deltat) } D_j D_{j+k_1} D_{j+k_2} f \w|^2\w)^{\frac{1}{2}}}
\\&\leq  \sum_{k_1,k_2\in \Z^{\nu}}\LpN{p}{\q(\sum_{j\in \N^{\nu}} \q| 2^{j\cdot \delta +\jt\cdot \deltat - \jh\cdot \lambda^{t}(\deltat) } D_j D_{j+k_1} D_{j+k_2} f \w|^2\w)^{\frac{1}{2}}}
\\&= \sum_{\substack{k_1,k_2\in \Z^{\nu} \\ |k_1|\geq |k_2|/2 }} + \sum_{\substack{k_1,k_2\in \Z^{\nu} \\ |k_2|> 2|k_1| }}
=:(I)+(II).
\end{split}
\end{equation*}
We bound the above two terms separately.

We begin with $(I)$.  For $k_1,k_2\in \Z^{\nu}$ set $\sT_{k_1,k_2}^1:= 2^{-k_2\cdot \delta} \sR_{k_1,\deltat}^1$.
Applying \eqref{EqnPfCompareConcLemmaVV}, we have for $|k_1|\geq |k_2|/2$ and using that $|\delta|\leq \epsilon_p/4$,
\begin{equation*}
\LplqnuOpN{p}{2}{\sT_{k_1,k_2}^1}\lesssim 2^{|k_2| |\delta|} 2^{-\epsilon_p |k_1|} \leq 2^{-|k_1| \epsilon_p/2}\leq 2^{- (|k_1|+|k_2|)\epsilon_p/8}.
\end{equation*}
Using this, we have,
\begin{equation*}
\begin{split}
(I)&= \sum_{\substack{k_1,k_2\in \Z^{\nu} \\ |k_1|\geq |k_2|/2 }} \LpN{p}{\q(\sum_{j\in \N^{\nu}} \q| 2^{j\cdot \delta +\jt\cdot \deltat - \jh\cdot \lambda^{t}(\deltat) } D_j D_{j+k_1} D_{j+k_2} f \w|^2\w)^{\frac{1}{2}}}
\\&= \sum_{\substack{k_1,k_2\in \Z^{\nu} \\ |k_1|\geq |k_2|/2 }} \LplqnuN{p}{2}{\sT_{k_1,k_2}^1 \q\{2^{(j+k_2)\cdot \delta} D_{j+k_2} f \w\}_{j\in \N^{\nu}} }
\\&\lesssim \sum_{\substack{k_1,k_2\in \Z^{\nu} \\ |k_1|\geq |k_2|/2 }} 2^{- (|k_1|+|k_2|)\epsilon_p/8} \LplqnuN{p}{2}{ \q\{ 2^{j\cdot \delta} D_j f\w\}_{j\in \N^{\nu}}}
\\&\lesssim \NLpN{p}{\delta}{\fD}{f},
\end{split}
\end{equation*}
as desired.

We turn to $(II)$.  For $k_1,k_2\in \Z^{\nu}$, set $\sT_{k_1,k_2}^2:= 2^{-k_1\cdot \delta - (3\epsilon_p/4) |k_1-k_2|} \sR_{\deltat}^2$.
For $|k_2|>2|k_1|$, we have $-k_1 \cdot \delta - (3\epsilon_p/4)|k_1-k_2|\leq |k_2| (\epsilon_p/4) - |k_2| (3\epsilon_p/8) = -(\epsilon_p/8) |k_2|\leq -(\epsilon_p/16) (|k_1|+|k_2|)$.   Combining this with \eqref{EqnPfCompareConcLemmaVV}, we have for $|k_2|>2|k_1|$,
\begin{equation*}
\LplqnuOpN{p}{2}{\sT_{k_1,k_2}^2}\lesssim 2^{-k_1\cdot \delta - (3\epsilon_p/4) |k_1-k_2|} \lesssim 2^{-(\epsilon_p/16) (|k_1|+|k_2|)}.
\end{equation*}
Using this, we have,
\begin{equation*}
\begin{split}
&(II)= \sum_{\substack{k_1,k_2\in \Z^{\nu} \\ |k_2|> 2|k_1| }} \LpN{p}{\q(\sum_{j\in \N^{\nu}} \q| 2^{j\cdot \delta +\jt\cdot \deltat - \jh\cdot \lambda^{t}(\deltat) } D_j D_{j+k_1} D_{j+k_2} f \w|^2\w)^{\frac{1}{2}}}
\\&=  \sum_{\substack{k_1,k_2\in \Z^{\nu} \\ |k_2|> 2|k_1| }} \LplqnuN{p}{2}{\sT_{k_1,k_2}^2 \q\{ 2^{(j+k_1)\cdot \delta +(3\epsilon_p/4) |k_1-k_2|} D_{j+k_1} D_{j+k_2} f \w\}_{j\in \N^{\nu}}  }
\\&\lesssim\sum_{\substack{k_1,k_2\in \Z^{\nu} \\ |k_2|> 2|k_1| }}  2^{-(\epsilon_p/16) (|k_1|+|k_2|)} \LpN{p}{\q( \sum_{j\in \N^{\nu}} \q| 2^{(j+k_1)\cdot \delta +(3\epsilon_p/4)|k_1-k_2|} D_{j+k_1} D_{j+k_2} f \w|^2\w)^{\frac{1}{2}} }
\\&\leq \sum_{\substack{k_1,k_2\in \Z^{\nu} \\ |k_2|> 2|k_1| }}  2^{-(\epsilon_p/16) (|k_1|+|k_2|)} \LpN{p}{\q( \sum_{j\in \N^{\nu}} \q| 2^{j\cdot \delta +(3\epsilon_p/4)|k_1-k_2|} D_{j} D_{j+k_2-k_1} f \w|^2\w)^{\frac{1}{2}} }
\\&\leq \sum_{l_1,l_2\in \Z^{\nu}} 2^{-(\epsilon_p/32) |l_1|} \LpN{p}{\q(\sum_{j\in \N^{\nu}} \q| 2^{j\cdot \delta + (3\epsilon_p/4) |l_2|} D_j D_{j+l_2} f \w|^2  \w)^{\frac{1}{2}}}
\\&\lesssim \NLpN{p}{\delta}{\fD}{f},
\end{split}
\end{equation*}
where the last line follows from \eqref{EqnPfCompareTwoToOne}.

Combining the above two estimates shows
\begin{equation*}
\NLpN{p}{\delta+ (-\lambda^{t}(\deltat),\deltat )}{\fD}{f}\lesssim \NLpN{p}{\delta}{\fD}{f},
\end{equation*}
as desired, completing the proof in Case I.

For Case II, we note that the above proof proved the result for $|\delta|,|\deltat|\leq \epsilon_p/4$, $\deltat\in [0,\infty)^{\nut}$, where $\epsilon_p$
was such that \eqref{EqnPfCompareConcLemmaVV} and \eqref{EqnPfCompareTwoToOne} held.
In Case II, Case II of Lemma \ref{LemmaPfCompareVV} and Case II of Lemma \ref{LemmaPfSobTwoToOne}
show that \eqref{EqnPfCompareConcLemmaVV} and \eqref{EqnPfCompareTwoToOne} hold for all $\epsilon_p\in (0,\infty)$.
From here, the same proof as above gives the result in Case II.
\end{proof} 

\section{Proofs:  Fractional Radon Transforms}\label{SectionPfRadon}
\begin{proof}[Proof of Theorem \ref{ThmResRadonMainThm}]
We first prove the result when $(\gamma,e,N)$ is finitely generated on $\Omega'$; then we outline the changes necessary for when
$(\gamma,e,N)$ is linearly finitely generated on $\Omega'$.

Let $T$ be given by \eqref{EqnResRadonMainOpNew}, so that
\begin{equation*}
Tf(x)=\psi_1(x) \int f(\gamma_t(x)) \psi_2(\gamma_t(x)) \kappa(t,x) K(t)\: dt,
\end{equation*}
where $\psi_1,\psi_1\in C_0^\infty(\Omega_0)$, $\kappa(t,x)\in C^\infty(B^N(a)\times \Omega'')$, $\delta\in \R^{\nu}$,
and $K\in \sK_\delta(N,e,a)$.
Take $\eta\in C_0^\infty(B^N(a))$ and a bounded set $\{\vsig_j : j\in \N^{\nu}\}\subset \schS(\R^N)$ with $\vsig_j\in \schS_{\{\mu : j_\mu\ne 0\}}$ such that
\begin{equation*}
K(t) = \eta(t)\sum_{j\in \N^{\nu}} 2^{j\cdot \delta} \dil{\vsig_j}{2^j}(t).
\end{equation*}
For $j\in \N^{\nu}$, define
\begin{equation*}
T_j f(x) = \psi_1(x) \int f(\gamma_t(x)) \psi_2(\gamma_t(x)) \kappa(t,x) \dil{\vsig_j}{2^j}\: dt,
\end{equation*}
so that $T=\sum_{j\in \N^{\nu} } 2^{j\cdot \delta} T_j$.  Per our usual convention, we take $T_j=0$ for $j\in \Z^{\nu}\setminus \N^{\nu}$.
Let $\fD=\q(\nu,(\gamma,e,N,\Omega,\Omega'''), a, \etat,\{\vsigt_j\}_{j\in \N^{\nu}}, \psi\w)$ be Sobolev data on $\Omega'$.
We wish to show that there exists $\epsilon=\epsilon(p,(\gamma,e,N))>0$ such that for $|\delta|,|\delta'|<\epsilon$,
\begin{equation}\label{EqnPfRadonMainToShow}
\NLpN{p}{\delta'}{\fD}{Tf}\lesssim \NLpN{p}{\delta+\delta'}{\fD}{f}, \quad f\in C_0^{\infty}(\Omega_0).
\end{equation}

For $j\in \N^{\nu}$ let $D_j = D_j(\fD)$ be as in \eqref{EqnResSobDefnDjNew}; and as usual for $j\in \Z^{\nu}\setminus \N^{\nu}$, $D_j=0$.  For $k_1,k_2\in \Z^{\nu}$, we define two vector valued operators
\begin{equation*}
\sR_{k_1,k_2}^1 \{f_j\}_{j\in \N^{\nu}} := \{D_j T_{j+k_1} D_{j+k_2} f_j \}_{j\in \N^{\nu}}, \quad \sR_{k_1}^2 \{ f_j\}_{j\in \N^{\nu}} := \{ D_j T_{j+k_1} f_j\}_{j\in \N^{\nu}}.
\end{equation*}
Case I of Proposition \ref{PropPfSobVVOps} combined with Case I of Lemma \ref{LemmaPfSobTwoToOne} shows
that for $1<p<\infty$, there exists $\epsilon_p>0$ such that for $k_1,k_2\in \Z^{\nu}$, $|\delta|,\delta_0<\epsilon_p$, $\delta\in \R^{\nu}$, and $\delta_0\in \R$,
we have
\begin{equation}\label{EqnPfRadonMainVV}
\begin{split}
\LplqnuOpN{p}{2}{\sR_{k_1,k_2}^1}&\lesssim 2^{-\epsilon_p (|k_1|+|k_2|)}, 
\\ \LplqnuOpN{p}{2}{\sR_{k_1}^2}&\lesssim 2^{-\epsilon_p |k_1|},
\end{split}
\end{equation}
\begin{equation}\label{EqnPfRadonMainTwoToOne}
\sum_{k\in \Z^{\nu}} \LpN{p}{\q( \sum_{j\in \N^{\nu}} \q| 2^{j\cdot \delta+\delta_0 |k|} D_j D_{j+k} f \w|^2\w)^{\frac{1}{2} }}\lesssim \NLpN{p}{\delta}{\fD}{f}, \quad f\in C_0^\infty(\Omega_0).
\end{equation}
Here, we have replaced $\Omega_0$ in the application of Proposition \ref{PropPfSobVVOps} with some $\Omega_0'\Subset \Omega$ such that
$\supp{\psi}\subset \Omega_0'$.
We prove \eqref{EqnPfRadonMainToShow} for $|\delta|,|\delta'|<\epsilon_p/8$, which will complete the proof in Case I.

Let $f\in C_0^\infty(\Omega_0)$.  Using the fact that $\psi f= f$ and $\sum_{j\in \N^{\nu}} D_j = \psi^2$, we see
\begin{equation*}
\begin{split}
&\NLpN{p}{\delta'}{\fD}{Tf} = \LpN{p}{\q(\sum_{j\in \N^{\nu}} \q|2^{j\cdot \delta'} D_j T \psi^4 f\w|\w)^{\frac{1}{2}}}
\\&= \LpN{p}{\q(\sum_{j\in \N^{\nu}} \q|\sum_{k_1,k_2,k_3\in \Z^{\nu}} 2^{j\cdot \delta' + (j+k_1)\cdot \delta} D_j T_{j+k_1} D_{j+k_2} D_{j+k_3} f \w|^2\w)^{\frac{1}{2}} }
\\&\leq \sum_{k_1,k_2,k_3\in \Z^{\nu}} \LpN{p}{\q(\sum_{j\in \N^{\nu}} \q| 2^{j\cdot \delta' + (j+k_1)\cdot \delta} D_j T_{j+k_1} D_{j+k_2} D_{j+k_3} f \w|^2\w)^{\frac{1}{2}} }
\\&= \sum_{\substack{k_1,k_2,k_3\in \Z^{\nu} \\ |k_2|\geq |k_3|/2 }} + \sum_{\substack{k_1,k_2,k_3\in \Z^{\nu} \\ |k_3|> 2|k_2| }} =: (I)+(II).
\end{split}
\end{equation*}
We bound the above two terms separately.

We begin with $(I)$.  For $k_1,k_2,k_3\in \Z^{\nu}$, define a vector valued operator
$$\sT_{k_1,k_2,k_3}^1 := 2^{-k_3\cdot \delta' + (k_1-k_3)\cdot \delta} \sR_{k_1,k_2}^1.$$
Note that \eqref{EqnPfRadonMainVV} implies for $|k_2|\geq |k_3|/2$, using that $|\delta'|, |\delta|<\epsilon_p/8$,
\begin{equation*}
\begin{split}
&\LplqnuOpN{p}{2}{\sT_{k_1,k_2,k_3}^1} \lesssim 2^{-k_3\cdot \delta' + (k_1-k_3)\cdot \delta - \epsilon_p (|k_1|+|k_2|)}
\\&\leq 2^{|k_3||\delta'| + |k_1-k_3| |\delta| - \epsilon_p (|k_1|+k_2|)}
\leq 2^{|k_2|\epsilon_p/2 + |k_1|\epsilon_p/8 -   \epsilon_p (|k_1|+k_2|)} 
\\&\leq 2^{-(|k_1|+|k_2|)\epsilon_p/2} \leq 2^{-(|k_1|+|k_2|+|k_3|)\epsilon_p/8}.
\end{split}
\end{equation*}
Using this, we have
\begin{equation*}
\begin{split}
(I)&=\sum_{\substack{k_1,k_2,k_3\in \Z^{\nu} \\ |k_2|\geq |k_3|/2 }}\LpN{p}{\q(\sum_{j\in \N^{\nu}} \q| 2^{j\cdot \delta' + (j+k_1)\cdot \delta} D_j T_{j+k_1} D_{j+k_2} D_{j+k_3} f \w|^2\w)^{\frac{1}{2}} }
\\&= \sum_{\substack{k_1,k_2,k_3\in \Z^{\nu} \\ |k_2|\geq |k_3|/2 }} \LplqnuN{p}{2}{\sT_{k_1,k_2,k_3}^1\q\{ 2^{(j+k_3)\cdot (\delta'+\delta) }  D_{j+k_3} f\w\}_{j\in \N^{\nu}} }
\\&\lesssim \sum_{\substack{k_1,k_2,k_3\in \Z^{\nu} \\ |k_2|\geq |k_3|/2 }} 2^{-(|k_1|+|k_2|+|k_3|)\epsilon_p/8} \LplqnuN{p}{2}{ \q\{2^{j\cdot (\delta'+\delta) }  D_{j} f\w\}_{j\in \N^{\nu}}}
\\&\lesssim \NLpN{p}{\delta'+\delta}{\fD}{f},
\end{split}
\end{equation*}
as desired.

We now turn to $(II)$.  For $k_1,k_2,k_3\in \Z^{\nu}$ define
\begin{equation*}
\sT_{k_1,k_2,k_3}^2 := 2^{-k_2\cdot \delta' +(k_1-k_2)\cdot \delta - (3\epsilon_p/4) |k_3-k_2| } \sR_{k_1}^2.
\end{equation*}
\eqref{EqnPfRadonMainVV} implies for $|k_3|>2|k_2|$, using that $|\delta'|,|\delta|<\epsilon_p/8$,
\begin{equation*}
\begin{split}
&\LplqnuOpN{p}{2}{\sT_{k_1,k_2,k_3}^2}\lesssim 2^{-k_2\cdot \delta' +(k_1-k_2)\cdot \delta - (3\epsilon_p/4) |k_3-k_2| -\epsilon_p|k_1|}
\\&\leq 2^{ |k_2| (|\delta'|+|\delta|) + |k_1||\delta| - (3\epsilon_p/8) |k_3|- \epsilon_p|k_1|} 
\leq 2^{|k_3| (\epsilon_p/8) +|k_1|(\epsilon_p/8) - (3\epsilon_p/8)|k_3|-\epsilon_p|k_1|}
\\&\leq 2^{-(\epsilon_p/4) (|k_1|+|k_3|)}
\leq 2^{-(\epsilon_p/8)(|k_1|+|k_2|+|k_3|)}.
\end{split}
\end{equation*}
Using this, we have
\begin{equation*}
\begin{split}
&(II)= \sum_{\substack{k_1,k_2,k_3\in \Z^{\nu} \\ |k_3|> 2|k_2| }}\LpN{p}{\q(\sum_{j\in \N^{\nu}} \q| 2^{j\cdot \delta' + (j+k_1)\cdot \delta} D_j T_{j+k_1} D_{j+k_2} D_{j+k_3} f \w|^2\w)^{\frac{1}{2}} }
\\&= \sum_{\substack{k_1,k_2,k_3\in \Z^{\nu} \\ |k_3|> 2|k_2| }} \LplqnuN{p}{2}{\sT_{k_1,k_2,k_3}^2 \q\{ 2^{(j+k_2)\cdot (\delta+\delta') + 3\frac{\epsilon_p}{4}|k_3-k_2| } D_{j+k_2} D_{j+k_3} f \w\}_{j\in \N^{\nu} }}
\\&\lesssim \sum_{\substack{k_1,k_2,k_3\in \Z^{\nu} \\ |k_3|> 2|k_2| }} 2^{-\frac{\epsilon_p}{8}(|k_1|+|k_2|+|k_3|)} \LpN{p}{\q(\sum_{j\in \N^{\nu}} \q|2^{(j+k_2)\cdot (\delta+\delta') + 3\frac{\epsilon_p}{4}|k_3-k_2|} D_{j+k_2} D_{j+k_3} f \w|^2 \w)^{\frac{1}{2}} } 
\\&\leq \sum_{k_1,k_2,k_3\in \Z^{\nu} } 2^{-\frac{\epsilon_p}{8}(|k_1|+|k_2|+|k_3|)} \LpN{p}{\q(\sum_{j\in \N^{\nu}} \q|2^{j\cdot (\delta+\delta') + 3\frac{\epsilon_p}{4}|k_3-k_2|} D_{j} D_{j+k_3-k_2} f \w|^2 \w)^{\frac{1}{2}} } 
\\&\leq \sum_{k_1,l_1,l_2\in \Z^{\nu}} 2^{-\frac{\epsilon_p}{16} (|k_1|+|l_1|)} \LpN{p}{\q(\sum_{j\in \N^{\nu}} \q|2^{j\cdot (\delta+\delta') + 3\frac{\epsilon_p}{4}|l_2|} D_{j} D_{j+l_2} f \w|^2 \w)^{\frac{1}{2}} } 
\\&\lesssim \NLpN{p}{\delta+\delta'}{\fD}{f},
\end{split}
\end{equation*}
where the last line follows by \eqref{EqnPfRadonMainTwoToOne}.
Combining the above estimates proves \eqref{EqnPfRadonMainToShow}, completing the proof in Case I.

We turn to the case when $(\gamma,e,N)$ is linearly finitely generated, we note that in the above we proved \eqref{EqnPfRadonMainToShow} for $|\delta|,|\delta'|\leq\epsilon_p/8$, where $\epsilon_p$ was
so that \eqref{EqnPfRadonMainVV} and \eqref{EqnPfRadonMainTwoToOne} held.
But when $(\gamma,e,N)$ is linearly finitely generated, Case II of Proposition \ref{PropPfSobVVOps} and Case II of Lemma \ref{LemmaPfSobTwoToOne}
show that \eqref{EqnPfRadonMainVV} and \eqref{EqnPfRadonMainTwoToOne} hold for all $\epsilon_p\in (0,\infty)$,
and therefore the above proof shows \eqref{EqnPfRadonMainToShow} holds for all $\delta,\delta'\in \R^{\nu}$, completing the proof.
\end{proof}

\begin{proof}[Proof of Proposition \ref{PropResRadonOtherAddingParamsNew}]
Let $T$ be a fractional Radon transform of order $\deltat\in \R^{\nut}$ corresponding to $(\gammat,\et,\Nt)$ on $B^{\Nt}(a)$, as in the statement of the proposition.
I.e., there exist $\psi_1,\psi_2\in C_0^\infty(\Omega_0)$, $\kappa(t,x)\in C^\infty(B^{\Nt}(a)\times \Omega'')$, and $\Kt\in \sK_{\deltat}(\Nt,\et,a)$ such that
\begin{equation*}
Tf(x) = \psi_1(x) \int f(\gammat_{\vtt}(x)) \psi_2(\gammat_{\vtt}(x)) \kappa(\vtt,x) \Kt(\vtt)\: d\vtt.
\end{equation*}
Let $\etat\in C_0^\infty(B^{\Nt}(a))$ and $\q\{\vsigt_{\jt} : \jt\in \N^{\nut}\w\}\subset \schS(\R^{\Nt})$ be a bounded set with $\vsigt_{\jt}\in \schS_{\{\mu : \jt_{\mu}\ne 0\}}$
and such that $\Kt(\vtt)=\etat(\vtt) \sum_{\jt\in \N^{\nut}} \dil{\vsigt}{2^{\jt}}(\vtt)$.
Because $\supp{\etat}\Subset B^{\Nt}(a)$, we may pick $\at\in (0,a)$ so that $\supp{\etat}\Subset B^{\Nt}(\at)$.

Let $\sSh\subseteq \snuhvectone$ be as in the statement of the proposition.  We know that $\sL(\sSh)$ is finitely generated (resp. linearly finitely generated)
by $\sFh\subset \snuhvect$ (resp. $\sFh\subset \snuhvectone$) on $\Omega'$ in Case I (resp. in Case II).
Enumerate the vector fields in $\sFh:=\{ (\Xh_1,\hd_1),\ldots, (\Xh_{\qh},\hd_{\qh})\}$.  Define $\nuh$ parameter dilations on $\R^{\qh}$
by setting $\rh (\vht_1,\ldots, \vht_{\qh}):=(\rh^{\hd_1} \vht_1,\ldots, \rh^{\hd_{\qh}} \vht_{\qh})$ for $\rh\in \R^{\nuh}$.
Denote these $\nuh$-parameter dilations on $\R^{\qh}$ by $\hd$.

Let $\Nt'=\qh+\Nt$, and define $\nu$-parameter dilations on $\R^{\Nt'}$ by for $t=(\vht, \vtt)\in \R^{\qh}\times \R^{\Nt}=\R^{\Nt'}$ and $r=(\rh,\rt)\in \R^{\nuh}\times\R^{\nut}=\R^{\nu}$,
\begin{equation*}
r (\vht,\vtt) := (\rh \vht,\rt\vtt),
\end{equation*}
where $\rh\vht$ is defined by the above $\nuh$ parameter dilations on $\R^{\qh}$, and $\rt\vtt$ is defined by the given $\nut$-parameter dilations $\et$ on 
$\R^{\Nt}$.  Denote these $\nu$ parameter dilations on $\R^{\Nt'}$ by $\et'$.

Let $\ah\in (0,\frac{a-\at}{2})$ be a small number, to be chosen later.
Let $\delta_0(\vht)$ denote the Dirac $\delta$ function at $0$ in the $\vht$ variable.  By Lemma \ref{LemmaResKerDelta},
$\delta_0(\vht)\in \sK_0(\qh, \hd,\ah)$.  Take $\etah(\vht)\in C_0^\infty(B^{\Nh}(\ah))$ and $\{\vsigh_{\jh} : \jh\in \N^{\nuh}\}\subset \schS(\R^{\Nh})$
a bounded set with $\vsigh_{\jh}\in \schS_{\{ \mu : \jh_\mu\ne 0\}}$ and 
$$\delta_0(\vht) = \etah(\vht)\sum_{\jh\in \N^{\nuh}} \dil{\vsigh_{\jh}}{2^{\jh}}(\vht),$$
where $\dil{\vsigh_{\jh}}{2^{\jh}}$ is defined by the dilations $\hd$, via \eqref{EqnResKerDefFuncDil}.

For $j=(\jh,\jt)\in \N^{\nuh}\times \N^{\nut}$, let $\vsig_j(\vht,\vtt):=\vsigh_{\jh}(\vht) \vsigt_{\jt}(\vtt)$.  Note that
$\dil{\vsig_j}{2^j}(\vht,\vtt) = \dil{\vsigh_{\jh}}{2^{\jh}}(\vht)\dil{\vsigt_{\jt}}{2^{\jt}}(\vtt)$, where $\dil{\vsig_j}{2^j}$ is defined via the $\nu$-parameter dilations
on $\et'$ on $\R^{\Nt'}$.
Thus, we have
\begin{equation*}
\delta_0(\vht)\otimes \Kt(\vtt)  = \etah(\vht)\etat(\vtt)\sum_{j\in \N^{\nu}} \dil{\vsig_j}{2^j}(\vht,\vtt).
\end{equation*}
Because $\vsig_j\in \schS_{\{\mu : j_\mu\ne 0\}}$, we see $K(\vht,\vtt):=\delta_0(\vht)\otimes \Kt(\vtt) \in \sK_{(0,\deltat)}(\Nt',\et',a)$.

Let $(\gammat,\et,\Nt,\Omega,\Omega''')$ correspond to the vector field parameterization $(\Wt,\et,\Nt,\Omega'')$, where $\Omega'\Subset \Omega''\Subset \Omega'''$.
Define a new vector field parameterization:
\begin{equation*}
W(\vht,\vtt,x):= \Wt(\vtt,x) + \sum_{l=1}^{\qh} \vht_l \Xh_l.
\end{equation*}
Let $(\gammat', \et', \Nt')$ denote the parameterization corresponding to $(W, \et', \Nt')$.
Because $\gammat_{0,\vtt}'(x)=\gammat_{\vtt}(x)$, standard existence theorems from ODEs show that $\gammat_{\vht,\vtt}'(x)$ is defined for $|\vht|<\ah$, $|\vtt|<\at$, provided 
$\ah$ is chosen sufficiently small.  Note that if we define $\sS\subset \snuvectone$ in terms of $(W,\et,\Nt')$ by \eqref{EqnResSurfDefnsS}, then $\sS$
is exactly given by \eqref{EqnAssumptionResRadonOtherMainsSNew}.  It follows from the assumptions that $(\gammat',\et',\Nt')$ is finitely generated in Case I
(linearly finitely generated in Case II) by $\sF$ on $\Omega'$.

Finally, we have
\begin{equation*}
\begin{split}
Tf(x) &= \psi_1(x) \int f(\gammat_{\vtt}(x)) \psi_2(\gammat_{\vtt}(x)) \kappa(\vtt,x) \Kt(\vtt)\: d\vtt
\\&=\psi_1(x) \int f(\gammat_{\vht,\vtt}'(x)) \psi_2(\gammat_{\vht,\vtt}'(x)) \kappa(\vtt,x) K(\vht,\vtt)\: d\vht\: d\vtt.
\end{split}
\end{equation*}
This shows that $T$ is a fractional radon transform of order $(0_{\nuh},\deltat)\in \R^{\nu}$ corresponding to $(\gammat',\et',\Nt')$ on $B^{\Nt'}(a)$,
which competes the proof.
\end{proof}

\section{Optimality}\label{SectionOptimality}
In this section we present results concerning optimality.
We focus on the single-parameter case, and in fact discuss only the optimality of the result in Corollary \ref{CorResRadonOtherHor3Main} (and as a special case
we obtain the optimality of Corollary \ref{CorResRadonOtherHorWithEuclidNewer}).\footnote{The methods here apply in some cases to the multi-parameter situation, but we were unable to formulate a short statement of a general result in the multi-parameter case.
We have therefore presented the single-parameter case, and leave any generalizations to the interested reader.}
Fix open sets $\Omega'\Subset \Omega''\Subset \Omega'''\Subset \Omega\subseteq \R^n$. 

\begin{defn}
Let $(\Xb,\bd)=\{(\Xb_1,\bd_1),\ldots, (\Xb_q,\bd_q)\}\subset \sonevect$ be a finite set and let $\lambda>0$.  Suppose
$\sSc\subset \sonevect$.
We say $\sFb$ \textbf{sharply} $\lambda$\textbf{-controls} $\sSc$ \textbf{on} $\Omega'$ if $\sFb$ $\lambda$-controls $\sSc$ on $\Omega'$
and there exists a set $\Omega_0\Subset \Omega'$, $\tau_0>0$ such that for all $0<\tau_1\leq \tau_0$ the following holds.
There exists $(\Xc,\cd)\in \sSc$, $m\in \N$, and a sequence $\delta_k\in (0,1]$ with $\delta_k\rightarrow 0$ such that
for each $x\in \Omega_0$ we have\footnote{It is always possible to write $\delta_k^{\lambda\cd} \Xc$ as in \eqref{EqnOptimalSharpSum} because
$\sFb$ $\lambda$-controls $\sSc$ on $\Omega'$.}
\begin{equation}\label{EqnOptimalSharpSum}
\delta_k^{\lambda \cd} \Xc = \sum_{j=1}^q \delta_k^{\bd_j} c_{x,j}^k \Xb_j\text{ on }\B{\Xb}{\bd}{x}{\tau_1 \delta_k}
\end{equation}
and such that
\begin{equation*}
\liminf_{k\rightarrow \infty} \sup_{x\in \Omega_0} \inf \sum_{|\alpha|\leq m} \sum_{j=1}^q \CjN{ (\delta_k \Xb)^{\alpha} c_{x,j}^k}{0}{\B{\Xb}{\bd}{x}{\tau_1 \delta_k}} >0.
\end{equation*}
Here, $\delta_k \Xb=(\delta^{\bd_1} \Xb_1,\ldots, \delta^{\bd_q} \Xb_q)$ and the infimum is taken over all representations of the form \eqref{EqnOptimalSharpSum}.
\end{defn}

\begin{defn}\label{DefnOptimalSharpControl}
Let $\sSb\subseteq \sonevect$ and let $\lambda>0$ such that $\sSb$ is finitely generated by some $\sFb\subset \sonevect$ on $\Omega'$,
and let $\lambda>0$.  Suppose $\sSc\subset \sonevect$.  We say
$\sSb$ \textbf{sharply} $\lambda$\textbf{-controls} $\sSc$ \textbf{on} $\Omega'$ if $\sFb$ sharply $\lambda$-controls $\sSc$ on $\Omega'$.
\end{defn}

\begin{rmk}
Note that Definition \ref{DefnOptimalSharpControl} is independent of the choice of $\sFb$.
\end{rmk}

\begin{rmk}
It follows immediately from the definitions that if $\sSc$ and $\sSc'$ are equivalent on $\Omega'$, then $\sSb$ sharply $\lambda$-controls
$\sSc$ on $\Omega'$ if and only if $\sSb$ sharply $\lambda$-controls $\sSc'$ on $\Omega'$.
\end{rmk}

We now present the main theorem of the section.
\begin{thm}\label{ThmOptimalMain}
Suppose $(\gammat,\et, \Nt,\Omega,\Omega''')$ and $(\gammah,\eh,\Nh,\Omega,\Omega''')$ are parameterizations with single-parameter dilations $\et$ and $\eh$, and let
 $(\Wt,\et,\Nt)$ and $(\Wh,\eh,\Nh)$ be the corresponding vector field parameterizations.  
 Expand $\Wt(\vtt)$ and $\Wh(\vht)$ as Taylor series in the $\vtt$ and $\vht$ variables:
\begin{equation*}
\Wt(\vtt)\sim \sum_{|\alphat|>0} \vtt^{\alphat} \Xt_{\alphat}, \quad \Wh(\vht)\sim \sum_{|\alphah|>0} \vht^{\alphah} \Xh_{\alphah}.
\end{equation*}
We suppose both $\{\Xt_{\alphat} :|\alphat|>0\}$ and $\{\Xh_{\alphah}:|\alphah|>0\}$ satisfy H\"ormander's condition
on $\Omega''$.\footnote{I.e., we are assuming that $\gammat$ and $\gammah$ satisfy the curvature condition from \cite{ChristNagelSteinWaingerSingularAndMaximalRadonTransforms}.}  
Then, by Corollary \ref{CorResSurfHorSmoothlyFG}, $(\gammat,\et,\Nt)$ and $(\gammah,\eh,\Nh)$ are finitely generated
by some $\sFt\subset \sonevect$ and $\sFh\subset \sonevect$ on $\Omega'$, respectively.  
There exists $a>0$ such that the following holds.
\begin{itemize}
\item Suppose $\sFt$ sharply $\lambda$-controls $\sFh$ on $\Omega'$, for some $\lambda>0$.
Then, for $1<p<\infty$, there exists $\epsilon=\epsilon(p,(\gammat,\et,\Nt), (\gammah,\eh,\Nh),\lambda)>0$ such that
for all $\deltah\in [0,\epsilon)$, $\delta\in(-\epsilon,\epsilon)$, and every fractional Radon transform $T$ of order $-\lambda\deltah$ corresponding to $(\gammat,\et,\Nt)$ on $B^{\Nt}(a)$,
\begin{equation}\label{EqnOptimalKnownBound1}
\NLpN{p}{\delta}{(\gammah,\eh,\Nh)}{T f} \lesssim \NLpN{p}{\delta -\deltah}{(\gammah,\eh,\Nh)}{f}, \quad f\in C_0^\infty(\Omega').
\end{equation}
Furthermore, this is optimal in the sense that there do not exist $p\in (1,\infty)$, $r>0$, $\deltah\in [0,\epsilon)$,\footnote{Recall, $\epsilon$ depends on $p\in (1,\infty)$.} $\delta\in(-\epsilon,\epsilon)$
such that for every fractional Radon transform $T$ of order $-\lambda\deltah$ corresponding to $(\gammat,\et,\Nt)$ on $B^{\Nt}(a)$ we have
\begin{equation}\label{EqnOptimalFalseBound1}
\NLpN{p}{\delta+r}{(\gammah,\eh,\Nh)}{T f} \lesssim \NLpN{p}{\delta -\deltah}{(\gammah,\eh,\Nh)}{f}, \quad f\in C_0^\infty(\Omega').
\end{equation}

\item  Suppose $\sFh$ sharply $\lambda$-controls $\sFt$ on $\Omega'$, for some $\lambda>0$.  Then,
for $1<p<\infty$, there exists $\epsilon=\epsilon(p, (\gammat,\et, \Nt),(\gammah,\eh,\Nh),\lambda)>0$
such that for all $\deltat\in [0,\epsilon)$, $\delta\in (-\epsilon,\epsilon)$, and every fractional Radon transform $T$ of order $\deltat$
corresponding to $(\gammat,\et,\Nt)$ on $B^{\Nt}(a)$,
\begin{equation}\label{EqnOptimalKnownBound2}
\NLpN{p}{\delta}{(\gammah,\eh,\Nh)}{Tf}\lesssim \NLpN{p}{\delta+\lambda \deltat}{(\gammah,\eh,\Nh)}{f},\quad f\in C_0^\infty(\Omega').
\end{equation}
Furthermore, this is optimal in the sense that there do not exist $p\in (1,\infty)$, $r>0$, $\deltat\in [0,\epsilon)$, $\delta\in (-\epsilon,\epsilon)$
such that for every fractional Radon transform $T$ of order $\deltat$ corresponding to $(\gammat,\et,\Nt)$ on $B^{\Nt}(a)$ we have
\begin{equation}\label{EqnOptimalFalseBound2}
\NLpN{p}{\delta+r}{(\gammah,\eh,\Nh)}{Tf}\lesssim \NLpN{p}{\delta+\lambda \deltat}{(\gammah,\eh,\Nh)}{f},\quad f\in C_0^\infty(\Omega').
\end{equation}
\end{itemize}
\end{thm}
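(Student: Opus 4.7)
The positive bounds \eqref{EqnOptimalKnownBound1} and \eqref{EqnOptimalKnownBound2} are immediate consequences of Corollary \ref{CorResRadonOtherHor3Main}: in either case the sharpness hypothesis forces $\lambda$ to coincide with the minimal control constant appearing in that corollary (namely $\lambda_2$ in the first case and $\lambda_1$ in the second), so the conclusion of that corollary applies. The real work is the optimality statements, and it suffices to handle \eqref{EqnOptimalFalseBound1}; the proof of \eqref{EqnOptimalFalseBound2} is entirely analogous, swapping the roles of $\sFt$ and $\sFh$ and using an operator $T$ of positive order $\deltat$ (derivative type) instead of negative order $-\lambda\deltah$ (smoothing type).

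Suppose for contradiction that \eqref{EqnOptimalFalseBound1} holds for some fixed $p$, $r > 0$, $\deltah \in [0,\epsilon)$, and $\delta \in (-\epsilon,\epsilon)$, for every fractional Radon transform $T$ of order $-\lambda\deltah$ corresponding to $(\gammat,\et,\Nt)$ on $B^{\Nt}(a)$. From the sharp $\lambda$-control hypothesis I extract the witness data: a set $\Omega_0 \Subset \Omega'$, an element $(\Xc,\cd) \in \sFh$, constants $\tau_0, c_0 > 0$, an integer $m \in \N$, and a sequence $\delta_k \downarrow 0$ such that for every $x \in \Omega_0$ and every representation
\begin{equation*}
\delta_k^{\lambda\cd}\Xc = \sum_j \delta_k^{\bd_j} c_{x,j}^k \Xb_j \text{ on } \B{\Xb}{\bd}{x}{\tau_0\delta_k}
\end{equation*}
(where $\{(\Xb_j,\bd_j)\}$ enumerates $\sFt$), one has $\sum_{|\alpha|\leq m}\sum_j \CjN{(\delta_k\Xb)^{\alpha} c_{x,j}^k}{0}{\B{\Xb}{\bd}{x}{\tau_0\delta_k}} \geq c_0$.

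The plan is to exhibit a single $T$ together with $L^p$-normalized bump functions $f_k$ concentrated on $B_{\sFh}(x_0,\delta_k)$ (for some fixed $x_0 \in \Omega_0$) for which the ratio in \eqref{EqnOptimalFalseBound1} diverges. I take $T$ to be a non-degenerate Riesz-type potential along $\gammat$,
\begin{equation*}
T f(x) = \psi(x) \int f(\gammat_{\vtt}(x)) K(\vtt)\,d\vtt,
\end{equation*}
with $K \in \sK_{-\lambda\deltah}(\Nt,\et,a)$ a positive kernel essentially homogeneous of degree $-Q + \lambda\deltah$ with respect to the $\et$-dilations, $Q$ being the associated homogeneous dimension. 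Applying Theorem \ref{ThmQuantFrob} to the $\sFh$-vector fields at $x_0$, together with Definition \ref{DefnResSobForGamma} and Proposition \ref{PropResSobLp}, one obtains the scaling $\|f_k\|_{\NLp{p}{s}{\sFh}} \approx \delta_k^{-s}$ for all sufficiently small $s$.

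The decisive step is the matching lower bound $\|Tf_k\|_{\NLp{p}{\delta+r}{\sFh}} \gtrsim \delta_k^{\deltah - \delta - r}$; combined with the previous scaling this yields a ratio $\delta_k^{-r} \to \infty$, the desired contradiction. I will establish the lower bound by duality, pairing $Tf_k$ against a dual test function $g_k$ which is essentially a fractional $\Xc$-derivative of a bump at scale $\delta_k$. After pulling back via the map $\Phi$ from Theorem \ref{ThmQuantFrob} applied to the rescaled vector fields $\{\delta_k^{\bd_j}\Xb_j\}$, the operator $T$ becomes a uniform Riesz-type smoothing of order $\lambda\deltah$ against the normalized vector fields $Y_j$, while the sharpness of the control, combined with Proposition \ref{PropFrobControlEquivsP} and the quantitative strengthening in Remark \ref{RmkExtraPropFrobControlEquivsP}, prevents the pullback of $\Xc$ from being uniformly small in $C^m$ norm---indeed, such smallness would yield a representation of $\delta_k^{\lambda\cd}\Xc$ with coefficients violating the sharpness lower bound $c_0$. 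This non-degeneracy of the pulled-back $\Xc$ is precisely what forces $\langle Tf_k, g_k\rangle$ to be bounded below at the claimed rate. The main obstacle is the rigorous execution of this pairing: it amounts to a Fefferman--Phong-type subelliptic computation at the unit scale after pullback, and while every ingredient (sharpness via the Frobenius characterization, scaling via Theorem \ref{ThmQuantFrob}, duality via Proposition \ref{PropResSobLp}) is already available, stitching them together and tracking all the uniformity in $k$ requires careful bookkeeping.
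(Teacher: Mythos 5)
Your high-level strategy is the right one — testing against scale-$\delta_k$ objects, using the sharp control hypothesis to force a nondegenerate limit after pullback via Theorem~\ref{ThmQuantFrob}, and deriving a power-of-$\delta_k$ contradiction — but your execution diverges from the paper's in a way that introduces real gaps.

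The paper does not attempt to estimate $\langle Tf_k, g_k\rangle$ for a single multi-scale Riesz kernel $T$ tested against bump functions. Instead it builds \emph{single-scale} operators $T_{j_k}$ and $S_{\lambda j_k}$ (convolution against $\dil{\vsigb}{2^{j_k}}$ and $\dil{\vsigc}{2^{\lambda j_k}}$, with $\vsigb,\vsigc$ high-order $\partial_t$-derivatives of bumps) and shows in Lemma~\ref{LemmaOptimalDefineTkandSk} that the triple products $S_{\lambda j_k}T_{j_k}S_{\lambda j_k}$ and $T_{j_k}S_{\lambda j_k}T_{j_k}$ have $L^p\to L^p$ operator norms bounded away from zero, by conjugating with $\Phi_{x_k,\delta_k}^\#$ and passing to a $C^\infty$ limit (Lemmas~\ref{LemmaOptimalSubseqVect}--\ref{LemmaOptimalNonzeroInts}). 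Crucially, since the hypothesis \eqref{EqnOptimalFalseBound1} is a universal statement over all fractional Radon transforms of a given order, the paper uses the Baire category argument spelled out in Remark~\ref{RmkOptimalUniform} to convert it into a \emph{uniform-in-$k$} bound on the single-scale pieces $2^{-\lambda\deltah j_k}T_{j_k}$; it then sandwiches $T_{j_k}$ between two copies of $S_{\lambda j_k}$ (whose boundedness is supplied by Theorem~\ref{ThmResRadonMainThm}) to produce the contradiction at the level of pure $L^p\to L^p$ bounds, never touching the nonisotropic Sobolev norms of any individual test function.

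This matters for two of your steps. First, your ``decisive step'' --- a lower bound for $\|Tf_k\|_{\NLp{p}{\delta+r}{\sFh}}$ when $T$ is a genuine multi-scale kernel --- is precisely the thing the paper's single-scale decomposition is designed to avoid: with a full Riesz kernel, contributions from scales other than $\delta_k$ enter the pairing, and ruling out cancellation among them is substantial extra work not covered by the ingredients you cite. Second, your scaling claim $\|f_k\|_{\NLp{p}{s}{\sFh}}\approx\delta_k^{-s}$ for a bump at $\sFh$-scale $\delta_k$ does not follow from Proposition~\ref{PropResSobLp} (which only gives $\NLp{p}{0}{\fD}\approx L^p$) together with Theorem~\ref{ThmQuantFrob}; establishing it would require showing that the square-function defining $\NLpN{p}{s}{\fD}{\cdot}$ is concentrated at the single dyadic index $j\approx\log_2\delta_k^{-1}$, which is a nontrivial sub-claim. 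The paper sidesteps both issues by never needing the Sobolev norm of a single bump. You should either switch to the single-scale/Baire-category structure, or at minimum recognize that your plan requires a genuine multi-scale lower bound and a bump-function Sobolev scaling lemma, neither of which is supplied.

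One small point on the positive direction: you cite Corollary~\ref{CorResRadonOtherHor3Main}, which is fine, but the paper's own proof of \eqref{EqnOptimalKnownBound1} and \eqref{EqnOptimalKnownBound2} invokes Corollary~\ref{CorResRadonOtherDownBelowCor} directly (via Remark~\ref{RmkResRadonOtherSmoothWorks}, Proposition~\ref{PropResCCHorSmoothFG}, and Corollary~\ref{CorResSurfHorSmoothlyFG}); your statement that the sharpness ``forces $\lambda$ to coincide with'' $\lambda_1$ or $\lambda_2$ of Corollary~\ref{CorResRadonOtherHor3Main} is not quite right as phrased --- sharp $\lambda$-control merely implies $\lambda$-control, which is all that is needed for the positive bound, and is logically independent of whether $\lambda$ is the minimal such constant.
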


Before we prove Theorem \ref{ThmOptimalMain} we need to further study the notion of sharp $\lambda$-control.  Suppose $\sSb\subset \sonevect$ is such that $\sL(\sSb)$ is finitely generated by $\sFb=\{(\Xb_1,\bd_1),\ldots, (\Xb_q,\bd_q) \}\subset \sonevect$ on $\Omega'$.
We also assume that for all $x\in \Omega'$, $\dim \Span{ X(x) :  \exists d, (X,d)\in \sL(\sSb)}=n$.
By Lemma \ref{LemmaResCCsLFinGenGivessD}, $\sFb$ satisfies $\sD(\Omega')$.
By Proposition \ref{PropFrobControlUnitScaleVsAll}, the vector fields $\sZ_\delta:=\q\{ \delta^{\bd} \Xb : (\Xb,\bd)\in \sFb\w\}$
satisfy the conditions of Theorem \ref{ThmQuantFrob}, uniformly for $\delta\in [0,1]$, $x_0\in \Omega'$.
Thus Theorem \ref{ThmQuantFrob} applies to give $\eta>0$ and a map
$\Phi_{x_0,\delta}: B^{n}(\eta)\rightarrow B_{\sZ_\delta}(x_0,\xi_2)$ (for $x_0\in \Omega'$, $\delta\in[0,1]$)
satisfying the conclusions of that theorem (uniformly in $x_0$ and $\delta$) with $Z$ replaced by $\sZ_\delta$. 
The next lemma helps to elucidate the notion of sharp $\lambda$-control in this setting.

\begin{lemma}\label{LemmaOptimalSubseqVect}
Let $\sSb$ be as above so that we have the maps $\Phi_{x_0,\delta}$ for $x_0\in \Omega'$, $\delta\in [0,1]$.
Let $\sSc\subset \sonevect$ and suppose $\sL(\sSb)$ sharply $\lambda$-controls $\sL(\sSc)$ on $\Omega'$.
Fix $\eta_0>0$.
Then, there exists $\Omega_0\Subset \Omega'$ and sequences $x_k\in \Omega_0$, $\delta_k\in [0,1]$ with $\delta_k\rightarrow 0$,
and vector fields with  formal degrees $(\Xc_0,\cd_0)\in \sSc$, $(\Xb_0,\bd_0)\in \sSb$, such that the following holds.
Let $\Yc_k$ be the pullback of $\delta_k^{\lambda \cd_0} \Xc_0$ via $\Phi_{x_k,\delta_k}$ to $B^n(\eta)$
and let $\Yb_k$ be the pullback of $\delta_k^{\bd_0} \Xb_0$ via $\Phi_{x_k,\delta_k}$ to $B^{n}(\eta)$.
Then $\Yc_k$ and $\Yb_k$ converge in $C^\infty(B^n(\eta))$, $\Yc_k\rightarrow \Yc_\infty$ and $\Yb_k\rightarrow \Yb_\infty$.
Furthermore, there is a nonempty open set $U\subseteq B^n(\eta_0)$ such that for all $u\in U$, $\Yb_\infty(u)\ne 0$ and $\Yc_\infty(u)\ne 0$.
\end{lemma}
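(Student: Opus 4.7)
The plan is to extract scales and centers from the hypothesis, pull everything back to the unit scale via Theorem~\ref{ThmQuantFrob}, and then run a compactness and contradiction argument using the quantitative equivalence of Proposition~\ref{PropFrobControlEquivsP}. First I would unpack the definition: sharp $\lambda$-control, applied with a parameter $\tau_1\in(0,\tau_0]$ to be calibrated against $\eta_0$ at the end, supplies an open $\Omega_0\Subset\Omega'$, a vector field $(\Xc_0,\cd_0)\in\sL(\sSc)$, an integer $m$, and $\delta_k\downarrow 0$ such that along a subsequence I can choose $x_k\in\Omega_0$ for which every representation $\delta_k^{\lambda\cd_0}\Xc_0=\sum_{j=1}^q \delta_k^{\bd_j}c_{x_k,j}^k\Xb_j$ on $\B{\Xb}{\bd}{x_k}{\tau_1\delta_k}$ obeys $\sum_{|\alpha|\le m}\sum_j \CjN{(\delta_k\Xb)^\alpha c_{x_k,j}^k}{0}{\B{\Xb}{\bd}{x_k}{\tau_1\delta_k}}\ge\epsilon_0$ for some fixed $\epsilon_0>0$. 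A further subsequence gives $x_k\to x_\infty\in\overline{\Omega_0}$.

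Next I would go to the unit scale. Since $\sFb$ satisfies $\sD(\Omega')$ (Lemma~\ref{LemmaResCCsLFinGenGivessD}), Proposition~\ref{PropFrobControlUnitScaleVsAll} shows that $\sZ_{\delta_k}=\{\delta_k^{\bd}\Xb:(\Xb,\bd)\in\sFb\}$ meets the hypotheses of Theorem~\ref{ThmQuantFrob} uniformly in $k$ and $x_k$. The resulting maps $\Phi_{x_k,\delta_k}:B^n(\eta)\to B_{\sZ_{\delta_k}}(x_k,\xi_2)$ let me pull back the vector fields: $\Yb_j^k$ (the pullback of $\delta_k^{\bd_j}\Xb_j$) is uniformly $C^\infty$-bounded by \eqref{EqnFrobYsSmooth} and satisfies $|\det_{n\times n}(\Yb_1^k|\cdots|\Yb_q^k)|\gtrsim 1$ by \eqref{EqnFrobYsSpan}. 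Fixing for each $k$ one representation coming from the underlying (non-sharp) $\lambda$-control, Proposition~\ref{PropFrobControlEquivsP} in the direction $\sP_1\Rightarrow\sP_2$ shows that the pullbacks $\tilde c_{x_k,j}^k=c_{x_k,j}^k\circ\Phi_{x_k,\delta_k}$, and hence $\Yc_k=\sum_j\tilde c_{x_k,j}^k\Yb_j^k$, are uniformly bounded in $C^\infty(B^n(\eta))$. Arzel\`a--Ascoli and a diagonal subsequence then produce limits $\Yb_j^k\to\Yb_j^\infty$ and $\Yc_k\to\Yc_\infty$ in $C^\infty(B^n(\eta))$; the determinantal bound survives the limit, so $\{\Yb_j^\infty\}$ spans $\R^n$ at every point.

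The main obstacle is to show $\Yc_\infty\not\equiv 0$ near $0$, which is precisely where the sharp part of the hypothesis enters. I would argue by contradiction. If $\Yc_k\to 0$ in every $C^{m'}$-norm on a neighbourhood of $0$, then using the determinantal spanning of $\{\Yb_j^k\}$ I can solve, by Cramer's rule, $\Yc_k=\sum_j \tilde d_j^k\Yb_j^k$ on that neighbourhood with $\|\tilde d_j^k\|_{C^{m'}}\to 0$ for every $m'$. Transporting this representation back to the original scale and invoking Remark~\ref{RmkExtraPropFrobControlEquivsP} in the direction $\sP_2\Rightarrow\sP_1$ yields representations $\delta_k^{\lambda\cd_0}\Xc_0=\sum_j\delta_k^{\bd_j}e_j^k\Xb_j$ on $\B{\Xb}{\bd}{x_k}{\tau_1\delta_k}$ with $\sum_{|\alpha|\le m}\sum_j\CjN{(\delta_k\Xb)^\alpha e_j^k}{0}{\B{\Xb}{\bd}{x_k}{\tau_1\delta_k}}\to 0$, contradicting the infimum-lower-bound provided by sharp $\lambda$-control. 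Hence $\Yc_\infty\not\equiv 0$ near $0$, and by continuity $\Yc_\infty\ne 0$ on some open set $V_1$.

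Finally, I would select $(\Xb_0,\bd_0)$ and arrange $U\subseteq B^n(\eta_0)$. Since $\{\Yb_j^\infty\}$ spans $\R^n$ at each point, at least one $\Yb_{j_0}^\infty$ is nonzero at some point of $V_1$; by continuity it is nonzero on a neighbourhood $V_2\subseteq V_1$, and I set $(\Xb_0,\bd_0)=(\Xb_{j_0},\bd_{j_0})$ together with $U=V_2$. The remaining point is to shrink $\tau_1$ at the outset so that the entire argument lives in $B^n(\eta_0)$: the CC balls $\B{\Xb}{\bd}{x_k}{\tau_1\delta_k}$ pull back under $\Phi_{x_k,\delta_k}$ into a Euclidean ball $B^n(c\tau_1/M)$ about $0$, an inclusion that follows from the $C^\infty$ bounds in Theorem~\ref{ThmQuantFrob} on the $\Yb_j^k$, so choosing $\tau_1$ small enough that $c\tau_1/M<\eta_0$ localises both the lower bound and the eventual open set inside $B^n(\eta_0)$. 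Keeping this calibration consistent with the quantitative equivalence of Remark~\ref{RmkExtraPropFrobControlEquivsP} uniformly in $\tau_1$ is the delicate technical step I anticipate in carrying out the proof.
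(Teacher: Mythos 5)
Your argument produces the wrong objects: the lemma asks for $(\Xc_0,\cd_0)\in\sSc$ and $(\Xb_0,\bd_0)\in\sSb$, but the route you describe yields only $(\Xc_0,\cd_0)\in\sL(\sSc)$ (from the sharp-control definition applied to $\sL(\sSc)$) and $(\Xb_0,\bd_0)\in\sFb$ (which may consist of iterated commutators, hence lie in $\sL(\sSb)$ but not $\sSb$). This is not a cosmetic issue: in the subsequent application (Lemma~\ref{LemmaOptimalNonzeroParams}), the conclusion is used to exhibit a nonvanishing \emph{Taylor coefficient} of the pulled-back parameterizations $\thetab^{\infty}$, $\thetac^{\infty}$, and those Taylor coefficients are precisely the pullbacks of elements of $\sSc$ and $\sSb$, not of their Lie closures. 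So one genuinely needs to descend to the generating sets.

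The missing step is a commutator-descent argument. Once you know that the pullback $\Yc'_\infty$ of some $\delta_k^{\lambda\cd}\Xc$ with $(\Xc,\cd)\in\sL(\sSc)$ is nonzero, write $\Xc$ as an iterated commutator of $\Xc_1,\ldots,\Xc_r$ with $(\Xc_s,\cd_s)\in\sSc$ and $\sum\cd_s=\cd$; the pullback $\Yc'_k$ is then the corresponding iterated commutator of the pullbacks $\Yc_{k,s}$, each of which is uniformly bounded in $C^\infty$ (this uses Proposition~\ref{PropFrobControlEquivsP} again). If every $\Yc_{k,s}\to 0$ in $C^\infty$, then so would the commutator $\Yc'_k$, contradicting $\Yc'_\infty\ne 0$; hence, after passing to a subsequence, some $\Yc_{k,s}\to\Yc_{\infty,s}\ne 0$ and one takes $(\Xc_0,\cd_0)=(\Xc_s,\cd_s)\in\sSc$. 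The same descent is then run a second time for $\sSb$: pick $u_0$ with $\Yc_{\infty,s}(u_0)\ne 0$, use the spanning bound \eqref{EqnFrobYsSpan} to find $(\Xb_l,\bd_l)\in\sFb$ whose pullback limit is nonzero at $u_0$, and then write $\Xb_l$ as an iterated commutator of $\Xb_{l,1},\ldots,\Xb_{l,r}\in\sSb$ and repeat the argument to extract $(\Xb_0,\bd_0)\in\sSb$. Without this second layer of descent the proposal does not establish the lemma as stated. The rest of your outline (uniform $C^\infty$ bounds via $\sP_1\Leftrightarrow\sP_2$, Arzel\`a--Ascoli, the contradiction argument for nonvanishing, and the $\tau_1/\eta_0$ calibration) is in line with the paper and sound, though the paper obtains the nonvanishing of $\Yc'_\infty$ directly from Remark~\ref{RmkExtraPropFrobControlEquivsP} rather than via a Cramer's-rule detour.
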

\begin{proof}
Pick $\tau_1>0$ so small 
$B_{\sFb}(x,{\tau_1\delta})\subset \Phi_{x,\delta}(B^{n}(\eta_0))$ 
for $x\in \Omega'$, $\delta\in [0,1]$.
Let $\Omega_0\Subset \Omega'$, $\delta_k\in [0,1]$, $x_k\in \Omega_0$, $m\in \N$, and $(\Xc,\cd)\in \sL(\sSc)$ be as in the definition of sharp $\lambda$-control, with this choice of 
$\tau_1$.
Let $\Yc_k'$ be the pullback of $\delta_k^{\lambda \cd} \Xc$ via $\Phi_{x_k,\delta_k}$ to $B^n(\eta)$.
Because $(\Xb,\bd)$ $\lambda$-controls $\sSc$ on $\Omega'$, it follows from \eqref{EqnFrobYsSmooth} and
\eqref{EqnFrobYsSpan} that $\CjN{\Yc_k'}{L}{B^{n}(\eta_2)}\lesssim 1$, for every $L\in \N$ with implicit constant depending on $L$.
By the definition of sharp $\lambda$-control and Proposition \ref{PropFrobControlEquivsP} and Remark \ref{RmkExtraPropFrobControlEquivsP}, we have
that  $\CjN{\Yc_k'}{m}{B^n(\eta_0)}\gtrsim 1$.
Replacing $\delta_k$ and $x_k$ with a subsequence shows that $\Yc_k'\rightarrow \Yc_\infty'$ in $C^{\infty}$ with $\Yc_\infty'$
not the zero vector field on $B^{n}(\eta_0)$.
For $(\Xc,\cd)\in \sL(\sSc)$, $\Xc$ can be written as an iterated commutator of vector fields $\Xc_{1},\ldots, \Xc_{r}$,
where $(\Xc_{1},\cd_{1}),\ldots, (\Xc_{r},\cd_{r})\in \sSc$ with $\cd_{1}+\cdots+\cd_{r}=\cd$.
Thus, if $\Yc_{k,s}$ is the pullback of $\delta_k^{\lambda \cd_{s}} \Xc_{s}$ via $\Phi_{x_k,\delta_k}$ to $B^n(\eta)$,
we have that $\Yc_{k}'$ can be written as an iterated commutator of $\Yc_{k,1},\ldots, \Yc_{k,r}$.
Because $\Yc_{k}'\rightarrow  \Yc_{\infty}'$ where $\Yc_{\infty}'$ is not the zero vector field, and because
$\Yc_{k,1},\ldots, \Yc_{k,r}$ are uniformly in $C^\infty$ (Proposition \ref{PropFrobControlEquivsP})
we must that that $\Yc_{k,s}$ does not tend to the zero vector field on $B^n(\eta_0)$ in $C^\infty$ for some $s$.
Moving to a subsequence, we see $\Yc_{k,s}\rightarrow \Yc_{\infty,s}$ in $C^\infty$, where $\Yc_{\infty,s}$ is not the zero
vector field on $B^{n}(\eta_0)$.
Because $\Yc_{\infty,s}$ is smooth, there is a nonempty open set $U\subseteq B^n(\eta_0)$ on which $\Yc_{\infty,s}$ is nonzero.
  This completes the proof for $\sSc$ with $(\Xc_0,\cd_0)=(\Xc_{s}, \cd_{s})\in \sSc$.

Fix $u_0\in B^{n}(\eta_0)$ so that $\Yc_{\infty,s}(u_0)\ne 0$.
Let $\Yb_{k,1},\ldots, \Yb_{k,q}$ be the pullbacks of $\delta^{\bd_1}\Xb_1,\ldots, \delta^{\bd_q} \Xb_q$ via $\Phi_{x_k,\delta_k}$ to 
$B^{n}(\eta)$.  Combining \eqref{EqnFrobYsSmooth} and \eqref{EqnFrobYsSpan}, we see that if we move to a subsequence,
there exists $l\in \{1,\ldots, q\}$ such that $\Yb_{k,l}\rightarrow \Yb_{\infty,l}$ in $C^\infty$ with $\Yb_{\infty,l}(u_0)\ne 0$.
Because $(\Xb_l,\bd_0)\in \sL(\sSb)$, $\Xb$ can be written as an iterated commutator of vector fields $\Xb_{l,1},\ldots, \Xb_{l,r}$,
where $(\Xb_{l,1},\bd_{l,1}),\ldots, (\Xb_{l,r},\bd_{l,r})\in \sSb$ with $\bd_{l,1}+\cdots+\bd_{l,r}=\bd_l$.
Thus, if $\Yb_{k,l,s}$ is the pullback of $\delta_k^{\bd_{s}} \Xb_{l,s}$ via $\Phi_{x_k,\delta_k}$ to $B^n(\eta)$,
we have that $\Yb_{k,l}$ can be written as an iterated commutator of $\Yb_{k,l,1},\ldots, \Yb_{k,l,r}$.
Because $\Yb_{k,l}\rightarrow  \Yb_{\infty,l}$ where $\Yb_{\infty,l}(u_0)\ne 0$, 
and because
$\Yb_{k,l,1},\ldots, \Yb_{k,l,r}$ are uniformly in $C^\infty$ (Proposition \ref{PropFrobControlEquivsP})
we must that 
there exists an $s$ such that 
that $\Yb_{k,l,s}(u_0)$ does not tend to the zero for some $s$.
By moving to a subseqence, we have $\Yb_{k,l,s}$ converges in $C^{\infty}$ to some
vector field $\Yb_{\infty,l,s}$ with 
$\Yb_{\infty,l,s}(u_0)\ne 0$.  
We take $(\Xb_0,\bd_0)=(\Xb_{l,s},\bd_{l,s})$.
Because both $\Yb_{\infty,l,s}(u_0)\ne 0$  and $\Yc_{\infty,s}(u_0)\ne 0$, and both vector fields are smooth, 
they are both nonzero on some open set.  This completes the proof.
%
\end{proof}

\begin{rmk}
The same proof as in Lemma \ref{LemmaOptimalSubseqVect} can be used to show various facts about sharp control.  For instance,
it can be used to show the following.
\begin{itemize}
\item Let $\sSb\subset \sonevect$ be such that $\sL(\sSb)$ is finitely generated on $\Omega'$.  Let $\sSc\subset \sonevect$.
Then $\sL(\sSb)$ sharply $\lambda$-controls $\sL(\sSc)$ on $\Omega'$ if and only if $\sL(\sSb)$ sharply $\lambda$-controls $\sSc$ on $\Omega'$.
\end{itemize}
\end{rmk}

Now suppose we have two parameterizations with single parameter dilations $(\gammab,\eb,\Nb,\Omega,\Omega''')$ and
$(\gammac, \ec, \Nc, \Omega,\Omega''')$.  We suppose $(\gammab,\eb,\Nb)$ is finitely generated by $\sFb$ on $\Omega'$
and $(\gammac,\ec,\Nc)$ is finitely generated by $\sFc$ on $\Omega'$.  Finally, we suppose $\sFb$ sharply $\lambda$-controls $\sFc$ on $\Omega'$, for some
$\lambda>0$.

As before, Lemma \ref{LemmaResCCsLFinGenGivessD} shows $\sFb$ satisfies $\sD(\Omega')$ and
Proposition \ref{PropFrobControlUnitScaleVsAll} shows the vector fields $\sZ_\delta:=\q\{ \delta^{\bd} \Xb : (\Xb,\bd)\in \sFb\w\}$
satisfy the conditions of Theorem \ref{ThmQuantFrob}, uniformly for $\delta\in [0,1]$, $x_0\in \Omega'$.
Theorem \ref{ThmQuantFrob} applies to give $\eta>0$ and a map
$\Phi_{x_0,\delta}: B^{n}(\eta)\rightarrow B_{\sZ_\delta}(x_0,\xi_2)$ (for $x_0\in \Omega'$, $\delta\in[0,1]$)
satisfying the conclusions of that theorem (uniformly in $x_0$ and $\delta$) with $Z$ replaced by $\sZ_\delta$.
For each $x\in \Omega'$ and $\delta\in [0,1]$ define
\begin{equation*}
\thetab_{\tb}^{x,\delta}(u) := \Phi_{x,\delta}^{-1} \circ \gammab_{\delta \tb}\circ \Phi_{x,\delta}(u), \quad \thetac_{\tc}^{x,\delta}(u):=\Phi_{x,\delta}^{-1}\circ \gammac_{\delta^{\lambda} \tc}\circ \Phi_{x,\delta}(u),
\end{equation*}
where $\delta \tb$ is defined by the single-parameter dilations $\eb$ and $\delta^{\lambda} \tc$ is defined by the single-parameter dilations $\ec$.

\begin{lemma}\label{LemmaOptimalNonzeroParams}
Under the above hypotheses, there exists $\Omega_0\Subset \Omega'$, $\eta_0>0$, $a>0$, and sequences $x_k\in \Omega_0$ and $\delta_k\in (0,1]$ with $\delta_k\rightarrow 0$
such that the following holds.  For $x\in \Omega'$, $\delta\in (0,1]$, $\thetab^{x,\delta}_{\tb}(u)\in C^{\infty}\q(B^{\Nb}(a)\times B^n(\eta_0)\w)$ (where $\tb\in B^{\Nb}(a)$, $u\in B^n(\eta_0)$) and $\thetac^{x,\delta}_{\tc}(u)\in C^{\infty}\q(B^{\Nc}(a)\times B^{n}(\eta_0)\w)$ (where $\tc\in B^{\Nc}(a)$, $u\in B^n(\eta_0)$).  Also
$\thetab^{x_k,\delta_k}\rightarrow \thetab^{\infty}$ in  $C^{\infty}\q(B^{\Nb}(a)\times B^n(\eta_0)\w)$ and $\thetac^{x_k,\delta_k}\rightarrow \thetac^{\infty}$ in  $C^{\infty}\q(B^{\Nc}(a)\times B^n(\eta_0)\w)$, and there exists an open set $U\subseteq B^{n}(\eta)$ such that
for $u\in U$ neither $\thetab^{\infty}_{\tb}(u)$ nor $\thetac^{\infty}_{\tc}(u)$ are constant in $\tb$ or $\tc$, respectively, on any neighborhood of $0$ in the $\tb$
or $\tc$ variable, respectively.
\end{lemma}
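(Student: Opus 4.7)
The approach is to establish uniform $C^{\infty}$ bounds on $\thetab^{x,\delta}$ and $\thetac^{x,\delta}$ for $(x,\delta) \in \Omega' \times (0,1]$, extract a convergent subsequence via Arzelà-Ascoli, and then invoke Lemma \ref{LemmaOptimalSubseqVect} to guarantee that the limits have nontrivial dependence on their respective parameter variables. Since $(\gammab,\eb,\Nb)$ is finitely generated by $\sFb$ on $\Omega'$, Proposition \ref{PropFrobControlUnitScaleVsAll} yields that $\sZ_\delta$ controls $\gammab_{\delta \tb}$ at the unit scale on $\Omega'$, uniformly in $\delta \in [0,1]$. For $\thetac$, transitivity of control combined with the $\lambda$-control of $\sFc$ by $\sFb$ and the control of $\gammac$ by $\sFc$ gives that $\sZ_\delta$ controls $\gammac_{\delta^\lambda \tc}$ at the unit scale on $\Omega'$, uniformly in $\delta$. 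Applying Proposition \ref{PropFrobControlEquivsQ} in both cases converts this unit-scale control into condition $\sQ_2$, furnishing uniform $C^{\infty}$ bounds on $\thetab^{x,\delta}$ over $B^{\Nb}(a) \times B^n(\eta_0)$ and on $\thetac^{x,\delta}$ over $B^{\Nc}(a) \times B^n(\eta_0)$ for some $a, \eta_0 > 0$.

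Next, I will apply Lemma \ref{LemmaOptimalSubseqVect} to produce the subsequence and the nonvanishing pullbacks. Let $\sSb_0 := \{(\Xb_{\alphab}, \deg(\alphab)) : |\alphab|>0\}$ and $\sSc_0 := \{(\Xc_{\alphac}, \deg(\alphac)) : |\alphac|>0\}$ be the sets of Taylor coefficients of $\Wb$ and $\Wc$. By the definition of finite generation, $\sL(\sSb_0)$ and $\sL(\sSc_0)$ are equivalent on $\Omega'$ to $\sFb$ and $\sFc$, respectively; since sharp $\lambda$-control is invariant under equivalence, $\sL(\sSb_0)$ sharply $\lambda$-controls $\sL(\sSc_0)$ on $\Omega'$. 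Lemma \ref{LemmaOptimalSubseqVect} (applied with $\sSb = \sSb_0$, $\sSc = \sSc_0$) then produces $\Omega_0 \Subset \Omega'$, sequences $x_k \in \Omega_0$ with $\delta_k \to 0$, multi-indices $\beta, \beta'$ satisfying $(\Xb_{\beta}, \deg(\beta)) \in \sSb_0$ and $(\Xc_{\beta'}, \deg(\beta')) \in \sSc_0$, and a nonempty open set $U_0 \subseteq B^n(\eta_0)$ on which the limit pullbacks $\delta_k^{\deg(\beta)} (\Phi_{x_k,\delta_k}^{-1})_* \Xb_{\beta} \to \Yb_{\infty}$ and $\delta_k^{\lambda \deg(\beta')} (\Phi_{x_k,\delta_k}^{-1})_* \Xc_{\beta'} \to \Yc_{\infty}$ (both in $C^{\infty}$) are nonvanishing. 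Refining the subsequence using the $C^{\infty}$-compactness from the first step, we may further assume $\thetab^{x_k,\delta_k} \to \thetab^{\infty}$ and $\thetac^{x_k,\delta_k} \to \thetac^{\infty}$ in $C^{\infty}$.

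It remains to translate the nonvanishing of $\Yb_{\infty}, \Yc_{\infty}$ into nonconstancy of $\thetab^{\infty}, \thetac^{\infty}$ in their parameter variables. A direct computation---computing the vector field corresponding to $\tb \mapsto \gammab_{\delta\tb}$ and then pulling back by $\Phi_{x,\delta}$---gives
\begin{equation*}
\Wb^{x,\delta}(\tb, u) = \sum_{|\alphab|>0} \delta^{\deg(\alphab)} \tb^{\alphab} (\Phi_{x,\delta}^{-1})_* \Xb_{\alphab}(u),
\end{equation*}
where $\Wb^{x,\delta}$ is the vector field parameterization corresponding to $\thetab^{x,\delta}$, and analogously for $\Wc^{x,\delta}$ (with $\delta$ replaced by $\delta^\lambda$). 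Since the correspondence of Proposition \ref{PropResSurfEquiv} is continuous in $C^{\infty}$, the $C^{\infty}$-convergence $\thetab^{x_k,\delta_k} \to \thetab^{\infty}$ forces $\Wb^{x_k,\delta_k} \to \Wb^{\infty}$ in $C^{\infty}$, and the $\tb^{\beta}$-Taylor coefficient of $\Wb^{\infty}(\tb,u)$ equals $\Yb_{\infty}(u)$, which is nonzero on $U_0$. Hence at any $u_0 \in U_0$, $\Wb^{\infty}(\cdot, u_0)$ is not identically zero on any neighborhood of $0 \in \R^{\Nb}$, and reversing the bijection of Proposition \ref{PropResSurfEquiv} shows $\thetab^{\infty}_{\tb}(u_0)$ is not constant in $\tb$ on any neighborhood of $0$; by continuity this persists on an open neighborhood $U \subseteq U_0$ of $u_0$. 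The identical argument applied at the same $u_0$, using $\Yc_{\infty}(u_0) \ne 0$, gives the corresponding statement for $\thetac^{\infty}$ on the same $U$.

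The main obstacle is the invariance of sharp $\lambda$-control under equivalence---we must transfer sharp control from the given generating sets $\sFb, \sFc$ to the canonical Taylor-coefficient sets $\sSb_0, \sSc_0$ in order to apply Lemma \ref{LemmaOptimalSubseqVect}---together with the bookkeeping that ties the pullback of the $\alphab$-th Taylor coefficient of $\Wb$ (as appearing in Lemma \ref{LemmaOptimalSubseqVect}) to the corresponding Taylor coefficient of the rescaled, pulled-back vector field parameterization $\Wb^{x,\delta}$, since it is this link that converts nonvanishing pullbacks of vector fields into nonconstancy of $\thetab^{\infty}$ and $\thetac^{\infty}$ in their parameter variables.
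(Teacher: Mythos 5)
Your proof is correct and follows essentially the same route as the paper's. Both you and the paper establish uniform $C^\infty$ bounds on $\thetab^{x,\delta}$, $\thetac^{x,\delta}$ via unit-scale control and Proposition \ref{PropFrobControlEquivsQ}, apply Lemma \ref{LemmaOptimalSubseqVect} to the Taylor-coefficient sets $\sSb_0$, $\sSc_0$ (after transferring sharp $\lambda$-control from $\sFb, \sFc$ by equivalence), extract a convergent subsequence, and identify the nonvanishing limit pullbacks as Taylor coefficients of the vector field parameterizations $\Vb_{x,\delta}, \Vc_{x,\delta}$ corresponding to $\thetab^{x,\delta}, \thetac^{x,\delta}$, whence nonconstancy of $\thetab^\infty$, $\thetac^\infty$ in the parameter variable follows on an open set.
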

\begin{proof}
That $\{ \thetac^{x,\delta} : x\in \Omega', \delta\in [0,1]\}\subset C^{\infty}\q(B^{\Nc}(a)\times B^{n}(\eta_0)\w)$
and $\{\thetab^{x,\delta} : x\in \Omega', \delta\in [0,1]\}\subset C^{\infty}\q(B^{\Nb}(a)\times B^n(\eta_0)\w)$ are bounded sets
(for some $\eta_0>0$) follows from Proposition \ref{PropFrobControlEquivsQ}.
Let $(\Wb,\eb,\Nb)$ and $(\Wc,\ec,\Nc)$ be the vector field parameterizations corresponding to the parameterizations $(\gammab,\eb,\Nb)$
and $(\gammac,\ec,\Nc)$, respectively.  We expand $\Wb(\tb)$ and $\Wc(\tc)$ into Taylor series:
\begin{equation*}
\Wb(\tb)\sim \sum_{|\alphab|>0} \tb^{\alphab} \Xb_\alpha, \quad \Wc(\tc)\sim \sum_{|\alphac|>0} \tc^{\alphac} \Xc_\alpha.
\end{equation*}
Define
\begin{equation*}
\sSb:=\{(\Xb_{\alphab}, \deg(\alphab)) : |\alphab|>0 \}, \quad \sSc:=\{ (\Xc_{\alphac},\deg(\alphac)) : |\alphac|>0 \},
\end{equation*}
where $\deg(\alphab)$ and $\deg(\alphac)$ are defined using the single-parameter dilations $\eb$ and $\ec$, respectively; see Definition \ref{DefnResKerDegree}.
Note that, by our assumptions $\sL(\sSb)$ is finitely generated by $\sFb$ on $\Omega'$ and $\sL(\sSc)$ is finitely generated by $\sFc$ on $\Omega'$.
Because $\sFb$ sharply $\lambda$-controls $\sFc$ on $\Omega'$, we have $\sL(\sSb)$ sharply $\lambda$-controls $\sL(\sSc)$ on $\Omega'$.

Define the vector fields, for $x\in \Omega', \delta\in [0,1]$,
\begin{equation*}
\Vb_{x,\delta}(\tb,u):= \frac{\partial}{\partial \epsilon}\bigg|_{\epsilon=1} \thetab_{\epsilon\tb}^{x,\delta}\circ\q( \thetab_{\tb}^{x,\delta} \w)^{-1}(u), \quad 
\Vc_{x,\delta}(\tc,u):= \frac{\partial}{\partial \epsilon}\bigg|_{\epsilon=1} \thetac_{\epsilon\tc}^{x,\delta}\circ\q( \thetac_{\tc}^{x,\delta} \w)^{-1}(u),
\end{equation*}
where $\epsilon \tb$ and $\epsilon \tc$ are defined using standard multiplication, and do not reference the single parameter dilations $\eb$ and $\ec$.
Note that $\Vb_{x,\delta}(\tb)$ and $\Vc_{x,\delta}(\tc)$ are the pullbacks, via $\Phi_{x,\delta}$, of $\Wb(\delta \tb)$ and $\Wc(\delta^{\lambda} \tc)$, respectively;
here, $\delta \tb$ and $\delta^{\lambda} \tc$ are defined using the single-parameter dilations $\eb$ and $\ec$, respectively.
Expand $\Vb_{x,\delta}(\tb)$ and $\Vc_{x,\delta}(\tc)$ as Taylor series in the $\tb$ and $\tc$ variables:
\begin{equation*}
\Vb_{x,\delta}(\tb)\sim \sum_{|\alphab|>0} \tb^{\alphab} \Yb_{\alphab}^{x,\delta}, \quad 
\Vc_{x,\delta}(\tc)\sim \sum_{|\alphac|>0} \tb^{\alphac} \Yc_{\alphac}^{x,\delta}.
\end{equation*}
Note that $\Yb_{\alphab}^{x,\delta}$ and $\Yc_{\alphac}^{x,\delta}$ are the pullbacks, via $\Phi_{x,\delta}$,
of $\delta^{\deg(\alphab)} \Xb_{\alphab}$ and $\delta^{\lambda\deg(\alphac)} \Xc_{\alphac}$, respectively.

Now let $\Omega_0\Subset\Omega'$, $x_k\in \Omega_0$, and $\delta_k\rightarrow 0$ be as in Lemma \ref{LemmaOptimalSubseqVect}.
Because $\{\thetab^{x_k,\delta_k}\}$ and $\{\thetac^{x_k,\delta_k}\}$ are bounded subsets of $C^\infty$, as discussed above,
by moving to a subsequence, we have that $\thetab^{x_k,\delta_k}$ and $\thetac^{x_k,\delta_k}$ converge in $C^\infty$.
Say, $\thetab^{x_k,\delta_k}\rightarrow \thetab^{\infty}$ and $\thetac^{x_k,\delta_k}\rightarrow \thetac^{\infty}$. 
Note that $\Vb_{x_k,\delta_k}\rightarrow \Vb_\infty$ and $\Vc_{x_k,\delta_k}\rightarrow \Vc_{\infty}$, where
\begin{equation*}
\Vb_{\infty}(\tb,u):= \frac{\partial}{\partial \epsilon}\bigg|_{\epsilon=1} \thetab_{\epsilon\tb}^{\infty}\circ\q( \thetab_{\tb}^{\infty} \w)^{-1}(u), \quad 
\Vc_{\infty}(\tc,u):= \frac{\partial}{\partial \epsilon}\bigg|_{\epsilon=1} \thetac_{\epsilon\tb}^{\infty}\circ\q( \thetac_{\tc}^{\infty} \w)^{-1}(u).
\end{equation*}
By Lemma \ref{LemmaOptimalSubseqVect}, there are multi-indices $\alphab$ and $\alphac$ such that
$\Yb_{\alphab}^{x_k,\delta_k}\rightarrow \Yb_{\alphab}^{\infty}$ and $\Yc_{\alphac}^{x_k,\delta_k}\rightarrow \Yc_{\alphac}^{\infty}$,
where there is an open set $U\subset B^{n_0}(\eta_0)$ such that $\Yb_{\alphab}^{\infty}$ and $\Yc_{\alphac}^{\infty}$ are never $0$ in $U$.
But $\Yb_{\alphab}^{\infty}$ and $\Yc_{\alphac}^{\infty}$ appear as Taylor coefficients of $\Vb_{\infty}(\tb)$ and $\Vc_\infty(\tc)$, respectively.
It follows that, for any $u\in U$, $\thetab_{\tb}(u)$ and $\thetac_{\tc}(u)$ are not constant in $\tb$ or $\tc$, respectively, on any neighborhood of $0$.
This completes the proof.
\end{proof}

\begin{lemma}\label{LemmaOptimalNonzeroInts}
Let $\thetab^{\infty}$ and $\thetac^{\infty}$ be as in Lemma \ref{LemmaOptimalNonzeroParams}.
Then, for every $M\in \N$, $a>0$, there exist $\vsigb_0\in C_0^{\infty}\q(B^{\Nb}(a)\w), \vsigc_0\in C_0^\infty\q(B^{\Nc}(a)\w)$, multi-indices
$\alphab\in \N^{\Nb}$, $\alphac\in \N^{\Nc}$ with $|\alphab|=|\alphac|=M$, and functions $f_1,f_2\in C_0^{\infty}(B^{n}(\eta_0))$
such that the following holds.  Let $\vsigb:=\partial_{\tb}^{\alphab} \vsigb_0$ and $\vsigc:=\partial_{\tc}^{\alphac} \vsigc_0$ and define
\begin{equation*}
g_1(u):=\int f_1\q( \thetac_{\tc_1}^{\infty}\circ \thetab_{\tb}^{\infty}\circ \thetac_{\tc_2}^{\infty}(u) \w) \vsigc(\tc_1) \vsigb(\tb) \vsigc(\tc_2)\: d\tc_1 \:d\tb\: d\tc_2,
\end{equation*}
\begin{equation*}
g_2(u):=\int f_2\q( \thetab_{\tb_1}^{\infty}\circ \thetac_{\tc}^{\infty}\circ \thetab_{\tb_2}^{\infty}(u) \w)\vsigb(\tb_1) \vsigc(\tc) \vsigb(\tb_2)\: d\tb_1\: d\tc\: d\tb_2.
\end{equation*}
Then, there is an open set $U\subseteq B^{n}(\eta_0)$ such that $g_1$ and $g_2$ are never zero on $U$.
\end{lemma}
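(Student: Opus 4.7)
I will analyze $g_1$ via integration by parts and a Taylor-expansion argument; the case of $g_2$ is symmetric after swapping the roles of $\thetab^{\infty}$ and $\thetac^{\infty}$. Integrating by parts against $\vsigb = \partial_{\tb}^{\alphab}\vsigb_0$ and $\vsigc = \partial_{\tc}^{\alphac}\vsigc_0$ yields
\[
g_1(u) = (-1)^{|\alphab|+2|\alphac|}\int \vsigc_0(\tc_1)\vsigb_0(\tb)\vsigc_0(\tc_2)\,\partial_{\tc_1}^{\alphac}\partial_{\tb}^{\alphab}\partial_{\tc_2}^{\alphac}\q[f_1\circ\Psi_1(u,\cdot)\w]\: d\tc_1\: d\tb\: d\tc_2,
\]
where $\Psi_1(u,\tc_1,\tb,\tc_2) := \thetac^{\infty}_{\tc_1}\circ\thetab^{\infty}_{\tb}\circ\thetac^{\infty}_{\tc_2}(u)$. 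Taking $\vsigb_0,\vsigc_0$ to be non-negative bump functions with $\int \vsigb_0, \int \vsigc_0 > 0$ and rescaling them to be concentrated at scale $\epsilon>0$ near the origin, the moment identities $\int \tb^{\beta}\vsigb(\tb)\: d\tb = 0$ for $\beta\not\geq \alphab$ and the analogous statement for $\vsigc$ show that $g_1(u)$ is, up to a nonzero constant and vanishing $O(\epsilon)$ corrections, equal to
\[
\partial_{\tc_1}^{\alphac}\partial_{\tb}^{\alphab}\partial_{\tc_2}^{\alphac}\q[f_1\circ\Psi_1(u,\cdot)\w]\bigg|_{(\tc_1,\tb,\tc_2)=0}.
\]
Thus it suffices to choose $\alphab,\alphac$ of length $M$ and $f_1\in C_0^{\infty}(B^n(\eta_0))$ so that this mixed partial derivative is a non-vanishing function of $u$ on some nonempty open set $U'\subseteq U$.

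The non-triviality input comes from Lemma \ref{LemmaOptimalNonzeroParams}, together with an inspection of its proof. The generators $\Vb_{\infty}(\tb,u), \Vc_{\infty}(\tc,u)$ of the flows $\thetab^{\infty}_{\tb}, \thetac^{\infty}_{\tc}$ have non-vanishing Taylor coefficients $\Yb_{\alphab^*}^{\infty}, \Yc_{\alphac^*}^{\infty}$ at some multi-indices $\alphab^*, \alphac^*$ on an open subset of $U$. Integrating the defining ODE for $\thetab^{\infty}_{\tb}(u)$ in the form $\thetab^{\infty}_{\tb}(u) = u + \int_0^1 \Vb_{\infty}(\epsilon\tb, \thetab^{\infty}_{\epsilon\tb}(u))\: d\epsilon$ shows that the Taylor coefficient of $\thetab^{\infty}_{\tb}(u)-u$ at $\tb^{\alphab^*}$ is a nonzero multiple of $\Yb_{\alphab^*}^{\infty}(u)$, and similarly for $\thetac^{\infty}$. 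Iteratively expanding $\Psi_1$ through the three compositions produces a Taylor series whose nontrivial coefficients include not only those at $(\alphac^*,\alphab^*,\alphac^*)$ but also, via chain-rule interactions, at arbitrarily large multi-indices built from sums of the $\alphab^*, \alphac^*$ blocks and their derivatives.

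For $M\geq \max(|\alphab^*|,|\alphac^*|)$, one chooses $\alphab,\alphac$ of length $M$ with $\alphab\geq \alphab^*$, $\alphac\geq \alphac^*$ coordinatewise (padding by an additional multi-index) and picks $f_1$ as (a bump times) the $k$th coordinate function, where $k$ is an index for which $\Yb_{\alphab^*}^\infty$ or $\Yc_{\alphac^*}^\infty$ has a nonzero component on $U'$; the mixed derivative is then non-vanishing on $U'$ by the iterated chain-rule computation. For smaller $M$, one chooses $\alphab,\alphac$ of length $M$ with $\alphab\leq \alphab^*$, $\alphac\leq \alphac^*$ coordinatewise (possible when $M\leq \min(|\alphab^*|,|\alphac^*|)$); the corresponding derivative then picks out, after summation over all Taylor monomials $\tc_1^{\gamma_1}\tb^{\gamma}\tc_2^{\gamma_2}$ with $\gamma_1\geq \alphac,\gamma\geq \alphab, \gamma_2\geq \alphac$ contributing via the moment identities above, a nonzero contribution from the $(\alphac^*,\alphab^*,\alphac^*)$ coefficient of $\Psi_1$ composed linearly with $f_1$. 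Once the mixed derivative is nonzero at some $u_0$, continuity gives a neighborhood $U'\subseteq U$ on which $g_1>0$; taking $f_2$ by the same recipe gives a (possibly smaller) open set on which $g_2\neq 0$ as well, and shrinking $U$ if necessary establishes the lemma.

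\textbf{Main obstacle.} The delicate step is the \emph{uniform} treatment in $M$: for small $M$ the first nontrivial Taylor coefficient of $\Psi_1$ may be of higher order than $(\alphac,\alphab,\alphac)$, and one must show that some higher-order coefficient satisfying $\gamma_1\geq \alphac,\gamma\geq \alphab,\gamma_2\geq \alphac$ is nonetheless non-vanishing on an open set. The argument here rests on combining the iterated-composition Taylor structure of $\Psi_1$ with the fact (carried over from the proof of Lemma \ref{LemmaOptimalNonzeroParams}) that $\thetab^\infty, \thetac^\infty$ are not flat at the origin, and then using the freedom to choose $f_1$ so that the linear functional on the relevant Taylor tensor given by $f_1 \mapsto \partial_{\tc_1}^{\alphac}\partial_{\tb}^{\alphab}\partial_{\tc_2}^{\alphac}[f_1\circ\Psi_1]|_0$ does not annihilate the surviving component.
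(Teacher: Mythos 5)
There is a genuine gap, and it stems from a suboptimal choice at the very start: you commit to concentrating the bump functions $\vsigb_0,\vsigc_0$ at the origin, which forces you to analyze the mixed Taylor coefficient $\partial^{\alphac}_{\tc_1}\partial^{\alphab}_{\tb}\partial^{\alphac}_{\tc_2}\bigl[f_1\circ\Psi_1\bigr]\big|_{(\tc_1,\tb,\tc_2)=0}$. But the stated conclusion of Lemma \ref{LemmaOptimalNonzeroParams} is only that $\thetab^\infty_{\tb}(u)$ and $\thetac^\infty_{\tc}(u)$ are \emph{non-constant} on every neighborhood of $0$; a smooth non-constant map may be flat at $0$, in which case every such mixed derivative at the origin vanishes for every $f_1$. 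You try to patch this by importing the stronger non-flatness information hidden inside the \emph{proof} of Lemma \ref{LemmaOptimalNonzeroParams} (the nonvanishing generator Taylor coefficients $\alphab^*,\alphac^*$), but this only gives nonvanishing at those particular orders, and the required $\alphab,\alphac$ have the externally prescribed length $M$. Your case split treats only $M\geq\max(|\alphab^*|,|\alphac^*|)$ and $M\leq\min(|\alphab^*|,|\alphac^*|)$, leaving the intermediate range entirely unaddressed; moreover, in both cases the concluding steps (``padding by an additional multi-index,'' ``picks out\ldots a nonzero contribution from the $(\alphac^*,\alphab^*,\alphac^*)$ coefficient\ldots via the moment identities'') are asserted rather than proved, and the ``delicate step'' you flag at the end is exactly the part that is never closed.

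The clean route, which appears to be what the paper intends by ``follows immediately from the conclusion of Lemma \ref{LemmaOptimalNonzeroParams},'' is to center the bumps \emph{away from} the origin. Since $\tc\mapsto\thetac^\infty_{\tc}(u_0)$ is smooth and non-constant on every neighborhood of $0$, one can choose $\tc^0$ arbitrarily close to $0$ and an index $j$ so that $\partial_{\tc^j}\thetac^\infty_{\tc}(u_0)\big|_{\tc=\tc^0}\ne 0$; taking $\tc^0$ and then $\tb^0$ small enough, the points $v_1=\thetac^\infty_{\tc^0}(u_0)$ and $v_2=\thetab^\infty_{\tb^0}(v_1)$ stay inside $U$ and (by continuity) the same $\tc^0,j$ still give a nonzero first derivative of $\thetac^\infty_{\tc}(v_2)$ at $\tc=\tc^0$. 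At the base point $(\tc^0,\tb^0,\tc^0)$ all three first partial derivatives of $\Psi_1$ in the chosen directions are nonzero. Taking $\alphac=Me_j$, $\alphab=Me_{j'}$, $\vsigc_0,\vsigb_0$ bumps at $\tc^0,\tb^0$, and $f_1(y)=\phi(y)\langle y-v_3,w\rangle^{3M}$ with $w$ generic, the mixed derivative of $f_1\circ\Psi_1$ of order $(\alphac,\alphab,\alphac)$ at the base point is a nonzero multiple of $\binom{3M}{M,M,M}a_1^M a_b^M a_2^M$ and hence nonzero; the integration-by-parts/moment argument you already wrote then gives $g_1\ne 0$ near $u_0$. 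This argument uses only first-order information about $\Psi_1$, works uniformly in $M$ with no case splitting, and requires only the stated conclusion of Lemma \ref{LemmaOptimalNonzeroParams}, not its proof. Your reduction via integration by parts is correct and useful; the flaw is the insistence on base point $0$ and the resulting incomplete Taylor analysis.
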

\begin{proof}
This follows immediately from the conclusion of Lemma \ref{LemmaOptimalNonzeroParams}.
\end{proof}

\begin{lemma}\label{LemmaOptimalDefineTkandSk}
Let $M\in \N$.  Take $\Omega_0\Subset\Omega'$, $\eta_0>0$, $x_k\in \Omega_0$, and $\delta_k\rightarrow 0$ as in Lemma \ref{LemmaOptimalNonzeroParams}.  
Take $a>0$ less than or equal to the choice of $a$ in Lemma \ref{LemmaOptimalNonzeroParams}.
Take $\vsigb$, $\vsigc$ as in Lemma \ref{LemmaOptimalNonzeroInts} (with these choices of $M$ and $a$).
Let $\delta_k=2^{-j_k}$ where $j_k\in [0,\infty)$ so that $j_k\rightarrow \infty$, and let $\psi\in C_0^{\infty}(\Omega')$ equal $1$ on a neighborhood
of the closure of $\Omega_0$.  Define
\begin{equation*}
T_{j_k} f(x) := \psi(x) \int f(\gammab_{\tb}(x)) \psi(\gammab_{\tb}(x)) \dil{\vsigb}{2^{j_k}}(\tb)\: d\tb,
\end{equation*}
\begin{equation*}
S_{\lambda j_k} f(x) := \psi(x) \int f(\gammac_{\tc}(x)) \psi(\gammac_{\tc}(x)) \dil{\vsigc}{2^{\lambda j_k}}(\tb)\: d\tb,
\end{equation*}
\begin{equation*}
R_k^1 := S_{\lambda j_k} T_{j_k} S_{\lambda j_k},\quad R_k^2:= T_{j_k} S_{\lambda j_k} T_{j_k}.
\end{equation*}
Then, for $1\leq p\leq \infty$,
\begin{equation*}
\liminf_{k\rightarrow \infty} \LpOpN{p}{R_k^1}>0, \quad \liminf_{k\rightarrow \infty} \LpOpN{p}{R_k^2}>0.
\end{equation*}
\end{lemma}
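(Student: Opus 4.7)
\smallskip

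\noindent\textbf{Proof proposal.} The plan is to show $\liminf_{k}\LpOpN{p}{R_k^l}>0$ for $l=1,2$ and $1\leq p\leq \infty$ by pulling each operator back via the Frobenius chart $\Phi_k:=\Phi_{x_k,\delta_k}$ of Theorem \ref{ThmQuantFrob} (applied to $\sZ_{\delta_k}$). Set $V_k:=\q|\det_{n\times n} d\Phi_k(0)\w|$. On $B^n(\eta)$ we have $\q|\det_{n\times n} d\Phi_k(u)\w|\approx V_k$ by \eqref{EqnQuantFrobJacVol}, so for any fixed $h\in C_0^{\infty}(B^n(\eta_0))$ the pullback $h\circ \Phi_k^{-1}$ satisfies $\LpN{p}{h\circ \Phi_k^{-1}}\approx V_k^{1/p}\LpN{p}{h}$ for every $1\leq p\leq \infty$ (interpreting $V_k^0=1$).

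First, I unravel the change of variables. Substituting $\tb=\delta_k \tb'$ (single-parameter dilation $\eb$) in the definition of $T_{j_k}$ gives $\dil{\vsigb}{2^{j_k}}(\tb)\, d\tb=\vsigb(\tb')\, d\tb'$, so for $u\in B^n(\eta_0)$ and $k$ large enough that the composition stays where $\psi\equiv 1$,
\begin{equation*}
T_{j_k}\q(g\circ \Phi_k^{-1}\w)(\Phi_k(u))=\int g\q(\thetab^{x_k,\delta_k}_{\tb'}(u)\w)\vsigb(\tb')\, d\tb',
\end{equation*}
and analogously for $S_{\lambda j_k}$ after $\tc=\delta_k^{\lambda}\tc'$. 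Composing these pullbacks (using that $\Phi_k\circ\Phi_k^{-1}=\mathrm{id}$ between the factors), I get for $l=1$,
\begin{equation*}
R_k^1\q(g\circ \Phi_k^{-1}\w)(\Phi_k(u))=\int g\q(\thetac^{x_k,\delta_k}_{\tc_1}\circ\thetab^{x_k,\delta_k}_{\tb}\circ\thetac^{x_k,\delta_k}_{\tc_2}(u)\w)\vsigc(\tc_1)\vsigb(\tb)\vsigc(\tc_2)\, d\tc_1 d\tb d\tc_2,
\end{equation*}
and the analogous formula with the roles of $\thetab^{\infty}$ and $\thetac^{\infty}$ exchanged for $R_k^2$.

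Second, I invoke Lemma \ref{LemmaOptimalNonzeroParams}: along the chosen subsequence, $\thetab^{x_k,\delta_k}\to \thetab^{\infty}$ and $\thetac^{x_k,\delta_k}\to\thetac^{\infty}$ in $C^{\infty}\q(B^{\Nb}(a)\times B^n(\eta_0)\w)$ and $C^{\infty}\q(B^{\Nc}(a)\times B^n(\eta_0)\w)$ respectively. Choosing $M$ large enough that the pullback formulas are valid, take $\vsigb=\partial_{\tb}^{\alphab}\vsigb_0$, $\vsigc=\partial_{\tc}^{\alphac}\vsigc_0$ and $f_1,f_2\in C_0^\infty(B^n(\eta_0))$ from Lemma \ref{LemmaOptimalNonzeroInts}, so that the limiting functions $g_1,g_2$ are nonvanishing on an open set $U\subseteq B^n(\eta_0)$. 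Applying $R_k^1$ to $f_k^1:=f_1\circ \Phi_k^{-1}$ (and similarly $f_k^2:=f_2\circ \Phi_k^{-1}$ for $R_k^2$), dominated convergence combined with the $C^{\infty}$ convergence of the $\theta$'s yields $R_k^1 f_k^1(\Phi_k(u))\to g_1(u)$ uniformly on $B^n(\eta_0)$, and likewise $R_k^2 f_k^2(\Phi_k(u))\to g_2(u)$.

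Finally, combining with the Jacobian estimate,
\begin{equation*}
\LpN{p}{R_k^l f_k^l}\gtrsim V_k^{1/p}\Lppn{p}{U}{g_l}+o(V_k^{1/p}),\qquad \LpN{p}{f_k^l}\approx V_k^{1/p}\LpN{p}{f_l},
\end{equation*}
uniformly for $1\leq p\leq \infty$ (the endpoint $p=\infty$ is immediate since $V_k^{1/p}=1$ and $\sup_{u\in U}|R_k^l f_k^l(\Phi_k(u))|\to \sup_U|g_l|>0$). Taking the ratio,
\begin{equation*}
\LpOpN{p}{R_k^l}\geq \LpN{p}{R_k^l f_k^l}/\LpN{p}{f_k^l}\gtrsim \Lppn{p}{U}{g_l}/\LpN{p}{f_l}>0,
\end{equation*}
which gives the desired $\liminf$. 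The main obstacle is verifying that the $C^{\infty}$ convergence of $\thetab^{x_k,\delta_k}$ and $\thetac^{x_k,\delta_k}$ really does transfer to convergence of the triple-composition integrals uniformly in $u$; this is where it matters that Lemma \ref{LemmaOptimalNonzeroParams} gives $C^{\infty}$ (not merely $C^0$) convergence on a fixed compact set, and that the cutoff $\psi$ can be arranged to equal $1$ throughout the composition since $\delta_k\to 0$ shrinks the $\tb,\tc$ integration regions to zero.
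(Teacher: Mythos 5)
Your proposal is correct and follows essentially the same approach as the paper's own proof: both pull back via the Frobenius chart $\Phi_{x_k,\delta_k}$, both use the Jacobian estimate \eqref{EqnQuantFrobJacVol} to relate the $L^p$ operator norm in the chart to the ambient one, and both identify the limit of the conjugated operator as convolution against $\thetac^{\infty}_{\tc_1}\circ\thetab^{\infty}_{\tb}\circ\thetac^{\infty}_{\tc_2}$ (resp.\ with roles exchanged) and then invoke Lemma \ref{LemmaOptimalNonzeroInts}. The only cosmetic difference is that you argue directly — testing $R_k^l$ against $f_l\circ\Phi_k^{-1}$ and taking a ratio of norms — while the paper argues by contradiction, defining $U_k:=\Phi_k^{\#}R_k^1(\Phi_k^{\#})^{-1}$, using $\LpOpN{p}{U_k}\lesssim\LpOpN{p}{R_k^1}$, and noting that $U_k f_1$ converges to a function that is nonzero on a set of positive measure.
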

\begin{proof}
We begin with the result for $R_k^1$.  
Suppose $\liminf_{k\rightarrow \infty} \LpOpN{p}{R_k^1}=0$.  By moving to a subsequence, we may assume $\lim_{k\rightarrow \infty} \LpOpN{p}{R_k^1}=0$.
Define $\Phi_{x,\delta}^{\#} f (u):= f\circ \Phi_{x,\delta}(u)$, and let
\begin{equation*}
U_k:= \Phi_{x_k,\delta_k}^{\#} R_k^1 \q(\Phi_{x_k,\delta_k}^{\#}\w)^{-1},
\end{equation*}
where we think of $U_k$ as an operator acting on functions on $B^n(\eta_0)$.  By \eqref{EqnQuantFrobJacVol}, we have
\begin{equation*}
\LpOpN{p}{U_k}\lesssim \LpOpN{p}{R_k^1},
\end{equation*}
and therefore $\lim_{k\rightarrow \infty} \LpOpN{p}{U_k}=0$.
But, we have
\begin{equation*}
\lim_{k\rightarrow \infty} U_k f(u) = \int f\q( \thetac_{\tc_1}^{\infty}\circ \thetab_{\tb}^{\infty}\circ \thetac_{\tc_2}^{\infty}(u) \w) \vsigc(\tc_1) \vsigb(\tb) \vsigc(\tc_2)\: d\tc_1 \:d\tb\: d\tc_2.
\end{equation*}
By taking $f=f_1$, where $f_1$ is as in Lemma \ref{LemmaOptimalNonzeroInts}, we see that $\lim_{k\rightarrow \infty} U_k f_1(u)$ is nonzero on a set of
positive measure, which contradicts  the fact that $$\lim_{k\rightarrow \infty} \LpOpN{p}{U_k}=0$$ and completes the proof for $R_k^1$.
The same proof works for $R_k^2$, where we use $f_2$ from Lemma \ref{LemmaOptimalNonzeroInts} in place of $f_1$.
\end{proof}

\begin{rmk}\label{RmkOptimalUniform}
Fix $\deltab, \deltac\in \R$ and $a>0$.  Take $M=M(\deltab,\deltac)\geq 1$ large.  Let $T_{j_k}$, $S_{\lambda j_k}$ be as in Lemma \ref{LemmaOptimalDefineTkandSk},
with this choice of $M$.
Write,
\begin{equation*}
T_{j_k} f(x) := \psi(x) \int f(\gammab_{\tb}(x)) \psi(\gammab_{\tb}(x)) \dil{\vsigb}{2^{j_k}}(\tb)\: d\tb,
\end{equation*}
\begin{equation*}
S_{\lambda j_k} f(x) := \psi(x) \int f(\gammac_{\tc}(x)) \psi(\gammac_{\tc}(x)) \dil{\vsigc}{2^{\lambda j_k}}(\tb)\: d\tb,
\end{equation*}
as in Lemma \ref{LemmaOptimalDefineTkandSk}.  Lemma \ref{LemmaPfKerDecompSingleParam} shows
that $2^{\deltab j_k} T_{j_k}$ is a fractional Radon transform corresponding to $(\gammab,\eb,\Nb)$ on $B^{\Nb}(a)$,
and $2^{\deltac \lambda j_k} S_{\lambda j_k}$ is a fractional Radon transform corresponding to $(\gammac,\ec,\Nc)$ on $B^{\Nc}(a)$.
Furthermore, this is true \textit{uniformly in} $k$.  Indeed, fix $\etab\in C_0^{\infty}\q(B^{\Nb}(a)\w)$ and $\etac\in C_0^\infty\q(B^{\Nc}(a)\w)$,
with $\etab\equiv 1$ on a neighborhood of the support of $\vsigb$ and $\etac\equiv 1$ on a neighborhood of the support of $\vsigc$.
Lemma \ref{LemmaPfKerDecompSingleParam} shows that we may write
\begin{equation*}
\begin{split}
&2^{\deltab j_k} \dil{\vsigb}{2^j}=  2^{\deltab j_k} \etab \dil{\vsigb}{2^j}=\etab \sum_{\substack{l\leq j_k\\l\in \N}} 2^{k\deltab} \dil{\vsigb_{l,k}}{2^l}, 
\\&2^{\deltac \lambda j_k} \dil{\vsigc}{2^{\lambda j_k}}=2^{\deltac \lambda j_k}\etac \dil{\vsigc}{2^{\lambda j_k}} = \etac\sum_{\substack{l\leq \lambda j_k\\l\in \N}}2^{k\deltab} \dil{\vsigc_{l,k}}{2^l},
\end{split}
\end{equation*}
where $\vsigb_{l,k}\in \schS_0(\R^{\Nb})$ and $\vsigc_{l,k}\in \schS_0(\R^{\Nc})$ for $l>0$ and
\begin{equation*}
\{ \vsigb_{l,k} : k\in \N, l\leq j_k, l\in \N \} \subset \schS(\R^{\Nb}), \quad \{ \vsigc_{l,k} : k\in \N, l\leq \lambda j_k, l\in \N \} \subset \schS(\R^{\Nc})
\end{equation*}
are bounded sets.  Because of this, the Baire category theorem implies the following.  If $B_1$ and $B_2$ are function spaces
with norms $\|\cdot\|_{B_1}$ and $\|\cdot\|_{B_2}$, respectively, then we have:
\begin{itemize}
\item If for every fractional Radon transform, $T$, of order $\deltab$ corresponding to $(\gammab,\eb,\Nb)$ on $B^{\Nb}(a)$ we have $T$ extends
to a bounded operator $T:B_1\rightarrow B_2$, then there is a constant $C$, independent of $k$, such that
$\| 2^{j_k\deltab} T_{j_k} \|_{B_1\rightarrow B_2} \leq C$.

\item If for every fractional Radon transform, $S$, of order $\deltac$ corresponding to $(\gammac,\ec,\Nc)$ on $B^{\Nc}(a)$ we have $S$ extends
to a bounded operator $S:B_1\rightarrow B_2$, then there is a constant $C$, independent of $k$, such that
$\| 2^{\lambda j_k \deltac}S_{\lambda j_k} \|_{B_1\rightarrow B_2} \leq C$.
\end{itemize}
\end{rmk}

\begin{proof}[Proof of Theorem \ref{ThmOptimalMain}]
Because sharp $\lambda$-control implies $\lambda$-control, \eqref{EqnOptimalKnownBound1} and \eqref{EqnOptimalKnownBound2}
follow from Corollary \ref{CorResRadonOtherDownBelowCor} (that the conditions of Corollary \ref{CorResRadonOtherDownBelowCor} hold in this case follows from 
Remark \ref{RmkResRadonOtherSmoothWorks}, Proposition \ref{PropResCCHorSmoothFG}, and Corollary \ref{CorResSurfHorSmoothlyFG}).

We now turn to showing that \eqref{EqnOptimalFalseBound1} cannot hold if $a>0$ is chosen sufficiently small.  
Suppose \eqref{EqnOptimalFalseBound1} holds for some choice of
$p\in (1,\infty)$, $r>0$, $\deltah\in [0,\epsilon)$, and $\delta\in (-\epsilon,\epsilon)$.  Because \eqref{EqnOptimalFalseBound1} for $r>0$ implies the result for any smaller $r$ and
by possible shrinking $\epsilon$,
we may assume that $r$, $\delta$, and $\deltah$ are as small as we like in what follows.
Fix $M$ large, and apply Lemma \ref{LemmaOptimalDefineTkandSk} with $(\gammab,\eb,\Nb)$ replaced by $(\gammat,\et,\Nt)$ and
$(\gammac,\ec,\Nc)$ replaced by $(\gammah,\eh,\Nh)$, with this choice of $M$, to obtain $T_{j_k}$ and $S_{\lambda j_k}$ as in that lemma.
If $M=M(\delta,\deltah,r,\lambda)$ is chosen sufficiently large, we see by the discussion in Remark \ref{RmkOptimalUniform} that
$2^{(\delta+r)\lambda j_k} S_{\lambda j_k}$ is a fractional Radon transform of order $\delta+r$ corresponding to $(\gammah,\eh,\Nh)$ on $B^{\Nh}(a)$,
$2^{-(\delta-\deltah)\lambda j_k} S_{\lambda j_k}$  is a fractional Radon transform of order $-(\delta-\deltah)$ corresponding to $(\gammah,\eh,\Nh)$ on $B^{\Nh}(a)$,
and $2^{-\lambda \deltah j_k} T_{j_k}$ is a fractional Radon transform of order $-\lambda \deltah$ corresponding to $(\gammat,\et,\Nt)$ on $B^{\Nt}(a)$.
Furthermore, this is all true uniformly in $k$ in the sense made precise in Remark \ref{RmkOptimalUniform}.

Applying Theorem \ref{ThmResRadonMainThm} to $2^{(\delta+r)\lambda j_k} S_{\lambda j_k}$ and $2^{-(\delta-\deltah)\lambda j_k} S_{\lambda j_k}$,
\eqref{EqnOptimalFalseBound1} to  $2^{-\lambda \deltah j_k} T_{j_k}$, and using the uniformity discussed in Remark \ref{RmkOptimalUniform}, for $1<p<\infty$ we have,
for $f\in C^\infty$,
\begin{equation*}
\begin{split}
2^{r\lambda j_k} \LpN{p}{S_{\lambda j_k} T_{j_k} S_{\lambda j_k} f}
&=\LpN{p}{ \q( 2^{(\delta+r)\lambda j_k} S_{\lambda j_k}  \w) \q( 2^{-\lambda \deltah j_k} T_{j_k}  \w) \q( 2^{-(\delta-\deltah) \lambda j_k } S_{\lambda j_k} \w)  f}
\\&\lesssim \NLpN{p}{\delta+r}{(\gammah,\eh,\Nh)}{\q( 2^{-\lambda \deltah j_k} T_{j_k}  \w) \q( 2^{-(\delta-\deltah) \lambda j_k } S_{\lambda j_k} \w)  f }
\\&\lesssim \NLpN{p}{\delta-\deltah}{(\gammah,\eh,\Nh)}{\q( 2^{-(\delta-\deltah) \lambda j_k } S_{\lambda j_k} \w)  f }
\\&\lesssim \LpN{p}{f}.
\end{split}
\end{equation*}
We conclude $\LpOpN{p}{S_{\lambda j_k} T_{j_k} S_{\lambda j_k}}\lesssim 2^{-r \lambda j_k}$.  Since $r,\lambda>0$ and since $j_k\rightarrow \infty$,
we have $\lim_{k\rightarrow \infty} \LpOpN{p}{S_{\lambda j_k} T_{j_k} S_{\lambda j_k}}=0$.  This contradicts the conclusion of Lemma \ref{LemmaOptimalDefineTkandSk},
which achieves the contradiction and completes the proof that \eqref{EqnOptimalFalseBound1} cannot hold.

We finish the proof by showing that \eqref{EqnOptimalFalseBound2} cannot hold if $a>0$ is chosen sufficiently small.  
Suppose \eqref{EqnOptimalFalseBound2} holds for some $p\in (1,\infty)$, $r>0$, $\deltat\in [0,\epsilon)$, and $\delta\in (-\epsilon,\epsilon)$.
Because \eqref{EqnOptimalFalseBound2} for $r>0$ implies the result for any smaller $r$ and
by possible shrinking $\epsilon$,
we may assume that $r$, $\delta$, and $\deltat$ are as small as we like in what follows.
Fix $M$ large, and apply Lemma \ref{LemmaOptimalDefineTkandSk} with $(\gammab,\eb,\Nb)$ replaced by $(\gammah,\eh,\Nh)$ and
$(\gammac,\ec,\Nc)$ replaced by $(\gammat,\et,\Nt)$, with this choice of $M$, to obtain $T_{j_k}$ and $S_{\lambda j_k}$ as in that lemma.
If $M=M(\delta,\deltat, r,\lambda)$ is chosen sufficiently large, we see by the discussion in Remark \ref{RmkOptimalUniform} that
$2^{(\delta+r)j_k} T_{j_k}$ is a fractional Radon transform of order $\delta+r$ corresponding to $(\gammah,\eh,\Nh)$ on $B^{\Nh}(a)$,
$2^{-(\delta+\lambda \deltat)j_k} T_{j_k}$ is a fractional Radon transform of order $-\delta-\lambda \deltat$ corresponding to $(\gammah,\eh,\Nh)$
on $B^{\Nh}(a)$, and $2^{\deltat \lambda j_k} S_{\lambda j_k}$ is a fractional Radon transform of order $\deltat$ corresponding to $(\gammat,\et,\Nt)$
on $B^{\Nt}(a)$.
Furthermore, this is all true uniformly in $k$ in the sense made precise in Remark \ref{RmkOptimalUniform}.

Applying Theorem \ref{ThmResRadonMainThm} to $2^{(\delta+r)j_k} T_{j_k}$ and $2^{-(\delta+\lambda \deltat)j_k} T_{j_k}$,
\eqref{EqnOptimalFalseBound2} to $2^{\deltat \lambda j_k} S_{\lambda j_k}$, and using the uniformity discussed
in Remark \ref{RmkOptimalUniform}, for $1<p<\infty$ we have, for $f\in C^\infty$,
\begin{equation*}
\begin{split}
&2^{r j_k}\LpN{p}{ T_{j_k} S_{\lambda j_k} T_{j_k} f} =
\LpN{p}{ \q(2^{(\delta+r)j_k} T_{j_k}\w)  \q(2^{\deltat \lambda j_k} S_{\lambda j_k}  \w)\q(2^{-(\delta+\lambda \deltat) j_k } T_{j_k} \w) f  }
\\&\lesssim \NLpN{p}{\delta+r}{(\gammah,\eh,\Nh)}{    \q(2^{\deltat \lambda j_k} S_{\lambda j_k}  \w)\q(2^{-(\delta+\lambda \deltat) j_k } T_{j_k} \w) f}
\lesssim \NLpN{p}{\delta+\lambda\deltat}{p}{ \q(2^{-(\delta+\lambda \deltat) j_k } T_{j_k} \w) f}
\\&\lesssim \LpN{p}{f}.
\end{split}
\end{equation*}
We conclude $\LpOpN{p}{ T_{j_k} S_{\lambda j_k} T_{j_k}}\lesssim 2^{-rj_k}$.  Since $r>0$ and $j_k\rightarrow \infty$, we
see $\lim_{k\rightarrow \infty} \LpOpN{p}{T_{j_k} S_{\lambda j_k} T_{j_k}}=0$.  This contradicts the conclusion of Lemma \ref{LemmaOptimalDefineTkandSk},
which achieves the contradiction and completes the proof.
\end{proof}

\begin{proof}[Proof of the optimality in Corollary \ref{CorResRadonOtherHorWithEuclidNewer}]
In the proof of the bounds in Corollary \ref{CorResRadonOtherHorWithEuclidNewer} we saw
that if $\Omega'$ is chosen to be a sufficiently small neighborhood of $x_0$,
then $\sFt$ $\lambda_2'$-controls $(\partial, 1)$ on $\Omega'$ and $(\partial,1)$ $\lambda_1'$-controls $\sFt$ on $\Omega'$;
where $\sFt$, $\lambda_2'$, and $\lambda_1'$ are in the proof of the bounds in Corollary \ref{CorResRadonOtherHorWithEuclidNewer}.
It is immediate to verify that, in fact, $\sFt$ sharply $\lambda_2'$-controls $(\partial, 1)$ on $\Omega'$ and $(\partial,1)$ sharply $\lambda_1'$-controls $\sFt$ on $\Omega'$.  The optimality now follows from Theorem \ref{ThmOptimalMain}.
\end{proof}

\bibliographystyle{amsalpha}

\bibliography{radon}




\end{document}